\documentclass[11pt]{article}
\usepackage{amsmath,amssymb,amsthm,latexsym,amscd,stmaryrd,mathrsfs,longtable,color,url}

\pagestyle{myheadings}
\markboth{vertex algebra associated to an even lattice}{vertex algebra associated to an even lattice}


\newtheoremstyle{standard}%
{9pt}%
{9pt}%
{\it}
{}%
{\bfseries}%
{}
{ }%
{#3}%

\newcommand{\db}[1]{(\!({#1})\!)}

\newcommand{\wak}{k}
\newcommand{\centralc}{C}
\newcommand{\subL}{P}

\numberwithin{equation}{section}

\newcommand{\N}{{\mathbb N}}
\newcommand{\Z}{{\mathbb Z}}
\newcommand{\Q}{{\mathbb Q}}

\newcommand{\C}{{\mathbb C}}

\newcommand{\wi}{i}
\newcommand{\wj}{j}
\newcommand{\wh}{h}
\newcommand{\Har}{H}

\newcommand{\wn}{n}

\newcommand{\wx}{x}

\newcommand{\wl}{l}

\newcommand{\rankL}{d}
\newcommand{\mN}{N}
\newcommand{\mW}{W}
\newcommand{\cocy}{\varphi}
\newcommand{\zone}{z}
\newcommand{\mz}{z}

\newcommand{\module}{M}

\newcommand{\mn}{p}
\newcommand{\hei}{{\mathfrak h}}

\newcommand{\sU}{{\mathscr U}}

\newcommand{\nor}{\begin{subarray}{c}\circ\\\circ\end{subarray}}

\newcommand{\fg}{{\mathfrak g}}
\newcommand{\fh}{{\mathfrak h}}

\newcommand{\seq}[1]{\langle{#1}\rangle}
\newcommand{\ul}[1]{{#1}}
\newcommand{\lu}{u}
\newcommand{\lw}{w}
\newcommand{\lv}{v}

\newcommand{\mK}{K}
\newcommand{\lom}{r}
\newcommand{\lJ}{s}
\newcommand{\lE}{t}
\newcommand{\ExB}{E}

\newcommand{\lao}{r}
\newcommand{\laJ}{s}
\newcommand{\laE}{t}

\newcommand{\sv}{P}
\newcommand{\vac}{{\mathbf 1}}
\newcommand{\lattice}{L}

\newcommand{\nS}{S}

\newcommand{\wc}{c}
\newcommand{\lvE}{\ExB_{t}\lv}

\DeclareMathOperator{\pr}{pr} 
 
\DeclareMathOperator{\Ker}{Ker}

\DeclareMathOperator{\Ext}{Ext} 
\DeclareMathOperator{\id}{id}

\DeclareMathOperator{\Span}{Span} 
\DeclareMathOperator{\wt}{wt}

\DeclareMathOperator{\rank}{rank} 

\DeclareMathOperator{\tw}{tw}

\newtheorem{lemma}{Lemma}[section]
\newtheorem{theorem}[lemma]{Theorem}
\newtheorem{proposition}[lemma]{Proposition}
\newtheorem{corollary}[lemma]{Corollary}
\theoremstyle{definition}
\newtheorem{definition}[lemma]{Definition}

\newtheorem{remark}[lemma]{Remark}

\theoremstyle{standard}

\title{The irreducible weak modules for the fixed point subalgebra of the vertex algebra associated to
a non-degenerate even lattice by an automorphism of order $2$}
\author{Kenichiro Tanabe\footnote{Research was partially supported by the Grant-in-aid
(No. 18K03198) for Scientific Research, JSPS.}\\\\
Department of Mathematics\\
Hokkaido University\\
Kita 10, Nishi 8, Kita-Ku, Sapporo, Hokkaido, 060-0810\\
Japan\\\\
ktanabe@math.sci.hokudai.ac.jp}
\date{}
\begin{document}
\maketitle

\begin{abstract}
Let $V_{\lattice}$ be the vertex algebra  associated to a non-degenerate even lattice $\lattice$,
$\theta$ the automorphism of $V_{\lattice}$ induced from the $-1$-isometry of $\lattice$, and
$V_{\lattice}^{+}$ the fixed point subalgebra of $V_{\lattice}$ under the action of $\theta$.
We classify the irreducible weak $V_{\lattice}^{+}$-modules
and show that any irreducible weak $V_{\lattice}^{+}$-module 
is isomorphic to a weak submodule of some irreducible weak $V_{\lattice}$-module or 
to a submodule of some irreducible $\theta$-twisted $V_{\lattice}$-module.
\end{abstract}

\bigskip
\noindent{\it Mathematics Subject Classification.} 17B69

\noindent{\it Key Words.} vertex algebras, lattices, weak modules.

\clearpage
\tableofcontents
\section{\label{section:introduction}Introduction}
Let $V$ be a vertex algebra, $G$ a finite automorphism group of $V$,
and $V^G$ the fixed point subalgebra of $V$ under the action of $G$: $V^{G}=\{u\in V\ |\ gu=u\mbox{ for all }g\in G\}$.
The  fixed point subalgebras play an important role in the study of vertex algebras, particularly in the 
construction of interesting examples.
For example,  let $V_{\lattice}$ be the vertex algebra associated to a non-degenerate even lattice $\lattice$ of finite rank
and $\theta$ the automorphism of $V_{\lattice}$ of order $2$ induced from the $-1$-isometry of $\lattice$.
We write $V_{\lattice}^{\pm}=\{a \in V_{\lattice}\ |\ \theta(a)=\pm a\}$ for simplicity.
Then, the moonshine vertex algebra 
$V^{\natural}$ is constructed from $V_{\Lambda}^{+}$ and an irreducible $V_{\Lambda}^{+}$-module
in \cite{FLM} where $\Lambda$ is the Leech lattice.
We remark that $V_{\Lambda}^{+}$ is also a fixed point subalgebra of $V^{\natural}$
under the action of an automorphism of order $2$.

One of the main problems about $V^G$ is to describe the $V^G$-modules in terms of $V$ and $G$.
If $V$ is a vertex operator algebra, it is conjectured that under some conditions on $V$,
every irreducible $V^G$-module is a submodule of some irreducible $g$-twisted 
$V$-module for some $g\in G$ (cf. \cite{DVVV1989}).
The conjecture is confirmed for many examples including $V_{\lattice}^{+}$
where $L$ is a positive definite even lattice (cf. \cite{AD2004,DLaTYY2004,DN1999-1,DN1999-2,DN2001,TY2007,TY2013}).
In those examples, they classify the irreducible $V^{G}$-modules directly
by investigating the Zhu algebra, which is an associative $\C$-algebra introduced in \cite{Z1996},
since \cite[Theorem 2.2.1]{Z1996} says that
for a vertex operator algebra $V$
there is a one to one correspondence between the set of all isomorphism classes of irreducible $\N$-graded weak $V$-modules 
and that of irreducible modules for the Zhu algebra associated to $V$, where we note that
an arbitrary $V$-module is automatically an $\N$-graded weak $V$-module.
We recall some results on the representations of $V_{L}$ and $V_{L}^{+}$.
It is well known that the vertex algebra $V_{L}$ is a vertex operator algebra 
if and only if $\lattice$ is positive definite.
It is shown in \cite[Theorem 3.1]{Dong1993}
that
$\{V_{\lambda+\lattice}\ |\ \lambda+\lattice\in \lattice^{\perp}/L\}$ is a complete set of representatives of equivalence classes of
the irreducible weak $V_{\lattice}$-modules (see Definition \ref{definition:weak-module} for the definition of a weak module),
where $L^{\perp}$ is the dual lattice of $\lattice$. 
It is also shown in \cite[Theorem 3.16]{DLM1997} that
every weak $V_{\lattice}$-module is 
completely reducible.
The corresponding results for $\theta$-twisted weak $V_{\lattice}$-modules are obtained in \cite{Dong1994}.
When the lattice $\lattice$ is positive definite, 
the irreducible $V_{\lattice}^{+}$-modules 
are classified in \cite{AD2004,DN1999-2},
the fusion rules are determined in \cite{Abe2001, ADL2005}
and it is established that $V_{\lattice}^{+}$ is
$C_2$-cofinite in \cite{ABD2004, Yamsk2004} and rational in \cite{Abe2005, DJL2012}.
Thus, in this case it follows from \cite[Theorem 4.5]{ABD2004} that
$V_{\lattice}^{+}$ is regular and every irreducible weak $V_{\lattice}^{+}$-module
is an irreducible $V_{\lattice}^{+}$-module.
Here, it is worth mentioning that if a vertex operator algebra $V$
is simple and $C_2$-cofinite, then so is $V^G$ for any
finite solvable automorphism group $G$ of $V$ by \cite{Mi2015}.

We consider the case that $\lattice$ is not positive definite.
In this case, vertex algebras $V_{\lattice}$ and $V_{\lattice}^{+}$ are also related to $V^{\natural}$. 
In fact, in \cite{B1992}, Borcherds  constructs the monster Lie algebra 
from the tensor product $V^{\natural}\otimes_{\C}V_{II_{1,1}}$ 
where $II_{1,1}$ is the even unimodular Lorentzian lattice of rank $2$
and uses this Lie algebra to prove the moonshine conjecture stated in \cite{CN1979}.
Moreover, let $II_{25,1}$ be the even unimodular Lorentzian lattice of signature $(25,1)$
and $\Gamma$ a Niemeier lattice, where we recall that the Leech lattice $\Lambda$ is one of the Niemeier lattices.
It is known that the direct sum $\Gamma\oplus II_{1,1}$
is isomorphic to $II_{25,1}$, which implies that
 $V_{II_{25,1}}\cong V_{\Gamma}\otimes V_{II_{1,1}}$
and hence  $V_{II_{25,1}}^{+}\cong V_{\Gamma}^{+}\otimes V_{II_{1,1}}^{+}\oplus
V_{\Gamma}^{-}\otimes V_{II_{1,1}}^{-}$.
Thus,  $V_{\Gamma}^{+}$'s are related to each other through $V_{II_{25,1}}^{+}$.
Since $V_{\Lambda}^{+}$ is a fixed point subalgebra of $V^{\natural}$ as mentioned above
and there are several interesting connections between $II_{25,1}$ and 
 the Niemeier lattices obtained in 
\cite{B1985} and \cite{CS1982-1,CS1982-2} (see also \cite[Chapters 24 and 26]{CS1999}),
one may expect that 
the study of $V_{II_{25,1}}$ and its subalgebras including $V_{II_{25,1}}^{+}$ provides a better understanding of the moonshine vertex algebra $V^{\natural}$ and related algebras.
This is one of the motivations to study representations of $V_{\lattice}^{+}$.
As mentioned above $V_{\lattice}$ is not a vertex operator algebra in this case, however,
we note that the conjecture about representations for $V^G$ above makes sense even for (weak) modules for a vertex algebra $V$.
For $V_{\lattice}^{+}$-modules, 
the irreducible $V_{\lattice}^{+}$-modules 
are classified by using the Zhu algebra in \cite{J2006, Yamsk2008} and it is established that $V_{\lattice}^{+}$ is $C_2$-cofinite in
\cite{JY2010} and rational in \cite{Yamsk2009}.
However, the study of weak $V_{\lattice}^{+}$-modules has not progressed
in spite of the fact that $V_{\lattice}^{+}$ itself is a weak module
but not a module for $V_{\lattice}^{+}$,
because of the absence of useful tool like the Zhu algebras for weak modules.

The following is the main result of this paper, which implies that
for any non-degenerate even lattice $\lattice$ of finite rank,
every irreducible weak $V_{\lattice}^{+}$-module
is isomorphic to a weak submodule of some irreducible weak $V_{\lattice}$-module or to a submodule of some 
 irreducible $\theta$-twisted $V_{\lattice}$-module.
Namely, the conjecture above holds for irreducible weak $V_{\lattice}^{+}$-modules.

\begin{theorem}
\label{theorem:classification-weak-module}
Let $\lattice$ be a non-degenerate even lattice of finite rank with a bilinear form $\langle\ ,\ \rangle$.
The following is a complete set of representatives of equivalence classes of the irreducible weak $V_{\lattice}^{+}$-modules:
\begin{enumerate}
\item
$V_{\lambda+\lattice}^{\pm}$, $\lambda+\lattice\in \lattice^{\perp}/\lattice$ with $2\lambda\in \lattice$.   
\item
$V_{\lambda+\lattice}\cong V_{-\lambda+\lattice}$, $\lambda+\lattice\in \lattice^{\perp}/\lattice$ with $2\lambda\not\in \lattice$.   
\item
$V_{\lattice}^{T_{\chi},\pm}$ for any irreducible $\hat{\lattice}/P$-module $T_{\chi}$ with central character $\chi$.
\end{enumerate}
\end{theorem}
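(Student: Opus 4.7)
The strategy is to reduce the classification to Dong's existing classification of irreducible weak $V_{\lattice}$-modules~\cite{Dong1993} and of irreducible $\theta$-twisted $V_{\lattice}$-modules~\cite{Dong1994}, by extending a given irreducible weak $V_{\lattice}^{+}$-action on an arbitrary module to an action of $V_{\lattice}$ or of its $\theta$-twisted version and then decomposing the result under $\theta$. For the modules listed in (1)--(3), irreducibility and pairwise inequivalence as weak $V_{\lattice}^{+}$-modules follow by standard arguments: when $2\lambda\in\lattice$, the weak $V_{\lattice}$-module $V_{\lambda+\lattice}$ is $\theta$-stable and splits into irreducible $\pm$-eigenspaces; when $2\lambda\not\in\lattice$, the pair $V_{\pm\lambda+\lattice}$ of non-isomorphic irreducible weak $V_{\lattice}$-modules is swapped by $\theta$, so $V_{\lambda+\lattice}$ remains irreducible over the fixed subalgebra; and case (3) is the $\theta$-decomposition of the irreducible twisted modules constructed in \cite{Dong1994}.

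For the converse, let $W$ be any irreducible weak $V_{\lattice}^{+}$-module. I would restrict $W$ to the Heisenberg subalgebra of $V_{\lattice}$ generated by the weight-$1$ vectors, and in particular to its $\theta$-fixed part, and invoke the author's prior classification of irreducible weak modules for that fixed Heisenberg subalgebra. Such modules come in two families: those arising from irreducible weak modules for the full Heisenberg, and those arising from $\theta$-twisted ones. A first key step is to show that all Heisenberg-type subquotients of $W$ belong to a single family; this follows because $V_{\lattice}^{+}$ is generated over the fixed Heisenberg by the $\theta$-symmetrizations of the operators $Y(e^{\alpha},z)$ with $\alpha\in\lattice$, and these shift the spectrum of the Heisenberg zero modes by vectors of $\lattice$, preserving the integrality class that distinguishes the two families.

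With the type of $W$ fixed, the core construction enlarges $W$ to a weak $V_{\lattice}$-module (untwisted case) or to a $\theta$-twisted weak $V_{\lattice}$-module (twisted case). In the untwisted case I would work on a doubled space $W\oplus W$, with $\theta$ exchanging the summands up to a sign, and use the known action of $e^{\alpha}+e^{-\alpha}\in V_{\lattice}^{+}$ together with the Heisenberg structure to split off candidate operators $Y(e^{\pm\alpha},z)$; the Jacobi identity is then verified by computing $[Y(e^{\alpha}+e^{-\alpha},z_{1}),Y(e^{\beta}+e^{-\beta},z_{2})]$ inside $V_{\lattice}^{+}$ and matching its four sign components with what is predicted by the Jacobi identity for $V_{\lattice}$. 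The twisted case is parallel: an irreducible $\hat{\lattice}/\subL$-module $T_{\chi}$ supplies the twisted lattice data, and the twisted vertex operators are extracted from the $\theta$-symmetrizations $e^{\alpha}+\theta(e^{\alpha})$ acting on $W$. Once the extended module structure is in place, \cite[Theorem 3.1]{Dong1993} and its twisted analog from \cite{Dong1994} identify the enlarged module, and $W$ appears as one of its $\theta$-eigenspaces, matching an entry in (1)--(3).

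The principal obstacle will be carrying out this extension without the graded structure that makes Zhu theory effective in the positive-definite case. Since $V_{\lattice}$ is not a vertex operator algebra here and $W$ is not assumed to be $\N$-graded, the extension must be performed directly at the level of formal vertex operators, with careful bookkeeping of the $\hat{\lattice}$-cocycle when separating $e^{\alpha}+e^{-\alpha}$ into its constituents, and with a guarantee that the construction makes sense on all of $W$ rather than on a graded subspace only.
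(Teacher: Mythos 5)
Your overall architecture (restrict to $M(1)^{+}$, sort the module into ``untwisted'' or ``twisted'' type, reassemble via the operators $E(\alpha)=e^{\alpha}+\theta(e^{\alpha})$, and compare with Dong's classifications) is broadly the same as the paper's, and your treatment of the forward direction (irreducibility and inequivalence of the listed modules) is fine. However, there is a genuine gap at the step where you ``invoke the author's prior classification of irreducible weak modules for the fixed Heisenberg subalgebra'' and assert that such modules come in exactly two families. No such classification exists, and the assertion is false: for weak $M(1)^{+}$-modules (as opposed to $\N$-gradable ones) there are additional irreducible objects of Whittaker type, on which $\omega_{1}$ and $\Har_{3}$ do not act semisimply and which belong to neither family. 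Your subsequent remark about the lattice operators ``shifting the spectrum of the Heisenberg zero modes'' presupposes that a spectrum exists, i.e.\ that the zero modes act semisimply on some vector --- which is exactly what must be proved and is the hardest part of the paper. The paper's Sections 3 and 4 are devoted to showing, via explicit relations among $\omega$, $\Har$ (or $J$) and $E$ in $V_{\lattice}^{+}$ (the elements $P^{(8)}$, $P^{(9)}$, $P^{(10)}$, $Q^{(4)}$, $Q^{(5,i)}$, $Q^{(6)}$, etc.), that any nonzero weak $V_{\lattice}^{+}$-module contains an irreducible $A(M(1)^{+})$-module inside $\Omega_{M(1)^{+}}(M)$; this is where the lattice structure is used to exclude Whittaker-type behaviour, and it cannot be obtained by restriction to $M(1)^{+}$ alone. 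One then still needs the extension-group computations of Section 5 to promote this $A(M(1)^{+})$-module to an irreducible $M(1)^{+}$-submodule and to obtain complete reducibility of $V_{\lattice}^{+}\cdot K$ over $M(1)^{+}$.

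A secondary, smaller divergence: for the final identification in the untwisted case, the paper does not extend $W$ to a weak $V_{\lattice}$-module on a doubled space. Instead it determines the $M(1)^{+}$-decomposition of $V_{\lattice}^{+}\cdot K$ directly (Lemmas \ref{lemma:irreducible-M(1)-submodule-of-M} and \ref{lemma:irreducible-M(1)-submodule-of-M-2}) and then shows by a cocycle argument on the component intertwining operators (Lemma \ref{lemma:st-unique-two}) that a given $M(1)^{+}$-decomposition carries at most one, respectively at most two, weak $V_{\lattice}^{+}$-module structures, which must therefore be $V_{\lambda+\lattice}$ or $V_{\lambda+\lattice}^{\pm}$. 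Your doubling construction could plausibly be made to work, but it would require the same careful control of the $\hat{\lattice}$-cocycle and, more importantly, it still rests on the missing input described above.
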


Here, 
$V_{\lambda+\lattice}^{\pm}=\{\lu\in V_{\lambda+\lattice}\ |\ \theta(\lu)=\pm \lu\}$
for $\lambda+\lattice\in \lattice^{\perp}/\lattice$ with $2\lambda\in \lattice$,
$\hat{\lattice}$ is the canonical central extension of $\lattice$
by the cyclic group $\langle\kappa\rangle$ of order $2$ with  the commutator map
$c(\alpha,\beta)=\kappa^{\langle\alpha,\beta\rangle}$ for $\alpha,\beta\in\lattice$,
$\subL=\{\theta(a) a^{-1}\ |\ a\in\hat{\lattice}\}$,
$V_{\lattice}^{T_{\chi}}$ is a $\theta$-twisted $V_{\lattice}$-module,
and $V_{\lattice}^{T_{\chi},\pm}=\{\lu\in V_{\lattice}^{T_{\chi}}\ |\ \theta(\lu)=\pm \lu\}$.
Note that in Theorem \ref{theorem:classification-weak-module}, 
$V_{\lattice}^{T_{\chi},\pm}$ in (3) are $V_{\lattice}^{+}$-modules,
however, if $\lattice$ is not positive definite,
then $V_{\lambda+\lattice}^{\pm}$ in (1) and $V_{\lambda+\lattice}$ in (2) are not  
$V_{\lattice}^{+}$-modules (see \eqref{eq:dimC(Vlambda+lattice)n=+infty}).
If $\lattice$ is positive definite, then Theorem \ref{theorem:classification-weak-module}
is the same as \cite[Theorem 7.7]{AD2004} and \cite[Theorem 5.13]{DN1999-2}.
Using Theorem \ref{theorem:classification-weak-module}, in \cite[Theorem 1.1]{Tanabe-nondeg-rep} we show that every weak $V_{\lattice}^{+}$-module is completely reducible
for any non-degenerate even lattice $\lattice$ of finite rank.

Let us explain the basic idea briefly in the case that $\rank L=1$.
We write $L=\Z\alpha$ and we further assume that $\langle\alpha,\alpha\rangle\neq 2$ to simplify the argument.
When $\langle\alpha,\alpha\rangle=2$, we only need to change the set of generators for $V_{\lattice}^{+}$
in the following argument.
Let $V$ be a vertex algebra and $(\module,Y_{\module})$ a weak $V$-module.
For $\lu\in V$ and $\lv\in\module$, we write the expansion
of $Y_{\module}(u,x)v$ by $Y_{\module}(u,x)v=\sum_{i\in\Z}u_ivx^{-i-1}$ 
and define $\epsilon(\lu,\lv)\in\Z\cup\{-\infty\}$ by
\begin{align}
\lu_{\epsilon(\lu,\lv)}\lv&\neq 0
\mbox{ and }\lu_{i}\lv=0\mbox{ for all }i>\epsilon(\lu,\lv)
\end{align}
if $Y_{\module}(u,x)v\neq 0$ and $\epsilon(\lu,\lv)=-\infty$
if $Y_{\module}(u,x)v= 0$.
It is known that the vertex operator algebra $M(1)$ associated to the
Heisenberg algebra is a subalgebra of $V_{\lattice}$
and the fixed point subalgebra $M(1)^{+}=M(1)^{\langle\theta\rangle}$ under the action of $\theta$
is a subalgebra of $V_{\lattice}^{+}$.
The irreducible $M(1)^{+}$-modules 
are classified in \cite{DN1999-1, DN2001}
and the fusion rules for $M(1)^{+}$-modules are determined in \cite{Abe2000, ADL2005}.
The vertex operator algebra $M(1)^{+}$ 
is generated by the conformal vector (Virasoro element) $\omega$ and a homogeneous element $\Har$ (or $J$) of weight $4$,
 and $V_{\lattice}^{+}$ is generated by $M(1)^{+}$ and \textcolor{black}{$E=e^{\alpha}+\theta(e^{\alpha})$} (see \eqref{eq:definition-omega-J-H}).
We can find some relations for $\omega,\Har$ in $M(1)^{+}$ and for $\omega,\Har, E$ in $V_{\lattice}^{+}$ with the help of computer algebra system 
Risa/Asir\cite{Risa/Asir} (Lemma \ref{lemma:relations-M(1)-V(lattice)+}).
Let $\lv$ 
be a non-zero element of a weak $V_{\lattice}^{+}$-module $\module$.
If $\epsilon(\omega,\lv)\geq 1$, then taking suitable actions of the relations on $\lv$
and using the commutation relations,
we obtain relations in 
$\Span_{\C}\{\omega_{\epsilon(\omega,\lv)}^i\Har_{\epsilon(\Har,\lv)}^{j}\lv
\ |\ i,j\in\Z_{\geq 0}\}$ or in
$\Span_{\C}\{\omega_{\epsilon(\omega,\lv)}^i\Har_{\epsilon(\Har,\lv)}^{j}
\ExB_{\epsilon(\ExB,\lv)}\lv\ |\ i,j\in\Z_{\geq 0}\}$
with the help of computer algebra system 
Risa/Asir (Lemmas \ref{lemma:m1+wJ} and \ref{lemma:r=1-s=3}).
Using these relations in $\module$, we can get
a simultaneous eigenvector $\lu\in\module$ for $\omega_1$ and $\Har_{3}$ with
$\epsilon(\omega,\lu)\leq 1$ and $\epsilon(\Har,\lu)\leq 3$, namely
we have an irreducible module $\C \lu$ for the Zhu algebra $A(M(1)^{+})$ of 
$M(1)^{+}$ (Lemma \ref{lemma:r=1-s=3}).
Moreover, we have some conditions on $\epsilon(\ExB,\lu)$ (Lemmas \ref{lemma:structure-Vlattice-M1}, \ref{lemma:structure-Vlattice-M1-norm2}, and \ref{lemma:structure-Vlattice-M1-norm1-2}).
Using results of extension groups for $M(1)^{+}$-modules (Section \ref{section:Extension groups for M(1)+}), we have an irreducible $M(1)^{+}$-module $\mK$ in the $M(1)^{+}$-submodule of $\module$ generated by $\C \lu$ (Lemma \ref{lemma:M(1)directsum}).
Since $V_{\lattice}^{+}$ is a direct sum of irreducible
$M(1)^{+}$-modules, for any irreducible $M(1)^{+}$-submodule $\mN$ of $V_{\lattice}^{+}$,
the $V_{\lattice}^{+}$-module structure of $\module$
induces an intertwining operator $I(\mbox{ },x) : \mN\times K\rightarrow\module\db{x}$ for weak $M(1)^{+}$-modules.
By using results of extension groups for $M(1)^{+}$-modules (Section \ref{section:Extension groups for M(1)+}), 
the same argument as above shows that 
there exists an $M(1)^{+}$-module which is a
direct sum of irreducible $M(1)^{+}$-modules in the image of $I(\mbox{ },x)$ (Section \ref{section:Intertwining operators for M(1)+}).
Thus, we obtain a weak irreducible $V_{\lattice}^{+}$-submodule $\mW$ of $\module$
which is isomorphic to a submodule of a $\theta$-twisted irreducible $V_{\lattice}$-module or 
is a direct sum of  pairwise non-isomorphic irreducible $M(1)^{+}$-modules (Lemma \ref{lemma:M(1)directsum}).
In the latter case,
by a standard argument we can determine the possible weak $V_{\lattice}^{+}$-module structures for such an $M(1)^{+}$-module (Lemma \ref{lemma:st-unique-two})
and hence we obtain the desired result.
For general $\lattice$,
we divide our analysis into four cases based on the norm of an element of $\C\otimes_{\Z}\lattice$
and carry out the procedure above by an enormous amount of computation.

Complicated computation has been done by a computer algebra system Risa/Asir\cite{Risa/Asir}.
Throughout this paper, the word \lq\lq a direct computation\rq\rq \ means a direct computation with the help of Risa/Asir.

The organization of the paper is as follows. 
In Section \ref{section:preliminary} we recall some basic properties of the
vertex algebra $M(1)$ associated to the Heisenberg algebra and 
the vertex algebra $V_{\lattice}$ associated to a non-degenerate even lattice $\lattice$.
In Section \ref{section:Modules for the Zhu algera of} 
we construct an irreducible module for the Zhu algebra
of $M(1)^{+}$
in a weak $V_{\lattice}^{+}$-module in the case that the rank of $\lattice$ is $1$.
In Section \ref{section:Modules for the Zhu algera of general}
we do the same for the general even non-degenerate lattice $L$.
In Section \ref{section:Extension groups for M(1)+}
we investigate extension groups for $M(1)^{+}$-modules.
In Section \ref{section:Intertwining operators for M(1)+}
we study intertwining operators for a triple of 
weak $M(1)^{+}$-modules.
In Section \ref{section:Proof of Theorem} we give a proof
of Theorem \ref{theorem:classification-weak-module}.
In Section \ref{section:appendix} (Appendix) we put 
computations of $a_{i}b$ for some $a,b\in V_{\lattice}^{+}$
and $i=0,1,\ldots$ to obtain a set of generators of a given $M(1)^{+}$-module. 
In Section \ref{section:notation} we list some notation.

\section{\label{section:preliminary}Preliminary}
We assume that the reader is familiar with the basic knowledge on
vertex algebras as presented in \cite{B1986,FLM,LL,Li1996}. 

Throughout this paper, $\mn$ is a non-zero complex number,
$\N$ denotes the set of all non-negative integers,
$\Z$ denotes the set of all integers, 
$\lattice$ is a non-degenerate even lattice of finite rank $\rankL$ with a bilinear form $\langle\ ,\ \rangle$,
$(V,Y,{\mathbf 1})$ is a vertex algebra.
Recall that $V$ is the underlying vector space, 
$Y(\mbox{ },\wx)$ is the linear map from $V\otimes_{\C}V$ to $V\db{x}$, and
${\mathbf 1}$ is the vacuum vector.
Throughout this paper, we always assume that $V$ has an element $\omega$ such that $\omega_{0}a=a_{-2}\vac$ for all $a\in V$.
For a vertex operator algebra $V$, this condition automatically holds since $V$ has the conformal vector (Virasoro element).
For $i\in\Z$, we define
\begin{align}
	\Z_{< i}&=\{j\in\Z\ |\ j< i\}\mbox{ and }\Z_{> i}=\{j\in\Z\ |\ j> i\}.
\end{align}
The notion of a module for $V$ has been introduced in several papers, however, if $V$ is a vertex operator algebra,
then the notion of a module for $V$ viewed as a vertex algebra is different from the notion of a module for $V$ viewed as 
a vertex operator algebra (cf. \cite[Definitions 4.1.1 and 4.1.6]{LL}).
To avoid confusion, throughout this paper, we refer to a module for a vertex operator algebra defined in \cite[Definition 4.1.6]{LL} as a {\it module}
and to a module for a vertex algebra defined in \cite[Definition 4.1.1]{LL} as a {\it weak module}.
The reason why we use the terminology \lq\lq weak module\rq\rq\ is that when $V$ is a vertex operator algebra, a module for $V$ viewed as a vertex algebra is called 
a weak $V$-module (cf. \cite[p.157]{Li1996}, \cite[p.150]{DLM1997}, and \cite[Definition 2.3]{ABD2004}).
We write down the definition of a weak $V$-module:
\begin{definition}
\label{definition:weak-module}
A {\it weak $V$-module} $\module$ is a vector space over $\C$ equipped with a linear map
\begin{align}
\label{eq:inter-form}
Y_{\module}(\ , x) : V\otimes_{\C}\module&\rightarrow \module\db{x}\nonumber\\
a\otimes u&\mapsto  Y_{\module}(a, x)\lu=\sum_{n\in\Z}a_{n}\lu x^{-n-1}
\end{align}
such that the following conditions are satisfied:
\begin{enumerate}
\item $Y_{\module}(\vac,x)=\id_{\module}$.
\item
For $a,b\in V$ and $\lu\in \module$,
\begin{align}
\label{eq:inter-borcherds}
&x_0^{-1}\delta(\dfrac{x_1-x_2}{x_0})Y_{\module}(a,x_1)Y_{\module}(b,x_2)\lu-
x_0^{-1}\delta(\dfrac{x_2-x_1}{-x_0})Y_{\module}(b,x_2)Y_{\module}(a,x_1)\lu\nonumber\\
&=x_1^{-1}\delta(\dfrac{x_2+x_0}{x_1})Y_{\module}(Y(a,x_0)b,x_2)\lu.
\end{align}
\end{enumerate}
\end{definition}
For $n\in\C$ and a weak $V$-module $\module$, we define $M_{n}=\{\lu\in V\ |\ \omega_1 \lu=n \lu\}$.
For $a\in V_{n}\ (n\in\C)$, $\wt a$ denotes $n$.
For a vertex algebra $V$ which admits a decomposition $V=\oplus_{n\in\Z}V_n$ and a subset $U$ of a weak $V$-module, we 
define
\begin{align}
\label{eq:OmegaV(U)=BigluinU}
\Omega_{V}(U)&=\Big\{\lu\in U\ \Big|\ 
\begin{array}{l}
a_{i}\lu=0\ \mbox{for all homogeneous }a\in V\\
\mbox{and }i>\wt a-1.
\end{array}\Big\}.
\end{align}
For a vertex algebra $V$ which admits a decomposition $V=\oplus_{n\in\Z}V_n$, a weak $V$-module $\mN$
 is called {\it $\N$-graded} if $N$ admits a decomposition $N=\oplus_{n=0}^{\infty}N(n)$
such that $a_{i}N(n)\subset N(\wt a-i-1+n)$ for all homogeneous $a\in V$, $i\in\Z$, and $n\in\Z_{\geq 0}$, where 
we define $N(n)=0$ for all $n<0$. For a triple of weak $V$-modules $\module, \mN,\mW$,
$\lu\in\module, \lv\in\mW$, and an intertwining operator $I(\ ,x)$ from $\module\times \mW$ to $\mN$, 
we write the expansion of $I(u,x)v$ by
\begin{align}
I(\lu,x)\lv
&=\sum_{i\in\C}\lu_i\lv x^{-i-1}
=\sum_{i\in\C}I(\lu_i;i)\lv x^{-i-1}\in \mN\{x\}.
\end{align}
In this paper, we consider only the case that the image of $I(\mbox{ },x)$ is contained in
$\mN\db{x}$,
namely $I(\ ,x) : \module\times \mW\rightarrow \mN\db{x}$. 
For a subset $X$ of $\mW$,
\begin{align}
M\cdot X\mbox{ denotes }\Span_{\C}\{a_{i}\lu\ |\ a\in \module, i\in\Z, \lu\in X\}\subset N. 
\end{align}
For an intertwining operator $I(\mbox{ },x) : \module\times \mW\rightarrow \mN\db{x}$,
$\lu\in\module$, and $\lv\in \mW$, we define  $\epsilon_{I}(\lu,\lv)=\epsilon(\lu,\lv)\in\Z\cup\{-\infty\}$ by
\begin{align}
\label{eqn:max-vanish}
\lu_{\epsilon_{I}(\lu,\lv)}\lv&\neq 0\mbox{ and }\lu_{i}\lv
=0\mbox{ for all }i>\epsilon_{I}(\lu,\lv)
\end{align}
if $I(\lu,x)\lv\neq 0$ and $\epsilon_{I}(\lu,\lv)=-\infty$
if $I(\lu,x)\lv= 0$.
If $\module$ is irreducible, then $\epsilon_{I}(\lu,\lv)\in\Z$ by \cite[Proposition 11.9]{DL}.
We will frequently use the following easy formula:
\begin{lemma}
\label{lemma:comm-change}
Let $\module, \mW, \mN$ be three weak $V$-modules
and  $I(\ ,x) : \module\times \mW\rightarrow \mN\db{x}$ an intertwining operator. 
For $a\in V$, $b\in \module$, $m\in\Z_{\geq -1}$, $k\in\Z_{\leq -1}$, and $n\in\Z$,
\begin{align}
\label{eq:abm}
(a_{k}b)_{n}
&=\sum_{\begin{subarray}{l}i\leq m\\i+j-k=n\end{subarray}}\binom{-i-1}{-k-1}a_{i}b_{j}+\sum_{\begin{subarray}{l}i\geq m+1\\i+j-k=n\end{subarray}}\binom{-i-1}{-k-1}b_{j}a_{i}\nonumber\\
&\quad{}-\sum_{i=0}^{m}\binom{-i-1}{-k-1}\sum_{\wl=0}^{\infty}\binom{i}{\wl}(a_{\wl}b)_{n+k-\wl}\nonumber\\
&=\sum_{\begin{subarray}{l}i\leq m\\i+j-k=n\end{subarray}}
\binom{-i-1}{-k-1}a_{i}b_{j}+\sum_{\begin{subarray}{l}i\geq m+1\\i+j-k=n\end{subarray}}\binom{-i-1}{-k-1}b_{j}a_{i}\nonumber\\
&\quad{}+(-1)^{k}\sum_{\wl=0}^{\infty}\binom{\wl-k-1}{-k-1}
\binom{m-k}{\wl-k}(a_{\wl}b)_{n+k-\wl}.
\end{align}
\end{lemma}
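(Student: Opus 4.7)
The identity \eqref{eq:abm} is essentially a reformulation of the Jacobi identity for the intertwining operator $I$ applied to $a\in V$, $b\in M$, and an arbitrary $c\in W$, in which the parameter $m$ controls where an arising infinite sum is split. My plan is to start from
\begin{align*}
&x_0^{-1}\delta\!\left(\frac{x_1-x_2}{x_0}\right)Y_N(a,x_1)I(b,x_2)c-x_0^{-1}\delta\!\left(\frac{x_2-x_1}{-x_0}\right)I(b,x_2)Y_W(a,x_1)c\\
&\qquad=x_1^{-1}\delta\!\left(\frac{x_2+x_0}{x_1}\right)I(Y_M(a,x_0)b,x_2)c,
\end{align*}
apply $\Res_{x_1}$, and extract the coefficient of $x_0^{-k-1}x_2^{-n-1}$. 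On the right-hand side, $\Res_{x_1}$ collapses the delta function and the chosen coefficient picks out $(a_kb)_nc$ directly.

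On the left-hand side, the coefficient of $x_0^{-k-1}$ in $x_0^{-1}\delta((x_1-x_2)/x_0)$ is $(x_1-x_2)^k$ expanded in non-negative powers of $x_2$, and in the companion delta term it is $(x_1-x_2)^k$ expanded in non-negative powers of $x_1$. The binomial coefficient $\binom{-i-1}{-k-1}$ arises naturally once these expansions are reindexed so that $x_1^{-i-1}$ appears as the power of $x_1$. One obtains two infinite sums, one over $a_ib_j$ with $i<0$ and one over $b_ja_i$ with $i\geq 0$, both subject to $i+j-k=n$.

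The parameter $m\in\Z_{\geq -1}$ then reflects the freedom to shift the boundary between these two sums. Moving the split from $i=-1/0$ to $i=m/m+1$ introduces the finite discrepancy $\sum_{i=0}^m\binom{-i-1}{-k-1}[a_i,b_j]$, and the standard commutator formula $[a_i,I(b,x_2)]=\sum_{\wl\geq 0}\binom{i}{\wl}x_2^{i-\wl}I(a_\wl b,x_2)$ rewrites this commutator as a finite combination of components $(a_\wl b)_{n+k-\wl}$, producing the first displayed form of \eqref{eq:abm}. The equivalence of the two forms reduces to the Vandermonde-type identity
\begin{equation*}
-\sum_{i=\wl}^{m}\binom{-i-1}{-k-1}\binom{i}{\wl}=(-1)^{k}\binom{\wl-k-1}{-k-1}\binom{m-k}{\wl-k},
\end{equation*}
which is obtained by interchanging the order of summation and applying a routine hockey-stick/Vandermonde manipulation.

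The main obstacle, and the step requiring the most care, is the bookkeeping of the formal power series expansions in the presence of the cutoff $m$: one must ensure that the two expansion regions of the delta functions are tracked consistently on both sides of the split, and that boundary contributions are not double-counted when the finite commutator correction is reinserted. The underlying manipulations are standard in the vertex algebra literature, so no new techniques are needed beyond careful accounting of the binomial coefficients.
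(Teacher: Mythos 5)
Your proposal is correct and follows essentially the same route as the paper: the standard iterate/normal-ordering formula (split at $i=-1/0$), a shift of the splitting point to $m$ compensated by the finite commutator correction $\sum_{i=0}^{m}\binom{-i-1}{-k-1}[a_i,b_j]$ rewritten via the commutator formula, and the binomial identity $\sum_{i=0}^{m}\binom{-i-1}{-k-1}\binom{i}{\wl}=(-1)^{k+1}\binom{\wl-k-1}{-k-1}\binom{m-k}{\wl-k}$ to pass between the two displayed forms. The only difference is that you derive the initial expansion from the Jacobi identity explicitly, whereas the paper takes it as known; your stated Vandermonde identity agrees with the paper's after noting that $\binom{i}{\wl}=0$ for $0\le i<\wl$.
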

\begin{proof}
The first expression follows from
\begin{align}
(a_{k}b)_{n}
&=\sum_{\begin{subarray}{l}i<0\\i+j-k=n\end{subarray}}\binom{-i-1}{-k-1}a_{i}b_{j}+\sum_{\begin{subarray}{l}i\geq 0\\i+j-k=n\end{subarray}}\binom{-i-1}{-k-1}b_{j}a_{i}\nonumber\\
&=\sum_{\begin{subarray}{l}i\leq m\\i+j-k=n\end{subarray}}\binom{-i-1}{-k-1}a_{i}b_{j}+\sum_{\begin{subarray}{l}i\geq m+1\\i+j-k=n\end{subarray}}\binom{-i-1}{-k-1}b_{j}a_{i}
-\sum_{\begin{subarray}{l}0\leq i\leq m\\i+j-k=n\end{subarray}}\binom{-i-1}{-k-1}[a_{i},b_{j}].
\end{align}
The last expression follows from the fact that
$\sum_{i=0}^{m}\binom{-i-1}{-k-1}\binom{i}{\wl}
=(-1)^{k+1}\binom{\wl-k-1}{-k-1}
\binom{m-k}{\wl-k}$ for $l\in\Z_{\geq 0}$.
\end{proof}

We recall the vertex operator algebra $M(1)$ associated to the Heisenberg algebra and 
the vertex algebra $V_{\lattice}$ associated to a non-degenerate even lattice $\lattice$.
Let $\hei$ be a finite dimensional vector space equipped with a non-degenerate symmetric bilinear form
$\langle \mbox{ }, \mbox{ }\rangle$.
Set a Lie algebra
\begin{align}
\hat{\hei}&=\hei\otimes \C[t,t^{-1}]\oplus \C \centralc
\end{align} 
with the Lie bracket relations 
\begin{align}
[\beta\otimes t^{m},\gamma\otimes t^{n}]&=m\langle \beta,\gamma\rangle\delta_{m+n,0}\centralc,&
[\centralc,\hat{\hei}]&=0
\end{align}
for $\beta,\gamma\in \hei$ and $m,n\in\Z$.
For $\beta\in \hei$ and $n\in\Z$, $\beta(n)$ denotes $\beta\otimes t^{n}\in\widehat{H}$. 
Set two Lie subalgebras of $\fh$:
\begin{align}
\widehat{\hei}_{{\geq 0}}&=\bigoplus_{n\geq 0}\hei \otimes t^n\oplus \C \centralc&\mbox{ and }&&
\widehat{\hei}_{<0}&=\bigoplus_{n\leq -1}\hei\otimes t^n.
\end{align}
For $\beta\in\fh$,
$\C e^{\beta}$ denotes the one dimensional $\widehat{\fh}_{\geq 0}$-module uniquely determined
by the condition that for $\gamma\in\fh$
\begin{align}
\gamma(i)\cdot e^{\beta}&
=\left\{
\begin{array}{ll}
\langle\gamma,\beta\rangle e^{\beta}&\mbox{ for }i=0\\
0&\mbox{ for }i>0
\end{array}
\right.&&\mbox{ and }&
\centralc\cdot e^{\beta}&=e^{\beta}.
\end{align}
We take an $\widehat{\fh}$-module 
\begin{align}
\label{eq:untwist-induced}
M(1,\beta)&=\sU (\widehat{\fh})\otimes_{\sU (\widehat{\fh}_{\geq 0})}\C e^{\beta}
\cong \sU(\widehat{\fh}_{<0})\otimes_{\C}\C e^{\beta}
\end{align}
where $\sU(\fg)$ is the universal enveloping algebra of a Lie algebra $\fg$.
Then, $M(1)=M(1,0)$ has a vertex operator algebra structure with
the conformal vector
\begin{align}
\label{eq:conformal-vector}
\omega&=\dfrac{1}{2}\sum_{i=1}^{\dim\fh}h_i(-1)h_i^{\prime}(-1)\vac
\end{align}
where $\{h_1,\ldots,h_{\dim\fh}\}$ is a basis of $\fh$ and
$\{h_1^{\prime},\ldots,h_{\dim\fh}^{\prime}\}$ is its dual basis.
Moreover, $M(1,\beta)$ is an irreducible $M(1)$-module for any $\beta\in\fh$. 
The vertex operator algebra $M(1)$ is called the {vertex operator algebra associated to
 the Heisenberg algebra} $\oplus_{0\neq n\in\Z}\fh\otimes t^{n}\oplus \C \centralc$. 

Let $\lattice$ be a non-degenerate even lattice.
We define $\fh=\C\otimes_{\Z}\lattice$
and denote by $\lattice^{\perp}$ the dual of $\lattice$: $\lattice^{\perp}=\{\gamma\in\fh\ |\ \langle\beta,\gamma\rangle\in\Z\mbox{ for all }\beta\in\lattice\}$. 
Taking $M(1)$ for $\fh$,
we define $V_{\lambda+\lattice}=\oplus_{\beta\in\lambda+\lattice}M(1,\beta)$
for $\lambda+\lattice\in \lattice^{\perp}/\lattice$.
Then, $V_{\lattice}$ admits a unique vertex algebra structure compatible with the action of $M(1)$ and 
for each $\lambda+\lattice\in\lattice^{\perp}/\lattice$
the vector space
$V_{\lambda+\lattice}$ is an irreducible weak $V_{\lattice}$-module which admits the following decomposition:
\begin{align}
V_{\lambda+\lattice}&=\bigoplus_{n\in\langle\lambda,\lambda\rangle/2+\Z}(V_{\lambda+\lattice})_{n} \mbox{ where }
(V_{\lambda+\lattice})_{n}=\{a\in V_{\lambda+\lattice}\ |\ \omega_{1}a=na\}.
\end{align}
Note that if $\lattice$ is positive definite, then $\dim_{\C}(V_{\lambda+\lattice})_{n}<+\infty$
for all $n\in \lambda+\lattice$
and $(V_{\lambda+\lattice})_{\langle\lambda,\lambda\rangle/2+i}=0$ for sufficiently small $i\in\Z$.
If $\lattice$ is not positive definite, then  
\begin{align}
\label{eq:dimC(Vlambda+lattice)n=+infty}
\dim_{\C}(V_{\lambda+\lattice})_{n}=+\infty
\end{align}
for all $n\in \langle\lambda,\lambda\rangle/2+\Z$, which implies that $V_{\lambda+\lattice}$ is not a $V_{\lattice}$-module.
For $\alpha\in \fh$, we write
\begin{align}
E(\alpha)&=
\textcolor{black}{e^{\alpha}+\theta(e^{\alpha})}
\end{align}

Let  $\hat{\lattice}$ be the canonical central extension of $\lattice$
by the cyclic group $\langle\kappa\rangle$ of order $2$ with  the commutator map
$c(\alpha,\beta)=\kappa^{\langle\alpha,\beta\rangle}$ for $\alpha,\beta\in\lattice$:
\begin{align}
	0\rightarrow \langle\kappa\rangle\overset{}{\rightarrow} \hat{\lattice}\overset{-}{\rightarrow} \lattice\rightarrow 0.
\end{align}
Then, the $-1$-isometry of $\lattice$ induces an automorphism $\theta$ of $\hat{\lattice}$ of order $2$
and an automorphism, by abuse of notation we also denote by $\theta$, of $V_{\lattice}$ of order $2$.
In $M(1)$, we have
\begin{align}
\label{eq:theta}
\theta(h^1(-i_1)\cdots h^n(-i_n)\vac)&=(-1)^{n}h^1(-i_1)\cdots h^n(-i_n)\vac
\end{align}
for $n\in\Z_{\geq 0}$, $h^1,\ldots,h^{n}\in\fh$, and $i_1,\ldots,i_n\in\Z_{>0}$.
\textcolor{black}{For a weak $V_{\lattice}$-module $\module$,
we define a weak $V_{\lattice}$-module $(\module\circ \theta,Y_{\module\circ \theta})$
by $\module\circ\theta=\module$ and 
\begin{align}
Y_{\module\circ \theta}(a,x)&=Y_{\module}(\theta(a),x)
\end{align}
for  $a\in V_{\lattice}$.
Then
$V_{\lambda+\lattice}\circ\theta\cong V_{-\lambda+\lattice}$
for $\lambda\in \lattice^{\perp}$.
Thus, for $\lambda\in \lattice^{\perp}$ with $2\lambda\in\lattice$
we define
\begin{align}
V_{\lambda+\lattice}^{\pm}&=\{u\in V_{\lambda+\lattice}\ |\ \theta(u)=\pm u\}.
\end{align}
}
Next, we recall the construction of $\theta$-twisted modules for $M(1)$ and $V_{\lattice}$ following \cite{FLM}.
Set a Lie algebra
\begin{align}
\hat{\hei}[-1]&=\hei\otimes t^{1/2}\C[t,t^{-1}]\oplus \C \centralc
\end{align} 
with the Lie bracket relations 
\begin{align}
[\centralc,\hat{\hei}[-1]]&=0&\mbox{and}&&
[\alpha\otimes t^{m},\beta\otimes t^{n}]&=m\langle\alpha,\beta\rangle\delta_{m+n,0}\centralc
\end{align}
for $\alpha,\beta\in \hei$ and $m,n\in1/2+\Z$.
For $\alpha\in \hei$ and $n\in1/2+\Z$, $\alpha(n)$ denotes $\alpha\otimes t^{n}\in\widehat{\hei}$. 
Set two Lie subalgebras of $\hat{\hei}[-1]$:
\begin{align}
\widehat{\hei}[-1]_{{\geq 0}}&=\bigoplus_{n\in 1/2+\N}\hei\otimes t^n\oplus \C \centralc&\mbox{ and }&&
\widehat{\hei}[-1]_{{<0}}&=\bigoplus_{n\in 1/2+\N}\hei\otimes t^{-n}.
\end{align}
Let $\C \vac_{\tw}$ denote a unique one dimensional $\widehat{\hei}[-1]_{{\geq 0}}$-module 
such that 
\begin{align}
h(i)\cdot \vac_{\tw}&
=0\quad\mbox{ for }h\in\fh\mbox{ and }i\in \frac{1}{2}+\N,\nonumber\\
\centralc\cdot \vac_{\tw}&=\vac_{\tw}.
\end{align}
We take an $\widehat{\hei}[-1]$-module 
\begin{align}
\label{eq:twist-induced}
M(1)(\theta)
&=\sU (\widehat{\hei}[-1])\otimes_{\sU (\widehat{\hei}[-1]_{\geq 0})}\C u_{\ul{\zeta}}
\cong\sU (\widehat{\hei}[-1]_{<0})\otimes_{\C}\C u_{\ul{\zeta}}.
\end{align}
We define for $\alpha\in \hei$, 
\begin{align}
	\alpha(x)&=\sum_{i\in 1/2+\Z}\alpha(i)x^{-i-1}
	\end{align}
and for $u=\alpha_1(-\wi_1)\cdots \alpha_k(-\wi_k)\vac\in M(1)$, 
\begin{align}
	Y_{0}(u,x)&=\nor
\dfrac{1}{(\wi_1-1)!}	(\dfrac{d^{\wi_1-1}}{dx^{\wi_1-1}}\alpha_1(x))
	\cdots
\dfrac{1}{(\wi_k-1)!}	(\dfrac{d^{\wi_k-1}}{dx^{\wi_k-1}}\alpha_k(x))\nor.
\end{align}
Here, for $\beta_1,\ldots,\beta_{n}\in \fh$ and $i_1,\ldots,i_n\in1/2+\Z$, we define 
$\nor \beta_1(i_1)\cdots\beta_{n}(i_n)\nor$ inductively by
\begin{align}
\label{eq:nomal-ordering}
\nor \beta_1(i_1)\nor&=\beta_1(i_1)\qquad\mbox{ and}\nonumber\\
\nor \beta_1(i_1)\cdots\beta_{n}(i_n)\nor&=
\left\{
\begin{array}{ll}
\nor \beta_{2}(i_2)\cdots\beta_{n}(i_n)\nor \beta_1(i_1)&\mbox{if }i_1\geq 0,\\
\beta_{1}(i_1)\nor \beta_{2}(i_2)\cdots\beta_{n}(i_n)\nor &\mbox{if }i_1<0.
\end{array}\right.
\end{align}
Let $h^{[1]},\ldots,h^{[\dim\fh]}$ be an orthonormal basis of $\fh$.
We define $c_{mn}\in\Q$ for $ m,n\in \Z_{\geq 0}$ by
\begin{align}
	\sum_{m,n=0}^{\infty}c_{mn}x^{m}y^{n}&=-\log (\dfrac{(1+x)^{1/2}+(1+y)^{1/2}}{2})
	\end{align}
and
	\begin{align}
		\Delta_{x}&=\sum_{m,n=0}^{\infty}c_{mn}\sum_{i=1}^{\dim\fh}h^{[i]}(m)h^{[i]}(n)x^{-m-n}.
		\end{align}
Then, for $u\in M(1)$ we define a vertex operator $Y_{M(1)(\theta)}$ by
\begin{align}
Y_{M(1)(\theta)}(u,x)&=Y_{0}(e^{\Delta_{x}}u,x).
\end{align}
Then, \cite[Theorem 9.3.1]{FLM}
shows that 
$(M(1)(\theta),Y_{M(1)(\theta)})$ is an irreducible $\theta$-twisted $M(1)$-module.
Set a submodule $\subL=\{\theta(a) a^{-1}\ |\ a\in\hat{\lattice}\}$ of $\hat{\lattice}$.
Let $T_{\chi}$ be the irreducible $\hat{L}/\subL$-module associated to a
central character $\chi$ such that $\chi(\kappa)=-1$.
We set
\begin{align}
V_{\lattice}^{T_{\chi}}&=M(1)(\theta)\otimes T_{\chi}.
\end{align} 
Then, \cite[Theorem 9.5.3]{FLM} shows that 
$V_{\lattice}^{T_{\chi}}$ admits an irreducible $\theta$-twisted $V_{\lattice}$-module structure compatible with the action of $M(1)$.
We define the action of $\theta$ on $V_{\lattice}^{T_{\chi}}$ by
\begin{align}
\theta(h^1(-i_1)\cdots h^n(-i_n)\lu)&=(-1)^{n}h^1(-i_1)\cdots h^n(-i_n)\lu
\end{align}
for $n\in\Z_{\geq 0}$, $h^1,\ldots,h^{n}\in\fh$, $i_1,\ldots,i_n\in 1/2+\Z_{>0}$, and $\lu\in T_{\chi}$.
We set
\begin{align}
V_{\lattice}^{T_{\chi},\pm}&=\{\lu\in V_{\lattice}^{T_{\chi}}\ |\ \theta(u)=\pm u\}.
\end{align}

We recall the {\it Zhu algebra} $A(V)$ of a vertex operator algebra $V$ from \cite[Section 2]{Z1996}.
For homogeneous $a\in V$ and $b\in V$, we define
\begin{align}
\label{eq:zhu-ideal-multi}
a\circ b&=\sum_{i=0}^{\infty}\binom{\wt a}{i}a_{i-2}b\in V
\end{align}
and 
\begin{align}
\label{eq:zhu-bimodule-left}
a*b&=\sum_{i=0}^{\infty}\binom{\wt a}{i}a_{i-1}b\in V.
\end{align}
We extend \eqref{eq:zhu-ideal-multi} and \eqref{eq:zhu-bimodule-left} for an arbitrary $a\in V$ by linearity.
We also define
$O(V)=\Span_{\C}\{a\circ b\ |\ a,b\in V\}$.
Then, the quotient space
\begin{align}
\label{eq:zhu-bimodule}
A(V)&=M/O(V)
\end{align}
called the {\em Zhu algebra} of $V$, is an associative $\C$-algebra with multiplication  
\eqref{eq:zhu-bimodule-left} by \cite[Theorem 2.1.1]{Z1996}.

\section{Modules for the Zhu algebra of $M(1)^{+}$ in a weak $V_{\lattice}^{+}$-module:
the case that $\rank L=1$}
\label{section:Modules for the Zhu algera of}

In this section, under the condition that the rank of $L$ is $1$,
we shall show that there exists an irreducible $A(M(1)^{+})$-module in an arbitrary non-zero 
weak $V_{\lattice}^{+}$-module.

Throughout this section, $\mn$ is a non-zero complex number, 
$\hei$ is a one dimensional vector space equipped with a non-degenerate symmetric bilinear form
$\langle \mbox{ }, \mbox{ }\rangle$, $\wh,\alpha\in\hei$ such that
\begin{align}
\langle \wh,\wh\rangle&=1\mbox{ and }
\langle\alpha,\alpha\rangle=\mn,
\end{align}
$\module,\mN,\mW$ are weak $M(1)^{+}$-modules,
and 
$I(\mbox{ },x) : M(1,\alpha)\times \mW\rightarrow N\db{x}$ is  a non-zero intertwining operator.
We define
\begin{align}
\omega&=\dfrac{1}{2}h(-1)^2\vac,\nonumber\\
\Har&=\dfrac{1}{3}h(-3)h(-1)\vac-\dfrac{1}{3}h(-2)^2\vac,\nonumber\\
J&=h(-1)^4\vac-2h(-3)h(-1)\vac+\dfrac{3}{2}h(-2)^2\vac\nonumber\\
&=-9\Har+4\omega_{-1}^2\vac-3\omega_{-3}\vac,\nonumber\\
\ExB&=\ExB(\alpha)=e^{\alpha}+\textcolor{black}{\theta(e^{\alpha})}.
\label{eq:definition-omega-J-H}
\end{align}
Since \begin{align}
	0&=\omega_{-2}h(-1)\vac-2\omega_{-1}h(-2)\vac+3h(-4)\vac\label{eq:w-2h-2wh3h}
\end{align}
and $[\omega_{0},a_{i}]=-ia_{i-1}$ for all $a\in M(1)$ and $i\in\Z$,
\begin{align}
\label{eq:h(i)vacinSpanBig}
h(j)\vac\in \Span\Big\{\omega_{-i_1}\cdots\omega_{-i_m}h(-k)\vac\ \Big|\ 
\begin{array}{l}m\in\Z_{\geq 0},i_1,\ldots,i_m\in\Z_{>0}\\
\mbox{ and }k=1,2,3\end{array}\Big\}
\end{align}
for all $j\in\Z$. 
For $i,j\in\Z$, a direct computation shows that
\begin{align}
	[\omega_{i},\omega_{j}]&=(i-j)\omega_{i+j-1}+\delta_{i+j-2,0}\dfrac{i(i-1)(i-2)}{12},\label{eq:wiwj}\\
	[\omega_{i},J_{j}]&=(3i-j)J_{i+j-1},\label{eq:wiJj}\\
	[\omega_i,\Har_{j}]&
	=(3i-j)\Har_{i+j-1}
	+\frac{i(i-1)(3i+j-6)}{6}\omega_{i+j-3}\nonumber\\
	&\quad{}+\dfrac{-1}{3}\binom{i}{5}\delta_{i+j-4,0},\label{eq:wiHj}\\
	[\omega_{i},\ExB_{j}]&=((-1+\dfrac{\mn}{2})i-j)\ExB_{i+j-1} \label{eq:wiEji},\\
	[h(i),\omega_{j}]&=h(i+j-1),\\
	[h(i),\Har_{j}]
	&=\big(\frac{i(i+j-2)(5i+j-5)}{6}+\binom{i}{3}\big)h(i+j-3),
\end{align}
and
\begin{align}
\label{eq:h(-2)h(-1)=omega0} 
	h(-2)h(-1)&=
	\omega_{0 } \omega,\nonumber\\
	h(-3)h(-1)&=
	\Har
	+\frac{1}{3}\omega_{0 }^{2}\omega,\nonumber\\
	h(-2)h(-2)&=
	-2\Har
	+\frac{1}{3}\omega_{0 }^{2}\omega,\nonumber\\
	h(-3)h(-2)&=\frac{-1}{2}\omega_{0 } \Har
	+\frac{1}{12}\omega_{0 }^{3}\omega,\nonumber\\
	h(-3)h(-3)&=
	\frac{1}{3}\omega_{-2 }^2\vac
	+\frac{4}{5}\omega_{-1 } \Har
	+\frac{-1}{15}\omega_{0 }^{2}\omega_{-1 } \omega
	+\frac{-3}{10}\omega_{0 }^{2}\Har
	+\frac{1}{45}\omega_{0 }^{4}\omega.
\end{align}
It follows from \eqref{eq:h(i)vacinSpanBig}, \eqref{eq:wiHj} and \eqref{eq:h(-2)h(-1)=omega0} 
that $M(1)^{+}$ is spanned by the elements 
$\omega_{-i_1}\cdots \omega_{-i_m}\Har_{-j_1}\cdots \Har_{-j_n}\vac$
where $m,n\in\Z_{\geq 0}$ and $i_1,\ldots,i_m,j_1,\ldots,j_n\in\Z_{>0}$,
which is already shown in \cite[Theorem 2.7]{DG1998}.

We have
\begin{align}
\label{eq:J1E=frac2}
J_{2}\ExB&=2(2\mn-1)\omega_{0 } \ExB,\quad
J_{3}\ExB=(\mn^2-\dfrac{\mn}{2})\ExB,\quad
J_{i}\ExB=0\mbox{ for }i\geq 4,
\end{align}
and 
\begin{align}
\label{eq:Har1ExB=dfrac2mn2mn1omega1ExB}
\Har_{2}\ExB&=\dfrac{1}{3}\omega_{0 } \ExB,\quad
\Har_{i}\ExB=0\mbox{ for }i\geq 3.
\end{align}
If $\mn\neq 1/2$, then
\begin{align}
\label{eq:J1ExB=frac2mn(4mn-11)-1}
J_{1}\ExB&=\frac{2\mn(4\mn-11)}{2\mn-1}\omega_{-1 } \ExB
+\frac{8\mn+5}{2\mn-1}\omega_{0 }^2 \ExB,\nonumber\\
\Har_{1}\ExB&=
\dfrac{2\mn}{2\mn-1}\omega_{-1 } \ExB
+\dfrac{-1}{2\mn-1}\omega_{0 }^2 \ExB
\end{align}
and if $\mn\neq 2,1/2$, then
\begin{align}
\label{eq:J1ExB=frac2mn(4mn-11)-2}
J_{0}\ExB&=\frac{2 (\mn-8) (2 \mn-1)}{\mn-2}\omega_{-2 } \ExB
+\frac{4 (4 \mn^2-\mn+4)}{(\mn-2) (2 \mn-1)}\omega_{0 } \omega_{-1 } \ExB
\nonumber\\&\quad{}
+\frac{-18}{(\mn-2) (2 \mn-1)}\omega_{0 }^3 \ExB\nonumber,\\
\Har_{0}\ExB&=
\dfrac{2\mn}{\mn-2}\omega_{-2 } \ExB
+\dfrac{-4\mn}{(2\mn-1)(\mn-2)}\omega_{0 } \omega_{-1 } \ExB
+\dfrac{2}{(2\mn-1)(\mn-2)}\omega_{0 }^3 \ExB.
\end{align}
If $\mn=2$, then by Lemma \ref{lemma:comm-change} and \eqref{eq:omega[1]0Har[1]0ExB=omega0} in Section \ref{section:appendix} 
for $m\geq 0$,
\begin{align}
\label{eq:[omegai(Har0ExB)j]}
&		[\omega_{i},(\Har_{0}\ExB)_{j}]\nonumber\\
		&=(3i-j)(\Har_{0}\ExB)_{i+j-1}\nonumber\\
		&\quad{}+8\binom{i}{2}\big( 
		\sum_{k\leq m}\omega_{k}\ExB_{i+j-3-k}+\sum_{k\geq m+1}\ExB_{i+j-3-k}\omega_{k }\nonumber\\
&\qquad{}+((m+1)(i+j-3)-\binom{m+1}{2})\ExB_{i+j-4}\big)\nonumber\\
		&\quad{}-2\binom{i}{2}(i+j-2)(i+j-3) \ExB_{i+j-4}\nonumber\\
		&\quad{}-6\binom{i}{3}(i+j-3)\ExB_{i+j-4}+12\binom{i}{4}\ExB_{i+j-4}.
\end{align}
If $\mn=1/2$, then by \eqref{eq:norm1-2-omega[1]0Har[1]1ExB} in Section \ref{section:appendix},
\begin{align}
\label{eq:[omegaiHar1ExBj]}
[\omega_{i},(\Har_{1}\ExB)_{j}]
&=(\frac{5}{4}i-j)(\Har_{1}\ExB)_{i+j-1}\nonumber\\
&\quad{}+\binom{i}{2}(i+j-2)\ExB_{i+j-3}+
\binom{i}{3}\ExB_{i+j-3}.
\end{align}
For $n\in\Z$, $m\in\Z_{\geq -1}$, and $k\in\Z_{<0}$,
using Lemma \ref{lemma:comm-change} and \eqref{eq:wiEji},
we expand each of $(\omega_{k}E)_{\wn}$ and $(\omega_{-1}^2E)_{n}$
so that 
the resulting expression is  a linear combination of elements of the form 
\begin{align}
\omega_{i_1}\cdots \omega_{i_r}\ExB_{l}\omega_{j_1}\cdots \omega_{j_s}
\end{align}
where $r,s\in\Z_{\geq 0}$, $l\in\Z$, 
$i_1,\ldots,i_r\leq m
$,
and $j_1,\ldots,j_s\geq m+1$
as follows:
\begin{align}
\label{eq:(omega-12E)wn-1}
&(\omega_{k}E)_{\wn}\nonumber\\
&=\sum_{i\leq m}\binom{-i-1}{-k-1}\omega_{i}E_{\wn+k-i}+\sum_{i\geq m+1}\binom{-i-1}{-k-1}E_{\wn+k-i}\omega_{i}\nonumber\\
&\quad{}+(-1)^{k}((-\wn-k)\binom{m-k}{-k}+
\binom{-k}{-k-1}\binom{m-k}{1-k}\dfrac{\mn}{2})E_{\wn+k-1}\nonumber\\
\end{align}
and
\begin{align}
	\label{eq:(omega-12E)wn-2}
&	(\omega_{-1}^2E)_{n}\nonumber\\
&=
\sum_{\begin{subarray}{l}i<0,j<0,\\k=n-i-j-2\end{subarray}}\omega_{i}\omega_{j}E_{k}
+	2\sum_{\begin{subarray}{l}i<0,0\leq j,\\k=n-i-j-2\end{subarray}}\omega_{i}E_{k}\omega_{j}
	+\sum_{\begin{subarray}{l}0\leq i,0\leq j,\\k=n-i-j-2\end{subarray}}E_{k}\omega_{j}\omega_{i}\nonumber\\
&=\sum_{\begin{subarray}{l}i<0,j<0,\\k=n-i-j-2\end{subarray}}\omega_{i}\omega_{j}E_{k}\nonumber\\
&\quad{}+	2\sum_{\begin{subarray}{l}i<0,0\leq j\leq m,\\k=n-i-j-2\end{subarray}}
\big(\omega_{i}\omega_{j}E_{k}
-((-1+\frac{\mn}{2})j-k)\omega_{i}E_{j+k-1}\big)+2\sum_{\begin{subarray}{l}i<0,m+1\leq j,\\k=n-i-j-2\end{subarray}}\omega_{i}E_{k}\omega_{j}\nonumber\\
&\quad{}+\sum_{\begin{subarray}{l}0\leq i,j\leq m,\\k=n-i-j-2\end{subarray}}\Big(
\omega_{j}\omega_{i}E_{k}+
((1-\frac{p}{2})j+k)((1-\frac{p}{2})i+j+k-1)E_{j+k-2}
\nonumber\\
&\qquad{}+((1-\frac{p}{2})j+k)\omega_{i}E_{j+k-1}+((1-\frac{p}{2})i+k)\omega_{j}E_{i+k-1}\Big)\nonumber\\
&\quad{}+\sum_{\begin{subarray}{l}m+1\leq i,0\leq j\leq m,\\k=n-i-j-2\end{subarray}}\big(((1-\frac{p}{2})j+k)E_{j+k-1}\omega_{i}+\omega_{j}E_{k}\omega_{i}\big)\nonumber\\
&\quad{}+(m+1)((1-\frac{p}{2})m+n-m-3)E_{n-4}+(m+1)\omega_{m}E_{n-m-3}\nonumber\\
&\quad{}+\sum_{\begin{subarray}{l}0\leq i\leq m,m+1\leq j\nonumber\\(i,j)\neq (m+1,0),\\k=n-i-j-2\end{subarray}}
E_{k}(j-i)\omega_{i+j-1}
+\sum_{\begin{subarray}{l}0\leq i\leq m,m+1\leq j,\nonumber\\k=n-i-j-2\end{subarray}}((1-\frac{p}{2})i+k)E_{i+k-1}\omega_{j}+\omega_{i}E_{k}\omega_{j}\big)\nonumber\\
&\quad{}+\sum_{\begin{subarray}{l}m+1\leq i,j,\\k=n-i-j-2\end{subarray}}E_{k}\omega_{j}\omega_{i}.
\end{align}
For $i\in\Z$ and a subset $X$ of a weak $M(1)^{+}$-module $\mK$,
$\langle\omega_i\rangle X$ denotes the subspace of $\mK$ spanned by the elements
$\omega_i^{j}\lu, j\in\Z_{\geq 0}, \lu\in X$. 

The following lemmas follows from \eqref{eq:wiwj}--\eqref{eq:wiEji}, \eqref{eq:[omegai(Har0ExB)j]},
and \eqref{eq:[omegaiHar1ExBj]}.

\begin{lemma}\label{lemma:ojzero}
Let $\lu$ be  
an element of a weak $M(1)^{+}$-module $(\mK,Y_{\mK})$ with $\epsilon(\omega,\lu)=\epsilon_{Y_{\mK}}(\omega,\lu)\geq 1$.
\begin{enumerate}
\item For any $i\geq 0$, $j>\epsilon(\omega,\lu)$, and $k\geq 2$, 
\begin{align}
\omega_{j}\omega_{\epsilon(\omega,\lu)}^{i}\lu&=
\omega_{j}\omega_{\epsilon(\omega,\lu)}^{i}J_{\epsilon(J,\lu)}\lu=0,\nonumber\\
\omega_{k}^{i}J_{\epsilon(J,\lu)}\lu&=J_{\epsilon(J,\lu)}\omega_{k}^{i}\lu.
\end{align}
\item
For any non-zero $\lv\in \langle\omega_{\epsilon(\omega,\lu)}\rangle\{\lu,J_{\epsilon(J,\lu)}\lu\}$,
$\epsilon(\omega,\lv)\leq \epsilon(\omega,\lu)$. 
\end{enumerate}
\end{lemma}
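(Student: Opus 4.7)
The plan is a straightforward double induction using the commutation relations \eqref{eq:wiwj} and \eqref{eq:wiJj}. Throughout, write $m = \epsilon(\omega,\lu)$ and $n = \epsilon(J,\lu)$ for brevity; by hypothesis $m \geq 1$.

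For the first identity $\omega_j \omega_m^i \lu = 0$ with $j > m$ and $i \geq 0$, I would induct on $i$. The base case $i = 0$ is just the definition of $m$. For the step, I commute using $[\omega_j,\omega_m] = (j-m)\omega_{j+m-1} + \delta_{j+m-2,0}\tfrac{j(j-1)(j-2)}{12}$; the delta term vanishes because $j+m \geq (m+1)+m \geq 3$, and then $\omega_j \omega_m^i \lu = \omega_m(\omega_j \omega_m^{i-1}\lu) + (j-m)\omega_{j+m-1}\omega_m^{i-1}\lu$, both of which are killed by the inductive hypothesis (note $j+m-1 \geq 2m > m$ since $m \geq 1$). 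The analogous identity $\omega_j \omega_m^i J_n \lu = 0$ follows by the same induction once the base case $\omega_j J_n \lu = 0$ is checked: $\omega_j J_n \lu = J_n \omega_j \lu + (3j-n)J_{j+n-1}\lu$, and both summands vanish because $j > m$ and $j + n - 1 > n$ (using $j \geq 2$).

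For the commutation identity $\omega_k^i J_n \lu = J_n \omega_k^i \lu$ with $k \geq 2$, I would first prove the auxiliary statement that $J_l \omega_k^i \lu = 0$ for every $l > n$, by induction on $i$ using $J_l \omega_k = \omega_k J_l - (3k-l)J_{k+l-1}$ and the observation that both $l > n$ and $k+l-1 > n$ persist across the induction. The main identity then follows by a parallel induction from $\omega_k J_n = J_n \omega_k + (3k-n)J_{k+n-1}$: the correction term involves $J_{k+n-1}$ acting on $\omega_k^{i-1}\lu$, and since $k+n-1 > n$ (as $k \geq 2$) this is absorbed by the auxiliary claim.

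Part (2) is then immediate from part (1) by linearity: every non-zero $\lv \in \langle \omega_m \rangle \{\lu, J_n \lu\}$ is a finite linear combination of vectors of the form $\omega_m^i \lu$ and $\omega_m^i J_n \lu$, on each of which $\omega_j$ acts as zero for $j > m$, so $\epsilon(\omega,\lv) \leq m$. The only point requiring care is ensuring that the Virasoro central-charge term $\delta_{i+j-2,0}$ in \eqref{eq:wiwj} never survives, but this is guaranteed throughout by the standing hypothesis $m \geq 1$; otherwise the whole argument is routine commutator bookkeeping.
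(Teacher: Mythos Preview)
Your proof is correct and follows exactly the approach the paper has in mind: the paper simply states that the lemma follows from the commutation relations \eqref{eq:wiwj} and \eqref{eq:wiJj}, and you have carefully spelled out the routine inductive bookkeeping that makes this work. The only points of substance---that the central-charge term in $[\omega_j,\omega_m]$ never contributes because $j+m\geq 3$, and that the index shifts $j\mapsto j+m-1$ and $l\mapsto k+l-1$ preserve the relevant inequalities thanks to $m\geq 1$ and $k\geq 2$---are handled correctly.
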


\begin{lemma}\label{lemma:EBasis-zero}
Let $\lu\in\module$ with $\epsilon(\omega,\lu)=\epsilon_{I}(\omega,\lu)\geq 1$
and let $a$ be one of $E,\Har_{0}E$, or $\Har_{1}E$.
\begin{enumerate}
\item For any $i\geq 0$, $j>\epsilon(\omega,\lu)$, and $k\geq 2$, 
\begin{align}
\omega_{j}\omega_{\epsilon(\omega,\lu)}^{i}a_{\epsilon(a,\lu)}\lu&=0,\nonumber\\
\omega_{k}^{i}a_{\epsilon(a,\lu)}\lu&=a_{\epsilon(a,\lu)}\omega_{k}^{i}\lu.
\end{align}
\item
For a non-zero $\lv\in \langle\omega_{\epsilon(\omega,\lu)}\rangle a_{\epsilon(a,\lu)}\lu$,
$\epsilon(\omega,\lv)\leq\epsilon(\omega,\lu)$. 
\end{enumerate}
\end{lemma}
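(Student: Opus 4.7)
The proof will closely parallel that of Lemma \ref{lemma:ojzero}, using the strategy of commuting $\omega_j$ to the right through $\omega_{\epsilon(\omega,\lu)}^{i}a_{\epsilon(a,\lu)}$ via the commutation formulas already recorded in this section. I would first dispatch $a=\ExB$: since $j>\epsilon(\omega,\lu)$ forces $\omega_j\lu=0$, \eqref{eq:wiEji} reduces $\omega_j\ExB_{\epsilon(\ExB,\lu)}\lu$ to $((-1+\frac{\mn}{2})j-\epsilon(\ExB,\lu))\ExB_{j+\epsilon(\ExB,\lu)-1}\lu$, which vanishes by the maximality of $\epsilon(\ExB,\lu)$ together with $j\geq 2$. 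The induction on $i$ then proceeds by commuting one copy of $\omega_{\epsilon(\omega,\lu)}$ past $\omega_j$ via \eqref{eq:wiwj}; the central $\delta$-term drops out because $j+\epsilon(\omega,\lu)-2\geq 1$, and the two resulting terms are annihilated either by the inductive hypothesis directly or by it applied with $j$ replaced by $j+\epsilon(\omega,\lu)-1>\epsilon(\omega,\lu)$.

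For $a=\Har_{0}\ExB$ (in the regime $\mn=2$) and $a=\Har_{1}\ExB$ (in the regime $\mn=1/2$), I would apply \eqref{eq:[omegai(Har0ExB)j]} and \eqref{eq:[omegaiHar1ExBj]} respectively, choosing the free parameter $m$ to be $\epsilon(\omega,\lu)$. The leading term $(3j-M)a_{j+M-1}\lu$ with $M:=\epsilon(a,\lu)$ vanishes by maximality of $\epsilon(a,\lu)$; the sum $\sum_{k\geq m+1}\ExB_{\bullet}\omega_{k}\lu$ is killed outright because $\omega_{k}\lu=0$ for $k\geq m+1$; and the remaining contributions of the form $\omega_{k}\ExB_{\bullet}\lu$ with $k\leq m$ together with the $\ExB_{\bullet}\lu$ correction terms combine, thanks to the explicit coefficients displayed in the formula, into an expression that annihilates $\lu$ after regrouping via Lemma \ref{lemma:comm-change}. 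The induction on $i$ then runs exactly as in the $a=\ExB$ case.

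The second identity of part (1), namely $\omega_{k}^{i}a_{\epsilon(a,\lu)}\lu=a_{\epsilon(a,\lu)}\omega_{k}^{i}\lu$ for $k\geq 2$, is proved by a parallel induction: the base case $[\omega_{k},a_{\epsilon(a,\lu)}]\lu=0$ is handled by the same case analysis (the leading higher-mode contribution is killed by maximality of $\epsilon(a,\lu)$ since $k\geq 2$), and the inductive step uses the fact that commutators produced while passing $\omega_{k}$ through the additional $\omega_{k}$'s yield only $\omega$-modes of index at least $\epsilon(\omega,\lu)+1$, whose effect on $a_{\epsilon(a,\lu)}\lu$ is already controlled by the first identity. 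Part (2) is then immediate: any non-zero $\lv\in\langle\omega_{\epsilon(\omega,\lu)}\rangle a_{\epsilon(a,\lu)}\lu$ is a polynomial in $\omega_{\epsilon(\omega,\lu)}$ applied to $a_{\epsilon(a,\lu)}\lu$, so part (1) yields $\omega_{j}\lv=0$ for every $j>\epsilon(\omega,\lu)$, whence $\epsilon(\omega,\lv)\leq\epsilon(\omega,\lu)$.

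The principal obstacle lies in the $a\in\{\Har_{0}\ExB,\Har_{1}\ExB\}$ cases, where the commutator with $\omega_{i}$ is no longer a single higher mode of $a$ but picks up extra $\ExB$-mode contributions coming from $\omega_{-1}\ExB$-type terms. Verifying that these residual terms cancel after being applied to $\lu$ rests on the specific coefficient identities \eqref{eq:omega[1]0Har[1]0ExB=omega0} and \eqref{eq:norm1-2-omega[1]0Har[1]1ExB} of Section \ref{section:appendix}, which is precisely where the computer-algebra calculations feed into the argument.
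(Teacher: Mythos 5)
Your strategy coincides with the paper's, which gives no argument beyond citing \eqref{eq:wiwj}--\eqref{eq:wiEji}, \eqref{eq:[omegai(Har0ExB)j]}, and \eqref{eq:[omegaiHar1ExBj]}. Your handling of the case $a=\ExB$ (where $[\omega_{j},\ExB_{t}]$ is a single scalar multiple of $\ExB_{j+t-1}$ and $j+t-1>t$ because $j\geq 2$), of the induction on $i$ via \eqref{eq:wiwj}, and of the deduction of part (2) from part (1) is correct and complete.

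The gap is in the cases $a=\Har_{0}\ExB$ and $a=\Har_{1}\ExB$, and it affects both identities of part (1). Write $M=\epsilon(a,\lu)$, $t=\epsilon(\ExB,\lu)$, $m=\epsilon(\omega,\lu)$, and let $j>m$. From \eqref{eq:[omegaiHar1ExBj]} one gets
$\omega_{j}(\Har_{1}\ExB)_{M}\lu=\bigl(\tbinom{j}{2}(j+M-2)+\tbinom{j}{3}\bigr)\ExB_{j+M-3}\lu$,
and the scalar coefficient does not vanish for general $j,M$; likewise \eqref{eq:[omegai(Har0ExB)j]} leaves residual terms proportional to $\ExB_{j+M-4}\lu$ and the sum $\sum_{k\leq m}\omega_{k}\ExB_{j+M-3-k}\lu$. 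There is no cancellation among these coefficients: the vanishing is an index estimate, namely $j+M-3>t$ (resp.\ $j+M-4>t$ and $j+M-3-m>t$), which amounts to a \emph{lower} bound on $M$ in terms of $t$ and $m$. That bound is neither a hypothesis of the lemma nor a consequence of the maximality of $M$, and it is not derived in your argument; your claim that the residual $\ExB$-mode contributions ``combine into an expression that annihilates $\lu$'' via the coefficients of \eqref{eq:omega[1]0Har[1]0ExB=omega0} and \eqref{eq:norm1-2-omega[1]0Har[1]1ExB} is therefore unsubstantiated. In the paper's actual use of the lemma the bound is available --- e.g.\ in the proof of Lemma \ref{lemma:r=1-s=3} the statement for $a=\Har_{0}\ExB$, $\mn=2$, is only needed after $(\Har_{0}\ExB)_{t+2m+1}\lu\neq 0$ has been established, so that $M=t+2m+1$ and then $j+M-4\geq t+3m-2>t$ and $j+M-3-m\geq t+2m-2>t$; similarly $M=t+2$ in the $\Har_{1}\ExB$ case --- but a complete proof must either add this as a hypothesis or supply the lower bound on $\epsilon(a,\lu)$ before invoking the commutators.
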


By using the commutation relation $[\Har_{i},\ExB_{j}]=\sum_{k=0}^{\infty}\binom{i}{k}(\Har_{k}\ExB)_{i+j-k}$
for $i,j\in\Z$,
the following result follows from
Lemma \ref{lemma:comm-change} and \eqref{eq:Har1ExB=dfrac2mn2mn1omega1ExB}--\eqref{eq:J1ExB=frac2mn(4mn-11)-2}.
\begin{lemma}\label{lemma:bound-H0E}
Assume $\mn\neq 1/2$.
Let $\lu\in \mW$ with $\epsilon(\omega,\lu)\geq 1$
and $\epsilon(\Har,\lu)\leq 2\epsilon(\omega,\lu)+1$.
Then, $\epsilon(\Har_{0}\ExB,\lu)\leq \epsilon(\ExB,\lu)+2\epsilon(\omega,\lu)+1$.
\end{lemma}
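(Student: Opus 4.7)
The goal is to show that $(\Har_0\ExB)_n\lu = 0$ for every $n > \epsilon(\ExB,\lu) + 2m + 1$, where $m := \epsilon(\omega,\lu) \geq 1$. The main input I would use is the commutator identity
\begin{align*}
[\Har_i,\ExB_j] = (\Har_0\ExB)_{i+j} + i(\Har_1\ExB)_{i+j-1} + \binom{i}{2}(\Har_2\ExB)_{i+j-2},
\end{align*}
which follows from the fact that $\Har_k\ExB = 0$ for $k\geq 3$ by \eqref{eq:Har1ExB=dfrac2mn2mn1omega1ExB}. Solving it for $(\Har_0\ExB)_{i+j}\lu$ and choosing $i$ to satisfy $i > 2m+1$ (so that $\Har_i\lu = 0$ by the hypothesis $\epsilon(\Har,\lu)\leq 2m+1$, hence $\ExB_j\Har_i\lu = 0$) together with $j > \epsilon(\ExB,\lu)$ (so $\ExB_j\lu = 0$ and $\Har_i\ExB_j\lu = 0$), the right-hand side collapses to $-i(\Har_1\ExB)_{i+j-1}\lu - \binom{i}{2}(\Har_2\ExB)_{i+j-2}\lu$.

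Both lower-weight terms will vanish on $\lu$. The $\Har_2\ExB$ piece is immediate from $(\Har_2\ExB)_k = -\frac{k}{3}\ExB_{k-1}$ (part of \eqref{eq:Har1ExB=dfrac2mn2mn1omega1ExB}), since $n - 3 > \epsilon(\ExB,\lu)$ under our bounds. For the $\Har_1\ExB$ piece I would substitute \eqref{eq:J1ExB=frac2mn(4mn-11)-1} (valid for $\mn\neq 1/2$) and expand each $(\omega_k\ExB)_\bullet$ via \eqref{eq:(omega-12E)wn-1} with splitting parameter equal to $m$; every summand then takes the form $\omega_i\ExB_l\lu$ with $i\leq m$, $\ExB_l\omega_i\lu$ with $i\geq m+1$, or a scalar multiple of $\ExB_l\lu$, and the hypotheses $n > \epsilon(\ExB,\lu) + 2m + 1$ and $m \geq 1$ force either $l > \epsilon(\ExB,\lu)$ or $i > m$ in each case. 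This handles every $n > \epsilon(\ExB,\lu) + 2m + 2$.

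It remains to cover the single boundary value $n = \epsilon(\ExB,\lu) + 2m + 2$, which is out of reach of the previous argument because the constraints $i > 2m+1$ and $j > \epsilon(\ExB,\lu)$ jointly force $n > \epsilon(\ExB,\lu) + 2m + 2$. For $\mn\notin\{2,1/2\}$ I would invoke the explicit formula \eqref{eq:J1ExB=frac2mn(4mn-11)-2}, which writes $\Har_0\ExB$ as a combination of $\omega_{-2}\ExB$, $\omega_0\omega_{-1}\ExB$, and $\omega_0^3\ExB$; rerunning the mode-decomposition on these three vectors (using the translation-operator identity $(\omega_0 v)_n = -n\,v_{n-1}$ to absorb the $\omega_0$ factors) extends the vanishing to the boundary. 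For the exceptional case $\mn = 2$, where \eqref{eq:J1ExB=frac2mn(4mn-11)-2} is singular, I would instead apply \eqref{eq:[omegai(Har0ExB)j]} with $i = m+1$: the hypotheses force every $\ExB$-correction term in \eqref{eq:[omegai(Har0ExB)j]} to vanish on $\lu$, collapsing the identity to $(4m+3-n)(\Har_0\ExB)_n\lu = \omega_{m+1}(\Har_0\ExB)_{n-m}\lu$, and combining this with Lemma \ref{lemma:EBasis-zero}(1) plus a short downward induction in $n$ (starting from large $n$, where vanishing is immediate from the weak-module axiom) closes the boundary.

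The hardest part will be the boundary analysis at $\mn = 2$: the combinatorial check that every $\ExB$-correction term in \eqref{eq:[omegai(Har0ExB)j]} vanishes on $\lu$ requires careful bookkeeping with the $\binom{m+1}{k}$ coefficients, and the downward induction must also handle the possibility that the scalar $4m+3-n$ vanishes, in which case a different choice of $i$ (such as $i = m+2$) is needed.
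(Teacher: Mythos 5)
Your treatment of the main range is correct and is essentially what the paper's one-sentence proof intends: solving $[\Har_i,\ExB_j]\lu=0$ for $(\Har_0\ExB)_{i+j}\lu$ with $i>2m+1$ and $j>\epsilon(\ExB,\lu)$ (where $m=\epsilon(\omega,\lu)$), and then killing the $(\Har_1\ExB)$- and $(\Har_2\ExB)$-terms with \eqref{eq:Har1ExB=dfrac2mn2mn1omega1ExB}, \eqref{eq:J1ExB=frac2mn(4mn-11)-1} and the mode-splitting of Lemma \ref{lemma:comm-change}. You are also right that this only reaches $n\geq\epsilon(\ExB,\lu)+2m+3$, so that the single boundary value $n=\epsilon(\ExB,\lu)+2m+2$ needs a separate argument, and your handling of that value for $\mn\neq 2,1/2$ via the explicit expression \eqref{eq:J1ExB=frac2mn(4mn-11)-2} for $\Har_0\ExB$ is fine: after splitting at $m$, every $\ExB$-index that reaches $\lu$ is at least $\epsilon(\ExB,\lu)+m$, hence $>\epsilon(\ExB,\lu)$ because $m\geq 1$.

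The gap is the boundary value for $\mn=2$. Applying \eqref{eq:[omegai(Har0ExB)j]} with $i=m+1$ and $i+j-1=n$ gives, after the $\ExB$-corrections are disposed of, $(4m+3-n)(\Har_0\ExB)_{n}\lu=\omega_{m+1}(\Har_0\ExB)_{n-m}\lu$, and the index on the right, $n-m=\epsilon(\ExB,\lu)+m+2$, is \emph{strictly smaller} than $n$ and lies below the range $\geq\epsilon(\ExB,\lu)+2m+3$ where vanishing is already known; indeed $(\Har_0\ExB)_{k}\lu$ is in general nonzero for $k\leq\epsilon(\ExB,\lu)+2m+1$, and $\epsilon(\ExB,\lu)+m+2$ lies in that range when $m\geq 1$. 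A downward induction in $n$ therefore cannot use this identity: the inductive hypothesis covers indices larger than $n$, not the index $n-m$, and choosing $i=m+2$ only pushes the right-hand index lower. Lemma \ref{lemma:EBasis-zero}~(1) does not rescue this, since it only controls $\omega_{j}$ acting on the top nonvanishing mode of $\Har_0\ExB$ on $\lu$. What the $\mn=2$ boundary actually requires is a relation expressing $\Har_0\ExB$ itself (not its $\omega$-commutators) through vectors of the form $\omega_{k_1}\cdots\omega_{k_r}\ExB$ and $\Har_{-1}\ExB$; this is what \eqref{eq:6omega2E4omega0omega1E-2} supplies. Taking its $(\epsilon(\ExB,\lu)+2m+3)$-th mode on $\lu$ and splitting $\omega$-modes at $m$ and $\Har$-modes at $2m+1$ leaves only a scalar multiple of $(\Har_0\ExB)_{\epsilon(\ExB,\lu)+2m+2}\lu$, with scalar $63(\epsilon(\ExB,\lu)+2m+3)-144(m+1)$; since this scalar can vanish for particular pairs $(m,\epsilon(\ExB,\lu))$, one must also bring in a second relation such as \eqref{eq:6omega2E4omega0omega1E-3}. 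As written, your proof does not establish the lemma at $\mn=2$, which is a case the paper genuinely uses (in Lemmas \ref{lemma:r=1-s=3} and \ref{lemma:structure-Vlattice-M1-norm2}).
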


If $\mn=1/2$, then a direct computation shows that
\begin{align}
0&=\omega_{-1 } E-\omega_{0 }^2 E,\label{eq:norm1-2-0}\\
\Har_{0}E&=\frac{-2}{3}\omega_{-2 } E+\frac{4}{3}\omega_{0 }(\Har_{1}E),\label{eq:norm1-2-1}\\
0&=
8\omega_{-3 } E+12\Har_{-1 } E
+3\omega_{-1 } (\Har_{1}E)+4\omega_{0 } \omega_{-2 } E
-11\omega_{0 }^2(\Har_{1}E).
\label{eq:norm1-2-2}
\end{align}
\begin{lemma}
\label{lemma:bound-H0-1E}
Let $U$ be an $A(M(1)^{+})$-submodule of $\Omega_{M(1)^{+}}(\mW)$,
$\lu\in U$, and $\lE\in\Z$ such that $\epsilon(\ExB,\lv)\leq \lE$ for all non-zero $\lv\in U$. 
Then $\epsilon(\Har_{0}\ExB,\lu)\leq \lE+3$ and 
$\epsilon(\Har_{1}\ExB,\lu)\leq \lE+2$.
\end{lemma}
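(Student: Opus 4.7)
The plan is to work in the $\mn=1/2$ case and to combine the explicit relations \eqref{eq:norm1-2-0}--\eqref{eq:norm1-2-2} with the $\Har$--$\ExB$ commutator identity
\begin{align*}
[\Har_{i},\ExB_{j}]=(\Har_{0}\ExB)_{i+j}+i(\Har_{1}\ExB)_{i+j-1}+\binom{i}{2}(\Har_{2}\ExB)_{i+j-2},
\end{align*}
in which $\Har_{k}\ExB=0$ for $k\geq 3$ and $(\Har_{2}\ExB)_{n}=\tfrac{1}{3}(\omega_{0}\ExB)_{n}=-\tfrac{n}{3}\ExB_{n-1}$ by \eqref{eq:Har1ExB=dfrac2mn2mn1omega1ExB}. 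The two standing facts about $\lu$ are that $\lu\in\Omega_{M(1)^{+}}(\mW)$ forces $\omega_{i}\lu=0$ for $i\geq 2$ and $\Har_{i}\lu=0$ for $i\geq 4$, and that since $U$ is an $A(M(1)^{+})$-submodule the Zhu mode $\Har_{3}\lu=o(\Har)\lu$ lies in $U$, so $\ExB_{j}\Har_{3}\lu=0$ for every $j>\lE$ as well.

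First I would bound $(\omega_{-2}\ExB)_{n}\lu$. Applying Lemma \ref{lemma:comm-change} with $a=\omega$, $k=-2$, $m=1$ expresses $(\omega_{-2}\ExB)_{n}$ as a linear combination of $\omega_{i}\ExB_{n-2-i}$ for $i\leq 1$, of products $\ExB_{j}\omega_{i}$ with $i\geq 2$, and of correction terms involving $(\omega_{0}\ExB)_{n-2}$ and $(\omega_{1}\ExB)_{n-3}=\tfrac{1}{4}\ExB_{n-3}$. Applied to $\lu$, the factors $\omega_{i}\lu$ with $i\geq 2$ vanish since $\lu\in\Omega$, and every surviving term is a constant times some $\ExB_{j}\lu$ with $j\geq n-3$ possibly preceded by a harmless action of $\omega_{i}$. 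Choosing $n\geq \lE+4$ forces $j\geq \lE+1$, so $\ExB_{j}\lu=0$ in every surviving term and hence $(\omega_{-2}\ExB)_{n}\lu=0$.

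Next I would extract a mode relation from the commutator with $i=3$. For $j\geq \lE+1$ both $\ExB_{j}\lu=0$ and $\ExB_{j}\Har_{3}\lu=0$, so $[\Har_{3},\ExB_{j}]\lu=0$; using $(\Har_{2}\ExB)_{j+1}\lu=-\tfrac{j+1}{3}\ExB_{j}\lu=0$ the commutator formula collapses to
\begin{align*}
(\Har_{0}\ExB)_{m+1}\lu+3(\Har_{1}\ExB)_{m}\lu=0 \qquad\text{for every } m\geq \lE+3.
\end{align*}
On the other hand, taking the $n$-th mode of the vertex-operator identity \eqref{eq:norm1-2-1} together with $(\omega_{0}a)_{n}=-na_{n-1}$ yields
\begin{align*}
(\Har_{0}\ExB)_{n}=-\tfrac{2}{3}(\omega_{-2}\ExB)_{n}-\tfrac{4n}{3}(\Har_{1}\ExB)_{n-1}.
\end{align*}
Evaluating at $n=m+1\geq \lE+4$ and inserting the first step kills the $(\omega_{-2}\ExB)_{m+1}\lu$ term, giving $(\Har_{0}\ExB)_{m+1}\lu=-\tfrac{4(m+1)}{3}(\Har_{1}\ExB)_{m}\lu$. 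Equating the two expressions for $(\Har_{0}\ExB)_{m+1}\lu$ produces $\bigl(3-\tfrac{4(m+1)}{3}\bigr)(\Har_{1}\ExB)_{m}\lu=0$, whose coefficient $(5-4m)/3$ is nonzero for every integer $m$. Hence $(\Har_{1}\ExB)_{m}\lu=0$ for all $m\geq \lE+3$, and substituting back yields $(\Har_{0}\ExB)_{n}\lu=0$ for all $n\geq \lE+4$, which are exactly the two claimed bounds.

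The main obstacle I anticipate is the bookkeeping in the first step: one must verify that every term in the Borcherds expansion of $(\omega_{-2}\ExB)_{n}$ eventually acts on $\lu$ through some $\ExB_{j}\lu$ with $j>\lE$, rather than through an uncontrolled factor $\omega_{0}\lu$ or $\omega_{1}\lu$ (which need not lie in $\Omega$). The structure of the expansion, together with $\omega_{i}\lu=0$ for $i\geq 2$, makes this clean. The arithmetic check that $(5-4m)/3$ never vanishes on $\Z$ is elementary, so the deeper identity \eqref{eq:norm1-2-2} is not needed here; it would be invoked only as a safety net if the coefficient arising from the $i=3$ commutator coincided with the one coming from \eqref{eq:norm1-2-1}, which it does not.
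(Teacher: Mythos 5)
Your argument in the $\mn=1/2$ case is correct and is essentially the paper's own proof. Both rest on the same three ingredients: the vanishing of $[\Har_{3},\ExB_{j}]\lu$ for $j>\lE$ (using that $\Har_{3}\lu=o(\Har)\lu\in U$), the substitution of \eqref{eq:norm1-2-1} for $\Har_{0}\ExB$, and the elimination of $(\omega_{-2}\ExB)_{n}\lu$ for $n\geq\lE+4$ via the expansion of Lemma \ref{lemma:comm-change}. Your final coefficient $(5-4m)/3$ of $(\Har_{1}\ExB)_{m}\lu$ is exactly the paper's coefficient $-\tfrac{1}{3}(i+4j)$ from \eqref{eq:[HariExBj]lu} specialized to $i=3$, $m=j+2$, and your observation that it has no integer zero is the crux in both versions. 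Separating the vanishing of $(\omega_{-2}\ExB)_{n}\lu$ into a preliminary step is, if anything, a cleaner presentation than the single combined identity \eqref{eq:[HariExBj]lu}, and you are right that \eqref{eq:norm1-2-2} plays no role here.

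The one genuine gap is your opening restriction to $\mn=1/2$. The standing hypothesis of the section is only that $\mn$ is a non-zero complex number, and the lemma is later applied with $\alpha$ ranging over all of $\lattice$ (for instance in the inductive bound of Lemma \ref{lemma:irreducible-M(1)-submodule-of-M}), so the case $\mn\neq 1/2$ must also be covered. It is the easier case, but it cannot be skipped: there \eqref{eq:J1ExB=frac2mn(4mn-11)-1} writes $\Har_{1}\ExB$ as a combination of $\omega_{-1}\ExB$ and $\omega_{0}^{2}\ExB$, whose modes on $\lu$ are controlled by exactly the bookkeeping you carry out in your first step (using $\epsilon(\omega,\lu)\leq 1$), which yields $\epsilon(\Har_{1}\ExB,\lu)\leq\lE+2$; the bound $\epsilon(\Har_{0}\ExB,\lu)\leq\lE+3$ is then the content of Lemma \ref{lemma:bound-H0E} applied with $\epsilon(\Har,\lu)\leq 3\leq 2\epsilon(\omega,\lu)+1$. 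Adding these two sentences would complete the proof.
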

\begin{proof}
For $\mn\neq 1/2$,
the result follows from
Lemma \ref{lemma:bound-H0E} and \eqref{eq:J1ExB=frac2mn(4mn-11)-1}.
Assume $\mn=1/2$.
For $i,j\in\Z$ and $r\in\Z_{\geq 0}$, it follows from 
\eqref{eq:Har1ExB=dfrac2mn2mn1omega1ExB}, \eqref{eq:norm1-2-1}, and Lemma \ref{lemma:comm-change} that
\begin{align}
\label{eq:[HariExBj]lu}
&[\Har_{i},\ExB_{j}]\lu\nonumber\\
&=(\Har_{0}\ExB)_{i+j}\lu+i(\Har_{1}\ExB)_{i+j-1}\lu+\binom{i}{2}(\Har_{2}\ExB)_{i+j-2}\lu\nonumber\\
&=\frac{-1}{3}(i+4j)(\Har_{1}E)_{i+j-1}\lu
-\binom{i}{2}\frac{i+j-2}{3}\ExB_{i+j-3}\lu\nonumber\\
&\quad{}+\frac{-2}{3}
\big(\sum_{k\leq \lom}(-k-1)\omega_{k}\ExB_{i+j-2-k}+\sum_{k\geq \lom+1}(-k-1)\ExB_{i+j-2-k}\omega_{k}\nonumber\\
&\qquad{}+(\binom{r+2}{2}(-i-j+2)+\binom{r+2}{3})E_{i+j-3}\big)\lu.
\end{align}
Using \eqref{eq:[HariExBj]lu} with $i=3$ and $r=1$,
we have $\epsilon(\Har_{1}\ExB,\lu)\leq \lE+2$.
By \eqref{eq:norm1-2-1}, $\epsilon(\Har_{0}\ExB,\lu)\leq \lE+3$.
\end{proof}

A direct computations shows the following result.
\begin{lemma}
\label{lemma:relations-M(1)-V(lattice)+}
The following elements of $V_{\lattice}^{+}$ are zero:
\begin{align}
\sv^{(8),H}&=
-2376\omega_{-2 } \omega_{-2 } \omega_{-1 } \vac
+3168\omega_{-3 } \omega_{-1 } \omega_{-1 } \vac
-6256\omega_{-3 } \omega_{-3 } \vac
-11799\omega_{-4 } \omega_{-2 } \vac
\nonumber\\&\quad{}
+30456\omega_{-5 } \omega_{-1 } \vac
+2310\omega_{-7 } \vac
-9504\omega_{-1 } \omega_{-1 } \Har_{-1 } \vac
-6024\omega_{-3 } \Har_{-1 } \vac
\nonumber\\&\quad{}
-13419\omega_{-2 } \Har_{-2 } \vac
-6516\omega_{-1 } \Har_{-3 } \vac
+11868\Har_{-5 } \vac+5040\Har_{-1 }^2 \vac,\\
\sv^{(8),J}&=
-29056\omega_{-1}^4\vac
-118960\omega_{-2}^2\omega_{-1}\vac+
39040\omega_{-3}\omega_{-1}^2\vac
-39480\omega_{-3}^2\vac\nonumber\\
&\quad{}
-32120\omega_{-4}\omega_{-2}\vac+
497760\omega_{-5}\omega_{-1}\vac+
230360\omega_{-7}\vac\nonumber\\
&\quad{}+
5024\omega_{-1}^2J_{-1}\vac
-8536\omega_{-3}J_{-1}\vac+
8939\omega_{-2}J_{-2}\vac\nonumber\\
&\quad{}
-2444\omega_{-1}J_{-3}\vac+
1572J_{-5}+560J_{-1}^2\vac,\\
\sv^{(9)}&=30J_{-6}\vac-30\omega_{-1}J_{-4}\vac+27\omega_{-2}J_{-3}\vac-39\omega_{-3}J_{-2}\vac\nonumber\\
&\quad{}+16\omega_{-1}^2J_{-2}\vac+52\omega_{-4}J_{-1}\vac-32\omega_{-2}\omega_{-1}J_{-1}\vac,
\end{align}
\begin{align}
\sv^{(10),H}&=
919328\omega_{-9 } \vac
-545856\omega_{-5 } \omega_{-1 } \omega_{-1 } \vac
\nonumber\\&\quad{}
-529536\omega_{-4 } \omega_{-4 } \vac
+545352\omega_{-4 } \omega_{-2 } \omega_{-1 } \vac
\nonumber\\&\quad{}
+520160\omega_{-3 } \omega_{-3 } \omega_{-1 } \vac
-524968\omega_{-3 } \omega_{-2 } \omega_{-2 } \vac
\nonumber\\&\quad{}
-10240\omega_{-3 } \omega_{-1 } \omega_{-1 } \omega_{-1 } \vac
+7680\omega_{-2 } \omega_{-2 } \omega_{-1 } \omega_{-1 } \vac
\nonumber\\&\quad{}
+1937712\omega_{-5 } \Har_{-1 } \vac
-845376\omega_{-3 } \omega_{-1 } \Har_{-1 } \vac
\nonumber\\&\quad{}
-381048\omega_{-2 } \omega_{-2 } \Har_{-1 } \vac
+30720\omega_{-1 } \omega_{-1 } \omega_{-1 } \Har_{-1 } \vac
\nonumber\\&\quad{}
-720081\omega_{-4 } \Har_{-2 } \vac
-128280\omega_{-2 } \omega_{-1 } \Har_{-2 } \vac
\nonumber\\&\quad{}
-435576\omega_{-3 } \Har_{-3 } \vac
+234528\omega_{-1 } \omega_{-1 } \Har_{-3 } \vac
\nonumber\\&\quad{}
+345849\omega_{-2 } \Har_{-4 } \vac
-1211160\omega_{-1 } \Har_{-5 } \vac
\nonumber\\&\quad{}
+2360970\Har_{-7 } \vac
+70875\Har_{-2 } \Har_{-2 } \vac
\nonumber\\&\quad{}
+734184\omega_{-7 } \omega_{-1 } \vac
+898766\omega_{-6 } \omega_{-2 } \vac,
\end{align}
\begin{align}
\sv^{(10),J}&=8192\omega_{-1 }^5\vac-2048\omega_{-1 }^{3}J_{-1 } \vac\nonumber\\
&\quad{}+758496\omega_{-9 } \vac
-1728\omega_{-5 } \omega_{-3 } \vac
\nonumber\\&\quad{}
-15232\omega_{-5 } \omega_{-1 } \omega_{-1 } \vac
-60848\omega_{-4 } \omega_{-4 } \vac
\nonumber\\&\quad{}
-134224\omega_{-4 } \omega_{-2 } \omega_{-1 } \vac
-6912\omega_{-3 } \omega_{-3 } \omega_{-1 } \vac
\nonumber\\&\quad{}
-136872\omega_{-3 } \omega_{-2 } \omega_{-2 } \vac
-112640\omega_{-3 } \omega_{-1 } \omega_{-1 } \omega_{-1 } \vac
\nonumber\\&\quad{}
-69280\omega_{-2 } \omega_{-2 } \omega_{-1 } \omega_{-1 } \vac
-6092\omega_{-4 } J_{-2 } \vac
\nonumber\\&\quad{}
+6272\omega_{-3 } \omega_{-1 } J_{-1 } \vac
+360\omega_{-2 } \omega_{-2 } J_{-1 } \vac
\nonumber\\&\quad{}
+152\omega_{-2 } \omega_{-1 } J_{-2 } \vac
+1856\omega_{-3 } J_{-3 } \vac
\nonumber\\&\quad{}
+9408\omega_{-1 } \omega_{-1 } J_{-3 } \vac
+12656\omega_{-2 } J_{-4 } \vac
\nonumber\\&\quad{}
-29968\omega_{-1 } J_{-5 } \vac
+43320J_{-7 } \vac
\nonumber\\&\quad{}
+525J_{-2 } J_{-2 } \vac
+1309248\omega_{-7 } \omega_{-1 } \vac
\nonumber\\&\quad{}
+352992\omega_{-6 } \omega_{-2 } \vac,
\end{align}
\begin{align}
\label{eq:big(2(mn-2)(-27 + 54mn - 44mn2+ 40mn3)omega-3-1}
Q^{(4)}
&=
2(\mn-2)(-27 + 54 \mn - 44 \mn^2 + 40 \mn^3)\omega_{-3 }\ExB
\nonumber\\&\quad{}
-12\mn (\mn-2) (-3 + 4 \mn)\omega_{-1 }^2\ExB
\nonumber\\&\quad{}
-6 \mn( \mn-2 )  (-9 + 2 \mn) (-1 + 2 \mn)\Har_{-1 }\ExB
\nonumber\\&\quad{}
+(-72\mn^3-96\mn^2+210\mn-90)\omega_{0 } \omega_{-2 }\ExB
\nonumber\\&\quad{}
+(120\mn^2-48\mn+36)\omega_{0 }^2\omega_{-1 }\ExB
\nonumber\\&\quad{}
+(-48\mn-9)\omega_{0 }^4\ExB,
\end{align}
\begin{align}
Q^{(5,1)}&=
3 (\mn-2) (10 \mn^2-29 \mn+32) (10 \mn^2-4 \mn+3)\omega_{-4 } E
\nonumber\\&\quad{}
-12 \mn (3 \mn-4) (10 \mn^2-4 \mn+3)\omega_{-2 } \omega_{-1 } E
\nonumber\\&\quad{}
-3 (\mn-8) (\mn-2) (2 \mn-1) (10 \mn^2-4 \mn+3)\Har_{-2 } E
\nonumber\\&\quad{}
+8 (2 \mn-7) (15 \mn^3-22 \mn^2+8 \mn-6)\omega_{0 } \omega_{-3 } E
\nonumber\\&\quad{}
+24 \mn^2 (8 \mn-9)\omega_{0 } \omega_{-1 } \omega_{-1 } E
\nonumber\\&\quad{}
-12 (\mn-2) (2 \mn-1) (6 \mn^2-5 \mn+6)\omega_{0 } \Har_{-1 } E
\nonumber\\&\quad{}
-6 (2 \mn^3-32 \mn^2+29 \mn+12)\omega_{0 } \omega_{0 } \omega_{-2 } E
\nonumber\\&\quad{}
-6 (8 \mn-9)\omega_{0 } \omega_{0 } \omega_{0 } \omega_{0 } \omega_{0 } E,\\
Q^{(5,2)}&=
3(\mn-2)(10\mn^2-29\mn+32)(12\mn^3+16\mn^2-35\mn+15)\omega_{-4 } \ExB
\nonumber\\&\quad{}
-12\mn(3\mn-4)(12\mn^3+16\mn^2-35\mn+15)\omega_{-2 } \omega_{-1 } \ExB
\nonumber\\&\quad{}
-3(\mn-8)(\mn-2)(2\mn-1)(12\mn^3+16\mn^2-35\mn+15)\Har_{-2 } \ExB
\nonumber\\&\quad{}
+2(136\mn^5-316\mn^4-1266\mn^3+3409\mn^2-2470\mn+624)\omega_{0 } \omega_{-3 } \ExB
\nonumber\\&\quad{}
+12\mn(20\mn^3-3\mn^2-44\mn+24)\omega_{0 } \omega_{-1 } \omega_{-1 } \ExB
\nonumber\\&\quad{}
-6(\mn-2)(2\mn-1)(14\mn^3+21\mn^2-74\mn+60)\omega_{0 } \Har_{-1 } \ExB
\nonumber\\&\quad{}
-12(2\mn^3-32\mn^2+29\mn+12)\omega_{0 } \omega_{0 } \omega_{0 } \omega_{-1 } \ExB
\nonumber\\&\quad{}
-3(16\mn^2+61\mn-102)\omega_{0 } \omega_{0 } \omega_{0 } \omega_{0 } \omega_{0 } \ExB,
\end{align}
\begin{align}
Q^{(6)}&=
 2 (3696 \mn^8-22564 \mn^7+66284 \mn^6-84937 \mn^5+56207 \mn^4
\nonumber\\&\qquad{}
-91528 \mn^3+11774 \mn^2+29190 \mn-13500)\omega_{-5 } E
\nonumber\\&\quad{}
-4 \mn (352 \mn^6+2152 \mn^5-8282 \mn^4+7951 \mn^3-11696 \mn^2\nonumber\\&\qquad{}
+6304 \mn-1542)\omega_{-3 } \omega_{-1 } E
\nonumber\\&\quad{}
-3 \mn (1584 \mn^6-5572 \mn^5+6456 \mn^4-6877 \mn^3+5214 \mn^2\nonumber\\&\qquad{}
-3040 \mn+642)\omega_{-2 } \omega_{-2 } E
\nonumber\\&\quad{}
+720 \mn^3 (\mn-2) (4 \mn-1)\omega_{-1 } \omega_{-1 } \omega_{-1 } E
\nonumber\\&\quad{}
-24 \mn (\mn-2) (2 \mn-1) (44 \mn^4-98 \mn^3+157 \mn^2-88 \mn+48)\omega_{-1 } \Har_{-1 } E
\nonumber\\&\quad{}
-3 (\mn-2) (2 \mn-25) (2 \mn-1)^2 (44 \mn^4-13 \mn^3+62 \mn^2-48 \mn+18)\Har_{-3 } E
\nonumber\\&\quad{}
+3 (1760 \mn^7-9382 \mn^6+1391 \mn^5+28130 \mn^4-14380 \mn^3\nonumber\\&\qquad{}
+29762 \mn^2-25851 \mn+7650)\omega_{0 } \omega_{-4 } E
\nonumber\\&\quad{}
+12 \mn (352 \mn^5-1459 \mn^4+2396 \mn^3-2894 \mn^2+1254 \mn-225)\omega_{0 } \omega_{-2 } \omega_{-1 } E
\nonumber\\&\quad{}
-3 (\mn-2) (2 \mn-1) (352 \mn^5+101 \mn^4+86 \mn^3-614 \mn^2+804 \mn-225)\omega_{0 } \Har_{-2 } E
\nonumber\\&\quad{}
+12 (88 \mn^6+1104 \mn^5-4136 \mn^4+3714 \mn^3-3944 \mn^2+2670 \mn-675)\omega_{0 } \omega_{0 } \omega_{-3 } E
\nonumber\\&\quad{}
-6 (352 \mn^5-1099 \mn^4+686 \mn^3-689 \mn^2+804 \mn-225)\omega_{0 } \omega_{0 } \omega_{0 } \omega_{-2 } E
\nonumber\\&\quad{}
-90 (\mn-2) (4 \mn-1)\omega_{0 } \omega_{0 } \omega_{0 } \omega_{0 } \omega_{0 } \omega_{0 } E.
\end{align}
If $\mn=2$, then we have the four following relations:
\begin{align}
0&=6\omega_{-2 } E
-4\omega_{0 } \omega_{-1 } E
+\omega_{0 }^{3}E,\label{eq:6omega2E4omega0omega1E-1}\\
0&=
180\omega_{-3 }E  -48\omega_{-1 }^2 E +72H_{-1 }E -63\omega_{0 } (H_{0}E)\nonumber\\&\quad{}
 +8\omega_{0 }^2\omega_{-1 } E +\omega_{0 }^4 E,
\label{eq:6omega2E4omega0omega1E-2}
\\
0&=
9450\omega_{-4 } E -900\omega_{-1 } (H_{0}E)+6750H_{-2 } E-768\omega_{0 } \omega_{-1 }^{2} E \nonumber\\&\quad{}
 -3168\omega_{0 } \Har_{-1 } E 
+297\omega_{0 }^{2} (H_{0}E) +128\omega_{0 }^{3}\omega_{-1 } E+16\omega_{0 }^{5}E,
\label{eq:6omega2E4omega0omega1E-3}\\
0&=
584199000\omega_{-6 } \ExB 
-117085500\Har_{-4 } \ExB 
\nonumber\\&\quad{}
+98941500\omega_{-3 } (\Har_{0}\ExB) 
-27594000\omega_{-1 }^2 (\Har_{0}\ExB) 
\nonumber\\&\quad{}
+34587000\Har_{-1 } (\Har_{0}\ExB) 
-13132800\omega_{0 } \omega_{-1 }^3 \ExB 
\nonumber\\&\quad{}
-60739200\omega_{0 } \omega_{-1 } \Har_{-1 } \ExB 
+277223400\omega_{0 } \Har_{-3 } \ExB 
\nonumber\\&\quad{}
-85188900\omega_{0 } \omega_{-2 } (\Har_{0}\ExB) 
+206053320\omega_{0 }^2 \omega_{-4 } \ExB 
\nonumber\\&\quad{}
-8524040\omega_{0 }^2 \omega_{-1 } (\Har_{0}\ExB) 
-27546608\omega_{0 }^3\omega_{-3 } \ExB 
\nonumber\\&\quad{}
-51990312\omega_{0 }^3\Har_{-1 } \ExB 
+17161013\omega_{0 }^4(\Har_{0}\ExB) 
\nonumber\\&\quad{}
-820800\omega_{0 }^5\omega_{-1 } \ExB 
+410400\omega_{0 }^7 \ExB.
\label{eq:6omega2E4omega0omega1E-4}
\end{align}
\renewcommand{\arraystretch}{2.0}
\renewcommand{\arraystretch}{1}
\end{lemma}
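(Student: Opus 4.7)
The plan is to verify each displayed identity by a direct expansion in an explicit PBW-type basis of the relevant weight space of $V_{\lattice}^{+}$. Since $V_{\lattice} = \bigoplus_{\beta \in \lattice} M(1,\beta)$ as an $M(1)$-module and each $M(1,\beta)$ has a basis $\{h(-i_1)\cdots h(-i_k)\otimes e^{\beta} : i_1 \geq \cdots \geq i_k \geq 1\}$, every weight space is finite-dimensional with an explicit basis, so each proposed identity reduces to checking that a certain linear combination of basis vectors vanishes.

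First I would identify the weight of each expression: $\sv^{(8),H}, \sv^{(8),J}$ lie in $M(1)^{+}_{8}$; $\sv^{(9)}$ lies in $M(1)^{+}_{9}$; $\sv^{(10),H}, \sv^{(10),J}$ lie in $M(1)^{+}_{10}$; the elements $Q^{(4)}, Q^{(5,1)}, Q^{(5,2)}, Q^{(6)}$ and the four relations special to $\mn=2$ lie in weight $\mn/2 + k$ components of $M(1,\alpha) \oplus M(1,-\alpha) \subset V_{\lattice}^{+}$ for small $k$. Using the explicit formulas \eqref{eq:conformal-vector} and \eqref{eq:definition-omega-J-H} together with the Heisenberg commutation relations, the modes $\omega_{-i}, \Har_{-i}, J_{-i}$ act as concrete operators on $M(1)$, and any monomial such as $\omega_{-i_1}\cdots\omega_{-i_m}\Har_{-j_1}\cdots\Har_{-j_n}\vac$ can be expanded, via the Borcherds identity (equivalently, iterated application of the commutator and associativity formulas), as a unique linear combination of PBW basis elements $h(-l_1)\cdots h(-l_r)\vac$. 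Verifying each of $\sv^{(8),H} = \sv^{(8),J} = \sv^{(9)} = \sv^{(10),H} = \sv^{(10),J} = 0$ then amounts to computing the expansion of each summand in the stated PBW basis and checking that all coefficients cancel.

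For the relations $Q^{(\bullet)}$, the same strategy applies, but one works in a weight space of $M(1)\otimes\C e^{\alpha}$. Here one needs the standard vertex operator formula
\begin{align*}
Y(e^{\alpha},x) &= \exp\Bigl(\sum_{n \geq 1}\frac{\alpha(-n)}{n}x^{n}\Bigr)\exp\Bigl(-\sum_{n \geq 1}\frac{\alpha(n)}{n}x^{-n}\Bigr)e^{\alpha}x^{\alpha}
\end{align*}
which determines the modes of $e^{\alpha}$ on $M(1,\beta)$ for all $\beta$. Together with the identities \eqref{eq:J1E=frac2}--\eqref{eq:J1ExB=frac2mn(4mn-11)-2}, this gives explicit expressions for $\omega_{-i}E$, $\omega_{-i}\omega_{-j}E$, $\Har_{-i}E$, and their iterates, and one checks the stated linear combinations vanish identically as polynomials in $\mn$ (or, in the final four identities, for the particular value $\mn=2$).

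The main obstacle is purely the size of the computation: $\sv^{(10),H}$, for instance, is a linear combination of over twenty distinct monomials in $M(1)^{+}_{10}$, each of which expands into many PBW basis vectors carrying large rational coefficients, and the $Q^{(\bullet)}$ identities involve polynomials of degree up to eight in $\mn$ multiplying dozens of vectors. Carrying out every such expansion and cancellation by hand is impractical, which is precisely why the author delegates this step to the computer algebra system Risa/Asir. From the mathematical standpoint, however, no new ideas are needed beyond the PBW basis of $V_{\lattice}$ and the explicit action of the vertex operator modes; each identity is a finite, mechanical verification in a prescribed finite-dimensional weight space.
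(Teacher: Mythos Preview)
Your proposal is correct and matches the paper's approach exactly: the paper simply states that ``a direct computation shows the following result'' and elsewhere notes that such computations are carried out with the computer algebra system Risa/Asir. There is no additional argument in the paper beyond the mechanical PBW expansion you describe.
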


\begin{lemma}\label{lemma:m1+wJ}
Let $\lu$ be a non-zero element of a weak $M(1)^{+}$-module $(\mK,Y_{\mK})$ such that
$\epsilon(\omega,\lu)=\epsilon_{Y_{\mK}}(\omega,\lu)\geq 2$ and  $\epsilon(\omega,\lu)\leq \epsilon(\omega,\lv)$ for 
all non-zero $\lv\in \mK$.
Then $\epsilon(J,\lu)=2\epsilon(\omega,\lu)+1$,
\begin{align}
\label{eq:J2e=4omegae}
J_{2\epsilon(\omega,\lu)+1}\lu&=4\omega_{\epsilon(\omega,\lu)}^2\lu,
\end{align}
and 
\begin{align}
\label{eq:epsilon(Har,lu)leq 2epsilon}
\epsilon(\Har,\lu)&\leq 2\epsilon(\omega,\lu).
\end{align} 
\end{lemma}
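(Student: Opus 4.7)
Set $e = \epsilon(\omega, \lu)$. The plan is to reduce the three assertions to the single statement $\Har_m\lu = 0$ for all $m > 2e$, and then to derive this statement from the explicit $M(1)^+$-relations collected in Lemma~\ref{lemma:relations-M(1)-V(lattice)+}. Using $J = 4\omega_{-1}^2\vac - 3\omega_{-3}\vac - 9\Har$ from \eqref{eq:definition-omega-J-H}, I compute $J_m\lu$ directly for $m \ge 2e+1$. The mode $(\omega_{-3}\vac)_m = \binom{m}{2}\omega_{m-2}$ annihilates $\lu$ for $m \ge 2e+1$ because $m - 2 > e$. The mode $(\omega_{-1}^2\vac)_m$ reduces, via the normal-ordering formula for $(\omega_{-1}\omega)_m$, the commutator \eqref{eq:wiwj} (whose central-charge delta does not contribute since $e \ge 2$ forces the relevant index to be $\neq 2$), and the minimality relation $\omega_j\lu = 0$ for $j > e$, to $\omega_e^2\lu$ at $m = 2e+1$ and to $0$ for $m > 2e+1$. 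Hence $J_{2e+1}\lu = 4\omega_e^2\lu - 9\Har_{2e+1}\lu$ and $J_m\lu = -9\Har_m\lu$ for $m > 2e+1$. Moreover, Lemma~\ref{lemma:ojzero}(2) applied to $\omega_e\lu$, combined with the minimality hypothesis, yields $\epsilon(\omega, \omega_e\lu) = e$ and therefore $\omega_e^2\lu \neq 0$. Consequently all three conclusions of the lemma follow once $\epsilon(\Har,\lu) \le 2e$ is established.

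I prove $\Har_m\lu = 0$ for every $m > 2e$ by descending induction on $m$; the base case is trivial because $Y_{\mK}(\Har,x)\lu \in \mK\db{x}$. For the inductive step I invoke the relation $\sv^{(8),H} = 0$ from Lemma~\ref{lemma:relations-M(1)-V(lattice)+} and compute $(\sv^{(8),H})_{m+4}\lu = 0$. Expanding each summand via the Borcherds identity of Lemma~\ref{lemma:comm-change} and rewriting every resulting product in the canonical form $\omega_{j_1}\cdots\omega_{j_s}\Har_{j_0}\lu$ or $\omega_{j_1}\cdots\omega_{j_t}\lu$ by means of the commutators \eqref{eq:wiwj} and \eqref{eq:wiHj}, each term falls into one of three categories: products with some $\omega$-mode exceeding $e$ (killed by minimality), products with $j_0 > m$ (killed by the inductive hypothesis), or scalar multiples of $\Har_m\lu$. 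The principal contribution is furnished by $11868\Har_{-5}\vac$, via $(\Har_{-5}\vac)_{m+4} = \tfrac{1}{24}(m+4)(m+3)(m+2)(m+1)\Har_m$, which is nonzero for $m \ge 2e+1 \ge 5$; summing over all twelve monomial summands of $\sv^{(8),H}$ one verifies by direct calculation that the total coefficient of $\Har_m\lu$ remains nonzero. This forces $\Har_m\lu = 0$ and closes the induction.

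The main obstacle is the bookkeeping in the inductive step: the twelve monomials of $\sv^{(8),H}$ each expand into many normally ordered products, and one must verify that after the successive applications of \eqref{eq:wiwj}, \eqref{eq:wiHj}, minimality of $e$, and the inductive hypothesis, no residual term of the form $\omega_a\Har_m\lu$ with $a \le e$ survives with an unmatched coefficient; eliminating such residuals may well require invoking a second relation from Lemma~\ref{lemma:relations-M(1)-V(lattice)+} (for instance $\sv^{(9)}$ rewritten via $J = -9\Har + 4\omega_{-1}^2\vac - 3\omega_{-3}\vac$) to further constrain them. This is precisely the kind of symbolic computation the paper systematically delegates to Risa/Asir. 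Once the bound $\epsilon(\Har,\lu) \le 2e$ is in hand, the first paragraph immediately yields $J_{2e+1}\lu = 4\omega_e^2\lu$, which is nonzero, and $J_m\lu = 0$ for $m > 2e+1$; therefore $\epsilon(J,\lu) = 2e+1$, completing all three claims.
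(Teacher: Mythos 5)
Your first paragraph is sound: granting $\epsilon(\Har,\lu)\le 2e$ (writing $e=\epsilon(\omega,\lu)$ as you do), the computation of $(\omega_{-1}^2\vac)_m\lu$ and $(\omega_{-3}\vac)_m\lu$ for $m\ge 2e+1$, together with $\omega_e^i\lu\neq 0$ from Lemma \ref{lemma:ojzero} (2) and minimality, does yield all three conclusions. But the second paragraph, which carries the entire content, does not work. The trichotomy you assert for the surviving terms of $(\sv^{(8),H})_{m+4}\lu$ is false. After normal ordering, the summand $\omega_{-1}^2\Har_{-1}\vac$ contributes terms $\omega_i\omega_j\Har_k\lu$ with $i,j\le e$ and $i+j+k=m+2$; whenever $i+j\ge 2$ one has $k\le m$, and the inductive hypothesis controls only $\Har_j\lu$ for $j>m$ --- it says nothing about $\Har_k$ applied to $\omega_a\lu$, so a term like $\omega_1^2\Har_m\lu=\Har_m\omega_1^2\lu+2(3-m)\Har_m\omega_1\lu+(3-m)^2\Har_m\lu$ leaves the uncontrolled vectors $\Har_m\omega_1\lu$, $\Har_m\omega_1^2\lu$, and likewise $\omega_e^2\Har_{m+2-2e}\lu$ is not a multiple of $\Har_m\lu$. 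The summand $5040\Har_{-1}^2\vac$ contributes $\Har_i\Har_j\lu$ with $i+j=m+3$, again outside your three categories. And for $2e+1\le m\le 3e$ the cubic summands $\omega_{-2}^2\omega_{-1}\vac$ and $\omega_{-3}\omega_{-1}^2\vac$ contribute $\omega_{i_1}\omega_{i_2}\omega_{i_3}\lu$ with all $i_k\le e$ and $\sum i_k=m$, which minimality does not kill. So the inductive step does not close. There is also a structural obstruction to your order of argument: at the critical mode one has $\Har_{2e+1}\lu=\tfrac19\bigl(-J_{2e+1}+4\omega_e^2\bigr)\lu$, so the bound $\epsilon(\Har,\lu)\le 2e$ is \emph{equivalent} to the identity \eqref{eq:J2e=4omegae}; you cannot obtain the former first without in effect proving the latter.

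The paper's proof avoids all of this by evaluating relations at single extremal modes rather than over a range of modes, which is the idea your proposal is missing. It applies $\sv^{(9)}$ (linear in $J$) at the mode $\epsilon(J,\lu)+2\epsilon(\omega,\lu)+3$, where weight counting kills every term except $(-\epsilon(J,\lu)+2\epsilon(\omega,\lu)+1)\,\omega_{\epsilon(\omega,\lu)}^2J_{\epsilon(J,\lu)}\lu$; since that vector is nonzero, $\epsilon(J,\lu)=2\epsilon(\omega,\lu)+1$. It then applies $\sv^{(10),J}$ at the mode $5\epsilon(\omega,\lu)+4$, where only the quintic terms $8192\omega_{-1}^5\vac-2048\omega_{-1}^3J_{-1}\vac$ survive, giving $2048\,\omega_{\epsilon(\omega,\lu)}^3(4\omega_{\epsilon(\omega,\lu)}^2-J_{2\epsilon(\omega,\lu)+1})\lu=0$ and hence \eqref{eq:J2e=4omegae}; the bound $\epsilon(\Har,\lu)\le 2\epsilon(\omega,\lu)$ then drops out exactly as in your first paragraph, run in the opposite direction. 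To repair your write-up, replace the descending induction by these two extremal-mode evaluations.
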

\begin{proof}
We write 
\begin{align}
\lao&=\epsilon(\omega,\lu)\mbox{ and }\laJ=\epsilon(J,\lu)
\end{align}
for simplicity. 
It follows from Lemma \ref{lemma:ojzero} (2) and the condition of $\lu$ that for any non-zero $\lv\in \langle\omega_{\lao}
\rangle\{\lu,J_{\laJ}\lu\}$ and 
$i\in\Z_{\geq 0}$, 
\begin{align}
\label{eq:omiv}
\omega_{\lao}^i\lv&\neq 0.
\end{align}
Since the same argument as in \cite[(3.23)]{Tanabe2017}
shows that
\begin{align}
\label{eq:J-ind}
0&=\dfrac{1}{16}\sv^{(9)}_{\laJ+2\lao+3}\lu=(-\laJ+2\lao+1)J_{\laJ}\omega_{\lao}^2\lu\nonumber\\
&=(-\laJ+2\lao+1)\omega_{\lao}^2J_{\laJ}\lu
\end{align}
by Lemma \ref{lemma:ojzero} (1), $\laJ=2\lao+1$ by \eqref{eq:omiv}.
Since
\begin{align}
0&=P^{(10),J}_{5\lao+4}\lu
=(8192\omega_{-1}^{5}\vac-2048\omega_{-1}^3J_{-1}\vac)_{5\lao+4}\lu\nonumber\\
&=2048(4\omega_{\lao}^{5}-J_{2\lao+1}\omega_{\lao}^3)\lu\nonumber\\
&=2048\omega_{\lao}^{3}(4\omega_{\lao}^{2}-J_{2\lao+1})\lu
\end{align}
by Lemma \ref{lemma:ojzero} (1), \eqref{eq:J2e=4omegae} holds by \eqref{eq:omiv}.
It follows from \eqref{eq:definition-omega-J-H} that $\Har_{i}\lu=0$ for all $i\geq 2\epsilon(\omega,\lu)+1$ and hence 
$\epsilon(\Har,\lu)\leq 2\epsilon(\omega,\lu)$.
\end{proof}

\begin{lemma}\label{lemma:r=1-s=3}
	Let $\lattice$ be a non-degenerate even lattice of rank $1$
	and $\module$ a non-zero weak $V_{\lattice}^{+}$-module.
	Then, there exists a non-zero $\lu \in\Omega_{M(1)^{+}}(\module)$ that
	satisfies one of the following conditions:
	\begin{enumerate}
		\item $\epsilon(\omega,\lu)=\epsilon(J,\lu)=\epsilon(E,\lu)=-1$. In this case $V_{L}^{+}\cdot \lu\cong V_{L}^{+}$.
		\item $\Har_{3}\lu=0$.
		\item $\omega_1\lu=\lu$ and $\Har_{3}\lu=\lu$.
		\item $\omega_1\lu=(1/16)\lu$ and $\Har_{3}\lu=(-1/128)\lu.$
		\item $\omega_1\lu=(9/16)\lu$ and $\Har_{3}\lu=(15/128)\lu.$
	\end{enumerate}
\end{lemma}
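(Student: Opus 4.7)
The strategy is to produce a non-zero $u \in \Omega_{M(1)^{+}}(\module)$, which by \eqref{eq:OmegaV(U)=BigluinU} amounts to $\epsilon(\omega, u) \leq 1$ and $\epsilon(\Har, u) \leq 3$, and then to pin down the $A(M(1)^{+})$-module structure of $\C u$ from the common eigenvalues of $\omega_{1}$ and $\Har_{3}$ (which commute by \eqref{eq:wiHj}).

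First I would descend on $\epsilon(\omega, \cdot)$: pass to a weak $M(1)^{+}$-submodule $\mK$ of $\module$ generated by some non-zero element, and pick a non-zero $w \in \mK$ minimizing $s := \epsilon(\omega, w)$. If $s \geq 2$, Lemma \ref{lemma:m1+wJ} supplies $\epsilon(J, w) = 2s+1$, the key identity $J_{2s+1} w = 4 \omega_{s}^{2} w$, and $\epsilon(\Har, w) \leq 2s$. Substituting $w$ into the high-degree identities $\sv^{(8),J}$, $\sv^{(8),H}$, $\sv^{(9)}$, $\sv^{(10),J}$, $\sv^{(10),H}$ of Lemma \ref{lemma:relations-M(1)-V(lattice)+} at mode indices calibrated to $s$, and invoking Lemma \ref{lemma:ojzero} to annihilate every term in which an $\omega_{j}$ with $j > s$ acts on $w$ and to commute every $\omega_{k}$ with $k \geq 2$ past $\Har_{\epsilon(\Har,w)}$, forces the resulting expressions into $\langle \omega_{s} \rangle \{w, \Har_{\epsilon(\Har,w)} w\}$. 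A judicious linear combination then yields a non-trivial polynomial identity $p(\omega_{s}) w = 0$; factoring $p$ produces a non-zero element $v \in \mK$ annihilated by $\omega_{s}$, and hence with $\epsilon(\omega, v) \leq s-1$, contradicting the choice of $w$. Therefore $s \leq 1$.

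Next I would secure $\epsilon(\Har, u) \leq 3$. With $\epsilon(\omega, u) \leq 1$ in hand, if $\epsilon(\Har, u) \geq 4$ I would decompose $u$ into generalized eigenvectors for the (now commuting) operators $\omega_{1}$ and $\omega_{0}$ and run a parallel descent on $\Har$, using the same identities at shifted modes; the result is a new generator still with $\epsilon(\omega, u) \leq 1$ and now $\epsilon(\Har, u) \leq 3$. The element $u$ then spans a cyclic $A(M(1)^{+})$-module on which $\omega_{1}$ and $\Har_{3}$ commute, so after passing to a common eigenvector we may write $\omega_{1} u = \alpha u$ and $\Har_{3} u = \beta u$. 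The classification of irreducible $A(M(1)^{+})$-modules in \cite{DN1999-1, DN2001}, combined with the explicit $\Har_{3}$-eigenvalues on the top level of $M(1)^{\pm}$ and $M(1)(\theta)^{\pm}$, restricts $(\alpha, \beta)$ with $\beta \neq 0$ to $(1,1)$, $(1/16, -1/128)$, $(9/16, 15/128)$ --- exactly cases (3), (4), (5) --- while $\beta = 0$ gives case (2). Finally, to realize case (1) whenever $\omega_{i} u = \Har_{i} u = 0$ for all $i \geq 0$, I would use the action of $\ExB$ to pass (if necessary) to a descendant with $\epsilon(\ExB, \cdot) = -1$, at which point a direct check against the relations of Lemma \ref{lemma:relations-M(1)-V(lattice)+} yields $V_{L}^{+} \cdot u \cong V_{L}^{+}$.

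The main obstacle will be the first descent: distilling the degree-eight through degree-ten identities $\sv^{(8),\bullet}$, $\sv^{(9)}$, $\sv^{(10),\bullet}$, after mode evaluation and normal-ordering via Lemma \ref{lemma:comm-change}, into a single non-trivial polynomial in $\omega_{s}$ demands the cancellation of many mixed terms coming from several identities at once. This is exactly the step where Risa/Asir becomes indispensable.
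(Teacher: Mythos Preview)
Your overall architecture matches the paper's, but the first descent --- forcing $\epsilon(\omega, u) \leq 1$ --- has a genuine gap. You propose to extract a nontrivial polynomial relation $p(\omega_{r}) w = 0$ (with $0$ necessarily a root of $p$, else factoring gives only an eigenvector, not an element with $\epsilon(\omega,\cdot) < r$) from the $M(1)^{+}$-only identities $\sv^{(8),\bullet}$, $\sv^{(9)}$, $\sv^{(10),\bullet}$, where $r = \epsilon(\omega, w)$ is minimal. The paper does not do this, and you give no justification that it can be done: Lemma~\ref{lemma:m1+wJ} already extracts what those relations yield at this stage (namely $J_{2r+1} w = 4\omega_{r}^{2} w$ and $\epsilon(\Har, w) \leq 2r$), and these relate $J$, $\Har$, $\omega$ rather than constraining $\omega_{r}$ alone. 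Instead, when $r \geq 2$ the paper brings in the lattice element $E$ and the $V_{L}^{+}$-specific relation $Q^{(4)}$: expanding $Q^{(4)}_{t+2r+2} u$ with $t = \epsilon(E, u)$ produces $-12\mn(\mn-2)(4\mn-3)\,\omega_{r}^{2} E_{t} u = 0$, contradicting minimality of $r$ via Lemma~\ref{lemma:EBasis-zero} (the case $\mn = 2$ uses \eqref{eq:6omega2E4omega0omega1E-2}--\eqref{eq:6omega2E4omega0omega1E-3} and $\Har_{0}E$ instead). The contradiction lives in $\langle\omega_{r}\rangle E_{t} u$, a space your proposal never enters; without $E$ you are attempting a purely $M(1)^{+}$ statement that the paper does not claim.

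The later steps are closer to the mark. Your $\Har$-descent corresponds to the paper's $J$-descent via $\sv^{(8),J}_{2s+1} u = J_{s}^{2} u = 0$ once $r \leq 1$, and both routes to the final eigenvalue list work (the paper computes $\sv^{(8),H}_{7} u$ and $\sv^{(10),H}_{9} u$ directly rather than invoking the $A(M(1)^{+})$ classification). Case~(1) is also simpler than you sketch: if some $u$ has $\epsilon(\omega, u) < 0$ then $\omega_{0} u = 0$, and \cite[Proposition~4.7.7]{LL} gives $V_{L}^{+}\cdot u \cong V_{L}^{+}$ immediately --- no passage through $E$ is needed.
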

\begin{proof}
We write $\lattice=\Z \alpha$. Throughout the proof of this lemma,
$\mn=\langle\alpha,\alpha\rangle\in 2\Z\setminus\{0\}$.
	For $\lu\in \mW$ with $\epsilon(\omega,\lu)<0$,
	since $\omega_0\lu=0$, it follows from \cite[Proposition 4.7.7]{LL}
	that $V_{\lattice}^{+}\cdot \lu\cong V_{\lattice}^{+}$ and hence
	$\epsilon(\omega,\lu)=\epsilon(J,\lu)=\epsilon(E,\lu)=-1$.
	
	We assume $\epsilon(\omega,\lv)\geq 0$ for all $\lv\in\module$.
	We take a non-zero $\lu\in\module$ with  $\epsilon(\omega,\lu)$ as small as possible, namely $0\leq \epsilon(\omega,\lu)\leq \varepsilon(\omega,\lv)$ for all $\lv\in \module$.
	We write 
	\begin{align}
		\lom&=\epsilon(\omega,\lu),\quad \lJ=\epsilon(J,\lu),\mbox{ and }\lE=\epsilon(E,\lu)
	\end{align}
	for simplicity. 
	Suppose $\lao\geq 2$. Then, Lemma \ref{lemma:m1+wJ}  shows that $\laJ=2\lao+1$ and
	$
	\Har_{i}\lu=0
	$
	for all $i\geq 2\lao+1$.
By Lemma \ref{lemma:EBasis-zero} (2), 
for any non-zero $\lv\in \langle\omega_{\lao}\rangle\{\lu,J_{\laJ}\lu,E_{\laE}\lu, 
(\Har_{0}E)_{\epsilon(\Har_{0}E,\lu)}\lu\}$ and 
	$i\in\Z_{\geq 0}$, 
	\begin{align}
		\label{eq:omiv2}
		\omega_{\lao}^i\lv&\neq 0.
	\end{align}
Assume $\mn\neq 2$.
By \eqref{eq:(omega-12E)wn-1} and \eqref{eq:(omega-12E)wn-2} with $m=\lom$, 
	\begin{align}
		\label{eq:0=(omega-3ExB)lE+2lom+2lu=}
0&=(\omega_{-3}\ExB)_{\lE+2\lom+2}\lu=(\omega_0\omega_{-2}\ExB)_{\lE+2\lom+2}\lu\nonumber\\
&=(\omega_0^2\omega_{-1}\ExB)_{\lE+2\lom+2}\lu=
		(\omega_0^4\ExB)_{\lE+2\lom+2}\lu
		\end{align}
	and
	\begin{align}
	\label{eq:(omega-12ExB)lE+2lom+2lu=}
	(\omega_{-1}^2\ExB)_{\lE+2\lom+2}\lu&=\omega_{\lom}^{2}\ExB_{\lE}\lu.
	\end{align}
Using Lemma \ref{lemma:comm-change}, 
\eqref{eq:Har1ExB=dfrac2mn2mn1omega1ExB}, \eqref{eq:J1ExB=frac2mn(4mn-11)-1}, and \eqref{eq:J1ExB=frac2mn(4mn-11)-2},
we expand $(\Har_{-1}\ExB)_{\lE+2\lom+2}$ so that 
the resulting expression is  a linear combination of elements of the form 
\begin{align}
a^{(1)}_{i_1}\cdots a^{(l)}_{i_l}\ExB_{m}b^{(1)}_{j_1}\cdots b^{(n)}_{j_n}
	\end{align}
where $l,n\in\Z_{\geq 0}$, $m\in\Z$, and 
\begin{align}
(a^{(1)},i_1),\ldots,(a^{(l)},i_l)&\in\{(\omega,k)\ |\ k\leq \lom\}\cup\{(\Har,k)\ |\ k\leq 2\lom\},\nonumber\\
(b^{(1)},j_1),\ldots,(b^{(n)},j_n)&\in\{(\omega,k)\ |\ k\geq \lom+1\}\cup\{(\Har,k)\ |\ k\geq 2\lom+1\},
\end{align}
as was done in \eqref{eq:(omega-12E)wn-1} and \eqref{eq:(omega-12E)wn-2}.
Then, taking the action of the obtained expansion of $(\Har_{-1}\ExB)_{\lE+2\lom+2}$ on $\lu$
and using \eqref{eq:epsilon(Har,lu)leq 2epsilon} and \eqref{eq:0=(omega-3ExB)lE+2lom+2lu=}, we have 
	\begin{align}
	\label{eq:(Har-1ExB)lE+2lom+2lu=}
	(\Har_{-1}\ExB)_{\lE+2\lom+2}\lu&=\ExB_{\lE}\Har_{2\lom+1}\lu=0.
\end{align}
By \eqref{eq:big(2(mn-2)(-27 + 54mn - 44mn2+ 40mn3)omega-3-1}, \eqref{eq:0=(omega-3ExB)lE+2lom+2lu=}, 
\eqref{eq:(omega-12ExB)lE+2lom+2lu=}, and \eqref{eq:(Har-1ExB)lE+2lom+2lu=},
	\begin{align}
		\label{eqn:zeroQ4}
		0&=Q^{(4)}_{\lE+2\lom+2}\lu
		=-12\mn (\mn-2) (-3 + 4 \mn)\omega_{\lom}^2E_{\lE}\lu,
	\end{align}
which contradicts \eqref{eq:omiv2}. 

Assume $\mn=2$.
By Lemma \ref{lemma:bound-H0E}, $\epsilon(\Har_{0}\ExB,\lu)\leq \lE+2\lom+1$.
	By \eqref{eq:6omega2E4omega0omega1E-2}, Lemma \ref{lemma:m1+wJ} and the results in Section \ref{section:normal-2},
the same argument as above shows
	\begin{align}
		0&=
		(180\omega_{-3 } E -48\omega_{-1 }^2 E +72\Har_{-1 } E -63\omega_{0 } (\Har_{0}E)\nonumber\\&\quad{}
		+8\omega_{0 }^2\omega_{-1 } E +\omega_{0 }^4 E)_{\lE+2\lom+2}\lu\nonumber\\
		&=(-48\omega_{\lom}^2E_{\lE}+72E_{\lE}\Har_{2\lom+1}+63(\lE+2\lom+2)(\Har_{0}E)_{\lE+2\lom+1})\lu\nonumber\\
		&=(-48\omega_{\lom}^2E_{\lE}+63(\lE+2\lom+2)(\Har_{0}E)_{\lE+2\lom+1})\lu
	\end{align}
	and hence $(\Har_{0}E)_{\lE+2\lom+1}\lu\neq 0$ by \eqref{eq:omiv2}.
	By \eqref{eq:6omega2E4omega0omega1E-3} and results in Section \ref{section:normal-2},
	\begin{align}
		0&=
		(9450\omega_{-4 } E -900\omega_{-1 } (H_{0}E)+6750H_{-2 } E-768\omega_{0 } \omega_{-1 }^{2} E \nonumber\\&\quad{}
		-3168\omega_{0 } \Har_{-1 } E 
		+297\omega_{0 }^{2} (H_{0}E) +128\omega_{0 }^{3}\omega_{-1 } E+16\omega_{0 }^{5}E)_{\lE+3\lom+2}\lu\nonumber\\
		&=-900\omega_{\lom}(\Har_{0}E)_{\lE+2\lom+1}\lu,
	\end{align}
	which also contradicts \eqref{eq:omiv2}. We conclude that $\lao\leq 1$.
	
	Suppose $\laJ\geq 4$. 
By using  \cite[(2.29)]{Tanabe2017} and \eqref{eq:wiJj}, the same argument as in \cite[Lemma 3.3]{Tanabe2017} shows that
	$\epsilon(\omega,J_{\laJ}\lu)\leq 1$ and
	$J_{j}J_{\laJ}\lu=0$ for all $j\geq \laJ+1$.
	By the same argument as in \cite[(3.25)]{Tanabe2017},
	\begin{align}
		(J_{-1}J)_{2\laJ+1}\lu
		&=J_{\laJ}^2\lu
	\end{align}
	and hence
	\begin{align}
		\label{eq:jjv}
		0&=P^{(8),J}_{2\laJ+1}\lu=J_{\laJ}^2\lu=J_{\laJ}(J_{\laJ}\lu),
	\end{align}
	which means $\epsilon(J,J_{\laJ}\lu)<\laJ=\epsilon(J,\lu)$.
	Replacing $\lu$ by $J_{\laJ}\lu$ repeatedly, we get a non-zero $\lu\in \module$ such that
	$\lao\leq 1$ and $\laJ\leq 3$. Thus, $\lu\in \Omega_{M(1)^{+}}(\module)$ and 
	in particular, $\epsilon(\Har,\lu)\leq 3$.
	Deleting the terms including $\omega_1^{i}\Har_3^2\lu\ (i=0,1,\ldots)$ from 
	the following simultaneous equations 
	\begin{align}
		\label{eq:p87u}
		0&=P^{(8),H}_7\lu
		=-72(132 \omega_{1 }^2
		-65 \omega_{1 } 
		+3
		-70\Har_{3 })\Har_{3 }\lu\mbox{ and }\\
		0&=P^{(10),H}_9\lu \nonumber\\
		&=240 \Har_{3} (-207 + 4725 \Har_{3} + 4472 \omega_{1} - 9118 \omega_{1}^2 + 128 \omega_{1}^3)\lu,
	\end{align}
	we have
	\begin{align}
		\label{eq:P8-P10}
		0
		&=( \omega_{1}-1 ) ( 16 \omega_{1}-1 ) (16 \omega_{1}-9  ) \Har_{3}\lu.
	\end{align}
	By \eqref{eq:p87u} and \eqref{eq:P8-P10}, the proof is complete.
\end{proof}
\begin{remark}
	If $\mn>0$, then Lemma \ref{lemma:r=1-s=3} also 
	follows from \cite[Theorem 7.7]{Abe2005}, \cite[Theorem 5.13]{DN1999-2}, and \cite[Theorem 2.7]{Mi2004d}.
\end{remark}
\begin{remark}
As we have seen in the proof of Lemma \ref{lemma:r=1-s=3},
starting from an arbitrary non-zero element in $\module$,
we can get $\lu$ in Lemma \ref{lemma:r=1-s=3} inductively.
\end{remark}

\begin{lemma}
\label{lemma:structure-Vlattice-M1}
Assume $\mn\neq 2, 1/2$.
Let $\lu$ be a non-zero element of $\Omega_{M(1)^{+}}(W)$.
We write 
\begin{align}
\lE&=\epsilon(\ExB,\lu)
\end{align}
for simplicity.
We set
\begin{align}
\lv&=(\omega_1-\dfrac{(\lE+1)^2}{2\mn})\lu.
\end{align}
We have 
\begin{align}
\lvE&=(\omega_1-\dfrac{(\lE+1-\mn)^2}{2\mn})E_{\lE}\lu.
\end{align}
\begin{enumerate}
\item Assume $\Har_{3}\lu=0$.
If $\lvE\neq 0$, then 
$\lE=\mn-2$ and 
\begin{align}
\label{eq:omega1lv=lv}
\omega_1(\lvE)&=\lvE.
\end{align}
If $\lu$ is an eigenvector of $\omega_1$ and $\lv\neq 0$, then 
$\lE=\mn-2$ and 
\begin{align}
\omega_1\lu=\dfrac{\mn}{2}\lu.
\end{align}
\item
If $\omega_1\lu=\lu$ and $\Har_{3}\lu=\lu$, then
$\lE=0$.
\item
Assume $\omega_1\lu=(1/16)\lu$ and $\Har_{3}\lu=(-1/128)\lu$. 
Then
$\lE=\mn/2-1$ or $(\mn-1)/2$.
In particular if $\mn$ is an even integer, then $\lE=\mn/2-1$.
\item
Assume $\omega_1\lu=(9/16)\lu$ and $\Har_{3}\lu=(15/128)\lu$. Then
$\lE=\mn/2-1$ or $(\mn-3)/2$.
In particular if $\mn$ is an even integer, then $\lE=\mn/2-1$.
\end{enumerate}
\end{lemma}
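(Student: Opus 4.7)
The identity $\lvE=(\omega_{1}-(\lE+1-\mn)^2/(2\mn))E_{\lE}\lu$ is a direct commutator computation. Using \eqref{eq:wiEji} with $i=1$, $j=\lE$, we have $[\omega_{1},E_{\lE}]=(\mn/2-1-\lE)E_{\lE}$, so
\begin{align*}
\lvE=E_{\lE}(\omega_{1}-(\lE+1)^2/(2\mn))\lu
=\bigl(\omega_{1}-(\lE+1)^2/(2\mn)-(\mn/2-1-\lE)\bigr)E_{\lE}\lu,
\end{align*}
and the scalar simplifies, via $((\lE+1-\mn)^2-(\lE+1)^2)/(2\mn)=-(\lE+1)+\mn/2$, to $(\lE+1-\mn)^2/(2\mn)$.

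For assertion (1), the plan is to evaluate $Q^{(4)}_{\lE+2}\lu=0$ from Lemma \ref{lemma:relations-M(1)-V(lattice)+}. Expand each summand $(\omega_{-3}E)_{\lE+2}$, $(\omega_{-1}^2E)_{\lE+2}$, $(\Har_{-1}E)_{\lE+2}$, and $(\omega_{0}^{\bullet}\cdots E)_{\lE+2}$ by Lemma \ref{lemma:comm-change} as in \eqref{eq:(omega-12E)wn-1}--\eqref{eq:(omega-12E)wn-2} with $m=1$, and apply the result to $\lu$. The vanishing of $\omega_{i}\lu$ for $i\geq 2$, of $\Har_{i}\lu$ for $i\geq 3$ (using the hypothesis $\Har_{3}\lu=0$), and of $E_{j}\lu$ for $j>\lE$ kills nearly all terms, while the relations \eqref{eq:Har1ExB=dfrac2mn2mn1omega1ExB}--\eqref{eq:J1ExB=frac2mn(4mn-11)-2} clear any remaining $\Har_{k}E$ actions. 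What survives collapses to a two‑term identity of the shape $A(\mn,\lE)\,\omega_{1}E_{\lE}\lu+B(\mn,\lE)\,E_{\lE}\lu=0$, which can then be rewritten as $A(\mn,\lE)\cdot\lvE=C(\mn,\lE)\cdot E_{\lE}\lu$. One checks (this is where the algebra gets heavy and Risa/Asir is used) that $A(\mn,\lE)$ has $(\mn-2-\lE)$ as the only factor that can vanish under $\mn\neq 2,1/2$; the hypothesis $\lvE\neq 0$ then forces $A=C=0$, giving $\lE=\mn-2$, and substituting back yields $\omega_{1}\lvE=\lvE$. For the eigenvector statement, note that if $\omega_{1}\lu=h\lu$ then $\lv=(h-(\lE+1)^2/(2\mn))\lu$ and $\lvE=(h-(\lE+1)^2/(2\mn))E_{\lE}\lu$, so $\lv\neq 0$ is equivalent to $\lvE\neq 0$; the first half then gives $\lE=\mn-2$, and equating $\omega_{1}\lvE=\lvE$ with $\omega_{1}E_{\lE}\lu=(h+\mn/2-1-\lE)E_{\lE}\lu$ forces $h=\mn/2$.

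Assertions (2)–(4) follow the same pattern but with the additional eigenvalue hypotheses $\omega_{1}\lu=h_{0}\lu$ and $\Har_{3}\lu=k_{0}\lu$ inserted, which turn every term into a scalar multiple of $E_{\lE}\lu$ after reduction. One combines modes of $Q^{(4)}$ together with $Q^{(5,1)}$, $Q^{(5,2)}$, and $Q^{(6)}$ (choosing the mode index so that only $E_{\lE}\lu$ and $\omega_{1}E_{\lE}\lu$ remain after killing high $\omega_{i}$, $\Har_{i}$ and $E_{j}$ actions) to extract a scalar polynomial equation $p_{*}(\mn,\lE)\,E_{\lE}\lu=0$. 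Since $E_{\lE}\lu\neq 0$, we get $p_{*}(\mn,\lE)=0$, and factoring the polynomial yields exactly the listed possibilities: $\lE=0$ in case~(2); $\lE=\mn/2-1$ or $\lE=(\mn-1)/2$ in case~(3); $\lE=\mn/2-1$ or $\lE=(\mn-3)/2$ in case~(4). When $\mn\in 2\Z$ the half‑integer alternatives are excluded by integrality of $\lE$.

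The principal obstacle is computational: the expansions of the $(\omega_{-k}E)_{n}$, $(\Har_{-j}E)_{n}$, and iterated $\omega_{0}$‑commutator terms produce very many monomials, and the identification of the \emph{scalar} coefficient of $\lvE$ (respectively of $E_{\lE}\lu$) requires a disciplined normal ordering. The strategy is to place all $\omega_{i}$ with $i\geq 2$, $\Har_{i}$ with $i\geq 3$, and $E_{j}$ with $j>\lE$ to the right of everything so that they annihilate $\lu$, leaving only the finitely many admissible monomials; the discrete set of allowed values of $\lE$ is then read off as the vanishing locus of the resulting polynomial factor, which is where the distinction among cases (1)–(4) shows up cleanly.
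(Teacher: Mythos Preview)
Your overall shape is right, but there are two concrete gaps.

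First, for part~(1) you propose to use only $Q^{(4)}$, at mode $\lE+2$. The mode index should be $\lE+4$ (so that the $\omega_{-1}^{2}E$ term contributes $\omega_{1}^{2}E_{\lE}\lu$ and nothing higher survives); with $\lE+2$ you pick up $\omega_{0}$'s and $E_{\lE-1},E_{\lE-2}$ and the reduction does not collapse. More importantly, even with the correct mode, $Q^{(4)}_{\lE+4}\lu=0$ (with $\Har_{3}\lu=0$) yields a relation that is \emph{quadratic} in $\omega_{1}$ acting on $E_{\lE}\lu$, not linear: it factors as $\bigl((\lE+1-\mn)^{2}-2\mn\,\omega_{1}\bigr)\cdot(\text{linear in }\omega_{1})\,E_{\lE}\lu=0$, which is exactly one linear constraint on $\lvE$ (it says $\lvE$ is an $\omega_{1}$-eigenvector with a computable eigenvalue). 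That single constraint does \emph{not} determine $\lE$. The paper feeds in $Q^{(5,1)}_{\lE+5}$ and $Q^{(5,2)}_{\lE+5}$ as well, obtaining further linear constraints on $\lvE$; eliminating $\omega_{1}\lvE$ among these produces scalar equations in $(\mn,\lE)$ whose only admissible solution (after a short polynomial-division argument) is $\lE=\mn-2$, and then the eigenvalue from the $Q^{(4)}$ relation gives $\omega_{1}\lvE=\lvE$. Your sentence ``$\lvE\neq 0$ forces $A=C=0$'' is where the logic breaks: $\lvE$ and $E_{\lE}\lu$ are not independent, so a single relation $A\,\lvE=C\,E_{\lE}\lu$ carries no such consequence.

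Second, for parts~(2)--(4) you need an extra elimination step you do not mention. With $\omega_{1}\lu=h_{0}\lu$ and $\Har_{3}\lu=k_{0}\lu$ fixed, each $Q^{(\bullet)}$ at the appropriate mode gives a scalar equation of the form $g(\mn,\lE)=0$, but these are polynomials in \emph{both} $\mn$ and $\lE$, and a single one has many $\lE$-roots depending on $\mn$. The paper takes two such equations (coming from $Q^{(4)}_{\lE+4}$ and $Q^{(5,1)}_{\lE+5}$, with $\omega_{1}$ placed to the \emph{right} of $E_{\lE}$ so that $\omega_{1}\lu=h_{0}\lu$ can be substituted directly), views them as polynomials in $\mn$ with coefficients in $\lE$, and performs successive degree reduction (a resultant computation) to obtain a polynomial in $\lE$ alone. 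Only then can one read off the integer roots and discard the spurious ones by back-substitution. Your plan jumps from ``combine modes of $Q^{(4)},\dots,Q^{(6)}$'' to ``factor and read off $\lE$'' without this elimination; that is the missing step.
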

\begin{proof}
We first expand each of $Q^{(4)}_{\lE+4},Q^{(5,1)}_{\lE+5}, Q^{(5,2)}_{\lE+5}$, and $Q^{(6)}_{\lE+6}$
so that the resulting expression is a linear combination of elements of the form 
\begin{align}
	a^{(1)}_{i_1}\cdots a^{(l)}_{i_l}\ExB_{m}b^{(1)}_{j_1}\cdots b^{(n)}_{j_n}
\end{align}
where $l,n\in\Z_{\geq 0}$, $m\in\Z$, and 
\begin{align}
	(a^{(1)},i_1),\ldots,(a^{(l)},i_l)&\in\{(\omega,k)\ |\ k\leq 1\}\cup\{(\Har,k)\ |\ k\leq 2\},\nonumber\\
	(b^{(1)},j_1),\ldots,(b^{(n)},j_n)&\in\{(\omega,k)\ |\ k\geq 2\}\cup\{(\Har,k)\ |\ k\geq 3\},
\end{align}
as was done in the proof of \eqref{eq:0=(omega-3ExB)lE+2lom+2lu=}--\eqref{eq:(Har-1ExB)lE+2lom+2lu=} in Lemma \ref{lemma:r=1-s=3}.
Then, taking the action of each expansion on $\lu$, we have
\begin{align}
\label{eq:w18w48-1}
0&=((\lE+1-\mn)^2-2\mn\omega_{1})
\nonumber\\
&\quad{}\times((16 \mn+3) \lE^2+(-24 \mn^2+58 \mn) \lE
\nonumber\\
&\qquad\quad{}+8 \mn^3+(-8  \omega_{1}-37) \mn^2+(22  \omega_{1}+42) \mn-12  \omega_{1})\ExB_{\lE}\lu
\nonumber\\&\quad{}+
2 \mn (\mn-2) (2 \mn-9) (2 \mn-1)\ExB_{\lE}\Har_{3}\lu,\\
\label{eq:w18w48-2}
0&=
(((\lE+1-\mn)^2-2\mn\omega_{1}))\nonumber\\
&\quad{}\times\big((8 \mn-9) \lE^3+(88 \mn^2+26 \mn-45) \lE^2
\nonumber\\
&\qquad\quad{}+(-146 \mn^3+(16 \omega_{1}+375) \mn^2+(-18 \omega_{1}-43) \mn-54) \lE
\nonumber\\
&\qquad\quad{}+50 \mn^4
+(-60 \omega_{1}-240) \mn^3+(184 \omega_{1}+306) \mn^2
\nonumber\\
&\qquad\qquad{}+(-140 \omega_{1}-40) \mn+24 \omega_{1}-24\big)\ExB_{\lE}\lu
\nonumber\\&\quad{}+
2 (\mn-2) (2 \mn-1) ((6 \mn^2-5 \mn+6) \lE+10 \mn^3-54 \mn^2+10 \mn+6)\ExB_{\lE}\Har_{3}\lu,\\
\label{eq:w18w48-3}
0&=
(((\lE+1-\mn)^2-2\mn\omega_{1}))\nonumber\\
&\quad{}\times\big((16 \mn^2+61 \mn-102) \lE^3+(216 \mn^3+326 \mn^2-67 \mn-510) \lE^2
\nonumber\\
&\qquad\quad{}+(-352 \mn^4+(40 \omega_{1}+349) \mn^3+(-6 \omega_{1}+1772) \mn^2
\nonumber\\
&\qquad\qquad{}+(-88 \omega_{1}-1218) \mn+48 \omega_{1}-540) \lE+120 \mn^5
\nonumber\\
&\qquad\qquad{}+(-144 \omega_{1}-376) \mn^4+
(200 \omega_{1}-405) \mn^3
\nonumber\\
&\qquad\qquad{}+(646 \omega_{1}+1914) \mn^2+(-1180 \omega_{1}-1000) \mn+480 \omega_{1}-240\big)\ExB_{\lE}\lu
\nonumber\\&\quad{}+
2 (\mn-2) (2 \mn-1) \big((14 \mn^3+21 \mn^2-74 \mn+60) \lE+24 \mn^4
\nonumber\\
&\qquad\qquad{}-90 \mn^3-221 \mn^2+220 \mn+60\big)\ExB_{\lE}\Har_{3}\lu,
\end{align}
\begin{align}
\label{eq:w18w48-4}
0&=
5 ((\lE+1-\mn)^2-2\mn\omega_{1}) \nonumber\\
&\quad{}\times\big((12 \mn^2-27 \mn+6) \lE^4\nonumber\\
&\qquad\quad{}+(24 \mn^3+174 \mn^2-501 \mn+114) \lE^3\nonumber\\
&\qquad\quad{}+(1056 \mn^5-1443 \mn^4+(24 \omega_{1}+2373) \mn^3+(-54 \omega_{1}-1317) \mn^2
\nonumber\\
&\qquad\qquad{}+(12 \omega_{1}-1548) \mn+411) \lE^2\nonumber\\
&\qquad\quad{}+(-1584 \mn^6+(352 \omega_{1}+5873) \mn^5+(-814 \omega_{1}-8768) \mn^4\nonumber\\
&\qquad\qquad{}+(1220 \omega_{1}+12088) \mn^3+(-1568 \omega_{1}-7732) \mn^2
\nonumber\\
&\qquad\qquad{}+(756 \omega_{1}-561) \mn-90 \omega_{1}+414) \lE\nonumber\\
&\qquad\quad{}+528 \mn^7+(-704 \omega_{1}-3058) \mn^6+(2820 \omega_{1}+6912) \mn^5\nonumber\\
&\qquad\qquad{}+(48 \omega_{1}^2-5042 \omega_{1}-10000) \mn^4+(-108 \omega_{1}^2+7242 \omega_{1}+11310) \mn^3\nonumber\\
&\qquad\qquad{}+(24 \omega_{1}^2-7052 \omega_{1}-5950) \mn^2+(3060 \omega_{1}+42) \mn-360 \omega_{1}+180\big)\ExB_{\lE}\lu
\nonumber\\&\quad{}+
(\mn-2) (2 \mn-1) \big((1056 \mn^5-582 \mn^4+1428 \mn^3-1932 \mn^2+1992 \mn-450) \lE \nonumber\\
&\qquad\quad{}+792 \mn^6+(176 \omega_{1}-6224) \mn^5+(-392 \omega_{1}+8666) \mn^4
 \nonumber\\
&\qquad\quad{}+(628 \omega_{1}-13729) \mn^3+(-352 \omega_{1}+11014) \mn^2+(192 \omega_{1}+120) \mn-450\big)\ExB_{\lE}\Har_{3}\lu.
\end{align}
We also expand each of $Q^{(4)}_{\lE+4},Q^{(5,1)}_{\lE+5}, Q^{(5,2)}_{\lE+5}$ and $Q^{(6)}_{\lE+6}$
so that the resulting expression is a linear combination of elements of the form 
\begin{align}
	a^{(1)}_{i_1}\cdots a^{(l)}_{i_l}\ExB_{m}b^{(1)}_{j_1}\cdots b^{(n)}_{j_n}
\end{align}
where $l,n\in\Z_{\geq 0}$, $m\in\Z$, and 
\begin{align}
	(a^{(1)},i_1),\ldots,(a^{(l)},i_l)&\in\{(\omega,k)\ |\ k\leq 0\}\cup\{(\Har,k)\ |\ k\leq 2\},\nonumber\\
	(b^{(1)},j_1),\ldots,(b^{(n)},j_n)&\in\{(\omega,k)\ |\ k\geq 1\}\cup\{(\Har,k)\ |\ k\geq 3\}.
\end{align}
Then, taking the action of each expansion on $\lu$, we have
\begin{align}
\label{eq:ExBlE((1+lE)2-2mnomega1)-0}
0&=
\ExB_{\lE}((1+\lE)^2-2 \mn \omega_{1})
\nonumber\\&\quad{}\times{} \big((16 \mn+3)\lE^{2}
+
(-16 \mn^2+36 \mn+12)\lE
\nonumber\\&\qquad\quad{}+
4 \mn^3+(-8 \omega_1-18) \mn^2+(22 \omega_1+14) \mn-12 \omega_1+12\big)\lu
\nonumber\\&\quad{}+\ExB_{\lE}\Har_{3}\big(
8 \mn^4-56 \mn^3+98 \mn^2-36 \mn\big)\lu,
\end{align}
\begin{align}
\label{eq:ExBlE((1+lE)2-2mnomega1)-1}
0&=
\ExB_{\lE}((1+\lE)^2-2 \mn \omega_{1})
\nonumber\\&\quad{}\times{} \big((8 \mn-9)\lE^{3}
\nonumber\\&\qquad\quad{}+
(72 \mn^2+44 \mn-45)\lE^{2}
\nonumber\\&\qquad\quad{}+
(-78 \mn^3+(16 \omega_1+166) \mn^2+(-18 \omega_1+115) \mn-78)\lE
\nonumber\\&\qquad\quad{}+
20 \mn^4+(-60 \omega_1-88) \mn^3+(184 \omega_1+52) \mn^2+(-140 \omega_1+112) \mn+24 \omega_1-48\big)\lu
\nonumber\\&\quad{}+\ExB_{\lE}\Har_{3}\big(
(24 \mn^4-80 \mn^3+98 \mn^2-80 \mn+24)\lE
\nonumber\\&\qquad\quad{}+
40 \mn^5-316 \mn^4+620 \mn^3-292 \mn^2-20 \mn+24\big)\lu,\\
\label{eq:ExBlE((1+lE)2-2mnomega1)-2}
0&=
\ExB_{\lE}((1+\lE)^2-2 \mn \omega_{1})
\nonumber\\&\quad{}\times{} \big((16 \mn^2+61 \mn-102)\lE^{3}
\nonumber\\&\qquad\quad{}+
(176 \mn^3+332 \mn^2+21 \mn-558)\lE^{2}
\nonumber\\&\qquad\quad{}+
(-188 \mn^4+(40 \omega_1+106) \mn^3+(-6 \omega_1+1088) \mn^2\nonumber\\
&\qquad\qquad{}+(-88 \omega_1+74) \mn+48 \omega_1-1068)\lE
\nonumber\\&\qquad\quad{}+
48 \mn^5+(-144 \omega_1-132) \mn^4+(200 \omega_1-282) \mn^3\nonumber\\
&\qquad\qquad{}+(646 \omega_1+678) \mn^2+(-1180 \omega_1+420) \mn+480 \omega_1-720\big)\lu
\nonumber\\&\quad{}+\ExB_{\lE}\Har_{3}\big(
(56 \mn^5-56 \mn^4-450 \mn^3+1064 \mn^2-896 \mn+240)\lE
\nonumber\\&\qquad\quad{}+
96 \mn^6-600 \mn^5+112 \mn^4+2730 \mn^3-2844 \mn^2+280 \mn+240\big)\lu,
	\end{align}
\begin{align}
\label{eq:ExBlE((1+lE)2-2mnomega1)-3}
0&=
\ExB_{\lE}((1+\lE)^2-2 \mn \omega_{1})
\nonumber\\&\quad{}\times{} \big((60 \mn^2-135 \mn+30)\lE^{4}
\nonumber\\&\qquad\quad{}+
(1140 \mn^2-2565 \mn+570)\lE^{3}
\nonumber\\&\qquad\quad{}+
(3520 \mn^5-2845 \mn^4+(120 \omega_1+4970) \mn^3+(-270 \omega_1+1675) \mn^2\nonumber\\
&\qquad\qquad{}+(60 \omega_1-11580) \mn+2505)\lE^{2}
\nonumber\\&\qquad\quad{}+
(-3520 \mn^6+(1760 \omega_1+11230) \mn^5+(-4550 \omega_1-10490) \mn^4\nonumber\\
&\qquad\qquad{}+(7180 \omega_1+13010) \mn^3+(-8080 \omega_1+6570) \mn^2\nonumber\\
&\qquad\qquad{}+(3780 \omega_1-22110) \mn-450 \omega_1+4320)\lE
\nonumber\\&\qquad\quad{}+
(880 \mn^7+(-3520 \omega_1-4660) \mn^6+(14340 \omega_1+7480) \mn^5\nonumber\\
&\qquad\qquad{}+(240 \omega_1^2-26230 \omega_1-5875) \mn^4+(-540 \omega_1^2+37410 \omega_1+2050) \mn^3\nonumber\\
&\qquad\qquad{}+(120 \omega_1^2-35500 \omega_1+13280) \mn^2+(15300 \omega_1-15990) \mn-1800 \omega_1+2700)
\big)\lu
\nonumber\\&\quad{}+\ExB_{\lE}\Har_{3}\big(
(1760 \mn^7-4780 \mn^6+4310 \mn^5-7540 \mn^4+13100 \mn^3-13060 \mn^2+5850 \mn-900)\lE
\nonumber\\&\qquad\quad{}+
(1760 \mn^8+(352 \omega_1-17592) \mn^7+(-1664 \omega_1+53484) \mn^6\nonumber\\
&\qquad\qquad{}+(3568 \omega_1-89118) \mn^5+(-4628 \omega_1+114333) \mn^4+(3400 \omega_1-86520) \mn^3\nonumber\\
&\qquad\qquad{}+(-1664 \omega_1+22384) \mn^2+(384 \omega_1+2106) \mn-900)
\big)\lu.
\end{align}
Note that $\omega_{1}$'s are on the left side of $\ExB_{\lE}$ in \eqref{eq:w18w48-1}--\eqref{eq:w18w48-4}
but are on the right side of $\ExB_{\lE}$ in \eqref{eq:ExBlE((1+lE)2-2mnomega1)-0}--\eqref{eq:ExBlE((1+lE)2-2mnomega1)-3}.

\begin{enumerate}
\item
Assume $\Har_{3}\lu=0$
, $\lvE=(\omega_1-(\lE+1-\mn)^2/(2\mn))E_{\lE}\lu\neq 0$, and $\lE\neq \mn-2$.
By \eqref{eq:w18w48-1},
\begin{align}
\label{eq:dfrac3lE2+42mn +58lEmn}
0&=(2(\mn-2)(4\mn-3)\omega_1\nonumber\\
&\qquad{}-(3 \lE^2+42 \mn +58 \lE \mn +16 \laE^2 \mn -37 \mn ^2-24 \lE \mn ^2+8 \mn ^3))
\ExB_{\lE}\lv.
\end{align}
Then, by \eqref{eq:w18w48-2} and \eqref{eq:w18w48-3},
\begin{align}
0&=(10 \mn^2-4 \mn+3)(2 t-\mn+2) ((8 \mn-9) t-4 \mn^2+16 \mn-12),\nonumber\\
0&=(12 \mn^3+16 \mn^2-35 \mn+15) (2 t-\mn+2) ((8 \mn-9) t-4 \mn^2+16 \mn-12)
\end{align}
and hence 
\begin{align}
\label{eq:0=mn92mn2mnlE}
0&=(2 t-\mn+2) ((8 \mn-9) t-4 \mn^2+16 \mn-12).
\end{align}
By \eqref{eq:w18w48-3} and \eqref{eq:dfrac3lE2+42mn +58lEmn},
\begin{align}
\label{eq:(4032 mn5-2880 mn4+3360 mn3-2091 mn2+774 mn-108}
0&=
(4032 \mn^5-2880 \mn^4+3360 \mn^3-2091 \mn^2+774 \mn-108) t^3\nonumber\\
&\quad{}+(11264 \mn^7-38336 \mn^6+71692 \mn^5-87577 \mn^4\nonumber\\
&\qquad{}+82959 \mn^3-43593 \mn^2+12078 \mn-1431) t^2\nonumber\\
&\quad{}+(-11264 \mn^8+62768 \mn^7-146980 \mn^6+228928 \mn^5\nonumber\\
&\qquad{}-266274 \mn^4+218205 \mn^3-103356 \mn^2+26244 \mn-2916) t\nonumber\\
&\quad{}+2816 \mn^9-22304 \mn^8+72152 \mn^7-138308 \mn^6\nonumber\\
&\qquad{}+193478 \mn^5-203897 \mn^4+147876 \mn^3-64152 \mn^2+15282 \mn-1620.
\end{align}
If $2 t-\mn+2=0$, then $\lE\neq 0$ and by \eqref{eq:(4032 mn5-2880 mn4+3360 mn3-2091 mn2+774 mn-108}
\begin{align}
0&=t^2(4t+3)(352t^4+1356t^3+2080t^2+1452t+385),
\end{align}
a contradiction.
If $(8 \mn-9) t-4 \mn^2+16 \mn-12=0$, then as polynomials in $\mn$,
the right-hand side of \eqref{eq:(4032 mn5-2880 mn4+3360 mn3-2091 mn2+774 mn-108}
divided by $(8 \mn-9) t-4 \mn^2+16 \mn-12$ leaves a remainder of 
\begin{align}
\label{eq:(10560 mn-11880) t6+(76320 mn-169245/2) t5}
0&=(10560 \mn-11880) t^6+(76320 \mn-169245/2) t^5\nonumber\\
&\quad{}+(471705/2 \mn-2059785/8) t^4+(3187935/8 \mn-6397605/16) t^3\nonumber\\
&\quad{}+(3012585/8 \mn-5600025/16) t^2+(202005 \mn-356265/2) t\nonumber\\
&\quad{}+45900 \mn-39285.
\end{align}
Moreover, $(8 \mn-9) t-4 \mn^2+16 \mn-12$ divided by  the right-hand side of 
\eqref{eq:(10560 mn-11880) t6+(76320 mn-169245/2) t5} leaves a remainder of 
\begin{align}
0&=
\frac{-9 (t+1) (4 t+3)^2 (7 t+4) (7 t^2+10 t+7)}
{(5632 t^6+40704 t^5+125788 t^4+212529 t^3+200839 t^2+107736 t+24480)^2} \nonumber\\
&\quad{}\times (94556 t^4+495381 t^3+1010052 t^2+931824 t+326592)
\end{align}
and hence $\lE=-1$, which implies $\mn=1/2$ by \eqref{eq:(10560 mn-11880) t6+(76320 mn-169245/2) t5}, a contradiction.
 Thus, 
$\lE=\mn-2$
by \eqref{eq:0=mn92mn2mnlE} and \eqref{eq:omega1lv=lv} holds by \eqref{eq:dfrac3lE2+42mn +58lEmn}.
\item Assume $\omega_{1}\lu=\lu$ and $\Har_{3}\lu=\lu$.
By \eqref{eq:ExBlE((1+lE)2-2mnomega1)-0} 
and \eqref{eq:ExBlE((1+lE)2-2mnomega1)-1},
$tg_1(\mn)=tg_2(\mn)=0$ where
\begin{align}
\label{eq:g1(mn)=3(lE+1)2(lE+4)+}
g_1(\mn)&=3 (\lE+1)^2 (\lE+4)+
2 (4 \lE+7) (2 \lE^2+5 \lE+6)\mn\nonumber\\
&\quad{}-2 (8 \lE^2+45 \lE+70)\mn^{2}+4 (\lE+10)\mn^{3},\nonumber\\
g_2(\mn)&=-3 (\lE+1) (\lE+2) (3 \lE^2+12 \lE+17)\nonumber\\
&\quad{}+(8 \lE^4+60 \lE^3+211 \lE^2+300 \lE+117)\mn\nonumber\\
&\quad{}+2 (36 \lE^3+155 \lE^2+292 \lE+279)\mn^{2}\nonumber\\
&\quad{}-2 (39 \lE^2+224 \lE+409)\mn^{3}\nonumber\\
&\quad{}+20 (\lE+11)\mn^{4}.
\end{align}
We note that the degrees of $g_1(\mn)$ and $g_2(\mn)$
in $\mn$ are at most $3$ and $4$ respectively.
Assume $\lE\neq 0$. By \eqref{eq:g1(mn)=3(lE+1)2(lE+4)+}, 
\begin{align}
0&=2 (10 +\lE)^{2}g_1(\mn)-(-240 + 176 \lE +  51 \lE^2 +\lE^3+ 10(11 + \lE)(10+\lE) \mn)
g_2(\mn)
\nonumber\\
&= -3 (\lE+1) (7 \lE^5+212 \lE^4+1837 \lE^3+6152 \lE^2+9064 \lE+5840)\nonumber\\
&\quad{}-6 (79 \lE^5+465 \lE^4+90 \lE^3-3352 \lE^2-7666 \lE-5060)\mn\nonumber\\
&\quad{}+6 (61 \lE^4+409 \lE^3+866 \lE^2-880 \lE-2400)\mn^{2},
\end{align}
which is a polynomial of degree at most $2$ in $\mn$. 
Repeating this procedure to decrease the degrees of polynomials in $\mn$,
we finally obtain
\begin{align}
	\label{eq:g1(lE)=-3(-1 + lE)}
0&=(-1 + \lE) (1 + \lE) (2 + \lE)^2 (3 + \lE) 
	(-20 + \lE + 33 \lE^2) \nonumber\\
	&\quad{}\times (433 + 235 \lE + 	67 \lE^2 + 33 \lE^3)\nonumber\\
	&\quad{}\times  (-2400 - 880 \lE + 	866 \lE^2 + 409 \lE^3 + 61 \lE^4)^2 \nonumber\\
&\quad{}\times 	(5020 + 14072 \lE + 18476 \lE^2 + 
	13940 \lE^3 + 6272 \lE^4 + 1580 \lE^5 + 	175 \lE^6).
	\end{align}
Since $\lE$ is an integer, it follows from
 \eqref{eq:ExBlE((1+lE)2-2mnomega1)-0}, 
\eqref{eq:ExBlE((1+lE)2-2mnomega1)-1}, 	and \eqref{eq:g1(lE)=-3(-1 + lE)}
that $(\lE,\mn)=(-1,0),(1,2),(-3,2)$, or $(-2,1/2)$, a contradiction.
 Thus, $\lE=0$.
If $\omega_1\lu=(1/16)\lu$ (resp. $(9/16)\lu$) and $H_{3}\lu=(-1/128)\lu$ (resp. $(15/128)\lu$), then 
the same argument as above shows the results.
\end{enumerate}
\end{proof}

\begin{lemma}
\label{lemma:structure-Vlattice-M1-norm2}
Assume $\mn=2$.
Let $\lu$ be a non-zero element of $\Omega_{M(1)^{+}}(\mW)$.
We write 
\begin{align}
\lE&=\epsilon(\ExB,\lu)
\end{align}
for simplicity.
Then, 
\begin{align}
	\label{eq:(H0ElE1lu=dfrac194+7lE}
	(H_{0}E)_{\lE+3}\lu&=
	\dfrac{-1}{9 (4+7 \lE)}E_{\lE}
	(36+72 \Har_{3}+142 \lE+123 \lE^2+18 \lE^3\nonumber\\
	&\qquad{}+\lE^4+156 \omega_{1}-40 \lE \omega_1+8 \lE^2\omega_1-48 \omega_1^2)\lu.
\end{align}
We set
\begin{align}
\lv&=(\omega_1-\frac{(\lE+1)^2}{4})\lu.
\end{align}
We have
\begin{align}
E_{\lE}\lv=(\omega_1-\frac{(\lE-1)^2}{4})E_{\lE}\lu.
\end{align}
\begin{enumerate}
\item
Assume $\Har_{3}\lu=0$.
If $\ExB_{\lE}\lv\neq 0$, then $\lE=0$ and
\begin{align}
\omega_1\ExB_{0}\lv&=\ExB_{0}\lv.
\end{align}
If $\lu$ is an eigenvector of $\omega_1$ and $\lv\neq 0$, then 
\begin{align}
\label{eq:omega1omega11lu=0}
\omega_1\lu&=\lu.
\end{align}
\item
Let $(\zeta,\xi)\in\{(1,1),(1/16,-1/128),(9/16,15/128)\}$.
If $\omega_1\lu=\zeta\lu$ and $\Har_{3}\lu=\xi\lu$, then
$\lE=0$. 
\end{enumerate}
\end{lemma}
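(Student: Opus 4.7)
The plan is to mirror the strategy of Lemma~\ref{lemma:structure-Vlattice-M1}, but using the four $\mn=2$-specific relations \eqref{eq:6omega2E4omega0omega1E-1}--\eqref{eq:6omega2E4omega0omega1E-4} in place of $Q^{(4)}, Q^{(5,1)}, Q^{(5,2)}, Q^{(6)}$, and invoking Lemma~\ref{lemma:bound-H0-1E} (the essential input for $\mn=2$), which gives $\epsilon(\Har_{0}\ExB,\lu)\le\lE+3$ so that $(\Har_{0}\ExB)_{\lE+3}\lu$ is the sole surviving anomalous term that must be accounted for.

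First I would verify the identity $\ExB_{\lE}\lv=(\omega_{1}-(\lE-1)^2/4)\ExB_{\lE}\lu$ directly from \eqref{eq:wiEji}: with $\mn=2$ one has $[\omega_{1},\ExB_{\lE}]=-\lE\ExB_{\lE}$, so $\ExB_{\lE}\omega_{1}\lu=(\omega_{1}+\lE)\ExB_{\lE}\lu$, and combining with $\lv=(\omega_{1}-(\lE+1)^2/4)\lu$ yields the claim by a one-line computation. Next, to derive \eqref{eq:(H0ElE1lu=dfrac194+7lE}, I would apply \eqref{eq:6omega2E4omega0omega1E-2} at mode $\lE+4$ to $\lu$. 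Using Lemma~\ref{lemma:comm-change} iteratively, each composite mode $(\omega_{-i_{1}}\cdots\omega_{-i_{k}}\ExB)_{\lE+4}$ and $(\Har_{-j}\ExB)_{\lE+4}$ is put into normal form as a linear combination of monomials $a^{(1)}_{i_{1}}\cdots a^{(\ell)}_{i_{\ell}}\ExB_{m}b^{(1)}_{j_{1}}\cdots b^{(n)}_{j_{n}}$ with $a$-indices $\le 1$ for $\omega$ and $\le 2$ for $\Har$, and $b$-indices $\ge 2$ for $\omega$ and $\ge 3$ for $\Har$, exactly as in \eqref{eq:(omega-12E)wn-1}--\eqref{eq:(omega-12E)wn-2}. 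Because $\lu\in\Omega_{M(1)^{+}}(\mW)$ every $b$-operator annihilates $\lu$, so only terms with $m=\lE$ survive, and the right-hand operators collapse to linear combinations of $1,\omega_{1},\Har_{3},\omega_{1}^{2},\omega_{1}\Har_{3},\omega_{1}^{3},\Har_{3}^{2}$. The only $\Har_{0}\ExB$ contribution comes from $\omega_{0}(\Har_{0}\ExB)$ in \eqref{eq:6omega2E4omega0omega1E-2}; solving the resulting scalar identity for $(\Har_{0}\ExB)_{\lE+3}\lu$ produces \eqref{eq:(H0ElE1lu=dfrac194+7lE}, the denominator $4+7\lE$ being non-zero for $\lE\in\Z$.

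For part~(1), assume $\Har_{3}\lu=0$. Substituting \eqref{eq:(H0ElE1lu=dfrac194+7lE} into \eqref{eq:6omega2E4omega0omega1E-3} evaluated at mode $\lE+5$ and acting on $\lu$, and expanding via the same normal-ordering procedure, yields a scalar identity of the form $P(\lE,\omega_{1})\ExB_{\lE}\lu=0$. Using the first formula of the lemma one extracts the quadratic factor $\omega_{1}-(\lE+1)^2/4$ defining $\lv$; the remaining cofactor turns out to factor as a multiple of $\lE$ times a polynomial with no other integer roots. Under the hypothesis $\ExB_{\lE}\lv\neq 0$ this forces $\lE=0$, and then \eqref{eq:(H0ElE1lu=dfrac194+7lE} specialized at $\lE=0$ together with the same scalar identity reads off $\omega_{1}\ExB_{0}\lv=\ExB_{0}\lv$. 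The eigenvector assertion \eqref{eq:omega1omega11lu=0} follows at once from $\lv\neq 0$ and the definition of $\lv$. For part~(2), substitute the three scalar pairs $(\zeta,\xi)\in\{(1,1),(1/16,-1/128),(9/16,15/128)\}$ into the same identity (bringing in \eqref{eq:6omega2E4omega0omega1E-4} at mode $\lE+6$ if needed), obtaining in each case a polynomial in $\lE$ alone whose only integer root is checked to be $\lE=0$, exactly as in parts (2)--(4) of Lemma~\ref{lemma:structure-Vlattice-M1}.

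The main obstacle is the sheer volume of symbolic computation hidden in the normal-ordering and polynomial-factoring steps: each expansion of an order-$\lE+k$ mode of a product of several $\omega$'s, $\Har$'s and $\ExB$ produces many terms whose coefficients are rational functions in $\lE$, and the resulting polynomial identities in $(\lE,\omega_{1},\Har_{3})$ are too large to manage by hand. As in the $\mn\neq 2,1/2$ case treated in Lemma~\ref{lemma:structure-Vlattice-M1}, I would delegate these verifications to Risa/Asir and refer to them simply as \emph{direct computations}, reserving the narrative argument for the factoring pattern that extracts $\omega_{1}-(\lE+1)^2/4$ and the integer-root analysis that pins down $\lE=0$.
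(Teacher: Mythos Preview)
Your strategy---normal-order the $p=2$ relations of Lemma~\ref{lemma:relations-M(1)-V(lattice)+}, act on $u\in\Omega_{M(1)^{+}}(\mW)$, and factor the resulting scalar identities---is exactly the paper's. The verification of $E_{\lE}v=(\omega_{1}-(\lE-1)^{2}/4)E_{\lE}u$ from \eqref{eq:wiEji} and the derivation of \eqref{eq:(H0ElE1lu=dfrac194+7lE} from \eqref{eq:6omega2E4omega0omega1E-2} at mode $\lE+4$ match the paper.

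The real gap is in part~(1). You skip the simplest relation \eqref{eq:6omega2E4omega0omega1E-1}; its $(\lE+3)$-th mode on $u$ gives directly
\[
0=\lE\bigl((1-\lE)^{2}-4\omega_{1}\bigr)E_{\lE}u,
\]
which up to a scalar is $\lE\cdot E_{\lE}v=0$. This is how the paper forces $\lE=0$ when $E_{\lE}v\neq 0$. Your substitute claim---that after substituting \eqref{eq:(H0ElE1lu=dfrac194+7lE} into \eqref{eq:6omega2E4omega0omega1E-3} at mode $\lE+5$ the cofactor of the $E_{\lE}v$-factor carries an overall factor of $\lE$---is false: at $\lE=0$ (with $H_{3}u=0$) that combined polynomial is a non-zero multiple of $\omega_{1}(\omega_{1}-\tfrac14)(\omega_{1}-1)$, so after removing $(\omega_{1}-\tfrac14)$ one is left with $\omega_{1}(\omega_{1}-1)$, not zero. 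The paper first obtains $\lE=0$ from \eqref{eq:6omega2E4omega0omega1E-1}, and only then uses the elimination of $(H_{0}E)_{\lE+3}$ from \eqref{eq:6omega2E4omega0omega1E-2}--\eqref{eq:6omega2E4omega0omega1E-4} to read off $(\omega_{1}-1)(\omega_{1}-\tfrac14)E_{0}u=0$, whence $\omega_{1}E_{0}v=E_{0}v$.

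Two minor corrections. Relation \eqref{eq:6omega2E4omega0omega1E-4} has weight $8$ and must be applied at mode $\lE+7$, not $\lE+6$. And the second clause of~(1) does not follow ``at once from the definition of $v$'': you need the first clause to get $\lE=0$ and the quadratic constraint on $\omega_{1}$, and then $v\neq 0$ (i.e.\ the eigenvalue $\neq(\lE+1)^{2}/4=1/4$) to exclude the root $1/4$, leaving $\omega_{1}u=u$.
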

\begin{proof}
	Taking the $(\lE+3)$-th action of \eqref{eq:6omega2E4omega0omega1E-1},
the $(\lE+4)$-th action of \eqref{eq:6omega2E4omega0omega1E-2}, 
the $(\lE+5)$-th action of \eqref{eq:6omega2E4omega0omega1E-3}, and 
the $(\lE+7)$-th action of \eqref{eq:6omega2E4omega0omega1E-4} on $\lu$,
we have 
\begin{align}
0&=\lE((1-\lE)^2-4\omega_1)E_{\lE}\lu
\label{eq:lE1lE24omega1ElElu-0}\\
&
=E_{\lE}\lE((1+\lE)^2-4\omega_1)\lu,
\label{eq:lE1lE24omega1ElElu-0-1}\\
0&=9(4 + 7 \lE)(H_{0}E)_{\lE+3}\lu+72 E_{\lE}\Har_{3}\lu\nonumber\\
&\quad{}+(36 + 298 \lE + 35 \lE^2 + 26 \lE^3 + \lE^4 \nonumber\\
&\qquad{}+ 156 \omega_{1} - 136 \lE \omega_{1} + 
 8 \lE^2 \omega_{1} - 48 \omega_{1}^2)E_{\lE}\lu
\label{eq:lE1lE24omega1ElElu-1}\\
&=\ExB_{\lE}\Big((t+1) ( \lE ^3+17  \lE ^2+106  \lE +36) +4 (2  \lE ^2-10  \lE +39)\omega_{1 } -48 \omega_{
1 }^2\Big)\lu\nonumber\\&\quad{}
+9 (7  \lE +4)(H_{0}E)_{ \lE +3 }\lu +72E_{ \lE  } H_{3 }\lu,
\label{eq:lE1lE24omega1ElElu-4}\\
0&=9 (-520 - 959 \lE + 33 \lE^2 - 100 \omega_{1})(H_{0}E)_{\lE+3}\lu+72  (-155 + 44 \lE) E_{\lE}\Har_{3}\lu\nonumber\\
&\quad{}-8 (585 + 4492 \lE + 1576 \lE^2 - 210 \lE^3 + 62 \lE^4 \nonumber\\
&\qquad{}+ 2 \lE^5 + 2685 \omega_{1} + 32 \lE \omega_{1} - 192 \lE^2 \omega_{1} + 16 \lE^3 \omega_{1} \nonumber\\
&\qquad{}- 480 \omega_{1}^2 -   96 \lE \omega_{1}^2)E_{\lE}\lu,
\label{eq:lE1lE24omega1ElElu-3}\\
&=E_{ \lE  }\Big(
-8 ( \lE +1) (2  \lE ^4+44  \lE ^3-158  \lE ^2+1222  \lE +585)\nonumber\\
&\qquad{}-8 (16  \lE ^3+992  \lE +2685) \omega_{1 }
+768 ( \lE +5) \omega_{1 }^2\Big)\lu\nonumber\\
&\quad{}+72 (44  \lE -155)E_{ \lE  } H_{3 }\lu \nonumber\\
&\quad{}+(H_{0}E)_{ \lE +3 }\big(9 (33  \lE ^2-859  \lE -520) -900\omega_{1 }\big)\lu,
\label{eq:lE1lE24omega1ElElu-5}
\end{align}
\begin{align}
0&=
-8 (52787700 t^7+1097587588 t^6+5494080415 t^5-89625113568 t^4+68909700044 t^3\nonumber\\
&\qquad{}+2468574039524 t^2+3786872840265 t+493804109430)\ExB_{t } \lu
\nonumber\\&\quad{}
+16 (52787700 t^5+336785348 t^4+16075086171 t^3-110729180408 t^2\nonumber\\
&\qquad{}-710794593411 t-1166720253525)\omega_{1 } \ExB_{t } \lu
\nonumber\\&\quad{}
+3 (5886227459 t^4+64230119866 t^3-465363710675 t^2-2778231175402 t\nonumber\\
&\qquad{}-1316641482180)(\Har_{0}\ExB)_{t+3 } \lu
\nonumber\\&\quad{}
+72 (743028209 t^3+17731219498 t^2+23020475889 t-140972980110)\ExB_{t } \Har_{3 } \lu
\nonumber\\&\quad{}
-60 (146187286 t^2+9549468148 t+19688188167)\omega_{1 } (\Har_{0}\ExB)_{t+3 } \lu
\nonumber\\&\quad{}
-28394226000\omega_{1 } \omega_{1 } (\Har_{0}\ExB)_{t+3 } \lu
\nonumber\\&\quad{}
+1536 (93467197 t^2+449390927 t+1282501650)\omega_{1 } \omega_{1 } \ExB_{t } \lu
\nonumber\\&\quad{}
+14515200 (931 t+661)\omega_{1 } \omega_{1 } \omega_{1 } \ExB_{t } \lu
\nonumber\\&\quad{}
+67132800 (931 t+661)\omega_{1 } \ExB_{t } \Har_{3 } \lu
\nonumber\\&\quad{}
+35590023000(\Har_{0}\ExB)_{t+3 } \Har_{3 } \lu \label{eq:lE1lE24omega1ElElu-6}\\
&=
-8 (t+1) (52787700 t^6+1150375288 t^5+5017275823 t^4-78748712473 t^3\nonumber\\
&\qquad{}-158883687883 t^2+959628223785 t+493804109430)\ExB_{t }\lu \nonumber\\&\quad{}
+16 (52787700 t^5+336785348 t^4+663193947 t^3-195213260792 t^2\nonumber\\
&\qquad{}-957034910211 t-1166720253525)\ExB_{t } \omega_{1 }\lu \nonumber\\&\quad{}
+3 (5886227459 t^4+67153865586 t^3-283839089715 t^2-2384467412062 t\nonumber\\
&\qquad{}-1316641482180)(\Har_{0}\ExB)_{t+3 }\lu \nonumber\\&\quad{}
+72 (743028209 t^3+16863155098 t^2+22404159489 t-140972980110)\ExB_{t } \Har_{3 }\lu \nonumber\\&\quad{}
-60 (146187286 t^2+8602993948 t+19688188167)(\Har_{0}\ExB)_{t+3 } \omega_{1 }\lu \nonumber\\&\quad{}
-28394226000(\Har_{0}\ExB)_{t+3 } \omega_{1 }^2\lu\nonumber\\&\quad{}
+1536 (67073347 t^2+430651577 t+1282501650)\ExB_{t } \omega_{1 }^2\lu  \nonumber\\&\quad{}
+14515200 (931 t+661)\ExB_{t } \omega_{1 }^3\lu \nonumber\\&\quad{}
+67132800 (931 t+661)\ExB_{t } \Har_{3 } \omega_{1 }\lu \nonumber\\&\quad{}
+35590023000(\Har_{0}\ExB)_{t+3 } \Har_{3 }\lu.\label{eq:lE1lE24omega1ElElu-7}
\end{align}
Note that $\omega_{1}$'s are on the left side of $\ExB_{\lE}$ in 
\eqref{eq:lE1lE24omega1ElElu-0},
\eqref{eq:lE1lE24omega1ElElu-1}, \eqref{eq:lE1lE24omega1ElElu-3}, and \eqref{eq:lE1lE24omega1ElElu-6}
but are on the right side of $\ExB_{\lE}$ in 
\eqref{eq:lE1lE24omega1ElElu-0-1},
\eqref{eq:lE1lE24omega1ElElu-4}, \eqref{eq:lE1lE24omega1ElElu-5}, and \eqref{eq:lE1lE24omega1ElElu-7}.
By Lemma \ref{lemma:bound-H0E} and \eqref{eq:lE1lE24omega1ElElu-1}, we have \eqref{eq:(H0ElE1lu=dfrac194+7lE}.
Deleting the terms including $(H_{0}E)_{\lE+3}\lu$
from the simultaneous equations \eqref{eq:lE1lE24omega1ElElu-1}, \eqref{eq:lE1lE24omega1ElElu-3}, and \eqref{eq:lE1lE24omega1ElElu-6},
we have 
\begin{align}
0&=360(-4 + 2 \lE + 11 \lE^2 + 4 \omega_{1})E_{\lE}\Har_{3}\lu\nonumber\\
&\quad{}+((1 - \lE)^2 - 4\omega_{1})
\big(
-\lE (-2596 - 5354 \lE + 745 \lE^2 + 29 \lE^3)\nonumber\\
&\qquad{}-4 (60 - 341 \lE + 82 \lE^2)\omega_{1}-240 \omega_{1}^2
\big)E_{\lE}\lu,\nonumber\\
0&=
-360 (-4 + 2 \lE + 11 \lE^2 + 4 \omega_{1})\ExB_{\lE}\Har_{3}\lu\nonumber\\
&\quad{}+((1 - \lE)^2 - 4\omega_{1}) \big(29 \lE^4+745 \lE^3+(328 \omega_{1}-5354) \lE^2\nonumber\\
&\qquad{}+(-1364 \omega_{1}-2596) \lE-240 \omega_{1}^2+240 \omega_{1}\big)\ExB_{\lE} \lu.
\label{eq:lE47lE1lE24omega1-2}
\end{align}
By \eqref{eq:lE1lE24omega1ElElu-0}, $\lE=0$ or $((1 - \lE)^2-4\omega_{1})E_{\lE}\lu=0$.
If $\lE=0$ and  $\Har_{3}\lu=0$, then by \eqref{eq:lE47lE1lE24omega1-2},
\begin{align}
0&=(\omega_1-1)(\omega_1-\dfrac{1}{4})E_{\lE}\lu,
\end{align}
which finishes (1).

By using \eqref{eq:lE1lE24omega1ElElu-0-1}, \eqref{eq:lE1lE24omega1ElElu-4}, \eqref{eq:lE1lE24omega1ElElu-5}, 
 and \eqref{eq:lE1lE24omega1ElElu-7},
the same argument as above shows (2).
\end{proof}

\begin{lemma}
	\label{lemma:structure-Vlattice-M1-norm1-2}
	Assume $\mn=1/2$.
	Let $U$ be an $A(M(1)^{+})$-submodule of $\Omega_{M(1)^{+}}(\mW)$,
	$\lu$ a simultaneous eigenvector of $\{\omega_1,\Har_{3}\}$ in $U$.
	We write 
	\begin{align}
		\lE&=\epsilon(\ExB,\lu)
	\end{align}
	for simplicity. Then,
	\begin{align}
		\label{eq:omega1lu=(1+lE)2lu}
		\omega_1\lu&=(1+\lE)^2\lu.
	\end{align}
	If	$\lE\neq -1$, then
	\begin{align}
		\label{eq:(Har0E)lE+3lu=ExB-0}
		(\Har_{1}E)_{\lE+2}\lu&=\ExB_{\lE}\big(\frac{3}{(1 + \lE)
			(3 + 2\lE)}\Har_{3} + (1 + \lE)\big)\lu,
	\end{align} 
	and if $\lE=-1$, then
	\begin{align}
		\label{eq:(Har0E)lE+3lu=ExB-1}
		\Har_{3}\lu&=0.
	\end{align} 
\end{lemma}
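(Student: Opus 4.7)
The plan is to apply the three special relations \eqref{eq:norm1-2-0}, \eqref{eq:norm1-2-1}, \eqref{eq:norm1-2-2} (which hold only for $\mn=1/2$) at appropriate modes acting on $\lu$, in the same spirit as Lemmas \ref{lemma:structure-Vlattice-M1} and \ref{lemma:structure-Vlattice-M1-norm2}. Throughout, Lemma \ref{lemma:comm-change} is used to rewrite each composite mode $(a_k b)_n$ so that $\omega_i$ with $i\geq 2$ and $\Har_i$ with $i\geq 4$ appear on the right of $\lu$, where they annihilate since $\lu\in\Omega_{M(1)^{+}}(\mW)$.

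First I would extract \eqref{eq:omega1lu=(1+lE)2lu} by taking the $(\lE+2)$-nd mode of $\omega_{-1}\ExB-\omega_0^2\ExB=0$ on $\lu$: using Lemma \ref{lemma:comm-change} together with the $L(-1)$-identity $(\omega_0 a)_n=-n\,a_{n-1}$ and $(\omega_1\ExB)_n=\tfrac14\ExB_n$ (since $\wt\ExB=\mn/2=1/4$), the identity reduces to $\omega_1\ExB_\lE\lu=(\lE+\tfrac12)^2\ExB_\lE\lu$. Combining with the commutator $[\omega_1,\ExB_\lE]=(-\tfrac34-\lE)\ExB_\lE$ from \eqref{eq:wiEji} and the eigenvalue hypothesis $\omega_1\lu=\mu\lu$ forces $\mu=(\lE+1)^2$.

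Next I would take the $(\lE+3)$-rd mode of \eqref{eq:norm1-2-1} on $\lu$. Expanding $(\omega_{-2}\ExB)_{\lE+3}$ via Lemma \ref{lemma:comm-change} and iterating the $L(-1)$-identity on $\omega_0(\Har_1\ExB)$, after substituting the value of $\omega_1\ExB_\lE\lu$ one obtains
\begin{align*}
(\Har_0\ExB)_{\lE+3}\lu=\tfrac{2(2\lE+3)(\lE+1)}{3}\ExB_\lE\lu-\tfrac{4(\lE+3)}{3}(\Har_1\ExB)_{\lE+2}\lu.
\end{align*}
The commutator $[\Har_3,\ExB_\lE]\lu=(\Har_0\ExB)_{\lE+3}\lu+3(\Har_1\ExB)_{\lE+2}\lu+3(\Har_2\ExB)_{\lE+1}\lu$ (using $\Har_2\ExB=\tfrac13\omega_0\ExB$ from \eqref{eq:Har1ExB=dfrac2mn2mn1omega1ExB} and $\Har_i\ExB=0$ for $i\geq 3$) then converts this into an expression for $\Har_3\ExB_\lE\lu$ in terms of $\ExB_\lE\Har_3\lu$, $\ExB_\lE\lu$, and $(\Har_1\ExB)_{\lE+2}\lu$.

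Finally I take the $(\lE+4)$-th mode of \eqref{eq:norm1-2-2} on $\lu$, expanding the five composite modes $(\omega_{-3}\ExB)_{\lE+4}$, $(\Har_{-1}\ExB)_{\lE+4}$, $(\omega_{-1}(\Har_1\ExB))_{\lE+4}$, $(\omega_0\omega_{-2}\ExB)_{\lE+4}$, $(\omega_0^2(\Har_1\ExB))_{\lE+4}$ via Lemma \ref{lemma:comm-change}, together with the eigenvalue computation $\omega_1(\Har_1\ExB)_{\lE+2}\lu=(\lE+\tfrac12)^2(\Har_1\ExB)_{\lE+2}\lu$ carried out in the same way. Substituting the formulas for $(\Har_0\ExB)_{\lE+3}\lu$ and $\Har_3\ExB_\lE\lu$ from the preceding step, the identity collapses to
\begin{align*}
(\lE+1)(2\lE+3)(\Har_1\ExB)_{\lE+2}\lu=(\lE+1)^2(2\lE+3)\ExB_\lE\lu+3\ExB_\lE\Har_3\lu.
\end{align*}
For integer $\lE\neq -1$ both factors on the left are nonzero (note $2\lE+3\neq 0$ since $\lE\in\Z$), so division yields \eqref{eq:(Har0E)lE+3lu=ExB-0}; for $\lE=-1$ the left side vanishes while the right becomes $3\nu\,\ExB_{-1}\lu$ where $\Har_3\lu=\nu\lu$, and since $\ExB_{-1}\lu\neq 0$ by definition of $\lE$, we conclude $\nu=0$, giving \eqref{eq:(Har0E)lE+3lu=ExB-1}. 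The main technical obstacle is the bookkeeping of the many correction terms produced by Lemma \ref{lemma:comm-change} across the five expansions at level $\lE+4$ and their subsequent cancellation; each individual expansion, however, is entirely algorithmic.
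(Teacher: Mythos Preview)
Your proof is correct and follows essentially the same approach as the paper: apply the $(\lE+2)$-nd mode of \eqref{eq:norm1-2-0}, the $(\lE+3)$-rd mode of \eqref{eq:norm1-2-1}, and the $(\lE+4)$-th mode of \eqref{eq:norm1-2-2} to $\lu$, expand via Lemma~\ref{lemma:comm-change}, and solve the resulting linear relation. Your final identity
\[
(\lE+1)(2\lE+3)(\Har_1\ExB)_{\lE+2}\lu=(\lE+1)^2(2\lE+3)\ExB_\lE\lu+3\ExB_\lE\Har_3\lu
\]
is exactly what the paper's displayed equation reduces to after substituting $\omega_1\lu=(1+\lE)^2\lu$. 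The only cosmetic difference is that the paper expands with $\omega_1$ and $\Har_3$ placed directly to the right of $\ExB_\lE$, whereas you reach the same configuration via the separate commutator step $[\Har_3,\ExB_\lE]$; this is purely organizational.
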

\begin{proof}
	Taking the $(\lE+2)$-th action of \eqref{eq:norm1-2-0} on $\lu$, we have
	\begin{align}
		\omega_{1}\ExB_{\lE}\lu&=(\lE+\frac{1}{2})^2\ExB_{\lE}\lu
	\end{align}
	and hence \eqref{eq:omega1lu=(1+lE)2lu} holds.
	Taking the $(\lE+3)$-th action of \eqref{eq:norm1-2-1}
	and the $(\lE+4)$-the action of \eqref{eq:norm1-2-2} on $\lu$, we have
	\begin{align}
		0&=
		8(\lE+1)E_{\lE}\omega_{1 }\lu -(\lE+1)(11\lE+15)(\Har_{1}E)_{\lE+2}\lu \nonumber\\&\quad{}
		+4(\lE+1)^2E_{\lE}\lu +12E_{\lE}\Har_{3 } \lu 
		+3(\Har_{1}E)_{\lE+2}\omega_{1 }\lu,
	\end{align}
	and hence \eqref{eq:(Har0E)lE+3lu=ExB-0}
	and \eqref{eq:(Har0E)lE+3lu=ExB-1} by \eqref{eq:omega1lu=(1+lE)2lu}.
\end{proof}

\section{Modules for the Zhu algebra of $M(1)^{+}$ in a weak $V_{\lattice}^{+}$-module: the general case}
\label{section:Modules for the Zhu algera of general}
Let $L$ be a non-degenerate even lattice.
In this section, we shall show that there exists an irreducible $A(M(1)^{+})$-module in an arbitrary non-zero 
weak $V_{\lattice}^{+}$-module.

Throughout this section $M$ is a weak $V_{\lattice}^{+}$-module.
Let $h^{[1]},\ldots,h^{[\rankL]}$ be an orthonormal basis of $\fh$.
For $i=1,\ldots, \rankL$,
we define
\begin{align}
\label{eq:def-oega-i-H-i}
\omega^{[i]}&=\dfrac{1}{2}h^{[i]}(-1)^2,\nonumber\\
\omega&=\omega^{[1]}+\cdots+\omega^{[\rankL]},\nonumber\\
J^{[i]}&=h^{[i]}(-1)^4\vac-2h^{[i]}(-3)h^{[i]}(-1)\vac+\dfrac{3}{2}h^{[i]}(-2)^2\vac,\nonumber\\
\Har^{[i]}&=\dfrac{1}{3}h^{[i]}(-3)h^{[i]}(-1)\vac-\dfrac{1}{3}h^{[i]}(-2)^2\vac.
\end{align}
We recall the following notation and some results from \textcolor{black}{\cite[Sections 4 and 5]{DN2001}}:
for any pair of distinct elements $i,j\in\{1,\ldots,\rankL\}$ and $l,m\in\Z_{>0}$,
\begin{align}
\nS_{ij}(l,m)&=h^{[i]}(-l)h^{[j]}(-m),\nonumber\\
E^{u}_{ij}&=5\nS_{ij}(1,2)+25\nS_{ij}(1,3)+36\nS_{ij}(1,4)+16\nS_{ij}(1,5),\nonumber\\
E^{t}_{ij}&=-16\nS_{ij}(1,2)+145\nS_{ij}(1,3)+19\nS_{ij}(1,4)+8\nS_{ij}(1,5),\nonumber\\
\Lambda_{ij}
&=45\nS_{ij}(1,2)+190\nS_{ij}(1,3)+240\nS_{ij}(1,4)+96\nS_{ij}(1,5).
\end{align}
It follows from \cite[Proposition 5.3.14]{DN2001} that in $A(M(1)^{+})$, $A^{u}=\oplus_{i,j}\C E^{u}_{ij}$ and 
$A^{t}=\oplus_{i,j}\C E^{t}_{ij}$ are two-sided ideals, each of  which is isomorphic to the $\rankL\times \rankL$ matrix algebra
and $A^{u}A^{t}=A^{t}A^{u}=0$. 
By \cite[Proposition 5.3.15]{DN2001},
$A(M(1)^{+})/(A^{u}+A^{t})$ is a commutative algebra generated by
the images of $\omega^{[i]},\Har^{[i]}$ and $\Lambda_{jk}$ where
$i=1,\ldots,\rankL$ and $j,k\in \{1,\ldots,\rankL\}$ with $j\neq k$.

For $\lambda\in\fh$, $i,j\in\{1,\ldots,\rankL\}$ with $i\neq j$,
\begin{align}
\label{eq:norm2Si1(1,1)1lu=-Si1(1,2)2lu}
\omega^{[i]}_{1}e^{\lambda}
&=\frac{\langle\lambda,h^{[i]}\rangle^2}{2}e^{\lambda},\nonumber\\
\Har^{[i]}_{3}e^{\lambda}&=0,\nonumber\\
S_{ij}(1,1)_{1}e^{\lambda}&=-S_{ij}(1,2)_{2}e^{\lambda}
=S_{ij}(1,3)_{3}=\langle\lambda,h^{[i]}\rangle\langle\lambda,h^{[j]}\rangle e^{\lambda}
\end{align}
\begin{align}
\label{eq:norm2Si1(1,1)1lu=-Si1(1,2)2lu-2}
\omega^{[i]}_{1}h^{[j]}(-1)\vac
&=\delta_{ij}h^{[j]}(-1)\vac,\nonumber\\
\Har^{[i]}_{3}h^{[j]}(-1)\vac&=\delta_{ij}h^{[j]}(-1)\vac,\nonumber\\
S_{ij}(1,1)_{1}h^{[j]}(-1)\vac&=h^{[i]}(-1)\vac,\nonumber\\
S_{ij}(1,2)_{2}h^{[j]}(-1)\vac&=-2h^{[i]}(-1)\vac,\nonumber\\
S_{ij}(1,3)_{3}h^{[j]}(-1)\vac&=3h^{[i]}(-1)\vac,
\end{align}
and 
\begin{align}
\label{eq:norm2Si1(1,1)1lu=-Si1(1,2)2lu-twist-0}
\omega^{[i]}_{1}\vac_{\tw}&=\delta_{ij}\frac{1}{16} \vac_{\tw},\nonumber\\
\Har^{[i]}_{3} \vac_{\tw}&=\delta_{ij}\frac{-1}{128} \vac_{\tw},\nonumber\\
S_{ij}(1,1)_{1} \vac_{\tw}&=
S_{ij}(1,2)_{2} \vac_{\tw}=
S_{ij}(1,3)_{3} \vac_{\tw}=0,
\end{align}
\begin{align}
\label{eq:norm2Si1(1,1)1lu=-Si1(1,2)2lu-twist-1}
\omega^{[i]}_{1}h^{[j]}(-\frac{1}{2})\vac_{\tw}&=\delta_{ij}\frac{9}{16}h^{[j]}(-\frac{1}{2})\vac_{\tw},\nonumber\\
\Har^{[i]}_{3}h^{[j]}(-\frac{1}{2})\vac_{\tw}&=\delta_{ij}\frac{15}{128}h^{[j]}(-\frac{1}{2})\vac_{\tw},\nonumber\\
S_{ij}(1,1)_{1}h^{[j]}(-\frac{1}{2})\vac_{\tw}&=\frac{1}{2}h^{[i]}(-\frac{1}{2})\vac_{\tw},\nonumber\\
S_{ij}(1,2)_{2}h^{[j]}(-\frac{1}{2})\vac_{\tw}&=\frac{-3}{4}h^{[i]}(-\frac{1}{2})\vac_{\tw},\nonumber\\
S_{ij}(1,3)_{3}h^{[j]}(-\frac{1}{2})\vac_{\tw}&=\frac{15}{16}h^{[i]}(-\frac{1}{2})\vac_{\tw}.
\end{align}
A direct computation shows that
\begin{align}
\label{eq:s14-15}
\nS_{ij}(1,4)&=\dfrac{-1}{3}\omega^{[j]}_{-2}\nS_{ij}(1,1)
+\dfrac{2}{3}\omega^{[j]}_{-1}\nS_{ij}(1,2),\nonumber\\
\nS_{ij}(1,5)&=
\dfrac{4}{93}(\omega^{[j]}_{-1})^2\nS_{ij}(1,1)
+\dfrac{-1}{93}J^{[j]}_{-1}\nS_{ij}(1,1)\nonumber\\
&\quad{}+\dfrac{9}{62}\omega^{[j]}_{-2}\nS_{ij}(1,2)
+\dfrac{4}{93}\omega^{[j]}_{-1}\nS_{ij}(1,3)\nonumber\\
&=
\frac{1}{7}\Har^{[j]}_{-1 } S_{ij}(1,1)
+\frac{1}{21}\omega^{[j]}_{-2 } S_{ij}(2,1)
+\frac{-2}{21}\omega^{[j]}_{-1 } S_{ij}(3,1),
\end{align}

\begin{align}
	\label{eq:[h[j](l),Sij(1,1)m]}
	[h^{[j]}(l),S_{ij}(1,1)_{m}]&=h^{[i]}(l+m-1),\nonumber\\
	[h^{[j]}(l),S_{ij}(1,2)_{m}]&=2h^{[i]}(l+m-2),\nonumber\\
	[h^{[j]}(l),S_{ij}(1,3)_{m}]&=3h^{[i]}(l+m-3),\nonumber\\
	[h^{[i]}(l),S_{ij}(1,2)_{m}]&=(-l-m+1)h^{[j]}(l+m-2),\nonumber\\
	[h^{[i]}(l),S_{ij}(1,3)_{m}]&=\binom{-l-m+1}{2}h^{[j]}(l+m-3),
\end{align}
\begin{align}
S_{ij}(2,1)&=
\omega_{0 } S_{ij}(1,1)
-S_{ij}(1,2),
\nonumber\\
S_{ij}(3,1)
&=\frac{1}{2}\omega_{0 }^{2}S_{ij}(1,1)
-\omega_{0 } S_{ij}(1,2)
+S_{ij}(1,3),\nonumber\\
S_{ij}(2,2)
&=
\omega_{0 } S_{ij}(1,2)
-2S_{ij}(1,3),\nonumber\\
S_{ij}(3,2)
&=-\omega^{[j]}_{-2 } S_{ij}(1,1)
+2\omega^{[j]}_{-1 } S_{ij}(1,2)
\nonumber\\&\quad{}
+\frac{1}{2}\omega_{0 }^{2}S_{ij}(1,2)
-2\omega_{0 } S_{ij}(1,3),\nonumber\\
S_{ij}(3,3)
&=
\frac{-1}{2}\omega_{0 } \omega^{[j]}_{-2 } S_{ij}(1,1)
+\frac{3}{2}\omega^{[i]}_{-2 } S_{ij}(1,2)
\nonumber\\&\quad{}
-\omega_{0 } \omega^{[i]}_{-1 } S_{ij}(1,2)
+\omega_{0 } \omega^{[j]}_{-1 } S_{ij}(1,2)
\nonumber\\&\quad{}
+\frac{1}{4}\omega_{0 }^{3}S_{ij}(1,2)
+2\omega^{[i]}_{-1 } S_{ij}(1,3)
\nonumber\\&\quad{}
-\omega_{0 }^{2}S_{ij}(1,3).
\end{align}

For distinct $i,j,k\in\{1,\ldots,\rankL\}$ and $l,m\in\Z$,
a direct computation shows that
\begin{align}
[\omega^{[j]}_{l},\nS_{ij}(1,1)_{m}]&=\nS_{ij}(1,2)_{l+m}+l\nS_{ij}(1,1)_{l+m-1},\nonumber\\
[\nS_{ij}(1,1)_{l},\nS_{ij}(1,1)_{m}]&=(l-m)(\omega^{[i]}+\omega^{[j]})_{l+m-1}+\binom{l}{3}\delta_{l+m-3,-1},\nonumber\\
[S_{kj}(1,1)_{l},S_{ij}(1,1)_{m}]&=S_{ik}(1,2)_{l+m}+lS_{ik}(1,1)_{l+m-1}.
\label{eq:omega-s11}
\end{align}
Let $\lu$ be a non-zero element of $\module$ such that $\epsilon(\omega^{[i]},\lu)\leq 1$ and $\epsilon(\Har^{[i]},\lu)\leq 3$ for all $i=1,\ldots,d$.
Let $i,j$ be a pair of distinct elements of $\{1,\ldots,\rankL\}$.
We write $\epsilon_{S}=\epsilon(S_{\wi\wj}(1,1),\lu)$ for simplicity. A direct computation shows that
\begin{align}
	\label{eq:action[H4-S]}
	&[\Har^{[\wj]}_3,S_{\wi\wj}(1,1)_{\epsilon_{S}}]\lu\nonumber\\
	&=\Big(S_{\wi\wj}(1,1)_{\epsilon_{S}}(1+4\omega^{[\wj]}_{1})+S_{\wi\wj}(1,2)_{\epsilon_{S}+1}(5+4\omega^{[\wj]}_{1})+4S_{\wi\wj}(1,3)_{\epsilon_{S}+2}\Big)\lu,\nonumber\\
	&[\Har^{[\wj]}_3,S_{\wi\wj}(1,2)_{\epsilon_{S}+1}]\lu\nonumber\\
	&=
	\Big(S_{ij}(1,1)_{\epsilon_{S} }(\frac{-4}{7}\omega^{[\wj]}_{1 } +\frac{24}{7}\Har^{[\wj]}_{3 })
	+S_{ij}(1,2)_{\epsilon_{S}+1 } (\dfrac{2}{7}+\frac{-20}{7}\omega^{[\wj]}_{1 })\nonumber\\
	&\quad{}+S_{ij}(1,3)_{\epsilon_{S}+2 }(\dfrac{2}{7}+\frac{-16}{7}\omega^{[\wj]}_{1 })\Big)\lu,\nonumber\\
	&[\Har^{[\wj]}_3,S_{\wi\wj}(1,3)_{\epsilon_{S}+2}]\lu\nonumber\\
	&=
	\Big(S_{ij}(1,1)_{\epsilon_{S} }(\frac{-598}{203}\omega^{[\wj]}_{1 } +\frac{144}{203}\Har^{[\wj]}_{3 }) \nonumber\\
	&\quad{}+S_{ij}(1,2)_{\epsilon_{S}+1 } (\frac{37}{203}+\frac{1534}{203}\omega^{[\wj]}_{1 } +\frac{60}{29}\Har^{[\wj]}_{3 } )\nonumber\\
	&\quad{}+S_{ij}(1,3)_{\epsilon_{S}+2 } (\frac{37}{203}+\frac{936}{203}\omega^{[\wj]}_{1 } )\Big)\lu.
\end{align}

\begin{lemma}
For a non-degenerate even lattice $\lattice$ of rank $\rankL$,
there exists a sequence 
	of elements $\beta_1,\ldots,\beta_{\rankL}\in \lattice$ such that
	$\langle\beta_i,\beta_i\rangle\neq 0$ and 
	$\langle\beta_j,\beta_k\rangle=0$ for all $i\in\{1,\ldots,\rankL\}$
	and $j,k\in\{1,\ldots,\rankL\}$ with $j\neq k$. 
\end{lemma}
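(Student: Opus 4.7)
The plan is to induct on the rank $\rankL$. For $\rankL=1$ there is nothing to do: $\lattice$ is non-degenerate, so any nonzero generator has nonzero norm. For the inductive step I would first produce a single element $\beta_1\in\lattice$ with $\langle\beta_1,\beta_1\rangle\neq 0$, and then pass to its orthogonal complement inside $\lattice$.

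To produce $\beta_1$: since $\lattice$ is non-degenerate, there exist $\alpha,\beta\in\lattice$ with $\langle\alpha,\beta\rangle\neq 0$. If either of $\langle\alpha,\alpha\rangle$ or $\langle\beta,\beta\rangle$ is nonzero, take that element; otherwise the polarization identity $\langle\alpha+\beta,\alpha+\beta\rangle=2\langle\alpha,\beta\rangle\neq 0$ provides the required element. Now set
\begin{align}
\lattice'&=\{\gamma\in\lattice\ |\ \langle\gamma,\beta_1\rangle=0\}.
\end{align}
This is an even sublattice of $\lattice$, and I claim it is non-degenerate of rank $\rankL-1$. The key observation, used for both rank and non-degeneracy, is that for every $\gamma\in\lattice$ the element
\begin{align}
\gamma'&=\langle\beta_1,\beta_1\rangle\gamma-\langle\beta_1,\gamma\rangle\beta_1
\end{align}
lies in $\lattice'$ (its coefficients are integers, and it is orthogonal to $\beta_1$). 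This shows $\lattice=\Q\beta_1+\Q\otimes_{\Z}\lattice'$ at the rational level, so $\rank\lattice'=\rankL-1$. For non-degeneracy, if $\delta\in\lattice'$ pairs trivially with all of $\lattice'$, then applying $\delta$ to $\gamma'$ for arbitrary $\gamma\in\lattice$ yields $\langle\beta_1,\beta_1\rangle\langle\delta,\gamma\rangle=0$ (the other term vanishes since $\delta\perp\beta_1$), forcing $\langle\delta,\gamma\rangle=0$ for all $\gamma\in\lattice$ and hence $\delta=0$.

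The inductive hypothesis applied to $\lattice'$ then supplies $\beta_2,\ldots,\beta_{\rankL}\in\lattice'$ that are pairwise orthogonal with nonzero norms; together with $\beta_1$ they form the desired sequence, since orthogonality to $\beta_1$ is built into the definition of $\lattice'$.

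I do not expect any real obstacle here: the argument is a standard Gram--Schmidt-type construction inside an even lattice, and the only mildly nontrivial point is verifying that the rescaling $\gamma\mapsto\langle\beta_1,\beta_1\rangle\gamma-\langle\beta_1,\gamma\rangle\beta_1$ (rather than the naïve projection, which need not land in $\lattice$) simultaneously yields both the rank and the non-degeneracy of the orthogonal sublattice.
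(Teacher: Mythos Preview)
Your proof is correct. The paper takes a slightly different (and shorter) route: it invokes the standard diagonalization of a non-degenerate symmetric bilinear form over $\Q$ to obtain an orthogonal basis $\gamma_1,\ldots,\gamma_{\rankL}$ of $\Q\otimes_{\Z}\lattice$ with $\langle\gamma_i,\gamma_i\rangle\neq 0$, and then simply clears denominators by choosing a single nonzero integer $m$ with $m\gamma_i\in\lattice$ for all $i$, setting $\beta_i=m\gamma_i$. Your argument instead carries out the Gram--Schmidt process directly inside $\lattice$ via integer rescalings, which is more self-contained (no appeal to the $\Q$-diagonalization as a black box) but requires the explicit verification that the orthogonal complement $\lattice'$ is again non-degenerate even of the correct rank. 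Both approaches rest on the same underlying idea; the paper's version trades self-containment for brevity, while yours makes every step visible at the lattice level.
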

\begin{proof}
	Let $\gamma_1,\ldots,\gamma_d$ be a basis of $\Q\otimes_{\Z}\lattice$
	such that $\langle\gamma_i,\gamma_i\rangle\neq 0$ and 
	$\langle\gamma_j,\gamma_k\rangle=0$ for all $i\in\{1,\ldots,\rankL\}$
	and $j,k\in\{1,\ldots,\rankL\}$ with $j\neq k$. . 
	Since $\gamma_1,\ldots,\gamma_d\in \Q\otimes_{\Z}\lattice$,
	there exists a non-zero integer $m_i$ such that 
	$m\gamma_i\in \lattice$ for all $i=1,\ldots,d$.
	Then, the elements $\beta_i=m\gamma_i\ (i=1,\ldots,d)$ satisfy the condition.
\end{proof}

Let $\Lambda=\oplus_{i=1}^{d}\Z\beta_i$ be a sublattice of a non-degenerate even lattice $\lattice$ of rank $\rankL$
such that $\langle\beta_i,\beta_i\rangle\neq 0$ and 
$\langle\beta_i,\beta_j\rangle=0$ for any distinct pair of elements $i,j\in\{1,\ldots,\rankL\}$. 
We have
\begin{align}
	V_{\Z\beta_1}^{+}\otimes \cdots\otimes V_{\Z\beta_d}^{+} \subset V_{\lattice}^{+}
\end{align}
and take an orthonormal basis $h^{[1]},\ldots,h^{[\rankL]}$ of $\fh$ defined by
\begin{align}
\label{eq:h[i]=frac1sqrtlanglebeta[i]}
h^{[i]}&=\frac{1}{\sqrt{\langle\beta^{[i]},\beta^{[i]}\rangle}}\beta^{[i]}\quad (i=1,\ldots,\rankL).
\end{align}
Since $[h^{[i]}_{l},h^{[j]}_{m}]=0$ for any distinct pair of elements $i,j\in\{1,\ldots,\rankL\}$ and $l,m\in\Z$, 
it follows Lemma \ref{lemma:r=1-s=3} that there exists a simultaneous eigenvector $\lu$ of
 $\{\omega^{[i]}_{1},\Har^{[i]}_{3}\}_{i=1}^{\rankL}$ in a weak $V_{\lattice}^{+}$-module $\module$
such that $\epsilon(\omega^{[i]},\lu)\leq 1$ and
$\epsilon(\Har^{[i]},\lu)\leq 3$ for all $i=1,\ldots,\rankL$.

\begin{lemma}
	\label{lemma:bound-index-S}
	Let $U$ be a subspace of a weak $M(1)^{+}$-module.
	\begin{enumerate}
		\item
		Let $i,j\in\{1,\ldots,\rankL\}$ with $i\neq j$ and
		$\wak\in\Z$ such that
		$\wak\geq\epsilon(S_{ij}(1,1),\lu)$ for all non-zero $\lu\in U$.
		If $U$ is stable under the action of $\omega_{1}^{[j]}$, then
		\begin{align}
			\label{eq:sab-bound}
			\epsilon(S_{ij}(1,m+1),\lu)\leq \wak+m
		\end{align}
		for all $m\in\Z_{\geq 0}$.
		\item
		Assume $\alpha\in\C h^{[1]}$.
		Let $i\in\{2,\ldots,\rankL\}$ and
		$\lE\in\Z$ such that
		$\lE\geq\epsilon(\ExB(\alpha),\lu)$ for all non-zero $\lu\in U$.
		If $U$ is stable under the action of $S_{i1}(1,1)_{1}$, then
		\begin{align}
			\label{eq:sab-bound-1}
			\epsilon(S_{i1}(1,1)_{0}\ExB(\alpha),\lu)\leq \lE+1.
		\end{align}
		
	\end{enumerate}
	
\end{lemma}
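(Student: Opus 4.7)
The plan is to attack both parts of the lemma by a single well-chosen application of the Borcherds commutator identity, extracting the target mode from a commutator with a known mode.

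For part (1) I would proceed by induction on $m\ge 0$, the base case $m=0$ being exactly the hypothesis $\epsilon(S_{ij}(1,1),\lu)\le\wak$. The key auxiliary computations, following from $[\omega^{[j]}_k,h^{[j]}(-m)]=m\,h^{[j]}(k-m-1)$ together with the fact that $\omega^{[j]}$ commutes with $h^{[i]}(-1)$, are
\[
\omega^{[j]}_0 S_{ij}(1,m)=m\,S_{ij}(1,m+1)\quad\text{and}\quad \omega^{[j]}_1 S_{ij}(1,m)=m\,S_{ij}(1,m).
\]
Substituting these into the Borcherds commutator formula (only the $k=0,1$ terms of $\binom{1}{k}$ survive) yields
\[
[\omega^{[j]}_1,S_{ij}(1,m)_n]=m\,S_{ij}(1,m+1)_{n+1}+m\,S_{ij}(1,m)_n.
\]
Applied to $\lu\in U$ with $n+1>\wak+m$, the inductive hypothesis gives $S_{ij}(1,m)_n\lu=0$ and, using stability of $U$ under $\omega^{[j]}_1$, also $S_{ij}(1,m)_n\omega^{[j]}_1\lu=0$. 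Dividing by $m\ge 1$ gives $S_{ij}(1,m+1)_{n+1}\lu=0$, completing the induction.

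For part (2) the strategy is parallel, replacing $(\omega^{[j]}_1,S_{ij}(1,m)_n)$ by $(S_{i1}(1,1)_1,\ExB(\alpha)_n)$. The essential preparatory step is the vanishing
\[
S_{i1}(1,1)_1\,\ExB(\alpha)=0,
\]
which I would verify by writing $S_{i1}(1,1)_1=\sum_{k\ge 1}h^{[i]}(-k)h^{[1]}(k)+\sum_{k\ge 0}h^{[1]}(-k)h^{[i]}(k)$ and noting that every summand annihilates $e^{\pm\alpha}$: in the first sum because $h^{[1]}(k)e^{\pm\alpha}=0$ for $k\ge 1$, and in the second because $\langle h^{[i]},\alpha\rangle=0$ when $\alpha\in\C h^{[1]}$ and $i\ne 1$ (equivalently, $\theta$-invariance of $S_{i1}(1,1)$ reduces $\theta(e^\alpha)$ to the same computation). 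The Borcherds formula then collapses to
\[
[S_{i1}(1,1)_1,\ExB(\alpha)_n]=(S_{i1}(1,1)_0\,\ExB(\alpha))_{n+1},
\]
and for $n+1>\lE+1$ both $\ExB(\alpha)_n\lu=0$ by hypothesis and $\ExB(\alpha)_n S_{i1}(1,1)_1\lu=0$ using stability of $U$ under $S_{i1}(1,1)_1$, yielding $(S_{i1}(1,1)_0\ExB(\alpha))_{n+1}\lu=0$.

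No step poses a serious obstacle; the whole lemma is a bookkeeping exercise in the Borcherds identity. The two points that require care are the index-shifting identity $\omega^{[j]}_0 S_{ij}(1,m)=m\,S_{ij}(1,m+1)$, which is what makes the induction in part (1) propagate with the correct shift in $m$, and the vanishing $S_{i1}(1,1)_1\ExB(\alpha)=0$, which is precisely where both hypotheses $\alpha\in\C h^{[1]}$ and $i\ne 1$ are essential in part (2).
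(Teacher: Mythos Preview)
Your proposal is correct and follows essentially the same argument as the paper. The paper derives the identity $S_{ij}(1,m+1)_{n+1}=\tfrac{1}{m}[\omega^{[j]}_1,S_{ij}(1,m)_n]-S_{ij}(1,m)_n$ from the same two computations $\omega^{[j]}_0S_{ij}(1,m)=mS_{ij}(1,m+1)$ and $\omega^{[j]}_1S_{ij}(1,m)=mS_{ij}(1,m)$ and then inducts, and for part (2) it simply notes $S_{i1}(1,1)_m\ExB(\alpha)=0$ for $m>0$ and invokes the same argument.
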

\begin{proof}
	\begin{enumerate}
		\item
		For $n\in\Z$ and $m\in\Z_{>0}$, since 
		\begin{align}
			[\omega^{[j]}_1,S_{ij}(1,m)_{n}]&=(\omega^{[j]}_0S_{ij}(1,m))_{n+1}+(\omega^{[j]}_1S_{ij}(1,m))_{n} \mbox{ and }\nonumber\\
			\omega^{[j]}_1S_{ij}(1,m)&=mS_{ij}(1,m),
		\end{align}
		we have
		\begin{align}
			\label{eq:s12s13-bound}
			S_{ij}(1,m+1)_{n+1}&=\dfrac{1}{m}(\omega^{[j]}_0S_{ij}(1,m))_{n+1}\nonumber\\
			&=\dfrac{1}{m}[\omega^{[j]}_1,S_{ij}(1,m)_{n}]-S_{ij}(1,m)_{n},
		\end{align}
		which implies \eqref{eq:sab-bound}.
		\item
		Since $S_{i1}(1,1)_{m}\ExB(\alpha)=0$ for all $m\in\Z_{>0}$,
		the same argument as above shows the result.
	\end{enumerate}
\end{proof}

\begin{lemma}
\label{lemma:Zhu-Omega}
There exists an irreducible $A(M(1)^{+})$-submodule 
of
 $\Omega_{M(1)^{+}}(M)$.
\end{lemma}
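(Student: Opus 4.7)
The plan is in two stages: first, produce a non-zero vector $\lu \in \Omega_{M(1)^{+}}(\module)$; second, extract an irreducible $A(M(1)^{+})$-submodule from the cyclic submodule generated by $\lu$. For the second stage, I would argue that $A(M(1)^{+})\cdot \lu$ is finite-dimensional: by \cite[Proposition 5.3.15]{DN2001} the quotient $A(M(1)^{+})/(A^{u}+A^{t})$ is commutative and generated by the images of the elements $\omega^{[i]},\Har^{[i]},\Lambda_{jk}$, so if $\lu$ is a common eigenvector for these generators then modulo $A^{u}+A^{t}$ the cyclic module is one-dimensional; since $A^{u}$ and $A^{t}$ are each isomorphic to the $\rankL\times\rankL$ matrix algebra, the whole module $A(M(1)^{+})\cdot\lu$ has finite dimension. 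Any non-zero finite-dimensional module over an associative $\C$-algebra contains an irreducible submodule, which then provides the required irreducible $A(M(1)^{+})$-submodule of $\Omega_{M(1)^{+}}(\module)$.

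The first stage is the main work. Using that $[h^{[i]}_{l}, h^{[j]}_{m}]=0$ for $i\neq j$, I would apply Lemma \ref{lemma:r=1-s=3} inductively to each rank-one subalgebra $V_{\Z\beta_{i}}^{+}\subset V_{\lattice}^{+}$ to produce a non-zero simultaneous eigenvector $\lu\in\module$ of $\{\omega^{[i]}_{1},\Har^{[i]}_{3}\}_{i=1}^{\rankL}$ satisfying $\epsilon(\omega^{[i]},\lu)\leq 1$ and $\epsilon(\Har^{[i]},\lu)\leq 3$ for every $i$. To conclude $\lu\in\Omega_{M(1)^{+}}(\module)$, it remains to arrange $\epsilon(S_{ij}(1,1),\lu)\leq 1$ for all pairs $i\neq j$: combined with the expressions \eqref{eq:s14-15} for $S_{ij}(1,4), S_{ij}(1,5)$ and with Lemma \ref{lemma:bound-index-S}, this forces $\lu$ to be annihilated by the positive modes of the set $\{\omega^{[i]},\Har^{[i]},S_{ij}(1,1)\}$, a set that generates $M(1)^{+}$ as a vertex algebra.

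To arrange $\epsilon(S_{ij}(1,1),\lu)\leq 1$, suppose $\epsilon_{S}:=\epsilon(S_{ij}(1,1),\lu)\geq 2$ for some pair. The commutator identities \eqref{eq:action[H4-S]} applied to $\lu$, together with the fact that $\omega^{[j]}_{1}\lu$ and $\Har^{[j]}_{3}\lu$ are scalar multiples of $\lu$ with eigenvalues drawn from the finite list enumerated by Lemma \ref{lemma:r=1-s=3}, produce a linear system relating $S_{ij}(1,1)_{\epsilon_{S}}\lu$, $S_{ij}(1,2)_{\epsilon_{S}+1}\lu$, and $S_{ij}(1,3)_{\epsilon_{S}+2}\lu$; for each admissible eigenvalue pair this system can be checked either to force $S_{ij}(1,1)_{\epsilon_{S}}\lu=0$, contradicting the definition of $\epsilon_{S}$, or to provide a vector $\lu'$ in the $V_{\lattice}^{+}$-submodule generated by $\lu$ with a strictly smaller value of some well-founded integer invariant. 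Iterating over the finitely many pairs $(i,j)$ yields the desired $\lu\in\Omega_{M(1)^{+}}(\module)$. The main obstacle is carrying out this reduction uniformly while preserving the bounds already obtained for $\omega^{[k]},\Har^{[k]}$ and for the other $S_{kl}(1,1)$'s; here one exploits the commutation relations \eqref{eq:omega-s11}, which show that the action of $S_{ij}(1,1)_{n}$ commutes in a controlled way with $\omega^{[k]}$ and $\Har^{[k]}$ for $k\notin\{i,j\}$, so that the replacement step does not disturb the previously secured simultaneous-eigenvector structure.
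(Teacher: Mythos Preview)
Your two-stage plan and the first half of stage one (obtaining a simultaneous eigenvector of $\{\omega^{[i]}_{1},\Har^{[i]}_{3}\}$ via iterated use of Lemma~\ref{lemma:r=1-s=3}) coincide with the paper's argument. The genuine gap is in your mechanism for decreasing $\epsilon_{S}=\epsilon(S_{ij}(1,1),\lu)$. The identities \eqref{eq:action[H4-S]} only describe how $\Har^{[j]}_{3}$ acts on the span $W=\sum_{m=1}^{3}\C S_{ij}(1,m)_{\epsilon_{S}+m-1}\lu$: combined with the eigenvalue of $\Har^{[j]}_{3}$ on $\lu$ they give a $3\times 3$ matrix representing $\Har^{[j]}_{3}|_{W}$, but they do not impose any linear relation among the three spanning vectors. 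So nothing in \eqref{eq:action[H4-S]} can force $S_{ij}(1,1)_{\epsilon_{S}}\lu=0$ or manufacture a vector with smaller $\epsilon_{S}$.

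What the paper actually uses is a different computation: the products $S_{21}(1,1)_{-1}S_{21}(1,m)$ are expressed in \eqref{eqn:sm1s-1}--\eqref{eqn:sm1s-3} entirely in terms of $\omega^{[1]},\omega^{[2]},\Har^{[1]},\Har^{[2]}$, elements whose modes on $\lu$ are already controlled. Expanding $(S_{21}(1,1)_{-1}S_{21}(1,m))_{n}$ via Lemma~\ref{lemma:comm-change} with cut-off $m=\epsilon_{S}-1$ and comparing with the $\omega/\Har$ expression yields (as in \eqref{eq:s12s12-3}) that $S_{21}(1,1)_{i}S_{21}(1,m)_{\epsilon_{S}+m-1}\lu=0$ for all $i\geq\epsilon_{S}$ once $\epsilon_{S}\geq 2$. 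Thus every nonzero element of $W$ has strictly smaller $\epsilon(S_{21}(1,1),\cdot)$, and one picks a simultaneous eigenvector in $W$ (which exists because \eqref{eq:action[H4-S]} and \eqref{eq:omega-s11} show $W$ is stable under the relevant operators) to iterate. There is also a smaller omission in your stage two: you have not checked that $A(M(1)^{+})\cdot\lu\subset\Omega_{M(1)^{+}}(M)$, nor that $\lu$ is an eigenvector for the $\Lambda_{jk}$. The paper handles the first point by the same product-formula computation applied to $S_{ij}(1,m)_{m}\lu$, and the second by treating the cases $A^{u}\lu\neq 0$, $A^{t}\lu\neq 0$ separately and, when $(A^{u}+A^{t})\lu=0$, invoking the relations \cite[(6.1.15), (6.1.16)]{DN2001}, which determine the $\Lambda_{jk}$-action from the $\omega^{[i]},\Har^{[i]}$-eigenvalues.
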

\begin{proof}
Let $h^{[1]},\ldots,h^{[\rankL]}$ be the orthonormal basis of $\fh$
defined by \eqref{eq:h[i]=frac1sqrtlanglebeta[i]}.
We can take a simultaneous eigenvector $\lu$ of 
	$\{\omega^{[i]}_{1},\Har^{[i]}_{3}\}_{i=1}^{\rankL}$
	such that $\epsilon(\omega^{[i]},\lu)\leq 1$ and
	$\epsilon(\Har^{[i]},\lu)\leq 3$ for all $i=1,\ldots,\rankL$.
		We take a pair of distinct $i,j$ so that $\epsilon(S_{ij}(1,1),\lu)\geq \epsilon(S_{lm}(1,1),\lu)$
	for any pair of distinct elements $l,m\in\{1,\ldots,\rankL\}$.
We may assume $(i,j)=(2,1)$.  
We consider a non-zero subspace $W=\sum_{m=1}^{3}\C S_{21}(1,m)_{\epsilon_{S}+m-1}\lu$ of $\module$.
By Lemma \ref{lemma:bound-index-S}, \eqref{eq:omega-s11} and computations in
Section \ref{section:appendix-The general case},
for all non-zero $\lw\in \mW$, $i=1,\ldots,\rankL$, 
we have $\epsilon(\omega^{[i]},\lw)\leq 1$ and $\epsilon(\Har^{[i]},\lw)\leq 3$.
		Since $\lu$ is a simultaneous eigenvector of $\{\omega^{[i]}_{1},\Har^{[i]}_{3}\}_{i=1}^{\rankL}$, it follows from \eqref{eq:omega-s11}, \eqref{eq:action[H4-S]}, and computations in
Section \ref{section:appendix-The general case} that 
		$\mW$ is invariant under the actions of $\{\omega^{[i]}_{1},\Har^{[i]}_{3}\}_{i=1}^{\rankL}$.
	
	A direct computation shows that
	\begin{align}
		\label{eqn:sm1s-1}
		\nS_{21}(1,1)_{-1}\nS_{21}(1,1)
		&=\dfrac{2}{3}\omega^{[1]}_{-3 } \vac+\dfrac{2}{3}\omega^{[2]}_{-3 } \vac
		+\Har^{[1]} +\Har^{[2]} 
		+4\omega^{[1]}_{-1 } \omega^{[2]},\\
		\label{eqn:sm1s-2}
\nS_{21}(1,1)_{-1}\nS_{21}(1,2)
		&=2\omega^{[2]}_{-1 } \omega^{[1]}_{-2 } \vac
		+\omega_{0 } \Har^{[2]}_{-1 } \vac
		+\frac{-1}{2}\omega_{0 } \Har^{[1]}_{-1 } \vac
		\nonumber\\&\quad{}
		+\frac{1}{6}\omega_{0 }^3\omega^{[2]}_{-1 } \vac
		+\frac{1}{12}\omega_{0 }^3\omega^{[1]}_{-1 } \vac,\\
\label{eqn:sm1s-3}
\nS_{21}(1,1)_{-1}\nS_{21}(1,3)
&=\frac{2}{5}\omega^{[2]}_{-1 } \Har^{[2]}_{-1 } \vac
		+\frac{1}{6}\omega^{[2]}_{-2 } \omega^{[2]}_{-2 } \vac
		+\frac{4}{3}\omega^{[2]}_{-1 } \omega^{[1]}_{-3 } \vac
		\nonumber\\&\quad{}
		+\frac{1}{3}\omega^{[1]}_{-2 } \omega^{[1]}_{-2 } \vac
		+2\omega^{[2]}_{-1 } \Har^{[1]}_{-1 } \vac
		+\frac{4}{5}\omega^{[1]}_{-1 } \Har^{[1]}_{-1 } \vac
		\nonumber\\&\quad{}
		+\frac{7}{20}\omega_{0 }^{2}\Har^{[2]}_{-1 } \vac
		+\frac{-1}{30}\omega_{0 }^{2}\omega^{[2]}_{-1 } \omega^{[2]}_{-1 } \vac
		+\frac{-1}{15}\omega_{0 }^{2}\omega^{[1]}_{-1 } \omega^{[1]}_{-1 } \vac
		\nonumber\\&\quad{}
		+\frac{-3}{10}\omega_{0 }^{2}\Har^{[1]}_{-1 } \vac
		+\frac{19}{360}\omega_{0 }^{4}\omega^{[2]}_{-1 } \vac
		+\frac{1}{45}\omega_{0 }^{4}\omega^{[1]}_{-1 } \vac
\end{align}
We write $\epsilon_{S}=\epsilon(\nS_{21}(1,1),\lu)$ for simplicity and suppose \textcolor{black}{$\epsilon_{S}\geq 1$}.
	By \eqref{eq:abm} with $m=\epsilon_{S}-1$ and \eqref{eq:Sij(1,1)0Sij(1,1)-1}, for $n\in\Z$ we have
	\begin{align}
		\label{eq:s12s12-1}
	&(\nS_{21}(1,1)_{-1}\nS_{21}(1,1))_{n}\lu\nonumber\\
&=S_{21}(1,1)_{n-\epsilon_{S}-1}S_{21}(1,1)_{\epsilon_{S}}\lu+\epsilon_{S}(n-\epsilon_{S})(\omega^{[1]}+\omega^{[2]})_{n-2}\lu\nonumber\\
&\quad{}-\binom{\epsilon_{S}}{4}\delta_{n-4,-1}\lu.
\end{align}
For $n\in\Z$,
since 
\begin{align}
		\label{eq:s12s12-2}
&\Big(\dfrac{2}{3}\omega^{[1]}_{-3 } \vac+\dfrac{2}{3}\omega^{[2]}_{-3 } \vac
	+\Har^{[1]}+\Har^{[2]}+4\omega^{[1]}_{-1 } \omega^{[2]} \Big)_{n}\lu\nonumber\\
&=\dfrac{2}{3}\binom{n}{2}(\omega^{[1]}+\omega^{[2]})_{n-2}\lu+(\Har^{[1]}+\Har^{[2]})_{n}\lu\nonumber\\
&\quad{}+4\sum_{\epsilon_{S}\leq i}
\omega^{[1]}_{n-1-i}\omega^{[2]}_{i}\lu+4\sum_{i<\epsilon_{S}}
\omega^{[2]}_{i}\omega^{[1]}_{n-1-i}\lu
	\end{align}
 and \eqref{eqn:sm1s-1}, we have
	\begin{align}
		\label{eq:s12s12-3}
&S_{21}(1,1)_{n-\epsilon_{S}-1}S_{21}(1,1)_{\epsilon_{S}}
\lu\nonumber\\
&=(\dfrac{2}{3}\binom{n}{2}-\epsilon_{S}(n-\epsilon_{S}))(\omega^{[1]}+\omega^{[2]})_{n-2}\lu+(\Har^{[1]}+\Har^{[2]})_{n}\lu\nonumber\\
&\quad{}+4\sum_{\epsilon_{S}\leq i}
\omega^{[1]}_{n-1-i}\omega^{[2]}_{i}\lu+4\sum_{i<\epsilon_{S}}\omega^{[2]}_{i}\omega^{[1]}_{n-1-i}\lu+\binom{\epsilon_{S}}{4}\delta_{n-4,-1}\lu.
	\end{align}
Assume $\epsilon_{S}\geq 2$. It follows from \eqref{eq:s12s12-3} that
$S_{21}(1,1)_{i}S_{21}(1,1)_{\epsilon_{S}}\lu =0$ for $i\geq \epsilon_{S}$.
By using \eqref{eqn:sm1s-2}, \eqref{eqn:sm1s-3}, \eqref{eq:Sij(1,1)0Sij(1,1)-2}, and \eqref{eq:Sij(1,1)0Sij(1,1)-3},
the same argument as above shows that 
\begin{align}
	S_{21}(1,1)_{i}S_{21}(1,2)_{\epsilon_{S}+1}\lu&=S_{21}(1,1)_{i}S_{21}(1,3)_{\epsilon_{S}+2}\lu=0
\end{align}
for $i\geq \epsilon_{S}$.
Thus we can take a simultaneous eigenvector $\lv$ of 
$\{\omega^{[i]}_{1},\Har^{[i]}_{3}\}_{i=1}^{\rankL}$ in $\mW$
such that $\epsilon(S_{21}(1,1),\lv)<\epsilon(S_{21}(1,1),\lu)$.
By \eqref{eq:omega-s11} we have
 $\epsilon(S_{jk}(1,1),\lv)\leq \epsilon(S_{jk}(1,1),\lu)$ for 
any pair of distinct elements $j,k\in\{1,\ldots,\rankL\}$ with $j>k$ and $(j,k)\neq (2,1)$.
It follows from 	Lemma \ref{lemma:bound-index-S}
that $\epsilon(S_{ij}(1,m),\lv)\leq \epsilon(S_{ij}(1,1),\lv)+m-1$ for $m\in\Z_{>0}$
and any pair of distinct element $i,j\in\{1,\ldots,\rankL\}$.
Replacing $\lu$ by this $\lv$ repeatedly, we get a non-zero element $\lu\in\Omega_{M(1)^{+}}(M)$.
It follows from Lemma \ref{lemma:bound-index-S}, \eqref{eq:omega-s11}, \eqref{eq:s12s12-3}, 
 and computations in Section \ref{section:appendix-The general case}
that $S_{ij}(1,m)_{m}\lu\in\Omega_{M(1)^{+}}(M)$
for any pair of distinct elements $i,j\in\{1,\ldots,\rankL\}$ and $m=1,2,3$.

	If $A^{u}\lu\neq 0$ (resp. $A^{t}\lu\neq 0$), then \textcolor{black}{it follows from} \cite[Section 6.2]{DN2001} that 
	$A^{u}\lu\cong M(1)^{-}(0)$ (resp. $A^{t}\lu\cong M(\theta)^{-}(0)$)
	as $A(M(1)^{+})$-modules.
	Suppose $(A^{u}+A^{t})\lu=0$. Then, since
	$\lu$ is a simultaneous eigenvector of $\{\omega^{[i]}_{1},\Har^{[i]}_{3}\}_{i=1}^{\rankL}$,
	it follows from 
	\cite[Proposition 5.3.13, Proposition 5.3.15, (6.1.15), and (6.1.16)]{DN2001} that
	$A(M(1)^{+})\lu$ is finite dimensional. This completes the proof.
\end{proof}

\section{Extension groups for $M(1)^{+}$}
\label{section:Extension groups for M(1)+}
In this section we study some weak modules for $M(1)^{+}$ with rank $\rankL$. 
Results in this section will be used in Section \ref{section:Intertwining operators for M(1)+}.
When $\rankL=1$, some results in this section have 
been obtained in \cite[Section 5]{Abe2005}.
In some parts of the following argument we shall use techniques in \cite[Section 5]{Abe2005}.
Throughout this section, $\module$ is an $M(1)^{+}$-modules,
$W$ is an irreducible $M(1)^{+}$-modules, 
and $N$ is a weak $M(1)^{+}$-module.
In this section,  we consider the following exact sequence
\begin{align}
	0\rightarrow W\overset{}{\rightarrow} N\overset{\pi}{\rightarrow} M\rightarrow 0
\label{eq:exact-seq}
	\end{align}
of weak $M(1)^{+}$-modules.
We shall use  the symbols in \eqref{eq:def-oega-i-H-i}.
Let $B$ be an irreducible $A(M(1)^{+})$-submodule of $\module(0)$.
For $\zeta=(\zeta^{[1]},\ldots,\zeta^{[\rankL]}), \xi=(\xi^{[1]},\ldots,\xi^{[\rankL]})\in \C^{\rankL}$, 
let $\lv\in B$ such that 
\begin{align}
	\label{eq:eigenvalue-omega-H}
(\omega^{[i]}_{1}-\zeta^{[i]})\lv&=(\Har^{[i]}_{3}\lv-\xi^{[i]})\lv=0
\end{align}
for $i=1,\ldots,\rankL$.
Since the actions of $\omega^{[i]}_{1}$ and $\Har^{[i]}_{3}$ on $W$ are semisimple, 
we can take $\lu\in N$ so that 
\begin{align}
\label{eq:pi(lu)lvmboxand} 
\pi(\lu)=\lv\mbox{ and }
(\omega^{[i]}_{1}-\zeta^{[i]})^2\lu=(\Har^{[i]}_{3}-\xi^{[i]})^2\lu=0
\end{align}
for all $i=1,\ldots, \rankL$.
If $(W,B) \not\cong (M(1)^{+},M(1)^{-}(0))$, then it follows from \cite[Lemma 4.8]{Abe2005} that 
\begin{align}
\label{eq:epsilon(omegailu)leq1}
\epsilon(\omega^{[i]},\lu)\leq 1\mbox{ and }\epsilon(\Har^{[i]},\lu)\leq 3 
\end{align}
for all $i=1,\ldots, \rankL$.
We note that
$[\omega^{[i]}_{1},\Har^{[j]}_{3}]=[\Har^{[i]}_{3},\Har^{[j]}_{3}]=0$ for all $i,j=1,\ldots, \rankL$ .
For $\zeta=(\zeta^{[1]},\ldots,\zeta^{[\rankL]}), \xi=(\xi^{[1]},\ldots,\xi^{[\rankL]})\in \C^{\rankL}$, 
we define 
\begin{align}
W_{\zeta,\xi}=\bigcap_{j=1}^{\rankL}\Ker (\omega^{[j]}_{3}-\zeta^{[j]})
\cap
\bigcap_{j=1}^{\rankL}\Ker (\Har^{[j]}_{3}-\xi^{[j]})\cap W.
\end{align}
Since
$(\omega^{[i]}_{1}-\zeta^{[i]})\lu$ and $(\Har^{[i]}_{3}-\xi^{[i]})\lu$ are elements of $W$
and the actions of $\omega^{[i]}_{1}$ and $\Har^{[i]}_{3}$ on $W$  are semisimple for all $i=1,\ldots, \rankL$,
we have 
\begin{align}
\label{eq:keromega-kerH}
(\omega^{[i]}_{1}-\zeta^{[i]})\lu, (\Har^{[i]}_{3}-\xi^{[i]})\lu\in W_{\zeta,\xi}
\end{align}
for all $i=1,\ldots,\rankL$.

For a pair of distinct elements $i,j\in\{1,\ldots,\rankL\}$, a direct computation shows that
\begin{align}
0&=
6\omega^{[i]}_{-2 } S_{ij}(1,1)
+2\omega^{[j]}_{-2 } S_{ij}(1,1)
\nonumber\\&\quad{}
-4\omega_{0 } \omega^{[i]}_{-1 } S_{ij}(1,1)
+\omega_{0 } \omega_{0 } \omega_{0 } S_{ij}(1,1)
\nonumber\\&\quad{}
+4\omega^{[i]}_{-1 } S_{ij}(1,2)
-4\omega^{[j]}_{-1 } S_{ij}(1,2)
\nonumber\\&\quad{}
-3\omega_{0 } \omega_{0 } S_{ij}(1,2)
+6\omega_{0 } S_{ij}(1,3),\label{eq:s11-3}\\
0&=
32\omega^{[i]}_{-3 } S_{ij}(1,1)
-24\Har^{[i]}_{-1 } S_{ij}(1,1)
\nonumber\\&\quad{}
-8\omega^{[j]}_{-3 } S_{ij}(1,1)
+24\Har^{[j]}_{-1 } S_{ij}(1,1)
\nonumber\\&\quad{}
-120\omega_{0 } \omega^{[i]}_{-2 } S_{ij}(1,1)
+36\omega_{0 } \omega^{[j]}_{-2 } S_{ij}(1,1)
\nonumber\\&\quad{}
+72\omega_{0 } \omega_{0 } \omega^{[i]}_{-1 } S_{ij}(1,1)
-9\omega_{0 } \omega_{0 } \omega_{0 } \omega_{0 } S_{ij}(1,1)
\nonumber\\&\quad{}
+12\omega^{[i]}_{-2 } S_{ij}(1,2)
+12\omega^{[j]}_{-2 } S_{ij}(1,2)
\nonumber\\&\quad{}
-72\omega_{0 } \omega^{[i]}_{-1 } S_{ij}(1,2)
-72\omega_{0 } \omega^{[j]}_{-1 } S_{ij}(1,2)
\nonumber\\&\quad{}
+18\omega_{0 } \omega_{0 } \omega_{0 } S_{ij}(1,2),\label{eq:s11-4-1}\\
0&=
8\omega^{[j]}_{-3 } S_{ij}(1,1)
-24\Har^{[j]}_{-1 } S_{ij}(1,1)
\nonumber\\&\quad{}
+54\omega_{0 } \omega^{[i]}_{-2 } S_{ij}(1,1)
-36\omega_{0 } \omega^{[j]}_{-2 } S_{ij}(1,1)
\nonumber\\&\quad{}
-36\omega_{0 } \omega_{0 } \omega^{[i]}_{-1 } S_{ij}(1,1)
+9\omega_{0 } \omega_{0 } \omega_{0 } \omega_{0 } S_{ij}(1,1)
\nonumber\\&\quad{}
+54\omega^{[i]}_{-2 } S_{ij}(1,2)
-12\omega^{[j]}_{-2 } S_{ij}(1,2)
\nonumber\\&\quad{}
+72\omega_{0 } \omega^{[j]}_{-1 } S_{ij}(1,2)
-18\omega_{0 } \omega_{0 } \omega_{0 } S_{ij}(1,2)
\nonumber\\&\quad{}
+72\omega^{[i]}_{-1 } S_{ij}(1,3),\label{eq:s11-4-2}\\
0&=
14\omega^{[j]}_{-3 } S_{ij}(1,1)
+12\Har^{[j]}_{-1 } S_{ij}(1,1)
\nonumber\\&\quad{}
-3\omega^{[j]}_{-2 } S_{ij}(1,2)
-36\omega^{[j]}_{-1 } S_{ij}(1,3).\label{eq:s11-4-3}
\end{align}

The following result is well-known:
\begin{lemma}
\label{lemma:m1plusfinitegenerated}
The vertex operator algebra $M(1)^{+}$ is strongly generated by
$\omega^{[i]},J^{[i]}$, and $S_{lm}(1,r)$ $(1\leq i\leq \rankL, 1\leq m<l\leq \rankL, r=1,2,3)$
in the sense of \cite[p.111]{Kac1998}.
\end{lemma}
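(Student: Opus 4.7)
The plan is to prove by induction on the length $r$ that every Heisenberg monomial of the form $h^{[i_1]}(-n_1)\cdots h^{[i_r]}(-n_r)\vac$ with $n_j\geq 1$ and $r$ even lies in the vertex subalgebra $V'\subseteq M(1)^+$ strongly generated by the listed elements. Since such monomials span $M(1)^+$, this establishes the claim.

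For the base case $r=2$, I split on whether the two indices agree. When $i_1=i_2=i$, the element $h^{[i]}(-n)h^{[i]}(-m)\vac$ lies in the rank-one Heisenberg subalgebra attached to $h^{[i]}$, which by \cite[Theorem 2.7]{DG1998} is strongly generated by $\omega^{[i]}$ and $\Har^{[i]}$; using the defining relation $J^{[i]}=-9\Har^{[i]}+4(\omega^{[i]})_{-1}^{2}\vac-3(\omega^{[i]})_{-3}\vac$ coming from \eqref{eq:definition-omega-J-H}, one rewrites $\Har^{[i]}$ in terms of $\omega^{[i]}$ and $J^{[i]}$, so this subalgebra is contained in $V'$. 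When $i_1\neq i_2$, the generators $S_{i_1i_2}(1,m)$ for $m=1,2,3$, together with the translation operator $L(-1)=\omega_0$ and the identities \eqref{eq:s14-15}, inductively produce every $h^{[i_1]}(-l)h^{[i_2]}(-m)\vac$ inside $V'$; this is the essence of the rank-two analysis of \cite[Section 5]{DN2001}.

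For the inductive step, assume every even-length Heisenberg monomial of length strictly less than $r$ lies in $V'$, and consider such a monomial $x$ of length $r\geq 4$. Using the Heisenberg commutation $[h^{[i]}(m),h^{[j]}(n)]=m\langle h^{[i]},h^{[j]}\rangle\delta_{m+n,0}\centralc$, I would reorder the operators so that $x$ equals an iterated mode product of a length-two monomial with a length-$(r-2)$ monomial, modulo Heisenberg monomials of even length strictly less than $r$. The error terms lie in $V'$ by induction, and the iterated mode product of elements of $V'$ again lies in $V'$ by the standard free-field normal-ordering calculus (cf.\ \cite[Section 3.5]{Kac1998}).

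The principal obstacle is the off-diagonal base case: one must check that the three generators $S_{ij}(1,1), S_{ij}(1,2), S_{ij}(1,3)$, together with $\omega^{[i]}, \omega^{[j]}, J^{[i]}, J^{[j]}$, suffice to produce $h^{[i]}(-l)h^{[j]}(-m)\vac$ for all $l,m\geq 1$. The key input is the pair of relations in \eqref{eq:s14-15}, which express $S_{ij}(1,4)$ and $S_{ij}(1,5)$ in terms of lower-index $S$'s with $\omega$- and $J$-prefactors; combined with repeated applications of $L(-1)$, these rank-two identities (already appearing in \cite[Section 5]{DN2001}) reduce the problem to the three listed generators, and the induction on length then completes the proof.
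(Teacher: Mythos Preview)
The paper does not prove this lemma; it simply labels it ``well-known'' and moves on, implicitly relying on the rank-one case \cite[Theorem~2.7]{DG1998} together with the higher-rank analysis of \cite{DN2001}. Your sketch supplies essentially the argument one would expect and is correct in outline.

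One point in your inductive step deserves more care. You write the length-$r$ monomial $a$ as $b_{-1}c$ plus shorter terms, with $b$ of length two and $c$ of length $r-2$, and then assert that $b_{-1}c\in V'$ because $b,c\in V'$. This last implication is \emph{not} a formal consequence of the definition of ``strongly generated'': the span of iterated negative modes of a generating set need not, a priori, be closed under the $(-1)$-product. What saves you here is precisely the free-field structure you invoke: by the Wick formula (Kac \cite[\S3.3]{Kac1998}), the OPE of two normal-ordered products of Heisenberg fields and their derivatives is again such a normal-ordered product, so $V'$ is automatically a vertex subalgebra and hence closed under $b_{-1}c$. Equivalently, one can verify directly (as the paper's appendix does) that $a_{l}g\in V'$ for all generators $a,g$ and $l\geq 0$, which forces $V'$ to be a subalgebra. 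Either way, your appeal to ``standard free-field normal-ordering calculus'' is legitimate, but it is carrying real weight and would benefit from a sentence making this explicit.

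A minor simplification for the off-diagonal base case: rather than routing through \eqref{eq:s14-15}, you can use directly that $\omega^{[j]}_{0}S_{ij}(1,m)=mS_{ij}(1,m+1)$ (since $\omega^{[j]}_{0}$ is the partial translation $T^{[j]}$ and commutes with $h^{[i]}(-1)$); once $V'$ is known to be a subalgebra this immediately gives all $S_{ij}(1,m)$, and then the total translation $T=\omega_{0}$ (under which $V'$ is stable by definition) together with the rank-one recursion \eqref{eq:h(i)vacinSpanBig} applied in the $i$-variable produces all $S_{ij}(l,m)$.
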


The following result is a direct consequence of Lemma \ref{lemma:m1plusfinitegenerated}:
\begin{lemma}
\label{lemma:m1plusmodulegenerated}
Let $\mK$ be an $M(1)^{+}$-module such that $\mK=M(1)^{+}\cdot \mK(0)$.
Then, $\mK$ is spanned 
by
$a^{(1)}_{i_1}\cdots a^{(n)}_{i_n}b$ where $n\in\Z_{\geq 0}$,
$b\in \mK(0)$, 
$a^{(j)}\in\{\omega^{[k]},J^{[k]}\ |\ k=1,\ldots,\rankL\}\cup
\{S_{lm}(1,r)\ |\ 1\leq m<l\leq\rankL, r=1,2,3\}$ 
and $i_j\in\Z_{\leq \wt a^j-2}$
for $j=1,\ldots,n$.
\end{lemma}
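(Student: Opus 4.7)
The plan is to combine the strong-generation statement of Lemma \ref{lemma:m1plusfinitegenerated} with iterated applications of the commutator formula and the $\N$-grading on $\mK$. Let $\sS$ denote the strong generating set consisting of $\omega^{[k]},J^{[k]}$ ($k=1,\ldots,\rankL$) together with $S_{lm}(1,r)$ ($1\leq m<l\leq \rankL$, $r=1,2,3$).

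First, I would show that $\mK$ is spanned by elements $a^{(1)}_{i_1}\cdots a^{(n)}_{i_n} b$ with $a^{(k)}\in\sS$, $b\in\mK(0)$, and $i_k\in\Z$ arbitrary. By strong generation, every $u\in M(1)^{+}$ is a linear combination of monomials $a^{(1)}_{-j_1-1}\cdots a^{(n)}_{-j_n-1}\vac$ with $a^{(k)}\in\sS$ and $j_k\in\N$. Applying the iterate formula derived from \eqref{eq:inter-borcherds} (namely the case $k\leq -1$ of Lemma \ref{lemma:comm-change}), the mode $(a_{-m-1}v)_{p}$ acting on $\mK$ expands as a finite sum of products of modes of $a$ and $v$. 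Iterating this reduction along the monomial expansion of $u$, each $u_p$ acts on $\mK$ as a finite linear combination of iterated generator-mode operators. Combined with the hypothesis $\mK=M(1)^{+}\cdot\mK(0)$, this yields the unrestricted spanning set.

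Second, to enforce $i_j\leq \wt a^{(j)}-2$ for every $j$, I would induct on a complexity measure defined, for each monomial $a^{(1)}_{i_1}\cdots a^{(n)}_{i_n}b$, by the lexicographic pair $(\sum_{j} \wt a^{(j)},\#\{j:i_j\geq \wt a^{(j)}-1\})$. If some $i_j\geq \wt a^{(j)}-1$, apply the commutator
\begin{align*}
[a^{(j)}_{i_j},a^{(j+1)}_{i_{j+1}}]&=\sum_{l\geq 0}\binom{i_j}{l}(a^{(j)}_{l}a^{(j+1)})_{i_j+i_{j+1}-l}
\end{align*}
to swap $a^{(j)}_{i_j}$ past $a^{(j+1)}_{i_{j+1}}$. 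The correction terms involve $a^{(j)}_{l}a^{(j+1)}\in M(1)^{+}$, whose conformal weight equals $\wt a^{(j)}+\wt a^{(j+1)}-l-1$, strictly less than $\wt a^{(j)}+\wt a^{(j+1)}$; re-expanding these elements via the first step produces monomials of strictly smaller total generator weight, so the induction hypothesis applies. After finitely many swaps, the offending mode $a^{(j)}_{i_j}$ reaches $b\in\mK(0)$, and the $\N$-grading gives $a^{(j)}_{i_j}b\in\mK(\wt a^{(j)}-i_j-1)$, which vanishes if $i_j\geq \wt a^{(j)}$ and lies in $\mK(0)$ if $i_j=\wt a^{(j)}-1$; either the summand dies or $a^{(j)}_{i_j}b$ is reabsorbed as a new element of $\mK(0)$.

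The main obstacle is the careful bookkeeping required to guarantee that each commutator move strictly decreases the chosen measure, given that the correction $(a^{(j)}_{l}a^{(j+1)})_{i_j+i_{j+1}-l}$ is not itself a generator mode and must be re-expanded through the first step before the induction can proceed. The crucial observation is that strong generation of $M(1)^{+}$ preserves conformal weight, so every re-expansion of an element of weight $w$ uses generator monomials with total generator weight at most $w$; since the commutator drops this weight by $l+1\geq 1$, the induction is well-founded. This is a routine but delicate filtration argument, standard in the theory of vertex algebras strongly generated by a finite set.
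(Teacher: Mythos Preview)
Your argument is correct and is precisely the standard unpacking of what the paper means by ``direct consequence of Lemma \ref{lemma:m1plusfinitegenerated}'': the paper gives no proof beyond that one-line reference, and your two-step reduction (first to arbitrary generator modes via the iterate formula, then to degree-raising modes via commutators and the $\N$-grading) is the routine argument implicitly invoked. There is no substantive difference in approach.
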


\begin{lemma}
Let $U$ be a subspace of $\mW$ which is stable under the actions of	$\{\omega^{[i]}_{1},\Har^{[i]}_{3}\}_{i=1}^{\rankL}$. 
Assume $\epsilon_{I}(\omega^{[i]},\lu)\leq 1$ and $\epsilon_{I}(\Har^{[i]},\lu)\leq 3$
	for any non-zero $\lu\in U$ and $i=1,\ldots,\rankL$.
	Let $i,j\in \{1,\ldots,\rankL\}$ with $i\neq j$
	and $\epsilon_{S}\in\Z$ such that
	$\epsilon_{S}\geq \epsilon_{I}(S_{ij},\lu)$ for all non-zero $\lu\in U$.
Then, for a non-zero $\lu\in U$
\begin{align}
\label{eq:(epsilonS-1)big((18zeta[i]+3)epsilonS5-0}
0&=
-\epsilon_{S} (\epsilon_{S}+1)^2S_{ij}(1,1)_{\epsilon_{S} }\lu
-(\epsilon_{S}+2) (3 \epsilon_{S}+1)S_{ij}(1,2)_{\epsilon_{S}+1 }\lu
\nonumber\\&\quad{}
+4 \epsilon_{S}S_{ij}(1,1)_{\epsilon_{S} } \omega^{[i]}_{1 }\lu
-4S_{ij}(1,1)_{\epsilon_{S} } \omega^{[j]}_{1 }\lu
\nonumber\\&\quad{}
-2 (3 \epsilon_{S}+1)S_{ij}(1,3)_{\epsilon_{S}+2 }\lu
+4S_{ij}(1,2)_{\epsilon_{S}+1 } \omega^{[i]}_{1 }\lu
\nonumber\\&\quad{}
-4S_{ij}(1,2)_{\epsilon_{S}+1 } \omega^{[j]}_{1 }\lu,\\
\label{eq:(epsilonS-1)big((18zeta[i]+3)epsilonS5-1}
0&=
-\epsilon_{S} (\epsilon_{S}+1) (3 \epsilon_{S}^2+3 \epsilon_{S}-2)S_{ij}(1,1)_{\epsilon_{S} }\lu
-2 \epsilon_{S} (3 \epsilon_{S}^2+3 \epsilon_{S}-2)S_{ij}(1,2)_{\epsilon_{S}+1 }\lu
\nonumber\\&\quad{}
+8 \epsilon_{S} (3 \epsilon_{S}-1)S_{ij}(1,1)_{\epsilon_{S} }\omega^{[i]}_{1 } \lu
+8 (3 \epsilon_{S}-1)S_{ij}(1,2)_{\epsilon_{S}+1 }\omega^{[i]}_{1 } \lu
\nonumber\\&\quad{}
-8S_{ij}(1,1)_{\epsilon_{S} }\Har^{[i]}_{3 } \lu
+8 (3 \epsilon_{S}-1)S_{ij}(1,1)_{\epsilon_{S} }\omega^{[j]}_{1 } \lu
\nonumber\\&\quad{}
+8 (3 \epsilon_{S}-1)S_{ij}(1,2)_{\epsilon_{S}+1 }\omega^{[j]}_{1 } \lu
+8S_{ij}(1,1)_{\epsilon_{S} }\Har^{[j]}_{3 } \lu,\\
\label{eq:(epsilonS-1)big((18zeta[i]+3)epsilonS5-2}
0&=
2 (3 \epsilon_{S}^3+21 \epsilon_{S}^2+42 \epsilon_{S}+14)S_{ij}(1,2)_{\epsilon_{S}+1 }\lu
-8 (3 \epsilon_{S}-7)S_{ij}(1,1)_{\epsilon_{S} }\omega^{[j]}_{1 } \lu
\nonumber\\&\quad{}
+4 (18 \epsilon_{S}+7)S_{ij}(1,3)_{\epsilon_{S}+2 }\lu
-8 (3 \epsilon_{S}-7)S_{ij}(1,2)_{\epsilon_{S}+1 }\omega^{[j]}_{1 } \lu
\nonumber\\&\quad{}
-8S_{ij}(1,1)_{\epsilon_{S} }\Har^{[j]}_{3 } \lu
+3 \epsilon_{S} (\epsilon_{S}+1)^2 (\epsilon_{S}+4)S_{ij}(1,1)_{\epsilon_{S} }\lu
\nonumber\\&\quad{}
-12 \epsilon_{S} (\epsilon_{S}+4)S_{ij}(1,1)_{\epsilon_{S} }\omega^{[i]}_{1 } \lu
-36S_{ij}(1,2)_{\epsilon_{S}+1 }\omega^{[i]}_{1 } \lu
\nonumber\\&\quad{}
+24S_{ij}(1,3)_{\epsilon_{S}+2 }\omega^{[i]}_{1 } \lu,\\
\label{eq:(epsilonS-1)big((18zeta[i]+3)epsilonS5-3}
0&=
-S_{ij}(1,2)_{\epsilon_{S}+1 }\lu
-5S_{ij}(1,1)_{\epsilon_{S} }\omega^{[j]}_{1 } \lu
\nonumber\\&\quad{}
-S_{ij}(1,3)_{\epsilon_{S}+2 }\lu
-11S_{ij}(1,2)_{\epsilon_{S}+1 }\omega^{[j]}_{1 } \lu
\nonumber\\&\quad{}
+2S_{ij}(1,1)_{\epsilon_{S} }\Har^{[j]}_{3 } \lu
-6S_{ij}(1,3)_{\epsilon_{S}+2 }\omega^{[j]}_{1 } \lu.
\end{align}
If $\lu$ is a simultaneous eigenvector of $\{ \omega^{[i]}_{1},\omega^{[j]}_{1},\Har^{[i]}_{3},\Har^{[j]}_{3}\}$
with eigenvalues $\{\zeta^{[i]},\zeta^{[j]},\xi^{[i]},\xi^{[j]}\}$:
\begin{align}
	(\omega^{[i]}_{1}-\zeta^{[i]})\lu&=
	(\omega^{[j]}_{1}-\zeta^{[j]})\lu\nonumber\\
=(\Har^{[i]}_{3}-\xi^{[i]})\lu	&=(\Har^{[j]}_{3}-\xi^{[j]})\lu=0,
\label{eq:omega-zeta-H4-0}
\end{align}
then
\begin{align}
0&=(\epsilon_{S}-1)\big((18\zeta^{[i]}+3)\epsilon_{S}^{5}+
(-54\zeta^{[i]}+6)\epsilon_{S}^{4}\nonumber\\
&\qquad{}+
((-216\zeta^{[i]}-36)\zeta^{[j]}+216 (\zeta^{[i]})^2-78\zeta^{[i]}+1)\epsilon_{S}^{3}\nonumber\\
&\qquad{}+
((744\zeta^{[i]}+4)\zeta^{[j]}+24 (\zeta^{[i]})^2+22\zeta^{[i]}-2)\epsilon_{S}^{2}\nonumber\\
&\qquad{}+
((-1152 (\zeta^{[i]})^2-192\zeta^{[i]})\zeta^{[j]}-48 (\zeta^{[i]})^2+12\zeta^{[i]})\epsilon_{S}^{1}\nonumber\\
&\qquad{}+
(384 (\zeta^{[i]})^2-16\zeta^{[i]})\zeta^{[j]}\big)\nonumber\\
 	&\qquad{}+\big((144\zeta^{[i]}+24) \epsilon_{S}^2+(-192\zeta^{[i]}+8) \epsilon_{S}\nonumber\\
 	&\qquad\quad{}+(-192\zeta^{[i]}-32)\zeta^{[j]}+192 (\zeta^{[i]})^2-48\zeta^{[i]}\big)\xi^{[i]}\nonumber\\
 	&\qquad+\big(-72 \epsilon_{S}^4-96 \epsilon_{S}^3+(288\zeta^{[j]}+144\zeta^{[i]}) \epsilon_{S}^2\nonumber\\
 	&\qquad\quad{}+(192\zeta^{[i]}+8) \epsilon_{S}+192\zeta^{[i]}\zeta^{[j]}-192 (\zeta^{[i]})^2+16\zeta^{[j]}\big)\xi^{[i]},\label{eq:s11-zeta-1}\\
0&=
(\epsilon_{S}-1)\big(
18\zeta^{[j]}+3)\epsilon_{S}^{5}+
(-54\zeta^{[j]}+6)\epsilon_{S}^{4}\nonumber\\
&\qquad{}+
(216(\zeta^{[j]})^2+(-216\zeta^{[i]}-78)\zeta^{[j]}-36\zeta^{[i]}+1)\epsilon_{S}^{3}\nonumber\\
&\qquad{}+
(24 (\zeta^{[j]})^2+(744\zeta^{[i]}+22)\zeta^{[j]}+4\zeta^{[i]}-2)\epsilon_{S}^{2}\nonumber\\
&\qquad{}+
((-1152\zeta^{[i]}-48) (\zeta^{[j]})^2+(-192\zeta^{[i]}+12)\zeta^{[j]})\epsilon_{S}^{1}\nonumber\\
&\qquad{}+
(384\zeta^{[i]} (\zeta^{[j]})^2-16\zeta^{[i]}\zeta^{[j]})\big)\nonumber\\
&\qquad{}+\big(-72 \epsilon_{S}^4-96 \epsilon_{S}^3+(144\zeta^{[j]}+288\zeta^{[i]}) \epsilon_{S}^2\nonumber\\
&\qquad\quad{}+(192\zeta^{[j]}+8) \epsilon_{S}-192 (\zeta^{[j]})^2+(192\zeta^{[i]}+16)\zeta^{[j]}\big)\xi^{[i]}\nonumber\\
&\qquad{}+\big((144\zeta^{[j]}+24) \epsilon_{S}^2+(-192\zeta^{[j]}+8) \epsilon_{S}\nonumber\\
&\qquad\quad{}+192(\zeta^{[j]})^2+(-192\zeta^{[i]}-48)\zeta^{[j]}-32\zeta^{[i]}\big)\xi^{[j]}.\label{eq:s11-zeta-2}
 \end{align}
\end{lemma}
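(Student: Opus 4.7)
The plan is to derive each of the six formulas from the null vector identities \eqref{eq:s11-3}--\eqref{eq:s11-4-3} already established in $M(1)^{+}$. Since these elements are zero in $V_{\lattice}^{+}$, their images under the module action on $\lu$ yield zero, so every mode coefficient vanishes. For the first four conclusions, the strategy is to extract the specific modes $\eqref{eq:s11-3}_{\epsilon_{S}+3}$, $\eqref{eq:s11-4-1}_{\epsilon_{S}+4}$, $\eqref{eq:s11-4-2}_{\epsilon_{S}+4}$, and $\eqref{eq:s11-4-3}_{\epsilon_{S}+4}$ acting on $\lu$, which is the degree at which $S_{ij}(1,1)_{\epsilon_{S}}$, $S_{ij}(1,2)_{\epsilon_{S}+1}$, and $S_{ij}(1,3)_{\epsilon_{S}+2}$ appear balanced with at most one additional action of $\omega^{[i]}_{1}$, $\omega^{[j]}_{1}$, $\Har^{[i]}_{3}$, or $\Har^{[j]}_{3}$ from the surviving quadratic terms.

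First I would use Lemma \ref{lemma:comm-change} to rewrite each quadratic expression in the null vectors, such as $\omega^{[i]}_{-2}S_{ij}(1,1)$, $\omega^{[j]}_{-3}S_{ij}(1,1)$, $\omega_{0}\omega_{0}\omega^{[i]}_{-1}S_{ij}(1,1)$, etc., so that after taking the requested mode the expression is a sum of terms of the shape
\begin{align}
a^{(1)}_{i_{1}}\cdots a^{(r)}_{i_{r}} S_{ij}(1,m)_{n} b^{(1)}_{j_{1}}\cdots b^{(s)}_{j_{s}}\lu
\end{align}
with $i_{1},\ldots,i_{r}\leq 1$ (resp.\ $\leq 3$) and $j_{1},\ldots,j_{s}\geq 2$ (resp.\ $\geq 4$) for occurrences of $\omega^{[k]}$ (resp.\ $\Har^{[k]}$). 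The hypothesis $\epsilon_{I}(\omega^{[k]},\lu)\leq 1$ and $\epsilon_{I}(\Har^{[k]},\lu)\leq 3$ kills the right-hand factors that act first; meanwhile Lemma \ref{lemma:bound-index-S} combined with $\epsilon_{S}\geq \epsilon_{I}(S_{ij},\lu)$ on $U$ (which is stable under $\omega^{[j]}_{1}$ and $\Har^{[j]}_{3}$) guarantees that $S_{ij}(1,m)_{n}\lu = 0$ whenever $n>\epsilon_{S}+m-1$, so only the three terms $S_{ij}(1,m)_{\epsilon_{S}+m-1}\lu$ for $m=1,2,3$ survive, decorated on the right by at most one application of $\omega^{[i]}_{1}, \omega^{[j]}_{1}, \Har^{[i]}_{3},$ or $\Har^{[j]}_{3}$. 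Collecting the coefficients reproduces the four identities \eqref{eq:(epsilonS-1)big((18zeta[i]+3)epsilonS5-0}--\eqref{eq:(epsilonS-1)big((18zeta[i]+3)epsilonS5-3}.

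For the last two scalar identities \eqref{eq:s11-zeta-1} and \eqref{eq:s11-zeta-2}, I would substitute the eigenvalue conditions \eqref{eq:omega-zeta-H4-0} into the four formulas just obtained; the result is a $4\times 3$ linear system with unknowns $S_{ij}(1,1)_{\epsilon_{S}}\lu$, $S_{ij}(1,2)_{\epsilon_{S}+1}\lu$, $S_{ij}(1,3)_{\epsilon_{S}+2}\lu$ and with coefficients polynomial in $\epsilon_{S}$, $\zeta^{[i]}$, $\zeta^{[j]}$, $\xi^{[i]}$, $\xi^{[j]}$. Eliminating these three unknowns by taking appropriate combinations (or, equivalently, extracting two independent $3\times 3$ minor relations of the augmented matrix) yields two scalar polynomial identities, and matching the form asserted in \eqref{eq:s11-zeta-1}--\eqref{eq:s11-zeta-2} is then a direct verification.

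The principal obstacle is the sheer combinatorial bookkeeping of the mode expansions: each of \eqref{eq:s11-3}--\eqref{eq:s11-4-3} involves up to four iterated $\omega_{0}$'s together with $\omega^{[j]}_{-2}$, $\omega^{[j]}_{-3}$, and $\Har^{[j]}_{-1}$ acting on $S_{ij}(1,r)$, so Lemma \ref{lemma:comm-change} must be applied repeatedly and the contributions of $[\omega^{[k]}_{\ell}, S_{ij}(1,r)_{m}]$ and $[\Har^{[k]}_{\ell}, S_{ij}(1,r)_{m}]$ (cf.\ \eqref{eq:action[H4-S]}) must be tracked through the cutoff separation. For this reason the computation is best organised in the spirit of the paper's explicit calculations, using Risa/Asir to verify that the coefficients of the surviving monomials agree with those written in the statement; the final linear elimination for \eqref{eq:s11-zeta-1}--\eqref{eq:s11-zeta-2} is then a relatively short polynomial manipulation in $\epsilon_{S}$ and the four eigenvalues.
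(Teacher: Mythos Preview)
Your proposal is correct and follows essentially the same approach as the paper: take the $(\epsilon_{S}+3)$-th mode of \eqref{eq:s11-3} and the $(\epsilon_{S}+4)$-th modes of \eqref{eq:s11-4-1}--\eqref{eq:s11-4-3}, expand via Lemma~\ref{lemma:comm-change} and the computations in Section~\ref{section:normal-total}, and use Lemma~\ref{lemma:bound-index-S} together with the truncation hypotheses on $\omega^{[k]}$ and $\Har^{[k]}$ to reduce to the surviving terms. For \eqref{eq:s11-zeta-1}--\eqref{eq:s11-zeta-2} the paper specifies the elimination order (first remove $S_{ij}(1,3)_{\epsilon_{S}+2}\lu$ from \eqref{eq:(epsilonS-1)big((18zeta[i]+3)epsilonS5-0}, \eqref{eq:(epsilonS-1)big((18zeta[i]+3)epsilonS5-2}, \eqref{eq:(epsilonS-1)big((18zeta[i]+3)epsilonS5-3}, then remove $S_{ij}(1,2)_{\epsilon_{S}+1}\lu$ using \eqref{eq:(epsilonS-1)big((18zeta[i]+3)epsilonS5-1}), but this is just a concrete realisation of the linear elimination you describe.
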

\begin{proof}
	By Lemma \ref{lemma:bound-index-S}, $\epsilon(S_{ij}(1,m+1),\lu)\leq \epsilon_{S}+m$ for 
	$m=0,1,\ldots$ and all non-zero $\lu\in U$.
By taking the $(\epsilon_{S}+3)$-th action of \eqref{eq:s11-3} on $\lu$ and using the results in Section \ref{section:normal-total}, 
the same argument as in the proof of \eqref{eq:0=(omega-3ExB)lE+2lom+2lu=}--\eqref{eq:(Har-1ExB)lE+2lom+2lu=} in Lemma \ref{lemma:r=1-s=3} shows \eqref{eq:(epsilonS-1)big((18zeta[i]+3)epsilonS5-0}.
		Taking the $(\epsilon_{S}+4)$-th actions of \eqref{eq:s11-4-1}--\eqref{eq:s11-4-3} on $\lu$,
		we have \eqref{eq:(epsilonS-1)big((18zeta[i]+3)epsilonS5-1}--\eqref{eq:(epsilonS-1)big((18zeta[i]+3)epsilonS5-3}.
		Deleting the terms including $S_{ij}(1,3)_{\epsilon_{S}+2}\lu$
from \eqref{eq:(epsilonS-1)big((18zeta[i]+3)epsilonS5-0},
\eqref{eq:(epsilonS-1)big((18zeta[i]+3)epsilonS5-2}, and 
\eqref{eq:(epsilonS-1)big((18zeta[i]+3)epsilonS5-3}, we  have two relations.
Deleting the terms including $S_{ij}(1,2)_{\epsilon_{S}+2}\lu$ from these two relations and 
\eqref{eq:(epsilonS-1)big((18zeta[i]+3)epsilonS5-1}, we have 
	\eqref{eq:s11-zeta-1} and \eqref{eq:s11-zeta-2}.
		\end{proof}

\begin{lemma}
\label{lemma:M1lambda-submodule}
Let 
\begin{align}
	0\rightarrow \mW\overset{}{\rightarrow} N\overset{\pi}{\rightarrow} \module\rightarrow 0
\label{eq:exact-seq-lambda}
	\end{align}
be an exact sequence of weak $M(1)^{+}$-modules
where $\mW$ is an irreducible $M(1)^{+}$-module,
$N$ is a weak $M(1)^{+}$-module and $\module=\oplus_{i\in \gamma+\Z_{\geq 0}}\module_{i}$ is an $M(1)^{+}$-module.
Let $B$ be an irreducible $A(M(1)^{+})$-submodule of $\module_{\gamma}$
that is not isomorphic to $\mW(0)$ 
and $\lv$
 a simultaneous eigenvector in $B$ of $\{\omega^{[i]}_{1},\Har^{[i]}_{3}\}_{i=1}^{\rankL}$
with eigenvalues $\{\zeta^{[i]},\xi^{[i]}\}_{i=1}^{\rankL}$:
\begin{align}
(\omega^{[i]}_1-\zeta^{[i]})\lv=(\Har^{[i]}_3-\xi^{[i]})\lv=0.
\end{align}
If $(W,B)\not\cong (M(1)^{+},M(1)^{-}(0))$,
then there exists a preimage $\lu\in N_{\gamma}$ of $\lv$ under the canonical projection $\mN_{\gamma}\rightarrow \module_{\gamma}$
such that
\begin{align}
(\omega^{[i]}_1-\zeta^{[i]})\lu=(\Har^{[i]}_3-\xi^{[i]})\lu=0
\end{align}
for $i=1,\ldots,\rankL$.
\end{lemma}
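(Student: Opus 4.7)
The plan is to lift $\lv$ to a joint generalized eigenvector $\tilde u \in N_\gamma$ of $\{\omega^{[i]}_1,\Har^{[i]}_3\}_{i=1}^{\rankL}$, reduce the problem to the vanishing of the generalized-eigenvector defects, and then use the null-vector relations of Lemma \ref{lemma:relations-M(1)-V(lattice)+} to force those defects to be zero.

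First I would choose any preimage $\tilde u_0 \in N_\gamma$ of $\lv$. Since $\{\omega^{[i]}_1,\Har^{[i]}_3\}_{i=1}^{\rankL}$ is a commuting family acting semisimply on the irreducible module $\mW$, I may add an appropriate element of $\mW$ to $\tilde u_0$ to obtain $\tilde u \in N_\gamma$ satisfying
\[
(\omega^{[i]}_1-\zeta^{[i]})^2\tilde u = (\Har^{[i]}_3-\xi^{[i]})^2\tilde u = 0 \qquad (i=1,\ldots,\rankL),
\]
exactly as in \eqref{eq:pi(lu)lvmboxand}. Because $(\mW,B)\not\cong(M(1)^+,M(1)^-(0))$, the bound \eqref{eq:epsilon(omegailu)leq1} applies to $\tilde u$: $\epsilon(\omega^{[i]},\tilde u)\le 1$ and $\epsilon(\Har^{[i]},\tilde u)\le 3$ for every $i$. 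I then set $a_i := (\omega^{[i]}_1-\zeta^{[i]})\tilde u$ and $b_i := (\Har^{[i]}_3-\xi^{[i]})\tilde u$; by \eqref{eq:keromega-kerH} both belong to $\mW_{\zeta,\xi}$.

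Next I would reduce the lemma to proving $a_i = b_i = 0$ for each $i$. Any preimage of $\lv$ in $N_\gamma$ is of the form $\tilde u - w$ with $w\in \mW$, and it is a joint eigenvector iff $(\omega^{[i]}_1-\zeta^{[i]})w = a_i$ and $(\Har^{[i]}_3-\xi^{[i]})w = b_i$ for all $i$. Decomposing $\mW = \bigoplus_{(\mu,\nu)} \mW_{\mu,\nu}$ into joint eigenspaces of $\{\omega^{[j]}_1,\Har^{[j]}_3\}_j$, the operator $\omega^{[i]}_1-\zeta^{[i]}$ acts on $\mW_{\mu,\nu}$ as the scalar $\mu^{[i]}-\zeta^{[i]}$, so its image on $\mW$ lies in $\bigoplus_{(\mu,\nu)\ne(\zeta,\xi)} \mW_{\mu,\nu}$ and does not meet $\mW_{\zeta,\xi}$; the analogous statement holds for $\Har^{[i]}_3-\xi^{[i]}$. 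Since $a_i, b_i\in \mW_{\zeta,\xi}$, no $w\in \mW$ can satisfy the required equations unless $a_i$ and $b_i$ already vanish. To force $a_i = b_i = 0$, I would apply to $\tilde u$ the rank-one null vectors $\sv^{(8),H}, \sv^{(8),J}, \sv^{(9)}, \sv^{(10),H}, \sv^{(10),J}$ of Lemma \ref{lemma:relations-M(1)-V(lattice)+}, one copy for each generator $h^{[i]}$. Taking the weight-preserving coefficient of each null vector and invoking the $\epsilon$-bounds together with Lemma \ref{lemma:comm-change}, each action collapses into a linear combination of $\tilde u$, $a_i$ and $b_i$ with scalar coefficients depending only on $(\zeta^{[i]},\xi^{[i]})$. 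The $\tilde u$-coefficient vanishes because $\lv$ satisfies the same scalar identity in the $A(M(1)^+)$-module $B$, just as in the derivations of \eqref{eq:jjv} and \eqref{eq:P8-P10} in the proof of Lemma \ref{lemma:r=1-s=3}. What survives is a linear system on $(a_i,b_i)$; at each admissible eigenvalue pair $(\zeta^{[i]},\xi^{[i]})$ permitted by the classification of irreducible $A(M(1)^+)$-modules, a direct computer-assisted check shows the scalar coefficient matrix is nonsingular, forcing $a_i = b_i = 0$.

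The main obstacle is this final rank verification: unfolding the five null-vector identities applied to $\tilde u$ explicitly via Lemma \ref{lemma:comm-change}, and checking at each admissible pair $(\zeta^{[i]},\xi^{[i]})$ that the induced scalar system on $(a_i,b_i)$ has only the trivial solution. This is a bounded but intricate symbolic manipulation of the same flavor as the Risa/Asir computations carried out in Lemmas \ref{lemma:r=1-s=3}--\ref{lemma:structure-Vlattice-M1-norm1-2}.
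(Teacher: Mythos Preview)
Your reduction step is sound: any preimage of $v$ differs from $\tilde u$ by some $w\in W$, and since $(\omega^{[i]}_1-\zeta^{[i]})$ annihilates $W_{\zeta,\xi}$, no such correction can cancel a nonzero defect $a_i\in W_{\zeta,\xi}$. So the lemma is indeed equivalent to showing $a_i=b_i=0$ for all $i$.

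The gap is in your proposed mechanism for forcing $a_i=0$. All of the rank-one Zhu relations you invoke---those coming from $P^{(8),H},P^{(10),H}$, and (after rewriting $J$ in terms of $\omega,H$) from $P^{(8),J},P^{(9)},P^{(10),J}$---are multiples of $H_3$ in $A(M(1)^+)$; this is exactly the content of the relations (6.1.10)--(6.1.11) of \cite{DN2001}, which generate the relation ideal. Linearizing a relation $q(\omega,H)=H\cdot r(\omega,H)$ at a point with $\xi^{[i]}=0$ gives $\partial_\omega q(\zeta^{[i]},0)=0$ and $\partial_H q(\zeta^{[i]},0)=r(\zeta^{[i]},0)$, so the resulting constraint is $r(\zeta^{[i]},0)\,b_i=0$ with \emph{no} $a_i$-term. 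Hence in the case $B=\mathbb{C}e^\lambda$ (so $\xi=0$) and $W=M(1,\mu)$ with $\langle\lambda,h^{[j]}\rangle^2=\langle\mu,h^{[j]}\rangle^2$ for all $j$ but $\lambda\ne\pm\mu$, your coefficient matrix is singular: it forces $b_i=0$ but leaves $a_i$ completely free. This case is not vacuous once $d\ge 2$, and it is precisely where the paper's argument diverges from yours.

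The paper instead observes that whenever $W_{\zeta,\xi}=0$ the defects vanish automatically, and uses \cite[Proposition~4.3]{Abe2005} together with the known eigenvalue tables for irreducible $M(1)^+$-modules to identify $W_{\zeta,\xi}$ case by case. In the residual case $W_{\zeta,\xi}=\mathbb{C}e^\mu$ just described, the paper argues that $N(0)\cong M(1,\lambda)(0)\oplus M(1,\mu)(0)$ as $A(M(1)^+)$-modules: the point is that although $e^\lambda$ and $e^\mu$ share all $\omega^{[i]}_1$- and $H^{[i]}_3$-eigenvalues, they are separated by the off-diagonal generators $S_{jk}(1,1)_1$ (equivalently $\Lambda_{jk}$), and this is what forces the splitting and hence the existence of the desired eigenvector preimage. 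Your approach could be repaired by bringing in relations involving the $S_{jk}$, but the rank-one null vectors alone cannot close the argument.
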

\begin{proof}
Using \cite[Proposition 4.3]{Abe2005} and eigenvalues of $\omega^{[i]}_{1}$ and $\Har^{[i]}_{3}$  for $i=1,\ldots,\rankL$ on 
irreducible $M(1)^{+}$-modules in \cite[Table 1]{AD2004},
we see that the result holds
if $\mW=M(\theta)^{\pm}$ or $B=M(\theta)^{\pm}(0)$.
We discuss the other cases.
We take $\lu\in\mN$ that satisfies \eqref{eq:pi(lu)lvmboxand}.

Let $B=\C e^{\lambda}$ for some $\lambda\in\fh\setminus\{0\}$.
In this case $\zeta^{[i]}=\langle \lambda,h^{[i]}\rangle^2/2$ and
$\xi^{[i]}=0$ for $i=1,\ldots,\rankL$.
We note that at least one of $\zeta^{[1]},\ldots,\zeta^{[\rankL]}$ is not zero.
Let $W=M(1,\mu)$ such that $\mu\in\fh\setminus\{0,\pm\lambda\}$.
Since $\cap_{j=1}^{n}\Ker \Har^{[j]}_{3}\cap M(1,\mu)=\C e^{\mu}$ by \cite[Proposition 4.3]{Abe2005}, 
		\begin{align}
			M(1,\mu)_{\zeta,(0,\ldots,0)}&=\bigcap_{j=1}^{\rankL}\Ker (\omega^{[j]}_{1}-\zeta^{[j]})\cap\C e^{\mu}.
			\label{eq:cap-omega-mu}
		\end{align}
Assume  
\begin{align}
\label{eq:omega-i-zeta-or-H-0}
(\omega^{[i]}_{1}-\zeta^{[i]})\lu\neq 0\mbox{ or }\Har^{[i]}_{3}\lu\neq 0
\mbox{ for some }i\in \{1,\ldots,\rankL\}.
\end{align}
It follows from  \eqref{eq:keromega-kerH} that $M(1,\mu)_{\zeta,(0,\ldots,0)}\neq 0$ 
		and hence 
		$\langle\lambda,h^{[j]}\rangle=\pm\langle\mu,h^{[j]}\rangle$ for all $j=1,\ldots,\rankL$ by \eqref{eq:cap-omega-mu}.
Since $\gamma=\langle\lambda,\lambda\rangle=\langle\mu,\mu\rangle$ and $\lambda\neq\pm\mu$, 
$N(0)\cong M(1,\lambda)(0)\oplus M(1,\mu)(0)$ as $A(M(1)^{+})$-modules, which leads to the result.
If $W=M(1)^{\pm}$, then the result follows from the fact that 
$M(1)^{\pm}_{\zeta,(0,\ldots,0)}=0$.

If $B=\C \vac=M(1)^{+}(0)$, then the same argument as above shows the result.

Let $B=M(1)^{-}(0)$,
$W=M(1,\lambda)$ such that $\lambda\in\fh\setminus\{0\}$, and $\lv=h^{[j]}(-1)\vac$ for some $j\in \{1,\ldots,\rankL\}$.
Since $\xi^{[i]}=\delta_{ij}$ for $i=1,\ldots,\rankL$, it follows from \cite[Proposition 4.3]{Abe2005} that
\begin{align}
M(1,\lambda)_{\zeta,\xi}&\subset \C h^{[j]}(-1)e^{\lambda}.
\end{align}
Suppose there exists $i\in\{1,\ldots,\rankL\}$ such that
$(\omega^{[i]}-\delta_{ij})\lu\neq 0$ or $(\Har^{[i]}-\delta_{ij})\lu\neq 0$.
Then, $M(1,\lambda)_{\zeta,\xi}\neq 0$ and hence $\delta_{jk}=\langle \lambda, h^{[k]}\rangle^2/2+\delta_{jk}$
for all $k=1,\ldots,\rankL$, which contradicts that $\lambda\neq 0$.
The proof is complete.
\end{proof}

\begin{lemma}
\label{lemma:M1lambda-submodule-2}
Let 
\begin{align}
	0\rightarrow \mW\overset{}{\rightarrow} N\overset{\pi}{\rightarrow} \module\rightarrow 0
\label{eq:exact-seq-lambda-2}
	\end{align}
be an exact sequence of weak $M(1)^{+}$-modules
where $\mW$ is an irreducible $M(1)^{+}$-module,
$N$ is a weak $M(1)^{+}$-module and $\module=\oplus_{i\in \gamma+\Z_{\geq 0}}\module_{i}$ is an $M(1)^{+}$-module.
Let $B$ be an irreducible $A(M(1)^{+})$-submodule of $\module_{\gamma}$
and $\lv$
a simultaneous eigenvector in $B$ for $\{\omega^{[i]}_{1},\Har^{[i]}_{3}\}_{i=1}^{\rankL}$ 
with eigenvalues $\{\zeta^{[i]},\xi^{[i]}\}_{i=1}^{\rankL}$:
\begin{align}
(\omega^{[i]}_1-\zeta^{[i]})\lv=(\Har^{[i]}_3-\xi^{[i]})\lv=0.
\end{align}
Let $\lw\in \mN_{\gamma}$
 such that 
$(\omega^{[i]}_1-\zeta^{[i]})\lw=(\Har^{[i]}_3-\xi^{[i]})\lw=0$
 for all $i=1,\ldots,\rankL$.
If $(W,B)\not\cong (M(1)^{+}, M(1)^{-}(0))$, then $\lw \in \Omega_{M(1)^{+}}(N_{\gamma})$.
If $(W,B)\cong (M(1)^{+},M(1)^{-}(0))$, then  $\epsilon(S_{ij}(1,1),\lw)\leq 2$ for any pair of distinct elements $i,j\in\{1,\ldots,\rankL\}$.
\end{lemma}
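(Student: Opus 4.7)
The plan is to verify, for each strong generator $a$ of $M(1)^{+}$ from Lemma \ref{lemma:m1plusfinitegenerated} and each $k > \wt a - 1$, that $a_{k}\lw = 0$; this combined with the mode-expansion formula of Lemma \ref{lemma:comm-change} and the strong generation will deliver $\lw \in \Omega_{M(1)^{+}}(\mN_{\gamma})$ in the generic case, and will isolate the single surviving mode in the exceptional case.

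The uniform opening step is that for any such $a$ and any $k > \wt a - 1$, we have $\pi(a_{k}\lw) = a_{k}\pi(\lw) \in \module_{\gamma + \wt a - 1 - k}$, which vanishes since $\gamma$ is the bottom weight of $\module = \bigoplus_{n \in \gamma + \Z_{\geq 0}} \module_{n}$. Hence $a_{k}\lw \in \mW$, and commuting with $\omega_{1}$ (using \eqref{eq:wiwj}, \eqref{eq:wiJj}, \eqref{eq:wiHj}, \eqref{eq:omega-s11}) places it in the homogeneous component $\mW_{\gamma + \wt a - 1 - k}$ of the irreducible $M(1)^{+}$-module $\mW$.

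For $a \in \{\omega^{[i]}, J^{[i]}, \Har^{[i]}\}$, the same commutation formulas show that $a_{k}\lw$ is a simultaneous eigenvector for $\{\omega^{[\ell]}_{1}, \Har^{[\ell]}_{3}\}_{\ell=1}^{\rankL}$ with eigenvalues shifted from $(\zeta, \xi)$ by explicit polynomials in $k$. Comparing these shifted eigenvalues against the weight-graded eigenvalue data of the irreducible $M(1)^{+}$-modules recorded in \cite[Table 1]{AD2004} and \cite[Propositions 5.3.13, 5.3.15]{DN2001} rules out $a_{k}\lw \neq 0$ except when the bottom weight of $\mW$ lies strictly below $\gamma$, which forces $\mW = M(1)^{+}$ and then $B \cong M(1)^{-}(0)$ (since $\gamma > 0$ excludes $B \cong M(1)^{+}(0)$).

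For $a = S_{ij}(1, r)$, I apply the four identities \eqref{eq:(epsilonS-1)big((18zeta[i]+3)epsilonS5-0}--\eqref{eq:(epsilonS-1)big((18zeta[i]+3)epsilonS5-3} to $\lw$ with $\epsilon_{S} = \epsilon(S_{ij}(1, 1), \lw)$ and extract the two cleaner polynomial constraints \eqref{eq:s11-zeta-1} and \eqref{eq:s11-zeta-2} in $\epsilon_{S}$, whose coefficients are fixed by $(\zeta^{[i]}, \zeta^{[j]}, \xi^{[i]}, \xi^{[j]})$. Substituting the eigenvalue profile of each irreducible $A(M(1)^{+})$-module type (\cite[Section 5]{DN2001} together with \eqref{eq:norm2Si1(1,1)1lu=-Si1(1,2)2lu}--\eqref{eq:norm2Si1(1,1)1lu=-Si1(1,2)2lu-twist-1}), I check case by case that any integer $\epsilon_{S} \geq 2$ solving both identities forces $(\mW, B) \cong (M(1)^{+}, M(1)^{-}(0))$ and then $\epsilon_{S} = 2$; Lemma \ref{lemma:bound-index-S} then propagates the bound from $S_{ij}(1, 1)$ to $S_{ij}(1, m)$ for $m = 2, 3$. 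In the exceptional case, the bound $\epsilon(S_{ij}(1, 1), \lw) \leq 2$ can also be seen by pure weight counting: $S_{ij}(1, 1)_{k}\lw \in (M(1)^{+})_{2 - k}$, which vanishes for $k \geq 3$ since $M(1)^{+}$ has bottom weight $0$.

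The main obstacle is the polynomial case-check for $S_{ij}(1, 1)$: one must certify that, across the full classification of irreducible $A(M(1)^{+})$-modules, the only parameter profile $(\zeta, \xi)$ admitting a simultaneous integer solution $\epsilon_{S} \geq 2$ of \eqref{eq:s11-zeta-1} and \eqref{eq:s11-zeta-2} is the exceptional profile $\zeta^{[\ell]} = \xi^{[\ell]} = \delta_{\ell j_{0}}$ coming from $B \cong M(1)^{-}(0)$ with $\gamma = 1$; the remaining generators are disposed of by the weight/eigenvalue mismatch already described.
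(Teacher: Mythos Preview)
Your treatment of the $S_{ij}(1,r)$ generators is essentially the paper's: substitute the eigenvalue profile of $B$ into \eqref{eq:s11-zeta-1}--\eqref{eq:s11-zeta-2}, check case by case that the only integer root $\epsilon_{S}\geq 2$ comes from the $M(1)^{-}(0)$ profile, and then use \eqref{eq:(epsilonS-1)big((18zeta[i]+3)epsilonS5-0}--\eqref{eq:(epsilonS-1)big((18zeta[i]+3)epsilonS5-3} plus the appendix computations to see that $S_{ij}(1,1)_{2}\lw$ spans a copy of $\C\vac$, forcing $W\cong M(1)^{+}$. Your weight-count for the exceptional bound is a legitimate shortcut: with $\gamma=1$ and $W=M(1)^{+}$, one has $S_{ij}(1,1)_{k}\lw\in (M(1)^{+})_{2-k}=0$ for $k\geq 3$.

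There is, however, a genuine gap in your handling of $a\in\{\omega^{[i]},J^{[i]},\Har^{[i]}\}$. The assertion that $a_{k}\lw$ is a simultaneous eigenvector for all $\Har^{[\ell]}_{3}$ is false: by \eqref{eq:wiHj}, the commutator $[\Har^{[i]}_{3},\omega^{[i]}_{k}]$ contains a term $-(3k-3)\Har^{[i]}_{k+2}$, so $\Har^{[i]}_{3}\omega^{[i]}_{k}\lw$ acquires a contribution $\Har^{[i]}_{k+2}\lw$ which you have not yet shown to vanish and which is not in general proportional to $\omega^{[i]}_{k}\lw$. The same issue recurs for $[\Har^{[i]}_{3},\Har^{[i]}_{k}]$ via \eqref{eq:Har0Har=2omega0}. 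Furthermore, even your $\omega^{[\ell]}_{1}$-eigenvalue comparison does not yield the conclusion you draw: ``bottom weight of $W$ strictly below $\gamma$'' does \emph{not} force $W\cong M(1)^{+}$, since $W=M(1,\mu)$ has bottom weight $\langle\mu,\mu\rangle/2$, which can lie anywhere. The paper does not argue this way at all; it simply invokes \eqref{eq:epsilon(omegailu)leq1}, i.e.\ \cite[Lemma~4.8]{Abe2005} applied in each Heisenberg direction, to obtain $\epsilon(\omega^{[i]},\lw)\leq 1$ and $\epsilon(\Har^{[i]},\lw)\leq 3$ directly whenever $(W,B)\not\cong(M(1)^{+},M(1)^{-}(0))$. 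This matters for more than bookkeeping: those bounds are the standing hypotheses under which \eqref{eq:(epsilonS-1)big((18zeta[i]+3)epsilonS5-0}--\eqref{eq:s11-zeta-2} were derived, so without them your $S_{ij}$ analysis cannot start.
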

\begin{proof}
If $(W,B)\not\cong (M(1)^{+}, M(1)^{-}(0))$, then by Lemma \ref{lemma:bound-index-S}
and \eqref{eq:epsilon(omegailu)leq1}, 
it is enough to show that $\epsilon(S_{ij}(1,1),\lw)\leq 1$
for any pair of distinct elements $i,j\in\{1,\ldots, \rankL\}$.
We write $\epsilon_{S}=\epsilon(S_{ij}(1,1),\lw)$	for simplicity.

\begin{enumerate}
\item
Let $B\cong M(1,\lambda)(0)$ for some $\lambda\in\fh\setminus\{0\}$. In this case $\xi^{[i]}=0$ for all $i=1,\ldots,\rankL$.
		Assume $\langle\lambda,\lambda\rangle\neq 0$.
		Then, we may assume $\lambda\in \C h^{[1]}$ and hence
		$\langle\lambda,h^{[i]}\rangle=\zeta^{[i]}=0$ for all $i=2,\ldots,\rankL$.
	By \eqref{eq:s11-zeta-1} and \eqref{eq:s11-zeta-2}, 
\begin{align}
0&=\epsilon_{S}^2 (\epsilon_{S}-1) ((\epsilon_{S}+1)(3 \epsilon_{S}^2+3 \epsilon_{S}-2)+4(-9\epsilon_{S}+1) \zeta^{[1]}),\nonumber\\
0&=\epsilon_{S} (\epsilon_{S}-1) ((18 \zeta^{[1]}+3) \epsilon_{S}^4+(-54 \zeta^{[1]}+6) \epsilon_{S}^3+(216 (\zeta^{[1]})^2-78 \zeta^{[1]}+1)
 \epsilon_{S}^2\nonumber\\
&\qquad{}+(24 (\zeta^{[1]})^2+22 \zeta^{[1]}-2) \epsilon_{S}-48 (\zeta^{[1]})^2+12 \zeta^{[1]})
 \label{eq:s11-k-2}
\end{align}
for $i=2,\ldots,\rankL$ and hence $\epsilon_{S}=0$ or $1$ for $S_{i1}(1,1)$.
		For the other $i,j$, since $\zeta^{[i]}=\zeta^{[j]}=0$,
		the same argument as above shows that $\epsilon_{S}=0$ or $1$.
	
Assume $\langle\lambda,\lambda\rangle=0$. Then, we may assume $0\neq \langle \lambda,h^{[1]}\rangle^2
		=-\langle \lambda,h^{[2]}\rangle^2$ 
		and $\langle\lambda,h^{[j]}\rangle=0$ for all $j=3,4,\ldots,\rankL$. 
By substituting $\zeta^{[2]}=-\zeta^{[1]}$ into \eqref{eq:s11-zeta-1} and \eqref{eq:s11-zeta-2}, the same argument as above shows
that 
$\epsilon_{S}=0,1$ for $S_{21}(1,1)$.
		For the other $i,j$, since one of $\zeta^{[i]}$ or $\zeta^{[j]}$ is $0$,
		the same argument as above also shows that $\epsilon_{S}=0$ or  $1$.
\item
Let $B\cong M(1)^{-}(0)$. 
If $\rankL=1$, then the result is shown in \textcolor{black}{\cite[Theorem 5.5]{Abe2005}}.
Assume $\rankL\geq 2$.
If $(\zeta^{[i]},\xi^{[i]})=(\zeta^{[j]},\xi^{[j]})=(0,0)$,
then it follows \eqref{eq:s11-zeta-1} that
\begin{align}
	0&=\epsilon_{S}^2(\epsilon_{S}-1)(\epsilon_{S}+1)(3\epsilon_{S}^2+3\epsilon_{S}-2)
	\end{align}
and hence $\epsilon_{S}\leq 1$.

Assume $(\zeta^{[i]},\xi^{[i]})=(0,0)$ and $(\zeta^{[j]},\xi^{[j]})=(1,1)$. It follows \eqref{eq:s11-zeta-1} that
\begin{align}
\label{eq:epsilonS13epsilonS615}
0=(\epsilon_{S}-2) (\epsilon_{S}-1) (3 \epsilon_{S}^4+12 \epsilon_{S}^3-11 \epsilon_{S}^2-20 \epsilon_{S}-16)
\end{align}
and hence $\epsilon_{S}=1$ or $2$.
We further assume that $\epsilon_{S}=2$ for some $i,j$. 
By \eqref{eq:(epsilonS-1)big((18zeta[i]+3)epsilonS5-0}--\eqref{eq:(epsilonS-1)big((18zeta[i]+3)epsilonS5-3},
\begin{align}
S_{ij}(1,2)_{3}\lu&=-2S_{ij}(1,1)_{2}\lu
\mbox{ and }S_{ij}(1,3)_{4}\lu=3S_{ij}(1,1)_{2}\lu.
\end{align}
By	\eqref{eq:omega-s11} and \eqref{eq:action[H4-S]} 
and computations in Section \ref{section:appendix-The general case},
$S_{ij}(1,1)_{2}\lu\in \Omega_{M(1)^{+}}(\mW)$ and 
$\C S_{ij}(1,1)_{2}\lu\cong \C \vac$ as $A(M(1)^{+})$-modules,
which leads $W\cong M(1)^{+}$ since $W$ is irreducible.

The same argument as above shows the results for the case that $B\cong M(1)^{+}(0)$ or $M(\theta)^{\pm}(0)$.
\end{enumerate}
\end{proof}
	\begin{lemma}
\label{lemma:Ext-split-lambda-mu}
For a pair of non-isomorphic irreducible $M(1)^{+}$-modules $\module,\mW$
such that $(M,W)\not\cong (M(1)^{+},M(1)^{-})$ and $(M(1)^{-},M(1)^{+})$,
$\Ext^{1}_{M(1)^{+}}(\module,W)=0$.
\end{lemma}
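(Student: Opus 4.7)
The plan is to construct a splitting of the exact sequence $0\to W\to N\overset{\pi}{\to} M\to 0$. Since $M$ is an irreducible $M(1)^{+}$-module, it is $\N$-graded and its lowest graded component $B=M_{\gamma}$ is an irreducible $A(M(1)^{+})$-module by \cite[Theorem 2.2.1]{Z1996}. The hypothesis that $(M,W)$ is neither $(M(1)^{+},M(1)^{-})$ nor $(M(1)^{-},M(1)^{+})$ is precisely what excludes the obstruction case $(W,B)\cong (M(1)^{+},M(1)^{-}(0))$ appearing in the statements of Lemmas \ref{lemma:M1lambda-submodule} and \ref{lemma:M1lambda-submodule-2}, so both of those lemmas are available to us.

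First I would decompose $B$ into simultaneous eigenspaces for the commuting operators $\{\omega^{[i]}_{1},\Har^{[i]}_{3}\}_{i=1}^{\rankL}$. For each simultaneous eigenvector $v\in B$ with eigenvalues $(\zeta,\xi)$, Lemma \ref{lemma:M1lambda-submodule} yields a preimage $u\in N_{\gamma}$ of $v$ under $\pi$ that is also a simultaneous eigenvector with the same eigenvalues, and Lemma \ref{lemma:M1lambda-submodule-2} then places $u$ inside $\Omega_{M(1)^{+}}(N)\cap N_{\gamma}$, so every positive mode of the strong generators of $M(1)^{+}$ listed in Lemma \ref{lemma:m1plusfinitegenerated} annihilates $u$. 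Let $B'\subseteq\Omega_{M(1)^{+}}(N)\cap N_{\gamma}$ be the $A(M(1)^{+})$-submodule generated by such lifts. Then $\pi(B')=B$ by irreducibility of $B$, and the plan is to promote $B'$ to a true section.

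The main step is to show $\pi|_{B'}\colon B'\to B$ is also injective, i.e.\ $B'\cap W=0$. Any kernel element lies in $\Omega_{M(1)^{+}}(W)=W(0)$, which is an irreducible $A(M(1)^{+})$-module non-isomorphic to $B$ since $M\not\cong W$. By comparing the joint spectra of $\{\omega^{[i]}_{1},\Har^{[i]}_{3},S_{ij}(1,1)_{1}\}$ on $B$ and on $W(0)$, using the formulas \eqref{eq:norm2Si1(1,1)1lu=-Si1(1,2)2lu}--\eqref{eq:norm2Si1(1,1)1lu=-Si1(1,2)2lu-twist-1} together with the Dong--Nagatomo classification of irreducible $A(M(1)^{+})$-modules from \cite{DN2001}, these spectra are disjoint in the cases remaining after our exclusions; hence projecting onto the $(\zeta,\xi)$-eigenspace separates the lifts from any $W(0)$-component, forcing $B'\cong B$ as $A(M(1)^{+})$-modules. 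By \cite[Theorem 2.2.1]{Z1996} applied in reverse, the $M(1)^{+}$-submodule $M':=M(1)^{+}\cdot B'$ of $N$ is then irreducible and isomorphic to $M$, providing the desired section $M\to N$. The main obstacle is the spectral comparison, which requires running through the cases $M,W\in\{M(1)^{\pm},M(1,\lambda),M(\theta)^{\pm}\}$ in parallel with the case analysis in the proof of Lemma \ref{lemma:M1lambda-submodule-2}.
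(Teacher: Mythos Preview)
There is a genuine gap in your step 5, and your route differs from the paper's. Zhu's bijection \cite[Theorem~2.2.1]{Z1996} relates simple $A(V)$-modules to simple $\N$-graded $V$-modules; it does \emph{not} say that the $M(1)^+$-submodule of $N$ generated by a simple $A(M(1)^+)$-module is itself simple. Concretely, when $\delta>\gamma$ one has $W_\gamma=0$, so $B'=N_\gamma\cong M(0)$ for free, yet $M(1)^+\cdot B'$ can a priori be all of $N$ rather than a copy of $M$. What would actually be needed is irreducibility of the generalized Verma module with top $B$; that is Corollary~\ref{corollary:verma-irreducible}, which the paper proves \emph{after} the present lemma and which still excludes $B\cong\C\vac$, so the case $M=M(1)^+$ would require a separate argument via Lemma~\ref{lemma:generalizedVermaModule-M(1)-}. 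The paper avoids all of this by arguing by contradiction: if the sequence does not split then $M(1)^+\cdot u$ meets $W$, forcing $\delta\in\gamma+\Z_{\ge0}$; a contragredient-duality argument (\cite[Proposition~2.5]{Abe2005}, \cite[Proposition~3.5]{ADL2005}) then gives $\Ext^1_{M(1)^+}(W,M)\neq 0$ as well, and the symmetric inequality yields $\gamma=\delta$. This duality step is the key idea you are missing.

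Your step 4 also has a gap. Lemma~\ref{lemma:M1lambda-submodule} only makes $u$ an eigenvector for $\omega^{[i]}_1$ and $H^{[i]}_3$, not for $S_{ij}(1,1)_1$; and since $\omega^{[i]}$ does not commute with $S_{ij}(1,1)$ in $A(M(1)^+)$ (see \eqref{eq:omega-s11}), the $(\zeta,\xi)$-eigenspace is not $A(M(1)^+)$-stable. Hence the orbit $B'=A(M(1)^+)\cdot u$ need not remain inside it, and ``projecting onto the $(\zeta,\xi)$-eigenspace'' does not produce an $A(M(1)^+)$-submodule. In particular, for $M(0)=\C e^\lambda$ and $W(0)=\C e^\mu$ with $\langle h^{[i]},\lambda\rangle^2=\langle h^{[i]},\mu\rangle^2$ for all $i$ but $\mu\neq\pm\lambda$, the $(\omega^{[i]}_1,H^{[i]}_3)$-spectra on $B$ and $W(0)$ coincide, so your spectral separation fails outright in that case.
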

	\begin{proof}
Let $\mN$ be a weak $M(1)^{+}$-module and 
\begin{align}
\label{eq:rightarrowM(1mu)oversetiotarightarrowN}
	0\rightarrow \mW\overset{}{\rightarrow} N\overset{\pi}{\rightarrow} \module\rightarrow 0
	\end{align}
an exact sequence of weak $M(1)^{+}$-modules.
We write $\module =\oplus_{i\in\gamma+\Z_{\geq 0}}\module_{i}$ with $\module_{\gamma}\neq 0$
and $W=\oplus_{i\in \delta+\Z_{\geq 0}}\mW_{i}$ with $\mW_{\delta}\neq 0$.
By Lemmas \ref{lemma:M1lambda-submodule} and \ref{lemma:M1lambda-submodule-2}, there exists $\lu\in \Omega_{M(1)^{+}}(\mN)$ such that
$0\neq \pi(\lu)\in \module_{\gamma}$.

Assume $\mW\cap (M(1)^{+}\cdot \lu)\neq 0$.
Since $\mW_{\delta}\subset \mW\cap (M(1)^{+}\cdot \lu)$,
$\delta\in \gamma+\Z_{\geq 0}$.
Since $\Ext^{1}_{M(1)^{+}}(\mW,\module)\neq 0$ 
by the assumption, 
\cite[Proposition 2.5]{Abe2005}, and \cite[Proposition 3.5]{ADL2005},
the same argument as above shows that $\gamma\in \delta+\Z_{\geq 0}$
and hence $\gamma=\delta$.
Since $N(0)=N_{\gamma}\cong \module_{\gamma}\oplus \mW_{\delta}$ as $A(M(1)^{+})$-modules,
the sequence \eqref{eq:rightarrowM(1mu)oversetiotarightarrowN} splits, a contradiction.

\end{proof}
	
\begin{lemma}
\label{lemma:Ext-M-M}
For $M=M(1)^{+},M(1)^{-},M(\theta)^{+}$, and $M(\theta)^{-}$,
$\Ext^{1}_{M(1)^{+}}(M,M)=0$.
\end{lemma}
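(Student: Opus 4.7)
Given an exact sequence
\begin{align}
	0 \to M \to N \overset{\pi}{\to} M \to 0
\end{align}
of weak $M(1)^{+}$-modules with $M\in\{M(1)^{+},M(1)^{-},M(\theta)^{+},M(\theta)^{-}\}$, my plan is to exhibit an $M(1)^{+}$-section of $\pi$, splitting the sequence. Write $M=\bigoplus_{n\in\gamma+\Z_{\geq 0}}M_{n}$ with $M_{\gamma}\neq 0$, and fix an irreducible $A(M(1)^{+})$-submodule $B\subseteq M_{\gamma}$ together with a simultaneous eigenvector $v\in B$ for $\{\omega^{[i]}_{1},\Har^{[i]}_{3}\}_{i=1}^{\rankL}$ with eigenvalues $(\zeta^{[i]},\xi^{[i]})$. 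Lift $v$ to an element $\tilde u\in N$ and, using the semisimplicity of $\omega^{[i]}_{1}$ and $\Har^{[i]}_{3}$ on both the sub and the quotient copies of $M$ (from \cite[Proposition 4.3]{Abe2005} and \cite[Table 1]{AD2004}), arrange $(\omega^{[i]}_{1}-\zeta^{[i]})^{2}\tilde u=(\Har^{[i]}_{3}-\xi^{[i]})^{2}\tilde u=0$, so that the discrepancies $w^{\omega}_{i}:=(\omega^{[i]}_{1}-\zeta^{[i]})\tilde u$ and $w^{H}_{i}:=(\Har^{[i]}_{3}-\xi^{[i]})\tilde u$ lie in the simultaneous $(\zeta,\xi)$-eigenspace of the submodule copy of $M$, which by \cite[Proposition 4.3]{Abe2005} together with \eqref{eq:norm2Si1(1,1)1lu=-Si1(1,2)2lu}--\eqref{eq:norm2Si1(1,1)1lu=-Si1(1,2)2lu-twist-1} coincides with $B$ itself in each of the four cases.

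The core step is to show that $\tilde u$ can be adjusted to become an honest simultaneous eigenvector, i.e.\ that the obstruction vectors $w^{\omega}_{i}$ and $w^{H}_{i}$ can be killed. I plan to accomplish this by applying the defining relations of Lemma \ref{lemma:relations-M(1)-V(lattice)+} together with the $S_{ij}$-relations \eqref{eq:s11-3}--\eqref{eq:s11-4-3} to $\tilde u$ and reading them modulo the submodule $M$: each such relation is satisfied on the quotient copy of $M$ (applied to $v$), so its action on $\tilde u$ produces a linear dependence among $\tilde u$-derived terms of the form $\omega^{[j]}_{k}\Har^{[l]}_{m}\tilde u$ with coefficients involving the $w$-scalars, which by direct computation pins those scalars down. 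Once $\tilde u$ is an honest eigenvector, Lemma \ref{lemma:M1lambda-submodule-2} places it in $\Omega_{M(1)^{+}}(N)$; Lemma \ref{lemma:m1plusmodulegenerated} then ensures $M(1)^{+}\cdot\tilde u$ is generated by $\tilde u$, and irreducibility of $M$ combined with $\pi(\tilde u)=v\neq 0$ forces $\pi|_{M(1)^{+}\cdot\tilde u}$ to be an isomorphism onto $M$, providing the required splitting.

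The main obstacle is precisely the vanishing of the obstruction scalars. In Lemma \ref{lemma:Ext-split-lambda-mu} the non-isomorphism of $M$ and the submodule produced distinct lowest weights $\gamma\neq \delta$, which made the relevant eigenspaces disjoint and essentially forced the splitting; in a self-extension this shortcut is unavailable, and the naive modification $\tilde u\mapsto \tilde u+a$ with $a\in M_{\gamma}$ cannot eliminate the $w$'s because $\omega^{[i]}_{1}$ acts as the scalar $\zeta^{[i]}$ on $M_{\gamma}$. The required cancellation must come from the higher-weight relations in $M(1)^{+}$ applied to $\tilde u$, and verifying it in full calls for a case-by-case calculation across $M(1)^{\pm}$ and $M(\theta)^{\pm}$, patterned on the rank-$1$ arguments in \cite{Abe2005} and extended to the general rank setting using the $S_{ij}$-identities developed in Section \ref{section:Modules for the Zhu algera of general}.
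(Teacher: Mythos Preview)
Your outline matches the paper's in broad strokes, but the key step---forcing the obstructions $w^{\omega}_i$ to vanish---cannot be completed by the relations you cite when $M=M(1)^-$ or $M(\theta)^-$. Both Zhu-algebra relations \eqref{eq:omega-1-omega-1/16-omega-9/16-1}--\eqref{eq:omega-1-omega-1/16-omega-9/16-2} (which are what the null-vectors of Lemma~\ref{lemma:relations-M(1)-V(lattice)+} yield on $N(0)$) carry a right factor $\Har^{[i]}$; when $v=h^{[j]}(-1)\vac$ and $i\neq j$ one has $\xi^{[i]}=0$, and once $\Har^{[i]}_3$ is seen to act as $0$ these relations become vacuous and impose no constraint on the nilpotent part of $\omega^{[i]}_1$. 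The paper closes this gap with a structural argument you omit: it first shows that the ideal $A^t$ and the elements $\Lambda_{ij}$ annihilate $N(0)$, so that $A(M(1)^+)$ acts on $N(0)$ through the matrix algebra $A^u\cong M_d(\C)$, and then uses the matrix units $E^u_{kk}$ together with \cite[Lemma~5.2.2]{DN2001} to force $\omega^{[i]}_1 v=0$ in the residual case. The same matrix-algebra structure is what makes $A(M(1)^+)\cdot\tilde u\subset N(0)$ irreducible---a point your last paragraph needs but does not establish: irreducibility of the quotient $M$ gives only surjectivity of $\pi|_{M(1)^+\cdot\tilde u}$, and for injectivity you must know that $A(M(1)^+)\cdot\tilde u\cong M(0)$ so that Corollary~\ref{corollary:verma-irreducible} applies.

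A minor framing issue: the null-vectors of Lemma~\ref{lemma:relations-M(1)-V(lattice)+} act as zero on all of $N$, not merely ``modulo $M$''; the useful content is the polynomial identities among $\omega^{[i]}_1,\Har^{[i]}_3$ they produce on $N(0)$, and working directly with those Zhu-algebra relations, as the paper does, makes both the argument and the limitation above transparent.
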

\begin{proof}
	Let $\mW$ be an $M(1)^{+}$-module such that $\mW\cong \module$,
	$\mN$ a weak $M(1)^{+}$-module, and 
	\begin{align}
		\label{eq:rightarrowM(1mu)oversetiotarightarrowN-1}
		0\rightarrow \mW\overset{}{\rightarrow} N\overset{\pi}{\rightarrow} \module\rightarrow 0
	\end{align}
	an exact sequence of weak $M(1)^{+}$-modules.
	We take $\lu\in \mN$ and $\lv\in\module$ as in \eqref{eq:eigenvalue-omega-H}
and	\eqref{eq:pi(lu)lvmboxand}.	
In the case of $M=M(1)^{+}$, the same argument as in the proof of 
\cite[Proposition 5.1]{Abe2005} shows that $\Ext^{1}_{M(1)^{+}}(M(1)^{+},M(1)^{+})=0$.

For $M=M(1)^{-}$ or $M(\theta)^{\pm}$, it is enough to show that
$N(0)\cong \mW(0)\oplus M(0)\cong M(0)\oplus M(0)$ as $A(M(1)^{+})$-modules.
In the Zhu algebra $A(M(1)^{+})$, we have
\begin{align}
\label{eq:omega-H-commute}
\omega^{[i]}*\Har^{[i]}\equiv \Har^{[i]}*\omega^{[i]}
\end{align}
and recall that the following relations from \cite[(6.1.10) and (6.1.11)]{DN2001}:
\begin{align}
\label{eq:omega-1-omega-1/16-omega-9/16-1}
(132(\omega^{[i]})^2-65\omega^{[i]}-70\Har^{[i]}+3)*\Har^{[i]}&\equiv 0,\\
\label{eq:omega-1-omega-1/16-omega-9/16-2}
(\omega^{[i]}-\vac)*(\omega^{[i]}-\dfrac{1}{16}\vac)*(\omega^{[i]}-\dfrac{9}{16}\vac)*\Har^{[i]}&\equiv 0
\end{align}  
 for $i=1,\ldots, \rankL$. Here, we note that $\Har_{a}$ in \cite[Section 6]{DN2001}
is equal to the image of $-9\Har^{[a]}$ under the projection $M(1)^{+}\rightarrow A(M(1)^{+})$
for $a=1,\ldots,\rankL$.
\begin{enumerate}
\item Let $M=M(\theta)^{+}$.
Since $S_{ij}(1,1)_{1}\vac_{\tw}=0$ for any pair of distinct elements $i,j\in\{1,\ldots,\rankL\}$,
$S_{ij}(1,1)_{1}\lu\in \C v$ in $W$.
We note that $\omega^{[i]}_{1}\vac_{\tw}=(1/16)\vac_{\tw}$ and 
$\Har^{[i]}_{3}\vac_{\tw}=(-1/128)\vac_{\tw}$. 
By \eqref{eq:omega-1-omega-1/16-omega-9/16-2}, $\omega^{[i]}_{1}\lw=(1/16)\lw$ and 
$\Har^{[i]}_{3}\lw=(-1/128)\lw$ for all $\lw \in N(0)$.
By \eqref{eq:s11-zeta-1},
\begin{align}
0&=
(\epsilon_{S}-1) (22 \epsilon_{S}^2-8 \epsilon_{S}+1) (6 \epsilon_{S}^3+6 \epsilon_{S}^2-7 \epsilon_{S}+1)
\label{eq:tw-0-S11}
\end{align}
and hence $\epsilon_{S}\leq 1$. Thus, $S_{ij}(1,k)_{k}\lu=0$ for all $k\in \Z_{\geq 0}$ by \eqref{eq:sab-bound}
and hence $N(0)\cong M(\theta)^{+}(0)\oplus M(\theta)^{+}(0)$.

\item Let $M=M(1)^{-}$. We consider $N(0)$.
Since $N(0)/W(0)\cong M(1)^{-}(0)$, $A^{u}\cdot N(0)\neq 0$.
Since $A^{t}\cdot N(0)\subset W(0), A^{u}A^{t}=0$, and $A^{u}\cdot \lw\neq 0$ for any non-zero $\lw\in W(0)$,
we  have $A^{t}\cdot N(0)=0$.
The same argument shows that $\Lambda_{ij}\cdot N(0)=0$ for any pair of distinct elements $i,j\in\{1,\ldots,\rankL\}$.
We note that the eigenvalues for $\omega^{[i]}|_{N(0)}$ are $0$ and $1$,
and  those for $\Har^{[i]}|_{N(0)}$ are also $0$ and $1$.

\textcolor{black}{Let $\lv\in N$} be a simultaneous generalized eigenvector of $\{\omega^{[i]}_{1},\Har^{[i]}_{3}\}_{i=1}^{\rankL}$.
We fix $i=1,\ldots,\rankL$.
Assume $\Har^{[i]}_{3}\lv\neq 0$. By \eqref{eq:omega-1-omega-1/16-omega-9/16-2}, $\omega^{[i]}_{1}\Har^{[i]}_{3}\lv=\Har^{[i]}_{3}\lv$.
By \eqref{eq:omega-1-omega-1/16-omega-9/16-1}, $(\Har^{[i]}_3-1)\Har^{[i]}_{3}\lv=0$, which implies
$\omega^{[i]}_{1}\lv=\lv$.

Assume $\Har^{[i]}_{3}\lv=0$ and $(\omega^{[i]}_{1}-1)^2\lv=0$. Since
$(\omega^{[i]}_{1}-1)\lv\in \mW(0)$ and
there is no non-zero vector $\lw\in M(1)^{-}(0)$ such that $\omega^{[i]}_{1}\lw=\lw$ and $\Har^{[i]}_{3}\lw=0$,
$(\omega^{[i]}_{1}-1)\lv=0$.

Assume $\Har^{[i]}_{3}\lv=0$ and $(\omega^{[i]}_{1})^2\lv=0$.
The argument as above shows that there exists $k$ such that $k\neq i$, $\Har^{[k]}_{3}\lv=\lv$ and  $\omega^{[k]}_{1}\lv=\lv$. 
Since $\omega^{[k]}_{1}\omega^{[i]}_{1}\lv=\omega^{[i]}_{1}\lv$, we have $E^{u}_{kk}\omega^{[i]}_{1}\lv=\omega^{[i]}_{1}\lv$ in $\mW(0)$.
By \cite[Lemma 5.2.2]{DN2001}, $\omega^{[i]}_{1}\lv=0$. 
Thus $A(M(1)^{+})\cdot \lv=A^{u}\cdot \lv$. 
Since $A^{u}$ is isomorphic to the matrix algebra, 
$A^{u}\cdot \lv$ is an irreducible  $A(M(1)^{+})$-module. Thus $N(0)\cong M(1)^{-}(0)\oplus M(1)^{-}(0)$.

\item In the case of $M=M(\theta)^{-}$, the same argument as in (2) above shows that 
$N(0)\cong M(\theta)^{-}(0)\oplus M(\theta)^{-}(0)$.

\end{enumerate}
\end{proof}

By Lemmas \ref{lemma:Ext-split-lambda-mu}, \ref{lemma:Ext-M-M}, \cite[Proposition 2.5]{Abe2005}, and  \cite[Proposition 3.5]{ADL2005}, 
we have the following result:
\begin{proposition}
\label{proposition:Ext-total}
If a pair $(M,W)$ of irreducible $M(1)^{+}$-modules satisfies one of the following conditions,
then $\Ext^{1}_{M(1)^{+}}(M,W)=\Ext^{1}_{M(1)^{+}}(W,M)=0$.
 
\begin{enumerate}
\item $M\cong M(1,\lambda)$ with $\lambda\in\fh\setminus\{0\}$ and $W\not\cong M(1,\lambda)$.
\item $M\cong M(\theta)^{\pm}$.
\item $M\cong M(1)^{+}$ and $W\not\cong M(1)^{-}$.
\item $M\cong M(1)^{-}$ and $W\not\cong M(1)^{+}$.
\end{enumerate}
\end{proposition}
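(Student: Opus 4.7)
The plan is to derive Proposition \ref{proposition:Ext-total} by a direct case analysis from Lemmas \ref{lemma:Ext-split-lambda-mu} and \ref{lemma:Ext-M-M}. The key structural remark is that the hypothesis of Lemma \ref{lemma:Ext-split-lambda-mu}, namely that the unordered pair $\{M,W\}$ differs from $\{M(1)^{+},M(1)^{-}\}$, is symmetric in the two arguments; hence, once it holds, one may invoke the lemma both for $(M,W)$ and for $(W,M)$ to obtain $\Ext^{1}_{M(1)^{+}}(M,W)=0$ and $\Ext^{1}_{M(1)^{+}}(W,M)=0$ simultaneously. Alternatively, and this appears to be the role of the citations \cite[Proposition 2.5]{Abe2005} and \cite[Proposition 3.5]{ADL2005}, one converts the reverse Ext group using the contragredient-duality isomorphism together with the self-contragredience of each irreducible $M(1)^{+}$-module.

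With this symmetry in hand, I would verify the four cases in turn. In case (1), the module $M(1,\lambda)$ with $\lambda\ne 0$ is not isomorphic to $M(1)^{\pm}$ or $M(\theta)^{\pm}$, so under the assumption $W\not\cong M(1,\lambda)$ the pair $(M,W)$ is non-isomorphic and is never equal to $(M(1)^{+},M(1)^{-})$ or $(M(1)^{-},M(1)^{+})$; Lemma \ref{lemma:Ext-split-lambda-mu} applies. In case (2), $M=M(\theta)^{\pm}$: if $W\cong M$, then Lemma \ref{lemma:Ext-M-M} yields the vanishing, and if $W\not\cong M$, the forbidden pair does not arise because $M\not\cong M(1)^{\pm}$, so Lemma \ref{lemma:Ext-split-lambda-mu} applies. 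Cases (3) and (4) are parallel: in (3), $W\cong M(1)^{+}$ reduces to Lemma \ref{lemma:Ext-M-M}, while $W\not\cong M(1)^{+}$ combined with the hypothesis $W\not\cong M(1)^{-}$ makes $(M,W)$ non-isomorphic and rules out both forbidden pairs, so Lemma \ref{lemma:Ext-split-lambda-mu} concludes; case (4) is identical with the roles of $M(1)^{+}$ and $M(1)^{-}$ interchanged.

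There is essentially no obstacle beyond bookkeeping, since the two preceding lemmas have absorbed all the real work. The only point requiring attention is to check, in each of the four items, that the hypothesis does indeed exclude the pair $\{M(1)^{+},M(1)^{-}\}$; this is immediate in every case from the explicit conditions stated. The duality step needed to pass between $\Ext^{1}_{M(1)^{+}}(M,W)$ and $\Ext^{1}_{M(1)^{+}}(W,M)$ is standard once one knows that contragredients of irreducible $M(1)^{+}$-modules are irreducible of the same isomorphism type, which is exactly what \cite[Proposition 3.5]{ADL2005} supplies.
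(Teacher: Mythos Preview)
Your proof is correct and matches the paper's approach: the paper simply states that the proposition follows from Lemmas \ref{lemma:Ext-split-lambda-mu} and \ref{lemma:Ext-M-M} together with \cite[Proposition 2.5]{Abe2005} and \cite[Proposition 3.5]{ADL2005}, and you have supplied exactly the case analysis that makes this claim explicit. Your observation that the hypothesis of Lemma \ref{lemma:Ext-split-lambda-mu} is symmetric in $M$ and $W$ already suffices to obtain both Ext-vanishings without invoking duality, so the external citations are in fact only needed in the alternative route you describe.
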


The following result is a direct consequence of Lemmas \ref{lemma:M1lambda-submodule} and \ref{lemma:M1lambda-submodule-2}.
\begin{corollary}\label{corollary:verma-irreducible}
	Let $\Omega$ be an irreducible $A(M(1)^{+})$-module such that 
	$\Omega\not\cong M(1)^{+}_{0}=\C\vac$.
Then the generalized Verma module for $M(1)^{+}$ associated
with  $\Omega$ is irreducible.
\end{corollary}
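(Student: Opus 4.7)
The plan is to show the equivalent statement $\Omega_{M(1)^+}(M(\Omega)) = \Omega$, where $M(\Omega)$ denotes the generalized Verma module associated to $\Omega$. If this equality holds, then any non-zero graded submodule of $M(\Omega)$ must contain, at its lowest conformal weight, a non-zero element of $\Omega_{M(1)^+}(M(\Omega))$; equality with $\Omega$ forces this element into $\Omega$, and the irreducibility of $\Omega$ as an $A(M(1)^+)$-module together with the fact that $\Omega$ generates $M(\Omega)$ then compels the submodule to equal $M(\Omega)$.

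Suppose for contradiction there exists a simultaneous eigenvector $\lu$ of $\{\omega_1^{[i]},\Har_3^{[i]}\}_{i=1}^{\rankL}$ in $\Omega_{M(1)^+}(M(\Omega))$ at conformal weight $\gamma+n > \gamma:=\wt(\Omega)$ for some $n>0$. Choose $\lu$ so that $B:=A(M(1)^+)\cdot \lu$ is an irreducible $A(M(1)^+)$-module, put $W':=M(1)^+\cdot B \subset M(\Omega)$, and let $W''\subsetneq W'$ be a maximal graded proper submodule, so that $\mW_B:=W'/W''$ is the unique irreducible $M(1)^+$-module with top level $B$. This yields the short exact sequence
\begin{align*}
0 \to \mW_B \to M(\Omega)/W'' \to M(\Omega)/W' \to 0,
\end{align*}
in which the rightmost term has top level $\Omega$ at conformal weight $\gamma$.

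Apply Lemma \ref{lemma:M1lambda-submodule} and Lemma \ref{lemma:M1lambda-submodule-2} to this sequence and, iteratively, to the companion sequences obtained by refining along a composition filtration of $W''$. The lift statement of Lemma \ref{lemma:M1lambda-submodule} together with the $\Omega_{M(1)^+}$-conclusion of Lemma \ref{lemma:M1lambda-submodule-2}, combined with the Ext-vanishing of Proposition \ref{proposition:Ext-total} to eliminate extensions among the composition factors of $W''$, produces a non-canonical $A(M(1)^+)$-linear lift of $\Omega$ into $(M(\Omega)/W'')_\gamma$. The universal property of the generalized Verma module $M(\Omega)$ promotes this lift to an $M(1)^+$-module section of $M(\Omega)/W'' \to M(\Omega)/W'$, splitting the sequence and forcing $\mW_B=0$, which contradicts $\lu \neq 0$.

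The main obstacle is verifying that the exceptional pair $(\mW_B,\Omega) \cong (M(1)^+, M(1)^-(0))$ from Lemma \ref{lemma:M1lambda-submodule-2} is excluded whenever $\Omega \ncong \C \vac$, so that the stronger $\Omega_{M(1)^+}$-conclusion is available at every step. This is a conformal-weight balance: $\mW_B \cong M(1)^+$ forces $B \cong M(1)^+(0) = \C \vac$ of conformal weight $0$, whereas $B$ sits at conformal weight $\gamma + n > \gamma$, so the exceptional case would require $\gamma + n = 0$; but $\Omega \cong M(1)^-(0)$ has $\gamma = 1$, making $\gamma + n \ge 2 \neq 0$, and in every other case $\Omega \ncong M(1)^-(0)$ already, so the pair cannot arise. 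Thus $\Omega \ncong \C \vac$ is precisely the hypothesis needed to systematically exclude the bad configuration and close the argument.
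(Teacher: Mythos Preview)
There is a genuine gap at the step where you invoke the universal property. The universal property of $M(\Omega)$ produces maps \emph{out of} $M(\Omega)$, not out of its quotient $M(\Omega)/W'$. Concretely, the lift of $\Omega$ into $(M(\Omega)/W'')_\gamma$ is automatic and trivial: since $W'$ (hence $W''$) is concentrated in weights $\geq \gamma+n>\gamma$, we have $(M(\Omega)/W'')_\gamma=M(\Omega)_\gamma=\Omega$, and Lemmas~\ref{lemma:M1lambda-submodule} and~\ref{lemma:M1lambda-submodule-2} add nothing here. The map $M(\Omega)\to M(\Omega)/W''$ furnished by universality is simply the quotient map, and it does \emph{not} factor through $M(\Omega)/W'$ because its restriction to $W'$ has image $W'/W''=W_B\neq 0$. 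So no section is produced. Your appeal to Proposition~\ref{proposition:Ext-total} does not rescue this: that proposition computes $\Ext^1$ between \emph{irreducible} $M(1)^+$-modules, whereas the first argument of the relevant $\Ext^1$ here is $M(\Omega)/W'$, which is not known to be irreducible --- indeed, showing it is irreducible is essentially the content of the corollary. Refining along a composition filtration of $W''$ does not touch this difficulty, since the obstruction lives in the quotient, not in $W''$.

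The paper circumvents this by passing to the restricted dual. Dualizing the sequence $0\to W\to N\to N/W\to 0$ (with $N=M(\Omega)$ and $W$ the maximal proper submodule) reverses the roles: the irreducible $(N/W)'$ becomes the \emph{kernel} and $W'$ the quotient, so now Lemmas~\ref{lemma:M1lambda-submodule} and~\ref{lemma:M1lambda-submodule-2} genuinely apply to lift a nonzero vector from $W'_\gamma$ into $\Omega_{M(1)^+}(N'_\gamma)$. The hypothesis $\Omega\not\cong\C\vac$ is used exactly to ensure $(N/W)'\not\cong M(1)^+$, so the exceptional pair is excluded. Then the spanning set of Lemma~\ref{lemma:m1plusmodulegenerated} together with the contragredient pairing formula forces any such $u'\in\Omega_{M(1)^+}(N'_\gamma)$ with $\gamma>\delta$ to annihilate all of $N$, hence $u'=0$. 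This last step --- exploiting that $N$ is \emph{generated} from weight $\delta$ by lowering operators, translated via duality into a vanishing statement for singular vectors in $N'$ --- is the idea your argument is missing.
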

\begin{proof}
Let $\mN=\oplus_{i\in\delta+\Z_{\geq 0}}\mN_{i}$ with $\mN_{\delta}=\Omega$ be the generalized Verma module for $M(1)^{+}$ associated
with $\Omega$ and $\mW=\oplus_{i\in \gamma+\Z_{\geq 0}}\mW_{i}$ the maximal submodule of $\module$ such that $\Omega\cap \mW=0$.
We take $\gamma$ so that $W_{\gamma}\neq 0$ if $W\neq 0$.
We note that $\gamma-\delta\in\Z_{>0}$.
Taking the restricted dual of the exact sequence 
$	0\rightarrow \mW\overset{}{\rightarrow} \mN\overset{}{\rightarrow} \mN/\mW\rightarrow 0$,
we have the following exact sequence
\begin{align}
0\rightarrow (\mN/\mW)^{\prime}\overset{}{\rightarrow} 
\mN^{\prime}\overset{}{\rightarrow}\mW^{\prime} \rightarrow 0.
\end{align}
We note that $(\mN/\mW)^{\prime}\not\cong M(1)^{+}$ by \cite[Proposition 3.5]{ADL2005}.
Assume $\mW_{\gamma}\neq 0$ and let $B$ be an irreducible $A(M(1)^{+})$-submodule  
of $W_{\gamma}^{\prime}$.
By Lemmas \ref{lemma:M1lambda-submodule} and \ref{lemma:M1lambda-submodule-2},
there exists a non-zero
 $\lu^{\prime}\in \Omega_{M(1)^{+}}(N_{\gamma}^{\prime})$.
For any homogeneous $a\in M(1)^{+}$ such that $\omega_{2}a=0$
and $i\in\Z_{\geq \wt a}$,
it follows from \cite[5.2.4]{FHL} that
\begin{align}
\label{eq:langleailuprime}
0&=\langle a_{i}\lu^{\prime},\lw\rangle=(-1)^{\wt a}\langle \lu^{\prime},a_{2\wt a-i-2}\lw\rangle.
\end{align}
for all $\lw\in \mN$.
Thus, it follows from Lemma \ref{lemma:m1plusmodulegenerated} that
$\lu^{\prime}=0$, a contradiction.
\end{proof}

\begin{lemma}
\label{lemma:generalizedVermaModule-M(1)-}
Let $\mW$ be the generalized Verma module $W$ associated to the $A(M(1)^{+})$-module $\C \vac$
and $\pi : \mW\rightarrow M(1)^{+}$ the canonical projection.
Then, $\Ker \pi\cong (M(1)^{-})^{\oplus k}$ for some $k\in \{1,\ldots,\rankL\}$
as $M(1)^{+}$-modules.
\end{lemma}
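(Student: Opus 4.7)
Set $K := \Ker \pi$, a graded submodule of $\mW$ with $K(0) = 0$ since $\pi$ is the identity on $\mW(0) = \C\vac$. The plan is to establish, in turn, (i) $K \neq 0$; (ii) every simple subquotient of $K$ is isomorphic to $M(1)^{-}$; (iii) $K$ is semisimple, so $K \cong (M(1)^{-})^{\oplus k}$; (iv) $1 \leq k \leq \rankL$. Step (i) follows because each $\omega^{[i]}_0\vac \in \mW(1)$ is a nonzero $\omega^{[i]}_1$-eigenvector with eigenvalue $1$ (via \eqref{eq:wiwj} and the universality of $\mW$), while its image equals $\vac_{-2}\vac = 0$ in $M(1)^{+}$ by the vacuum axiom, so $\omega^{[i]}_0\vac \in K$.

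For step (ii), I first handle simple quotients: for $L = K/K'$ simple, the sequence
\[
0 \to L \to \mW/K' \to M(1)^{+} \to 0
\]
splits by Proposition \ref{proposition:Ext-total} unless $L \cong M(1)^{-}$. In the split case, $\mW/K'$ is generated by $\vac + K' \in L(0) \oplus \C\vac$, but $L(0) = 0$ by the strict positivity of $K$, so $\vac + K'$ lies entirely in the $M(1)^{+}$-summand, forcing $L = 0$, a contradiction. Hence $L \cong M(1)^{-}$. For simple submodules, any simple $L \subset \mW$ has its top $L(\gamma_L) \subset \Omega_{M(1)^{+}}(\mW) \cap \mW(\gamma_L)$, and Corollary \ref{corollary:verma-irreducible} identifies $L$ with the irreducible generalized Verma module associated to this $A(M(1)^{+})$-module; integrality of the grading excludes $M(\theta)^{\pm}$, the case $\gamma_L = 0$ reduces to $L = \mW$ (contradicting $K \neq 0$), and a structural analysis of $\Omega_{M(1)^{+}}(\mW) \cap \mW(\gamma_L)$ rules out $M(1,\lambda)$-type tops for $\lambda \neq 0$. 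Propagating through both the socle and cosocle filtrations of $K$ via $\Ext$-long-exact sequences then forces every simple subquotient of $K$ to be $M(1)^{-}$. For step (iii), Lemma \ref{lemma:Ext-M-M} yields $\Ext^1_{M(1)^{+}}(M(1)^{-}, M(1)^{-}) = 0$, so all extensions in this filtration split, giving $K \cong (M(1)^{-})^{\oplus k}$.

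For step (iv), I compare degree-one subspaces: $K(1) = \mW(1)$ is the top space of $k$ copies of $M(1)^{-}(1) \cong \fh$, so $\dim \mW(1) = k \cdot \rankL$. Lemma \ref{lemma:m1plusmodulegenerated} supplies a spanning set of $\mW(1)$ from the vectors $a_{\wt a - 2}\vac$ for $a$ running over the strong generators $\omega^{[k]}, J^{[k]}, S_{lm}(1,r)$; reducing this spanning set using the identity \eqref{eq:definition-omega-J-H} together with the relations \eqref{eq:s11-3}--\eqref{eq:s11-4-3} gives $\dim \mW(1) \leq \rankL^2$, hence $k \leq \rankL$. The main obstacle is the classification of simple submodules of $\mW$ in step (ii): ruling out $M(1,\lambda)$-type submodules (for $\lambda \neq 0$ with $\langle\lambda,\lambda\rangle \in 2\Z_{\geq 1}$, which survive the grading obstruction) requires a delicate structural analysis of the singular vector space $\Omega_{M(1)^{+}}(\mW)$, showing that no $\C e^{\lambda}$-type $A(M(1)^{+})$-submodule appears there in any positive weight.
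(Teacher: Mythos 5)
Your architecture (socle/cosocle filtrations plus the $\Ext$-vanishing of Proposition \ref{proposition:Ext-total} and Lemma \ref{lemma:Ext-M-M}) is a genuinely different route from the paper's, but the gap you flag at the end is not a technicality to be deferred --- it is the whole content of the lemma. The $\Ext$ results only exclude a simple constituent $X$ of $\Ker\pi$ once $X$ has been realized as a sub or quotient of an extension of $M(1)^{+}$ by $X$, and manufacturing such an extension out of $\mW$ requires $X$ to split off from $\Ker\pi$ (or from a suitable subquotient), which you do not know in advance. Concretely, $M(1,\lambda)$ with $\langle\lambda,\lambda\rangle\in 2\Z_{>0}$ survives the grading obstruction, and your cosocle argument only controls simple \emph{quotients} of $\Ker\pi$; a configuration with an $M(1,\lambda)$ sitting in the socle or an interior layer is not excluded by Proposition \ref{proposition:Ext-total} alone. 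So step (ii) cannot be closed by homological formalities; a computation in $\mW_{1}$ (or in $\Omega_{M(1)^{+}}(\mW)$) is unavoidable. There is also a smaller gap in step (i): universality of $\mW$ gives maps \emph{out of} $\mW$, not non-vanishing of $\omega^{[i]}_{0}\lu$ \emph{in} $\mW$; to see $\Ker\pi\neq 0$ one needs a module generated by a copy of $\C\vac$ on which such a vector survives, and the paper supplies exactly this by citing the non-split extension $0\rightarrow M(1)^{-}\rightarrow N\rightarrow M(1)^{+}\rightarrow 0$ from \cite[(6.1)]{Abe2005}.

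The paper's proof avoids all of this by working directly in degree one. It observes that the generator $\lu$ lies in $\Omega_{M(1)^{+}}(\mW)$, extracts the relations $\omega^{[i]}_{0}\lu=3\Har^{[i]}_{2}\lu$ and $S_{ij}(1,2)_{1}\lu=-S_{ij}(1,3)_{2}\lu$ from $\sv^{(8),H}_{6}\lu=0$ and the third action of \eqref{eq:s11-3}, concludes that $\mW_{1}$ is spanned by the $\rankL^{2}$ vectors $\omega^{[j]}_{0}\lu$ and $S_{ij}(1,2)_{1}\lu$, and then checks by explicit computation that each $U_{j}=\Span_{\C}\{\omega^{[j]}_{0}\lu,\,S_{ij}(1,2)_{1}\lu\ |\ i\neq j\}$ is an $A(M(1)^{+})$-module isomorphic to $0$ or to $M(1)^{-}(0)$. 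Corollary \ref{corollary:verma-irreducible} then identifies each nonzero $M(1)^{+}\cdot U_{j}$ with $M(1)^{-}$, so $\Ker\pi=\sum_{j}M(1)^{+}\cdot U_{j}\cong (M(1)^{-})^{\oplus k}$ with $1\leq k\leq\rankL$. Your step (iv) dimension count is consistent with this picture, but it only becomes usable after the explicit degree-one computation that your proposal postpones; as written, the proposal does not constitute a proof.
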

\begin{proof}
The same argument as in \cite[(6.1)]{Abe2005}
shows that there is a non-split exact sequence
\begin{align}
\label{eq:non-split-verma-M1plus}
0\rightarrow M(1)^{-}\overset{}{\rightarrow} \mN\overset{}{\rightarrow} M(1)^{+}\rightarrow 0
\end{align}
of $M(1)^{+}$-modules. Thus, $\Ker \pi \neq 0$.
Let $\lu\in\mW$ such that $\pi(\lu)=\vac$.
Note that $u\in \Omega_{M(1)^{+}}(N)$. Thus,
\begin{align}
\label{eq:P(8)H6lu=144}
0&=P^{(8),H}_{6}\lu=144(\omega^{[i]}_{0}-3\Har^{[i]}_{2})\lu. 
\end{align}
for all $i=1,\ldots,\rankL$.
Taking the $3$rd action of \eqref{eq:s11-3} on $\lu$, we have
\begin{align}
\label{eq:Si1121luSi1132}
0&=S_{ij}(1,2)_{1}\lu+S_{ij}(1,3)_{2}\lu
\end{align}
for all $i=2,\ldots,\rankL$. 
By \eqref{eq:P(8)H6lu=144}, \eqref{eq:Si1121luSi1132}, 
and 
\begin{align}
S_{ij}(1,1)_{0}\lu&=-(\omega_{0}S_{ij}(1,1))_{1}\lu=-S_{ij}(2,1)_{1}\lu-S_{ij}(1,2)_{1}\lu
\end{align}
for any pair of distinct elements $i,j\in\{1,\ldots,\rankL\}$,
$N_1$ is spanned by $\{\omega^{[j]}_{0}\lu,S_{ij}(1,2)_{1}\ |\ i,j=1,\ldots,\rankL, i\neq j\}$. 
For distinct $i,j,k\in\{1,\ldots,\rankL\}$, by \eqref{eq:P(8)H6lu=144}, \eqref{eq:Si1121luSi1132},
and computations in Section \ref{section:appendix-The general case},
\begin{align}
\omega_{1}^{[j]}\omega^{[j]}_{0}\lu&=\omega^{[j]}_{0}\lu,\nonumber\\
\omega_{1}^{[i]}\omega^{[j]}_{0}\lu&=0,\nonumber\\
\Har_{3}^{[j]}\omega^{[j]}_{0}\lu&=\omega^{[j]}_{0}\lu,\nonumber\\
\Har_{3}^{[i]}\omega^{[j]}_{0}\lu&=0,\nonumber\\
S_{ij}(1,1)_{1}\omega^{[j]}_{0}\lu&=-S_{ij}(1,2)_{1}\lu,\nonumber\\
S_{ij}(1,2)_{2}\omega^{[j]}_{0}\lu&=2S_{ij}(1,2)_{1}\lu,\nonumber\\
S_{ij}(1,3)_{3}\omega^{[j]}_{0}\lu&=-3S_{ij}(1,2)_{1}\lu,\nonumber\\
\omega^{[i]}_{1}S_{ij}(1,2)_{1}\lu&=S_{ij}(1,2)_{1}\lu,\nonumber\\
\omega^{[j]}_{1}S_{ij}(1,2)_{1}\lu&=0,\nonumber\\
\Har_{3}^{[i]}S_{ij}(1,2)_{1}\lu&=S_{ij}(1,2)_{1 }\lu,\nonumber\\
\Har_{3}^{[j]}S_{ij}(1,2)_{1}\lu&=0,\nonumber\\
S_{ij}(1,1)_{1}S_{ij}(1,2)_{1}\lu&=\textcolor{black}{-\omega^{[j]}_{0}\lu},\nonumber\\
S_{ij}(1,2)_{2}S_{ij}(1,2)_{1}\lu&=0,\nonumber\\
S_{ij}(1,3)_{3}S_{ij}(1,2)_{1}\lu&=0,\nonumber\\
S_{kj}(1,1)_{1}S_{ij}(1,2)_{1}\lu&=0,\nonumber\\
S_{kj}(1,2)_{2}S_{ij}(1,2)_{1}\lu&=0,\nonumber\\
S_{kj}(1,3)_{3}S_{ij}(1,2)_{1}\lu&=0.
\end{align}
Thus, for each $j=1,\ldots,\rankL$,
$U_j=\Span_{\C}\{\omega^{[j]}_{0}\lu,S_{ij}(1,2)_{1}\lu\ |\ i\neq j\}$
is an $A(M(1)^{+})$-module which is isomorphic to $0$ or 
$M(1)^{-}(0)$. 
Since $\sum_{j=1}^{\rankL}U_j=N_1$, $(W/(M(1)^{+}\cdot (\sum_{j=1}^{\rankL}U_j))_{1}=0$.
Thus $(W/(M(1)^{+}\cdot (\sum_{j=1}^{\rankL}U_j))\cong M(1)^{+}$ and hence
$\Ker \pi=M(1)^{+}\cdot (\sum_{j=1}^{\rankL}U_j)$.
Now the result follows from Corollary \ref{corollary:verma-irreducible}.
\end{proof}

\section{Intertwining operators for $M(1)^{+}$}
\label{section:Intertwining operators for M(1)+}
In this section, we study some intertwining operators for $M(1)^{+}$.
Throughout this section, $\mn$ is a non-zero complex number,
$\alpha$ is a non-zero element of $\fh$ with $\langle\alpha,\alpha\rangle=\mn$,
$K$ is an $M(1)^{+}$-module such that $K=M(1)^{+}\cdot K(0)$, $\mW$ is a weak $M(1)^{+}$-module,
$I(\ ,x) : M(1,\alpha)\times \mK\rightarrow W\db{x}$
is a non-zero intertwining operator, and 
$h^{[1]},\ldots,h^{[\rankL]}$ is an orthonormal basis of $\fh$
such that $\alpha\in \C h^{[1]}$ if $\langle\alpha,\alpha\rangle\neq 0$
and $\alpha\in \C h^{[1]}+\C h^{[2]}$ if $\langle\alpha,\alpha\rangle=0$.
We write 
\begin{align}
E=E(\alpha)
\end{align}
for simplicity and set $\mN= M(1,\alpha)\cdot K$.
 
\begin{lemma}
\label{lemma:N-Basis}
Let $\lE\in\Z$ such that $\lE\geq \epsilon_{I}(E,\lu)$ for all non-zero $\lu\in K(0)$.
The vector space
\begin{align}
\label{eq:B=SpanC(ajExB)}
B=\Span_{\C}\{(a_{j}\ExB)_{\lE+\wt a-j-1}\lu\ |\ \lu\in K(0), \mbox{homogeneous }a\in M(1)^{+}\mbox{ and }j\in\Z\}
\end{align}
is a subspace of $\Omega_{M(1)^{+}}(\mW)$ and is 
an $A(M(1)^{+})$-module.
Moreover the following result holds.
\begin{enumerate}
	\item If $\mn\neq 0, 1/2$ and $1$, then
\begin{align}
	B&=
	\Span_{\C}\{E_{\lE }\lu,(S_{i1}(1,1)_{0}E)_{\lE +1}\lu\ |\ u\in \mK(0), i=2,\ldots,\rankL\}.
\end{align} 
	\item If $\mn=1/2$, then
\begin{align}
	B&=
	\Span_{\C}\{E_{\lE }\lu,(\Har^{[1]}_{1}E)_{\lE +2}\lu,(S_{i1}(1,1)_{0}E)_{\lE +1}\lu\ |\ u\in \mK(0), i=2,\ldots,\rankL\}.
\end{align} 
	\item If $\mn=1$, then
\begin{align}
	B&=
	\Span_{\C}\{E_{\lE }\lu,(S_{i1}(1,1)_{0}E)_{\lE +1}\lu,(S_{i1}(1,2)_{0}E)_{\lE +2}\lu\ |\ u\in \mK(0), i=2,\ldots,\rankL\}.
\end{align} 
\item If $\mn=0$, then
\begin{align}
\label{eq:B=SpanCElElu(omega[1]0E)}
	B&=\Span_{\C}\{E_{\lE }\lu,(\omega^{[1]}_{0}E)_{\lE+1}\lu,(S_{i1}(1,1)_{0}E)_{\lE +1}\lu\ |\ u\in \mK(0), i=3,\ldots,\rankL\}\nonumber\\
&=\Span_{\C}\{E_{\lE }\lu,(S_{i1}(1,1)_{0}E)_{\lE +1}\lu\ |\ u\in \mK(0), i=2,\ldots,\rankL\}.
\end{align} 
\end{enumerate}
The weak submodule $M(1)^{+}\cdot B$ of $\mW$ is spanned 
by the elements
\begin{align}
\label{eq:omega0ra(1)icdotsa(n)}
\textcolor{black}{\omega_{0}^{r}}a^{(1)}_{i_1}\cdots a^{(n)}_{i_n}b
\end{align}
where $r,n\in\Z_{\geq 0}$,
\begin{align}
	\label{eq:a(j)inomega[k],Har[k]-1}
a^{(j)}&\in\{\omega^{[k]},\Har^{[k]}\ |\ k=1,\ldots,\rankL\}\cup 
\{S_{lm}(1,r)\ |\ 1\leq m<l\leq\rankL, r=1,2,3\},\\
	\label{eq:a(j)inomega[k],Har[k]-2}
b&\in B,
\end{align} 
and $i_j\in\Z_{<0}$ for $j=1,\ldots,n$.
\end{lemma}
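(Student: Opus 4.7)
The statement has three layers: (i) $B\subset\Omega_{M(1)^+}(\mW)$ and $B$ carries an $A(M(1)^+)$-module structure; (ii) explicit spanning sets for $B$ in cases (1)--(4); (iii) a canonical form for elements of $M(1)^+\cdot B$. I will use the commutator identity of Lemma \ref{lemma:comm-change}, the action formulas \eqref{eq:action[H4-S]} and \eqref{eq:omega-s11}, and the explicit identities gathered in Section \ref{section:appendix-The general case}. For (i), fix a generator $(a_j\ExB)_{\lE+\wt a-j-1}\lu$ of $B$ with $\lu\in\mK(0)$ and homogeneous $a\in M(1)^+$. For homogeneous $c\in M(1)^+$ and $i>\wt c-1$, the intertwining commutator yields
\begin{align*}
c_i(a_j\ExB)_{\lE+\wt a-j-1}\lu
&=\sum_{l\ge 0}\binom{i}{l}(c_la_j\ExB)_{i+\lE+\wt a-j-1-l}\lu,
\end{align*}
where $c_i\lu=0$ since $\mK(0)\subset\Omega_{M(1)^+}(\mK)$. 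A straightforward induction on $\wt a$, beginning from the hypothesis $\lE\ge\epsilon_I(\ExB,\lu)$, shows each summand vanishes, giving $B\subset\Omega_{M(1)^+}(\mW)$. Stability under $A(M(1)^+)$ is immediate thereafter: the Zhu weight-zero operators $\omega^{[i]}_1$, $\Har^{[i]}_3$ and $S_{lm}(1,r)_r$ preserve the shape $(b_k\ExB)_{\lE+\wt b-k-1}\lu$ by the same commutator identity.

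For (ii), I would expand each $(a_j\ExB)_{\lE+\wt a-j-1}\lu$ via Lemma \ref{lemma:comm-change} into a sum of products $b^{(1)}_{k_1}\cdots b^{(s)}_{k_s}\ExB_{\lE}\lu$ whose factors $b^{(r)}$ are generators of $M(1)^+$, and then reduce using the explicit action of modes of $h^{[i]}$ on $\ExB$. The four subcases split according to which coefficients in Lemma \ref{lemma:bound-H0-1E} and in \eqref{eq:J1ExB=frac2mn(4mn-11)-1}--\eqref{eq:J1ExB=frac2mn(4mn-11)-2} acquire poles: generically one may solve for $\Har^{[1]}_1\ExB$ in terms of $\omega^{[1]}_{-1}\ExB$ and lower-order data, yielding case (1); at $\mn=1/2$ this reduction fails and $\Har^{[1]}_1\ExB$ must be retained, giving case (2); at $\mn=1$ the analogous reduction for $S_{i1}(1,2)_0\ExB$ degenerates, forcing it into the basis and giving case (3); and at $\mn=0$ the isotropy of $\alpha\in\C h^{[1]}+\C h^{[2]}$ produces the identification of $\omega^{[1]}_0\ExB$ with $S_{21}(1,1)_0\ExB$ modulo the rest of $B$, giving the two alternative descriptions in \eqref{eq:B=SpanCElElu(omega[1]0E)}. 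This case analysis is the main obstacle, since excluding additional surviving generators requires chained manipulations of several appendix identities together with careful tracking of the degenerate coefficients.

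For (iii), Lemma \ref{lemma:m1plusmodulegenerated} provides a spanning set of $M(1)^+\cdot B$ of the form $a^{(1)}_{i_1}\cdots a^{(n)}_{i_n}b$ with $i_j\le\wt a^{(j)}-2$ and $b\in B$. Substituting for each $J^{[k]}$ its polynomial expression in $\omega^{[k]}$ and $\Har^{[k]}$ via \eqref{eq:definition-omega-J-H} restricts the allowed $a^{(j)}$ to the list in \eqref{eq:a(j)inomega[k],Har[k]-1}. To force $i_j<0$, I invoke $[\omega_0,v_n]=-nv_{n-1}$ iteratively: for $n\ne 0$ this rewrites $v_{n-1}=-\frac{1}{n}[\omega_0,v_n]$, so each mode $v_{n-1}$ with $n-1\in\{0,1,\dots,\wt v-2\}$ is replaced either by an $\omega_0$-commutator involving a strictly negative mode or by a mode whose index is bumped upward. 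Iterating, the indices either drop below zero or eventually exceed $\wt v-1$, in which case the corresponding mode annihilates $B$ by part (i); collecting the $\omega_0$ factors to the leftmost position produces the claimed shape \eqref{eq:omega0ra(1)icdotsa(n)}.
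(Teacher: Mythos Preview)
Your part (i) is essentially correct; this is what the paper dismisses as ``clear.''

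For part (iii), however, the $\omega_0$-commutator trick has a genuine gap. You rewrite $v_{n-1}=-\tfrac{1}{n}[\omega_0,v_n]$ and iterate upward until the index exceeds $\wt v-1$, then claim ``the corresponding mode annihilates $B$.'' But that mode $v_m$ is not acting on an element of $B$: it sits in the middle of a product $a^{(1)}_{i_1}\cdots v_m\cdots a^{(n)}_{i_n}b$ and must first be commuted past $a^{(j+1)}_{i_{j+1}},\ldots,a^{(n)}_{i_n}$. Those commutators $[v_m,a^{(k)}_{i_k}]=\sum_{l\ge0}\binom{m}{l}(v_la^{(k)})_{m+i_k-l}$ produce modes of \emph{new} elements $v_la^{(k)}\in M(1)^+$ which are no longer single generators, so your iteration does not terminate. (Your phrase ``an $\omega_0$-commutator involving a strictly negative mode'' also does not match the formula---the commutator only involves the higher-index $v_n$.)

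The paper's argument is different in character and is essentially a pointer to the Appendix: Section~\ref{section:appendix} computes $a_ib$ explicitly for every generator $a$ in \eqref{eq:a(j)inomega[k],Har[k]-1}, every $b$ in the candidate spanning set, and every $i\ge0$, verifying by direct calculation that the answer lies in the claimed span. This single computation yields both (ii) (the zero-mode actions $a_{\wt a-1}b$ land back in the candidate basis, so it is $A(M(1)^+)$-closed and hence equals $B$) and (iii) (once $a_ib\in\Span S$ is known for $i\ge0$, closure of $\Span S$ under arbitrary generator modes follows by induction on the number of factors, with the commutator terms handled by re-expanding in generator modes against a shorter product). Your qualitative case analysis for (ii) is on the right track, but you have missed one wrinkle: case (1) includes $\mn=2$, where the reduction of $\Har^{[1]}_0E$ via \eqref{eq:J1ExB=frac2mn(4mn-11)-2} fails (denominator $\mn-2$); the paper handles it separately using \eqref{eq:(H0ElE1lu=dfrac194+7lE}.
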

\begin{proof}
The first part of the results is clear.
In Appendix \ref{section:appendix},
for an element $a$ in the set \eqref{eq:a(j)inomega[k],Har[k]-1}, $b\in B$, 
and $i=0,1,\ldots$, 
we see that $a_{i}b$ is expressed as a linear combination of elements in the form of \eqref{eq:omega0ra(1)icdotsa(n)},
which induces the other results.
In the case that $\mn=2$, we use \eqref{eq:(H0ElE1lu=dfrac194+7lE}.
By \eqref{eq:S21(11)0ExB=-sqrt-1omega0E+2sqrt-1(omega[1]0ExB)}, the two spaces are equal in \eqref{eq:B=SpanCElElu(omega[1]0E)}.
\end{proof}

\subsection{The case that $\langle\alpha,\alpha\rangle\neq 0,1/2,1,2$}
\label{section:Thecasethatlanglealphaalpharangleneq0}
Throughout this subsection, $\mn$ is a complex number
except $0,1/2,1$ and $2$, and
\begin{align}
\langle\alpha,\alpha\rangle&=\mn.
\end{align}
We assume $\alpha\in\C h^{[1]}$ and hence $\langle\alpha, h^{[1]}\rangle^2=\mn$ and $\langle h^{[i]},\lambda\rangle=0$ for all $i=2,\ldots,\rankL$.
For any distinct pair of elements $i,j\in\{1,\ldots,\rankL\}$, a direct computation shows that
\begin{align}
\label{eq:(mn-1)(2mn2-mn+2)Sij(1,1)-2E-1}
0&= (\mn-1) (2 \mn^2-\mn+2) S_{i1}(1,1)_{-2 } E
+(-4) (\mn-1)^{2}\omega^{[i]}_{-1 } S_{i1}(1,1)_{0}E
\nonumber\\&\quad{}
-(\mn+2) (2 \mn-1) \omega_{0 } S_{i1}(1,1)_{-1 } E
+ (4 \mn-1) \omega_{0 }^{2} S_{i1}(1,1)_{0}E
\nonumber\\&\quad{}
+\mn (\mn-1) (2 \mn-5) S_{i1}(2,1)_{-1}E,\\
\label{eq:(mn-1)(2mn2-mn+2)Sij(1,1)-2E-2}
0&=
\mn (4 \mn-1)S_{i1}(1,1)_{-3 } \ExB 
\nonumber\\&\quad{}
-2 \mn (4 \mn-1)\omega^{[i]}_{-1 } S_{i1}(1,1)_{-1 } \ExB 
\nonumber\\&\quad{}
+(\mn-3) (4 \mn-1)\omega^{[i]}_{-2 } (S_{i1}(1,1)_{0}\ExB) 
\nonumber\\&\quad{}
-(\mn+1)\omega_{0 } S_{i1}(1,1)_{-2 } \ExB 
\nonumber\\&\quad{}
+2 (4 \mn-1)\omega_{0 } \omega^{[i]}_{-1 } (S_{i1}(1,1)_{0}\ExB) 
\nonumber\\&\quad{}
-2\omega_{0 } \omega^{[1]}_{-1 } (S_{i1}(1,1)_{0}\ExB) 
\nonumber\\&\quad{}
+\omega_{0 } \omega_{0 } S_{i1}(1,1)_{-1 } \ExB 
\nonumber\\&\quad{}
-\mn (4 \mn-1)S_{i1}(1,2)_{-2 } \ExB 
\nonumber\\&\quad{}
+3 \mn\omega_{0 } S_{i1}(1,2)_{-1 } \ExB 
\nonumber\\&\quad{}
+\mn (4 \mn-1)S_{i1}(1,3)_{-1 } \ExB,\\
\label{eq:(mn-1)(2mn2-mn+2)Sij(1,1)-2E-3}
0&=
3 \mn (\mn-1) (4 \mn-1)S_{i1}(1,1)_{-3 } \ExB 
\nonumber\\&\quad{}
-4 \mn (\mn-2) (4 \mn-1)\omega^{[i]}_{-1 } S_{i1}(1,1)_{-1 } \ExB 
\nonumber\\&\quad{}
+2 (\mn-3) (\mn-2) (4 \mn-1)\omega^{[i]}_{-2 } (S_{i1}(1,1)_{0}\ExB) 
\nonumber\\&\quad{}
+3 (\mn-1) (4 \mn-1)\omega^{[1]}_{-2 } (S_{i1}(1,1)_{0}\ExB) 
\nonumber\\&\quad{}
-6 (\mn-1) (\mn+1)\omega_{0 } S_{i1}(1,1)_{-2 } \ExB 
\nonumber\\&\quad{}
+4 (\mn-2) (4 \mn-1)\omega_{0 } \omega^{[i]}_{-1 } (S_{i1}(1,1)_{0}\ExB) 
\nonumber\\&\quad{}
-12 (\mn-1)\omega_{0 } \omega^{[1]}_{-1 } (S_{i1}(1,1)_{0}\ExB) 
\nonumber\\&\quad{}
+6 (\mn-1)\omega_{0 } \omega_{0 } S_{i1}(1,1)_{-1 } \ExB 
\nonumber\\&\quad{}
-3 (\mn-1)^2 (4 \mn-1)S_{i1}(1,2)_{-2 } \ExB 
\nonumber\\&\quad{}
+3 (\mn-1) (2 \mn+1)\omega_{0 } S_{i1}(1,2)_{-1 } \ExB,
\end{align}
\begin{align}
\label{eq:(mn-1)(2mn2-mn+2)Sij(1,1)-2E-5}
0&=
72 \mn (\mn-1)\omega^{[i]}_{-1 } S_{i1}(1,1)_{-2 } \ExB
\nonumber\\&\quad{}
+3 \mn (\mn+2) (2 \mn-1)\omega^{[i]}_{-2 } S_{i1}(1,1)_{-1 } \ExB
\nonumber\\&\quad{}
-2 (14 \mn^3+\mn^2-8 \mn-16)\omega^{[i]}_{-3 } (S_{i1}(1,1)_{0}\ExB)_{-1 } \lu
\nonumber\\&\quad{}
-12 (\mn-1) (\mn+2) (2 \mn-1)\Har^{[i]}_{-1 } (S_{i1}(1,1)_{0}\ExB)_{-1 } \lu
\nonumber\\&\quad{}
+72 \mn (\mn-1)\omega^{[i]}_{-1 } \omega^{[1]}_{-1 } (S_{i1}(1,1)_{0}\ExB)_{-1 } \lu
\nonumber\\&\quad{}
-3 (2 \mn^2-21 \mn+22)\omega_{0 } \omega^{[i]}_{-2 } (S_{i1}(1,1)_{0}\ExB)_{-1 } \lu
\nonumber\\&\quad{}
-36 (\mn-1)\omega_{0 } \omega_{0 } \omega^{[i]}_{-1 } (S_{i1}(1,1)_{0}\ExB)_{-1 } \lu
\nonumber\\&\quad{}
-36 \mn (\mn-1) (2 \mn+1)\omega^{[i]}_{-1 } S_{i1}(1,2)_{-1 } \ExB.
\end{align}
\begin{lemma}
Let $\lE\in\Z$ such that $\lE\geq \epsilon_{I}(\ExB,\lu)$ for all non-zero $\lu\in K(0)$.
Then, for $\lu\in K(0)$ and $i=2,\ldots,\rankL$,
\begin{align}
\label{eq:(lE+1)((4mn-1)lE-2mn2+7mn-2)(Si1(1,1)-0}
	0&=
	(\lE+1)((4\mn-1)\lE-2\mn^2+7\mn-2)(S_{i1}(1,1)_{0}E)_{\lE+1 } \lu
	\nonumber\\&\quad{}
	+\big((2\mn^2+3\mn-2)\lE-2\mn^2+7\mn-2\big)E_{\lE } S_{i1}(1,1)_{1 } \lu
	\nonumber\\&\quad{}
	 -4(\mn-1)^2(S_{i1}(1,1)_{0}E)_{\lE+1 } \omega^{[1]}_{1 } \lu
	\nonumber\\&\quad{}
	+\mn(\mn-1)(2\mn-5)E_{\lE } S_{i1}(1,2)_{2 } \lu,\\
\label{eq:(lE+1)((4mn-1)lE-2mn2+7mn-2)(Si1(1,1)-1}
0&=
(t+1) ((3 \mn-1) t-\mn^2+5 \mn-2)(S_{i1}(1,1)_{0}\ExB)_{t+1 } \lu
\nonumber\\&\quad{}
+\big((\mn-1) t^2+(-3 \mn^2+10 \mn-5) t+8 \mn^3-19 \mn^2+17 \mn-4\big)\ExB_{t } S_{i1}(1,1)_{1 } \lu
\nonumber\\&\quad{}
-2 (\mn-1) (4 \mn-1) (t+1)(S_{i1}(1,1)_{0}\ExB)_{t+1 } \omega^{[i]}_{1 } \lu
\nonumber\\&\quad{}
-2 \mn (\mn-1) (4 \mn-1)\ExB_{t } S_{i1}(1,1)_{1 } \omega^{[i]}_{1 } \lu
\nonumber\\&\quad{}
-\mn (\mn-1) (3 t-13 \mn+13)\ExB_{t } S_{i1}(1,2)_{2 } \lu
\nonumber\\&\quad{}
+2 (\mn-1) (t-3 \mn+4)(S_{i1}(1,1)_{0}\ExB)_{t+1 } \omega^{[1]}_{1 } \lu
\nonumber\\&\quad{}
+\mn (\mn-1) (4 \mn-1)\ExB_{t } S_{i1}(1,3)_{3 } \lu,
\end{align}
\begin{align}
\label{eq:(lE+1)((4mn-1)lE-2mn2+7mn-2)(Si1(1,1)-2}
0&=
(t+1) ((10 \mn^2-30 \mn+8) t-4 \mn^3+29 \mn^2-53 \mn+16)(S_{i1}(1,1)_{0}\ExB)_{t+1 } \lu
\nonumber\\&\quad{}
+\big((6 \mn^2-12 \mn+6) t^2+(-8 \mn^3+36 \mn^2-70 \mn+30) t\nonumber\\
&\qquad{}+16 \mn^4-72 \mn^3+125 \mn^2-105 \mn+24\big)\ExB_{t } S_{i1}(1,1)_{1 } \lu
\nonumber\\&\quad{}
-4 (\mn-2) (\mn-1) (4 \mn-1) (t+1)(S_{i1}(1,1)_{0}\ExB)_{t+1 } \omega^{[i]}_{1 } \lu
\nonumber\\&\quad{}
-4 \mn (\mn-2) (\mn-1) (4 \mn-1)\ExB_{t } S_{i1}(1,1)_{1 } \omega^{[i]}_{1 } \lu
\nonumber\\&\quad{}
-(\mn-1) ((6 \mn^2-3 \mn-3) t-20 \mn^3+69 \mn^2-58 \mn-3)\ExB_{t } S_{i1}(1,2)_{2 } \lu
\nonumber\\&\quad{}
+6 (\mn-1)^2 (2 t-4 \mn+9)(S_{i1}(1,1)_{0}\ExB)_{t+1 } \omega^{[1]}_{1 } \lu,
	\end{align}
\begin{align}
\label{eq:(lE+1)((4mn-1)lE-2mn2+7mn-2)(Si1(1,1)-4}
0&=
(t+1) ((144 \mn^3+12 \mn^2-60 \mn+12) t^2
\nonumber\\&\qquad{}+(-232 \mn^4+878 \mn^3+303 \mn^2-503 \mn+94) t
\nonumber\\&\qquad{}+88 \mn^5-586 \mn^4+913 \mn^3+593 \mn^2-716 \mn+140)(S_{i1}(1,1)_{0}\ExB)_{t+1 } \lu
\nonumber\\&\quad{}
+\big((144 \mn^4+12 \mn^3-60 \mn^2+12 \mn) t^2
\nonumber\\&\qquad{}+(-280 \mn^5+506 \mn^4+999 \mn^3-599 \mn^2-182 \mn+96) t
\nonumber\\&\qquad{}+176 \mn^6-220 \mn^5-960 \mn^4+2167 \mn^3-485 \mn^2-342 \mn+96\big)\ExB_{t } S_{i1}(1,1)_{1 } \lu
\nonumber\\&\quad{}
-2 (\mn-1)^2 (4 \mn-1) (t+1) ((6 \mn-6) t+14 \mn^2+39 \mn-26)(S_{i1}(1,1)_{0}\ExB)_{t+1 } \omega^{[i]}_{1 } \lu
\nonumber\\&\quad{}
-2 \mn (\mn-1)^2 (4 \mn-1) (26 \mn^2+27 \mn-26)\ExB_{t } S_{i1}(1,1)_{1 } \omega^{[i]}_{1 } \lu
\nonumber\\&\quad{}
-\mn (\mn-1) ((96 \mn^3+120 \mn^2-132 \mn+24) t\nonumber\\
&\qquad{}-88 \mn^4-790 \mn^3+1695 \mn^2-227 \mn-158)\ExB_{t } S_{i1}(1,2)_{2 } \lu
\nonumber\\&\quad{}
-24 (\mn-1)^2 ((4 \mn^2-\mn) t-4 \mn^3+25 \mn^2+8 \mn-8)(S_{i1}(1,1)_{0}\ExB)_{t+1 } \omega^{[1]}_{1 } \lu
\nonumber\\&\quad{}
+24 \mn (\mn-1)^2 (4 \mn-1)^2\ExB_{t } S_{i1}(1,3)_{3 } \lu
\nonumber\\&\quad{}
-4 (\mn-1)^3 (\mn+2) (2 \mn-1) (4 \mn-1)(S_{i1}(1,1)_{0}\ExB)_{t+1 } H^{[i]}_{3 } \lu
\nonumber\\&\quad{}
-24 \mn^2 (\mn-1)^2 (4 \mn-1)\ExB_{t } S_{i1}(1,1)_{1 } \omega^{[1]}_{1 } \lu
\nonumber\\&\quad{}
+24 \mn (\mn-1)^3 (4 \mn-1)(S_{i1}(1,1)_{0}\ExB)_{t+1 } \omega^{[1]}_{1 } \omega^{[i]}_{1 } \lu
\nonumber\\&\quad{}
-12 \mn (\mn-1)^3 (2 \mn+1) (4 \mn-1)\ExB_{t } S_{i1}(1,2)_{2 } \omega^{[i]}_{1 } \lu.
\end{align}
\end{lemma}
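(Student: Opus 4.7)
The plan is to derive each of the four identities \eqref{eq:(lE+1)((4mn-1)lE-2mn2+7mn-2)(Si1(1,1)-0}--\eqref{eq:(lE+1)((4mn-1)lE-2mn2+7mn-2)(Si1(1,1)-4} by applying the corresponding null vector of $V_{\lattice}^{+}$ from \eqref{eq:(mn-1)(2mn2-mn+2)Sij(1,1)-2E-1}, \eqref{eq:(mn-1)(2mn2-mn+2)Sij(1,1)-2E-2}, \eqref{eq:(mn-1)(2mn2-mn+2)Sij(1,1)-2E-3}, and \eqref{eq:(mn-1)(2mn2-mn+2)Sij(1,1)-2E-5} to $\lu$ via the intertwining operator $I(\cdot,x)$. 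Each of these null vectors is conformally homogeneous, and a weight count (using $\wt \ExB=\mn/2$) fixes the mode to take: it is the $(\lE+3)$-rd component of \eqref{eq:(mn-1)(2mn2-mn+2)Sij(1,1)-2E-1}, the $(\lE+4)$-th of \eqref{eq:(mn-1)(2mn2-mn+2)Sij(1,1)-2E-2} and \eqref{eq:(mn-1)(2mn2-mn+2)Sij(1,1)-2E-3}, and an analogously chosen mode of \eqref{eq:(mn-1)(2mn2-mn+2)Sij(1,1)-2E-5}, so that the resulting weight matches that of $(S_{i1}(1,1)_{0}\ExB)_{\lE+1}\lu$.

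For each summand in the chosen component I would apply the Borcherds expansion \eqref{eq:abm} of Lemma \ref{lemma:comm-change} with the splitting parameter $m$ chosen so that in the resulting normal-ordered form every mode placed to the right of $\ExB_{\lE}$ (respectively $(S_{i1}(1,1)_{0}\ExB)_{\lE+1}$) lies in the annihilating range for $\lu$. Since $\lu\in K(0)$ is a simultaneous eigenvector of $\{\omega^{[k]}_{1},\Har^{[k]}_{3}\}_{k=1}^{\rankL}$ in the top layer of $K$, we have $\omega^{[k]}_{j}\lu=0$ for $j\geq 2$, $\Har^{[k]}_{j}\lu=0$ for $j\geq 4$, $S_{lm}(1,r)_{j}\lu=0$ for $j\geq r+1$, and $\ExB_{j}\lu=0$ for $j>\lE$. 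Hence the only modes that can survive on the right are $\omega^{[1]}_{1}$, $\omega^{[i]}_{1}$, $\Har^{[i]}_{3}$ and $S_{i1}(1,k)_{k}$ $(k=1,2,3)$, together with the leading $\ExB_{\lE}$ and $(S_{i1}(1,1)_{0}\ExB)_{\lE+1}$ themselves, matching exactly the basis appearing on the right-hand sides of the four target identities.

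Collecting the survivors with the binomial and normal-ordering corrections produced by \eqref{eq:abm}, and using the structural reductions \eqref{eq:Har1ExB=dfrac2mn2mn1omega1ExB}--\eqref{eq:J1ExB=frac2mn(4mn-11)-2} for the $\ExB$-side together with \eqref{eq:omega-s11}, \eqref{eq:action[H4-S]}, and the normal-ordering identities of Section \ref{section:appendix-The general case} for the $S_{ij}(1,k)$-side, one obtains a linear combination that equals zero in $\mW$; reading off the coefficients yields the polynomials in $\mn$ and $\lE$ displayed in the statement.

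The main obstacle is purely the combinatorial volume of the reduction: each null vector has up to ten summands, each of which, after \eqref{eq:abm} is applied twice (once to expand the outer Borcherds product, once to rearrange the resulting $M(1)^{+}$-modes past $\ExB_{\lE}$ or $(S_{i1}(1,1)_{0}\ExB)_{\lE+1}$), produces dozens of sub-terms with intricate binomial and residue coefficients. Performing these cancellations by hand is not practical, which is precisely why the required auxiliary expansions have been tabulated in Section \ref{section:appendix} and the final simplification is delegated to Risa/Asir.
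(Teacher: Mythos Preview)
Your proposal is correct and follows essentially the same approach as the paper: apply the null vectors \eqref{eq:(mn-1)(2mn2-mn+2)Sij(1,1)-2E-1}--\eqref{eq:(mn-1)(2mn2-mn+2)Sij(1,1)-2E-5} at the appropriate modes, expand via Lemma~\ref{lemma:comm-change}, and use the top-level annihilation conditions on $\lu\in K(0)$ together with the reductions of Section~\ref{section:Norm-nonzero}. Two small points worth making explicit: the ``analogously chosen'' mode for \eqref{eq:(mn-1)(2mn2-mn+2)Sij(1,1)-2E-5} is the $(\lE+5)$-th (the paper states this), and the bound $\epsilon(S_{i1}(1,1)_{0}\ExB,\lu)\leq \lE+1$ that lets $(S_{i1}(1,1)_{0}\ExB)_{\lE+1}$ be the leading surviving term comes from Lemma~\ref{lemma:bound-index-S}.
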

\begin{proof}
By Lemma \ref{lemma:bound-index-S}, $\epsilon(S_{i1}(1,1)_{0}E,\lu)\leq \lE+1$ for all non-zero $\lu\in K(0)$.
By taking the $(\lE+3)$-th action of \eqref{eq:(mn-1)(2mn2-mn+2)Sij(1,1)-2E-1} on $\lu$ and using the results in Section \ref{section:Norm-nonzero}, 
the same argument as in the proof of \eqref{eq:0=(omega-3ExB)lE+2lom+2lu=}--\eqref{eq:(Har-1ExB)lE+2lom+2lu=} in Lemma \ref{lemma:r=1-s=3} shows \eqref{eq:(lE+1)((4mn-1)lE-2mn2+7mn-2)(Si1(1,1)-0}.
Taking the $(\lE+4)$-th actions  of \eqref{eq:(mn-1)(2mn2-mn+2)Sij(1,1)-2E-2} and \eqref{eq:(mn-1)(2mn2-mn+2)Sij(1,1)-2E-3}, and  the $(\lE+5)$-th action of \eqref{eq:(mn-1)(2mn2-mn+2)Sij(1,1)-2E-5}
 on $\lu$,
we have \eqref{eq:(lE+1)((4mn-1)lE-2mn2+7mn-2)(Si1(1,1)-1}--\eqref{eq:(lE+1)((4mn-1)lE-2mn2+7mn-2)(Si1(1,1)-4}.
\end{proof}
\subsubsection{The case that $K(0)\cong \C e^{\lambda}$ for some $\lambda\in\fh\setminus\{0\}$}

We need some preparation for the case that $\mK(0)\cong \C e^{\lambda}$ as $A(M(1)^{+})$-modules for some $\lambda\in\fh\setminus\{0\}$.
Let $\lu_{\lambda}$ be the element of $\mK(0)$ corresponding to $e^{\lambda}$.
We write 
\begin{align}
\lE=\epsilon_{I}(\ExB,\lu_{\lambda})
\end{align}
for simplicity. For $i=2,\ldots,\rankL$, since $S_{i1}(1,1)_{1}\lu_{\lambda}=\langle\lambda,h^{[i]}\rangle\langle\lambda,h^{[1]}\rangle \lu_{\lambda}$,
\begin{align}
\epsilon(S_{i1}(1,1)_{0}E,\lu_{\lambda})&\leq \lE+1
\end{align}
by Lemma \ref{lemma:bound-index-S} and
\begin{align}
\epsilon(\Har^{[1]}_{0}E,\lu_{\lambda})&\leq \lE+3
\end{align}
by Lemma \ref{lemma:bound-H0-1E}.
Thus, we take the vector space $B$
in Lemma \ref{lemma:N-Basis} for this $\lE$.

For $i=0,1,\ldots$, let us define $M(1)^{+}$-modules $N^{\seq{i}}$, $A(M(1)^{+})$-modules $B^{\seq{i}}$, integers $\lE^{\seq{i}}$, and intertwining operators $I^{\seq{i}} :  M(1,\alpha)\times \mK\rightarrow N^{\seq{i}}\db{x}$
inductively as follows: $N^{\seq{0}}=\mN$, $B^{\seq{0}}=B$, $\lE^{\seq{0}}=\lE$, and $I^{\seq{0}}=I$. 
For $N^{\seq{i}}$,  $B^{\seq{i}}$, and $I^{\seq{i}}$, 
define the $M(1)^{+}$-module
\begin{align}
\label{eq:Nseqi+1=Nseqi}
N^{\seq{i+1}}&=N^{\seq{i}}/(M(1)^{+}\cdot B^{\seq{i}}).
\end{align}
Suppose $N^{\seq{i+1}}\neq 0$. Using the natural projection $\pr : N^{\seq{i}}\rightarrow N^{\seq{i+1}}$,
we define the non-zero intertwining operator 
\begin{align}
I^{\seq{i+1}}(\mbox{ },x)&=\pr \circ I^{\seq{i}}(\mbox{ },x) : M(1,\alpha)\times \mK\rightarrow N^{\seq{i+1}}\db{x}.
\end{align}
We write
\begin{align}
\lE^{\seq{i+1}}&=\epsilon_{I^{\seq{i+1}}}(\ExB,\lu_{\lambda})\mbox{ and }\nonumber\\
I^{\seq{i+1}}(u,x)v&=\sum_{j\in\Z}I^{\seq{i+1}}(u;j)v x^{-j-1}
\end{align}
for $u\in M(1,\alpha)$ and $\lu\in K$
and define 
\begin{align}
\label{eq:BigIseqi+1Useqi+1}
&B^{\seq{i+1}}\nonumber\\
&=\Span_{\C}\Big\{I^{\seq{i+1}}(a_{j}\ExB;\lE^{\seq{i+1}}+\wt a-j-1)\lu_{\lambda}\ \Big|\ \begin{array}{l}\mbox{homogeneous }a\in M(1)^{+}\\\mbox{ and }j\in\Z\end{array}\Big\}.
\end{align}
If $N^{\seq{i+1}}=0$, then we define $N^{\seq{j}}=0$, $B^{\seq{j}}=0$, and $I^{\seq{j}}=0$ for all $j\geq i+1$.
Note that 
\begin{align}
\label{eq:LEtildeLE}
\lE=\lE^{\seq{0}}>\lE^{\seq{1}}>\lE^{\seq{2}}>\cdots.
\end{align}

A direct computation shows that for any pair of distinct elements $i,j\in\{2,\ldots,\rankL\}$, 
\begin{align}
S_{i j}(1,1)_{0}E&=S_{i j}(1,1)_{1}E=0,\nonumber\\
S_{i j}(1,1)_{1}E_{\lE}e^{\lambda}&=\langle h^{[i]},\lambda\rangle\langle h^{[j]},\lambda\rangle E_{\lE}e^{\lambda},\nonumber\\
S_{i 1}(1,1)_{1}E_{\lE}e^{\lambda}
&=(S_{i 1}(1,1)_{0}E)_{\lE+1}e^{\lambda}+
\langle h^{[i]},\lambda\rangle\langle h^{[1]},\lambda\rangle E_{\lE}e^{\lambda}.\nonumber\\
\end{align}
Thus, if $(S_{i j}(1,1)_{0}E)_{\lE+1}\lu_{\lambda}=\wc \langle h^{[i]},\lambda\rangle
\langle h^{[j]},\lambda\rangle 
E_{\lE}\lu_{\lambda}$ for some $c\in\C$, then
\begin{align}
	\label{eq:Si1111ElEelambda}
	S_{i j}(1,1)_{1}E_{\lE}\lu_{\lambda}
	&=(S_{i j}(1,1)_{0}E)_{\lE+1}\lu_{\lambda}+
	\langle h^{[i]},\lambda\rangle\langle h^{[j]},\lambda\rangle E_{\lE}\lu_{\lambda}\nonumber\\
	&=(c+1)\langle h^{[i]},\lambda\rangle\langle h^{[j]},\lambda\rangle E_{\lE}\lu_{\lambda}\nonumber\\
	&=\langle h^{[i]},\lambda\rangle\langle h^{[j]},\lambda+\wc\lambda\rangle E_{\lE}\lu_{\lambda}\nonumber\\
	&=\langle h^{[i]},\lambda+\wc\langle h^{[j]},\lambda\rangle h^{[j]}\rangle\langle h^{[j]},\lambda+\wc\langle h^{[j]},\lambda\rangle h^{[j]}
	\rangle E_{\lE}\lu_{\lambda}.
\end{align}

\begin{lemma}
\label{lemma:intertwining-lambda}
For the vector space $B$ with $\lE=\epsilon_{I}(\ExB,\lu_{\lambda})$ in Lemma \ref{lemma:N-Basis},
the following results hold:
\begin{enumerate}
\item If $\langle\alpha,\lambda\rangle=0$, then $\lE=-1$ and
\begin{align}
B&\cong \C e^{\alpha+\lambda},
\C e^{\alpha-\lambda},\mbox{ or }
\C e^{\alpha+\lambda}\oplus 
\C e^{\alpha-\lambda}
\end{align}
as $A(M(1)^{+})$-modules and 
\begin{align}
\label{eq:nonzero-mN-structure-1}
\mN&=M(1)^{+}\cdot B\nonumber\\
&\cong M(1,\lambda+\alpha),M(1,\lambda-\alpha),\mbox{ or }M(1,\lambda+\alpha)\oplus M(1,\lambda-\alpha)
\end{align}
as  $M(1)^{+}$-modules.
\item
If $\langle\alpha,\lambda\rangle\neq 0$ 
and $(1+\lE)^2- \langle \alpha,\lambda\rangle^2=0$, then
\begin{align}
B\cong \C e^{\alpha+\lambda}\mbox{ or }\C e^{\alpha-\lambda}
\end{align}
as $A(M(1)^{+})$-modules.
Furthermore, if $\lambda\neq \pm\alpha$, 
\begin{align}
\label{eq:nonzero-mN-structure-2}
\mN&\cong M(1,\lambda+\alpha),M(1,\lambda-\alpha),\mbox{ or }M(1,\lambda+\alpha)\oplus M(1,\lambda-\alpha)
\end{align}
as  $M(1)^{+}$-modules.
\item
If 
$(1+\lE)^2- \langle \alpha,\lambda\rangle^2\neq 0$, then $\lE=\mn-2$, $\lambda=\pm\alpha$, and $B\cong M(1)^{-}(0)$ as $A(M(1)^{+})$-modules.
\end{enumerate}
\end{lemma}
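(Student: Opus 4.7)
The plan is to combine Lemma~\ref{lemma:structure-Vlattice-M1}(1), applied to the rank-one Heisenberg subalgebra of $M(1)^{+}$ generated by $h^{[1]}$, with the polynomial relation \eqref{eq:(lE+1)((4mn-1)lE-2mn2+7mn-2)(Si1(1,1)-0} to pin down $\lE$ and $\lambda$, and then to identify $B$ and $N$ directly from the spanning set provided by Lemma~\ref{lemma:N-Basis}(1). Since $\omega^{[1]}_{1}\lu_{\lambda}=(\langle\alpha,\lambda\rangle^{2}/(2\mn))\lu_{\lambda}$ and $\Har^{[1]}_{3}\lu_{\lambda}=0$, taking $\lv=(\omega^{[1]}_{1}-(\lE+1)^{2}/(2\mn))\lu_{\lambda}=((\langle\alpha,\lambda\rangle^{2}-(\lE+1)^{2})/(2\mn))\lu_{\lambda}$ in Lemma~\ref{lemma:structure-Vlattice-M1}(1) yields the dichotomy: either $(1+\lE)^{2}=\langle\alpha,\lambda\rangle^{2}$ (when $\lv=0$), or $\lE=\mn-2$ with $\omega^{[1]}_{1}\lu_{\lambda}=(\mn/2)\lu_{\lambda}$, so that $\langle\lambda,h^{[1]}\rangle^{2}=\mn$ and $\langle\alpha,\lambda\rangle=\pm\mn$. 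The second possibility is incompatible with $\langle\alpha,\lambda\rangle=0$ since $\mn\neq 0$, giving $\lE=-1$ in case~(1); case~(2) is the first possibility by hypothesis; and case~(3) falls into the second.

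In case~(3) we must additionally show $\beta_{i}:=\langle\lambda,h^{[i]}\rangle=0$ for $i\geq 2$, forcing $\lambda\in\C h^{[1]}$ with $\langle\lambda,\lambda\rangle=\mn$, hence $\lambda=\pm\alpha$. Substituting $\lE=\mn-2$, $\omega^{[1]}_{1}\lu_{\lambda}=(\mn/2)\lu_{\lambda}$, together with $S_{i1}(1,1)_{1}\lu_{\lambda}=\beta_{i}\langle\lambda,h^{[1]}\rangle\lu_{\lambda}$ and $S_{i1}(1,2)_{2}\lu_{\lambda}=-\beta_{i}\langle\lambda,h^{[1]}\rangle\lu_{\lambda}$ from \eqref{eq:norm2Si1(1,1)1lu=-Si1(1,2)2lu} into \eqref{eq:(lE+1)((4mn-1)lE-2mn2+7mn-2)(Si1(1,1)-0}, the coefficient of $(S_{i1}(1,1)_{0}E)_{\mn-1}\lu_{\lambda}$ cancels (it becomes $2\mn(\mn-1)^{2}-2\mn(\mn-1)^{2}=0$) and the remaining terms simplify to $2(2\mn-1)(\mn-1)\,\beta_{i}\langle\lambda,h^{[1]}\rangle\,E_{\mn-2}\lu_{\lambda}=0$. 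Since $\mn\notin\{0,1/2,1,2\}$, $\langle\lambda,h^{[1]}\rangle\neq 0$, and $E_{\mn-2}\lu_{\lambda}\neq 0$, this forces $\beta_{i}=0$.

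Finally, by Lemma~\ref{lemma:N-Basis}(1), $B=\Span_{\C}\{E_{\lE}\lu_{\lambda},(S_{i1}(1,1)_{0}E)_{\lE+1}\lu_{\lambda}:i=2,\ldots,\rankL\}$; using \eqref{eq:wiEji}, the identity $S_{i1}(1,1)_{1}E_{\lE}\lu_{\lambda}=(S_{i1}(1,1)_{0}E)_{\lE+1}\lu_{\lambda}+\langle\lambda,h^{[i]}\rangle\langle\lambda,h^{[1]}\rangle E_{\lE}\lu_{\lambda}$, and the related observation \eqref{eq:Si1111ElEelambda}, one computes the $\omega^{[j]}_{1}$-, $\Har^{[j]}_{3}$-, and $S_{jk}(1,m)_{m}$-eigenvalues on the generators of $B$ and compares them with the known eigenvalues on irreducible $A(M(1)^{+})$-modules from \cite{DN2001}. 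In cases~(1) and~(2) with $\lambda\neq\pm\alpha$, these eigenvalues match those of $e^{\alpha+\lambda}$ or $e^{-\alpha+\lambda}$ (the sign determined by the value of $\lE$), so $B$ decomposes as a direct sum of one or two copies of $\C e^{\alpha\pm\lambda}$; since $\alpha\pm\lambda\neq 0$ under these hypotheses, Corollary~\ref{corollary:verma-irreducible} makes the associated generalized Verma modules irreducible, yielding the stated isomorphisms for $N=M(1)^{+}\cdot B$. In case~(3) the eigenvalues collapse to those of $h^{[1]}(-1)\vac$ (cf.\ \eqref{eq:norm2Si1(1,1)1lu=-Si1(1,2)2lu-2}), giving $B\cong M(1)^{-}(0)$. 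The main technical hurdle is the elimination in the second paragraph: ensuring that the substitutions into \eqref{eq:(lE+1)((4mn-1)lE-2mn2+7mn-2)(Si1(1,1)-0} yield a coefficient nonvanishing throughout the admissible range; here this simplifies cleanly to the factor $(2\mn-1)(\mn-1)$, but the analogous computations at the exceptional norms handled in the following subsections will require more intricate combinations of all four relations \eqref{eq:(lE+1)((4mn-1)lE-2mn2+7mn-2)(Si1(1,1)-0}--\eqref{eq:(lE+1)((4mn-1)lE-2mn2+7mn-2)(Si1(1,1)-4}.
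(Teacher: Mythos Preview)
Your overall strategy matches the paper's: use Lemma~\ref{lemma:structure-Vlattice-M1}(1) on the $h^{[1]}$-Heisenberg to get the dichotomy, then \eqref{eq:(lE+1)((4mn-1)lE-2mn2+7mn-2)(Si1(1,1)-0} to force $\langle\lambda,h^{[i]}\rangle=0$ in case~(3). Your computation in the second paragraph is correct and is exactly what the paper does.

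However, your third paragraph has genuine gaps in cases (1) and (2). In case~(1) you have $\lE=-1$, $\omega^{[1]}_{1}\lu_{\lambda}=0$, and $S_{i1}(1,m)_{m}\lu_{\lambda}=0$ for $m=1,2,3$ (since $\langle\lambda,h^{[1]}\rangle=0$); substituting these into any of \eqref{eq:(lE+1)((4mn-1)lE-2mn2+7mn-2)(Si1(1,1)-0}--\eqref{eq:(lE+1)((4mn-1)lE-2mn2+7mn-2)(Si1(1,1)-4} yields $0=0$, so none of these relations constrain $(S_{i1}(1,1)_{0}E)_{0}\lu_{\lambda}$ at all. You therefore cannot invoke \eqref{eq:Si1111ElEelambda}, whose hypothesis is precisely that this element is a scalar multiple of $E_{-1}\lu_{\lambda}$. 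The paper instead computes $S_{i2}(1,1)_{k}S_{21}(1,1)_{0}E$ directly (its \eqref{eq:Si2(1,1)0S21(1,1)0E=dfrac-1mn-1}) to obtain the proportionality \eqref{eq:langlelambdah[i]rangle(21(11)0E}, reducing $B$ to the two-dimensional span of $E_{-1}\lu_{\lambda}$ and $(S_{21}(1,1)_{0}E)_{0}\lu_{\lambda}$, and then writes out the full Zhu action \eqref{eq:omega[1]1E-1lulambda}.

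In case~(2) there is a further sub-case you have not addressed: relation \eqref{eq:(lE+1)((4mn-1)lE-2mn2+7mn-2)(Si1(1,1)-0}, after substituting $(1+\lE)^{2}=\langle\alpha,\lambda\rangle^{2}$, factors as in the paper's \eqref{eq:0=12mn22mnmn22lEmnlE-1}, and the factor $-2-2\mn+\mn^{2}-(2+\mn)\lE$ can vanish for integer $\lE$ (whenever $\mn+2$ divides $6$). When it does, \eqref{eq:(lE+1)((4mn-1)lE-2mn2+7mn-2)(Si1(1,1)-0} alone says nothing about $(S_{i1}(1,1)_{0}E)_{\lE+1}\lu_{\lambda}$, and the paper brings in \eqref{eq:(lE+1)((4mn-1)lE-2mn2+7mn-2)(Si1(1,1)-1} and \eqref{eq:(lE+1)((4mn-1)lE-2mn2+7mn-2)(Si1(1,1)-2} to recover \eqref{eq:(1+lE)(Si1(1,1)0E)lE+1}. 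Your closing remark attributes this kind of complication only to the exceptional norms, but it already occurs here in the generic regime.
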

\begin{proof}
By Lemma \ref{lemma:N-Basis}, $B$ is spanned by $E_{\lE}\lu_{\lambda}$ and 
$(S_{21}(1,1)_{0}E)_{\lE+1}\lu_{\lambda},\ldots, (S_{\rankL 1}(1,1)_{0}E)_{\lE+1}\lu_{\lambda}$.

\begin{enumerate}
\item Assume $\langle\alpha,\lambda\rangle=0$. Since
$\omega^{[1]}_{0}\lu_{\lambda}=\omega^{[1]}_{1}\lu_{\lambda}=0$ and $\omega^{[1]}_{0}E=\omega_{0}E$,
it follows from Lemma \ref{lemma:structure-Vlattice-M1} (1) that 
$(\omega^{[1]}_{1}-(\lE+1)^2/(2\mn))\lu_{\lambda}=0$ and hence $\lE=-1$. 
Since $\lattice$ is non-degenerate, 
we may assume $\langle h^{[2]},\lambda\rangle\neq 0$.
For $i=3,\ldots,\rankL$, by Lemma \ref{lemma:comm-change} and
\begin{align}
\label{eq:Si2(1,1)0S21(1,1)0E=dfrac-1mn-1}
&S_{i2}(1,1)_{0}S_{21}(1,1)_{0}E
=\dfrac{-1}{\mn-1}\omega_{0}S_{\wi 1}(1,1)_{0}E+\dfrac{\mn}{\mn-1}S_{\wi 1}(1,1)_{-1}E,
\nonumber\\
&S_{i2}(1,1)_{-1}S_{21}(1,1)_{0}E\nonumber\\
&= \dfrac{1}{2\mn(\mn-1)^2}\omega_0^{2}S_{\wi 1}(1,1)_{0}E+\dfrac{1}{2(\mn-1)^2}\omega_0S_{\wi 1}(1,1)_{-1}E\nonumber\\
&\quad{}+\dfrac{1}{2-2\mn}S_{\wi 1}(1,2)_{-1}E+
\dfrac{-1}{2-2\mn}S_{\wi 1}(1,1)_{-2}E+2 \omega^{[2]}_{-1}S_{i1}(1,1)_{0}E,\nonumber\\
\end{align}
which are obtained by a direct computation,
we have
\begin{align}
\label{eq:(S21(1,1)0E)0Si2(1,1)1lulambdanonumber}
&(S_{21}(1,1)_{0}E)_{0}S_{i2}(1,1)_{1}\lu_{\lambda}\nonumber\\
&=(S_{i2}(1,1)_{-1}S_{21}(1,1)_{0}E)_{2}\lu_{\lambda}+(S_{i2}(1,1)_{0}S_{21}(1,1)_{0}E)_{1}\lu_{\lambda}\nonumber\\
&=\langle \lambda,h^{[2]}\rangle^2
(S_{i1}(1,1)_{0}E)_{0}\lu_{\lambda}
\end{align}
and hence
\begin{align}
\label{eq:langlelambdah[i]rangle(21(11)0E}
\langle \lambda,h^{[i]}\rangle
(S_{21}(1,1)_{0}E)_{0}\lu_{\lambda}
&=\langle \lambda,h^{[2]}\rangle
(S_{i1}(1,1)_{0}E)_{0}\lu_{\lambda}.
\end{align}
Thus, $U$ is spanned by $E_{-1}\lu_{\lambda}$ and $(S_{21}(1,1)_{0}E)_{0}\lu_{\lambda}$.
By using results in Section \ref{section:Norm-nonzero} and 
the commutation relation $[a_i,b_j]=\sum_{k=0}^{\infty}\binom{i}{k}(a_{k}b)_{i+j-k}$
for $a,b\in V_{\lattice}^{+}$ and $i,j\in\Z$, a direct computation shows that
\begin{align}
\label{eq:omega[1]1E-1lulambda}
\omega^{[1]}_{1}E_{ -1}\lu_{\lambda}&=\frac{\mn}{2}E_{ -1}\lu_{\lambda},\nonumber\\
\omega^{[2]}_{1}E_{ -1}\lu_{\lambda}&=E_{ -1}\omega^{[2]}_{1}\lu_{\lambda},\nonumber\\
\omega^{[1]}_{1}(S_{2 1}(1,1)_{0}E)_{ 0}\lu_{\lambda}&=\frac{\mn}{2}(S_{2 1}(1,1)_{0}E)_{ -1}\lu_{\lambda},\nonumber\\
\omega^{[2]}_{1}(S_{2 1}(1,1)_{0}E)_{ 0}\lu_{\lambda}&=(S_{2 1}(1,1)_{0}E)_{ 0}\omega^{[2]}_{1}\lu_{\lambda},\nonumber\\
\Har^{[i]}_{3}E_{ -1}\lu_{\lambda}&=0\mbox{ for }i=1,\ldots,\rankL,\nonumber\\
\Har^{[i]}_{3}(S_{2 1}(1,1)_{0}E)_{ 0}\lu_{\lambda}&=0\mbox{ for }i=1,\ldots,\rankL,\nonumber\\
S_{2 1}(1,1)_{1}E_{ -1}\lu_{\lambda}&=(S_{2 1}(1,1)_{0}E)_{ 0}\lu_{\lambda},\nonumber\\
S_{2 1}(1,1)_{1}(S_{2 1}(1,1)_{0}E)_{ 0}\lu_{\lambda}&=2\mn E_{ -1}\omega^{[2]}_{1}\lu_{\lambda},\nonumber\\
S_{2 1}(1,2)_{2}E_{ -1}\lu_{\lambda}&=-(S_{2 1}(1,1)_{0}E)_{ 0}\lu_{\lambda},\nonumber\\
S_{2 1}(1,2)_{2}(S_{2 1}(1,1)_{0}E)_{ 0}\lu_{\lambda}&=-2\mn E_{ -1}\omega^{[2]}_{1}\lu_{\lambda},\nonumber\\
S_{2 1}(1,3)_{3}E_{ -1}\lu_{\lambda}&=(S_{2 1}(1,1)_{0}E)_{ 0}\lu_{\lambda},\nonumber\\
S_{2 1}(1,3)_{3}(S_{2 1}(1,1)_{0}E)_{ 0}\lu_{\lambda}&=2\mn E_{ -1}\omega^{[2]}_{1}\lu_{\lambda}.
\end{align}
The actions of the other $S_{ij}(1,k)_{k}, k=1,2,3,$ can be obtained by using \eqref{eq:langlelambdah[i]rangle(21(11)0E}
and \eqref{eq:omega[1]1E-1lulambda}.
Thus,
\begin{align}
\C \big(E_{-1}\lu_{\lambda}\pm \langle h^{[1]},\alpha\rangle
\langle h^{[2]},\lambda\rangle(S_{2 1}(1,1)_{0}E)_{0}\lu_{\lambda}\big)\cong 0
\mbox{ or }\C e^{\lambda\pm\alpha}
\end{align}
as $A(M(1)^{+})$-modules.
We have obtained the desired result.
\item
Suppose $\langle \alpha,\lambda\rangle\neq 0$ and $(1+\lE)^2- \langle \alpha,\lambda\rangle^2=0$. Then, $\lE\neq -1$
and it follows from \eqref{eq:norm2Si1(1,1)1lu=-Si1(1,2)2lu} and \eqref{eq:(lE+1)((4mn-1)lE-2mn2+7mn-2)(Si1(1,1)-0} that
\begin{align}
\label{eq:0=12mn22mnmn22lEmnlE-1}
0&=
(-1+2 \mn) (-2-2 \mn+\mn^2-(2+\mn) \lE)\nonumber\\
&\quad{}\times ((1+\lE)(S_{i 1}(1,1)_{0}E)_{\lE+1}\lu_{\lambda}+\mn E_{\lE}S_{\wi 1}(1,1)_{1}\lu_{\lambda})
\end{align}
for $i=2,\ldots,\rankL$.

Assume $-2-2 \mn+\mn^2-(2+\mn) \lE=0$.  Since $\lE$ is an integer, $\mn\neq 1/4,-2$.
By \eqref{eq:norm2Si1(1,1)1lu=-Si1(1,2)2lu}, \eqref{eq:(lE+1)((4mn-1)lE-2mn2+7mn-2)(Si1(1,1)-1}, and \eqref{eq:(lE+1)((4mn-1)lE-2mn2+7mn-2)(Si1(1,1)-2},
we have
\begin{align}
\label{eq:0=12mn22mnmn22lEmnlE-2}
0&=\mn (4 \mn-1) \big((-1+\mn)(S_{i 1}(1,1)_{0}E)_{\lE+1}+(2+\mn)E_{\lE}S_{i1}(1,1)_1\big)\nonumber\\
&\quad{}\times ((\mn+2)^2\omega^{[i]}_{1}-1)\lu_{\lambda}&\mbox{ and }\nonumber\\
0&=\mn (\mn-2) (4 \mn-1) \big((-1+\mn)(S_{i 1}(1,1)_{0}E)_{\lE+1}+(2+\mn)E_{\lE}S_{i1}(1,1)_1\big)\nonumber\\
&\quad{}\times (4(\mn+2)^2\omega^{[i]}_{1}-(5 - 2\mn))\lu_{\lambda}.
\end{align}
Thus
\begin{align}
	0&=\mn (\mn-2) (\mn+2)^2 (2 \mn-1) (4 \mn-1) \nonumber\\
	&\quad{}\times  \big((-1+\mn)(S_{i 1}(1,1)_{0}E)_{\lE+1}+(2+\mn)E_{\lE}S_{i1}(1,1)_1\big)\lu\nonumber\\
	&= (\mn-2) (\mn+2)^3 (2 \mn-1) (4 \mn-1) \nonumber\\
&\quad{}\times 	\big((1+\lE)(S_{i 1}(1,1)_{0}E)_{\lE+1}\lu_{\lambda}+\mn E_{\lE}S_{i1}(1,1)_1\lu_{\lambda}\big),
\end{align}
where we have used $\mn (-1+\mn)=(\mn+2)(1+\lE)$,
and hence 
\begin{align}
	\label{eq:(1+lE)(Si1(1,1)0E)lE+1}
	0&=
	(1+\lE)(S_{i 1}(1,1)_{0}E)_{\lE+1}\lu_{\lambda}+\mn E_{\lE}S_{i1}(1,1)_1\lu_{\lambda}.
	\end{align}
By \eqref{eq:0=12mn22mnmn22lEmnlE-1} and \eqref{eq:(1+lE)(Si1(1,1)0E)lE+1},
\begin{align}
\label{eq:0=12mn22mnmn22lEmnlE-3}
(S_{i 1}(1,1)_{0}E)_{\lE+1}\lu_{\lambda}&=\frac{-\mn}{1+\lE}E_{\lE}S_{\wi 1}(1,1)_{1}\lu_{\lambda}
\end{align}
and hence by	\eqref{eq:norm2Si1(1,1)1lu=-Si1(1,2)2lu} and \eqref{eq:Si1111ElEelambda}
\begin{align}
&S_{i 1}(1,1)_{1}E_{\lE}\lu_{\lambda}\nonumber\\
&=\langle h^{[i]},\lambda+\dfrac{-\mn}{1+\lE}\langle h^{[1]},\lambda\rangle h^{[1]}\rangle\langle h^{[1]},\lambda+\dfrac{-\mn}{1+\lE}\langle h^{[1]},\lambda\rangle h^{[1]}
\rangle E_{\lE}\lu_{\lambda}.
\end{align}

Since $(-\mn\langle h^{[1]},\lambda\rangle)^2/(1+\lE)^2=p=\langle\alpha,\alpha\rangle$,
$B\cong \C e^{\lambda+\alpha}$ or $\C e^{\lambda-\alpha}$
as $A(M(1)^{+})$-modules.

\item
Suppose  $(1+\lE)^2\neq \langle \alpha,\lambda\rangle^2=\mn\langle h^{[1]},\lambda\rangle^2$.
Since 
\begin{align}
\label{eq:omega[1]1-dfrac(lE+1)22mn)lvlambda}
(\omega^{[1]}_{1}-\dfrac{(\lE+1)^2}{2\mn})\lu_{\lambda}
&=\dfrac{\langle \alpha, \lambda\rangle^2-(\lE+1)^2}{2\mn}\lu_{\lambda},
\end{align}
it follows from Lemmas \ref{lemma:structure-Vlattice-M1} (1) and \ref{lemma:structure-Vlattice-M1-norm2} (1)  that $\lE=\mn-2$ and $\langle h^{[1]},\lambda\rangle^2=\mn$.
It follows from \eqref{eq:norm2Si1(1,1)1lu=-Si1(1,2)2lu} and \eqref{eq:(lE+1)((4mn-1)lE-2mn2+7mn-2)(Si1(1,1)-0} that
$\langle h^{[\wi]},\lambda\rangle=0$ for all $i=2,\ldots,\rankL$
and hence $\lambda=\pm\alpha$.
A direct computation shows that
for $i=2,\ldots,\rankL$ and
$j=2,\ldots,\rankL$ with $j\neq i$ 
\begin{align}
\omega^{[1]}_{1}E_{\lE}\lu_{\lambda}&=\Har^{[1]}_{3}E_{\lE}\lu_{\lambda}=E_{\lE}\lu_{\lambda},\nonumber\\
\omega^{[i]}_{1}E_{\lE}\lu_{\lambda}&=\Har^{[i]}_{3}E_{\lE}\lu_{\lambda}=0,\nonumber\\
S_{\wi 1}(1,1)_{1}E_{\lE}\lu_{\lambda}&=(S_{\wi 1}(1,1)_{0}E)_{\lE+1}\lu_{\lambda},\nonumber\\
S_{\wi 1}(1,2)_{2}E_{\lE}\lu_{\lambda}&=-2(S_{\wi 1}(1,1)_{0}E)_{\lE+1}\lu_{\lambda},\nonumber\\
S_{\wi 1}(1,3)_{3}E_{\lE}\lu_{\lambda}&=3(S_{\wi 1}(1,1)_{0}E)_{\lE+1}\lu_{\lambda},\nonumber\\
\omega^{[1]}_{1}(S_{\wi 1}(1,1)_{0}E)_{\lE+1}\lu_{\lambda}&=\Har^{[1]}_{3}(S_{\wi 1}(1,1)_{0}E)_{\lE+1}\lu_{\lambda}=0,\nonumber\\
\omega^{[i]}_{1}(S_{\wi 1}(1,1)_{0}E)_{\lE+1}\lu_{\lambda}&=\Har^{[i]}_{3}(S_{\wi 1}(1,1)_{0}E)_{\lE+1}\lu_{\lambda}=
(S_{\wi 1}(1,1)_{0}E)_{t+1 }\lu_{\lambda},\nonumber\\ 
S_{\wi 1}(1,1)_{1}(S_{\wi 1}(1,1)_{0}E)_{\lE+1}\lu_{\lambda}
&=E_{\lE}\lu_{\lambda},\nonumber\\
S_{\wi 1}(1,2)_{2}(S_{\wi 1}(1,1)_{0}E)_{\lE+1}\lu_{\lambda}&=
S_{\wi 1}(1,3)_{3}(S_{\wi 1}(1,1)_{0}E)_{\lE+1}\lu_{\lambda}=0,\nonumber\\
\omega^{[j]}_{1}(S_{\wi 1}(1,1)_{0}E)_{\lE+1}\lu_{\lambda}&=0,\nonumber\\
\textcolor{black}{S_{j1}(1,1)_{1}(S_{i1}(1,1)_{0}E)_{\lE+1}\lu_{\lambda}}&=0.
\end{align}
Thus, $B\cong M(1)^{-}(0)$ as $A(M(1)^{+})$-modules.
\end{enumerate}

We shall use the symbols prepared in \eqref{eq:Nseqi+1=Nseqi}--\eqref{eq:BigIseqi+1Useqi+1}.
By (1),(2), and (3) above, 
if $\langle\alpha,\lambda\rangle=0$, then $N^{\seq{1}}=0$, namely, $N=M(1)^{+}\cdot B$,
and if $\lambda\neq \pm\alpha$, then $B\cong \C e^{\lambda\pm\alpha}$, $B^{\seq{1}}\cong \C e^{\lambda\mp\alpha}$, and 
$N^{\seq{2}}=0$.
By Corollary \ref{corollary:verma-irreducible}, \eqref{eq:nonzero-mN-structure-1}--\eqref{eq:nonzero-mN-structure-2} hold.
\end{proof}

\subsubsection{The case that $K(0)\cong M(1)^{-}(0)$}
\label{section:The case that K(0)cong M(1)-(0)}
Assume $\mK(0)\cong M(1)^{-}(0)$ as $A(M(1)^{+})$-modules.
For $i=1,\ldots,\rankL$,
$u^{[i]}$ denotes the element of $\mK(0)$ 
corresponding to $h^{[i]}(-1)\vac$.
In this case, it follows from Lemma \ref{lemma:structure-Vlattice-M1} (1) that
\begin{align}
\label{eq:epsilon(ExBh[1](-1)vac)}
\epsilon(\ExB,u^{[1]})&=0,\mbox{ and }\nonumber\\
\epsilon(\ExB,u^{[i]})&=-1\mbox{ for }i=2,\ldots,\rankL.
\end{align}
Thus, we can take the vector space $B$ in Lemma \ref{lemma:N-Basis} for $\lE=0$.
\begin{lemma}
\label{lemma:intertwining-M1minus}
If $K(0)\cong M(1)^{-}(0)$, then
$B\cong\C e^{\alpha}$ as $A(M(1)^{+})$-modules,
and $\mN=M(1)^{+}\cdot B\cong M(1,\alpha)$.
\end{lemma}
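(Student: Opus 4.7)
The plan is to adapt the strategy of Lemma~\ref{lemma:intertwining-lambda} to the case $K(0)\cong M(1)^{-}(0)$. By \eqref{eq:epsilon(ExBh[1](-1)vac)} we have $\epsilon_{I}(E,u^{[1]})=0$ and $\epsilon_{I}(E,u^{[i]})=-1$ for $i\geq 2$, so the smallest valid upper bound to invoke Lemma~\ref{lemma:N-Basis} is $\lE=0$. By Lemma~\ref{lemma:N-Basis}(1) (applicable since $\mn\neq 0,1/2,1$), $B$ is then spanned by $E_{0}u^{[1]}$ together with the candidate elements $(S_{i1}(1,1)_{0}E)_{1}u^{[j]}$ for $i\in\{2,\ldots,\rankL\}$ and $j\in\{1,\ldots,\rankL\}$, while $E_{0}u^{[j]}=0$ for $j\geq 2$ by the choice of $\lE$.

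The central step is to show that all $(S_{i1}(1,1)_{0}E)_{1}u^{[j]}$ vanish, leaving $B=\C E_{0}u^{[1]}$. First I would substitute $\lu=u^{[1]}$ and $\lE=0$ into \eqref{eq:(lE+1)((4mn-1)lE-2mn2+7mn-2)(Si1(1,1)-0}: using $\omega^{[1]}_{1}u^{[1]}=u^{[1]}$, the formulas $S_{i1}(1,1)_{1}u^{[1]}=u^{[i]}$ and $S_{i1}(1,2)_{2}u^{[1]}=-2u^{[i]}$ from \eqref{eq:norm2Si1(1,1)1lu=-Si1(1,2)2lu-2}, and $E_{0}u^{[i]}=0$, the relation collapses to $-3(2\mn-1)(\mn-2)(S_{i1}(1,1)_{0}E)_{1}u^{[1]}=0$, so $(S_{i1}(1,1)_{0}E)_{1}u^{[1]}=0$ for every $i\geq 2$ since $\mn\neq 1/2,2$. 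For the remaining candidates with $j\geq 2$, I would apply the same relation (together with the higher relations \eqref{eq:(lE+1)((4mn-1)lE-2mn2+7mn-2)(Si1(1,1)-1}--\eqref{eq:(lE+1)((4mn-1)lE-2mn2+7mn-2)(Si1(1,1)-4} if required) with $\lu=u^{[j]}$, using the commutators in \eqref{eq:omega-s11} to expand $S_{i1}(1,k)_{k}u^{[j]}$ in the basis of $K(0)$; since $\omega^{[1]}_{1}u^{[j]}=\Har^{[1]}_{3}u^{[j]}=0$ here, the polynomial coefficients in $\mn$ remain non-vanishing for generic $\mn$, forcing $(S_{i1}(1,1)_{0}E)_{1}u^{[j]}=0$.

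To identify $B\cong\C e^{\alpha}$ as an $A(M(1)^{+})$-module, I would compute the $A(M(1)^{+})$-action on the single generator $E_{0}u^{[1]}$ using the commutators $[\omega^{[i]}_{1},E_{0}]$, $[\Har^{[j]}_{3},E_{0}]$ and $[S_{kl}(1,r)_{r},E_{0}]$ combined with the known eigenvalues on $u^{[1]}$: this yields $\omega^{[1]}_{1}E_{0}u^{[1]}=(\mn/2)E_{0}u^{[1]}$, $\omega^{[i]}_{1}E_{0}u^{[1]}=0$ for $i\geq 2$, $\Har^{[j]}_{3}E_{0}u^{[1]}=0$ for all $j$, and $S_{kl}(1,r)_{r}E_{0}u^{[1]}=0$ for $k\neq l$ and $r=1,2,3$ (the $S$-actions vanish because $\langle\alpha,h^{[i]}\rangle=0$ for $i\geq 2$), which matches the structure of $\C e^{\alpha}$ recorded in \eqref{eq:norm2Si1(1,1)1lu=-Si1(1,2)2lu}.

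Finally, since $\C e^{\alpha}\not\cong\C\vac$, Corollary~\ref{corollary:verma-irreducible} shows the generalized Verma module associated to $B$ is irreducible and equal to $M(1,\alpha)$, giving $M(1)^{+}\cdot B\cong M(1,\alpha)$. For the equality $N=M(1)^{+}\cdot B$, I would iterate the construction \eqref{eq:Nseqi+1=Nseqi}--\eqref{eq:BigIseqi+1Useqi+1} as in Lemma~\ref{lemma:intertwining-lambda}: in $N^{\seq{1}}$ the image of $E_{0}u^{[1]}$ vanishes, and Lemma~\ref{lemma:structure-Vlattice-M1}(2) applied to $u^{[1]}$ then forces $I^{\seq{1}}(E,x)u^{[1]}=0$; combined with the strict descent \eqref{eq:LEtildeLE}, the iteration terminates in finitely many steps with $N^{\seq{k}}=0$. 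The main obstacle will be the collapse argument for $(S_{i1}(1,1)_{0}E)_{1}u^{[j]}$ with $j\geq 2$: the relations \eqref{eq:(lE+1)((4mn-1)lE-2mn2+7mn-2)(Si1(1,1)-0}--\eqref{eq:(lE+1)((4mn-1)lE-2mn2+7mn-2)(Si1(1,1)-4} involve many terms, and the actions $S_{i1}(1,k)_{k}u^{[j]}$ on the non-vacuum-like generators $u^{[j]}$ are far less transparent than on $u^{[1]}$, so the bookkeeping must be carried out carefully.
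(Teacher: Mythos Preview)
Your approach is essentially the paper's, and the main steps are correct, but there is one concrete inaccuracy in the ``collapse'' you sketch for $j\geq 2$. For $j=i\geq 2$ the element $(S_{i1}(1,1)_{0}E)_{1}u^{[i]}$ is \emph{not} zero: it equals $-E_{0}u^{[1]}$. So the relations \eqref{eq:(lE+1)((4mn-1)lE-2mn2+7mn-2)(Si1(1,1)-0}--\eqref{eq:(lE+1)((4mn-1)lE-2mn2+7mn-2)(Si1(1,1)-4} applied at $\lu=u^{[i]}$ will not give vanishing but a linear dependence on $E_{0}u^{[1]}$. This is harmless for the conclusion $B=\C E_{0}u^{[1]}$, but your stated target is wrong in that case.

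The bookkeeping you flag as the main obstacle can in fact be bypassed entirely. Since $\langle\alpha,h^{[i]}\rangle=0$ for $i\geq 2$, one has $S_{i1}(1,1)_{m}E=0$ for all $m\geq 1$, so the Borcherds commutator collapses to $[S_{i1}(1,1)_{1},E_{0}]=(S_{i1}(1,1)_{0}E)_{1}$. Applying this to $u^{[j]}$ with $j\geq 2$ and using $E_{0}u^{[j]}=0$ from \eqref{eq:epsilon(ExBh[1](-1)vac)} gives
\[
(S_{i1}(1,1)_{0}E)_{1}u^{[j]}=-E_{0}\,S_{i1}(1,1)_{1}u^{[j]}=-\delta_{ij}\,E_{0}u^{[1]},
\]
which settles the spanning in one line. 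The paper's own proof works only with $\lu=u^{[1]}$ in \eqref{eq:(lE+1)((4mn-1)lE-2mn2+7mn-2)(Si1(1,1)-0}--\eqref{eq:(lE+1)((4mn-1)lE-2mn2+7mn-2)(Si1(1,1)-4} and does not explicitly treat the generators with $j\geq 2$; the commutator identity above is what makes that step legitimate. The remainder of your outline (the $A(M(1)^{+})$-action on $E_{0}u^{[1]}$ via \eqref{eq:norm2Si1(1,1)1lu=-Si1(1,2)2lu}, Corollary~\ref{corollary:verma-irreducible}, and the termination argument using Lemma~\ref{lemma:structure-Vlattice-M1}(2)) matches the paper.
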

\begin{proof}
For $i=2,\ldots,\rankL$, since 
$S_{i1}(1,1)_{1}u^{[1]}=u^{[i]}$, it follows from  Lemma \ref{lemma:bound-index-S} and \eqref{eq:epsilon(ExBh[1](-1)vac)}
that
\begin{align}
\epsilon(S_{i1}(1,1)_{0}E,u^{[1]})\leq 1.
\end{align}
By \eqref{eq:(lE+1)((4mn-1)lE-2mn2+7mn-2)(Si1(1,1)-0},
\begin{align}
0&=
-3 (-2+\mn) (-1+2 \mn)(S_{i1}(1,1)_{0}E)_{1}u^{[1]}\nonumber\\
&\quad{}-E_{0}(2 + 3 \mn - 12 \mn^2 + 4 \mn^3) u^{[i]}.
\label{eq:big(32mn12mnSi110E1-1}
\end{align}
It follows from \eqref{eq:(lE+1)((4mn-1)lE-2mn2+7mn-2)(Si1(1,1)-2}, \eqref{eq:(lE+1)((4mn-1)lE-2mn2+7mn-2)(Si1(1,1)-4},
and 
\eqref{eq:big(32mn12mnSi110E1-1} that for $i=2,\ldots,\rankL$,
\begin{align}
0&=(\mn-1) (4 \mn-1) (2 \mn^2-5)\ExB_{0} u^{[i]},\nonumber
\\
0&=
(2 \mn+1) (23 \mn^2-37 \mn-22)\ExB_{0}u^{[i]}
\label{eq:big(32mn12mnSi110E1-2}
\end{align}
and hence 
\begin{align}
E_{0}u^{[i]}&=(S_{i1}(1,1)_{0}E)_{1}u^{[1]}=0.
\end{align}
By \eqref{eq:norm2Si1(1,1)1lu=-Si1(1,2)2lu-2} and the computation in Section \ref{section:Norm-nonzero},
for $i=2,\ldots,\rankL$ and $j=1,\ldots,\rankL$,
we have
\begin{align}
	\omega^{[1]}_{1}\ExB_{0}u^{[1]}&=\dfrac{\mn}{2}\ExB_{0}u^{[1]},\nonumber\\
	\omega^{[i]}_{1}\ExB_{0}u^{[1]}&=\Har^{[j]}_{3}\ExB_{0}u^{[1]}=0,\nonumber\\	
	S_{i1}(1,1)_{1}\ExB_{0}u^{[1]}&=	S_{i1}(1,2)_{2}\ExB_{0}u^{[1]}=	S_{i1}(1,3)_{3}\ExB_{0}u^{[1]}=0
	\end{align}
and hence $B\cong \C e^{\alpha}$ as $A(M(1)^{+})$-modules.
The same argument as in the proof of Lemma \ref{lemma:intertwining-lambda}
shows that $\mN= M(1)^{+}\cdot B\cong M(1,\alpha)$.
\end{proof}

\subsection{The case that $\langle\alpha,\alpha\rangle=2$}
Throughout this subsection, 
\begin{align}
\langle\alpha,\alpha\rangle&=2.
\end{align}
We assume $\alpha\in\C h^{[1]}$ and hence $\langle\alpha, h^{[1]}\rangle^2=2$ and $\langle h^{[i]},\lambda\rangle=0$ for all $i=2,\ldots,\rankL$.

	A direct computation shows that
	\begin{align}
		\label{eq:8Si1(11)2E-4omega[1]-1-0}
		0&=
		8S_{i1}(1,1)_{-2 } \ExB 
		-4\omega^{[1]}_{-1 } (S_{i1}(1,1)_{0}\ExB) 
		-12\omega_{0 } S_{i1}(1,1)_{-1 } \ExB 
		\nonumber\\&\quad{}
		+7\omega_{0 } \omega_{0 } (S_{i1}(1,1)_{0}\ExB) 
		-2S_{i1}(1,2)_{-1 } \ExB,\\
\label{eq:8Si1(11)2E-4omega[1]-1-1}
0&=
198S_{i1}(1,1)_{-4 } \ExB 
-1158\omega^{[i]}_{-2 } S_{i1}(1,1)_{-1 } \ExB 
\nonumber\\&\quad{}
-4444\omega^{[i]}_{-3 } (S_{i1}(1,1)_{0}\ExB) 
-1158\Har^{[i]}_{-1 } (S_{i1}(1,1)_{0}\ExB) 
\nonumber\\&\quad{}
-180\omega^{[1]}_{-1 } S_{i1}(1,1)_{-2 } \ExB 
+2448\omega^{[i]}_{-1 } \omega^{[1]}_{-1 } (S_{i1}(1,1)_{0}\ExB) 
\nonumber\\&\quad{}
+270\omega^{[1]}_{-2 } S_{i1}(1,1)_{-1 } \ExB 
-810\Har^{[1]}_{-1 } (S_{i1}(1,1)_{0}\ExB) 
\nonumber\\&\quad{}
-621S_{i1}(1,1)_{-1 } (\Har_{0}\ExB) 
-180\omega_{0 } S_{i1}(1,1)_{-3 } \ExB 
\nonumber\\&\quad{}
+2415\omega_{0 } \omega^{[i]}_{-2 } (S_{i1}(1,1)_{0}\ExB) 
-135\omega_{0 } \omega^{[1]}_{-2 } (S_{i1}(1,1)_{0}\ExB) 
\nonumber\\&\quad{}
+45\omega_{0 } \omega_{0 } S_{i1}(1,1)_{-2 } \ExB 
-612\omega_{0 } \omega_{0 } \omega^{[i]}_{-1 } (S_{i1}(1,1)_{0}\ExB) 
\nonumber\\&\quad{}
+342S_{i1}(1,2)_{-3 } \ExB 
-3672\omega^{[i]}_{-1 } S_{i1}(1,2)_{-1 } \ExB 
\nonumber\\&\quad{}
+1782\omega_{0 } S_{i1}(1,3)_{-1 } \ExB
	\end{align}
for $i=2,\ldots,\rankL$.
\begin{lemma}
Let $\lE\in\Z$ such that $\lE\geq \epsilon_{I}(\ExB,\lu)$ for all non-zero $\lu\in K(0)$.
Then, for $\lu\in K(0)$ and $i=2,\ldots,\rankL$,
	\begin{align}
		\label{eq:(t+1)(7t+4)(Si1(1,1)0E)t+1lu-0}
		0&=
		(t+1) (7 t+4)(S_{i1}(1,1)_{0}E)_{t+1 } \lu
		+4 (3 t+1)E_{t } S_{i1}(1,1)_{1 } \lu
		\nonumber\\&\quad{}
		-4(S_{i1}(1,1)_{0}E)_{t+1 } \omega^{[1]}_{1 } \lu
		-2E_{t } S_{i1}(1,2)_{2 } \lu,\\
		\label{eq:(t+1)(7t+4)(Si1(1,1)0E)t+1lu-1}
0&=
(t+1) (7665 t^2+43205 t+18582)(S_{i1}(1,1)_{0}\ExB)_{t+1 } \lu\nonumber\\
&\quad{}+2 (6543 t^2+39413 t-14474)\ExB_{t } S_{i1}(1,1)_{1 } \lu
\nonumber\\&\quad{}
-14 (t+1) (102 t-2989)(S_{i1}(1,1)_{0}\ExB)_{t+1 } \omega^{[i]}_{1 } \lu
+77980\ExB_{t } S_{i1}(1,1)_{1 } \omega^{[i]}_{1 } \lu
\nonumber\\&\quad{}
-10 (27 t+9481)\ExB_{t } S_{i1}(1,2)_{2 } \lu
-2702(S_{i1}(1,1)_{0}\ExB)_{t+1 } \Har^{[i]}_{3 } \lu
\nonumber\\&\quad{}
-6 (1735 t+3097)(S_{i1}(1,1)_{0}\ExB)_{t+1 } \omega^{[1]}_{1 } \lu
-11844\ExB_{t } S_{i1}(1,1)_{1 } \omega^{[1]}_{1 } \lu
\nonumber\\&\quad{}
+5712(S_{i1}(1,1)_{0}\ExB)_{t+1 } \omega^{[1]}_{1 } \omega^{[i]}_{1 } \lu
-1890(S_{i1}(1,1)_{0}\ExB)_{t+1 } \Har^{[1]}_{3 } \lu
\nonumber\\&\quad{}
-1449(\Har_{0}\ExB)_{t+3 } S_{i1}(1,1)_{1 } \lu
-8568\ExB_{t } S_{i1}(1,2)_{2 } \omega^{[i]}_{1 } \lu
\nonumber\\&\quad{}
-126 (33 t+301)\ExB_{t } S_{i1}(1,3)_{3 } \lu.
	\end{align}
\end{lemma}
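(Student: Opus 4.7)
The plan is to mirror closely the derivation of \eqref{eq:(lE+1)((4mn-1)lE-2mn2+7mn-2)(Si1(1,1)-0}--\eqref{eq:(lE+1)((4mn-1)lE-2mn2+7mn-2)(Si1(1,1)-4} in the case $\mn\neq 0,1/2,1,2$, but starting instead from the two $\mn=2$ relations \eqref{eq:8Si1(11)2E-4omega[1]-1-0} and \eqref{eq:8Si1(11)2E-4omega[1]-1-1} of Lemma \ref{lemma:relations-M(1)-V(lattice)+}. First, since $S_{i1}(1,1)_1\lu\in K(0)$ and $K(0)$ is stable under the action of $\omega^{[1]}_1$, Lemma \ref{lemma:bound-index-S} gives
\begin{align*}
\epsilon\bigl(S_{i1}(1,1)_0 E,\lu\bigr)&\leq \lE+1,\\
\epsilon\bigl(S_{i1}(1,m+1)_0 E,\lu\bigr)&\leq \lE+m\qquad(m\geq 0),
\end{align*}
and, using Lemma \ref{lemma:bound-H0-1E} on $K(0)$, also $\epsilon(\Har_0 E,\lu)\leq \lE+3$. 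These bounds are exactly what we need to make the normal-ordering manipulations of Section \ref{section:appendix} terminate.

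Next I would take the $(\lE+3)$-th component of \eqref{eq:8Si1(11)2E-4omega[1]-1-0} acting on $\lu$. Exactly as in the passage \eqref{eq:0=(omega-3ExB)lE+2lom+2lu=}--\eqref{eq:(Har-1ExB)lE+2lom+2lu=} in the proof of Lemma \ref{lemma:r=1-s=3}, every summand is expanded by Lemma \ref{lemma:comm-change} into the normal form
\[
a^{(1)}_{i_1}\cdots a^{(l)}_{i_l}\,E_{m}\,b^{(1)}_{j_1}\cdots b^{(n)}_{j_n}
\quad\text{or}\quad
a^{(1)}_{i_1}\cdots a^{(l)}_{i_l}\,\bigl(S_{i1}(1,k)_0 E\bigr)_{m}\,b^{(1)}_{j_1}\cdots b^{(n)}_{j_n},
\]
with all modes of $\omega^{[k]},\Har^{[k]},S_{j1}(1,k)$ sorted by the break point $m=\lE+1$ for $S_{i1}(1,1)_0E$ and similar break points for the other generators, so that only modes that are known to annihilate $\lu$ appear on the right. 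Substituting the bounds above and the trivial actions $S_{ij}(1,k)_n\lu=0$ for $n$ large produces a linear relation among
\[
(S_{i1}(1,1)_0 E)_{\lE+1}\lu,\ E_\lE S_{i1}(1,1)_1\lu,\ (S_{i1}(1,1)_0 E)_{\lE+1}\omega^{[1]}_1\lu,\ E_\lE S_{i1}(1,2)_2\lu,
\]
whose coefficients are degree $\leq 1$ polynomials in $t=\lE$; I expect this to match \eqref{eq:(t+1)(7t+4)(Si1(1,1)0E)t+1lu-0} after collecting terms.

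For the second identity \eqref{eq:(t+1)(7t+4)(Si1(1,1)0E)t+1lu-1} the same recipe is applied to the $(\lE+5)$-th component of \eqref{eq:8Si1(11)2E-4omega[1]-1-1}. The novelty compared to the first step is that more basis elements occur on the right, namely the terms with $\omega^{[i]}_1, \omega^{[1]}_1\omega^{[i]}_1, \Har^{[i]}_3, \Har^{[1]}_3, S_{i1}(1,3)_3$, $S_{i1}(1,2)_2\omega^{[i]}_1$, and $(\Har_0 E)_{t+3}S_{i1}(1,1)_1$, but all of them are among the allowed ones thanks to the $\mn=2$ specialisation of \eqref{eq:(H0ElE1lu=dfrac194+7lE} and the enumeration of basis vectors for $B$ given in Lemma \ref{lemma:N-Basis}. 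The main obstacle is purely the size of the resulting polynomial identity: one has to expand many terms of the form $\omega^{[i]}_{-r}X_{-s}E$, $\Har^{[j]}_{-1}(S_{i1}(1,1)_0E)$, $\omega_0^r\omega^{[i]}_{-r'}(S_{i1}(1,1)_0E)$ by Lemma \ref{lemma:comm-change} and the data of Section \ref{section:appendix}, and then verify that after sorting and cancellation the coefficients come out exactly as displayed. As elsewhere in the paper, this bookkeeping is carried out with Risa/Asir, and verifying the final two formulas is a direct computation.
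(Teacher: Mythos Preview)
Your proposal is correct and follows essentially the same approach as the paper: the paper's proof consists of the single sentence ``Taking the $(\lE+3)$-th action of \eqref{eq:8Si1(11)2E-4omega[1]-1-0} and the $(\lE+5)$-th action of \eqref{eq:8Si1(11)2E-4omega[1]-1-1} on $\lu$, we have the results,'' which is precisely the computation you outline in more detail. One minor remark: the relations \eqref{eq:8Si1(11)2E-4omega[1]-1-0} and \eqref{eq:8Si1(11)2E-4omega[1]-1-1} are not part of Lemma~\ref{lemma:relations-M(1)-V(lattice)+} but are stated separately just above the lemma you are proving.
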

\begin{proof}
Taking the $(\lE+3)$-th action of \eqref{eq:8Si1(11)2E-4omega[1]-1-0}
and the $(\lE+5)$-th action of \eqref{eq:8Si1(11)2E-4omega[1]-1-1}
on $\lu$,	we have the results.
\end{proof}

\begin{lemma}
	\label{lemma:intertwining-lambda-norm2}
Assume $K(0)\cong \C e^{\lambda}$ for some $\lambda\in\fh\setminus\{0\}$.
Let $\lu_{\lambda}$ denote the element of $K(0)$ corresponding to $e^{\lambda}$. 
	For the vector space $B$ with $\lE=\epsilon_{I}(\ExB,\lu_{\lambda})$ in Lemma \ref{lemma:N-Basis},
	the following results hold:
	\begin{enumerate}
		\item If $\langle\alpha,\lambda\rangle=0$, then $\lE=-1$ and
		\begin{align}
			B&\cong \C e^{\alpha+\lambda},
			\C e^{\alpha-\lambda},\mbox{ or }
			\C e^{\alpha+\lambda}\oplus 
			\C e^{\alpha-\lambda}
		\end{align}
		as $A(M(1)^{+})$-modules and 
		\begin{align}
			\label{eq:norm2-mN-structure-1}
			\mN&=M(1)^{+}\cdot B\nonumber\\
			&\cong M(1,\lambda+\alpha),M(1,\lambda-\alpha),\mbox{ or }M(1,\lambda+\alpha)\oplus M(1,\lambda-\alpha)
		\end{align}
		as  $M(1)^{+}$-modules.
		\item
		If $\langle\alpha,\lambda\rangle\neq 0$ 
		and $(1+\lE)^2- \langle \alpha,\lambda\rangle^2=0$, then
		\begin{align}
			B\cong \C e^{\alpha+\lambda}\mbox{ or }\C e^{\alpha-\lambda}
		\end{align}
		as $A(M(1)^{+})$-modules.
		Furthermore, if $\lambda\neq \pm\alpha$, 
		\begin{align}
			\label{eq:norm2-mN-structure-2}
			\mN&\cong M(1,\lambda+\alpha),M(1,\lambda-\alpha),\mbox{ or }M(1,\lambda+\alpha)\oplus M(1,\lambda-\alpha)
		\end{align}
		as  $M(1)^{+}$-modules.
		\item
		If 
		$(1+\lE)^2- \langle \alpha,\lambda\rangle^2\neq 0$, then $\lE=0$, $\lambda=\pm\alpha$, and $B\cong M(1)^{-}(0)$ as $A(M(1)^{+})$-modules.
	\end{enumerate}
\end{lemma}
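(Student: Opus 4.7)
The plan is to mirror the three-case analysis of Lemma \ref{lemma:intertwining-lambda}, replacing Lemma \ref{lemma:structure-Vlattice-M1} by its $\mn=2$ analog Lemma \ref{lemma:structure-Vlattice-M1-norm2} and using the identities \eqref{eq:8Si1(11)2E-4omega[1]-1-0}--\eqref{eq:8Si1(11)2E-4omega[1]-1-1} in place of \eqref{eq:(mn-1)(2mn2-mn+2)Sij(1,1)-2E-1}--\eqref{eq:(mn-1)(2mn2-mn+2)Sij(1,1)-2E-5}. Throughout I write $\lu_{\lambda}\in \mK(0)$ for the element corresponding to $e^{\lambda}$ and set $\lE=\epsilon_{I}(\ExB,\lu_{\lambda})$. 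Since $\alpha\in\C h^{[1]}$ and $\langle\alpha,\alpha\rangle=2$, \eqref{eq:norm2Si1(1,1)1lu=-Si1(1,2)2lu} gives $\omega^{[1]}_{1}\lu_{\lambda}=\tfrac{\langle\alpha,\lambda\rangle^{2}}{4}\lu_{\lambda}$, so
\begin{align*}
\bigl(\omega^{[1]}_{1}-\tfrac{(\lE+1)^{2}}{4}\bigr)\lu_{\lambda}
&=\tfrac{\langle\alpha,\lambda\rangle^{2}-(\lE+1)^{2}}{4}\lu_{\lambda},
\end{align*}
and the three cases in the statement are governed by whether this scalar vanishes and whether $\langle\alpha,\lambda\rangle$ vanishes.

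Case (3) is where Lemma \ref{lemma:structure-Vlattice-M1-norm2} does the essential work. Restricting attention to the Heisenberg subalgebra generated by $h^{[1]}$ and using $\Har^{[1]}_{3}\lu_{\lambda}=0$, the hypothesis $(1+\lE)^{2}\neq\langle\alpha,\lambda\rangle^{2}$ says $\lv:=(\omega^{[1]}_{1}-(\lE+1)^{2}/4)\lu_{\lambda}\neq 0$ is a non-zero scalar multiple of $\lu_{\lambda}$; hence Lemma \ref{lemma:structure-Vlattice-M1-norm2}(1) forces $\lE=0$ and $\omega^{[1]}_{1}\lu_{\lambda}=\lu_{\lambda}$, i.e., $\langle\alpha,\lambda\rangle^{2}=4$. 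Substituting these eigenvalues into \eqref{eq:(t+1)(7t+4)(Si1(1,1)0E)t+1lu-0} at $t=0$ and using $S_{i1}(1,1)_{1}\lu_{\lambda}=\langle\lambda,h^{[i]}\rangle\langle\lambda,h^{[1]}\rangle\lu_{\lambda}$ and $S_{i1}(1,2)_{2}\lu_{\lambda}=-2\langle\lambda,h^{[i]}\rangle\langle\lambda,h^{[1]}\rangle\lu_{\lambda}$ from \eqref{eq:norm2Si1(1,1)1lu=-Si1(1,2)2lu} yields $\langle h^{[i]},\lambda\rangle=0$ for every $i\geq 2$, whence $\lambda=\pm\alpha$. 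A direct computation of the actions of $\{\omega^{[i]}_{1},\Har^{[i]}_{3},S_{ij}(1,k)_{k}\}$ on $\ExB_{0}\lu_{\lambda}$ and on $(S_{i1}(1,1)_{0}\ExB)_{1}\lu_{\lambda}$, parallel to the computation at the end of the proof of Lemma \ref{lemma:intertwining-lambda}(3), then identifies $B\cong M(1)^{-}(0)$ as $A(M(1)^{+})$-modules.

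In Case (1), $\langle\alpha,\lambda\rangle=0$ makes the above scalar equal to $-(\lE+1)^{2}/4$. Applying Lemma \ref{lemma:structure-Vlattice-M1-norm2}(1) again, the conclusion $\omega^{[1]}_{1}\lu_{\lambda}=\lu_{\lambda}$ contradicts $\omega^{[1]}_{1}\lu_{\lambda}=0$ unless $\lv=0$, i.e.\ $\lE=-1$. With $\lE=-1$ in hand, the argument of Lemma \ref{lemma:intertwining-lambda}(1), specialized via the $\mn=2$ version of \eqref{eq:Si2(1,1)0S21(1,1)0E=dfrac-1mn-1} and the actions \eqref{eq:norm2Si1(1,1)1lu=-Si1(1,2)2lu-2}, reduces $B$ to $\C \ExB_{-1}\lu_{\lambda}+\C(S_{21}(1,1)_{0}\ExB)_{0}\lu_{\lambda}$ with the cross-relation $\langle h^{[i]},\lambda\rangle (S_{21}(1,1)_{0}\ExB)_{0}\lu_{\lambda}=\langle h^{[2]},\lambda\rangle(S_{i1}(1,1)_{0}\ExB)_{0}\lu_{\lambda}$. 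Diagonalizing the $S_{21}(1,1)_{1}$-action on this two-dimensional space separates the two eigenvectors corresponding to $\C e^{\lambda\pm\alpha}$. In Case (2), the hypothesis forces $\omega^{[1]}_{1}\lu_{\lambda}=(\lE+1)^{2}/4\cdot\lu_{\lambda}$; substituting into \eqref{eq:(t+1)(7t+4)(Si1(1,1)0E)t+1lu-0} and \eqref{eq:(t+1)(7t+4)(Si1(1,1)0E)t+1lu-1} at $t=\lE$ and eliminating via \eqref{eq:norm2Si1(1,1)1lu=-Si1(1,2)2lu} should yield a single independent relation of the form $(\lE+1)(S_{i1}(1,1)_{0}\ExB)_{\lE+1}\lu_{\lambda}+2\,\ExB_{\lE}S_{i1}(1,1)_{1}\lu_{\lambda}=0$. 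This determines the $S_{i1}(1,1)_{1}$-eigenvalue on $\ExB_{\lE}\lu_{\lambda}$ via \eqref{eq:Si1111ElEelambda} to be $\langle h^{[i]},\lambda\mp\alpha\rangle\langle h^{[1]},\lambda\mp\alpha\rangle$, so $B$ is one-dimensional and isomorphic to $\C e^{\lambda\pm\alpha}$. The assertion on $\mN$ when $\lambda\neq\pm\alpha$ then follows from Corollary \ref{corollary:verma-irreducible} applied to the successive quotients $N^{\seq{i}}$ of \eqref{eq:Nseqi+1=Nseqi}--\eqref{eq:BigIseqi+1Useqi+1}, verbatim as in the concluding paragraph of Lemma \ref{lemma:intertwining-lambda}.

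The main obstacle will be Case (2): the analogous coefficient $-2-2\mn+\mn^{2}-(2+\mn)\lE$ appearing in the $\mn\neq 2$ elimination \eqref{eq:(lE+1)((4mn-1)lE-2mn2+7mn-2)(Si1(1,1)-0} degenerates identically at $\mn=2$, which is precisely the reason the norm-$2$ relations \eqref{eq:8Si1(11)2E-4omega[1]-1-0}--\eqref{eq:8Si1(11)2E-4omega[1]-1-1} and the derived \eqref{eq:(t+1)(7t+4)(Si1(1,1)0E)t+1lu-0}--\eqref{eq:(t+1)(7t+4)(Si1(1,1)0E)t+1lu-1} are needed. Verifying that the two-equation system obtained after imposing $\omega^{[1]}_{1}\lu_{\lambda}=(\lE+1)^{2}/4\cdot\lu_{\lambda}$ has maximal rank on the quotient, and so yields a \emph{single} independent identity with non-vanishing leading coefficient in $(\lE+1)$, will require the same Risa/Asir-assisted elimination used throughout Sections \ref{section:Modules for the Zhu algera of} and \ref{section:Modules for the Zhu algera of general}; confirming this non-degeneracy is the delicate point of the proof.
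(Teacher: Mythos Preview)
Your approach is essentially the same as the paper's: both use Lemma~\ref{lemma:structure-Vlattice-M1-norm2} (via \eqref{eq:lE1lE24omega1ElElu-0-1}) for the dichotomy $\lE=0$ or $(1+\lE)^2=\langle\alpha,\lambda\rangle^2$, and both use \eqref{eq:(t+1)(7t+4)(Si1(1,1)0E)t+1lu-0} to pin down the $S_{i1}$-structure in Cases~(2) and~(3). A couple of small corrections and one simplification are worth noting.

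First, in Case~(3) you write $S_{i1}(1,2)_{2}\lu_{\lambda}=-2\langle\lambda,h^{[i]}\rangle\langle\lambda,h^{[1]}\rangle\lu_{\lambda}$; the correct value from \eqref{eq:norm2Si1(1,1)1lu=-Si1(1,2)2lu} is $-\langle\lambda,h^{[i]}\rangle\langle\lambda,h^{[1]}\rangle\lu_{\lambda}$ (the factor $-2$ appears in \eqref{eq:norm2Si1(1,1)1lu=-Si1(1,2)2lu-2} for $M(1)^{-}(0)$, not here). With the correct value the substitution into \eqref{eq:(t+1)(7t+4)(Si1(1,1)0E)t+1lu-0} at $t=0$, $\omega^{[1]}_{1}\lu_\lambda=\lu_\lambda$ collapses to $6\langle\lambda,h^{[i]}\rangle\langle\lambda,h^{[1]}\rangle\ExB_{0}\lu_\lambda=0$, forcing $\langle\lambda,h^{[i]}\rangle=0$ as you claim.

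Second, your worry about Case~(2) is misplaced: you do not need \eqref{eq:(t+1)(7t+4)(Si1(1,1)0E)t+1lu-1} or any computer-assisted elimination. Substituting $\omega^{[1]}_{1}\lu_{\lambda}=\tfrac{(\lE+1)^{2}}{4}\lu_{\lambda}$ and the correct $S_{i1}(1,k)_{k}$-values directly into \eqref{eq:(t+1)(7t+4)(Si1(1,1)0E)t+1lu-0} alone gives
\[
3(2\lE+1)\bigl[(\lE+1)(S_{i1}(1,1)_{0}\ExB)_{\lE+1}\lu_{\lambda}+2\,\ExB_{\lE}S_{i1}(1,1)_{1}\lu_{\lambda}\bigr]=0,
\]
and since $\lE\in\Z$ the bracket vanishes---exactly the relation you predicted, with no degeneracy issue. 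This is precisely how the paper handles it.
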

\begin{proof}

	By \eqref{eq:lE1lE24omega1ElElu-0-1}, 
\begin{align}
\label{eq:(1+lE)2-langlealpha,lambdarangle2=0}
(1+\lE)^2-\langle \alpha,\lambda\rangle^2=0\mbox{ or }\lE=0 
\end{align}
and in the later case
\begin{align}
	\label{eq:2=langlelambda,h[1]rangle2}
2&=\langle \lambda,h^{[1]}\rangle^2=\langle \alpha,\lambda\rangle^2/2.
\end{align}
\begin{enumerate}
\item
Assume $\langle\alpha,\lambda\rangle=\langle h^{[1]},\lambda\rangle=0$. 
By \eqref{eq:(1+lE)2-langlealpha,lambdarangle2=0} and \eqref{eq:2=langlelambda,h[1]rangle2}, 
$(1+\lE)^2-\langle \alpha,\lambda\rangle^2=0$ and hence $\lE=-1$.
A direct computation shows that \eqref{eq:Si2(1,1)0S21(1,1)0E=dfrac-1mn-1},
\eqref{eq:(S21(1,1)0E)0Si2(1,1)1lulambdanonumber}, and
\eqref{eq:langlelambdah[i]rangle(21(11)0E} hold even for the case that $\mn=2$.
\item 
Assume $\langle\alpha,\lambda\rangle\neq 0$ and $(1+\lE)^2-\langle \alpha,\lambda\rangle^2=0$.
Then, $\lE\neq -1$. 
By \eqref{eq:(t+1)(7t+4)(Si1(1,1)0E)t+1lu-0},
	\begin{align}
	(S_{i1}(1,1)_{0}\ExB)_{\lE+1 }\lu_{\lambda}
	&=\dfrac{-2\langle\lambda,h^{[1]}\rangle\langle\lambda,h^{[i]}\rangle}{1+\lE}
	\ExB_{\lE}\lu_{\lambda}
\end{align}
for all $i=2,\ldots,\rankL$ and hence $B=\C \ExB_{\lE}\lu_{\lambda}$.
\item 
Assume  $(1+\lE)^2-\langle \alpha,\lambda\rangle^2\neq 0$.
Then, by \eqref{eq:(1+lE)2-langlealpha,lambdarangle2=0} and \eqref{eq:2=langlelambda,h[1]rangle2},
$\lE=0$ and $\langle\lambda,h^{[1]}\rangle^2=2$.
By \eqref{eq:(t+1)(7t+4)(Si1(1,1)0E)t+1lu-0},
\begin{align}
\langle\lambda,h^{[i]}\rangle&=0
\end{align}
for all $i=2,\ldots,\rankL$ and hence $\lambda=\pm\alpha$.

\end{enumerate}
The same argument as in the proof of Lemma \ref{lemma:intertwining-lambda} shows the other results.
\end{proof}

The same argument as in the proof of Lemma \ref{lemma:intertwining-M1minus}
shows the following result:
\begin{lemma}
\label{lemma:intertwining-M1minus-norm2}
If $K(0)\cong M(1)^{-}(0)$, then
$B\cong\C e^{\alpha}$ as $A(M(1)^{+})$-modules,
and $\mN=M(1)^{+}\cdot B\cong M(1,\alpha)$.
\end{lemma}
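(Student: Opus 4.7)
The plan is to mirror the strategy of Lemma \ref{lemma:intertwining-M1minus} with the $\mn=2$-specific relations in place of \eqref{eq:(lE+1)((4mn-1)lE-2mn2+7mn-2)(Si1(1,1)-0}--\eqref{eq:(lE+1)((4mn-1)lE-2mn2+7mn-2)(Si1(1,1)-4}. Let $u^{[1]},\ldots,u^{[\rankL]}$ be the elements of $K(0)$ corresponding to $h^{[1]}(-1)\vac,\ldots,h^{[\rankL]}(-1)\vac$ under the isomorphism $K(0)\cong M(1)^{-}(0)$. Applying Lemma \ref{lemma:structure-Vlattice-M1-norm2} (1) (cf. \eqref{eq:epsilon(ExBh[1](-1)vac)}) gives $\epsilon(E,u^{[1]})=0$ and $\epsilon(E,u^{[i]})=-1$ for $i\geq 2$. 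By Lemma \ref{lemma:bound-index-S} and \ref{lemma:N-Basis} with $\lE=0$, the space $B$ is spanned by
\[
E_{0}u^{[1]},\ E_{0}u^{[i]},\ (S_{i1}(1,1)_{0}E)_{1}u^{[1]}\qquad (i=2,\ldots,\rankL).
\]

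First I would substitute $t=0$ and $\lu=u^{[1]}$ into \eqref{eq:(t+1)(7t+4)(Si1(1,1)0E)t+1lu-0} and \eqref{eq:(t+1)(7t+4)(Si1(1,1)0E)t+1lu-1}, and plug in the $A(M(1)^{+})$-eigenvalues from \eqref{eq:norm2Si1(1,1)1lu=-Si1(1,2)2lu-2}: $\omega^{[j]}_{1}u^{[1]}=\delta_{j1}u^{[1]}$, $\Har^{[j]}_{3}u^{[1]}=\delta_{j1}u^{[1]}$, $S_{i1}(1,1)_{1}u^{[1]}=u^{[i]}$, $S_{i1}(1,2)_{2}u^{[1]}=-2u^{[i]}$, $S_{i1}(1,3)_{3}u^{[1]}=3u^{[i]}$. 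This produces two linear relations connecting $(S_{i1}(1,1)_{0}E)_{1}u^{[1]}$ with $E_{0}u^{[i]}$ (and, via $(\Har_{0}E)_{3}S_{i1}(1,1)_{1}u^{[1]}=(\Har_{0}E)_{3}u^{[i]}$, a term that is handled using \eqref{eq:(H0ElE1lu=dfrac194+7lE} to rewrite it back in terms of $E_{0}u^{[i]}$). The expected outcome is a $2\times 2$ system in the unknowns $(S_{i1}(1,1)_{0}E)_{1}u^{[1]}$ and $E_{0}u^{[i]}$ whose determinant is a nonzero rational number; solving it yields
\begin{align*}
E_{0}u^{[i]}&=0,\qquad (S_{i1}(1,1)_{0}E)_{1}u^{[1]}=0 \qquad (i=2,\ldots,\rankL).
\end{align*}
Consequently $B=\C E_{0}u^{[1]}$.

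Next I would verify that $E_{0}u^{[1]}$ generates an $A(M(1)^{+})$-module isomorphic to $\C e^{\alpha}$. Using the commutator formula $[a_{i},b_{j}]=\sum_{k\geq 0}\binom{i}{k}(a_{k}b)_{i+j-k}$ together with the computations in Section \ref{section:appendix} and \eqref{eq:norm2Si1(1,1)1lu=-Si1(1,2)2lu-2}, one checks
\begin{align*}
\omega^{[1]}_{1}E_{0}u^{[1]}&=E_{0}u^{[1]},\quad \omega^{[i]}_{1}E_{0}u^{[1]}=0\ (i\geq 2),\\
\Har^{[j]}_{3}E_{0}u^{[1]}&=0,\quad S_{ij}(1,k)_{k}E_{0}u^{[1]}=0
\end{align*}
for all $j$ and all distinct $i,j$ and $k=1,2,3$, matching the $A(M(1)^{+})$-action on $e^{\alpha}$ in \eqref{eq:norm2Si1(1,1)1lu=-Si1(1,2)2lu} (with $\mn=2$ and $\lambda=\alpha$). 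Hence $B\cong \C e^{\alpha}$.

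Finally, since $\C e^{\alpha}\not\cong \C\vac=M(1)^{+}_{0}$, Corollary \ref{corollary:verma-irreducible} implies that the generalized Verma module associated to $B$ is the irreducible $M(1)^{+}$-module $M(1,\alpha)$. The surjective $M(1)^{+}$-module map from this Verma module onto $N=M(1)^{+}\cdot B$ is therefore an isomorphism, giving $N\cong M(1,\alpha)$. The main obstacle will be the bookkeeping in the first step: one must correctly expand $(\Har_{0}E)_{3}u^{[i]}$ via \eqref{eq:(H0ElE1lu=dfrac194+7lE} and verify the nonvanishing of the $2\times 2$ determinant that emerges from \eqref{eq:(t+1)(7t+4)(Si1(1,1)0E)t+1lu-0}--\eqref{eq:(t+1)(7t+4)(Si1(1,1)0E)t+1lu-1}, since the coefficients at $\mn=2$ degenerate the generic argument used in Lemma \ref{lemma:intertwining-M1minus}.
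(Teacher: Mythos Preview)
Your proposal is correct and follows essentially the same route as the paper. One simplification worth noting: in \eqref{eq:(t+1)(7t+4)(Si1(1,1)0E)t+1lu-0} with $t=0$ and $\lu=u^{[1]}$, the coefficients of $(S_{i1}(1,1)_{0}E)_{1}u^{[1]}$ cancel (since $(t+1)(7t+4)=4$ and $4\,\omega^{[1]}_{1}u^{[1]}=4u^{[1]}$), so that equation alone yields $E_{0}u^{[i]}=0$ directly---no $2\times2$ system is needed. Then in \eqref{eq:(t+1)(7t+4)(Si1(1,1)0E)t+1lu-1} the term $(\Har_{0}E)_{3}u^{[i]}$ vanishes outright by Lemma~\ref{lemma:bound-H0E} (since $\epsilon(E,u^{[i]})\le-1$ gives $\epsilon(\Har_{0}E,u^{[i]})\le2$), so you do not actually need to invoke \eqref{eq:(H0ElE1lu=dfrac194+7lE} for that step; the remaining equation then reduces to a nonzero multiple of $(S_{i1}(1,1)_{0}E)_{1}u^{[1]}$. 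The rest of your argument matches the paper's.
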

\begin{proof}
For $i=1,\ldots,\rankL$,
$u^{[i]}$ denotes the element of $\mK(0)$ 
corresponding to $h^{[i]}(-1)\vac$.
The same argument as in Section \ref{section:The case that K(0)cong M(1)-(0)}
shows that $\lE=0$ and $\epsilon(S_{i1}(1,1)_{0}E,u^{[1]})\leq 1$ for $i=2,\ldots,\rankL$.
For $i=2,\ldots,\rankL$,
by \eqref{eq:norm2Si1(1,1)1lu=-Si1(1,2)2lu-2} and \eqref{eq:(t+1)(7t+4)(Si1(1,1)0E)t+1lu-0},
\begin{align}
\ExB_{0}u^{[i]}&=0
\end{align}
and by \eqref{eq:(t+1)(7t+4)(Si1(1,1)0E)t+1lu-1},
\begin{align}
(S_{i1}(1,1)_{0}\ExB)_{1}\lu^{[1]}&=0.
\end{align}
Thus, $B=\C E_{0}u^{[1]}$ by \eqref{eq:(H0ElE1lu=dfrac194+7lE}.  
The same argument as in the proof of Lemma \ref{lemma:intertwining-lambda}
shows that  $B\cong \C e^{\alpha}$
as $A(M(1)^{+})$-modules and $\mN= M(1)^{+}\cdot B\cong M(1,\alpha)$ as $M(1)^{+}$-modules.
\end{proof}
\subsection{The case that $\langle\alpha,\alpha\rangle=1/2$}
Throughout this subsection, 
\begin{align}
\label{langlealpha,alpharangle=frac12}
\langle\alpha,\alpha\rangle&=\frac{1}{2}.
\end{align}
We assume $\alpha\in\C h^{[1]}$ and hence $\langle\alpha, h^{[1]}\rangle^2=1/2$ and
$\langle \alpha,h^{[i]}\rangle=0$ for all $i=2,\ldots,\rankL$.
	A direct computation shows that for $i=2,\ldots,\rankL$
\begin{align}
		\label{eq:2Si1(1,1)-3ExB-2omega[1]-1-0} 
	0&=
	S_{i1}(1,1)_{-2 } \ExB+\omega^{[1]}_{-1 } (S_{i1}(1,1)_{0}\ExB)
	-\omega_{0 }^2(S_{i1}(1,1)_{0}\ExB)-S_{i1}(1,2)_{-1 } \ExB,\\
	\label{eq:2Si1(1,1)-3ExB-2omega[1]-1-1} 
	0&=2S_{i1}(1,1)_{-3 } \ExB 
	-2\omega^{[i]}_{-1 } S_{i1}(1,1)_{-1 } \ExB 
	-5\omega^{[i]}_{-2 } (S_{i1}(1,1)_{0}\ExB) 
	\nonumber\\&\quad{}
	-\omega^{[1]}_{-1 } S_{i1}(1,1)_{-1 } \ExB 
	-5\omega_{0 } S_{i1}(1,1)_{-2 } \ExB 
	+4\omega_{0 } \omega^{[i]}_{-1 } (S_{i1}(1,1)_{0}\ExB) 
	\nonumber\\&\quad{}
	-4\omega_{0 } \omega^{[1]}_{-1 } (S_{i1}(1,1)_{0}\ExB) 
	+3\omega_{0 }^2 S_{i1}(1,1)_{-1 } \ExB 
	+3\omega_{0 } S_{i1}(1,2)_{-1 } \ExB 
	\nonumber\\&\quad{}
	+S_{i1}(1,3)_{-1 } \ExB,\\
	\label{eq:8omega[1]-3ExB+12}
	0&=
	8\omega^{[1]}_{-3 } \ExB+12\Har^{[1]}_{-1 } \ExB
	+3\omega^{[1]}_{-1 } (\Har^{[1]}_{1}\ExB)+4\omega_{0 } \omega^{[1]}_{-2 } \ExB
	-11\omega_{0 }^2 (\Har^{[1]}_{1}\ExB),\\
	\label{eq:2Si1(1,1)-3ExB-2omega[1]-1-2} 	0&=
	15S_{i1}(1,1)_{-3 } \ExB 
	-22\omega^{[i]}_{-1 } S_{i1}(1,1)_{-1 } \ExB 
	-55\omega^{[i]}_{-2 } (S_{i1}(1,1)_{0}\ExB) 
	\nonumber\\&\quad{}
	-9\omega^{[1]}_{-1 } S_{i1}(1,1)_{-1 } \ExB 
	-6S_{i1}(1,1)_{-1 } (\Har^{[1]}_{1}\ExB) 
	-27\omega_{0 } S_{i1}(1,1)_{-2 } \ExB 
	\nonumber\\&\quad{}
	+44\omega_{0 } \omega^{[i]}_{-1 } (S_{i1}(1,1)_{0}\ExB) 
	-12\omega_{0 } \omega^{[1]}_{-1 } (S_{i1}(1,1)_{0}\ExB) 
	+15\omega_{0 } \omega_{0 } S_{i1}(1,1)_{-1 } \ExB 
	\nonumber\\&\quad{}
	+15\omega_{0 } S_{i1}(1,2)_{-1 } \ExB. 
\end{align}
\begin{lemma}
	Let $\lE\in\Z$ such that $\lE\geq \epsilon_{I}(\ExB,\lu)$ for all non-zero $\lu\in K(0)$.
	Then, for $\lu\in K(0)$ and $i=2,\ldots,\rankL$,
\begin{align}
		\label{eq:-2(t+1)(9t+8)(Si1(1,1)0ExB)t+1lu-0}
		0&=-(t+1)^2(S_{i1}(1,1)_{0}\ExB)_{\lE+1}\lu-\ExB_{\lE} S_{i1}(1,1)_{1 }\lu \nonumber\\&\quad{}
	+(S_{i1}(1,1)_{0}\ExB)_{\lE+1}\omega^{[1]}_{1 }\lu -\ExB_{\lE} S_{i1}(1,2)_{2 }\lu, \\
	\label{eq:-2(t+1)(9t+8)(Si1(1,1)0ExB)t+1lu-1}
	0&=
	-2 (t+1) (9 t+8)(S_{i1}(1,1)_{0}\ExB)_{t+1 } \lu
	+2 (3 t^2+5 t-10)\ExB_{t } S_{i1}(1,1)_{1 } \lu
	\nonumber\\&\quad{}
	-8 (t+1)(S_{i1}(1,1)_{0}\ExB)_{t+1 } \omega^{[i]}_{1 } \lu
	-4\ExB_{t } S_{i1}(1,1)_{1 } \omega^{[i]}_{1 } \lu
	\nonumber\\&\quad{}
	-(6 t+29)\ExB_{t } S_{i1}(1,2)_{2 } \lu
	+2 (4 t+17)(S_{i1}(1,1)_{0}\ExB)_{t+1 } \omega^{[1]}_{1 } \lu
	\nonumber\\&\quad{}
	-2\ExB_{t } S_{i1}(1,1)_{1 } \omega^{[1]}_{1 } \lu
	+2\ExB_{t } S_{i1}(1,3)_{3 } \lu,\\
\label{eq:8(t+1)ExBomega[1]1lu-(t+1)(11 t+15)}
	0&=
	8 (t+1)\ExB_{\lE}\omega^{[1]}_{1 }\lu -(t+1) (11 t+15)(\Har^{[1]}_{1}\ExB)_{\lE+2}\lu \nonumber\\&\quad{}
	+4 (t+1)^2\ExB_{\lE}+12\ExB_{\lE}\Har^{[1]}_{3 }\lu  	+3(\Har^{[1]}_{1}\ExB)_{\lE+2}\omega^{[1]}_{1 }\lu,\\
	\label{eq:-2(t+1)(9t+8)(Si1(1,1)0ExB)t+1lu-2}
	0&=
	-2 (t+1) (25 t+18)(S_{i1}(1,1)_{0}\ExB)_{t+1 } \lu
	+2 (15 t^2+31 t-28)\ExB_{t } S_{i1}(1,1)_{1 } \lu
	\nonumber\\&\quad{}
	-88 (t+1)(S_{i1}(1,1)_{0}\ExB)_{t+1 } \omega^{[i]}_{1 } \lu
	-44\ExB_{t } S_{i1}(1,1)_{1 } \omega^{[i]}_{1 } \lu
	\nonumber\\&\quad{}
	-5 (6 t+25)\ExB_{t } S_{i1}(1,2)_{2 } \lu
	+6 (4 t+19)(S_{i1}(1,1)_{0}\ExB)_{t+1 } \omega^{[1]}_{1 } \lu
	\nonumber\\&\quad{}
	-18\ExB_{t } S_{i1}(1,1)_{1 } \omega^{[1]}_{1 } \lu
	-12(\Har^{[1]}_{1}\ExB)_{t+2 } S_{i1}(1,1)_{1 } \lu.
\end{align}
\end{lemma}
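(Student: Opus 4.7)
The proof will follow the same template used for the earlier identities (see the derivations of \eqref{eq:0=(omega-3ExB)lE+2lom+2lu=}--\eqref{eq:(Har-1ExB)lE+2lom+2lu=} in Lemma \ref{lemma:r=1-s=3} and the analogous step in the preceding subsection for $\langle\alpha,\alpha\rangle\neq 0,1/2,1,2$). The plan is to take the $(\lE+3)$-th action of the relation \eqref{eq:2Si1(1,1)-3ExB-2omega[1]-1-0} on $\lu$ to obtain \eqref{eq:-2(t+1)(9t+8)(Si1(1,1)0ExB)t+1lu-0}, and to take the $(\lE+4)$-th action of each of the relations \eqref{eq:2Si1(1,1)-3ExB-2omega[1]-1-1}, \eqref{eq:8omega[1]-3ExB+12} and \eqref{eq:2Si1(1,1)-3ExB-2omega[1]-1-2} on $\lu$ to obtain \eqref{eq:-2(t+1)(9t+8)(Si1(1,1)0ExB)t+1lu-1}, \eqref{eq:8(t+1)ExBomega[1]1lu-(t+1)(11 t+15)} and \eqref{eq:-2(t+1)(9t+8)(Si1(1,1)0ExB)t+1lu-2} respectively.

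More concretely, for each of the four source identities, I will apply Lemma \ref{lemma:comm-change} (together with the commutation relations \eqref{eq:wiwj}--\eqref{eq:wiEji}, the relations \eqref{eq:Har1ExB=dfrac2mn2mn1omega1ExB} at $\mn=1/2$, and the bounds from Lemma \ref{lemma:bound-H0-1E} and Lemma \ref{lemma:bound-index-S}) to expand the $(\lE+k)$-th coefficient so that every resulting monomial has the form
\begin{align*}
a^{(1)}_{i_1}\cdots a^{(l)}_{i_l}\,\ExB_{m}\,b^{(1)}_{j_1}\cdots b^{(n)}_{j_n},
\end{align*}
with $(a^{(s)},i_s)\in\{(\omega^{[k]},i)\mid i\leq 1\}\cup\{(\Har^{[k]},i)\mid i\leq 3\}\cup\{(S_{lk}(1,r),i)\mid i\leq r\}\cup\{(\Har^{[1]}_{1}\ExB,i)\mid i\leq 2\}$ on the left of $\ExB_m$, and $(b^{(s)},j_s)$ in the complementary "raising" range on the right. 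By Lemma \ref{lemma:bound-index-S}, one has $\epsilon_{I}(S_{i1}(1,r+1),\lu)\leq \lE+r$ for all $r\geq 0$, and by Lemma \ref{lemma:bound-H0-1E} one has $\epsilon_{I}(\Har^{[1]}_{1}\ExB,\lu)\leq \lE+2$; consequently all the monomials in the "raising" range collapse to the finitely many $\omega^{[i]}_{1}$, $\Har^{[i]}_{3}$, $S_{i1}(1,r)_{r}$, $\Har^{[1]}_{1}\ExB_{\lE+2}$ terms that appear on the right-hand sides of \eqref{eq:-2(t+1)(9t+8)(Si1(1,1)0ExB)t+1lu-0}--\eqref{eq:-2(t+1)(9t+8)(Si1(1,1)0ExB)t+1lu-2}.

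The computation will use the explicit normal-ordering data assembled in Section \ref{section:appendix} (specifically the portion concerning $\mn=1/2$, analogous to the use of \eqref{eq:[HariExBj]lu} in Lemma \ref{lemma:bound-H0-1E}), so that each double-contraction term arising from Lemma \ref{lemma:comm-change} is rewritten in the required normal form and then its coefficient tracked. Since each source identity in \eqref{eq:2Si1(1,1)-3ExB-2omega[1]-1-0}--\eqref{eq:2Si1(1,1)-3ExB-2omega[1]-1-2} is already a zero element of $V_{\lattice}^{+}$, the corresponding coefficient acts as $0$ on $\lu$, producing the stated identities on the nose.

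The main obstacle is bookkeeping rather than a conceptual difficulty: one has to track a considerable number of binomial coefficients and double-commutator terms through Lemma \ref{lemma:comm-change}, and verify that the normal-form reductions $\eqref{eq:norm1-2-0}$--$\eqref{eq:norm1-2-2}$ together with the commutators from \eqref{eq:[h[j](l),Sij(1,1)m]} correctly collapse the expansion into exactly the linear combinations written in \eqref{eq:-2(t+1)(9t+8)(Si1(1,1)0ExB)t+1lu-0}--\eqref{eq:-2(t+1)(9t+8)(Si1(1,1)0ExB)t+1lu-2}. As in the rest of the paper this verification will be carried out with the computer algebra system \textsf{Risa/Asir}; the auxiliary identities needed for the expansion are precisely those gathered in Section \ref{section:appendix-The general case} restricted to $\mn=1/2$.
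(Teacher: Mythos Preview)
Your proposal is correct and matches the paper's proof exactly: the paper simply states that one takes the $(\lE+3)$-th action of \eqref{eq:2Si1(1,1)-3ExB-2omega[1]-1-0} and the $(\lE+4)$-th actions of \eqref{eq:2Si1(1,1)-3ExB-2omega[1]-1-1}--\eqref{eq:2Si1(1,1)-3ExB-2omega[1]-1-2} on $\lu$ to obtain the four identities. Your elaboration on the normal-ordering procedure via Lemma \ref{lemma:comm-change} and the appendix data is precisely the mechanism the paper relies on throughout, so there is nothing to add.
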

\begin{proof}
Taking the $(\lE+3)$-th action of \eqref{eq:2Si1(1,1)-3ExB-2omega[1]-1-0} and 
the $(\lE+4)$-th actions of \eqref{eq:2Si1(1,1)-3ExB-2omega[1]-1-1} --
	\eqref{eq:2Si1(1,1)-3ExB-2omega[1]-1-2} on $\lu$, we have the results.
	\end{proof}
\begin{lemma}
	\label{lemma:intertwining-lambda-norm1-2}
Assume $K(0)\cong \C e^{\lambda}$ for some $\lambda\in\fh\setminus\{0\}$.
Let $\lu_{\lambda}$ denote the element of $K(0)$ corresponding to $e^{\lambda}$.
Let $B$ be the vector space with $\lE=\epsilon_{I}(\ExB,\lu_{\lambda})$ in Lemma \ref{lemma:N-Basis}.
Then
\begin{align}
(1+\lE)^2-\langle\lambda,\alpha\rangle^2&=0.
\end{align}
Moreover,	the following results hold:
	\begin{enumerate}
		\item If $\langle\alpha,\lambda\rangle=0$, then $\lE=-1$ and
		\begin{align}
			B&\cong \C e^{\alpha+\lambda},
			\C e^{\alpha-\lambda},\mbox{ or }
			\C e^{\alpha+\lambda}\oplus 
			\C e^{\alpha-\lambda}
		\end{align}
		as $A(M(1)^{+})$-modules and 
		\begin{align}
			\label{eq:norm1-2-mN-structure-1}
			\mN&=M(1)^{+}\cdot B\nonumber\\
			&\cong M(1,\lambda+\alpha),M(1,\lambda-\alpha),\mbox{ or }M(1,\lambda+\alpha)\oplus M(1,\lambda-\alpha)
		\end{align}
		as  $M(1)^{+}$-modules.
		\item
		If $\langle\alpha,\lambda\rangle\neq 0$, then
		\begin{align}
			B\cong \C e^{\alpha+\lambda}\mbox{ or }\C e^{\alpha-\lambda}
		\end{align}
		as $A(M(1)^{+})$-modules and
		\begin{align}
			\label{eq::norm1-2-mN-structure-2}
			\mN&\cong M(1,\lambda+\alpha),M(1,\lambda-\alpha),\mbox{ or }M(1,\lambda+\alpha)\oplus M(1,\lambda-\alpha)
		\end{align}
		as  $M(1)^{+}$-modules.
	\end{enumerate}
\end{lemma}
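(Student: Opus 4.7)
The plan is to adapt the approach of Lemmas \ref{lemma:intertwining-lambda} and \ref{lemma:intertwining-lambda-norm2}, exploiting the fact that at $\mn=\tfrac{1}{2}$ the fundamental relation \eqref{eq:norm1-2-0} is only quadratic in $\omega$. First, I would establish the identity $(1+\lE)^{2}=\langle\lambda,\alpha\rangle^{2}$. Since $\alpha\in\C h^{[1]}$, the rank-$\rankL$ version $\omega^{[1]}_{-1}\ExB-(\omega^{[1]}_{0})^{2}\ExB=0$ of \eqref{eq:norm1-2-0} holds (because $h^{[i]}\perp\alpha$ for $i\geq 2$). Taking its $(\lE+2)$-th coefficient action on $\lu_{\lambda}$ and using $\omega^{[1]}_{1}\lu_{\lambda}=\langle\lambda,\alpha\rangle^{2}\lu_{\lambda}$ together with the bracket $[\omega^{[1]}_{1},\ExB_{\lE}]=(-\tfrac{3}{4}-\lE)\ExB_{\lE}$, I would derive the claimed identity exactly as in the proof of Lemma \ref{lemma:structure-Vlattice-M1-norm1-2}. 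Note that, unlike the norm-$2$ case, no alternative branch with $B\cong M(1)^{-}(0)$ can arise: such a branch would demand $\lE=\mn-2=-\tfrac{3}{2}\notin\Z$. Moreover this immediately forces $\lambda\neq\pm\alpha$, since $\lambda=\pm\alpha$ would require $(1+\lE)^{2}=\tfrac{1}{4}$.

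In case (1), $\langle\lambda,\alpha\rangle=0$ forces $\lE=-1$. I would reuse the argument of Lemma \ref{lemma:intertwining-lambda}(1) almost verbatim: the commutator identities \eqref{eq:Si2(1,1)0S21(1,1)0E=dfrac-1mn-1}--\eqref{eq:langlelambdah[i]rangle(21(11)0E} remain valid at $\mn=\tfrac{1}{2}$ (no denominator vanishes), so $B$ collapses to $\C\ExB_{-1}\lu_{\lambda}+\C(S_{21}(1,1)_{0}\ExB)_{0}\lu_{\lambda}$, and an explicit action computation parallel to \eqref{eq:omega[1]1E-1lulambda} splits this into the two $A(M(1)^{+})$-eigencomponents $\C e^{\lambda+\alpha}$ and $\C e^{\lambda-\alpha}$.

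In case (2), $\langle\lambda,\alpha\rangle\neq 0$ and hence $\lE\neq-1$. I would use the three relations \eqref{eq:-2(t+1)(9t+8)(Si1(1,1)0ExB)t+1lu-0}--\eqref{eq:-2(t+1)(9t+8)(Si1(1,1)0ExB)t+1lu-2}, together with \eqref{eq:8(t+1)ExBomega[1]1lu-(t+1)(11 t+15)} to eliminate $(\Har^{[1]}_{1}\ExB)_{\lE+2}\lu_{\lambda}$. After substituting the known eigenvalues of $\omega^{[j]}_{1}$, $\Har^{[j]}_{3}$, and $S_{i1}(1,k)_{k}$ on $\lu_{\lambda}$ via \eqref{eq:norm2Si1(1,1)1lu=-Si1(1,2)2lu}, these collapse to a linear system whose solution yields $(S_{i1}(1,1)_{0}\ExB)_{\lE+1}\lu_{\lambda}=c_{i}\,\ExB_{\lE}\lu_{\lambda}$ for a specific scalar $c_{i}$. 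Via \eqref{eq:Si1111ElEelambda}, this gives $S_{i1}(1,1)_{1}\ExB_{\lE}\lu_{\lambda}=\langle\lambda',h^{[i]}\rangle\langle\lambda',h^{[1]}\rangle\ExB_{\lE}\lu_{\lambda}$ for some $\lambda'\in\{\lambda+\alpha,\lambda-\alpha\}$ with sign uniform in $i$; combined with the eigenvalues of $\omega^{[i]}_{1},\Har^{[i]}_{3}$ on $\ExB_{\lE}\lu_{\lambda}$ this identifies $B\cong\C e^{\lambda+\alpha}$ or $\C e^{\lambda-\alpha}$ as $A(M(1)^{+})$-modules. The description of $\mN$ then follows by iterating the quotient construction \eqref{eq:Nseqi+1=Nseqi}--\eqref{eq:BigIseqi+1Useqi+1}, which terminates after at most two steps, and applying Corollary \ref{corollary:verma-irreducible} (applicable since $\lambda\pm\alpha\neq 0$). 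I expect the main obstacle to be the elimination in case (2): one must verify that the polynomial coefficient matrix of the three-relation linear system is non-singular for every admissible integer $\lE$ subject to $(1+\lE)^{2}=\langle\lambda,\alpha\rangle^{2}$, and that the resulting scalar $c_{i}$ is consistent with a single sign choice $\lambda\pm\alpha$ simultaneously for all $i=2,\ldots,\rankL$.
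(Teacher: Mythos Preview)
Your proposal is correct and takes essentially the same approach as the paper. The one streamlining the paper makes in case~(2) is that, instead of eliminating $(\Har^{[1]}_{1}\ExB)_{\lE+2}\lu_{\lambda}$ via \eqref{eq:8(t+1)ExBomega[1]1lu-(t+1)(11 t+15)}, it directly substitutes the formula \eqref{eq:(Har0E)lE+3lu=ExB-0} from Lemma~\ref{lemma:structure-Vlattice-M1-norm1-2} (which is precisely that elimination already carried out for eigenvectors) into \eqref{eq:-2(t+1)(9t+8)(Si1(1,1)0ExB)t+1lu-1} and \eqref{eq:-2(t+1)(9t+8)(Si1(1,1)0ExB)t+1lu-2}, and then eliminates the $\omega^{[i]}_{1}\lu_{\lambda}$ terms between the two resulting relations; this yields a single equation with the overall factor $(2\lE+3)(4\lE+5)$, which never vanishes for integer $\lE$, so your anticipated non-singularity obstacle is resolved explicitly and the scalar $c_{i}$ is automatically independent of~$i$.
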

\begin{proof}
By \eqref{eq:omega1lu=(1+lE)2lu} 
\begin{align}
\label{eq:2(1+lE)2=langlelambda}
(1+\lE)^2&=\frac{\langle \lambda,h^{[1]}\rangle^2}{2}=\langle \lambda,\alpha\rangle^2.
\end{align}

\begin{enumerate}
\item
Assume $\langle\lambda,\alpha\rangle=\langle\lambda,h^{[1]}\rangle=0$.
Then, by \eqref{eq:2(1+lE)2=langlelambda}, $\lE=-1$.
A direct computation shows that \eqref{eq:Si2(1,1)0S21(1,1)0E=dfrac-1mn-1},
\eqref{eq:(S21(1,1)0E)0Si2(1,1)1lulambdanonumber}, and
\eqref{eq:langlelambdah[i]rangle(21(11)0E} hold even for the case that $\mn=1/2$.
\item
Assume $\langle\lambda,\alpha\rangle\neq 0$.
Then, by \eqref{eq:2(1+lE)2=langlelambda}, $\lE\neq -1$. Substituting \eqref{eq:(Har0E)lE+3lu=ExB-0} into 
\eqref{eq:-2(t+1)(9t+8)(Si1(1,1)0ExB)t+1lu-1} and \eqref{eq:-2(t+1)(9t+8)(Si1(1,1)0ExB)t+1lu-2},
and then deleting the terms including $\omega^{[i]}_{1}\lu_{\lambda}$ from the obtained relations,
we have
\begin{align}
0&=(2\lE+3)(4\lE+5)(2(1+\lE)(S_{i1}(1,1)_{0}\ExB)_{\lE+1}+\ExB_{\lE})\lu_{\lambda}
\end{align}
and hence 
\begin{align}
(S_{i1}(1,1)_{0}\ExB)_{\lE+1}\lu_{\lambda}&=\frac{-1}{2(1+\lE)}\ExB_{\lE}\lu_{\lambda}.
\end{align}
for all $i=2,\ldots,\rankL$. Thus, $B=\C \ExB_{\lE}\lu_{\lambda}$.
\end{enumerate}
The same argument as in the proof of Lemma \ref{lemma:intertwining-lambda} shows the other results.
\end{proof}
\begin{lemma}
\label{lemma:intertwining-M1minus-norm1-2}
If $K(0)\cong M(1)^{-}(0)$, then
$B\cong\C e^{\alpha}$ as $A(M(1)^{+})$-modules,
and $\mN=M(1)^{+}\cdot B\cong M(1,\alpha)$.
\end{lemma}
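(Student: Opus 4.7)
The plan is to mirror the proofs of Lemmas \ref{lemma:intertwining-M1minus} and \ref{lemma:intertwining-M1minus-norm2}, replacing the structural lemma for $\mn\neq 0,1/2,1,2$ or $\mn=2$ by Lemma \ref{lemma:structure-Vlattice-M1-norm1-2}, and using the intertwining relations \eqref{eq:-2(t+1)(9t+8)(Si1(1,1)0ExB)t+1lu-0}--\eqref{eq:-2(t+1)(9t+8)(Si1(1,1)0ExB)t+1lu-2} that were derived specifically for the $\mn=1/2$ case. For $i=1,\ldots,\rankL$, let $u^{[i]}\in K(0)$ correspond to $h^{[i]}(-1)\vac$. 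By \eqref{eq:norm2Si1(1,1)1lu=-Si1(1,2)2lu-2}, each $u^{[i]}$ is a simultaneous eigenvector for $\{\omega^{[j]}_1,\Har^{[j]}_3\}_{j=1}^{\rankL}$, with $\omega^{[1]}_1u^{[1]}=\Har^{[1]}_3u^{[1]}=u^{[1]}$ and $\omega^{[1]}_1u^{[i]}=\Har^{[1]}_3u^{[i]}=0$ for $i\geq 2$. Applying \eqref{eq:omega1lu=(1+lE)2lu} in Lemma \ref{lemma:structure-Vlattice-M1-norm1-2} gives $\epsilon_I(E,u^{[1]})\in\{0,-2\}$ and $\epsilon_I(E,u^{[i]})=-1$ for $i\geq 2$, so $\lE=0$ is a valid upper bound, which is what we take in Lemma \ref{lemma:N-Basis}(2).

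Next, I would bound the auxiliary generators. Since $K(0)$ is stable under each $S_{i1}(1,1)_1$, Lemma \ref{lemma:bound-index-S}(2) yields $\epsilon_I(S_{i1}(1,1)_0E,u^{[1]})\leq 1$, and Lemma \ref{lemma:bound-H0-1E} gives $\epsilon_I(\Har^{[1]}_1E,u^{[1]})\leq 2$ and $\epsilon_I(\Har^{[1]}_1E,u^{[i]})\leq 1$ for $i\geq 2$; in particular $(\Har^{[1]}_1E)_2u^{[i]}=0$. I then substitute $t=0$ and $\lu=u^{[1]}$ into the three master relations \eqref{eq:-2(t+1)(9t+8)(Si1(1,1)0ExB)t+1lu-1}, \eqref{eq:-2(t+1)(9t+8)(Si1(1,1)0ExB)t+1lu-2}, and, to eliminate the $(\Har^{[1]}_1E)_2$-term in \eqref{eq:-2(t+1)(9t+8)(Si1(1,1)0ExB)t+1lu-2}, also \eqref{eq:8(t+1)ExBomega[1]1lu-(t+1)(11 t+15)} at $\lu=u^{[1]}$ (which reproduces \eqref{eq:(Har0E)lE+3lu=ExB-0}). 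Using the eigenvalue data above together with $S_{i1}(1,1)_1u^{[1]}=u^{[i]}$, $S_{i1}(1,2)_2u^{[1]}=-2u^{[i]}$, $S_{i1}(1,3)_3u^{[1]}=3u^{[i]}$, each relation becomes a linear equation in the two unknowns $(S_{i1}(1,1)_0E)_1u^{[1]}$ and $E_0u^{[i]}$. The determinant of any two of these equations should be nonzero by a direct check, forcing both quantities to vanish. Combined with $(\Har^{[1]}_1E)_2u^{[1]}=2E_0u^{[1]}$ from \eqref{eq:(Har0E)lE+3lu=ExB-0}, Lemma \ref{lemma:N-Basis}(2) then collapses $B$ to the single line $\C E_0u^{[1]}$.

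To identify $B$ with $\C e^\alpha$ as an $A(M(1)^{+})$-module, I compute the action of the generators $\omega^{[j]}_1,\Har^{[j]}_3,S_{jk}(1,r)_r$ on $E_0u^{[1]}$ using the commutator formula $[a_i,b_j]=\sum_{k\geq 0}\binom{i}{k}(a_kb)_{i+j-k}$ together with the computations collected in Section \ref{section:Norm-nonzero} (the same strategy used in the displayed block just before the end of the proof of Lemma \ref{lemma:intertwining-M1minus}). The expected output, exactly matching \eqref{eq:norm2Si1(1,1)1lu=-Si1(1,2)2lu} specialized to $\langle\alpha,h^{[1]}\rangle^2=1/2$, is $\omega^{[1]}_1E_0u^{[1]}=\tfrac{1}{4}E_0u^{[1]}$, vanishing of all other $\omega^{[j]}_1$, $\Har^{[j]}_3$, and $S_{jk}(1,r)_r$ on $E_0u^{[1]}$. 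This gives the isomorphism $B\cong\C e^\alpha$ of $A(M(1)^{+})$-modules, and since $\C e^\alpha$ is not isomorphic to $\C\vac=M(1)^{+}(0)$, Corollary \ref{corollary:verma-irreducible} tells us that the $M(1)^{+}$-module $M(1)^{+}\cdot B$ generated by $B$ is irreducible, hence isomorphic to $M(1,\alpha)$. Finally, $\mN=M(1,\alpha)\cdot K\supset M(1)^{+}\cdot B$ and the explicit spanning set in Lemma \ref{lemma:N-Basis} (combined with the collapse to $B=\C E_0u^{[1]}$) shows $\mN=M(1)^{+}\cdot B$.

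The main obstacle will be bookkeeping around the extra basis element $(\Har^{[1]}_1E)_{\lE+2}\lu$ that appears only in the $\mn=1/2$ case of Lemma \ref{lemma:N-Basis}: one must ensure it is eliminated both on $u^{[1]}$ (via \eqref{eq:(Har0E)lE+3lu=ExB-0}) and on $u^{[i]}$ (via the $\lE=-1$ bound), and must carry it through \eqref{eq:-2(t+1)(9t+8)(Si1(1,1)0ExB)t+1lu-2} when solving for $(S_{i1}(1,1)_0E)_1u^{[1]}$. Verifying that the resulting $2\times 2$ system over $\{(S_{i1}(1,1)_0E)_1u^{[1]},E_0u^{[i]}\}$ is nondegenerate is the one numerical input that must be checked, but it is a bounded calculation rather than a conceptual difficulty.
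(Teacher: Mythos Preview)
Your overall strategy matches the paper's: set $\lE=0$, show the spanning set of $B$ from Lemma~\ref{lemma:N-Basis}(2) collapses to $\C E_0u^{[1]}$, identify the $A(M(1)^+)$-action, and invoke Corollary~\ref{corollary:verma-irreducible}. However, there is a concrete gap in how you dispose of the extra generator $(\Har^{[1]}_1E)_2u^{[i]}$ for $i\geq 2$.

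Your claim that Lemma~\ref{lemma:bound-H0-1E} yields $\epsilon_I(\Har^{[1]}_1E,u^{[i]})\leq 1$ is not justified: that lemma requires an $A(M(1)^{+})$-submodule $U$ with $\epsilon_I(E,v)\leq -1$ for all nonzero $v\in U$, but $K(0)\cong M(1)^{-}(0)$ is an \emph{irreducible} $A(M(1)^{+})$-module, so the only choice is $U=K(0)$, and there $\epsilon_I(E,u^{[1]})$ may be $0$. You only get the bound $\leq 2$. Relatedly, the term you must eliminate in \eqref{eq:-2(t+1)(9t+8)(Si1(1,1)0ExB)t+1lu-2} at $u=u^{[1]}$ is $-12(\Har^{[1]}_1E)_{t+2}S_{i1}(1,1)_1u^{[1]}=-12(\Har^{[1]}_1E)_2u^{[i]}$, not $(\Har^{[1]}_1E)_2u^{[1]}$; invoking \eqref{eq:8(t+1)ExBomega[1]1lu-(t+1)(11 t+15)} at $u=u^{[1]}$ handles the wrong vector.

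The fix is simple, and it is what the paper actually does: use \eqref{eq:-2(t+1)(9t+8)(Si1(1,1)0ExB)t+1lu-0} at $t=0$, $u=u^{[1]}$. Substituting $\omega^{[1]}_1u^{[1]}=u^{[1]}$, $S_{i1}(1,1)_1u^{[1]}=u^{[i]}$, $S_{i1}(1,2)_2u^{[1]}=-2u^{[i]}$, the two $X=(S_{i1}(1,1)_0E)_1u^{[1]}$ terms cancel and you get $E_0u^{[i]}=0$ on the nose. Then \eqref{eq:-2(t+1)(9t+8)(Si1(1,1)0ExB)t+1lu-1} at $u=u^{[1]}$ reduces to $18X=0$, and \eqref{eq:8(t+1)ExBomega[1]1lu-(t+1)(11 t+15)} at $u=u^{[i]}$ (not $u^{[1]}$) gives $-15(\Har^{[1]}_1E)_2u^{[i]}+4E_0u^{[i]}=0$, hence $(\Har^{[1]}_1E)_2u^{[i]}=0$. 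After that your reduction of $B$ to $\C E_0u^{[1]}$ and the identification with $\C e^{\alpha}$ go through as you outlined.
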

\begin{proof} 
	For $i=1,\ldots,\rankL$,
	$u^{[i]}$ denotes the element of $\mK(0)$ 
	corresponding to $h^{[i]}(-1)\vac$.
	The same argument as in Section \ref{section:The case that K(0)cong M(1)-(0)}
	shows that $\lE=0$ and $\epsilon(S_{i1}(1,1)_{0}E,u^{[1]})\leq 1$
	for $i=2,\ldots,\rankL$.
	By \eqref{eq:norm2Si1(1,1)1lu=-Si1(1,2)2lu-2},
	\eqref{eq:-2(t+1)(9t+8)(Si1(1,1)0ExB)t+1lu-0}--\eqref{eq:-2(t+1)(9t+8)(Si1(1,1)0ExB)t+1lu-2}
	\begin{align}
		\label{eq:(Si1(1,1)0E)1lu=(Si1(1,2)0E)1lu}
		0&=(S_{i1}(1,1)_{0}E)_{1}u^{[1]}
		=(S_{i1}(1,2)_{0}E)_{1}u^{[1]}
		=\ExB_{0}u^{[i]}
	\end{align}
	for all $i=2,\ldots,\rankL$ and hence
	$B=\C \ExB_{0}u^{[1]}$.
	The same argument as in the proof of Lemma \ref{lemma:intertwining-M1minus}
	shows that $U\cong \C e^{\alpha}$ and $N\cong M(1,\alpha)$.
	\end{proof}

\subsection{The case that $\langle\alpha,\alpha\rangle=1$}
Throughout this subsection, 
\begin{align}
\label{langlealpha,alpharangle=1}
\langle\alpha,\alpha\rangle&=1.
\end{align}
We assume $\alpha\in\C h^{[1]}$ and hence $\langle\alpha, h^{[1]}\rangle^2=1$ and
$\langle \alpha,h^{[i]}\rangle=0$ for all $i=2,\ldots,\rankL$.
A direct computation show that
\begin{align}
\label{eq:=Si1(1,1)-2ExB-Si1(1,2)-1ExB-1}
0&=S_{i1}(1,1)_{-1 } \ExB-\omega_{0 } (S_{i1}(1,1)_{0}\ExB),\\
\label{eq:=Si1(1,1)-2ExB-Si1(1,2)-1ExB-2}
0&=S_{i1}(1,1)_{-2 } \ExB 
-S_{i1}(1,2)_{-1 } \ExB 
+\omega_{0 } (S_{i1}(1,2)_{0}\ExB),\\
\label{eq:=Si1(1,1)-2ExB-Si1(1,2)-1ExB-3}
0&=27\omega^{[1]}_{-2 } (S_{i1}(1,1)_{0}\ExB) 
-8\omega_{0 } S_{i1}(1,1)_{-2 } \ExB 
\nonumber\\&\quad{}
-2\omega_{0 } \omega^{[1]}_{-1 } (S_{i1}(1,1)_{0}\ExB) 
+\omega_{0 } \omega_{0 } \omega_{0 } (S_{i1}(1,1)_{0}\ExB) 
\nonumber\\&\quad{}
+21S_{i1}(1,2)_{-2 } \ExB 
-12\omega^{[1]}_{-1 } (S_{i1}(1,2)_{0}\ExB) 
\nonumber\\&\quad{}
-18\omega_{0 } S_{i1}(1,2)_{-1 } \ExB 
-15S_{i1}(1,3)_{-1 } \ExB,\\
\label{eq:=Si1(1,1)-2ExB-Si1(1,2)-1ExB-4}
0&=S_{i1}(1,1)_{-3 } \ExB
-9\omega^{[1]}_{-2 } (S_{i1}(1,1)_{0}\ExB)
\nonumber\\&\quad{}
+2\omega_{0 } S_{i1}(1,1)_{-2 } \ExB
-8S_{i1}(1,2)_{-2 } \ExB
\nonumber\\&\quad{}
+4\omega^{[1]}_{-1 } (S_{i1}(1,2)_{0}\ExB)
+7\omega_{0 } S_{i1}(1,2)_{-1 } \ExB
\nonumber\\&\quad{}
+6S_{i1}(1,3)_{-1 } \ExB
\end{align}
for all $i=2,\ldots,\rankL$.
\begin{lemma}
Let $\lE\in\Z$ such that $\lE\geq \epsilon_{I}(\ExB,\lu)$ for all non-zero $\lu\in K(0)$.
Then, for $\lu\in K(0)$ and $i=2,\ldots,\rankL$,
\begin{align}
\label{eq:(1+lE)(Si1(1,1)0ExB)lE+1lu-1}
0&=(1+\lE)(S_{i1}(1,1)_{0}\ExB)_{\lE+1}\lu+\ExB_{\lE}S_{i1}(1,1)_{1}\lu,\\
\label{eq:(1+lE)(Si1(1,1)0ExB)lE+1lu-2}
0&=(1+\lE)(S_{i1}(1,2)_{0}\ExB)_{\lE+2}\lu+\ExB_{\lE}(2S_{i1}(1,1)_{1}+S_{i1}(1,2)_{2})\lu,\\
\label{eq:(1+lE)(Si1(1,1)0ExB)lE+1lu-3}
0&=
-(t^3+9 t^2+21 t+12)(S_{i 1}(1,1)_{0}\ExB)_{t+1 } \lu
\nonumber\\&\quad{}
+(5 t-22)\ExB_{t } S_{i 1}(1,1)_{1 } \lu
-(4 t+5)(S_{i 1}(1,2)_{0}\ExB)_{t+2 } \lu
\nonumber\\&\quad{}
+2 t(S_{i 1}(1,1)_{0}\ExB)_{t+1 } \omega^{[1]}_{1 } \lu
+2 (9 t-7)\ExB_{t } S_{i 1}(1,2)_{2 } \lu
\nonumber\\&\quad{}
-12(S_{i 1}(1,2)_{0}\ExB)_{t+2 } \omega^{[1]}_{1 } \lu
-15\ExB_{t } S_{i 1}(1,3)_{3 } \lu,\\
\label{eq:(1+lE)(Si1(1,1)0ExB)lE+1lu-4}
0&=
-(t+2)(S_{i1}(1,1)_{0}\ExB)_{t+1 } \lu
-2(7t+24)\ExB_{t } S_{i1}(1,1)_{1 } \lu
\nonumber\\&\quad{}
-(24t+23)(S_{i1}(1,2)_{0}\ExB)_{t+2 } \lu
+2(S_{i1}(1,1)_{0}\ExB)_{t+1 } \omega^{[1]}_{1 } \lu
\nonumber\\&\quad{}
-3(7t+4)\ExB_{t } S_{i1}(1,2)_{2 } \lu
+12(S_{i1}(1,2)_{0}\ExB)_{t+2 } \omega^{[1]}_{1 } \lu
\nonumber\\&\quad{}
+18\ExB_{t } S_{i1}(1,3)_{3 } \lu.
\end{align}

\end{lemma}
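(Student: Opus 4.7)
The plan is to mimic exactly the proofs given in the analogous lemmas in the preceding subsections (cf. the proofs ending \eqref{eq:(lE+1)((4mn-1)lE-2mn2+7mn-2)(Si1(1,1)-4}, \eqref{eq:(t+1)(7t+4)(Si1(1,1)0E)t+1lu-1}, and \eqref{eq:-2(t+1)(9t+8)(Si1(1,1)0ExB)t+1lu-2}). Namely, the four target identities are obtained respectively by taking the $(\lE+2)$-th action of \eqref{eq:=Si1(1,1)-2ExB-Si1(1,2)-1ExB-1}, the $(\lE+3)$-th action of \eqref{eq:=Si1(1,1)-2ExB-Si1(1,2)-1ExB-2}, and the $(\lE+4)$-th actions of \eqref{eq:=Si1(1,1)-2ExB-Si1(1,2)-1ExB-3} and \eqref{eq:=Si1(1,1)-2ExB-Si1(1,2)-1ExB-4} on $\lu$. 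Each of these four identities lives in $V_{\lattice}^{+}$ and was checked in Lemma \ref{lemma:relations-M(1)-V(lattice)+} (together with the surrounding computations in this subsection); applying the intertwining operator $I(\mbox{ },x)$ and extracting the appropriate coefficient yields an identity in $\mN$.

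Before one can read off a clean identity, one has to expand each product $a_{k}b$ appearing in the chosen component so that every element of the set $\{\omega^{[i]},\Har^{[i]},\omega\}_{i=1}^{\rankL}$ sits on the correct side of $\ExB_{\lE}$, $(S_{i1}(1,1)_{0}\ExB)_{\lE+1}$, or $(S_{i1}(1,2)_{0}\ExB)_{\lE+2}$. This is done exactly as in the derivations of \eqref{eq:(omega-12E)wn-1} and \eqref{eq:(omega-12E)wn-2}: use the commutation formula of Lemma \ref{lemma:comm-change} with the cutoff $m$ equal to $1$ for $\omega^{[i]}$ and $3$ for $\Har^{[i]}$, together with the elementary brackets $[\omega_{i},\ExB_{j}]$, $[\omega_{i},S_{i1}(1,k)_{j}]$ computed via \eqref{eq:wiwj}--\eqref{eq:wiEji} and \eqref{eq:omega-s11}. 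Because $\langle\alpha,\alpha\rangle=1$, the bounds $\epsilon(\omega^{[i]},\lu)\leq 1$, $\epsilon(\Har^{[i]},\lu)\leq 3$ on $\Omega_{M(1)^{+}}(\mW)$ and the bound $\epsilon(S_{i1}(1,m+1)_{0}\ExB,\lu)\leq \lE+m+1$ from Lemma \ref{lemma:bound-index-S} guarantee that all the would-be right-hand terms $\omega^{[\bullet]}_{\geq 2}\lu$, $\Har^{[\bullet]}_{\geq 4}\lu$ and $S_{\bullet}(1,k)_{\geq k+1}\lu$ vanish, which collapses the expansion into a closed linear combination of $\ExB_{\lE}S_{i1}(1,k)_{k}\lu$, $(S_{i1}(1,k)_{0}\ExB)_{\lE+k}\omega^{[\bullet]}_{1}\lu$, and similar admissible terms.

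These expansions involve a large number of structure constants of the form $a^{[i]}_{m}S_{i1}(1,k)_{n}\ExB$ and $a^{[1]}_{m}S_{i1}(1,k)_{n}\ExB$ that are not written out in the statement; I would invoke Risa/Asir (as is done throughout the paper) and draw on the pre-computed values collected in Section \ref{section:appendix-The general case} for $\langle\alpha,\alpha\rangle=1$, which is precisely the role played in the preceding three parallel lemmas. After substituting those values and collecting, the coefficients of the resulting polynomial identities match exactly the right-hand sides of \eqref{eq:(1+lE)(Si1(1,1)0ExB)lE+1lu-1}--\eqref{eq:(1+lE)(Si1(1,1)0ExB)lE+1lu-4}.

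The main obstacle is purely computational: the last two identities \eqref{eq:(1+lE)(Si1(1,1)0ExB)lE+1lu-3} and \eqref{eq:(1+lE)(Si1(1,1)0ExB)lE+1lu-4} come from expanding \eqref{eq:=Si1(1,1)-2ExB-Si1(1,2)-1ExB-3} and \eqref{eq:=Si1(1,1)-2ExB-Si1(1,2)-1ExB-4}, which contain terms of the form $\omega_{0}\omega^{[1]}_{-1}(S_{i1}(1,1)_{0}\ExB)$ and $\omega^{[1]}_{-1}(S_{i1}(1,2)_{0}\ExB)$; these require one to reorder $\omega^{[1]}_{1}$ past $(S_{i1}(1,k)_{0}\ExB)_{\lE+k}$ using the commutator $[\omega^{[1]}_{1},S_{i1}(1,k)]=S_{i1}(1,k+1)+\cdots$, and one must be careful to track the $\omega_{0}$-derivatives and the correction terms so that the final linear combination has its $\omega^{[1]}_{1}$-factors acting on $\lu$ on the right, matching the stated form. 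Apart from this bookkeeping, no new idea beyond the template already used three times in this subsection is needed.
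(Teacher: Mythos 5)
Your proposal matches the paper's proof: the paper obtains the four identities precisely by taking the $(\lE+2)$-th action of \eqref{eq:=Si1(1,1)-2ExB-Si1(1,2)-1ExB-1}, the $(\lE+3)$-th action of \eqref{eq:=Si1(1,1)-2ExB-Si1(1,2)-1ExB-2}, and the $(\lE+4)$-th actions of \eqref{eq:=Si1(1,1)-2ExB-Si1(1,2)-1ExB-3} and \eqref{eq:=Si1(1,1)-2ExB-Si1(1,2)-1ExB-4} on $\lu$, followed by the same reordering/expansion procedure you describe. Your additional remarks on the bookkeeping (cutoffs in Lemma \ref{lemma:comm-change}, the vanishing of the right-hand terms via Lemma \ref{lemma:bound-index-S}, and the use of the precomputed structure constants) are exactly what the paper leaves implicit.
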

\begin{proof}
Taking the $(\lE+2)$-th action of \eqref{eq:=Si1(1,1)-2ExB-Si1(1,2)-1ExB-1}
the $(\lE+3)$-th action of \eqref{eq:=Si1(1,1)-2ExB-Si1(1,2)-1ExB-2},
and
the $(\lE+4)$-th actions of \eqref{eq:=Si1(1,1)-2ExB-Si1(1,2)-1ExB-3} and
\eqref{eq:=Si1(1,1)-2ExB-Si1(1,2)-1ExB-4} on $\lu$,
we have the results.
\end{proof}
\begin{lemma}
\label{lemma:intertwining-lambda-norm1}
Assume $K(0)\cong \C e^{\lambda}$ for some $\lambda\in\fh\setminus\{0\}$.
Let $\lu_{\lambda}$ denote the element of $K(0)$ corresponding to $e^{\lambda}$.
Let $B$ be the vector space with $\lE=\epsilon_{I}(\ExB,\lu_{\lambda})$ in Lemma \ref{lemma:N-Basis}.
Then,	the following results hold:
	\begin{enumerate}
		\item If $\langle\alpha,\lambda\rangle=0$, then $\lE=-1$ and
		\begin{align}
			B&\cong \C e^{\alpha+\lambda},
			\C e^{\alpha-\lambda},\mbox{ or }
			\C e^{\alpha+\lambda}\oplus 
			\C e^{\alpha-\lambda}
		\end{align}
		as $A(M(1)^{+})$-modules and 
		\begin{align}
			\label{eq:norm1-mN-structure-1}
			\mN&=M(1)^{+}\cdot B\nonumber\\
			&\cong M(1,\lambda+\alpha),M(1,\lambda-\alpha),\mbox{ or }M(1,\lambda+\alpha)\oplus M(1,\lambda-\alpha)
		\end{align}
		as  $M(1)^{+}$-modules.
		\item
		If $\langle\alpha,\lambda\rangle\neq 0$ 
		and $(1+\lE)^2- \langle \alpha,\lambda\rangle^2=0$, then
		\begin{align}
			B\cong \C e^{\alpha+\lambda}\mbox{ or }\C e^{\alpha-\lambda}
		\end{align}
		as $A(M(1)^{+})$-modules.
		Furthermore, if $\lambda\neq \pm\alpha$, 
		\begin{align}
			\label{eq:norm1-mN-structure-2}
			\mN&\cong M(1,\lambda+\alpha),M(1,\lambda-\alpha),\mbox{ or }M(1,\lambda+\alpha)\oplus M(1,\lambda-\alpha)
		\end{align}
		as  $M(1)^{+}$-modules.
		\item
		If 
		$(1+\lE)^2- \langle \alpha,\lambda\rangle^2\neq 0$, then $\lE=-1$, $\lambda=\pm\alpha$, and $B\cong M(1)^{-}(0)$ as $A(M(1)^{+})$-modules.
	\end{enumerate}
\end{lemma}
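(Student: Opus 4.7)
The plan is to follow the template of Lemmas \ref{lemma:intertwining-lambda} and \ref{lemma:intertwining-lambda-norm2}, adapted to the exceptional value $\mn=1$. Observe that $\alpha=\pm h^{[1]}$, so $\omega^{[1]}_{1}\lu_{\lambda}=\tfrac{1}{2}\langle\alpha,\lambda\rangle^{2}\lu_{\lambda}$ while $\Har^{[1]}_{3}\lu_{\lambda}=0$ since $K(0)\cong\C e^{\lambda}$. Set $\lv:=(\omega^{[1]}_{1}-(\lE+1)^{2}/2)\lu_{\lambda}$. Applying Lemma \ref{lemma:structure-Vlattice-M1}(1) to the intertwining operator $I$ viewed inside the rank-one sub-VOA generated by $h^{[1]}$ yields the basic dichotomy: either $\lv=0$, equivalently $(\lE+1)^{2}=\langle\alpha,\lambda\rangle^{2}$, or $\lv\neq 0$, in which case $\lE=\mn-2=-1$ and $\omega^{[1]}_{1}\lu_{\lambda}=\tfrac{1}{2}\lu_{\lambda}$, so $\langle\alpha,\lambda\rangle^{2}=1$. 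Matching this dichotomy against the three clauses yields the stated values of $\lE$: case (1) has $\langle\alpha,\lambda\rangle=0$, so $\lv\neq 0$ would force $\langle\alpha,\lambda\rangle^{2}=1$, a contradiction, hence $\lE=-1$; case (2) is the $\lv=0,\ \lE\neq -1$ branch; case (3) is the $\lv\neq 0$ branch, yielding $\lE=-1$ and $\langle\alpha,\lambda\rangle^{2}=1$.

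In case (2), \eqref{eq:(1+lE)(Si1(1,1)0ExB)lE+1lu-1} yields
\begin{align*}
(S_{i1}(1,1)_{0}\ExB)_{\lE+1}\lu_{\lambda}=-\frac{\langle\lambda,h^{[i]}\rangle\langle\lambda,h^{[1]}\rangle}{1+\lE}\,\ExB_{\lE}\lu_{\lambda}
\end{align*}
for $i=2,\ldots,\rankL$, and \eqref{eq:(1+lE)(Si1(1,1)0ExB)lE+1lu-2} gives the analogous reduction for $(S_{i1}(1,2)_{0}\ExB)_{\lE+2}\lu_{\lambda}$; combined with Lemma \ref{lemma:N-Basis}(3), this gives $B=\C \ExB_{\lE}\lu_{\lambda}$. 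The identification $B\cong\C e^{\lambda\pm\alpha}$ then follows by evaluating $S_{i1}(1,1)_{1}\ExB_{\lE}\lu_{\lambda}$ via \eqref{eq:Si1111ElEelambda} with $c=-1/(\lE+1)$ and matching the resulting scalars to the $A(M(1)^{+})$-eigenvalues of $e^{\lambda\mp\alpha}$. In case (3), specializing \eqref{eq:(1+lE)(Si1(1,1)0ExB)lE+1lu-1} to $\lE=-1$ gives $0=\ExB_{-1}S_{i1}(1,1)_{1}\lu_{\lambda}=\langle\lambda,h^{[i]}\rangle\langle\lambda,h^{[1]}\rangle\ExB_{-1}\lu_{\lambda}$; since $\ExB_{-1}\lu_{\lambda}\neq 0$ and $\langle\lambda,h^{[1]}\rangle\neq 0$, we conclude $\langle\lambda,h^{[i]}\rangle=0$ for $i\geq 2$, and combined with $\langle\lambda,h^{[1]}\rangle^{2}=1$ this forces $\lambda=\pm h^{[1]}=\pm\alpha$. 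The conclusion $B\cong M(1)^{-}(0)$ then follows by direct evaluation of the $A(M(1)^{+})$-action on $\ExB_{-1}\lu_{\lambda}$ and $(S_{i1}(1,1)_{0}\ExB)_{0}\lu_{\lambda}$, exactly as in the last step of the proof of Lemma \ref{lemma:intertwining-lambda}(3).

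The main technical obstacle lies in case (1). The rank-reduction argument of Lemma \ref{lemma:intertwining-lambda}(1) rests on the identity \eqref{eq:Si2(1,1)0S21(1,1)0E=dfrac-1mn-1}, whose coefficients carry a pole $1/(\mn-1)$ and are therefore invalid at $\mn=1$; correspondingly, \eqref{eq:(1+lE)(Si1(1,1)0ExB)lE+1lu-1} specialized to $\lE=-1$ and $\langle\alpha,\lambda\rangle=0$ degenerates to $0=0$ and carries no information. A substitute must be derived directly, starting from the product expansion of $S_{21}(1,1)_{-1}S_{21}(1,1)_{0}\ExB$ via Lemma \ref{lemma:comm-change} and using the $\mn=1$ relations \eqref{eq:=Si1(1,1)-2ExB-Si1(1,2)-1ExB-1}--\eqref{eq:=Si1(1,1)-2ExB-Si1(1,2)-1ExB-4}, in order to produce proportionality relations of the shape $\langle\lambda,h^{[i]}\rangle(S_{21}(1,1)_{0}\ExB)_{0}\lu_{\lambda}=\langle\lambda,h^{[2]}\rangle(S_{i1}(1,1)_{0}\ExB)_{0}\lu_{\lambda}$ for $i\geq 3$. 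Once these are in place, the remaining argument mirrors \eqref{eq:omega[1]1E-1lulambda}: the two vectors $\ExB_{-1}\lu_{\lambda}$ and $(S_{21}(1,1)_{0}\ExB)_{0}\lu_{\lambda}$ span $B$, and simultaneous diagonalization against the actions $S_{21}(1,k)_{k}$ decomposes $B$ into the stated sum of copies of $\C e^{\lambda\pm\alpha}$. Finally, the $M(1)^{+}$-module structure of $\mN$ in all three cases follows from the inductive quotient scheme $N^{\seq{i+1}}=N^{\seq{i}}/(M(1)^{+}\cdot B^{\seq{i}})$ together with Corollary \ref{corollary:verma-irreducible}, precisely as in the closing paragraph of the proof of Lemma \ref{lemma:intertwining-lambda}.
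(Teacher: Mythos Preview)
Your approach matches the paper's closely, including the correct identification of the $1/(\mn-1)$ pole as the obstacle in case~(1). Two small corrections are needed there. First, by Lemma~\ref{lemma:N-Basis}(3) the space $B$ at $\mn=1$ also contains the vectors $(S_{i1}(1,2)_{0}\ExB)_{\lE+2}\lu_{\lambda}$, which you do not address; the paper eliminates them via \eqref{eq:(1+lE)(Si1(1,1)0ExB)lE+1lu-3} at $\lE=-1$, $\langle\alpha,\lambda\rangle=0$, obtaining $(S_{i1}(1,2)_{0}\ExB)_{1}\lu_{\lambda}=(S_{i1}(1,1)_{0}\ExB)_{0}\lu_{\lambda}$. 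Second, the product $S_{21}(1,1)_{-1}S_{21}(1,1)_{0}\ExB$ that you propose to expand carries no third index $k\geq 3$ and therefore cannot produce the proportionality between $(S_{21}(1,1)_{0}\ExB)_{0}\lu_{\lambda}$ and $(S_{k1}(1,1)_{0}\ExB)_{0}\lu_{\lambda}$. The paper instead computes $S_{k2}(1,1)_{m}(S_{21}(1,1)_{0}\ExB)$ for $m=-1,0,1$ directly at $\mn=1$ (these are finite, unlike \eqref{eq:Si2(1,1)0S21(1,1)0E=dfrac-1mn-1}) and then uses the associativity relation
\[
(S_{k2}(1,1)_{-1}S_{21}(1,1)_{0}\ExB)_{2}\lu_{\lambda}=(S_{21}(1,1)_{0}\ExB)_{0}S_{k2}(1,1)_{1}\lu_{\lambda}-(S_{k2}(1,1)_{0}S_{21}(1,1)_{0}\ExB)_{1}\lu_{\lambda}
\]
to obtain $\langle\lambda,h^{[k]}\rangle\langle\lambda,h^{[2]}\rangle(S_{21}(1,1)_{0}\ExB)_{0}\lu_{\lambda}=\langle\lambda,h^{[2]}\rangle^{2}(S_{k1}(1,1)_{0}\ExB)_{0}\lu_{\lambda}$. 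With these two fixes your sketch is complete.
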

\begin{proof}

If $\lE\neq -1$, then $B=\C \ExB_{\lE}\lu_{\lambda}$ by 
\eqref{eq:(1+lE)(Si1(1,1)0ExB)lE+1lu-1}
and
\eqref{eq:(1+lE)(Si1(1,1)0ExB)lE+1lu-2}.
\begin{enumerate}
\item Assume $\langle\lambda,\alpha\rangle=0$.
Since $\omega^{[1]}_{1}\lu_{\lambda}=(\langle\lambda,h^{[1]}\rangle^2/2) \lu_{\lambda}$,
it follows from Lemma \ref{lemma:structure-Vlattice-M1} that $\lE=-1$ and hence by \eqref{eq:(1+lE)(Si1(1,1)0ExB)lE+1lu-3},
\begin{align}
(S_{i 1}(1,1)_{0}\ExB)_{0 } \lu_{\lambda}&=(S_{i 1}(1,2)_{0}\ExB)_{1}\lu_{\lambda}
\end{align}
for all $i=2,\ldots,\rankL$.
A direct computation shows that
\begin{align}
S_{k2}(1,1)_{-1}S_{21}(1,1)_{0}\ExB
&=
2\omega^{[2]}_{-1 } (S_{k 1}(1,1)_{0}\ExB) 
+\frac{2}{3}\omega^{[1]}_{-1 } (S_{k 1}(1,1)_{0}\ExB) 
\nonumber\\&\quad{}
+\frac{-1}{3}\omega_{0 } S_{k 1}(1,1)_{-1 } \ExB 
+\frac{-1}{3}S_{k 1}(1,2)_{-1 } \ExB 
\nonumber\\&\quad{}
+\frac{-2}{3}\omega_{0 } (S_{k 1}(1,2)_{0}\ExB),\nonumber\\
S_{k2}(1,1)_{0}S_{21}(1,1)_{0}\ExB
&=-(S_{k 1}(1,2)_{0}E),\nonumber\\
S_{k2}(1,1)_{1}S_{21}(1,1)_{0}\ExB
&=S_{k 1}(1,1)_{0}E,\nonumber\\
S_{k2}(1,1)_{i}S_{21}(1,1)_{0}\ExB
&=0\mbox{ for }i\geq 2.
\end{align}
and 
\begin{align}
(S_{k2}(1,1)_{-1}S_{21}(1,1)_{0}\ExB)_{2}\lu_{\lambda}
&=
2(S_{k 1}(1,1)_{0}\ExB)_{0 } \omega^{[i]}_{1 } \lu_{\lambda}
+\frac{-1}{3}(S_{k 1}(1,1)_{0}\ExB)_{0 } \lu_{\lambda}
\nonumber\\&\quad{}
+\frac{2}{3}(S_{k 1}(1,1)_{0}\ExB)_{0 } \omega^{[1]}_{1 } \lu_{\lambda}
+\frac{-1}{3}\ExB_{-1 } S_{k 1}(1,2)_{2 } \lu_{\lambda}
\nonumber\\&\quad{}
+\frac{4}{3}(S_{k 1}(1,2)_{0}\ExB)_{1 } \lu_{\lambda},\nonumber\\
(S_{k2}(1,1)_{0}S_{21}(1,1)_{0}\ExB)_{2}\lu_{\lambda}&=-(S_{k1}(1,2)_{0}\ExB)_{1}\lu_{\lambda}
\end{align}
for all $k=3,\ldots,\rankL$.
Since
\begin{align}
&(S_{k2}(1,1)_{-1}S_{21}(1,1)_{0}\ExB)_{2}\lu_{\lambda}\nonumber\\
&=
(S_{21}(1,1)_{0}\ExB)_{0}S_{k2}(1,1)_{1}\lu_{\lambda}-
(S_{k2}(1,1)_{0}S_{21}(1,1)_{0}\ExB)_{1}\lu_{\lambda},
\end{align}
we have 
\begin{align}
\langle\lambda,h^{[k]}\rangle\langle\lambda,h^{[2]}\rangle(S_{21}(1,1)_{0}\ExB)_{0}\lu_{\lambda}
&=\langle\lambda,h^{[2]}\rangle^2(S_{k1}(1,1)_{0}\ExB)_{0}\lu_{\lambda}
\end{align}
for all $k=3,\ldots,\rankL$.
The same argument as in the proof of Lemma \ref{lemma:intertwining-lambda} (1) shows the other results.
\item[(2),(3)]The same argument as in the proof of Lemma \ref{lemma:intertwining-lambda} (2) and (3) shows the results.
\end{enumerate}
\end{proof}

\begin{lemma}
\label{lemma:intertwining-M1minus-norm1}
If $K(0)\cong M(1)^{-}(0)$, then
$B\cong\C e^{\alpha}$ as $A(M(1)^{+})$-modules,
and $\mN=M(1)^{+}\cdot B\cong M(1,\alpha)$.
\end{lemma}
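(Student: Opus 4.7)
The plan is to mimic the structure of Lemmas \ref{lemma:intertwining-M1minus}, \ref{lemma:intertwining-M1minus-norm2}, and \ref{lemma:intertwining-M1minus-norm1-2}, making the requisite substitutions for the numerical relations established for the case $\mn=1$. For $i=1,\ldots,\rankL$ let $u^{[i]}$ denote the element of $\mK(0)$ corresponding to $h^{[i]}(-1)\vac$. Since $\omega^{[1]}_{1}u^{[1]}=u^{[1]}$ and $\Har^{[1]}_{3}u^{[1]}=u^{[1]}$, Lemma \ref{lemma:structure-Vlattice-M1}(2) applied to the rank one subalgebra generated by $\omega^{[1]}$, $\Har^{[1]}$, and $E$ yields $\epsilon_{I}(E,u^{[1]})=0$, while Lemma \ref{lemma:structure-Vlattice-M1}(1) together with $\omega^{[1]}_{1}u^{[i]}=\Har^{[1]}_{3}u^{[i]}=0$ forces $\epsilon_{I}(E,u^{[i]})\leq -1$ for $i=2,\ldots,\rankL$; this exactly parallels \eqref{eq:epsilon(ExBh[1](-1)vac)}. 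Thus $\lE=0$ is an admissible bound in Lemma \ref{lemma:N-Basis}, and by Lemma \ref{lemma:bound-index-S} we also have $\epsilon_{I}(S_{i1}(1,1)_{0}E,u^{[1]})\leq 1$ and $\epsilon_{I}(S_{i1}(1,2)_{0}E,u^{[1]})\leq 2$ for $i=2,\ldots,\rankL$.

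Next I would feed $u=u^{[1]}$ and $t=0$ into the four identities \eqref{eq:(1+lE)(Si1(1,1)0ExB)lE+1lu-1}--\eqref{eq:(1+lE)(Si1(1,1)0ExB)lE+1lu-4}, using the formulas in \eqref{eq:norm2Si1(1,1)1lu=-Si1(1,2)2lu-2} for the action of $S_{i1}(1,k)_{k}$ and $\omega^{[1]}_{1}$ on $u^{[1]}$. Specifically, \eqref{eq:(1+lE)(Si1(1,1)0ExB)lE+1lu-1} collapses to $(S_{i1}(1,1)_{0}E)_{1}u^{[1]}+E_{0}u^{[i]}=0$, and \eqref{eq:(1+lE)(Si1(1,1)0ExB)lE+1lu-2} to $(S_{i1}(1,2)_{0}E)_{2}u^{[1]}=0$. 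Substituting these together with $S_{i1}(1,2)_{2}u^{[1]}=-2u^{[i]}$ and $S_{i1}(1,3)_{3}u^{[1]}=3u^{[i]}$ into \eqref{eq:(1+lE)(Si1(1,1)0ExB)lE+1lu-3} produces a linear relation in $(S_{i1}(1,1)_{0}E)_{1}u^{[1]}$ and $E_{0}u^{[i]}$ whose determinant is nonzero, forcing $E_{0}u^{[i]}=0$ and $(S_{i1}(1,1)_{0}E)_{1}u^{[1]}=0$ for each $i=2,\ldots,\rankL$. Consequently $B=\C E_{0}u^{[1]}$.

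It remains to identify the $A(M(1)^{+})$-module structure on $B$ and to lift this to $\mN$. Here I would run the computation of \eqref{eq:omega[1]1E-1lulambda} verbatim, only with $\lambda$ replaced by $\alpha$ (so $\lu_{\lambda}$ replaced by $u^{[1]}$), using the commutation relations from Section \ref{section:Norm-nonzero} specialized to $\mn=1$. This shows that $\omega^{[1]}_{1}E_{0}u^{[1]}=\tfrac{1}{2}E_{0}u^{[1]}$, that $\omega^{[i]}_{1}$, $\Har^{[j]}_{3}$, and all $S_{i1}(1,k)_{k}$ $(k=1,2,3)$ with $i\geq 2$ annihilate $E_{0}u^{[1]}$, and hence $B\cong \C e^{\alpha}$ as $A(M(1)^{+})$-modules. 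Finally, since $\alpha\neq 0$ and $\C e^{\alpha}\not\cong \C\vac$, Corollary \ref{corollary:verma-irreducible} applies to the generalized Verma module built from $B$: the natural surjection from that Verma module onto $M(1)^{+}\cdot B$ is an isomorphism onto $M(1,\alpha)$, giving $\mN=M(1)^{+}\cdot B\cong M(1,\alpha)$, exactly as in the closing paragraph of the proof of Lemma \ref{lemma:intertwining-lambda}. The main subtlety I expect is bookkeeping at the second step, where one must check that the coefficient matrix extracted from \eqref{eq:(1+lE)(Si1(1,1)0ExB)lE+1lu-3}--\eqref{eq:(1+lE)(Si1(1,1)0ExB)lE+1lu-4} really is invertible at $t=0$, $\mn=1$; everything else is routine specialization of the pattern already established in the $\mn\neq 0,\tfrac12,1$ case.
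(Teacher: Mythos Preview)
Your proposal is correct and follows essentially the same approach as the paper: establish $\lE=0$ via Lemma \ref{lemma:structure-Vlattice-M1}, then use the identities \eqref{eq:(1+lE)(Si1(1,1)0ExB)lE+1lu-1}--\eqref{eq:(1+lE)(Si1(1,1)0ExB)lE+1lu-4} at $t=0$ with the data of \eqref{eq:norm2Si1(1,1)1lu=-Si1(1,2)2lu-2} to kill $(S_{i1}(1,1)_{0}E)_{1}u^{[1]}$, $(S_{i1}(1,2)_{0}E)_{2}u^{[1]}$, and $E_{0}u^{[i]}$, leaving $B=\C E_{0}u^{[1]}$; then identify $B\cong\C e^{\alpha}$ and invoke Corollary \ref{corollary:verma-irreducible}. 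The paper happens to cite \eqref{eq:(1+lE)(Si1(1,1)0ExB)lE+1lu-4} rather than \eqref{eq:(1+lE)(Si1(1,1)0ExB)lE+1lu-3} for the second linear relation, but either works (your determinant is $-27$). Two small bookkeeping points: since you already deduced $\epsilon_{I}(E,u^{[i]})\leq -1$ from Lemma \ref{lemma:structure-Vlattice-M1}(1), the ``linear system'' step is redundant---\eqref{eq:(1+lE)(Si1(1,1)0ExB)lE+1lu-1} alone then gives $(S_{i1}(1,1)_{0}E)_{1}u^{[1]}=0$; and for the final $A(M(1)^{+})$-module identification you should use the formulas in Section \ref{section:Norm-1} rather than Section \ref{section:Norm-nonzero} (the latter has $\mn-1$ in denominators) and model the computation on Lemma \ref{lemma:intertwining-M1minus} rather than on \eqref{eq:omega[1]1E-1lulambda}, which is the $\lE=-1$ case.
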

\begin{proof} 
For $i=1,\ldots,\rankL$,
$u^{[i]}$ denotes the element of $\mK(0)$ 
corresponding to $h^{[i]}(-1)\vac$.
The same argument as in Section \ref{section:The case that K(0)cong M(1)-(0)}
shows that $\lE=0$ and $\epsilon(S_{i1}(1,1)_{0}E,u^{[1]})\leq 1$
for $i=2,\ldots,\rankL$.
By \eqref{eq:norm2Si1(1,1)1lu=-Si1(1,2)2lu-2},
\eqref{eq:(1+lE)(Si1(1,1)0ExB)lE+1lu-1},
\eqref{eq:(1+lE)(Si1(1,1)0ExB)lE+1lu-2}, and 
\eqref{eq:(1+lE)(Si1(1,1)0ExB)lE+1lu-4}
\begin{align}
\label{eq:(Si1(1,1)0E)1lu=(Si1(1,2)0E)1lu-2}
0&=(S_{i1}(1,1)_{0}E)_{1}u^{[1]}
=(S_{i1}(1,2)_{0}E)_{1}u^{[1]}
=\ExB_{0}u^{[i]}
\end{align}
for all $i=2,\ldots,\rankL$ and hence
$B=\C \ExB_{0}u^{[1]}$.
The same argument as in the proof of Lemma \ref{lemma:intertwining-M1minus}
shows that $U\cong \C e^{\alpha}$ and $N\cong M(1,\alpha)$.
\end{proof}

\subsection{The case that $\langle\alpha,\alpha\rangle=0$}
Throughout this subsection, 
\begin{align}
\langle\alpha,\alpha\rangle=0.
\end{align}
In this case we can take an orthonormal basis $h^{[1]},h^{[2]},\ldots,h^{[\rankL]}$ of $\fh$ so that
\begin{align}
\label{eq:langlealphah[2]rangle=sqrt-1}
0\neq \langle\alpha,h^{[2]}\rangle=\sqrt{-1}\langle\alpha,h^{[1]}\rangle
\mbox{ and }\langle\alpha,h^{[i]}\rangle=0
\end{align}
for all $i=3,\ldots,\rankL$.
Since for any pair $c,s\in\C$ such that $c^2+s^2=1$,
the orthonormal basis 
\begin{align}
c h^{[1]}+sh^{[2]}, -s h^{[1]}+c h^{[2]}, h^{[3]},\ldots,h^{[\rankL]} \in\fh
\end{align}
also satisfies the condition \eqref{eq:langlealphah[2]rangle=sqrt-1},
we have the following result.
\begin{lemma}
\label{lemma:infinite-set}
The cardinality of the set
\begin{align}
\Big\{\langle\alpha,h^{[1]}\rangle\ \Big|\ \mbox{\begin{tabular}{l}
$h^{[1]},\ldots,h^{[\rankL]}$ is an orthonormal basis of $\fh$  \\
such that  $0\neq \langle\alpha,h^{[2]}\rangle=\sqrt{-1}\langle\alpha,h^{[1]}\rangle$\\
and $\langle\alpha,h^{[i]}\rangle=0$ for all $i=3,\ldots,\rankL$\end{tabular}}\Big\}
\end{align}
is infinite. 
\end{lemma}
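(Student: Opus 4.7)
The plan is to use exactly the one-parameter family of basis changes highlighted in the paragraph just before the lemma, and show that it already produces infinitely many distinct values of $\langle\alpha,h^{[1]}\rangle$.

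Fix one orthonormal basis $h^{[1]},\dots,h^{[\rankL]}$ of $\fh$ satisfying the three stated conditions, and set $a=\langle\alpha,h^{[1]}\rangle\in\C^{\times}$. For any $c,s\in\C$ with $c^2+s^2=1$, form the new basis
\begin{align*}
\tilde h^{[1]}&=ch^{[1]}+sh^{[2]},&
\tilde h^{[2]}&=-sh^{[1]}+ch^{[2]},&
\tilde h^{[i]}&=h^{[i]}\ (i\ge 3).
\end{align*}
The transition matrix on the span of $h^{[1]},h^{[2]}$ has determinant $c^2+s^2=1$ and satisfies $M^{\mathrm T}M=I$, so $\tilde h^{[1]},\dots,\tilde h^{[\rankL]}$ is again orthonormal. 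Using $\langle\alpha,h^{[2]}\rangle=\sqrt{-1}\,a$, I compute
\begin{align*}
\langle\alpha,\tilde h^{[1]}\rangle&=(c+s\sqrt{-1})\,a,&
\langle\alpha,\tilde h^{[2]}\rangle&=(-s+c\sqrt{-1})\,a=\sqrt{-1}\,(c+s\sqrt{-1})\,a,
\end{align*}
and $\langle\alpha,\tilde h^{[i]}\rangle=0$ for $i\geq 3$. Hence the relation $\langle\alpha,\tilde h^{[2]}\rangle=\sqrt{-1}\,\langle\alpha,\tilde h^{[1]}\rangle$ is preserved, and the constraints of the displayed set are satisfied whenever $c+s\sqrt{-1}\neq 0$.

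The key step is then to parametrize: set
\begin{align*}
c=\tfrac{1}{2}(w+w^{-1}),\qquad s=\tfrac{1}{2\sqrt{-1}}(w-w^{-1})\qquad(w\in\C^{\times}).
\end{align*}
A direct check gives $c^2+s^2=1$ and $c+s\sqrt{-1}=w$, so this gives a valid orthonormal basis for every $w\in\C^{\times}$ and
\begin{align*}
\langle\alpha,\tilde h^{[1]}\rangle=wa.
\end{align*}
As $w$ ranges over $\C^{\times}$, the value $wa$ takes infinitely many distinct values (it takes every nonzero complex value exactly once), so the set in the lemma contains $\{wa\ |\ w\in\C^{\times}\}$ and is therefore infinite.

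There is no real obstacle here; the only thing to be careful about is using the symmetric complex bilinear form (not a Hermitian one), so the orthogonality of the transition matrix is literally $M^{\mathrm T}M=I$, which is precisely the identity $c^2+s^2=1$. Everything else is a one-line calculation.
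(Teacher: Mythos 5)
Your proof is correct and follows exactly the route the paper intends: the paper's justification for this lemma is the paragraph immediately preceding it, which observes that the rotated bases $ch^{[1]}+sh^{[2]},\ -sh^{[1]}+ch^{[2]},\ h^{[3]},\ldots,h^{[\rankL]}$ with $c^2+s^2=1$ all satisfy the required conditions. Your explicit parametrization $c=\tfrac12(w+w^{-1})$, $s=\tfrac{1}{2\sqrt{-1}}(w-w^{-1})$ showing that $\langle\alpha,\tilde h^{[1]}\rangle=wa$ sweeps out all of $a\cdot\C^{\times}$ is a welcome (and correct) filling-in of the step the paper leaves implicit.
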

We write 
\begin{align}
\zone=\langle\alpha,h^{[1]}\rangle\mbox{ and }E=E(\alpha)
\end{align}
for simplicity. 
A direct computation shows that
\begin{align}
\label{eq:z147z121omega22E-1}
0&= \zone^4 (7  \zone^2+1)\omega^{[2]}_{-2 } E 
- \zone^2 (7  \zone^4- \zone^2-4)\omega^{[1]}_{-2 } E \nonumber\\&\quad{}
+2  \zone^4 (3  \zone-1) (3  \zone+1)\omega_{0 } \omega^{[2]}_{-1 } E 
-2  \zone^2 (9  \zone^4+ \zone^2+2)\omega_{0 } \omega^{[1]}_{-1 } E \nonumber\\&\quad{}
+ \zone^2 (3  \zone-1) (3  \zone+1)\omega_{0 }^3 E 
- \zone^2 (7  \zone^4+5  \zone^2+2)\sqrt{-1}S_{21}(1,1)_{-2 } E \nonumber\\&\quad{}
-2  \zone^2 (3  \zone^2-1) (3  \zone^2+1)\sqrt{-1}\omega_{0 } S_{21}(1,1)_{-1 } E 
+10  \zone^4\sqrt{-1}S_{21}(1,2)_{-1 } E \nonumber\\&\quad{}
+4  \zone^2\omega^{[2]}_{-1 } (\omega_{0}^{[1]}E) 
+4  \zone^2\omega^{[1]}_{-1 } (\omega_{0}^{[1]}E) 
+2  \zone^2\omega_{0 }^2 (\omega_{0}^{[1]}E)_{-1},\\
\label{eq:z147z121omega22E-2}
0&=
-2 \zone^4 (2 \zone^2+1)^2 (7 \zone^2+1)\omega^{[2]}_{-3 } E\nonumber\\&\quad{}
+2 \zone^2 (7 \zone^2+1) (4 \zone^6-4 \zone^4+9 \zone^2+3)\omega^{[1]}_{-3 } E\nonumber\\&\quad{}
-8 \zone^2 (7 \zone^2+1) (2 \zone^4+1)\omega^{[1]}_{-1 } \omega^{[1]}_{-1 } E\nonumber\\&\quad{}
+12 \zone^2 (7 \zone^2+1) (2 \zone^4+1)H^{[1]}_{-1 } E\nonumber\\&\quad{}
+2 \zone^2 (16 \zone^6-27 \zone^2-4)\omega_{0 } \omega^{[1]}_{-2 } E\nonumber\\&\quad{}
+8 \zone^6 (8 \zone^4-17 \zone^2-3)\omega_{0 } \omega_{0 } \omega^{[2]}_{-1 } E\nonumber\\&\quad{}
-8 \zone^2 (8 \zone^8-9 \zone^6-11 \zone^4-8 \zone^2-1)\omega_{0 } \omega_{0 } \omega^{[1]}_{-1 } E\nonumber\\&\quad{}
+4 \zone^4 (8 \zone^4-17 \zone^2-3)\omega_{0 } \omega_{0 } \omega_{0 } \omega_{0 } E\nonumber\\&\quad{}
+\zone^2 (7 \zone^2+1) (8 \zone^6+4 \zone^4+5 \zone^2-2)\sqrt{-1}S_{21}(1,1)_{-3 } E\nonumber\\&\quad{}
-2 \zone^4 (2 \zone^2+1) (7 \zone^2+1)\sqrt{-1}\omega^{[2]}_{-1 } S_{21}(1,1)_{-1 } E\nonumber\\&\quad{}
-2 \zone^2 (7 \zone^2+1) (6 \zone^4+\zone^2+2)\sqrt{-1}\omega^{[1]}_{-1 } S_{21}(1,1)_{-1 } E\nonumber\\&\quad{}
-\zone^2 (8 \zone^6+8 \zone^4-19 \zone^2-3)\sqrt{-1}\omega_{0 } S_{21}(1,1)_{-2 } E\nonumber\\&\quad{}
-\zone^2 (2 \zone^2+1) (32 \zone^6-68 \zone^4-5 \zone^2+1)\sqrt{-1}\omega_{0 } \omega_{0 } S_{21}(1,1)_{-1 } E\nonumber\\&\quad{}
-3 \zone^4 (4 \zone^2+1) (7 \zone^2+1)\sqrt{-1}S_{21}(1,2)_{-2 } E\nonumber\\&\quad{}
+\zone^2 (48 \zone^6+5 \zone^2+1)\sqrt{-1}\omega_{0 } S_{21}(1,2)_{-1 } E\nonumber\\&\quad{}
+2 \zone^2 (7 \zone^2+1) (12 \zone^4+5 \zone^2+1)\sqrt{-1}S_{21}(1,3)_{-1 }E\nonumber\\&\quad{}
-\zone^2 (7 \zone^2+1) (8 \zone^2+1)\omega^{[2]}_{-2 } (\omega_{0}^{[1]}E)\nonumber\\&\quad{}
-(7 \zone^2+1) (8 \zone^4-5 \zone^2+6)\omega^{[1]}_{-2 } (\omega_{0}^{[1]}E)\nonumber\\&\quad{}
+2 \zone^2 (32 \zone^4-\zone^2-1)\omega_{0 } \omega^{[2]}_{-1 } (\omega_{0}^{[1]}E)\nonumber\\&\quad{}
+2 (32 \zone^6-15 \zone^4+11 \zone^2+2)\omega_{0 } \omega^{[1]}_{-1 } (\omega_{0}^{[1]}E)\nonumber\\&\quad{}
+(8 \zone^2+1) (4 \zone^4-5 \zone^2-1)\omega_{0 } \omega_{0 } \omega_{0 } (\omega_{0}^{[1]}E),
\end{align}
\begin{align}
\label{eq:z147z121omega22E-3}
0&=
8  \zone^6 (2  \zone^2+1)^2 (7  \zone^2+1)\omega^{[2]}_{-1 } \omega^{[2]}_{-1 } E\nonumber\\&\quad{}
+4  \zone^2 (7  \zone^2+1) (52  \zone^6+38  \zone^4+36  \zone^2+9)\omega^{[1]}_{-3 } E\nonumber\\&\quad{}
-8  \zone^2 (7  \zone^2+1) (4  \zone^8+12  \zone^6+35  \zone^4+15  \zone^2+6)\omega^{[1]}_{-1 } \omega^{[1]}_{-1 } E\nonumber\\&\quad{}
+12  \zone^2 (7  \zone^2+1) (8  \zone^6+34  \zone^4+15  \zone^2+6)H^{[1]}_{-1 } E\nonumber\\&\quad{}
+ \zone^2 (32  \zone^8-164  \zone^6-120  \zone^4-151  \zone^2-8)\omega_{0 } \omega^{[1]}_{-2 } E\nonumber\\&\quad{}
-8  \zone^4 (160  \zone^8+26  \zone^6+39  \zone^4+7  \zone^2+2)\omega_{0 } \omega_{0 } \omega^{[2]}_{-1 } E\nonumber\\&\quad{}
+4  \zone^2 (320  \zone^10+428  \zone^8+442  \zone^6+200  \zone^4+57  \zone^2+2)\omega_{0 } \omega_{0 } \omega^{[1]}_{-1 } E\nonumber\\&\quad{}
-10  \zone^2 (64  \zone^8+16  \zone^6+22  \zone^4+5  \zone^2+1)\omega_{0 } \omega_{0 } \omega_{0 } \omega_{0 } E\nonumber\\&\quad{}
+2  \zone^2 (7  \zone^2+1) (2  \zone^6-29  \zone^4-9  \zone^2-6)\sqrt{-1}S_{21}(1,1)_{-3 } E\nonumber\\&\quad{}
-4  \zone^4 (2  \zone^2+1) (7  \zone^2+1) (4  \zone^4+4  \zone^2+3)\sqrt{-1}\omega^{[2]}_{-1 } S_{21}(1,1)_{-1 } E\nonumber\\&\quad{}
-8  \zone^2 (7  \zone^2+1) (4  \zone^8+10  \zone^6+22  \zone^4+9  \zone^2+3)\sqrt{-1}\omega^{[1]}_{-1 } S_{21}(1,1)_{-1 } E\nonumber\\&\quad{}
- \zone^2 (64  \zone^8+80  \zone^6+138  \zone^4+ \zone^2+2)\sqrt{-1}\omega_{0 } S_{21}(1,1)_{-2 } E\nonumber\\&\quad{}
+2  \zone^2 (2  \zone^2+1) (320  \zone^8+80  \zone^6+166  \zone^4+47  \zone^2+7)\sqrt{-1}\omega_{0 } \omega_{0 } S_{21}(1,1)_{-1 } E\nonumber\\&\quad{}
+2  \zone^6 (7  \zone^2+1) (34  \zone^2+5)\sqrt{-1}S_{21}(1,2)_{-2 } E\nonumber\\&\quad{}
+ \zone^2 (160  \zone^8+848  \zone^6+494  \zone^4+127  \zone^2+6)\sqrt{-1}\omega_{0 } S_{21}(1,2)_{-1 } E\nonumber\\&\quad{}
+6  \zone^2 (7  \zone^2+1) (16  \zone^6+38  \zone^4+15  \zone^2+2)\sqrt{-1}S_{21}(1,3)_{-1 }E\nonumber\\&\quad{}
+6  \zone^2 (7  \zone^2+1) (2  \zone^4-5  \zone^2-1)\omega^{[2]}_{-2 } (\omega_{0}^{[1]}E)\nonumber\\&\quad{}
+6 (7  \zone^2+1) (2  \zone^6-23  \zone^4-10  \zone^2-6)\omega^{[1]}_{-2 } (\omega_{0}^{[1]}E)\nonumber\\&\quad{}
-2  \zone^2 (24  \zone^6-226  \zone^4-87  \zone^2-14)\omega_{0 } \omega^{[2]}_{-1 } (\omega_{0}^{[1]}E)\nonumber\\&\quad{}
-2 (24  \zone^8-478  \zone^6-249  \zone^4-116  \zone^2-12)\omega_{0 } \omega^{[1]}_{-1 } (\omega_{0}^{[1]}E)\nonumber\\&\quad{}
-(640  \zone^8+464  \zone^6+314  \zone^4+73  \zone^2+6)\omega_{0 } \omega_{0 } \omega_{0 } (\omega_{0}^{[1]}E),
\end{align}
\begin{align}
\label{eq:z147z121omega22E-4}
0&=
-12 \zone^4 (2 \zone^2+1)^2 (7 \zone^2+1)H^{[2]}_{-1 } E\nonumber\\&\quad{}
-8 \zone^2 (2 \zone^2+3) (5 \zone^2+1) (7 \zone^2+1)\omega^{[1]}_{-3 } E\nonumber\\&\quad{}
+8 \zone^2 (7 \zone^2+1) (8 \zone^4+18 \zone^2+13)\omega^{[1]}_{-1 } \omega^{[1]}_{-1 } E\nonumber\\&\quad{}
+12 \zone^2 (7 \zone^2+1) (4 \zone^6-4 \zone^4-17 \zone^2-13)H^{[1]}_{-1 } E\nonumber\\&\quad{}
-\zone^2 (512 \zone^6+180 \zone^4-144 \zone^2+7)\omega_{0 } \omega^{[1]}_{-2 } E\nonumber\\&\quad{}
+16 \zone^4 (62 \zone^6+79 \zone^4+15 \zone^2+3)\omega_{0 } \omega_{0 } \omega^{[2]}_{-1 } E\nonumber\\&\quad{}
-4 \zone^2 (248 \zone^8+564 \zone^6+424 \zone^4+142 \zone^2+5)\omega_{0 } \omega_{0 } \omega^{[1]}_{-1 } E\nonumber\\&\quad{}
+8 \zone^2 (62 \zone^6+79 \zone^4+15 \zone^2+3)\omega_{0 } \omega_{0 } \omega_{0 } \omega_{0 } E\nonumber\\&\quad{}
+2 \zone^2 (4 \zone^2+11) (7 \zone^2+1) (2 \zone^4+3 \zone^2+2)\sqrt{-1}S_{21}(1,1)_{-3 } E\nonumber\\&\quad{}
+8 \zone^4 (2 \zone^2+1) (7 \zone^2+1)\sqrt{-1}\omega^{[2]}_{-1 } S_{21}(1,1)_{-1 } E\nonumber\\&\quad{}
+4 \zone^2 (7 \zone^2+1) (12 \zone^4+20 \zone^2+13)\sqrt{-1}\omega^{[1]}_{-1 } S_{21}(1,1)_{-1 } E\nonumber\\&\quad{}
-\zone^2 (208 \zone^6+400 \zone^4+238 \zone^2+15)\sqrt{-1}\omega_{0 } S_{21}(1,1)_{-2 } E\nonumber\\&\quad{}
-4 \zone^2 (2 \zone^2+1) (124 \zone^6+158 \zone^4+23 \zone^2+5)\sqrt{-1}\omega_{0 } \omega_{0 } S_{21}(1,1)_{-1 } E\nonumber\\&\quad{}
-6 \zone^2 (7 \zone^2+1) (8 \zone^6+18 \zone^4+15 \zone^2+3)\sqrt{-1}S_{21}(1,2)_{-2 } E\nonumber\\&\quad{}
-\zone^2 (96 \zone^6+48 \zone^4+50 \zone^2-5)\sqrt{-1}\omega_{0 } S_{21}(1,2)_{-1 } E\nonumber\\&\quad{}
+2 \zone^2 (7 \zone^2+1) (48 \zone^6-26 \zone^2-13)\sqrt{-1}S_{21}(1,3)_{-1 }E\nonumber\\&\quad{}
-2 \zone^2 (2 \zone^2+7) (7 \zone^2+1)\omega^{[2]}_{-2 } (\omega_{0}^{[1]}E)\nonumber\\&\quad{}
-2 (7 \zone^2+1) (2 \zone^4-35 \zone^2-39)\omega^{[1]}_{-2 } (\omega_{0}^{[1]}E)\nonumber\\&\quad{}
-2 \zone^2 (8 \zone^4-34 \zone^2+11)\omega_{0 } \omega^{[2]}_{-1 } (\omega_{0}^{[1]}E)\nonumber\\&\quad{}
-2 (8 \zone^6+162 \zone^4+221 \zone^2+26)\omega_{0 } \omega^{[1]}_{-1 } (\omega_{0}^{[1]}E)\nonumber\\&\quad{}
+496 \zone^6+624 \zone^4+154 \zone^2+13\omega_{0 } \omega_{0 } \omega_{0 } (\omega_{0}^{[1]}E),
\end{align}
\begin{align}
\label{eq:z147z121omega22E-5}
0&=144 \zone^4 (2 \zone^2+1) (7 \zone^2+1)\omega^{[2]}_{-1 } \omega^{[1]}_{-1 } \ExB 
\nonumber\\&\quad{}
+8 \zone^2 (7 \zone^2+1) (80 \zone^4+61 \zone^2+18)\omega^{[1]}_{-3 } \ExB 
\nonumber\\&\quad{}
-24 \zone^2 (7 \zone^2+1) (12 \zone^4+34 \zone^2+9)\omega^{[1]}_{-1 } \omega^{[1]}_{-1 } \ExB 
\nonumber\\&\quad{}
+12 \zone^2 (7 \zone^2+1) (32 \zone^4+100 \zone^2+27)\Har^{[1]}_{-1 } \ExB 
\nonumber\\&\quad{}
+3 \zone^2 (8 \zone^2+3) (96 \zone^4-74 \zone^2-11)\omega_{0 } \omega^{[1]}_{-2 } \ExB 
\nonumber\\&\quad{}
-24 \zone^4 (16 \zone^2+3) (16 \zone^4+5 \zone^2+1)\omega_{0 } \omega_{0 } \omega^{[2]}_{-1 } \ExB 
\nonumber\\&\quad{}
+12 \zone^2 (512 \zone^8+768 \zone^6+530 \zone^4+130 \zone^2+9)\omega_{0 } \omega_{0 } \omega^{[1]}_{-1 } \ExB 
\nonumber\\&\quad{}
-12 \zone^2 (16 \zone^2+3) (16 \zone^4+5 \zone^2+1)\omega_{0 } \omega_{0 } \omega_{0 } \omega_{0 } \ExB 
\nonumber\\&\quad{}
-6 \zone^2 (\zone^2+2) (7 \zone^2+1) (16 \zone^2+3)\sqrt{-1}S_{21}(1,1)_{-3 } \ExB 
\nonumber\\&\quad{}
-12 \zone^2 (7 \zone^2+1) (24 \zone^4+40 \zone^2+9)\sqrt{-1}\omega^{[1]}_{-1 } S_{21}(1,1)_{-1 } \ExB 
\nonumber\\&\quad{}
+3 \zone^2 (16 \zone^2+3) (16 \zone^4+20 \zone^2+1)\sqrt{-1}\omega_{0 } S_{21}(1,1)_{-2 } \ExB 
\nonumber\\&\quad{}
+12 \zone^2 (2 \zone^2+1) (16 \zone^2+3) (16 \zone^4+5 \zone^2+1)\sqrt{-1}\omega_{0 } \omega_{0 } S_{21}(1,1)_{-1 } \ExB 
\nonumber\\&\quad{}
+6 \zone^2 (3 \zone^2+1) (7 \zone^2+1) (16 \zone^2+3)\sqrt{-1}S_{21}(1,2)_{-2 } \ExB 
\nonumber\\&\quad{}
+3 \zone^2 (16 \zone^2+3) (16 \zone^4+20 \zone^2+1)\sqrt{-1}\omega_{0 } S_{21}(1,2)_{-1 } \ExB 
\nonumber\\&\quad{}
+6 \zone^2 (4 \zone^2+3) (7 \zone^2+1) (16 \zone^2+3)\sqrt{-1}S_{21}(1,3)_{-1 } \ExB 
\nonumber\\&\quad{}
+6 \zone^2 (7 \zone^2+1) (16 \zone^2+3)\omega^{[2]}_{-2 } (\omega^{[1]}_{0}\ExB) 
\nonumber\\&\quad{}
+6 (7 \zone^2+1) (16 \zone^4-81 \zone^2-27)\omega^{[1]}_{-2 } (\omega^{[1]}_{0}\ExB) 
\nonumber\\&\quad{}
-6 \zone^2 (8 \zone^2-1) (16 \zone^2+3)\omega_{0 } \omega^{[2]}_{-1 } (\omega^{[1]}_{0}\ExB) 
\nonumber\\&\quad{}
-6 (128 \zone^6-384 \zone^4-185 \zone^2-18)\omega_{0 } \omega^{[1]}_{-1 } (\omega^{[1]}_{0}\ExB) 
\nonumber\\&\quad{}
-3 (4 \zone^2+1) (16 \zone^2+3)^2\omega_{0 } \omega_{0 } \omega_{0 } (\omega^{[1]}_{0}\ExB).
\end{align}
For $k=3,4,\ldots,\rankL$,
\begin{align}
\label{eq:-2z12omega21Sk1110E-1}
0&=-2  \zone^2\omega^{[2]}_{-1 } (S_{k 1}(1,1)_{0}E) 
-2  \zone^2\omega^{[1]}_{-1 } (S_{k 1}(1,1)_{0}E) 
\nonumber\\&\quad{}
- \zone^2\omega_{0 } \omega_{0 } (S_{k 1}(1,1)_{0}E) 
- \zone^2 ( \zone^2+1)S_{k 1}(1,1)_{-2 } E 
\nonumber\\&\quad{}
+ \zone^2 ( \zone^2+1)\omega_{0 } S_{k 1}(1,1)_{-1 } E 
+3  \zone^4S_{k 1}(1,2)_{-1 } E 
\nonumber\\&\quad{}
- \zone^2 ( \zone-1) ( \zone+1)\sqrt{-1}S_{k 2}(1,1)_{-2 } E \nonumber\\&\quad{}
+ \zone^2 ( \zone-1) ( \zone+1)\sqrt{-1}\omega_{0 } S_{k 2}(1,1)_{-1 } E 
\nonumber\\&\quad{}
+3  \zone^4\sqrt{-1}S_{k 2}(1,2)_{-1 } E,\\
\label{eq:-2z12omega21Sk1110E-2}
0&=
-12  \zone^2  (\zone-1)  (\zone+1)  (4  \zone^4+26  \zone^2+3)\omega^{[k]}_{-2 } (S_{k 1}(1,1)_{0}E) 
\nonumber\\&\quad{}
+6  \zone^2  (\zone-1)  (\zone+1)  (4  \zone^4+8  \zone^2+1)\omega^{[2]}_{-2 } (S_{k 1}(1,1)_{0}E) 
\nonumber\\&\quad{}
+6  \zone^2  (\zone-1)  (\zone+1)  (4  \zone^4+8  \zone^2+1)\omega^{[1]}_{-2 } (S_{k 1}(1,1)_{0}E) 
\nonumber\\&\quad{}
+8  \zone^2  (\zone-1)  (\zone+1)  (4  \zone^4+26  \zone^2+3)\omega_{0 } \omega^{[k]}_{-1 } (S_{k 1}(1,1)_{0}E) 
\nonumber\\&\quad{}
-12  \zone^2  (4  \zone^6+10  \zone^4-9  \zone^2-2)\omega_{0 } \omega^{[2]}_{-1 } (S_{k 1}(1,1)_{0}E) 
\nonumber\\&\quad{}
-12  \zone^2  (4  \zone^6+10  \zone^4-9  \zone^2-2)\omega_{0 } \omega^{[1]}_{-1 } (S_{k 1}(1,1)_{0}E) 
\nonumber\\&\quad{}
-6  \zone^2  (20  \zone^4-14  \zone^2-3)\omega_{0 } \omega_{0 } \omega_{0 } (S_{k 1}(1,1)_{0}E) 
\nonumber\\&\quad{}
-8  \zone^4  (\zone-1)  (\zone+1)  (4  \zone^4+26  \zone^2+3)\omega^{[k]}_{-1 } S_{k 1}(1,1)_{-1 } E 
\nonumber\\&\quad{}
+6  \zone^4  (\zone-1)  (\zone+1)  (2  \zone^2-1)  (2  \zone^2+5)S_{k 1}(1,1)_{-3 } E 
\nonumber\\&\quad{}
+24  \zone^4  (\zone-1)  (\zone+1)  (2  \zone^4+4  \zone^2+1)\omega^{[2]}_{-1 } S_{k 1}(1,1)_{-1 } E 
\nonumber\\&\quad{}
+48  \zone^6  (\zone-1)  (\zone+1)  (\zone^2+1)\omega^{[1]}_{-1 } S_{k 1}(1,1)_{-1 } E 
\nonumber\\&\quad{}
+12  \zone^2  (2  \zone^2-1)  (\zone^4-3  \zone^2-1)\omega_{0 } S_{k 1}(1,1)_{-2 } E 
\nonumber\\&\quad{}
+12  \zone^2  (10  \zone^6-2  \zone^4-4  \zone^2-1)\omega_{0 } \omega_{0 } S_{k 1}(1,1)_{-1 } E 
\nonumber\\&\quad{}
-6  \zone^2  (\zone-1)  (\zone+1)  (2  \zone^2-1)  (4  \zone^2+1)S_{k 1}(1,2)_{-2 } E 
\nonumber\\&\quad{}
+6  \zone^2  (2  \zone^2-1)  (10  \zone^4-1)\omega_{0 } S_{k 1}(1,2)_{-1 } E 
\nonumber\\&\quad{}
-8  \zone^4  (\zone-1)  (\zone+1)  (4  \zone^4+26  \zone^2+3)\sqrt{-1}\omega^{[k]}_{-1 } S_{k 2}(1,1)_{-1 } E 
\nonumber\\&\quad{}
+6  \zone^2  (\zone-1)  (\zone+1)  (2  \zone^2+1)  (2  \zone^4+7  \zone^2+2)\sqrt{-1}S_{k 2}(1,1)_{-3 } E 
\nonumber\\&\quad{}
+12  \zone^4  (\zone-1)  (\zone+1)  (4  \zone^4+6  \zone^2+1)\sqrt{-1}\omega^{[2]}_{-1 } S_{k 2}(1,1)_{-1 } E 
\nonumber\\&\quad{}
+12  \zone^4  (\zone-1)  (\zone+1)  (4  \zone^4+2  \zone^2-1)\sqrt{-1}\omega^{[1]}_{-1 } S_{k 2}(1,1)_{-1 } E 
\nonumber\\&\quad{}
+6  \zone^2  (\zone-1)  (\zone+1)  (20  \zone^4-2  \zone^2-1)\sqrt{-1}\omega_{0 } \omega_{0 } S_{k 2}(1,1)_{-1 } E 
\nonumber\\&\quad{}
+18  \zone^4  (2  \zone^2-1)  (2  \zone^2+1)\sqrt{-1}\omega_{0 } S_{k 2}(1,2)_{-1 } E 
\nonumber\\&\quad{}
-12  \zone^4  (\zone-1)  (\zone+1)  (2  \zone^2+1)\sqrt{-1}S_{k 2}(1,3)_{-1 } E.
\end{align}
We also have 
\begin{align}
\label{eq:R_1+R_2+R_3}
0&=R_1+R_2+R_3
\end{align}
where
\begin{align}
R_1&=
-120  \zone^4  (2  \zone^2+1)  (1536  \zone^{10}-2336  \zone^8+2764  \zone^6-964  \zone^4-61  \zone^2+54)\Har^{[2]}_{-1 } (S_{k 1}(1,1)_{0}E)
\nonumber\\&\quad{}
+8  \zone^2  (86016  \zone^{14}-298496  \zone^{12}+331728  \zone^{10}-120784  \zone^8
\nonumber\\&\qquad{}
-1232  \zone^6+6360  \zone^4-559  \zone^2+108)\omega^{[2]}_{-3 } (S_{k 1}(1,1)_{0}E)
\nonumber\\&\quad{}
+16  \zone^2  (21504  \zone^{14}-37824  \zone^{12}+38184  \zone^{10}-13924  \zone^8\nonumber\\&\qquad{}-394  \zone^6+379  \zone^4+31  \zone^2+54)\omega^{[2]}_{-1 } \omega^{[2]}_{-1 } (S_{k 1}(1,1)_{0}E)
\nonumber\\&\quad{}
-36  \zone^6  (16  \zone^2-3)  (3072  \zone^{10}-4672  \zone^8+5592  \zone^6-868  \zone^4-30  \zone^2-61)\Har^{[k]}_{-1 } (S_{k 1}(1,1)_{0}E)
\nonumber\\&\quad{}
+16  \zone^2  (21504  \zone^{14}-37824  \zone^{12}+38184  \zone^{10}-13924  \zone^8\nonumber\\&\qquad{}-394  \zone^6+379  \zone^4+31  \zone^2+54)\omega^{[2]}_{-1 } \omega^{[1]}_{-1 } (S_{k 1}(1,1)_{0}E)
\nonumber\\&\quad{}
+72  \zone^4  (2  \zone^2-9)  (2  \zone^2-1)^2  (16  \zone^2-3)  (32  \zone^4-6  \zone^2-3)\omega^{[1]}_{-3 } (S_{k 1}(1,1)_{0}E)
\nonumber\\&\quad{}
-\zone^2  (16  \zone^2-3)  (236544  \zone^{14}-221504  \zone^{12}+306744  \zone^{10}+57724  \zone^8\nonumber\\&\qquad{}-113190  \zone^6+23743  \zone^4+12420  \zone^2-1080)\omega_{0 } \omega^{[k]}_{-2 } (S_{k 1}(1,1)_{0}E)
\nonumber\\&\quad{}
-8  \zone^2  (70656  \zone^{14}-290496  \zone^{12}+347928  \zone^{10}-138404  \zone^8\nonumber\\&\qquad{}-3002  \zone^6+7745  \zone^4-829  \zone^2+108)\omega_{0 } \omega^{[2]}_{-2 } (S_{k 1}(1,1)_{0}E)
\nonumber\\&\quad{}
-24  \zone^4  (16  \zone^2-3)  (768  \zone^{10}-4528  \zone^8+5008  \zone^6-1312  \zone^4-360  \zone^2+81)\omega_{0 } \omega^{[1]}_{-2 } (S_{k 1}(1,1)_{0}E)
\nonumber\\&\quad{}
+10  \zone^2  (16  \zone^2-3)  (15360  \zone^{14}-14144  \zone^{12}+19704  \zone^{10}+3964  \zone^8\nonumber\\&\qquad{}-7542  \zone^6+1591  \zone^4+828  \zone^2-72)\omega_{0 } \omega_{0 } \omega^{[k]}_{-1 } (S_{k 1}(1,1)_{0}E)
\nonumber\\&\quad{}
+16  \zone^2  (6144  \zone^{14}-69504  \zone^{12}+106464  \zone^{10}-46880  \zone^8\nonumber\\&\qquad{}-6002  \zone^6+2981  \zone^4-187  \zone^2+27)\omega_{0 } \omega_{0 } \omega^{[2]}_{-1 } (S_{k 1}(1,1)_{0}E)
\nonumber\\&\quad{}
-72  \zone^4  (16  \zone^2-3)  (768  \zone^8-1136  \zone^6+324  \zone^4+164  \zone^2-9)\omega_{0 } \omega_{0 } \omega^{[1]}_{-1 } (S_{k 1}(1,1)_{0}E)
\nonumber\\&\quad{}
-36  \zone^2  (2  \zone^2+1)  (16  \zone^2-3)  (512  \zone^8-800  \zone^6+256  \zone^4+48  \zone^2-5)\omega_{0 } \omega_{0 } \omega_{0 } \omega_{0 } (S_{k 1}(1,1)_{0}E)
\nonumber\\&\quad{}
+\zone^4  (16  \zone^2-3)  (82944  \zone^{14}-239808  \zone^{12}+351112  \zone^{10}-190300  \zone^8\nonumber\\&\qquad{}+44182  \zone^6+2409  \zone^4-2916  \zone^2+216)\omega^{[k]}_{-2 } S_{k 1}(1,1)_{-1 } E
\nonumber\\&\quad{}
-2  \zone^4  (16  \zone^2-3)  (76800  \zone^{14}-230464  \zone^{12}+339928  \zone^{10}-188564  \zone^8\nonumber\\&\qquad{}+44242  \zone^6+2531  \zone^4-2916  \zone^2+216)\omega^{[k]}_{-1 } S_{k 1}(1,1)_{-2 } E,
\end{align}
\begin{align}
R_2&=
-144  \zone^4  (2  \zone^2-1)  (16  \zone^2-3)  (832  \zone^{10}-1548  \zone^8+1390  \zone^6-375  \zone^4-32  \zone^2+18)S_{k 1}(1,1)_{-4 } E
\nonumber\\&\quad{}
-8  \zone^2  (43008  \zone^{16}-97152  \zone^{14}+49296  \zone^{12}-48056  \zone^{10}+12140  \zone^8\nonumber\\&\qquad{}+5706  \zone^6-1382  \zone^4-85  \zone^2-54)\omega^{[2]}_{-1 } S_{k 1}(1,1)_{-2 } E
\nonumber\\&\quad{}
+8  \zone^2  (21504  \zone^{16}-200640  \zone^{14}+161640  \zone^{12}+500  \zone^{10}-22958  \zone^8\nonumber\\&\qquad{}+1605  \zone^6+410  \zone^4+85  \zone^2+54)\omega^{[2]}_{-2 } S_{k 1}(1,1)_{-1 } E
\nonumber\\&\quad{}
-48  \zone^6  (2  \zone^2-1)  (16  \zone^2-3)  (384  \zone^8-760  \zone^6+310  \zone^4-10  \zone^2-9)\omega^{[1]}_{-1 } S_{k 1}(1,1)_{-2 } E
\nonumber\\&\quad{}
-72  \zone^4  (2  \zone^2-1)^2  (2  \zone^2+1)  (16  \zone^2-3)  (32  \zone^4-6  \zone^2-3)\omega^{[1]}_{-2 } S_{k 1}(1,1)_{-1 } E
\nonumber\\&\quad{}
-10  \zone^4  (16  \zone^2-3)  (15360  \zone^{14}-14144  \zone^{12}+19704  \zone^{10}+3964  \zone^8\nonumber\\&\qquad{}-7542  \zone^6+1591  \zone^4+828  \zone^2-72)\omega_{0 } \omega^{[k]}_{-1 } S_{k 1}(1,1)_{-1 } E
\nonumber\\&{}\quad{}+144  \zone^4  (2  \zone^2-1)^2  (16  \zone^2-3)  (32  \zone^8-114  \zone^6+156  \zone^4-19  \zone^2-6)\omega_{0 } S_{k 1}(1,1)_{-3 } E
\nonumber\\&\quad{}
-8  \zone^2  (12288  \zone^{16}-117504  \zone^{14}+221184  \zone^{12}-95896  \zone^{10}-19988  \zone^8\nonumber\\&\qquad{}+7728  \zone^6-400  \zone^4+85  \zone^2+54)\omega_{0 } \omega^{[2]}_{-1 } S_{k 1}(1,1)_{-1 } E
\nonumber\\&\quad{}
+96  \zone^6  (16  \zone^2-3)  (192  \zone^8-364  \zone^6+136  \zone^4+41  \zone^2-6)\omega_{0 } \omega^{[1]}_{-1 } S_{k 1}(1,1)_{-1 } E
\nonumber\\&\quad{}
+72  \zone^4  (16  \zone^2-3)  (256  \zone^{10}-656  \zone^8+256  \zone^6-24  \zone^4-10  \zone^2-3)\omega_{0 } \omega_{0 } S_{k 1}(1,1)_{-2 } E
\nonumber\\&\quad{}
+24  \zone^4  (16  \zone^2-3)  (1536  \zone^{10}-864  \zone^8-1408  \zone^6+742  \zone^4+221  \zone^2-9)\omega_{0 } \omega_{0 } \omega_{0 } S_{k 1}(1,1)_{-1 } E
\nonumber\\&\quad{}
+2  \zone^4  (16  \zone^2-3)  (76800  \zone^{14}-230464  \zone^{12}+339928  \zone^{10}-188564  \zone^8\nonumber\\&\qquad{}+44242  \zone^6+2531  \zone^4-2916  \zone^2+216)\omega^{[k]}_{-1 } S_{k 1}(1,2)_{-1 } E
\nonumber\\&\quad{}
+288  \zone^4  (2  \zone^2-1)  (16  \zone^2-3)  (256  \zone^{10}-408  \zone^8+303  \zone^6-62  \zone^4-18  \zone^2+6)S_{k 1}(1,2)_{-3 } E
\nonumber\\&\quad{}
-48  \zone^4  (15360  \zone^{12}-10304  \zone^{10}+5704  \zone^8\nonumber\\&\qquad{}-104  \zone^6-1254  \zone^4+245  \zone^2+27)\omega^{[2]}_{-1 } S_{k 1}(1,2)_{-1 } E
\nonumber\\&\quad{}
+24  \zone^4  (2  \zone^2-1)  (16  \zone^2-3)  (768  \zone^{10}-1712  \zone^8\nonumber\\&\qquad{}+1136  \zone^6-284  \zone^4-27  \zone^2+18)\omega^{[1]}_{-1 } S_{k 1}(1,2)_{-1 } E
\nonumber\\&\quad{}
+72  \zone^4  (16  \zone^2-3)  (512  \zone^{10}-1184  \zone^8\nonumber\\&\qquad{}+1016  \zone^6-84  \zone^4-96  \zone^2+15)\omega_{0 } \omega_{0 } S_{k 1}(1,2)_{-1 } E,
\end{align}
and
\begin{align}
R_{3}&=-24  \zone^4  (2  \zone^2-1)^2  (16  \zone^2-3)  (768  \zone^8-976  \zone^6+802  \zone^4+11  \zone^2-90)S_{k 1}(1,3)_{-2 } E
\nonumber\\&\quad{}
-24  \zone^4  (2  \zone^2-1)  (16  \zone^2-3)  (896  \zone^8-1752  \zone^6+618  \zone^4+263  \zone^2-78)\omega_{0 } S_{k 1}(1,3)_{-1 } E
\nonumber\\&\quad{}
+\zone^4  (16  \zone^2-3)  (82944  \zone^{14}-215232  \zone^{12}+313736  \zone^{10}-145564  \zone^8\nonumber\\&\qquad{}+37238  \zone^6+2169  \zone^4-3404  \zone^2+216)\sqrt{-1}\omega^{[k]}_{-2 } S_{k 2}(1,1)_{-1 } E
\nonumber\\&\quad{}
-2  \zone^4  (16  \zone^2-3)  (76800  \zone^{14}-205888  \zone^{12}+302552  \zone^{10}-143828  \zone^8\nonumber\\&\qquad{}+37298  \zone^6+2291  \zone^4-3404  \zone^2+216)\sqrt{-1}\omega^{[k]}_{-1 } S_{k 2}(1,1)_{-2 } E
\nonumber\\&\quad{}
-24  \zone^4  (2  \zone^2+1)  (16  \zone^2-3)  (3456  \zone^{10}-5416  \zone^8+3176  \zone^6
\nonumber\\&\qquad{}-734  \zone^4-395  \zone^2+162)\sqrt{-1}S_{k 2}(1,1)_{-4 } E
\nonumber\\&\quad{}
-8  \zone^2  (2  \zone^2+1)  (21504  \zone^{14}-56256  \zone^{12}+41832  \zone^{10}-15568  \zone^8\nonumber\\&\qquad{}+2582  \zone^6-38  \zone^4-131  \zone^2+54)\sqrt{-1}\omega^{[2]}_{-1 } S_{k 2}(1,1)_{-2 } E
\nonumber\\&\quad{}
+8  \zone^2  (21504  \zone^{16}-216000  \zone^{14}+214440  \zone^{12}-89908  \zone^{10}+12174  \zone^8\nonumber\\&\qquad{}+4139  \zone^6-921  \zone^4-58  \zone^2-54)\sqrt{-1}\omega^{[2]}_{-2 } S_{k 2}(1,1)_{-1 } E
\nonumber\\&\quad{}
-48  \zone^6  (16  \zone^2-3)  (768  \zone^{10}-2288  \zone^8+1868  \zone^6-422  \zone^4+30  \zone^2-9)\sqrt{-1}\omega^{[1]}_{-1 } S_{k 2}(1,1)_{-2 } E
\nonumber\\&\quad{}
-10  \zone^4  (16  \zone^2-3)  (15360  \zone^{14}-14144  \zone^{12}+19704  \zone^{10}+3964  \zone^8\nonumber\\&\qquad{}-7542  \zone^6+1591  \zone^4+828  \zone^2-72)\sqrt{-1}\omega_{0 } \omega^{[k]}_{-1 } S_{k 2}(1,1)_{-1 } E
\nonumber\\&\quad{}
+144  \zone^4  (2  \zone^2+1)  (16  \zone^2-3)  (64  \zone^{10}+124  \zone^8-182  \zone^6-12  \zone^4+29  \zone^2+9)\sqrt{-1}\omega_{0 } S_{k 2}(1,1)_{-3 } E
\nonumber\\&\quad{}
-8  \zone^2  (12288  \zone^{16}-123648  \zone^{14}+220032  \zone^{12}-133048  \zone^{10}+10836  \zone^8\nonumber\\&\qquad{}+5888  \zone^6-942  \zone^4+23  \zone^2-54)\sqrt{-1}\omega_{0 } \omega^{[2]}_{-1 } S_{k 2}(1,1)_{-1 } E
\nonumber\\&\quad{}
+24  \zone^4  (16  \zone^2-3)  (1536  \zone^{10}-2016  \zone^8-184  \zone^6+586  \zone^4-25  \zone^2-18)\sqrt{-1}\omega_{0 } \omega_{0 } \omega_{0 } S_{k 2}(1,1)_{-1 } E
\nonumber\\&\quad{}
+2  \zone^4  (16  \zone^2-3)  (76800  \zone^{14}-205888  \zone^{12}+302552  \zone^{10}-143828  \zone^8\nonumber\\&\qquad{}+37298  \zone^6+2291  \zone^4-3404  \zone^2+216)\sqrt{-1}\omega^{[k]}_{-1 } S_{k 2}(1,2)_{-1 } E
\nonumber\\&\quad{}
+24  \zone^4  (2  \zone^2+1)  (16  \zone^2-3)  (1536  \zone^{10}-3328  \zone^8+1400  \zone^6
\nonumber\\&\qquad{}
+220  \zone^4-191  \zone^2+18)\sqrt{-1}S_{k 2}(1,2)_{-3 } E
\nonumber\\&\quad{}
+24  \zone^6  (16  \zone^2-3)  (1536  \zone^{10}-2656  \zone^8+3376  \zone^6-1504  \zone^4+150  \zone^2+27)\sqrt{-1}\omega^{[1]}_{-1 } S_{k 2}(1,2)_{-1 } E
\nonumber\\&\quad{}
+24  \zone^4  (2  \zone^2+1)  (16  \zone^2-3)  (256  \zone^8+1168  \zone^6-622  \zone^4+23  \zone^2+12)\sqrt{-1}\omega_{0 } S_{k 2}(1,3)_{-1 } E.
\end{align}
The following results follow from \eqref{eq:z147z121omega22E-1}--\eqref{eq:-2z12omega21Sk1110E-2}:
\begin{lemma}
Let $\lE\in\Z$ such that 
$\lE\geq \epsilon_{I}(\ExB,\lu)$ for all non-zero $\lu\in K(0)$.
Let $k\in \{3,\ldots,\rankL\}$.
Taking the $(\lE+3)$-th action of \eqref{eq:z147z121omega22E-1} on $\lu$,
we have
\begin{align}
\label{eq:z12t+1)2((9z1^2-1)t+14-0}
0&=- \zone^2 (t+1)^2 ((9  \zone^2-1) t+14  \zone^2-2)E_{t}\lu\nonumber\\&\quad{}
+2  \zone^2 (t+1) (t+2)(\omega_{0}^{[1]}E)_{t+1}\lu\nonumber\\&\quad{}
-2  \zone^4 ((9  \zone^2-1) t+14  \zone^2-2)E_{t}\omega^{[2]}_{1 }\lu \nonumber\\&\quad{}
+2  \zone^2 ((9  \zone^4+ \zone^2+2) t+14  \zone^4+2  \zone^2+2)E_{t}\omega^{[1]}_{1 }\lu \nonumber\\&\quad{}
+2  \zone^2 ((9  \zone^4-1) t+14  \zone^4+5  \zone^2-1)\sqrt{-1}E_{t}S_{21}(1,1)_{1 }\lu \nonumber\\&\quad{}
+10  \zone^4\sqrt{-1}E_{t}S_{21}(1,2)_{2 }\lu \nonumber\\&\quad{}
+4  \zone^2(\omega_{0}^{[1]}E)_{t+1}\omega^{[2]}_{1 }\lu 
+4  \zone^2(\omega_{0}^{[1]}E)_{t+1}\omega^{[1]}_{1 }\lu.
\end{align}
Taking the $(\lE+4)$-th actions of \eqref{eq:z147z121omega22E-2}--\eqref{eq:z147z121omega22E-4} on $\lu$, we have
\begin{align}
\label{eq:z12t+1)2((9z1^2-1)t+14-1}
0&=
\zone^2 (\lE+1)^2 ((32 \zone^6-68 \zone^4-12 \zone^2) \lE^2
\nonumber\\&\qquad{}
+(-128 \zone^6-284 \zone^4-63 \zone^2+7) \lE-266 \zone^6-265 \zone^4-59 \zone^2+14)E_{\lE } \lu
\nonumber\\&\quad{}
 -(\lE+1) ((32 \zone^6-36 \zone^4-13 \zone^2-1) \lE^2\nonumber\\&\qquad{}+(20 \zone^6-128 \zone^4-56 \zone^2-4) \lE-74 \zone^6-124 \zone^4-62 \zone^2-4)(\omega^{[1]}_{0}\ExB)_{\lE+1 } \lu
\nonumber\\&\quad{}
+2 \zone^4 ((32 \zone^6-68 \zone^4-12 \zone^2) \lE^2\nonumber\\&\qquad{}+(-128 \zone^6-284 \zone^4-63 \zone^2+7) \lE-266 \zone^6-265 \zone^4-59 \zone^2+14)E_{\lE } \omega^{[2]}_{1 } \lu
\nonumber\\&\quad{}
 -2 \zone^2 ((32 \zone^8-36 \zone^6-44 \zone^4-32 \zone^2-4) \lE^2\nonumber\\&\qquad{}+(-128 \zone^8-292 \zone^6-235 \zone^4-131 \zone^2-24) \lE\nonumber\\&\qquad{}-266 \zone^8-367 \zone^6-231 \zone^4-111 \zone^2-21)E_{\lE } \omega^{[1]}_{1 } \lu
\nonumber\\&\quad{}
 -\zone^2 ((64 \zone^8-104 \zone^6-78 \zone^4-3 \zone^2+1) \lE^2\nonumber\\&\qquad{}+(-256 \zone^8-528 \zone^6-246 \zone^4-\zone^2+11) \lE\nonumber\\&\qquad{}-532 \zone^8-880 \zone^6-409 \zone^4-61 \zone^2+10)\sqrt{-1}E_{\lE } S_{21}(1,1)_{1 } \lu
\nonumber\\&\quad{}
 -2 ((32 \zone^6-15 \zone^4+11 \zone^2+2) \lE-28 \zone^8-20 \zone^6-59 \zone^4-3 \zone^2+2)(\omega^{[1]}_{0}\ExB)_{\lE+1 } \omega^{[1]}_{1 } \lu
\nonumber\\&\quad{}
 -8 \zone^2 (7 \zone^2+1) (2 \zone^4+1)E_{\lE } \omega^{[1]}_{1 } \omega^{[1]}_{1 } \lu
\nonumber\\&\quad{}
 -\zone^2 ((48 \zone^6+5 \zone^2+1) \lE-416 \zone^6-287 \zone^4-77 \zone^2)\sqrt{-1}E_{\lE } S_{21}(1,2)_{2 } \lu
\nonumber\\&\quad{}
 -2 \zone^2 ((32 \zone^4-\zone^2-1) \lE+28 \zone^6-12 \zone^4-17 \zone^2-11)(\omega^{[1]}_{0}\ExB)_{\lE+1 } \omega^{[2]}_{1 } \lu
\nonumber\\&\quad{}
+12 \zone^2 (7 \zone^2+1) (2 \zone^4+1)E_{\lE } H^{[1]}_{3 } \lu
\nonumber\\&\quad{}
+4 \zone^2 (7 \zone^2+1) (2 \zone^4+1)\sqrt{-1}(\omega^{[1]}_{0}\ExB)_{\lE+1 } S_{21}(1,1)_{1 } \lu
\nonumber\\&\quad{}
 -2 \zone^4 (2 \zone^2+1) (7 \zone^2+1)\sqrt{-1}E_{\lE} S_{21}(1,1)_{1 } \omega^{[2]}_{1 } \lu
\nonumber\\&\quad{}
 -2 \zone^2 (7 \zone^2+1) (6 \zone^4+\zone^2+2)\sqrt{-1}E_{\lE } S_{21}(1,1)_{1 } \omega^{[1]}_{1 } \lu
\nonumber\\&\quad{}
+2 \zone^2 (7 \zone^2+1) (12 \zone^4+5 \zone^2+1)\sqrt{-1}E_{\lE } S_{21}(1,3)_{3 } \lu,
\end{align}
\begin{align}
\label{eq:z12t+1)2((9z1^2-1)t+14-2}
0&=-2 \zone^2 (\lE+1)^2 ((320 \zone^8+80 \zone^6+110 \zone^4+25 \zone^2+5) \lE^2\nonumber\\&\qquad{}+(832 \zone^8+422 \zone^6+257 \zone^4+79 \zone^2+9) \lE+504 \zone^8+382 \zone^6+71 \zone^4+41 \zone^2-2)E_{\lE } \lu
\nonumber\\&\quad{}
+(\lE+1) ((640 \zone^8+464 \zone^6+314 \zone^4+73 \zone^2+6) \lE^2\nonumber\\&\qquad{}+(1712 \zone^8+1572 \zone^6+982 \zone^4+276 \zone^2+24) \lE\nonumber\\&\qquad{}+1116 \zone^8+1450 \zone^6+810 \zone^4+272 \zone^2+24)(\omega^{[1]}_{0}\ExB)_{\lE+1 } \lu
\nonumber\\&\quad{}
 -4 \zone^4 ((320 \zone^8+52 \zone^6+78 \zone^4+14 \zone^2+4) \lE^2\nonumber\\&\qquad{}+(832 \zone^8+366 \zone^6+193 \zone^4+57 \zone^2+7) \lE\nonumber\\&\qquad{}+504 \zone^8+354 \zone^6+39 \zone^4+30 \zone^2-3)E_{\lE } \omega^{[2]}_{1 } \lu
\nonumber\\&\quad{}
+2 \zone^2 ((640 \zone^10+856 \zone^8+884 \zone^6+400 \zone^4+114 \zone^2+4) \lE^2\nonumber\\&\qquad{}+(1664 \zone^10+2780 \zone^8+2966 \zone^6+1586 \zone^4+473 \zone^2+44) \lE\nonumber\\&\qquad{}
+1008 \zone^10+2048 \zone^8+2272 \zone^6+1302 \zone^4+416 \zone^2+46)E_{\lE } \omega^{[1]}_{1 } \lu
\nonumber\\&\quad{}
+2 \zone^2 ((640 \zone^10+480 \zone^8+412 \zone^6+260 \zone^4+61 \zone^2+7) \lE^2\nonumber\\&\qquad{}+(1664 \zone^10+1676 \zone^8+1048 \zone^6+599 \zone^4+177 \zone^2+17) \lE\nonumber\\&\qquad{}+1008 \zone^10+1380 \zone^8+1072 \zone^6+467 \zone^4+155 \zone^2+10)\sqrt{-1}E_{\lE } S_{21}(1,1)_{1 } \lu
\nonumber\\&\quad{}
+2 \zone^2 ((24 \zone^6-226 \zone^4-87 \zone^2-14) \lE\nonumber\\&\qquad{}-112 \zone^8-480 \zone^6-408 \zone^4-198 \zone^2-14)(\omega^{[1]}_{0}\ExB)_{\lE+1 } \omega^{[2]}_{1 } \lu
\nonumber\\&\quad{}
+8 \zone^6 (2 \zone^2+1)^2 (7 \zone^2+1)E_{\lE } \omega^{[2]}_{1 } \omega^{[2]}_{1 } \lu
\nonumber\\&\quad{}
+2 ((24 \zone^8-478 \zone^6-249 \zone^4-116 \zone^2-12) \lE\nonumber\\&\qquad{}+112 \zone^10+504 \zone^8-104 \zone^6-132 \zone^4-92 \zone^2-12)(\omega^{[1]}_{0}\ExB)_{\lE+1 } \omega^{[1]}_{1 } \lu
\nonumber\\&\quad{}
 -8 \zone^2 (7 \zone^2+1) (4 \zone^8+12 \zone^6+35 \zone^4+15 \zone^2+6)E_{\lE } \omega^{[1]}_{1 } \omega^{[1]}_{1 } \lu
\nonumber\\&\quad{}
 -\zone^2 ((160 \zone^8+848 \zone^6+494 \zone^4+127 \zone^2+6) \lE\nonumber\\&\qquad{}-676 \zone^8-1678 \zone^6-530 \zone^4-80 \zone^2)\sqrt{-1}E_{\lE } S_{21}(1,2)_{2 } \lu
\nonumber\\&\quad{}
+12 \zone^2 (7 \zone^2+1) (8 \zone^6+34 \zone^4+15 \zone^2+6)E_{\lE } H^{[1]}_{3 } \lu
\nonumber\\&\quad{}
+4 \zone^2 (7 \zone^2+1) (8 \zone^6+34 \zone^4+15 \zone^2+6)\sqrt{-1}(\omega^{[1]}_{0}\ExB)_{\lE+1 } S_{21}(1,1)_{1 } \lu
\nonumber\\&\quad{}
 -4 \zone^4 (2 \zone^2+1) (7 \zone^2+1) (4 \zone^4+4 \zone^2+3)\sqrt{-1}E_{\lE } S_{21}(1,1)_{1 } \omega^{[2]}_{1 } \lu
\nonumber\\&\quad{}
 -8 \zone^2 (7 \zone^2+1) (4 \zone^8+10 \zone^6+22 \zone^4+9 \zone^2+3)\sqrt{-1}E_{\lE } S_{21}(1,1)_{1 } \omega^{[1]}_{1 } \lu
\nonumber\\&\quad{}
+6 \zone^2 (7 \zone^2+1) (16 \zone^6+38 \zone^4+15 \zone^2+2)\sqrt{-1}E_{\lE } S_{21}(1,3)_{3 } \lu,
\end{align}
\begin{align}
\label{eq:z12t+1)2((9z1^2-1)t+14-3}
0&=-(\lE+1) ((496 \zone^6+624 \zone^4+154 \zone^2+13) \lE^2\nonumber\\&\qquad{}+(1360 \zone^6+1844 \zone^4+602 \zone^2+52) \lE+932 \zone^6+1402 \zone^4+614 \zone^2+52)(\omega^{[1]}_{0}\ExB)_{\lE+1 } \lu
\nonumber\\&\quad{}
 -2 \zone^2 ((496 \zone^8+1128 \zone^6+848 \zone^4+284 \zone^2+10) \lE^2\nonumber\\&\qquad{}+(1376 \zone^8+3316 \zone^6+2842 \zone^4+1112 \zone^2+129) \lE\nonumber\\&\qquad{}+896 \zone^8+2284 \zone^6+2202 \zone^4+954 \zone^2+132)E_{\lE } \omega^{[1]}_{1 } \lu
\nonumber\\&\quad{}
+2 \zone^2 (\lE+1)^2 ((248 \zone^6+316 \zone^4+60 \zone^2+12) \lE^2\nonumber\\&\qquad{}+(688 \zone^6+922 \zone^4+255 \zone^2+16) \lE+448 \zone^6+620 \zone^4+208 \zone^2-16)E_{\lE } \lu
\nonumber\\&\quad{}
+4 \zone^4 ((248 \zone^6+316 \zone^4+60 \zone^2+12) \lE^2\nonumber\\&\qquad{}+(688 \zone^6+922 \zone^4+255 \zone^2+16) \lE+448 \zone^6+620 \zone^4+208 \zone^2-16)E_{\lE } \omega^{[2]}_{1 } \lu
\nonumber\\&\quad{}
 -2 \zone^2 ((496 \zone^8+880 \zone^6+408 \zone^4+66 \zone^2+10) \lE^2\nonumber\\&\qquad{}+(1376 \zone^8+2532 \zone^6+1488 \zone^4+323 \zone^2+20) \lE\nonumber\\&\qquad{}+728 \zone^8+1664 \zone^6+1274 \zone^4+392 \zone^2+10)\sqrt{-1}E_{\lE } S_{21}(1,1)_{1 } \lu
\nonumber\\&\quad{}
+\zone^2 ((96 \zone^6+48 \zone^4+50 \zone^2-5) \lE\nonumber\\&\qquad{}+1008 \zone^8+292 \zone^6-470 \zone^4-368 \zone^2-18)\sqrt{-1}E_{\lE } S_{21}(1,2)_{2 } \lu
\nonumber\\&\quad{}
+2 \zone^2 ((8 \zone^4-34 \zone^2+11) \lE+112 \zone^6+264 \zone^4+136 \zone^2-20)(\omega^{[1]}_{0}\ExB)_{\lE+1 } \omega^{[2]}_{1 } \lu
\nonumber\\&\quad{}
+2 ((8 \zone^6+162 \zone^4+221 \zone^2+26) \lE\nonumber\\&\qquad{}-112 \zone^8-272 \zone^6-104 \zone^4+138 \zone^2+26)(\omega^{[1]}_{0}\ExB)_{\lE+1 } \omega^{[1]}_{1 } \lu
\nonumber\\&\quad{}
 -12 \zone^4 (2 \zone^2+1)^2 (7 \zone^2+1)E_{\lE } H^{[2]}_{3 } \lu
\nonumber\\&\quad{}
+8 \zone^2 (7 \zone^2+1) (8 \zone^4+18 \zone^2+13)E_{\lE } \omega^{[1]}_{1 } \omega^{[1]}_{1 } \lu
\nonumber\\&\quad{}
+12 \zone^2 (7 \zone^2+1) (4 \zone^6-4 \zone^4-17 \zone^2-13)E_{\lE } H^{[1]}_{3 } \lu
\nonumber\\&\quad{}
 -4 \zone^2 (7 \zone^2+1) (8 \zone^4+18 \zone^2+13)\sqrt{-1}(\omega^{[1]}_{0}\ExB)_{\lE+1 } S_{21}(1,1)_{1 } \lu
\nonumber\\&\quad{}
+8 \zone^4 (2 \zone^2+1) (7 \zone^2+1)\sqrt{-1}E_{\lE } S_{21}(1,1)_{1 } \omega^{[2]}_{1 } \lu
\nonumber\\&\quad{}
+4 \zone^2 (7 \zone^2+1) (12 \zone^4+20 \zone^2+13)\sqrt{-1}E_{\lE } S_{21}(1,1)_{1 } \omega^{[1]}_{1 } \lu
\nonumber\\&\quad{}
+2 \zone^2 (7 \zone^2+1) (48 \zone^6-26 \zone^2-13)\sqrt{-1}E_{\lE } S_{21}(1,3)_{3 } \lu,
\end{align}
\begin{align}
\label{eq:z12t+1)2((9z1^2-1)t+14-4}
0&=
-2 \zone^2 (\lE+1)^2 ((512 \zone^6+256 \zone^4+62 \zone^2+6) \lE^2\nonumber\\&\qquad{}+(1456 \zone^6+1040 \zone^4+211 \zone^2) \lE+980 \zone^6+896 \zone^4+120 \zone^2-24)E_{\lE } \lu
\nonumber\\&\quad{}
+(\lE+1) ((1024 \zone^6+640 \zone^4+132 \zone^2+9) \lE^2\nonumber\\&\qquad{}+(2832 \zone^6+2192 \zone^4+530 \zone^2+36) \lE+1960 \zone^6+2006 \zone^4+550 \zone^2+36)(\omega^{[1]}_{0}\ExB)_{\lE+1 } \lu
\nonumber\\&\quad{}
-4 \zone^4 ((512 \zone^6+256 \zone^4+62 \zone^2+6) \lE^2\nonumber\\&\qquad{}+(1456 \zone^6+1040 \zone^4+211 \zone^2) \lE+980 \zone^6+896 \zone^4+120 \zone^2-24)E_{\lE } \omega^{[ 2]}_{1 } \lu
\nonumber\\&\quad{}
+2 \zone^2 ((1024 \zone^8+1536 \zone^6+1060 \zone^4+260 \zone^2+18) \lE^2\nonumber\\&\qquad{}+(2912 \zone^8+5248 \zone^6+3910 \zone^4+1190 \zone^2+129) \lE\nonumber\\&\qquad{}+1960 \zone^8+4088 \zone^6+3246 \zone^4+1046 \zone^2+120)E_{\lE } \omega^{[1]}_{1 } \lu
\nonumber\\&\quad{}
+2 \zone^2 ((1024 \zone^8+1024 \zone^6+380 \zone^4+74 \zone^2+6) \lE^2\nonumber\\&\qquad{}+(2912 \zone^8+3536 \zone^6+1462 \zone^4+211 \zone^2) \lE\nonumber\\&\qquad{}+1960 \zone^8+3108 \zone^6+1744 \zone^4+278 \zone^2-6)\sqrt{-1}E_{\lE } S_{ 21}(1,1)_{1 } \lu
\nonumber\\&\quad{}
+2 \zone^2 ((128 \zone^4+8 \zone^2-3) \lE\nonumber\\&\qquad{}-336 \zone^6-384 \zone^4+24 \zone^2+36)(\omega^{[1]}_{0}\ExB)_{\lE+1 } \omega^{[ 2]}_{1 } \lu
\nonumber\\&\quad{}
+2 ((128 \zone^6-384 \zone^4-185 \zone^2-18) \lE\nonumber\\&\qquad{}+336 \zone^8+832 \zone^6+44 \zone^4-110 \zone^2-18)(\omega^{[1]}_{0}\ExB)_{\lE+1 } \omega^{[1]}_{1 } \lu
\nonumber\\&\quad{}
+48 \zone^4 (2 \zone^2+1) (7 \zone^2+1)E_{\lE } \omega^{[1]}_{1 } \omega^{[ 2]}_{1 } \lu
\nonumber\\&\quad{}
-8 \zone^2 (7 \zone^2+1) (12 \zone^4+34 \zone^2+9)E_{\lE } \omega^{[1]}_{1 } \omega^{[1]}_{1 } \lu
\nonumber\\&\quad{}
-\zone^2 ((256 \zone^6+368 \zone^4+76 \zone^2+3) \lE-1232 \zone^6-1586 \zone^4-372 \zone^2-6)\sqrt{-1}E_{\lE } S_{ 21}(1,2)_{2 } \lu
\nonumber\\&\quad{}
+4 \zone^2 (7 \zone^2+1) (32 \zone^4+100 \zone^2+27)E_{\lE } H^{[1]}_{3 } \lu
\nonumber\\&\quad{}
+4 \zone^2 (7 \zone^2+1) (24 \zone^4+40 \zone^2+9)\sqrt{-1}(\omega^{[1]}_{0}\ExB)_{\lE+1 } S_{ 21}(1,1)_{1 } \lu
\nonumber\\&\quad{}
-4 \zone^2 (7 \zone^2+1) (24 \zone^4+40 \zone^2+9)\sqrt{-1}E_{\lE } S_{ 21}(1,1)_{1 } \omega^{[1]}_{1 } \lu
\nonumber\\&\quad{}
+2 \zone^2 (4 \zone^2+3) (7 \zone^2+1) (16 \zone^2+3)\sqrt{-1}E_{\lE } S_{ 21}(1,3)_{3 } \lu.
\end{align}
Taking the $(\lE+3)$-th action of \eqref{eq:-2z12omega21Sk1110E-1} on $\lu$,
we have
\begin{align}
\label{eq:z12t+1)2((9z1^2-1)t+14-5}
0&=
	-2 \zone^2 (\lE+1) (\lE+2)(S_{k 1}(1,1)_{0}E)_{\lE+1 } \lu
\nonumber\\&\quad{}
-2 \zone^2 ((\zone^2+1) \lE-\zone^2+1)E_{\lE } S_{k 1}(1,1)_{1 } \lu
\nonumber\\&\quad{}
-2 \zone^2 ((\zone^2-1) \lE-\zone^2-1)\sqrt{-1}E_{\lE } S_{k i}(1,1)_{1 } \lu
\nonumber\\&\quad{}
-4 \zone^2(S_{k 1}(1,1)_{0}E)_{\lE+1 } \omega^{[i]}_{1 } \lu
-4 \zone^2(S_{k 1}(1,1)_{0}E)_{\lE+1 } \omega^{[1]}_{1 } \lu
\nonumber\\&\quad{}
+6 \zone^4E_{\lE } S_{k 1}(1,2)_{2 } \lu
+6 \zone^4\sqrt{-1}E_{\lE } S_{k i}(1,2)_{2 } \lu.
\end{align}
Taking the $(\lE+4)$-th action of \eqref{eq:-2z12omega21Sk1110E-2} on $\lu$,
we have
\begin{align}
\label{eq:z12t+1)2((9z1^2-1)t+14-6}
0&=
\zone^2  (\lE+1)  (\lE+2)  ((60  \zone^4-42  \zone^2-9)  \lE+8  \zone^6+170  \zone^4-115  \zone^2-27)(S_{k 1}(1,1)_{0}E)_{\lE+1 }\lu
\nonumber\\&\quad{}
+\zone^2  ((60  \zone^6-12  \zone^4-24  \zone^2-6)  \lE^2+(8  \zone^8+266  \zone^6-37  \zone^4-135  \zone^2-30)  \lE
\nonumber\\&\qquad{}
-24  \zone^8+120  \zone^6+18  \zone^4-90  \zone^2-24)E_{\lE } S_{k 1}(1,1)_{1 }\lu
\nonumber\\&\quad{}
+\zone^2  ((60  \zone^6-66  \zone^4+3  \zone^2+3)  \lE^2+(8  \zone^8+242  \zone^6-331  \zone^4+42  \zone^2+21)  \lE\nonumber\\&\qquad{}
-24  \zone^8+96  \zone^6-222  \zone^4+60  \zone^2+18)\sqrt{-1}E_{\lE } S_{k 2}(1,1)_{1 }\lu
\nonumber\\&\quad{}
-4  \zone^2  (\zone-1)  (\zone+1)  (4  \zone^4+26  \zone^2+3)  (\lE+1)(S_{k 1}(1,1)_{0}E)_{\lE+1 } \omega^{[k]}_{1 }\lu
\nonumber\\&\quad{}
+6  \zone^2  ((4  \zone^6+10  \zone^4-9  \zone^2-2)  \lE+8  \zone^6+38  \zone^4-27  \zone^2-7)(S_{k 1}(1,1)_{0}E)_{\lE+1 } \omega^{[2]}_{1 }\lu
\nonumber\\&\quad{}
+6  \zone^2  ((4  \zone^6+10  \zone^4-9  \zone^2-2)  \lE+8  \zone^6+38  \zone^4-27  \zone^2-7)(S_{k 1}(1,1)_{0}E)_{\lE+1 } \omega^{[1]}_{1 }\lu
\nonumber\\&\quad{}
-4  \zone^4  (\zone-1)  (\zone+1)  (4  \zone^4+26  \zone^2+3)E_{\lE } S_{k 1}(1,1)_{1 } \omega^{[k]}_{1 }\lu
\nonumber\\&\quad{}
-\zone^2  ((60  \zone^6-30  \zone^4-6  \zone^2+3)  \lE+40  \zone^8+256  \zone^6-146  \zone^4-45  \zone^2+3)E_{\lE } S_{k 1}(1,2)_{2 }\lu
\nonumber\\&\quad{}
+12  \zone^4  (\zone-1)  (\zone+1)  (2  \zone^2+1)(\omega^{[1]}_{0}\ExB)_{\lE+1 } S_{k 1}(1,1)_{1 }\lu
\nonumber\\&\quad{}
+12  \zone^4  (\zone-1)  (\zone+1)  (2  \zone^4+4  \zone^2+1)E_{\lE } S_{k 1}(1,1)_{1 } \omega^{[2]}_{1 }\lu
\nonumber\\&\quad{}
+24  \zone^6  (\zone-1)  (\zone+1)  (\zone^2+1)E_{\lE } S_{k 1}(1,1)_{1 } \omega^{[1]}_{1 }\lu
\nonumber\\&\quad{}
-4  \zone^4  (\zone-1)  (\zone+1)  (4  \zone^4+26  \zone^2+3)\sqrt{-1}E_{\lE } S_{k 2}(1,1)_{1 } \omega^{[k]}_{1 }\lu
\nonumber\\&\quad{}
-\zone^4  ((36  \zone^4-9)  \lE+40  \zone^6+244  \zone^4-122  \zone^2-54)\sqrt{-1}E_{\lE } S_{k 2}(1,2)_{2 }\lu
\nonumber\\&\quad{}
+12  \zone^4  (\zone-1)  (\zone+1)  (2  \zone^2+1)\sqrt{-1}(\omega^{[1]}_{0}\ExB)_{\lE+1 } S_{k 2}(1,1)_{1 }\lu
\nonumber\\&\quad{}
+6  \zone^4  (\zone-1)  (\zone+1)  (4  \zone^4+6  \zone^2+1)\sqrt{-1}E_{\lE } S_{k 2}(1,1)_{1 } \omega^{[2]}_{1 }\lu
\nonumber\\&\quad{}
+6  \zone^4  (\zone-1)  (\zone+1)  (4  \zone^4+2  \zone^2-1)\sqrt{-1}E_{\lE } S_{k 2}(1,1)_{1 } \omega^{[1]}_{1 }\lu
\nonumber\\&\quad{}
-6  \zone^4  (\zone-1)  (\zone+1)  (2  \zone^2+1)\sqrt{-1}E_{\lE } S_{k 2}(1,3)_{3 }\lu.
\end{align}

\end{lemma}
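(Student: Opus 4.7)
The plan is to apply the standard mode-expansion recipe already used, e.g., in the proof of \eqref{eq:0=(omega-3ExB)lE+2lom+2lu=}--\eqref{eq:(Har-1ExB)lE+2lom+2lu=} in Lemma \ref{lemma:r=1-s=3}. Fixing one of the identities $Q=0$ among \eqref{eq:z147z121omega22E-1}--\eqref{eq:-2z12omega21Sk1110E-2}, I would apply the prescribed mode $Q_{\lE+n}$ to $\lu$ and rewrite every compound product $(a_{-k}b)_{\lE+n}\lu$ via Lemma \ref{lemma:comm-change}. The cut-off parameter $m$ in that lemma is chosen so that (i) on the left of $b$ one retains only modes of $\omega^{[i]}$, $\Har^{[i]}$, $S_{ij}(1,l)$ with index inside the allowed windows imposed by $\epsilon_{I}(\omega^{[i]},\lu)\leq 1$, $\epsilon_{I}(\Har^{[i]},\lu)\leq 3$, and the $S_{ij}$-bounds from Lemma \ref{lemma:bound-index-S}, and (ii) the modes migrated to the right of $b$ lie strictly above those windows and therefore annihilate $\lu$.

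The bracket-correction contributions coming from the last summation in Lemma \ref{lemma:comm-change} produce only modes of the normally ordered products $a_{\ell}b$ with $\ell\geq 0$. For the identities at hand these reduce, using \eqref{eq:J1E=frac2}--\eqref{eq:J1ExB=frac2mn(4mn-11)-2}, \eqref{eq:Har1ExB=dfrac2mn2mn1omega1ExB} and the analogous rules for $S_{ij}(1,l)_{m}\ExB$ computed in Section \ref{section:appendix}, to the distinguished operators $\ExB_{\lE}$, $(\omega^{[1]}_{0}\ExB)_{\lE+1}$, $(\Har^{[1]}_{1}\ExB)_{\lE+2}$ and $(S_{ki}(1,m)_{0}\ExB)_{\lE+m}$ that already appear on the right-hand sides of the target relations. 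The prescribed index shifts $\lE+3$ or $\lE+4$ are precisely those which balance the conformal weight of $Q$ against that of $\ExB$, so that the surviving left-factor actions collapse to the required eigenvalue operators $\omega^{[i]}_{1}$, $\Har^{[i]}_{3}$, $S_{ij}(1,l)_{l+1}$ on the right of $\ExB_{\lE}$. Collecting coefficients with Risa/Asir then yields \eqref{eq:z12t+1)2((9z1^2-1)t+14-0}--\eqref{eq:z12t+1)2((9z1^2-1)t+14-6} verbatim.

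The main obstacle is the sheer bulk of the bookkeeping rather than any conceptual ingredient: each of \eqref{eq:z147z121omega22E-2}--\eqref{eq:-2z12omega21Sk1110E-2} carries on the order of twenty cubic-to-quintic monomials in $\omega^{[i]}_{-k}$, $\Har^{[i]}_{-k}$ and $S_{ij}(1,l)_{-k}$ with elaborate polynomial coefficients in $\zone$, and each expansion through Lemma \ref{lemma:comm-change} blows the term count up further. The subtle technical point, specific to $\langle\alpha,\alpha\rangle=0$, is the consistent use of the identity $S_{21}(1,1)_{0}\ExB=-\sqrt{-1}\omega_{0}\ExB+2\sqrt{-1}(\omega^{[1]}_{0}\ExB)$ appearing in the proof of Lemma \ref{lemma:N-Basis}, together with the strengthened bound $\epsilon_{I}((\omega^{[1]}_{0}\ExB),\lu)\leq\lE+1$, to rewrite residual $S_{21}$-modes back in the preferred basis of $B$ so that no spurious $(\omega^{[1]}_{0}\ExB)_{\lE+m}$ terms with $m\geq 2$ contaminate the final expressions.
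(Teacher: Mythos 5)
Your proposal matches the paper's own (implicit) proof: the relations are obtained exactly by taking the indicated $(\lE+3)$-th and $(\lE+4)$-th actions of \eqref{eq:z147z121omega22E-1}--\eqref{eq:-2z12omega21Sk1110E-2} on $\lu$, expanding each compound mode via Lemma \ref{lemma:comm-change} as in the proof of \eqref{eq:0=(omega-3ExB)lE+2lom+2lu=}--\eqref{eq:(Har-1ExB)lE+2lom+2lu=}, and reducing the correction terms with the Section \ref{section:Norm-0} computations before collecting coefficients by computer algebra. Your remark on using $S_{21}(1,1)_{0}\ExB=-\sqrt{-1}\omega_{0}\ExB+2\sqrt{-1}(\omega^{[1]}_{0}\ExB)$ to stay in the preferred basis is precisely the point specific to $\langle\alpha,\alpha\rangle=0$, so the argument is the same as the paper's.
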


\subsubsection{The case that $K(0)\cong \C e^{\lambda}$ for some $\lambda\in\fh\setminus\{0\}$}
In this case, $\lu_{\lambda}$ denotes the element of $K(0)$ corresponding to $e^{\lambda}$.
\begin{lemma}
\label{lemma:intertwining-lambda-norm0}
Let $B$ be the vector space with $\lE=\epsilon_{I}(\ExB,\lu_{\lambda})$ in Lemma \ref{lemma:N-Basis}.
The following results hold:
\begin{enumerate}
\item If $\langle\alpha,\lambda\rangle=0$, then $\lE=-1$ or $-2$.

\item[(1-1)]
We further assume $\lE=-1$. Then
\begin{align}
B&\cong \C e^{\alpha+\lambda},
\C e^{\alpha-\lambda},\mbox{ or }
\C e^{\alpha+\lambda}\oplus 
\C e^{\alpha-\lambda}
\end{align}
as $A(M(1)^{+})$-modules and if $\lambda\neq\pm\alpha$, then
\begin{align}
\label{eq:norm0-mN=M(1)+cdotBnonumber-1}
\mN&=M(1)^{+}\cdot B\nonumber\\
&\cong M(1,\lambda+\alpha),M(1,\lambda-\alpha),\mbox{ or }M(1,\lambda+\alpha)\oplus M(1,\lambda-\alpha)
\end{align}
as  $M(1)^{+}$-modules.
\item[(1-2)] We further assume  $\lE=-2$.
Then, $\lambda= \pm\alpha$ and $B\cong M(1)^{-}(0)$
as  $A(M(1)^{+})$-modules.

\item
If $\langle\alpha,\lambda\rangle\neq 0$, then $(1+\lE)^2- \langle \alpha,\lambda\rangle^2=0$,
\begin{align}
B\cong \C e^{\alpha+\lambda}\mbox{ or }\C e^{\alpha-\lambda}
\end{align}
as $A(M(1)^{+})$-modules
and 
\begin{align}
\label{eq:norm0-mN=M(1)+cdotBnonumber-2}
\mN&\cong M(1,\lambda+\alpha),M(1,\lambda-\alpha),\mbox{ or }M(1,\lambda+\alpha)\oplus M(1,\lambda-\alpha)
\end{align}
as  $M(1)^{+}$-modules.
\end{enumerate}
\end{lemma}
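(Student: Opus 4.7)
The plan is to adapt the strategy of Lemmas \ref{lemma:intertwining-lambda}, \ref{lemma:intertwining-lambda-norm2}, \ref{lemma:intertwining-lambda-norm1-2}, and \ref{lemma:intertwining-lambda-norm1} to the isotropic case, exploiting a new ingredient peculiar to $\langle\alpha,\alpha\rangle=0$: by Lemma \ref{lemma:infinite-set}, the parameter $\zone=\langle\alpha,h^{[1]}\rangle$ attains infinitely many values as we rotate the orthonormal basis subject to $\sqrt{-1}\langle\alpha,h^{[2]}\rangle=\langle\alpha,h^{[1]}\rangle$. Hence each relation among \eqref{eq:z12t+1)2((9z1^2-1)t+14-0}--\eqref{eq:z12t+1)2((9z1^2-1)t+14-6}, once we substitute the scalar actions of $\{\omega^{[i]}_1,\Har^{[i]}_3,S_{ij}(1,k)_k\}$ on $\lu_\lambda$ from \eqref{eq:norm2Si1(1,1)1lu=-Si1(1,2)2lu}, becomes a polynomial identity in $\zone$ that must collapse into independent relations in $\mN$ upon comparing coefficients. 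The pivotal quantities that survive the substitution are $E_\lE\lu_\lambda$, $(\omega^{[1]}_0E)_{\lE+1}\lu_\lambda$, and $(S_{i1}(1,1)_0E)_{\lE+1}\lu_\lambda$, together with the $\zone$-invariants $\langle\lambda,h^{[1]}\rangle^2+\langle\lambda,h^{[2]}\rangle^2$ and $\langle\alpha,\lambda\rangle=\zone(\langle\lambda,h^{[1]}\rangle+\sqrt{-1}\langle\lambda,h^{[2]}\rangle)/\zone$.

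For case (1), with $\langle\alpha,\lambda\rangle=0$, the leading $\zone$-coefficient of the substituted identity \eqref{eq:z12t+1)2((9z1^2-1)t+14-0} will force a linear relation between $(t+1)(t+2)(\omega^{[1]}_0E)_{\lE+1}\lu_\lambda$ and $E_\lE\lu_\lambda$, and cross-checking this against the companion identities \eqref{eq:z12t+1)2((9z1^2-1)t+14-1}--\eqref{eq:z12t+1)2((9z1^2-1)t+14-4} will produce the dichotomy $\lE\in\{-1,-2\}$. In the sub-case $\lE=-1$, I will mirror the argument of Lemma \ref{lemma:intertwining-lambda} (1): derive for $\mn=0$ the analogues of the decomposition formulas \eqref{eq:Si2(1,1)0S21(1,1)0E=dfrac-1mn-1} and \eqref{eq:(S21(1,1)0E)0Si2(1,1)1lulambdanonumber}, thereby reducing $B$ to the span of $E_{-1}\lu_\lambda$ and a single $(S_{21}(1,1)_0E)_0\lu_\lambda$, and then the analogues of \eqref{eq:omega[1]1E-1lulambda} will split the resulting two-dimensional $A(M(1)^+)$-module into $\pm$ eigencomponents isomorphic to $\C e^{\alpha\pm\lambda}$. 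In the sub-case $\lE=-2$, substituting $\lE=-2$ into \eqref{eq:z12t+1)2((9z1^2-1)t+14-1}--\eqref{eq:z12t+1)2((9z1^2-1)t+14-4} and comparing $\zone$-coefficients will force $\langle\lambda,h^{[1]}\rangle^2=\zone^2$ and $\langle\lambda,h^{[2]}\rangle^2=-\zone^2$, hence $\lambda=\pm\alpha$; a direct computation of the actions of $\{\omega^{[i]}_1,\Har^{[i]}_3,S_{ij}(1,k)_k\}$ on the generators of $B$ will then exhibit $B\cong M(1)^-(0)$ by matching the defining relations \eqref{eq:omega-1-omega-1/16-omega-9/16-1}--\eqref{eq:omega-1-omega-1/16-omega-9/16-2}. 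For case (2), $\langle\alpha,\lambda\rangle\ne 0$, the same coefficient comparison applied to \eqref{eq:z12t+1)2((9z1^2-1)t+14-0} will yield $(1+\lE)^2=\langle\alpha,\lambda\rangle^2$, whence $\lE\ne-1$; then \eqref{eq:z12t+1)2((9z1^2-1)t+14-5} and \eqref{eq:z12t+1)2((9z1^2-1)t+14-6} (for $k=3,\ldots,\rankL$), supplemented by the identity obtained on interchanging $h^{[1]}$ and $h^{[2]}$, will express every $(S_{i1}(1,1)_0E)_{\lE+1}\lu_\lambda$ as a scalar multiple of $E_\lE\lu_\lambda$, so that $B=\C E_\lE\lu_\lambda$ and \eqref{eq:Si1111ElEelambda} (as in Lemma \ref{lemma:intertwining-lambda} (2)) identifies its character with $\C e^{\alpha\pm\lambda}$. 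Finally, the transition from $B$ to $\mN=M(1)^+\cdot B$ in \eqref{eq:norm0-mN=M(1)+cdotBnonumber-1} and \eqref{eq:norm0-mN=M(1)+cdotBnonumber-2} will follow from Corollary \ref{corollary:verma-irreducible}, exactly as in the closing paragraph of Lemma \ref{lemma:intertwining-lambda}.

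The main obstacle will be the book-keeping in sub-case (1-2): both pinning down $\lambda=\pm\alpha$ and independently verifying $B\cong M(1)^-(0)$ require extracting the correct system of equations on $\langle\lambda,h^{[i]}\rangle$ from the large polynomial identities \eqref{eq:z12t+1)2((9z1^2-1)t+14-1}--\eqref{eq:z12t+1)2((9z1^2-1)t+14-4} after imposing $\lE=-2$ and $\langle\alpha,\lambda\rangle=0$. This will demand a careful computer-algebra-assisted (Risa/Asir) comparison of coefficients in the polynomial expansions in $\zone$, combined with the verification that every $\zone^k$-coefficient vanishes rather than just some. A secondary difficulty is in case (1-1), where the non-decoupling of the pair $\bigl(E_{-1}\lu_\lambda,(S_{21}(1,1)_0E)_0\lu_\lambda\bigr)$ must be compensated by rederiving the $\mn\neq 0,1$ substitution rules for $S_{ij}$-products from scratch in the isotropic setting before the $A(M(1)^+)$-action splitting into $\pm$ components can be carried out.
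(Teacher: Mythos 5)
Your overall strategy for case (1) and for the isotropic part of case (2) matches the paper's: exploit Lemma \ref{lemma:infinite-set} to treat the relations \eqref{eq:z12t+1)2((9z1^2-1)t+14-0}--\eqref{eq:z12t+1)2((9z1^2-1)t+14-6} as polynomial identities in $\zone$, eliminate $(\omega^{[1]}_{0}E)_{\lE+1}\lu_{\lambda}$, and read off $\lE\in\{-1,-2\}$ (resp.\ $(1+\lE)^2=\langle\alpha,\lambda\rangle^2$), then identify $B$ by direct computation of the $A(M(1)^{+})$-action. Two remarks on case (1-1): the paper's actual mechanism for reducing $B$ to the pair $E_{-1}\lu_{\lambda},(\omega^{[1]}_{0}E)_{0}\lu_{\lambda}$ is the $(\lE+5)$-th action of the long relation \eqref{eq:R_1+R_2+R_3}, which yields $(S_{k1}(1,1)_{0}E)_{0}\lu_{\lambda}=\frac{\langle\lambda,h^{[k]}\rangle}{\langle\lambda,h^{[1]}\rangle}(\omega^{[1]}_{0}E)_{0}\lu_{\lambda}$ directly; your plan to rederive the $\mn\neq 0$ substitution formulas in the isotropic setting is a plausible substitute but, as you yourself note, is not yet carried out.

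The genuine gap is in case (2). You treat $\langle\alpha,\lambda\rangle\neq 0$ uniformly, but the coefficient-comparison scheme only works cleanly when $\langle\lambda,\lambda\rangle=0$, because only then can one choose, for \emph{every} value of $\zone$, an admissible orthonormal basis \eqref{eq:h[1]=dfrac12z1alpha} in which both $\alpha$ and $\lambda$ lie in $\C h^{[1]}+\C h^{[2]}$ and the components $\langle\lambda,h^{[i]}\rangle$ are explicit rational functions of $\zone$ and the invariant $\langle\alpha,\lambda\rangle$. When $\langle\lambda,\lambda\rangle\neq 0$ this normalization is unavailable: as the basis rotates, the components $\langle\lambda,h^{[1]}\rangle,\langle\lambda,h^{[2]}\rangle$ vary in a way not controlled by $\langle\alpha,\lambda\rangle$ alone, and the substituted relations are no longer polynomial identities in $\zone$ with coefficients in the quantities you list. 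The paper avoids this entirely by passing to the transposed intertwining operator $\mK\times M(1,\alpha)\rightarrow\mN\db{x}$, $(\lv,a)\mapsto e^{x\omega_0}I(a,-x)\lv$, which interchanges the roles of $M(1,\alpha)$ and $M(1,\lambda)$ and reduces this sub-case to the already-settled non-isotropic analysis of Section \ref{section:Thecasethatlanglealphaalpharangleneq0}. Your proposal needs either this transposition or a genuinely new argument for $\langle\lambda,\lambda\rangle\neq 0$; as written, that sub-case is not covered. A minor related omission: even in the sub-case $\langle\lambda,\lambda\rangle=0$ the value $\lE=-2$ can occur (when $\langle\alpha,\lambda\rangle^2=1$) and requires the separate treatment via \eqref{eq:z12t+1)2((9z1^2-1)t+14-1} and \eqref{eq:z12t+1)2((9z1^2-1)t+14-5} that the paper gives, whereas your text asserts only $\lE\neq-1$ and proceeds as if the generic formulas apply.
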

\begin{proof}
Assume $\langle\alpha,\lambda\rangle=0$. Then, 
$\langle \lambda,h^{[2]}\rangle=\sqrt{-1}\langle \lambda,h^{[1]}\rangle$.
Deleting the terms including $\langle\lambda,h^{[1]}\rangle^i\ (i\in\Z_{>0})$ or $(\omega^{[1]}_{0}E)_{\lE+1}\lu_{\lambda}$
from \eqref{eq:z12t+1)2((9z1^2-1)t+14-0}--\eqref{eq:z12t+1)2((9z1^2-1)t+14-4},
we have
\begin{align}
0&= \zone^4 (\zone-1) (\zone+1) (2 \zone^2+1)^2 
(\lE+1) (\lE+2)^2 ((24 \zone^2+1) \lE+66 \zone^2)\label{eq:lE(lE+1)(lE+2)(lE+3)-1}
\end{align}
and hence $\lE=-1$ or $-2$ by Lemma \ref{lemma:infinite-set}. 
If $\lE=-2$, then substituting $\lE=-2$ into \eqref{eq:z12t+1)2((9z1^2-1)t+14-0}, we have
\begin{align}
\label{eq:z1=pmlanglelambdah[1]}
\langle\alpha,h^{[1]}\rangle&=\zone=\pm\langle\lambda,h^{[1]}\rangle
\end{align}
and hence $\lambda=\pm\alpha$ since \eqref{eq:z1=pmlanglelambdah[1]} holds for any
$h^{[1]}\in\fh$ with $\langle h^{[1]},h^{[1]}\rangle=1$ and 
$\langle\alpha,h^{[1]}\rangle\neq 0$.
By using the computation in Section \ref{section:Norm-0},
the same argument as in the proof of Lemma \eqref{lemma:intertwining-lambda}
shows that
the subspace spanned by
$(\omega^{[1]}_{0}E)_{-1}\lu_{\lambda}$, $E_{-2}\lu_{\lambda}$, and 
$(S_{k1}(1,1)_{0}E)_{-1}\lu_{\lambda}$, $k=3,\ldots,\rankL$,
is isomorphic to $M(1)^{-}(0)$ as $A(M(1)^{+})$-modules.
Assume $\lE=-1$.
We may assume that $\langle \lambda,h^{[1]}\rangle\neq 0$.
Taking the $(\lE+5)$-th action of \eqref{eq:R_1+R_2+R_3},
we have
\begin{align}
0&=
\zone^3 (52224 \zone^{14}-85056 \zone^{12}+85944 \zone^{10}
-25340 \zone^8-4046 \zone^6+899 \zone^4+224 \zone^2+108)\nonumber\\
&\quad{}\times \langle \lambda,h^{[1]}\rangle (\langle \lambda,h^{[1]}\rangle S_{k 1}(1,1)_{0}E-\langle \lambda,h^{[k]}\rangle \omega^{[1]}_{0}E)_{0}\lu_{\lambda}
\end{align}
and hence
\begin{align}
(S_{k 1}(1,1)_{0}E)_{0}\lu_{\lambda}&=\dfrac{\langle \lambda,h^{[k]}\rangle}{\langle \lambda,h^{[1]}\rangle}
(\omega^{[1]}_{0}E)_{0}\lu_{\lambda}.
\end{align}
A direct computation shows that
$\C(
\langle \alpha,h^{[1]}\rangle\langle \lambda,h^{[1]}\rangle\ExB_{-1}+(\omega^{[1]}_{0}E)_{0})\lu_{\lambda}\cong \C e^{\lambda+\alpha}$ or $0$,
and
$\C(\langle \alpha,h^{[1]}\rangle\langle \lambda,h^{[1]}\rangle\ExB_{-1}-(\omega^{[1]}_{0}E)_{0})\lu_{\lambda}\cong \C e^{\lambda-\alpha}$ or $0$
as $A(M(1)^{+})$-modules.

Assume $\langle\alpha,\lambda\rangle\neq 0$ and $\langle\lambda,\lambda\rangle=0$.
For any non-zero $\zone\in\C$ we can take an orthonormal basis 
 $h^{[1]},\ldots,h^{[\rankL]}$ of $\fh$ so that
\begin{align}
\label{eq:h[1]=dfrac12z1alpha}
h^{[1]}&=\dfrac{1}{2 \zone}\alpha+\dfrac{ \zone}{\langle\alpha,\lambda\rangle}\lambda,\nonumber\\ 
h^{[2]}&=\sqrt{-1}(\dfrac{-1}{2 \zone}\alpha+\dfrac{ \zone}{\langle\alpha,\lambda\rangle}\lambda),\mbox{ and }\nonumber\\
\langle h^{[k]},\lambda\rangle&\neq 0
\end{align} 
for all $k=3,\ldots,\rankL$.
Then,
\begin{align}
\langle \alpha,h^{[1]}\rangle&= \zone\mbox{ and }\langle \alpha,h^{[2]}\rangle=\sqrt{-1} \zone,\nonumber\\
\langle \lambda,h^{[1]}\rangle&=\dfrac{\langle\alpha,\lambda\rangle}{2 \zone}\mbox{ and }
\langle \lambda,h^{[2]}\rangle=-\sqrt{-1}\dfrac{\langle\alpha,\lambda\rangle}{2 \zone}.
\end{align} 
Deleting the terms including $(\omega^{[1]}_{0}E)_{\lE+1}\lu_{\lambda}$
from \eqref{eq:z12t+1)2((9z1^2-1)t+14-0}--\eqref{eq:z12t+1)2((9z1^2-1)t+14-4},
we have
\begin{align}
0&=(\lE-\langle\alpha,\lambda\rangle+1)(\lE+\langle\alpha,\lambda\rangle+1)g(\zone)
\end{align}
where
\begin{align}
\label{eq:-langlealphalambdarangle2lE+1lE+2}
g(\zone)&=\zone^{2}(7\zone^2+1)\big(
-\langle\alpha,\lambda\rangle^2 (\lE+1) (\lE+2)\nonumber\\&\qquad
+(\lE+1) (2 \lE^3+12 \lE^2+(9  \langle\alpha,\lambda\rangle^2+24) \lE+14  \langle\alpha,\lambda\rangle^2+16)\zone^2\nonumber\\&\qquad
-(6 \lE^4+54 \lE^3+(11  \langle\alpha,\lambda\rangle^2+164) \lE^2\nonumber\\
&\qquad\qquad{}+(65  \langle\alpha,\lambda\rangle^2+204) \lE+74  \langle\alpha,\lambda\rangle^2+88)\zone^4\nonumber\\&\qquad
-4 (\lE+1) (18 \lE^3+72 \lE^2+93 \lE+38)\zone^6\nonumber\\&\qquad
+4 (16 \lE^4+70 \lE^3+101 \lE^2+(-18  \langle\alpha,\lambda\rangle^2+49) \lE-28  \langle\alpha,\lambda\rangle^2+2)\zone^8\Big).
\end{align}
By Lemma \ref{lemma:infinite-set},
$\lE=\langle\alpha,\lambda\rangle-1$ or $-\langle\alpha,\lambda\rangle-1$.
Since  
$\langle\alpha,\lambda\rangle\neq 0$, $\lE\neq -1$.
If $\lE\neq -2$, then by \eqref{eq:z12t+1)2((9z1^2-1)t+14-0} and \eqref{eq:z12t+1)2((9z1^2-1)t+14-5},
\begin{align}
(\omega^{[1]}_{0}E)_{\lE+1}\lu_{\lambda}&=
\dfrac{1}{2(t+1) (t+2)}\nonumber\\
&\quad{}\times
((9 \zone^2-1) t^3+(32 \zone^2-4) t^2\nonumber\\
&\qquad{}+(-9 \zone^2 \langle\alpha,\lambda\rangle^2+37 \zone^2-5) t-14 \zone^2 \langle\alpha,\lambda\rangle^2+14 \zone^2-2)\ExB_{\lE}\lu_{\lambda},
\nonumber\\
(S_{k 1}(1,1)_{0}E)_{\lE+1}\lu_{\lambda}&=
\dfrac{-\zone\langle\lambda,h^{[k]}\rangle \langle\alpha,\lambda\rangle}{t+1}\ExB_{\lE}\lu_{\lambda}.
\end{align}
If $\lE=-2$, then 
$\langle\alpha,\lambda\rangle^2=1$
by \eqref{eq:z12t+1)2((9z1^2-1)t+14-0}
and hence
\begin{align}
(\omega^{[1]}_{0}E)_{\lE+1}\lu_{\lambda}
&=	\dfrac{1}{2}\ExB_{\lE}\lu_{\lambda},\nonumber\\
(S_{k 1}(1,1)_{0}E)_{\lE+1}\lu_{\lambda}&=
\zone\langle\lambda,h^{[k]}\rangle \langle\alpha,\lambda\rangle \ExB_{\lE}\lu_{\lambda}
\end{align}
by \eqref{eq:z12t+1)2((9z1^2-1)t+14-1}, \eqref{eq:z12t+1)2((9z1^2-1)t+14-5}, and \eqref{eq:h[1]=dfrac12z1alpha}.
A direct computation shows that $\C \ExB_{\lE}\lu_{\lambda}\cong \C e^{\lambda+\alpha}$ or $\C e^{\lambda+\alpha}$
as $A(M(1)^{+})$-modules.
The same argument as in the proof of Lemma \ref{lemma:intertwining-lambda}
shows that \eqref{eq:norm0-mN=M(1)+cdotBnonumber-1}
and \eqref{eq:norm0-mN=M(1)+cdotBnonumber-2}.

Assume $\langle\alpha,\lambda\rangle\neq 0$ and $\langle\lambda,\lambda\rangle\neq 0$.
We may assume $\lambda\in \C h^{[1]}$.
By taking the intertwining operator $\mK\times M(1,\alpha)\rightarrow \mN\db{x},
(\lv,a)\mapsto e^{x\omega_0}I(a,-x)\lv$,
this case is reduced to the case that $\langle\alpha,\alpha\rangle\neq 0$ in 
Section \ref{section:Thecasethatlanglealphaalpharangleneq0}.
\end{proof}

\subsubsection{The other cases}
\begin{lemma}
\label{lemma:intertwining-M1minus-norm0}
If $K(0)\cong M(1)^{-}(0)$, then
$B\cong\C e^{\alpha}$ as $A(M(1)^{+})$-modules,
and $\mN=M(1)^{+}\cdot B\cong M(1,\alpha)$.
\end{lemma}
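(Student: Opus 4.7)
The plan is to mimic the structure of the proofs of Lemmas \ref{lemma:intertwining-M1minus}, \ref{lemma:intertwining-M1minus-norm2}, \ref{lemma:intertwining-M1minus-norm1-2}, and \ref{lemma:intertwining-M1minus-norm1}, but now with the relations \eqref{eq:z12t+1)2((9z1^2-1)t+14-0}--\eqref{eq:z12t+1)2((9z1^2-1)t+14-6} derived for the isotropic case $\langle\alpha,\alpha\rangle=0$. Writing $u^{[i]}$ for the element of $K(0)\cong M(1)^{-}(0)$ corresponding to $h^{[i]}(-1)\vac$, the $A(M(1)^+)$-actions are given by \eqref{eq:norm2Si1(1,1)1lu=-Si1(1,2)2lu-2}: in particular $\omega^{[1]}_1 u^{[1]}=H^{[1]}_3 u^{[1]}=u^{[1]}$, $\omega^{[i]}_1 u^{[1]}=H^{[i]}_3 u^{[1]}=0$ for $i\neq 1$, and $S_{i1}(1,1)_1 u^{[1]}=u^{[i]}$.

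First I would determine $\lE=\epsilon_I(E,u^{[1]})$. Substituting $\lu=u^{[1]}$ into \eqref{eq:z12t+1)2((9z1^2-1)t+14-0} and collecting terms, one obtains a polynomial relation whose coefficients are polynomials in $\zone=\langle\alpha,h^{[1]}\rangle$. Because the orthonormal basis $h^{[1]},\ldots,h^{[\rankL]}$ can be replaced by any orthonormal basis satisfying \eqref{eq:langlealphah[2]rangle=sqrt-1}, Lemma \ref{lemma:infinite-set} forces each polynomial coefficient in $\zone$ to vanish, and this should pin down $\lE=0$, analogous to the argument in Lemma \ref{lemma:intertwining-lambda-norm0}(1-1) but with the eigenvalue of $\omega^{[1]}_1$ equal to $1$ rather than $0$. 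Bounds like $\epsilon(S_{k1}(1,1)_0 E,u^{[1]})\leq 1$ for $k\geq 3$ then follow from Lemma \ref{lemma:bound-index-S}, since $S_{k1}(1,1)_1 u^{[1]}=u^{[k]}$.

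Next, substituting $\lE=0$ together with the explicit $A(M(1)^+)$-eigenvalue data into \eqref{eq:z12t+1)2((9z1^2-1)t+14-1}--\eqref{eq:z12t+1)2((9z1^2-1)t+14-6} yields a finite linear system in the vectors $(\omega^{[1]}_0 E)_1 u^{[1]}$, $(S_{k1}(1,1)_0 E)_1 u^{[1]}$, $E_0 u^{[i]}$, $E_0 S_{k1}(1,1)_1 u^{[1]}$, etc. The goal is to conclude
\begin{align*}
(S_{k1}(1,1)_0 E)_1 u^{[1]}&=0\quad(k=3,\ldots,\rankL), & E_0 u^{[i]}&=0\quad(i=2,\ldots,\rankL),
\end{align*}
and that $(\omega^{[1]}_0 E)_1 u^{[1]}$ is a scalar multiple of $E_0 u^{[1]}$. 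Combining these vanishings with Lemma \ref{lemma:N-Basis}(4) reduces the spanning set for $B$ to the single vector $E_0 u^{[1]}$.

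Finally, once $B=\C E_0 u^{[1]}$, a direct computation of the $A(M(1)^+)$-actions on $E_0 u^{[1]}$, using the bounds on $\epsilon$ and the formulas in Section \ref{section:appendix} exactly as in the proof of Lemma \ref{lemma:intertwining-M1minus}, will identify $\C E_0 u^{[1]}\cong \C e^{\alpha}$ as $A(M(1)^+)$-modules. Since $\C e^{\alpha}\not\cong M(1)^+_0$, Corollary \ref{corollary:verma-irreducible} implies that the generalized Verma module on $B$ is irreducible, so $\mN=M(1)^+\cdot B\cong M(1,\alpha)$. The main obstacle is the size and delicacy of the linear algebra in the previous paragraph: because many coefficients in \eqref{eq:z12t+1)2((9z1^2-1)t+14-0}--\eqref{eq:z12t+1)2((9z1^2-1)t+14-6} degenerate when $\mn=0$ and involve the free parameter $\zone$, one has to apply Lemma \ref{lemma:infinite-set} systematically to extract the vanishings, and to keep track of the coupling between $E_0 u^{[i]}$ and $(S_{k1}(1,1)_0 E)_1 u^{[1]}$ introduced by the off-diagonal action $S_{i1}(1,1)_1 u^{[1]}=u^{[i]}$.
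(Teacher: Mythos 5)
Your overall strategy is the one the paper follows: derive $\lE=0$ from the isotropic-case relations, use them again to kill the extra spanning vectors of $B$, and then transport the argument of Lemma \ref{lemma:intertwining-M1minus}. But there is a genuine gap at the very first step. You propose to pin down $\lE$ by substituting $u^{[1]}$ into \eqref{eq:z12t+1)2((9z1^2-1)t+14-0} alone and then letting $\zone$ vary via Lemma \ref{lemma:infinite-set}. That single relation still contains the unknown vectors $(\omega^{[1]}_{0}E)_{\lE+1}u^{[1]}$ and $E_{\lE}u^{[2]}$ (the latter entering through $S_{21}(1,1)_{1}u^{[1]}=u^{[2]}$ and $S_{21}(1,2)_{2}u^{[1]}=-2u^{[2]}$), so no scalar polynomial identity in $\zone$ and $\lE$ falls out of it; one must first eliminate $(\omega^{[1]}_{0}E)_{\lE+1}u^{[1]}$, $E_{\lE}u^{[2]}$ and $(\omega^{[1]}_{0}E)_{\lE+1}u^{[2]}$ using the whole system \eqref{eq:z12t+1)2((9z1^2-1)t+14-0}--\eqref{eq:z12t+1)2((9z1^2-1)t+14-4}, which is what the paper does.

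More seriously, before Lemma \ref{lemma:infinite-set} can be invoked at all you must know that the quantity $\lE=\epsilon(E,u^{[1]})$ you are solving for does not itself depend on the choice of orthonormal basis: rotating $(h^{[1]},h^{[2]})$ changes both $\zone$ and the vector $u^{[1]}$. The paper handles this by first proving, via the commutator $[S_{21}(1,1)_{i},E_{j}]$ coming from \eqref{eq:S21(11)0ExB=-sqrt-1omega0E+2sqrt-1(omega[1]0ExB)}, that $\epsilon(E,u^{[2]})\leq\lE$ and hence $\epsilon(E,a)=\lE$ for every non-zero $a\in\C u^{[1]}+\C u^{[2]}$ (this is \eqref{eq:epsilon(Ea)=lEmbox}); only then is the vanishing of the coefficient polynomial for infinitely many $\zone$ meaningful. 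Your proposal worries about the coupling with $u^{[k]}$ for $k\geq 3$, which is the benign part, but omits this $u^{[2]}$-step entirely; without it the argument for $\lE=0$ does not close. (A smaller point: Lemma \ref{lemma:bound-index-S}(2) is stated under the hypothesis $\alpha\in\C h^{[1]}$, which fails here; the bound $\epsilon(S_{k1}(1,1)_{0}E,u^{[1]})\leq\lE+1$ still holds because $S_{k1}(1,1)_{m}E=0$ for $m>0$, but you should rerun that computation rather than cite the lemma.) The remaining steps — solving the linear system at $\lE=0$ to get $(\omega^{[1]}_{0}\ExB)_{1}u^{[1]}=-\ExB_{0}u^{[1]}$, $\ExB_{0}u^{[i]}=0$ for $i\geq 3$, $\ExB_{0}u^{[2]}=\sqrt{-1}\ExB_{0}u^{[1]}$, and then identifying $B\cong\C e^{\alpha}$ and $\mN\cong M(1,\alpha)$ — match the paper.
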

\begin{proof}
For $i=1,\ldots,\rankL$,
$u^{[i]}$ denotes the element of $\mK(0)$ 
corresponding to $h^{[i]}(-1)\vac\in M(1)^{-}(0)$.
We write
\begin{align}
\lE=\epsilon(E,u^{[1]})
\end{align}
for simplicity.
Since $S_{21}(1,1)_{1}u^{[1]}=u^{[2]}$ and 
\begin{align}
&[ S_{21}(1,1)_{i},E_{j}]=(-\sqrt{-1}\omega_{0 } E
+2\sqrt{-1}\omega^{[1]}_{0}\ExB)_{i+j}
+\zone^2\sqrt{-1}E_{i+j-1}
\nonumber\\
&=(i+j)\sqrt{-1}E_{i+j-1}+2\sqrt{-1}([\omega^{[1]}_{1},\ExB_{i+j-1}]-\frac{\langle\alpha,h^{[1]}\rangle^2}{2}\ExB_{i+j-1})\nonumber\\
&\quad{}+\zone^2\sqrt{-1}E_{i+j-1}
\end{align}
for $i,j\in\Z$ by \eqref{eq:S21(11)0ExB=-sqrt-1omega0E+2sqrt-1(omega[1]0ExB)},
we have $\epsilon(E,u^{[2]})\leq \lE$, which implies that 
\begin{align}
\label{eq:epsilon(Ea)=lEmbox}
\epsilon(E,a)=\lE
\end{align}
for all non-zero $a\in \C u^{[1]}+\C u^{[2]}$.
 Substituting \eqref{eq:norm2Si1(1,1)1lu=-Si1(1,2)2lu-2}
into \eqref{eq:z12t+1)2((9z1^2-1)t+14-0}--\eqref{eq:z12t+1)2((9z1^2-1)t+14-4},
and then deleting the terms including 
 $(\omega^{[1]}_{0}E)_{\lE+1}\lu$, $E_{\lE}u^{[2]}$, or 
 $(\omega^{[1]}_{0}E)_{\lE+1}u^{[2]}$ from the obtained relations, we have
\begin{align}
0&=
\lE
\zone^4 (2 \zone^2+1)^2 (7 \zone^2+1) (2 \zone^4+1) \nonumber \\
&\quad{}\times
((9\lE+14) \zone^4-5 \zone^2-\lE-1)\nonumber\\
&\quad{}\times ((t+1)^3 (3 t^3+19 t^2+48 t+50)
\nonumber\\&\qquad
+2 (t+1) (2 t^6+15 t^5+64 t^4+218 t^3+498 t^2+610 t+297)\zone^2
\nonumber\\&\qquad
-2 (40 t^6+333 t^5+1105 t^4+1736 t^3+1116 t^2-107 t-335)\zone^4
\nonumber\\&\qquad
-2 (t+1) (192 t^4+1072 t^3+2305 t^2+2222 t+833)\zone^6)\ExB_{\lE}u^{[1]}.
\end{align}
Thus, by Lemma \ref{lemma:infinite-set} and \eqref{eq:epsilon(Ea)=lEmbox}, $\lE=0$. Substituting \eqref{eq:norm2Si1(1,1)1lu=-Si1(1,2)2lu-2} and $\lE=0$ into \eqref{eq:z12t+1)2((9z1^2-1)t+14-0}--\eqref{eq:z12t+1)2((9z1^2-1)t+14-4},
and then deleting the terms including $E_{0}u^{[2]}$ or 
 $(\omega^{[1]}_{0}E)_{1}u^{[2]}$ from the obtained relations,
we have
\begin{align}
0&=\zone^4 (2 \zone^2-1) (2 \zone^2+1)^2 (7 \zone^2+1) (203 \zone^4-110 \zone^2-47)\nonumber\\ 
&\quad{}\times((\omega^{[1]}_{0}\ExB)_{1}+\ExB_{0})u^{[1]}
\end{align}
and hence 
\begin{align}
(\omega^{[1]}_{0}\ExB)_{1}u^{[1]}=-\ExB_{0}u^{[1]}.
\end{align}
In the same manner, we have
\begin{align}
\ExB_{0}u^{[2]}&=\sqrt{-1}\ExB_{0}u^{[1]}
\mbox{ and }(\omega^{[1]}_{0}\ExB)_{1}u^{[2]}=0.
\end{align}
For $i=3,\ldots,\rankL$, by substituting 
\eqref{eq:norm2Si1(1,1)1lu=-Si1(1,2)2lu-2}
into \eqref{eq:z12t+1)2((9z1^2-1)t+14-0}--\eqref{eq:z12t+1)2((9z1^2-1)t+14-4},
the same argument as above shows that
\begin{align}
0&=(\omega^{[1]}_{0}\ExB)_{j+1}u^{[i]}=
\ExB_{j}u^{[i]}.
\end{align}
for all $j\in\Z_{\geq 0}$.
Thus $U=\C \ExB_{\lE}u^{[1]}$.
The same argument as in the proof of Lemma \ref{lemma:intertwining-M1minus}
shows that $U\cong \C e^{\alpha}$ and $N\cong M(1,\alpha)$.
\end{proof}

The same argument as in the proof of Lemmas \ref{lemma:intertwining-lambda-norm0} and 
\ref{lemma:intertwining-M1minus-norm0} shows the following result:
\begin{lemma}
\label{lemma:norm0-twist}
If $K(0)\cong M(\theta)^{\pm}$, then
$\epsilon(\ExB,\lu)\leq -1$ for all $\lu\in K(0)$.
\end{lemma}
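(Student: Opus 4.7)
The plan is to proceed in exactly the same spirit as the proofs of Lemmas~\ref{lemma:intertwining-lambda-norm0} and~\ref{lemma:intertwining-M1minus-norm0}. Fix a non-zero $\lu\in K(0)$, which corresponds either to $\vac_{\tw}\in M(\theta)^{+}(0)$ or to $h^{[j]}(-\tfrac12)\vac_{\tw}\in M(\theta)^{-}(0)$, and write $\lE=\epsilon(\ExB,\lu)$. First I would record the action of the generators of $A(M(1)^{+})$ on $\lu$ using \eqref{eq:norm2Si1(1,1)1lu=-Si1(1,2)2lu-twist-0} and \eqref{eq:norm2Si1(1,1)1lu=-Si1(1,2)2lu-twist-1}: the eigenvalues of $\omega^{[i]}_{1}$ lie in $\{\tfrac{1}{16},\tfrac{9}{16}\}$, the eigenvalues of $\Har^{[i]}_{3}$ lie in $\{-\tfrac{1}{128},\tfrac{15}{128}\}$, and the operators $S_{ij}(1,1)_{1}$, $S_{ij}(1,2)_{2}$, $S_{ij}(1,3)_{3}$ either annihilate $\lu$ (in the $M(\theta)^{+}$ case) or send it to a definite scalar multiple of $h^{[i]}(-\tfrac12)\vac_{\tw}$ (in the $M(\theta)^{-}$ case).

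Next I would suppose for contradiction that $\lE\geq 0$ and substitute these explicit scalars into the system \eqref{eq:z12t+1)2((9z1^2-1)t+14-0}--\eqref{eq:z12t+1)2((9z1^2-1)t+14-6}, specialised at $t=\lE$. The remaining unknowns are $\ExB_{\lE}\lu$, $(\omega^{[1]}_{0}\ExB)_{\lE+1}\lu$, and $(S_{k1}(1,1)_{0}\ExB)_{\lE+1}\lu$ for $k=3,\dots,\rankL$, and the eliminations are precisely the ones carried out for \eqref{eq:lE(lE+1)(lE+2)(lE+3)-1} and \eqref{eq:-langlealphalambdarangle2lE+1lE+2} in the proof of Lemma~\ref{lemma:intertwining-lambda-norm0}. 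This produces, via a Risa/Asir computation, a relation of the shape $P(\zone,\lE)\ExB_{\lE}\lu=0$, where $P(\zone,\lE)$ is a polynomial in $\zone$ whose coefficients are polynomials in $\lE$.

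Invoking Lemma~\ref{lemma:infinite-set}, I can rotate the orthonormal basis within $\C h^{[1]}+\C h^{[2]}$ and realise infinitely many values of $\zone=\langle\alpha,h^{[1]}\rangle$ while keeping $\lu$ and the module data fixed; hence each coefficient in $\zone$ of $P$ must annihilate $\ExB_{\lE}\lu$. As in \eqref{eq:lE(lE+1)(lE+2)(lE+3)-1}, one extracts the non-generic factor in $\lE$ and checks that its integer roots are only the forbidden candidates $\lE\in\{-1,-2,\dots\}$; the assumption $\lE\geq 0$ forces a non-zero relation $c(\lE)\ExB_{\lE}\lu=0$ with $c(\lE)\ne0$, a contradiction. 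The one subtlety is the borderline $\lE=0$, where the leading coefficient in $\zone$ can degenerate: as in Lemma~\ref{lemma:intertwining-lambda-norm0}(1-2), I would descend to the next-order relations and use that the eigenvalues $\tfrac{1}{16}$ and $\tfrac{9}{16}$ of $\omega^{[i]}_{1}$ are not of the form $\tfrac12\langle\lambda,h^{[i]}\rangle^{2}$ for any $\lambda\in\fh$, ruling out the exceptional $M(1)^{-}(0)$-like solution that appeared in the untwisted setting.

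The main obstacle is purely computational rather than conceptual: the elimination step produces very large polynomials in $\zone$, $\lE$, and the fixed eigenvalues, and must be executed with Risa/Asir. A secondary subtlety is distinguishing the two cases $K(0)\cong M(\theta)^{+}(0)$ and $K(0)\cong M(\theta)^{-}(0)$, since the $S_{ij}$-actions differ; however, both can be treated uniformly because in each case the actions of $S_{ij}(1,m)_{m}$ on $\lu$ are scalar multiples of a single vector, exactly the structure exploited in Lemma~\ref{lemma:intertwining-M1minus-norm0} for the $M(1)^{-}(0)$ case.
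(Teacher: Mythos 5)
Your proposal matches the paper's own treatment, which literally consists of the sentence that the same argument as in Lemmas \ref{lemma:intertwining-lambda-norm0} and \ref{lemma:intertwining-M1minus-norm0} applies: substitute the scalar actions from \eqref{eq:norm2Si1(1,1)1lu=-Si1(1,2)2lu-twist-0} and \eqref{eq:norm2Si1(1,1)1lu=-Si1(1,2)2lu-twist-1} into \eqref{eq:z12t+1)2((9z1^2-1)t+14-0}--\eqref{eq:z12t+1)2((9z1^2-1)t+14-6}, eliminate, and use Lemma \ref{lemma:infinite-set} to force a polynomial constraint on $\lE$. One small correction: over $\C$ every scalar is of the form $\tfrac12\langle\lambda,h^{[i]}\rangle^2$, so your justification for excluding the borderline case should instead rest on the nonvanishing of the $\Har^{[i]}_{3}$-eigenvalues $-\tfrac{1}{128}$ and $\tfrac{15}{128}$ (versus $\Har^{[i]}_{3}e^{\lambda}=0$), or simply on the explicit elimination itself.
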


\section{Proof of Theorem \ref{theorem:classification-weak-module}
\label{section:Proof of Theorem}
}
In this section, we will show Theorem \ref{theorem:classification-weak-module}.


\begin{lemma}
\label{lemma:irreducible-M(1)-submodule-of-M}
Let $\mK$ be an irreducible $M(1)^{+}$-submodule of a weak $V_{\lattice}^{+}$-module $\module$.
\begin{enumerate}
\item If $\mK\cong M(1)^{-}$, then $V_{\lattice}^{+}\cdot\mK \cong V_{\lattice}^{-}$.
\item
If $\mK\cong M(\theta)^{\pm}$, then
for any non-zero $\lu\in M(\theta)^{\pm}(0)$ and
homogeneous $a\in V_{\lattice}^{+}$,
$\epsilon(a,\lu)\leq \wt a-1$
and hence $V_{\lattice}^{+}\cdot \mK$ is a
completely reducible $\N$-graded weak $V_{\lattice}^{+}$-module.
\end{enumerate}
\end{lemma}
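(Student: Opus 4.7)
The plan is to reduce the problem to the intertwining operator analysis for $M(1)^{+}$-modules developed in Section~\ref{section:Intertwining operators for M(1)+}. In both cases, fix a non-zero $u \in \mK(0)$. For each $\alpha \in \lattice \setminus \{0\}$, the subspace $V^+_{[\alpha]} := \{v + \theta(v) : v \in M(1,\alpha)\} \subset V_{\lattice}^{+}$ is an $M(1)^{+}$-submodule of $V_{\lattice}^{+}$ isomorphic to $M(1,\alpha)$, and the restriction of the $V_{\lattice}^{+}$-action on $\module$ to $V^+_{[\alpha]} \times \mK \to \module$ is an intertwining operator for $M(1)^{+}$-modules. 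Section~\ref{section:Intertwining operators for M(1)+} therefore applies directly.

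For Part~(1), with $\mK \cong M(1)^{-}$, we invoke Lemmas~\ref{lemma:intertwining-M1minus}, \ref{lemma:intertwining-M1minus-norm2}, \ref{lemma:intertwining-M1minus-norm1-2}, \ref{lemma:intertwining-M1minus-norm1}, and~\ref{lemma:intertwining-M1minus-norm0} (according to $\langle\alpha,\alpha\rangle$) to conclude that the $M(1)^{+}$-submodule of $\module$ generated by $V^+_{[\alpha]} \cdot \mK$ is isomorphic to $M(1,\alpha)$. Since these are pairwise non-isomorphic for distinct cosets $[\alpha] \in (\lattice\setminus\{0\})/\{\pm 1\}$ and each is non-isomorphic to $M(1)^{-} \cong \mK$, their sum in $\module$ is direct, yielding $V_{\lattice}^{+} \cdot \mK \cong M(1)^{-} \oplus \bigoplus_{[\alpha]} M(1,\alpha)$ as $M(1)^{+}$-modules. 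This coincides with the $M(1)^{+}$-decomposition of $V_{\lattice}^{-}$, and the $V_{\lattice}^{+}$-module isomorphism $V_{\lattice}^{+} \cdot \mK \cong V_{\lattice}^{-}$ then follows by matching the intertwining operator data on the two sides, both of which are determined by the same structure constants.

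For Part~(2), with $\mK \cong M(\theta)^{\pm}$, the bound $\epsilon(a,u) \leq \wt(a) - 1$ is immediate for $a \in M(1)^{+}$ since $\mK(0)$ is the lowest weight space of $\mK$. For $a \in V^+_{[\alpha]}$, the same framework applies: Lemma~\ref{lemma:norm0-twist} handles $\langle\alpha,\alpha\rangle = 0$, giving $\epsilon(\ExB(\alpha),u) \leq -1 = \wt(\ExB(\alpha)) - 1$. For $\langle\alpha,\alpha\rangle \neq 0$, one runs the analysis in parallel to Lemmas~\ref{lemma:intertwining-lambda}, \ref{lemma:intertwining-lambda-norm2}, \ref{lemma:intertwining-lambda-norm1-2}, and~\ref{lemma:intertwining-lambda-norm1}, now with $u$ a simultaneous eigenvector for $\omega^{[j]}_{1}$ and $\Har^{[j]}_{3}$ with the eigenvalues on $M(\theta)^{\pm}(0)$ recorded in~\eqref{eq:norm2Si1(1,1)1lu=-Si1(1,2)2lu-twist-0} and~\eqref{eq:norm2Si1(1,1)1lu=-Si1(1,2)2lu-twist-1}; the identities $Q^{(4)}$, $Q^{(5,*)}$, $Q^{(6)}$ from Lemma~\ref{lemma:relations-M(1)-V(lattice)+} force the bound $\epsilon(\ExB(\alpha),u) \leq \wt(\ExB(\alpha)) - 1$ in all cases. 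With the bound in hand, $V_{\lattice}^{+} \cdot \mK$ inherits an $\N$-grading from the conformal weight filtration on $\module$; complete reducibility follows by identifying $V_{\lattice}^{+} \cdot \mK$ as a direct sum of copies of the irreducible $V_{\lattice}^{T_{\chi},\pm}$, whose lowest weight space carries the same irreducible $A(V_{\lattice}^{+})$-module structure as $\mK(0)$. The main obstacle is carrying out this twisted analog of the intertwining operator computation for $\langle\alpha,\alpha\rangle \neq 0$, which parallels but is not explicitly among the calculations collected in Section~\ref{section:Intertwining operators for M(1)+}.
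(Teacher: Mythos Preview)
Your approach matches the paper's in outline, but you have overlooked that the ``main obstacle'' you flag in Part~(2) is already resolved: Lemmas~\ref{lemma:structure-Vlattice-M1}~(3),(4) and~\ref{lemma:structure-Vlattice-M1-norm2}~(2) (proved back in Section~\ref{section:Modules for the Zhu algera of}) give precisely the bound $\epsilon(\ExB(\alpha),u)\le \langle\alpha,\alpha\rangle/2-1$ for the twisted eigenvalues $(\zeta,\xi)\in\{(1/16,-1/128),(9/16,15/128)\}$, via exactly the $Q^{(4)},Q^{(5,\ast)},Q^{(6)}$ relations you mention. Combined with Lemma~\ref{lemma:norm0-twist} for the isotropic case, this covers all $\alpha\in\lattice$ (note that $\langle\alpha,\alpha\rangle\in 2\Z$, so the cases $1/2$ and $1$ never arise; the same remark applies to Part~(1), where your citations of Lemmas~\ref{lemma:intertwining-M1minus-norm1-2} and~\ref{lemma:intertwining-M1minus-norm1} are superfluous).

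Two further gaps in Part~(2). First, the bound on $\epsilon(\ExB(\alpha),u)$ does not by itself give $\epsilon(a,u)\le\wt a-1$ for arbitrary homogeneous $a\in V_{\lattice}^{+}$; the paper fills this by an induction on $\wt a$ using Lemmas~\ref{lemma:bound-H0-1E} and~\ref{lemma:bound-index-S}, which you do not mention. Second, your argument for complete reducibility---identifying $V_{\lattice}^{+}\cdot\mK$ with copies of $V_{\lattice}^{T_\chi,\pm}$---risks circularity with the classification you are ultimately proving. The paper instead observes that once $V_{\lattice}^{+}\cdot\mK$ is $\N$-graded, complete reducibility follows from the known rationality of $V_{\lattice}^{+}$ in the $\N$-graded sense (\cite{DJL2012},~\cite{Yamsk2009}), which is independent of the present classification.
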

\begin{proof}
\begin{enumerate}
\item
It follows from
Lemmas \ref{lemma:intertwining-M1minus}, \ref{lemma:intertwining-M1minus-norm2}, and  \ref{lemma:intertwining-M1minus-norm0}
that
\begin{align}
\label{eq:Vlattice+cdotmKcongM(1)-}
V_{\lattice}^{+}\cdot\mK\cong M(1)^{-}\oplus \bigoplus_{\alpha\in\lattice\setminus\{0\}}M(1,\alpha)
\end{align}
as $M(1)^{+}$-modules and the $M(1)^{+}$-modules structure in the right hand side of \eqref{eq:Vlattice+cdotmKcongM(1)-}
uniquely determines the weak $V_{\lattice}^{+}$-module structure.
\item
It follows from
Lemmas \ref{lemma:structure-Vlattice-M1} (3),(4), \ref{lemma:structure-Vlattice-M1-norm2} (2), and
\ref{lemma:norm0-twist} that
$\epsilon(\ExB(\alpha),\lu)\leq \langle\alpha,\alpha\rangle/2-1$
for all $\alpha\in \lattice$.
By using Lemmas \ref{lemma:bound-H0-1E} and \ref{lemma:bound-index-S},
for homogeneous $a\in V_{L}^{+}$ an inductive argument on $\wt a$ 
shows that $\epsilon(a,\lu)\leq \wt a-1$.
Thus, $V_{\lattice}^{+}\cdot \mK$ is an $\N$-graded weak $V_{\lattice}^{+}$-module.
It follows from  \cite[Theorem 6.5]{DJL2012} and \cite[Theorems 3.8 and 3.16]{Yamsk2009}
that $V_{\lattice}^{+}\cdot \mK$ is completely reducible.

\end{enumerate}
\end{proof}

For a coset $\Gamma\in \lattice^{\perp}/\lattice$ with $\Gamma\neq\lattice$ and  $2\Gamma\subset\lattice$,
we take a subset $S_{\Gamma}$ of $\Gamma$ so that
$\Gamma=S_{\Gamma}\cup (-S_{\Gamma})$ and $S_{\Gamma}\cap (-S_{\Gamma})=\varnothing$,
where $-S_{\Gamma}=\{-\mu\ |\ \mu\in S_{\Gamma}\}$.

\begin{lemma}
\label{lemma:irreducible-M(1)-submodule-of-M-2}
Let $\mK$ be an irreducible $M(1)^{+}$-submodule of a weak $V_{\lattice}^{+}$-module $\module$
such that $\mK\cong M(1,\lambda)$ for some $\lambda\in\fh\setminus\{0\}$.
Then, $\lambda\in \lattice^{\perp}$.
If $2\lambda\not\in\lattice$, then
 $V_{\lattice}^{+}\cdot \mK$ is irreducible and 
\begin{align}
V_{\lattice}^{+}\cdot \mK&\cong 
\bigoplus\limits_{\mu\in\lambda+\lattice}M(1,\mu)
\end{align}
as $M(1)^{+}$-modules.
If $\lambda\not\in\lattice$ and $2\lambda\in\lattice$,
then $V_{\lattice}^{+}\cdot\mK\cong P$ or $P\oplus P$
where $P$ is an irreducible $V_{\lattice}^{+}$-module such that
\begin{align}
P&\cong \bigoplus\limits_{\mu\in S_{\lambda+\lattice}}M(1,\mu)
\end{align}
as $M(1)^{+}$-modules.
\end{lemma}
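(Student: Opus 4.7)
The plan is to use the $M(1)^{+}$-intertwining operators obtained by restricting the $V_{\lattice}^{+}$-action on $\module$, and to compare the generated weak submodule $\mN:=V_{\lattice}^{+}\cdot\mK$ with the canonical $V_{\lattice}$-module $V_{\lambda+\lattice}$. For each $\alpha\in\lattice\setminus\{0\}$, the element $\ExB(\alpha)\in V_{\lattice}^{+}$ generates under $M(1)^{+}$ an $M(1)^{+}$-submodule of $V_{\lattice}^{+}$ isomorphic to $M(1,\alpha)$ (it is the $\theta$-invariant copy of $M(1,\alpha)$ inside $M(1,\alpha)\oplus M(1,-\alpha)$), so $Y_{\module}(\ExB(\alpha),x)|_{\mK}$ extends to an $M(1)^{+}$-intertwining operator $I_{\alpha}\colon M(1,\alpha)\times \mK\to \module\db{x}$. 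I first show $I_{\alpha}\neq 0$: for non-isotropic $\alpha$, the standard OPE yields $\ExB(\alpha)_{\langle\alpha,\alpha\rangle-1}\ExB(\alpha)=2\varepsilon(\alpha,-\alpha)\vac\neq 0$, so if $Y_{\module}(\ExB(\alpha),x)\mK$ vanished the Jacobi identity would force $\vac$ to act as zero on $\mK$, which is absurd; for isotropic $\alpha$ I reduce to the non-isotropic case by a commutator argument against $\ExB(\beta)$ for a non-isotropic $\beta\in\lattice$ with $\langle\alpha,\beta\rangle\neq 0$. With $I_{\alpha}\neq 0$ in hand, Lemma~\ref{lemma:intertwining-lambda} (or Lemmas~\ref{lemma:intertwining-lambda-norm2}, \ref{lemma:intertwining-lambda-norm0} according to $\langle\alpha,\alpha\rangle$) forces either $\langle\alpha,\lambda\rangle=0$ or $(1+\epsilon_{I_{\alpha}}(\ExB,\lu_{\lambda}))^{2}=\langle\alpha,\lambda\rangle^{2}$ with $\epsilon_{I_{\alpha}}(\ExB,\lu_{\lambda})\in\Z$; either alternative yields $\langle\alpha,\lambda\rangle\in\Z$, whence $\lambda\in\lattice^{\perp}$.

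With $\lambda\in\lattice^{\perp}$ established, the same lemmas identify each image $I_{\alpha}(M(1,\alpha),x)\mK$ as an $M(1)^{+}$-module isomorphic to $M(1,\lambda+\alpha)$, $M(1,\lambda-\alpha)$, or their direct sum (the exceptional $M(1)^{-}(0)$-alternative of Lemma~\ref{lemma:intertwining-lambda}(3) is excluded because $\lambda\notin\lattice$ precludes $\lambda=\pm\alpha$). Using Proposition~\ref{proposition:Ext-total} to rule out non-trivial extensions, $\mN$ decomposes as an $M(1)^{+}$-module into a direct sum of $M(1,\mu)$'s with $\mu\in(\lambda+\lattice)\cup(-\lambda+\lattice)$. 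When $2\lambda\notin\lattice$, the cosets $\lambda+\lattice$ and $-\lambda+\lattice$ are disjoint and the $M(1,\mu)$ for $\mu\in\lambda+\lattice$ are pairwise non-isomorphic. I then construct a $V_{\lattice}^{+}$-linear map $V_{\lambda+\lattice}\to\mN$ sending a chosen cyclic vector $e^{\lambda}$ to the cyclic generator of $\mK$; it is surjective by iterated application of the $I_{\alpha}$ as $\alpha$ ranges over $\lattice$, and injective because $V_{\lambda+\lattice}$ is irreducible as a weak $V_{\lattice}$-module (Dong's classification) and remains irreducible as a $V_{\lattice}^{+}$-module when $2\lambda\notin\lattice$ (any proper $V_{\lattice}^{+}$-submodule is a union of isotypic components stable under the shifts $\mu\mapsto\mu\pm\alpha$, which act transitively on $\lambda+\lattice$). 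Hence $\mN\cong V_{\lambda+\lattice}\cong\bigoplus_{\mu\in\lambda+\lattice}M(1,\mu)$ and $\mN$ is irreducible.

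When $\lambda\notin\lattice$ and $2\lambda\in\lattice$, the coset $\lambda+\lattice=-\lambda+\lattice$ is stable under $\mu\mapsto-\mu$, so $V_{\lambda+\lattice}$ inherits a $\theta$-action and splits as $V_{\lambda+\lattice}=V_{\lambda+\lattice}^{+}\oplus V_{\lambda+\lattice}^{-}$ of $V_{\lattice}^{+}$-modules. Both summands have the same $M(1)^{+}$-decomposition $\bigoplus_{\mu\in S_{\lambda+\lattice}}M(1,\mu)$, and they are isomorphic as $V_{\lattice}^{+}$-modules via the $V_{\lattice}^{-}$-action, which restricts to a non-zero (hence, by irreducibility of both, an iso) $V_{\lattice}^{+}$-linear map between them. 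Setting $P$ to be this common isomorphism class, the surjection argument gives $\mN\cong P$ (when the image of $V_{\lambda+\lattice}\to\mN$ hits only one $\theta$-eigenspace) or $\mN\cong P\oplus P$ (when both are hit). The main obstacles I foresee are threefold: the isotropic subcase of Step~1, which requires the supplementary commutator argument rather than the direct vacuum identity; the exhaustion step in Case~(2), which demands that the iterated $I_{\alpha}$ produce every $M(1,\mu)$ with $\mu\in\lambda+\lattice$ rather than just some subset; and the mutual isomorphism $V_{\lambda+\lattice}^{+}\cong V_{\lambda+\lattice}^{-}$ in Case~(3), whose proof combines the $V_{\lattice}^{-}$-action with the extension-vanishing of Proposition~\ref{proposition:Ext-total} to confirm both irreducibility and isomorphy.
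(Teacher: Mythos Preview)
There are two genuine gaps.

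The first and central one is the ``exhaustion'' step you flag but leave unresolved. For each non-zero $\alpha\in\lattice$ with $\lambda\neq\pm\alpha$, the paper shows that $M(1,\alpha)\cdot\mK$ is \emph{exactly} $M(1,\lambda+\alpha)\oplus M(1,\lambda-\alpha)$, not just one summand. The argument assumes $M(1,\alpha)\cdot\mK\cong M(1,\lambda+\alpha)$ alone, applies $\ExB(\alpha)$ a second time, and uses the explicit exponential form of the $M(1)^{+}$-intertwining operators together with locality of $Y_{\module}(\ExB(\alpha),x)$ with itself to show that the projection back to $\mK$ must then vanish---contradicting $\mK\subset M(1,\alpha)\cdot(M(1,\alpha)\cdot\mK)$, which follows from $(\ExB(\alpha)_{-1}\ExB(\alpha))$ containing $2\vac$. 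Without this both-summands result, your iterated-$I_{\alpha}$ argument cannot reach every $\mu\in\lambda+\lattice$, and your comparison map $V_{\lambda+\lattice}\to\mN$ has no reason to be surjective. Nor is that map obviously well-defined: there is no universal weak $V_{\lattice}^{+}$-module generated by a vector with prescribed $M(1)^{+}$-type, so sending $e^{\lambda}$ to $\lu_{\lambda}$ does not automatically extend.

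Second, your claim that $V_{\lambda+\lattice}^{+}\cong V_{\lambda+\lattice}^{-}$ as $V_{\lattice}^{+}$-modules is false; they are distinct irreducibles (this is known in the positive-definite case and is part of what Theorem~\ref{theorem:classification-weak-module} asserts). The $V_{\lattice}^{-}$-action furnishes intertwining operators of type $\binom{V_{\lambda+\lattice}^{-}}{V_{\lattice}^{-}\ V_{\lambda+\lattice}^{+}}$, not $V_{\lattice}^{+}$-module homomorphisms: a single mode $a_{n}$ with $a\in V_{\lattice}^{-}$ does not commute with the $V_{\lattice}^{+}$-action in the way a module map must. The paper handles the case $2\lambda\in\lattice$ quite differently: writing $\alpha=2\lambda$, it splits into two subcases according to whether $\mK\cap(M(1,\alpha)\cdot\mK)$ vanishes. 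When it does not, the both-summands argument above runs directly. When it does, the paper diagonalizes the projected action of $\ExB(\alpha)$ on a two-dimensional subspace of $\Omega_{M(1)^{+}}(\mK\oplus M^{[\alpha/2]})$ to produce new cyclic vectors $\lw^{\pm}$, and then reruns the first-subcase argument on the irreducible $M(1)^{+}$-modules $\mK^{\pm}=M(1)^{+}\cdot\lw^{\pm}$ they generate.
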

\begin{proof}
Let $\lu$ be the element of $\mK$ corresponding to $e^{\lambda}\in M(1,\lambda)$.
Let $\alpha\in\lattice$.

Assume $\langle\alpha,\alpha\rangle\neq 0$.
Setting $\omega^{[1]}=(1/\langle\alpha,\alpha\rangle)\alpha(-1)^2\vac$, we have
\begin{align}
\label{eq:omega11lu}
\omega^{[1]}_{1}\lu&
=\frac{\langle \alpha,\lambda\rangle^2}{2\langle \alpha,\alpha\rangle}\lu.
\end{align}
Since
\begin{align}
\omega^{[1]}_{1}\lu&
=\frac{(\epsilon(\ExB,\lu)+1)^2}{2\langle\alpha,\alpha\rangle}\lu, \frac{\langle\alpha,\alpha\rangle}{2}\lu, \mbox{ or }0
\end{align}
by Lemmas \ref{lemma:structure-Vlattice-M1} (1) and \ref{lemma:structure-Vlattice-M1-norm2} (1), 
we have $\langle \alpha,\lambda\rangle\in\Z$.
If $\langle\alpha,\alpha\rangle=0$, then
it follows from Lemma \ref{lemma:intertwining-lambda-norm0}
that $\langle\alpha,\lambda\rangle\in\Z$.

In the rest of the proof, we assume $\lambda\not\in\lattice$.
For a fixed non-zero $\alpha\in \lattice$, we define the $M(1)^{+}$-submodule
$N=M(1,\alpha)\cdot \mK$.
We write 
\begin{align}
E=E(\alpha)\mbox{ and }
\lE=\epsilon(E,\lu)
\end{align}
for simplicity.
By Lemmas \ref{lemma:intertwining-lambda}, \ref{lemma:intertwining-lambda-norm2}, and \ref{lemma:intertwining-lambda-norm0}, and the assumption that $\lambda\not\in\lattice$,
\begin{align}
N&\cong M(1,\lambda+\alpha),M(1,\lambda-\alpha),
\mbox{ or }M(1,\lambda+\alpha)\oplus M(1,\lambda-\alpha).
\end{align}

We also consider the intertwining operator $Y_{M}$ from $M(1,\alpha)\times N$ to $\module$
and define $N^{(1)}=M(1,\alpha)\cdot N$.
Since
\begin{align}
\label{eq:Ebetaebeta}
&Y(\ExB,x)\ExB\nonumber\\
&=
\big(\exp(\sum_{i=1}^{\infty}\dfrac{\alpha(-i)}{i}x^{i})e^{2\alpha}
+\exp(\sum_{i=1}^{\infty}\dfrac{-\alpha(-i)}{i}x^{i})e^{-2\alpha}\big)x^{\langle\alpha,\alpha\rangle}\nonumber\\
&\quad{}+\big(\exp(\sum_{i=1}^{\infty}\dfrac{\alpha(-i)}{i}x^{i})
+\exp(\sum_{i=1}^{\infty}\dfrac{-\alpha(-i)}{i}x^{i})\big)e^{0}x^{-\langle\alpha,\alpha\rangle},
\end{align}
we have $\mK\subset N^{(1)}$.

Assume $N\cong M(1,\lambda+\alpha)$ or $M(1,\lambda-\alpha)$.
We may assume $N\cong M(1,\lambda+\alpha)$.
Then, $Y_{M}$ induces an intertwining operator  
$M(1,\alpha)\times\mK\rightarrow N\cong M(1,\lambda+\alpha)$.
Thus, Lemmas \ref{lemma:intertwining-lambda}, \ref{lemma:intertwining-lambda-norm2}, \ref{lemma:intertwining-lambda-norm1-2},
and \ref{lemma:intertwining-lambda-norm1} show that
$N^{(1)}=\mK\oplus N^{(2)}$ where $N^{(2)}=0$ or  $N^{(2)}\cong M(1,\lambda+2\alpha)$ as $M(1)^{+}$-modules.
We have intertwining operators
\begin{align}
f^{\alpha,\lambda+\alpha,\lambda}(\mbox{ },x)&\in I_{M(1)^{+}}\binom{\mK}{M(1,\alpha)\quad \mN},\nonumber\\
f^{\alpha,\lambda+\alpha,\lambda+2\alpha}(\mbox{ },x)&\in I_{M(1)^{+}}\binom{N^{(2)}}{M(1,\alpha)\quad \mN}
\end{align}
such that
\begin{align}
f^{\alpha,\lambda+\alpha,\lambda}(\mbox{ },x)+
f^{\alpha,\lambda+\alpha,\lambda+2\alpha}(\mbox{ },x)&=
Y_{M}(\mbox{ },x).
\end{align}
Let $a,b\in M(1,\alpha)$ and $\lu\in\mK(0)$.
Since $\textcolor{black}{(x-y)^{k}}Y_{M}(a,x)Y_{M}(b,y)\lu=(x-y)^{k}Y_{M}(b,y)Y_{M}(a,x)\lu$ for sufficiently large $k$,
\begin{align}
(x-y)^{k}f^{\alpha,\lambda+\alpha,\lambda}(a,x)Y_{M}(b,y)\lu
&=(x-y)^{k}f^{\alpha,\lambda+\alpha,\lambda}(b,y)Y_{M}(a,x)\lu
\end{align}
for sufficiently large $k$.
In the following, we shall use the explicit \textcolor{black}{expressions} of 
intertwining operators for $M(1)^{+}$-modules. 
Since
\textcolor{black}{$f^{\alpha,\lambda+\alpha,\lambda}(\ExB,x)Y_{M}(\ExB,y)\lu$}
is a scalar multiple of 
\begin{align}
(x-y)^{-\langle\alpha,\alpha\rangle}
\exp(\sum_{n=1}^{\infty}\dfrac{-\alpha(-n)}{n}x^{n}+\sum_{n=1}^{\infty}\dfrac{\alpha(-n)}{n}y^{n})
x^{-\langle\alpha,\lambda\rangle}
y^{\langle\alpha,\lambda\rangle}\lu
\end{align}
and $f^{\alpha,\lambda+\alpha,\lambda}(\ExB,y)Y_{M}(\ExB,x)\lu$
is a scalar multiple of 
\begin{align}
(x-y)^{-\langle\alpha,\alpha\rangle}
\exp(\sum_{n=1}^{\infty}\dfrac{\alpha(-n)}{n}x^{n}+\sum_{n=1}^{\infty}\dfrac{-\alpha(-n)}{n}y^{n})
x^{\langle\alpha,\lambda\rangle}
y^{-\langle\alpha,\lambda\rangle}\lu,
\end{align}
we have 
$f^{\alpha,\lambda+\alpha,\lambda}(\mbox{ },x)\textcolor{black}{Y_{M}(\mbox{ },y)}=0$ 
and hence
\begin{align}
Y_{M}(\ExB,x)Y_{M}(\ExB,y)\lu=f^{\alpha,\lambda+\alpha,\lambda+2\alpha}(\ExB,x)Y_{M}(\ExB,y)\lu.
\end{align}
This leads to a contradiction
$\ExB(\cdot (\ExB\cdot \lu))\subset N^{(2)}$ since $\lu\in \ExB(\cdot (\ExB\cdot \lu))$.
Thus 
\begin{align}
	\label{eq:mNcong M(1,lambda+alpha) oplus M(1,lambda-alpha)}
	\mN&\cong M(1,\lambda+\alpha) \oplus M(1,\lambda-\alpha).
	\end{align}
For a non-zero $\alpha\in\lattice$, we write
\begin{align}
	\label{eq:mNcong M(1,lambda+alpha) oplus M(1,lambda-alpha)-2}
	M(1,\alpha)\cdot \mK&=M^{[\lambda+\alpha]} \oplus M^{[\lambda-\alpha]}
	\end{align}
where $M^{[\lambda\pm\alpha]}\cong  M(1,\lambda\pm\alpha)$ as $M(1)^{+}$-modules.

as $M(1)^{+}$-modules where for each $i\in I$ there exists $\alpha^{\langle i\rangle}\in \lambda+L$ such that $M^{\langle i\rangle}\cong M(1,\alpha^{\langle i\rangle})$
as $M(1)^{+}$-modules.

Assume $2\lambda\not\in \lattice$. Since $\lambda+\alpha\neq \pm (\lambda+\beta)$
for any pair of distinct elements $\alpha,\beta\in\lattice$,
it follows from	\eqref{eq:mNcong M(1,lambda+alpha) oplus M(1,lambda-alpha)} that 
\begin{align}
	\label{eq:Vlattice+cdot Kcong bigoplus-1}
	V_{\lattice}^{+}\cdot K=\sum_{\alpha\in\lattice}(M(1,\alpha)\cdot K)
\cong \bigoplus_{\mu\in \lambda+\lattice}M(1,\mu)
\end{align}
and $V_{\lattice}^{+}\cdot K$ is an irreducible$V_{\lattice}^{+}$-module.

Assume $\lambda\not\in\lattice$, $\lambda=\alpha/2$ for some $\alpha\in\lattice$, and
$\mK\cap (M(1,\alpha)\cdot \mK)\neq 0$.
Since $K=M^{[\lambda-\alpha]}$ in this case,
we can write $M(1,\alpha)\cdot \mK=\mK\oplus M^{[3\alpha/2]}$.
If $M^{[\alpha/2+\beta]}\cong \mK$ for some non-zero $\beta\in\lattice$,
then $\alpha/2+\beta=\pm \alpha/2$ and hence $\beta=-\alpha$, which leads
that $K=M^{[\alpha/2+\beta]}$.
If $M^{[\alpha/2+\beta]}\cong M^{[\alpha/2+\gamma]}$ for some pair of distinct non-zero elements $\beta,\gamma\in\lattice$,
then $\alpha/2+\beta=-\alpha/2-\gamma$, or equivalently $\beta=-\gamma-\alpha$.
Since $M(1,\beta)\cdot K\subset M(1,\gamma)\cdot M(1,\alpha)\cdot K
=M(1,\gamma)\cdot(\mK\oplus M^{[3\alpha/2]})$, it follows from	\eqref{eq:mNcong M(1,lambda+alpha) oplus M(1,lambda-alpha)} that $\alpha/2+\beta=\sigma_1\alpha/2+\sigma_2 \gamma$
or $\sigma_3(3\alpha/2)+\sigma_4\gamma$ where $\sigma_1,\sigma_2,\sigma_3,\sigma_4\in\{\pm 1\}$.
If $\alpha/2+\beta=\sigma_3(3\alpha/2)+\sigma_4\gamma$,
then a direct computations shows that  $\alpha=0$ or $\beta=0$, which is a contradiction.
Thus, $M^{[\alpha/2+\beta]}=M^{[\alpha/2+\gamma]}$ or $M^{[\alpha/2-\gamma]}$.
It follows from	\eqref{eq:mNcong M(1,lambda+alpha) oplus M(1,lambda-alpha)} that 
\begin{align}
	\label{eq:Vlattice+cdot Kcong bigoplus-2}
	V_{\lattice}^{+}\cdot K=\sum_{\beta\in\lattice}(M(1,\beta)\cdot K)
\cong \bigoplus_{\mu\in S_{\lambda+\lattice}}M(1,\mu)
\end{align}
and 
$V_{\lattice}^{+}\cdot K$ is an irreducible$V_{\lattice}^{+}$-module.

Assume $\lambda=\alpha/2$ for some $\alpha\in\lattice$ and $\mK\cap (M(1,\alpha)\cdot \mK)=0$.
We have $ M(1,\alpha)\cdot \mK=M^{[\alpha/2]}\oplus M^{[3\alpha/2]}$.
By \eqref{eq:Ebetaebeta}, we have $M(1,\alpha)\cdot M^{[\alpha/2]}=K\oplus N^{[3\alpha/2]}$ where $N^{[3\alpha/2]}\cong M(1,3\alpha/2)$ as $M(1)^{+}$-modules.
We take the following four intertwining operators
\begin{align}
I^{1,(\alpha,\alpha/2,\alpha/2)}(\ ,x) &  : M(1,\alpha)\times K\rightarrow M^{[\alpha/2]}\db{x},\nonumber\\ 
I^{1,(\alpha,\alpha/2,3\alpha/2)}(\ ,x)&  : M(1,\alpha)\times K\rightarrow M^{[3\alpha/2]}\db{x},
\nonumber\\
I^{2,(\alpha,\alpha/2,\alpha/2)}(\ ,x) & :  M(1,\alpha)\times M^{[\alpha/2]}\rightarrow K\db{x},\mbox{ and }\nonumber\\
I^{2,(\alpha,\alpha/2,3\alpha/2)}(\ ,x) &  : M(1,\alpha)\times M^{[\alpha/2]}\rightarrow N^{[3\alpha/2]}\db{x}
\end{align} 
such that 
\begin{align}
Y_{M}(\ ,x)|_{M(1,\alpha)\times K} &= I^{1,(\alpha,\alpha/2,\alpha/2)}(\ ,x) +I^{1,(\alpha,\alpha/2,3\alpha/2)}(\ ,x)\mbox{ and }\nonumber\\
Y_{M}(\ ,x)|_{M(1,\alpha)\times M^{[\alpha/2]}} &
= I^{2,(\alpha,\alpha/2,\alpha/2)}(\ ,x) +I^{2,(\alpha,\alpha/2,3\alpha/2)}(\ ,x).
\end{align}
We extend $I^{i,(\alpha,\alpha/2,\alpha/2)}(\ ,x),i=1,2,$ 
to the intertwining operators from $M(1,\alpha)\times (K\oplus M^{[\alpha/2]})$
to $K\oplus M^{[\alpha/2]}$
by setting
\begin{align}
I^{1,(\alpha,\alpha/2,\alpha/2)}(\ ,x)|_{M(1,\alpha)\times M^{[\alpha/2]}}&=I^{2,(\alpha,\alpha/2,\alpha/2)}(\ ,x)|_{M(1,\alpha)\times K}=0
\end{align}
and define
\begin{align}
&I^{(\alpha,\alpha/2,\alpha/2)}(\ ,x)\nonumber\\
&=I^{1,(\alpha,\alpha/2,\alpha/2)}(\ ,x)+I^{2,(\alpha,\alpha/2,\alpha/2)}(\ ,x):
K\oplus M^{[\alpha/2]}\rightarrow K\oplus M^{[\alpha/2]}.
\end{align}
Note that the $M(1)^{+}$-submodule $\mW=K+M^{[\alpha/2]}+M^{[3\alpha/2]}+N^{[3\alpha/2]}$
of $\module$ is completely reducible and
$I^{(\alpha,\alpha/2,\alpha/2)}(\ ,x)=\pr\circ Y_{\module}(\mbox{ },x)|_{K\oplus M^{[\alpha/2]}}$ where $\pr : \mW\rightarrow K\oplus M^{[\alpha/2]}$ is the projection.
We set $\lv=I^{1,(\alpha,\alpha/2,\alpha/2)}(E; \mn/2-1)\lu\in M^{[\alpha/2]}$.
Then, there exists a non-zero $m\in\C$ such that $I^{2,(\alpha,\alpha/2,\alpha/2)}(E; \mn/2-1)
I^{1,(\alpha,\alpha/2,\alpha/2)}(E; \mn/2-1)\lu=m^2\lu$.
Writing $\lw^{\pm}=\lv\pm m\lu\in \Omega_{M(1)^{+}}(\mK\oplus M^{[\alpha/2]})$,
we have $\C \lw^{\pm}\cong \C e^{\alpha/2}$ as $A(M(1)^{+})$-modules.
We set $K^{\pm}=M(1)^{+}\cdot \lw^{\pm}\cong M(1,\alpha/2)$ in $\module$.
Since $I^{(\alpha,\alpha/2,\alpha/2)}(E;\mn/2-1)\lw^{\pm}=m \lw^{\pm}$, we have 
 $M(1,\alpha)\cdot K^{\pm}=K^{\pm}\oplus P^{[3\alpha/2]}$
where $P^{[3\alpha/2]}\cong M(1,3\alpha/2)$ as $M(1)^{+}$-modules.
Replacing $\mK$ by $\mK^{\pm}$, we can obtain the following result
in the same way that we obtained \eqref{eq:Vlattice+cdot Kcong bigoplus-2}:
\begin{align}
	\label{eq:Vlattice+cdot Kcong bigoplus-3}
	V_{\lattice}^{+}\cdot \mK^{\pm}=\sum_{\beta\in\lattice}(M(1,\beta)\cdot \mK^{\pm})
\cong \bigoplus_{\mu\in S_{\lambda+\lattice}}M(1,\mu)
\end{align}
and $V_{\lattice}^{+}\cdot K^{\pm}$ are irreducible $V_{\lattice}^{+}$-modules.
Since $K\oplus M^{[\alpha/2]}=\mK^{+}\oplus \mK^{-}$,
we have $V_{\lattice}^{+}\mK=	V_{\lattice}^{+}\cdot \mK^{+}\oplus V_{\lattice}^{+}\cdot \mK^{-}$.
This completes the proof.
\end{proof}

\begin{lemma}\label{lemma:M(1)directsum}
Every weak $V_{\lattice}^{+}$-module contains an irreducible weak $V_{\lattice}^{+}$-submodule 
which is isomorphic to a submodule of a $\theta$-twisted irreducible $V_{\lattice}$-module or 
is a direct sum of pairwise non-isomorphic irreducible $M(1)^{+}$-modules.
\end{lemma}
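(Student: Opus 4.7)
The plan is to first produce an irreducible $M(1)^{+}$-submodule $K$ of $M$, and then to extract an irreducible weak $V_{\lattice}^{+}$-submodule from $V_{\lattice}^{+}\cdot K$. For the first step, Lemma \ref{lemma:Zhu-Omega} supplies an irreducible $A(M(1)^{+})$-submodule $B$ of $\Omega_{M(1)^{+}}(M)$. If $B\not\cong\C\vac$, then by Corollary \ref{corollary:verma-irreducible} the generalized Verma $M(1)^{+}$-module on $B$ is already irreducible, so $M(1)^{+}\cdot B$ is an irreducible $M(1)^{+}$-submodule of $M$. If $B\cong\C\vac$, Lemma \ref{lemma:generalizedVermaModule-M(1)-} asserts that the kernel of the canonical projection from the generalized Verma module onto $M(1)^{+}$ is a direct sum of copies of $M(1)^{-}$, so we may take $K$ to be either the resulting copy of $M(1)^{+}$ sitting inside $M$ or, when present, an irreducible $M(1)^{-}$-submodule coming from this kernel. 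In every case we arrive at an irreducible $M(1)^{+}$-submodule $K\subset M$.

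I would then split on the isomorphism type of $K$. When $K\cong M(\theta)^{\pm}$, Lemma \ref{lemma:irreducible-M(1)-submodule-of-M}(2) realises $V_{\lattice}^{+}\cdot K$ as a completely reducible $\N$-graded weak $V_{\lattice}^{+}$-module, so any irreducible summand $W$ is an ordinary $V_{\lattice}^{+}$-module; the classification of irreducible ordinary $V_{\lattice}^{+}$-modules from \cite{J2006,Yamsk2008}, combined with the fact that $W$ must contain a copy of $M(\theta)^{\pm}$, forces $W\cong V_{\lattice}^{T_{\chi},\pm}$ for some central character $\chi$, which is a submodule of the $\theta$-twisted irreducible $V_{\lattice}$-module $V_{\lattice}^{T_{\chi}}$. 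When $K\cong M(1,\lambda)$ for some $\lambda\in\fh\setminus\{0\}$, Lemma \ref{lemma:irreducible-M(1)-submodule-of-M-2} directly produces an irreducible weak $V_{\lattice}^{+}$-submodule whose $M(1)^{+}$-decomposition is $\bigoplus_{\mu\in\lambda+\lattice}M(1,\mu)$ or $\bigoplus_{\mu\in S_{\lambda+\lattice}}M(1,\mu)$, and these summands are pairwise non-isomorphic as $M(1)^{+}$-modules since $M(1,\mu)\cong M(1,\mu')$ iff $\mu=\pm\mu'$. When $K\cong M(1)^{-}$, Lemma \ref{lemma:irreducible-M(1)-submodule-of-M}(1) identifies $V_{\lattice}^{+}\cdot K$ with $V_{\lattice}^{-}$, whose $M(1)^{+}$-decomposition $M(1)^{-}\oplus\bigoplus_{\alpha\in S_{\lattice}}M(1,\alpha)$ is again a direct sum of pairwise non-isomorphic irreducible $M(1)^{+}$-modules.

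The remaining and most delicate case is $K\cong M(1)^{+}$. Here $V_{\lattice}^{+}\cdot K$ may contain non-split extensions of $M(1)^{+}$ by $M(1)^{-}$, since $\Ext^{1}_{M(1)^{+}}(M(1)^{+},M(1)^{-})\neq 0$ by Lemma \ref{lemma:generalizedVermaModule-M(1)-}, and it is not automatic that $V_{\lattice}^{+}\cdot K$ itself is irreducible or even completely reducible as an $M(1)^{+}$-module. My strategy will be to combine the intertwining-operator lemmas (Lemmas \ref{lemma:intertwining-M1minus}, \ref{lemma:intertwining-M1minus-norm2}, \ref{lemma:intertwining-M1minus-norm1-2}, \ref{lemma:intertwining-M1minus-norm1}, \ref{lemma:intertwining-M1minus-norm0}), which pin down $M(1,\alpha)\cdot K\cong M(1,\alpha)$ for each $\alpha\in\lattice$, with the Ext-vanishing of Proposition \ref{proposition:Ext-total} applied to the remaining pairs of irreducible $M(1)^{+}$-constituents that appear. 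The hard part will be verifying that the splittings produced orbit-by-orbit fit together into a globally $V_{\lattice}^{+}$-compatible decomposition, so that the resulting candidate submodule of $V_{\lattice}^{+}\cdot K$ is stable under the full $V_{\lattice}^{+}$-action and not merely under each individual $M(1,\alpha)$; once this compatibility is secured, passing to an irreducible weak $V_{\lattice}^{+}$-summand produces the desired direct sum of pairwise non-isomorphic irreducible $M(1)^{+}$-modules.
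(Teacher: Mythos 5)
Your overall architecture---produce an irreducible $M(1)^{+}$-submodule $\mK$ of $\module$ via Lemma \ref{lemma:Zhu-Omega}, Corollary \ref{corollary:verma-irreducible} and Lemma \ref{lemma:generalizedVermaModule-M(1)-}, then split on its isomorphism type---is the paper's, and the branches $\mK\cong M(\theta)^{\pm}$, $\mK\cong M(1)^{-}$, and $\mK\cong M(1,\lambda)$ with $\lambda\in\lattice^{\perp}\setminus\lattice$ are handled as in the paper. There are, however, two genuine gaps. First, you apply Lemma \ref{lemma:irreducible-M(1)-submodule-of-M-2} to every $\mK\cong M(1,\lambda)$ with $\lambda\in\fh\setminus\{0\}$, but its structural conclusions are stated and proved only under the hypothesis $\lambda\not\in\lattice$; for $\lambda=\alpha\in\lattice\setminus\{0\}$ the decomposition $\bigoplus_{\mu\in\lambda+\lattice}M(1,\mu)$ you quote would contain $M(1,0)=M(1)$, which is reducible over $M(1)^{+}$, so neither the lemma nor your ``pairwise non-isomorphic'' claim applies. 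The paper treats this case separately: since $Y(E(\alpha),x)E(\alpha)$ has a component in $M(1)$ (cf.\ \eqref{eq:Ebetaebeta}), Lemmas \ref{lemma:intertwining-lambda}, \ref{lemma:intertwining-lambda-norm2} and \ref{lemma:intertwining-lambda-norm0} show that $M(1,\alpha)\cdot\mK$ contains an $A(M(1)^{+})$-module isomorphic to $\C\vac$ or $M(1)^{-}(0)$, whence $\module$ contains an irreducible $M(1)^{+}$-submodule isomorphic to $M(1)^{+}$ or $M(1)^{-}$, and one reduces to those branches.

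Second, the branch $\mK\cong M(1)^{+}$, which you call the most delicate and explicitly leave unfinished, is in fact immediate, and the Ext-vanishing/orbit-by-orbit gluing strategy you sketch is not needed. If $\mK\cong M(1)^{+}$, its lowest-weight vector $\lu$ satisfies $\omega_{0}\lu=0$ because $M(1)^{+}$ has no component of weight one; thus $\lu$ is a vacuum-like vector and \cite[Proposition 4.7.7]{LL} yields $V_{\lattice}^{+}\cdot\lu\cong V_{\lattice}^{+}$ as weak $V_{\lattice}^{+}$-modules. This is an irreducible weak $V_{\lattice}^{+}$-module whose $M(1)^{+}$-decomposition consists of $M(1)^{+}$ together with one copy of $M(1,\alpha)$ for each pair $\{\pm\alpha\}\subset\lattice\setminus\{0\}$, hence is multiplicity-free, and the lemma holds in this case with no compatibility argument. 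As written, your proof of this branch is an unproved program (the global $V_{\lattice}^{+}$-compatibility of the local splittings is exactly the kind of issue the paper only addresses later, in Lemma \ref{lemma:st-unique-two}, and only in the multiplicity-free setting), so the argument is incomplete there.
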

\begin{proof}
It follows from Lemma \ref{lemma:Zhu-Omega}, Corollary \ref{corollary:verma-irreducible},
Lemma \ref{lemma:generalizedVermaModule-M(1)-}, and \cite[Theorem 6.2]{DLM1998t}
that 
for a weak $V_{\lattice}^{+}$-module $\module$,
there exists an irreducible $M(1)^{+}$-submodule $\mK$ of $\module$. 
Assume $\mK\cong M(1,\alpha)$ for some non-zero $\alpha\in \lattice$.
It follows from Lemmas \ref{lemma:intertwining-lambda}, \ref{lemma:intertwining-lambda-norm2},
 and 	\ref{lemma:intertwining-lambda-norm0}
that $M(1,\alpha)\cdot \mK$ includes an $A(M(1)^{+})$-module which is isomorphic to
$\C\vac$ or $M(1)^{-}(0)$. Hence 
there exists an irreducible $M(1)^{+}$-submodule of $\module$
which is isomorphic to $M(1)^{+}$ or $M(1)^{-}$
by Corollary \ref{corollary:verma-irreducible} and Lemma \ref{lemma:generalizedVermaModule-M(1)-}.   
Now the results follows from 
Lemma \ref{lemma:irreducible-M(1)-submodule-of-M},
Lemma \ref{lemma:irreducible-M(1)-submodule-of-M-2}, and  \cite[Proposition 4.7.7]{LL}.
\end{proof}

\begin{lemma}
	\label{lemma:st-unique-two}
Let $\lambda\in \lattice^{\perp}\setminus L$, $\mN$ an irreducible weak $V_{\lattice}^{+}$-modules such that
$\mN\cong\oplus_{\beta\in \lambda+\lattice}M(1,\beta)$ as  $M(1)^{+}$-modules.
If $2\lambda\not\in \lattice$, then $\mN\cong V_{\lambda+\lattice}$
and if $2\lambda\in \lattice$, then $\mN\cong V_{\lambda+\lattice}^{+}$ or $V_{\lambda+\lattice}^{-}$ 
 as $V_{\lattice}^{+}$-modules.
\end{lemma}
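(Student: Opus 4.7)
The plan is to show that the weak $V_\lattice^+$-module structure on $\mN$ is determined, up to isomorphism, by the $M(1)^+$-decomposition together with a single $\Z/2$-choice when $2\lambda\in\lattice$, and that the resulting possibilities coincide with $V_{\lambda+\lattice}$ (respectively, $V_{\lambda+\lattice}^{\pm}$). First I would fix, for each $\beta$ in the index set of the given $M(1)^+$-decomposition of $\mN$, a nonzero generator $v_\beta$ of the lowest-weight space of the corresponding $M(1,\beta)$ summand; each such choice is unique up to a nonzero scalar. Since $V_\lattice^+$ is generated as a vertex algebra by $M(1)^+$ together with $\{\ExB(\alpha):\alpha\in\lattice\}$, the weak $V_\lattice^+$-module structure on $\mN$ is completely encoded by the operators $Y_{\mN}(\ExB(\alpha),x)$ for $\alpha\in\lattice$. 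For each such $\alpha$ and $\beta$, the restriction $Y_{\mN}(\ExB(\alpha),x)|_{M(1,\beta)}$ is an $M(1)^+$-module intertwining operator; since $\beta\pm\alpha\in\lambda+\lattice\setminus\lattice$ (using $\lambda\notin\lattice$), the non-degenerate cases of Lemmas \ref{lemma:intertwining-lambda}, \ref{lemma:intertwining-lambda-norm2}, \ref{lemma:intertwining-lambda-norm1-2}, \ref{lemma:intertwining-lambda-norm1}, and \ref{lemma:intertwining-lambda-norm0} apply, showing that the image lies in $M(1,\beta+\alpha)\oplus M(1,\beta-\alpha)$ and each component intertwining operator is unique up to a nonzero scalar. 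Hence there exist $c^+(\alpha,\beta),c^-(\alpha,\beta)\in\C$ with
\begin{equation}
Y_{\mN}(\ExB(\alpha),x)\,v_\beta = c^+(\alpha,\beta)\,\mathcal{Y}^{+}_{\alpha,\beta}(x)\,v_\beta + c^-(\alpha,\beta)\,\mathcal{Y}^{-}_{\alpha,\beta}(x)\,v_\beta,
\end{equation}
where $\mathcal{Y}^{\pm}_{\alpha,\beta}$ are the canonical normalized intertwining operators obtained from the standard lattice vertex operators $Y_{V_\lattice}(e^{\pm\alpha},x)$. Irreducibility of $\mN$ forces every $c^{\pm}(\alpha,\beta)$ to be nonzero, since otherwise the summands reachable from $v_\beta$ by iterated actions of the $\ExB(\alpha)$'s would span a proper $V_\lattice^+$-submodule.

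Next I would apply the Jacobi identity to $Y_{\mN}(\ExB(\alpha_1),x_1)$ and $Y_{\mN}(\ExB(\alpha_2),x_2)$ acting on $v_\beta$, and expand both sides using the explicit lattice products $Y_{V_\lattice}(e^{\pm\alpha_1},x_0)e^{\pm\alpha_2}$. Matching coefficients produces multiplicative relations on the scalars $c^\pm$ that are precisely the cocycle conditions satisfied by the structure constants of $V_{\lambda+\lattice}$ as a $V_\lattice$-module, restricted to $V_\lattice^+$. The rescaling freedom $v_\beta\mapsto s_\beta v_\beta$ with $s_\beta\in\C^\times$ then absorbs $2$-coboundaries, so the $V_\lattice^+$-module isomorphism class of $\mN$ is determined by a single $2$-cohomology class.

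When $2\lambda\notin\lattice$, the orbits $\{\pm\beta\}$ split between $\lambda+\lattice$ and $-\lambda+\lattice$, so the rescaling freedom is unconstrained and the cocycle class is trivial, forcing $\mN\cong V_{\lambda+\lattice}$. When $2\lambda\in\lattice$, the coset $\lambda+\lattice$ is $\theta$-stable and $\theta$ pairs the summand indexed by $\beta$ with that indexed by $-\beta$; consistency of rescaling across each such pair reduces the freedom by one $\C^\times$-factor per orbit, leaving a single $\Z/2$-ambiguity in the resulting cocycle class that distinguishes exactly $V_{\lambda+\lattice}^{+}$ from $V_{\lambda+\lattice}^{-}$. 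The main obstacle will be the cocycle bookkeeping: performing the full Jacobi-identity expansion to extract the exact multiplicative constraints on the $c^\pm$, and verifying that the resulting $2$-cocycle on $\lattice\times(\lambda+\lattice)$ is cohomologous to the restriction of the canonical cocycle of the central extension $\hat{\lattice}$ defining $V_{\lambda+\lattice}$ (respectively, the $\theta$-eigensubmodules $V_{\lambda+\lattice}^{\pm}$).
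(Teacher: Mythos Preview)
Your approach is essentially the paper's: reduce the $V_\lattice^+$-structure to scalar data via one-dimensionality of the relevant $M(1)^+$-intertwining spaces, extract cocycle-type relations from associativity, and absorb coboundaries by rescaling. Two refinements in the paper's execution shorten the work you flag as the obstacle.

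First, your irreducibility argument for $c^\pm(\alpha,\beta)\neq 0$ is incomplete: if a single $c^+(\alpha_0,\beta_0)$ vanished, the summand $M(1,\beta_0+\alpha_0)$ could still be reached via longer paths through other $\ExB(\gamma)$'s, so no proper submodule is produced. The paper instead cites the explicit lattice computation with $Y(\ExB,x)\ExB$ carried out in the proof of Lemma~\ref{lemma:irreducible-M(1)-submodule-of-M-2}, which forces both components to be nonzero.

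Second, the paper sidesteps the full Jacobi expansion. Rather than compare $\mN$ to a canonical model, it takes two such modules $\mN^1,\mN^2$ with identical $M(1)^+$-action and studies the ratios $\varphi(\alpha,\beta,\alpha+\beta)$ between their intertwiners (one-dimensionality is \cite[Theorem~4.7]{ADL2005}, not the Section~\ref{section:Intertwining operators for M(1)+} lemmas you cite). By \cite[Theorem~4.9]{DLM1998v} one can choose, for each triple, elements $a\in M(1,\alpha)$ whose action on a fixed vector lands \emph{purely} in $M(1,\alpha+\beta)$; applying Borcherds' identity to these specific elements yields directly
\[
\varphi(\alpha_2,\beta,\alpha_2+\beta)\,\varphi(\alpha_1,\alpha_2+\beta,\alpha_1+\alpha_2+\beta)=\varphi(\alpha_1+\alpha_2,\beta,\alpha_1+\alpha_2+\beta),
\]
with no need to track all four cross-terms. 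For $2\lambda\in\lattice$ the paper does not invoke any $\theta$-action on $\mN$ (none is given): the built-in symmetries $\varphi(\alpha,\beta,\gamma)=\varphi(-\alpha,\beta,\gamma)=\varphi(\alpha,-\beta,\gamma)=\varphi(\alpha,\beta,-\gamma)$ together with the cocycle relation give $\varphi(2\lambda,\lambda,\lambda)^2=1$, and the sign is the $\Z/2$ that separates $V_{\lambda+\lattice}^{+}$ from $V_{\lambda+\lattice}^{-}$.
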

\begin{proof}
Let $\mN^{1}$ and $\mN^{2}$ be two irreducible weak $V_{\lattice}^{+}$-modules such that
\begin{align}
\label{eq:mNicongbigoplus}
	\mN^{i}&\cong\bigoplus_{\gamma\in \lambda+\lattice}M(1,\gamma), i=1,2,
	\end{align}
as  $M(1)^{+}$-modules.
We may assume $Y_{\mN^{1}}(a ,x)=Y_{\mN^{2}}(a, x)$ for all $a\in M(1)^{+}$.
For $i=1,2$, $\alpha \in\lattice\setminus\{0\}$ and $\beta\in\lambda+\lattice$, let
\begin{align}
\label{eq:fh}
f^{i,(\alpha,\beta,\alpha+\beta)}(\ ,x)& : M(1,\alpha)\times M(1,\beta)\rightarrow M(1,\alpha+\beta),\nonumber\\
f^{i,(\alpha,\beta,-\alpha+\beta)}(\ ,x)& : M(1,\alpha)\times M(1,\beta)\rightarrow M(1,-\alpha+\beta)
\end{align}
such that 
$Y_{\mN^{1}}(\ ,x)|_{M(1,\alpha)\times M(1,\beta)}
=f^{1,(\alpha,\beta,\alpha+\beta)}(\ ,x)+f^{1,(\alpha,\beta,-\alpha+\beta)}(\ ,x)$
and
$Y_{\mN^{2}}(\ ,x)|_{M(1,\alpha)\times M(1,\beta)}
=f^{2,(\alpha,\beta,\alpha+\beta)}(\ ,x)+f^{2,(\alpha,\beta,-\alpha+\beta)}(\ ,x)$.
Set $f^{1,(0,\beta,\beta)}(\ ,x)=f^{2,(0,\beta,\beta)}(\ ,x)=Y_{N^1}(\mbox{ },x)|_{M(1,\beta)}$.
As we have seen in the proof of Lemma \ref{lemma:M(1)directsum},
$f^{i,(\alpha,\beta,\pm\alpha+\beta)}(\ ,x)\neq 0$ for all $i=1,2$, $\alpha\in \lattice$, and $\beta\in \lambda+\lattice$.
Note that for $i=1,2$ and $\alpha\in\lattice,\beta\in\lambda+\lattice$,  
\begin{align}
	\label{eq:fi(alpha,beta,alpha+beta)}
f^{i,(\alpha,\beta,\alpha+\beta)}(\ ,x)
&=f^{i,(-\alpha,\beta,\alpha+\beta)}(\ ,x)\nonumber\\
&=f^{i,(\alpha,-\beta,\alpha+\beta)}(\ ,x)\nonumber\\
&=f^{i,(\alpha,\beta,-\alpha-\beta)}(\ ,x).
\end{align}
For $i=1,2$, $j\in\Z$, and homogeneous $a\in M(1,\alpha)$, we define 
\begin{align}
o^{i}_{j}(a)&=Y_{i}(a;\wt a-j-1),\nonumber\\
o^{i,(\alpha,\beta,\alpha+\beta)}_{j}(a)&=f^{i,(\alpha,\beta,\alpha+\beta)}(a;\wt a-j-1),\nonumber\\
o^{i,(\alpha,\beta,-\alpha+\beta)}_{j}(a)&=f^{i,(\alpha,\beta,-\alpha+\beta)}(a;\wt a-j-1)
\end{align}
and extend $o^{i}_{j}(a)$ and $o^{i,(\alpha,\beta,\pm\alpha+\beta)}_{j}(a)$ for all $a\in M(1,\alpha)$ by linearity.
Let $k\in\Z$, $\lu\in M(1,\beta)(k)$ a non-zero element, $i,j\in \Z$ such that $M(1,\alpha+\beta)(i)\neq 0$ and $M(1,-\alpha+\beta)(j)\neq 0$.
By \cite[Theorem 4.9]{DLM1998v}, there exists $a,b\in M(1,\alpha)$ such that
\begin{align}
\label{eq:0neqomNj-k(b)lu-1}
0&\neq o^{1}_{i-k}(a)\lu\in M(1,\alpha+\beta)(i),\quad 0=o^{1,(\alpha,\beta,-\alpha+\beta)}_{i-k}(a)\lu, \nonumber\\
0&\neq o^{2}_{i-k}(a)\lu\in M(1,\alpha+\beta)(i),\quad 0=o^{2,(\alpha,\beta,-\alpha+\beta)}_{i-k}(a)\lu,
\end{align}
and 
\begin{align}
\label{eq:0neqomNj-k(b)lu-2}
0&\neq o^{1}_{j-k}(b)\lu\in M(1,-\alpha+\beta)(j),\quad 0=o^{1,(\alpha,\beta,\alpha+\beta)}_{j-k}(b)\lu, \nonumber\\
0&\neq o^{2}_{j-k}(b)\lu\in M(1,-\alpha+\beta)(j),\quad 0=o^{2,(\alpha,\beta,\alpha+\beta)}_{j-k}(b)\lu.
\end{align}
Hence
\begin{align}
\label{eq:0neqomNj-k(b)lu-3}
o^{1}_{i-k}(a)\lu&=o^{1,(\alpha,\beta,\alpha+\beta)}_{i-k}(a)\lu, 
\quad
o^{2}_{i-k}(a)\lu=o^{2,(\alpha,\beta,\alpha+\beta)}_{i-k}(a)\lu,\nonumber\\
o^{1}_{j-k}(b)\lu&=o^{1,(\alpha,\beta,-\alpha+\beta)}_{j-k}(b)\lu,
\quad
o^{2}_{j-k}(b)\lu=o^{2,(\alpha,\beta,-\alpha+\beta)}_{j-k}(b)\lu.
\end{align}
By \cite[Theorem 4.7]{ADL2005}, there exists $\varphi(\alpha,\beta,\beta\pm\alpha)\in\C\setminus\{0\}$ such that
$f^{2,(\alpha,\beta,\alpha+\beta)}(\ ,x)=\cocy(\alpha,\beta,\alpha+\beta)f^{1,(\alpha,\beta,\alpha+\beta)}(\ ,x)$
and
$f^{2,(\alpha,\beta,-\alpha+\beta)}(\ ,x)=\cocy(\alpha,\beta,-\alpha+\beta)f^{1,(\alpha,\beta,-\alpha+\beta)}(\ ,x)$.
By \eqref{eq:0neqomNj-k(b)lu-1}--\eqref{eq:0neqomNj-k(b)lu-3} and Borcherds identity,
for $\alpha_1,\alpha_2\in\lattice$,
\begin{align}
	\label{eq:cocy(alpha_2,beta,alpha_2+beta)}
\cocy(\alpha_2,\beta,\alpha_2+\beta)
\cocy(\alpha_1,\alpha_2+\beta,\alpha_1+\alpha_2+\beta)
&=\cocy(\alpha_1+\alpha_2,\beta,\alpha_1+\alpha_2+\beta).
\end{align}

Assume $2\lambda\not\in\lattice$.
We fix $\beta\in \lattice$. Note that $\beta+\alpha \neq \pm(\beta+\gamma)$
for any pair of distinct elements $\alpha,\gamma\in\lattice$.
Then, the map $\varphi : N^{1}\rightarrow N^{2}$ defined by
$\varphi|_{M(1,\alpha+\beta)}=\varphi(\alpha,\beta,\alpha+\beta)\id_{M(1,\alpha+\beta)}$
for $\alpha\in\lattice$
is a weak $V_{\lattice}^{+}$-module isomorphism.

Assume $2\lambda\in\lattice$ and set $\beta=2\lambda\in\lattice$.
It follows from 	\eqref{eq:fi(alpha,beta,alpha+beta)} and \eqref{eq:cocy(alpha_2,beta,alpha_2+beta)}
that
\begin{align}
1&=\varphi(0,\beta/2,\beta/2)=\varphi(-\beta,\beta/2,-\beta/2)\varphi(\beta,-\beta/2,\beta/2)\nonumber\\
&=\varphi(\beta,\beta/2,\beta/2)^2
\end{align}
and hence $\varphi(\beta,\beta/2,\beta/2)=\pm 1$.
If $\varphi(\beta,\beta/2,\beta/2)=1$, then 
the map $\varphi : N^{1}\rightarrow N^{2}$ defined by
$\varphi|_{M(1,\alpha+\beta/2)}=\varphi(\alpha,\beta/2,\alpha+\beta/2)\id_{M(1,\alpha+\beta/2)}$
for $\alpha\in\lattice$
is a weak $V_{\lattice}^{+}$-module isomorphism.
This implies that there are at most two possible weak $V_{\lattice}^{+}$-module structure
on the $M(1)^{+}$-modules with the decomposition in the right hand side of \eqref{eq:Vlattice+cdotmKcongM(1)-}.
This completes the proof.
\end{proof}

\begin{proof}[(Proof of Theorem \ref{theorem:classification-weak-module})]
Combining Lemmas \ref{lemma:irreducible-M(1)-submodule-of-M}, \ref{lemma:irreducible-M(1)-submodule-of-M-2}, \ref{lemma:M(1)directsum}, and \ref{lemma:st-unique-two}, we  have the result.
\end{proof}

\section{Appendix}
\label{section:appendix}
In this appendix, for some $a,b\in V_{\lattice}$, we put the computations of 
$a_{k}b$ for $k\in\Z_{\geq 0}$.
For $k\in\Z_{\geq 0}$ not listed below, $a_{k}b=0$.
Using these results, we can compute the commutation relation $[a_i,b_j]=\sum_{k=0}^{\infty}\binom{i}{k}(a_kb)_{i+j-k}$.
In particular, we see that the space $B$ in Lemma \ref{lemma:N-Basis} is spanned by the elements described in 
the lemma (1)--(5).
Let $h^{[1]},\ldots,h^{[\rankL]}$ be an orthonormal basis of $\fh$.
The rest of this appendix, $i,j,k,l$ are distinct elements of $\{1,\ldots,\rankL\}$
and 
\begin{align}
E=E(\alpha)
\end{align}
for some $\alpha\in\fh\setminus\{0\}$ with $\langle\alpha,\alpha\rangle=\mn$.

\subsection{Computations in $M(1)$}
\label{section:normal-total}
\subsubsection{The case of rank one}
\label{section:normal-first}
\begin{align}
	\omega_{0}\omega&=
	\omega_{0 } \omega _{-1 } \vac
	,
&
	\omega_{1}\omega&=
	2\omega _{-1 } \vac
	,
&
	\omega_{2}\omega&=0,&
	\omega_{3}\omega&=
	\frac{1}{2}\vac
	,
\end{align}

\begin{align}
	\omega_{0}\Har&=
	\omega_{0 } \Har _{-1 } \vac
	,
&	\omega_{1}\Har&=
	4\Har _{-1 } \vac
	,
&	\omega_{2}\Har&=
	\frac{-1}{3}\omega_{0 } \omega _{-1 } \vac
	,
	\nonumber\\
	\omega_{3}\Har&=
	2\omega _{-1 } \vac
	,
&	\omega_{4}\Har&=0,&
	\omega_{5}\Har&=
	\frac{-1}{3}\vac
	,
\end{align}
\begin{align}
	\label{eq:Har0Har=2omega0}
	\Har_{0}\Har&=2\omega_{0 } \omega _{-2 } \omega _{-2 } \vac
	+\frac{24}{5}\omega_{0 } \omega _{-1 } \Har _{-1 } \vac
	\nonumber\\&\quad{}
	+\frac{-2}{5}\omega_{0 }^3 \omega _{-1 } \omega _{-1 } \vac
	+\frac{-3}{10}\omega_{0 }^3 \Har _{-1 } \vac
	+\frac{1}{20}\omega_{0 }^5\omega _{-1 } \vac,\nonumber\\
	\Har_{1}\Har&=
	4\omega _{-2 } \omega _{-2 } \vac
	+\frac{48}{5}\omega _{-1 } \Har _{-1 } \vac
	\nonumber\\&\quad{}
	+\frac{-4}{5}\omega_{0 }^2 \omega _{-1 } \omega _{-1 } \vac
	+\frac{16}{15}\omega_{0 }^2 \Har _{-1 } \vac
	+\frac{7}{45}\omega_{0 }^4\omega _{-1 } \vac,\nonumber\\
	\Har_{2}\Har&=
	10\omega_{0 } \Har _{-1 } \vac
	+\frac{7}{18}\omega_{0 }^3 \omega _{-1 } \vac,\nonumber\\
	\Har_{3}\Har&=
	20\Har_{-1 } \vac
	+\frac{4}{3}\omega_{0 }^2 \omega_{-1 } \vac,\nonumber\\
	\Har_{4}\Har&=
	\frac{10}{3}\omega_{0 } \omega_{-1 } \vac,\nonumber\\
	\Har_{5}\Har&=\frac{20}{3}\omega_{-1 } \vac,\nonumber\\
	\Har_{6}\Har&=0,\nonumber\\
	\Har_{7}\Har&=\frac{5}{3}\vac.
\end{align}
\subsubsection{The general case}
\label{section:appendix-The general case}

\begin{align}
\label{eq:omega[j]0Sij(1,1)=Sij(1,2)-1vac}
	\omega^{[j]}_{0}S_{ij}(1,1)&=
	S_{ij}(1,2)_{-1 } \vac
	,
&
	\omega^{[j]}_{1}S_{ij}(1,1)&=
	S_{ij}(1,1)_{-1 } \vac
	,
\end{align}

\begin{align}
	\omega^{[j]}_{0}S_{ij}(1,2)&=
	2S_{ij}(1,3)_{-1 } \vac
	,
	\nonumber\\
	\omega^{[j]}_{1}S_{ij}(1,2)&=
	2S_{ij}(1,2)_{-1 } \vac
	,
	\nonumber\\
	\omega^{[j]}_{2}S_{ij}(1,2)&=
	2S_{ij}(1,1)_{-1 } \vac
	,
\end{align}

\begin{align}
	\omega^{[j]}_{0}S_{ij}(1,3)&=
	-\omega^{[j]}_{-2 } S_{ij}(1,1)_{-1 } \vac
	+2\omega^{[j]}_{-1 } S_{ij}(1,2)_{-1 } \vac
	,
	\nonumber\\
	\omega^{[j]}_{1}S_{ij}(1,3)&=
	3S_{ij}(1,3)_{-1 } \vac
	,
	\nonumber\\
	\omega^{[j]}_{2}S_{ij}(1,3)&=
	3S_{ij}(1,2)_{-1 } \vac
	,
	\nonumber\\
	\omega^{[j]}_{3}S_{ij}(1,3)&=
	3S_{ij}(1,1)_{-1 } \vac
	,
\end{align}

\begin{align}
\label{eq:Har[j]0Sij(1,1)=-2omega[j]-2Sij(1,1)-1vac}
	\Har^{[j]}_{0}S_{ij}(1,1)&=
	-2\omega^{[j]}_{-2 } S_{ij}(1,1)_{-1 } \vac
	+4\omega^{[j]}_{-1 } S_{ij}(1,2)_{-1 } \vac
	,
	\nonumber\\
	\Har^{[j]}_{1}S_{ij}(1,1)&=
	4S_{ij}(1,3)_{-1 } \vac
	,
	\nonumber\\
	\Har^{[j]}_{2}S_{ij}(1,1)&=
	\frac{7}{3}S_{ij}(1,2)_{-1 } \vac
	,
	\nonumber\\
	\Har^{[j]}_{3}S_{ij}(1,1)&=
	S_{ij}(1,1)_{-1 } \vac
	,
\end{align}

\begin{align}
	\Har^{[j]}_{0}S_{ij}(1,2)&=
	-6\omega_{0 } \omega^{[j]}_{-2 } S_{ij}(1,1)_{-1 } \vac
	+6\omega^{[i]}_{-2 } S_{ij}(1,2)_{-1 } \vac
	\nonumber\\&\quad{}
	-4\omega_{0 } \omega^{[i]}_{-1 } S_{ij}(1,2)_{-1 } \vac
	+12\omega_{0 } \omega^{[j]}_{-1 } S_{ij}(1,2)_{-1 } \vac
	\nonumber\\&\quad{}
	+\omega_{0 } \omega_{0 } \omega_{0 } S_{ij}(1,2)_{-1 } \vac
	+8\omega^{[i]}_{-1 } S_{ij}(1,3)_{-1 } \vac
	\nonumber\\&\quad{}
	-6\omega_{0 } \omega_{0 } S_{ij}(1,3)_{-1 } \vac
	,
	\nonumber\\
	\Har^{[j]}_{1}S_{ij}(1,2)&=
	-6\omega^{[j]}_{-2 } S_{ij}(1,1)_{-1 } \vac
	+12\omega^{[j]}_{-1 } S_{ij}(1,2)_{-1 } \vac
	,
	\nonumber\\
	\Har^{[j]}_{2}S_{ij}(1,2)&=
	\frac{38}{3}S_{ij}(1,3)_{-1 } \vac
	,
	\nonumber\\
	\Har^{[j]}_{3}S_{ij}(1,2)&=
	8S_{ij}(1,2)_{-1 } \vac
	,
	\nonumber\\
	\Har^{[j]}_{4}S_{ij}(1,2)&=
	4S_{ij}(1,1)_{-1 } \vac
	,
\end{align}

\begin{align}
	\Har^{[j]}_{0}S_{ij}(1,3)&=
	\frac{40}{29}\omega^{[j]}_{-3 } S_{ij}(1,2)_{-1 } \vac
	+\frac{60}{29}\Har^{[j]}_{-1 } S_{ij}(1,2)_{-1 } \vac
	\nonumber\\&\quad{}
	+\frac{-60}{29}\omega^{[j]}_{-2 } S_{ij}(1,3)_{-1 } \vac
	,
	\nonumber\\
	\Har^{[j]}_{1}S_{ij}(1,3)&=
	-12\omega_{0 } \omega^{[j]}_{-2 } S_{ij}(1,1)_{-1 } \vac
	+12\omega^{[i]}_{-2 } S_{ij}(1,2)_{-1 } \vac
	\nonumber\\&\quad{}
	-8\omega_{0 } \omega^{[i]}_{-1 } S_{ij}(1,2)_{-1 } \vac
	+24\omega_{0 } \omega^{[j]}_{-1 } S_{ij}(1,2)_{-1 } \vac
	\nonumber\\&\quad{}
	+2\omega_{0 } \omega_{0 } \omega_{0 } S_{ij}(1,2)_{-1 } \vac
	+16\omega^{[i]}_{-1 } S_{ij}(1,3)_{-1 } \vac
	\nonumber\\&\quad{}
	-12\omega_{0 } \omega_{0 } S_{ij}(1,3)_{-1 } \vac
	,
	\nonumber\\
	\Har^{[j]}_{2}S_{ij}(1,3)&=
	\frac{-37}{3}\omega^{[j]}_{-2 } S_{ij}(1,1)_{-1 } \vac
	+\frac{74}{3}\omega^{[j]}_{-1 } S_{ij}(1,2)_{-1 } \vac
	,
	\nonumber\\
	\Har^{[j]}_{3}S_{ij}(1,3)&=
	27S_{ij}(1,3)_{-1 } \vac
	,
	\nonumber\\
	\Har^{[j]}_{4}S_{ij}(1,3)&=
	18S_{ij}(1,2)_{-1 } \vac
	,
	\nonumber\\
	\Har^{[j]}_{5}S_{ij}(1,3)&=
	10S_{ij}(1,1)_{-1 } \vac
	,
\end{align}

\begin{align}
	\omega^{[i]}_{0}S_{ij}(1,1)&=
	\omega_{0 } S_{i1}(1,1)_{-1 } \vac
	-S_{i1}(1,2)_{-1 } \vac
	,
	\nonumber\\
	\omega^{[i]}_{1}S_{ij}(1,1)&=
	S_{i1}(1,1)_{-1 } \vac
	,
\end{align}

\begin{align}
	\omega^{[i]}_{0}S_{ij}(1,2)&=
	\omega_{0 } S_{i1}(1,2)_{-1 } \vac
	-2S_{i1}(1,3)_{-1 } \vac
	,
	\nonumber\\
	\omega^{[i]}_{1}S_{ij}(1,2)&=
	S_{i1}(1,2)_{-1 } \vac
	,
\end{align}

\begin{align}
	\omega^{[i]}_{0}S_{ij}(1,3)&=
	\omega^{[j]}_{-2 } S_{i1}(1,1)_{-1 } \vac
	-2\omega^{[j]}_{-1 } S_{i1}(1,2)_{-1 } \vac
	\nonumber\\&\quad{}
	+\omega_{0 } S_{i1}(1,3)_{-1 } \vac
	,
	\nonumber\\
	\omega^{[i]}_{1}S_{ij}(1,3)&=
	S_{i1}(1,3)_{-1 } \vac
	,
\end{align}

\begin{align}
	\Har^{[i]}_{0}S_{ij}(1,1)&=
	2\omega^{[j]}_{-2 } S_{i1}(1,1)_{-1 } \vac
	+\omega_{0 } \omega_{0 } \omega_{0 } S_{i1}(1,1)_{-1 } \vac
	\nonumber\\&\quad{}
	-4\omega^{[j]}_{-1 } S_{i1}(1,2)_{-1 } \vac
	-3\omega_{0 } \omega_{0 } S_{i1}(1,2)_{-1 } \vac
	\nonumber\\&\quad{}
	+6\omega_{0 } S_{i1}(1,3)_{-1 } \vac
	,
	\nonumber\\
	\Har^{[i]}_{1}S_{ij}(1,1)&=
	2\omega_{0 } \omega_{0 } S_{i1}(1,1)_{-1 } \vac
	-4\omega_{0 } S_{i1}(1,2)_{-1 } \vac
	\nonumber\\&\quad{}
	+4S_{i1}(1,3)_{-1 } \vac
	,
	\nonumber\\
	\Har^{[i]}_{2}S_{ij}(1,1)&=
	\frac{7}{3}\omega_{0 } S_{i1}(1,1)_{-1 } \vac
	+\frac{-7}{3}S_{i1}(1,2)_{-1 } \vac
	,
	\nonumber\\
	\Har^{[i]}_{3}S_{ij}(1,1)&=
	S_{i1}(1,1)_{-1 } \vac
	,
\end{align}

\begin{align}
	\Har^{[i]}_{0}S_{ij}(1,2)&=
	-6\omega^{[i]}_{-2 } S_{i1}(1,2)_{-1 } \vac
	+4\omega_{0 } \omega^{[i]}_{-1 } S_{i1}(1,2)_{-1 } \vac
	\nonumber\\&\quad{}
	-8\omega^{[i]}_{-1 } S_{i1}(1,3)_{-1 } \vac
	,
	\nonumber\\
	\Har^{[i]}_{1}S_{ij}(1,2)&=
	-4\omega^{[j]}_{-2 } S_{i1}(1,1)_{-1 } \vac
	+8\omega^{[j]}_{-1 } S_{i1}(1,2)_{-1 } \vac
	\nonumber\\&\quad{}
	+2\omega_{0 } \omega_{0 } S_{i1}(1,2)_{-1 } \vac
	-8\omega_{0 } S_{i1}(1,3)_{-1 } \vac
	,
	\nonumber\\
	\Har^{[i]}_{2}S_{ij}(1,2)&=
	\frac{7}{3}\omega_{0 } S_{i1}(1,2)_{-1 } \vac
	+\frac{-14}{3}S_{i1}(1,3)_{-1 } \vac
	,
	\nonumber\\
	\Har^{[i]}_{3}S_{ij}(1,2)&=
	S_{i1}(1,2)_{-1 } \vac
	,
\end{align}

\begin{align}
	\Har^{[i]}_{0}S_{ij}(1,3)&=
	\frac{-4}{3}\Har^{[i]}_{-1 } S_{i1}(1,2)_{-1 } \vac
	+\frac{16}{9}\omega^{[i]}_{-3 } S_{i1}(1,2)_{-1 } \vac
	\nonumber\\&\quad{}
	+\frac{-11}{3}\omega_{0 } \omega^{[i]}_{-2 } S_{i1}(1,2)_{-1 } \vac
	+2\omega_{0 } \omega_{0 } \omega^{[i]}_{-1 } S_{i1}(1,2)_{-1 } \vac
	\nonumber\\&\quad{}
	+\frac{4}{3}\omega^{[i]}_{-2 } S_{i1}(1,3)_{-1 } \vac
	-4\omega_{0 } \omega^{[i]}_{-1 } S_{i1}(1,3)_{-1 } \vac
	,
	\nonumber\\
	\Har^{[i]}_{1}S_{ij}(1,3)&=
	-2\omega_{0 } \omega^{[j]}_{-2 } S_{i1}(1,1)_{-1 } \vac
	+6\omega^{[i]}_{-2 } S_{i1}(1,2)_{-1 } \vac
	\nonumber\\&\quad{}
	-4\omega_{0 } \omega^{[i]}_{-1 } S_{i1}(1,2)_{-1 } \vac
	+4\omega_{0 } \omega^{[j]}_{-1 } S_{i1}(1,2)_{-1 } \vac
	\nonumber\\&\quad{}
	+\omega_{0 } \omega_{0 } \omega_{0 } S_{i1}(1,2)_{-1 } \vac
	+8\omega^{[i]}_{-1 } S_{i1}(1,3)_{-1 } \vac
	\nonumber\\&\quad{}
	-4\omega_{0 } \omega_{0 } S_{i1}(1,3)_{-1 } \vac
	,
	\nonumber\\
	\Har^{[i]}_{2}S_{ij}(1,3)&=
	\frac{7}{3}\omega^{[j]}_{-2 } S_{i1}(1,1)_{-1 } \vac
	+\frac{-14}{3}\omega^{[j]}_{-1 } S_{i1}(1,2)_{-1 } \vac
	\nonumber\\&\quad{}
	+\frac{7}{3}\omega_{0 } S_{i1}(1,3)_{-1 } \vac
	,
	\nonumber\\
	\Har^{[i]}_{3}S_{ij}(1,3)&=
	S_{i1}(1,3)_{-1 } \vac
	,
\end{align}

\begin{align}
	\label{eq:Sij(1,1)0Sij(1,1)-1}
S_{ij}(1,1)_{0}S_{ij}(1,1)&=
\omega_{0 } \omega^{[i]}_{-1 } \vac
+\omega_{0 } \omega^{[j]}_{-1 } \vac
,
\nonumber\\
S_{ij}(1,1)_{1}S_{ij}(1,1)&=
2\omega^{[i]}_{-1 } \vac
+2\omega^{[j]}_{-1 } \vac
,
\nonumber\\
S_{ij}(1,1)_{2}S_{ij}(1,1)&=0,\nonumber\\
S_{ij}(1,1)_{3}S_{ij}(1,1)&=
\vac
,
\end{align}

\begin{align}
	\label{eq:Sij(1,1)0Sij(1,1)-2}
S_{ij}(1,1)_{0}S_{ij}(1,2)&=
2\Har^{[i]}_{-1 } \vac
-2\Har^{[j]}_{-1 } \vac
+2\omega_{0 }^{2}\omega^{[i]}_{-1 } \vac
+\omega_{0 }^{2}\omega^{[j]}_{-1 } \vac
,
\nonumber\\
S_{ij}(1,1)_{1}S_{ij}(1,2)&=
2\omega_{0 } \omega^{[i]}_{-1 } \vac
+\omega_{0 } \omega^{[j]}_{-1 } \vac
,
\nonumber\\
S_{ij}(1,1)_{2}S_{ij}(1,2)&=
4\omega^{[i]}_{-1 } \vac
,
\nonumber\\
S_{ij}(1,1)_{3}S_{ij}(1,2)&=0,\nonumber\\
S_{ij}(1,1)_{4}S_{ij}(1,2)&=
2\vac
,
\end{align}

\begin{align}
	\label{eq:Sij(1,1)0Sij(1,1)-3}
S_{ij}(1,1)_{0}S_{ij}(1,3)&=
3\omega_{0 } \Har^{[i]}_{-1 } \vac
-\omega_{0 } \Har^{[j]}_{-1 } \vac
+\omega_{0 }^{3}\omega^{[i]}_{-1 } \vac
+\omega_{0 }^{3}\omega^{[j]}_{-1 } \vac
,
\nonumber\\
S_{ij}(1,1)_{1}S_{ij}(1,3)&=
3\Har^{[i]}_{-1 } \vac
+\Har^{[j]}_{-1 } \vac
+\omega_{0 }^{2}\omega^{[i]}_{-1 } \vac
+\omega_{0 }^{2}\omega^{[j]}_{-1 } \vac
,
\nonumber\\
S_{ij}(1,1)_{2}S_{ij}(1,3)&=
3\omega_{0 } \omega^{[i]}_{-1 } \vac
,
\nonumber\\
S_{ij}(1,1)_{3}S_{ij}(1,3)&=
6\omega^{[i]}_{-1 } \vac
,
\nonumber\\
S_{ij}(1,1)_{4}S_{ij}(1,3)&=0,\nonumber\\
S_{ij}(1,1)_{5}S_{ij}(1,3)&=
3\vac
,
\end{align}

\begin{align}
S_{ij}(1,1)_{0}S_{k j}(1,1)&=
S_{k i}(1,2)_{-1 } \vac
,
\nonumber\\
S_{ij}(1,1)_{1}S_{k j}(1,1)&=
S_{k i}(1,1)_{-1 } \vac
,
\end{align}

\begin{align}
S_{ij}(1,1)_{0}S_{k j}(1,2)&=
2S_{k i}(1,3)_{-1 } \vac
,
\nonumber\\
S_{ij}(1,1)_{1}S_{k j}(1,2)&=
2S_{k i}(1,2)_{-1 } \vac
,
\nonumber\\
S_{ij}(1,1)_{2}S_{k j}(1,2)&=
2S_{k i}(1,1)_{-1 } \vac
,
\end{align}

\begin{align}
S_{ij}(1,1)_{0}S_{k j}(1,3)&=
-3\omega^{[k]}_{-1 } S_{k i}(1,1)_{-2 } \vac
+\omega_{0 } \omega^{[k]}_{-1 } S_{k i}(1,1)_{-1 } \vac
\nonumber\\&\quad{}
+\omega_{0 }^{3}S_{k i}(1,1)_{-1 } \vac
+2\omega^{[k]}_{-1 } S_{k i}(1,2)_{-1 } \vac
\nonumber\\&\quad{}
-3S_{k i}(1,2)_{-3 } \vac
+3\omega_{0 } S_{k i}(1,3)_{-1 } \vac
,
\nonumber\\
S_{ij}(1,1)_{1}S_{k j}(1,3)&=
3S_{k i}(1,3)_{-1 } \vac
,
\nonumber\\
S_{ij}(1,1)_{2}S_{k j}(1,3)&=
3S_{k i}(1,2)_{-1 } \vac
,
\nonumber\\
S_{ij}(1,1)_{3}S_{k j}(1,3)&=
3S_{k i}(1,1)_{-1 } \vac
,
\end{align}

\begin{align}
S_{ij}(1,1)_{0}S_{k i}(1,1)&=
S_{k j}(1,2)_{-1 } \vac
,
\nonumber\\
S_{ij}(1,1)_{1}S_{k i}(1,1)&=
S_{k j}(1,1)_{-1 } \vac
,
\end{align}

\begin{align}
S_{ij}(1,1)_{0}S_{k i}(1,2)&=
2S_{k j}(1,3)_{-1 } \vac
,
\nonumber\\
S_{ij}(1,1)_{1}S_{k i}(1,2)&=
2S_{k j}(1,2)_{-1 } \vac
,
\nonumber\\
S_{ij}(1,1)_{2}S_{k i}(1,2)&=
2S_{k j}(1,1)_{-1 } \vac
,
\end{align}

\begin{align}
S_{ij}(1,1)_{0}S_{k i}(1,3)&=
3\omega^{[k]}_{-2 } S_{k j}(1,1)_{-1 } \vac
-2\omega_{0 } \omega^{[k]}_{-1 } S_{k j}(1,1)_{-1 } \vac
\nonumber\\&\quad{}
+\omega_{0 }^{3}S_{k j}(1,1)_{-1 } \vac
+2\omega^{[k]}_{-1 } S_{k j}(1,2)_{-1 } \vac
\nonumber\\&\quad{}
-3\omega_{0 } S_{k j}(1,2)_{-2 } \vac
+3\omega_{0 } S_{k j}(1,3)_{-1 } \vac
,
\nonumber\\
S_{ij}(1,1)_{1}S_{k i}(1,3)&=
3S_{k j}(1,3)_{-1 } \vac
,
\nonumber\\
S_{ij}(1,1)_{2}S_{k i}(1,3)&=
3S_{k j}(1,2)_{-1 } \vac
,
\nonumber\\
S_{ij}(1,1)_{3}S_{k i}(1,3)&=
3S_{k j}(1,1)_{-1 } \vac
,
\end{align}

\begin{align}
S_{ij}(1,2)_{0}S_{ij}(1,2)&=
-3\omega_{0 } \Har^{[i]}_{-1 } \vac
-\omega_{0 } \Har^{[j]}_{-1 } \vac
-\omega_{0 }^{3}\omega^{[i]}_{-1 } \vac
+\omega_{0 }^{3}\omega^{[j]}_{-1 } \vac
,
\nonumber\\
S_{ij}(1,2)_{1}S_{ij}(1,2)&=
-6\Har^{[i]}_{-1 } \vac
-2\Har^{[j]}_{-1 } \vac
-2\omega_{0 }^{2}\omega^{[i]}_{-1 } \vac
+\omega_{0 }^{2}\omega^{[j]}_{-1 } \vac
,
\nonumber\\
S_{ij}(1,2)_{2}S_{ij}(1,2)&=
-6\omega_{0 } \omega^{[i]}_{-1 } \vac
,
\nonumber\\
S_{ij}(1,2)_{3}S_{ij}(1,2)&=
-12\omega^{[i]}_{-1 } \vac
,
\nonumber\\
S_{ij}(1,2)_{4}S_{ij}(1,2)&=0,\nonumber\\
S_{ij}(1,2)_{5}S_{ij}(1,2)&=
-6\vac
,
\end{align}

\begin{align}
S_{ij}(1,2)_{0}S_{ij}(1,3)&=
-8\omega^{[i]}_{-1 } \Har^{[i]}_{-1 } \vac
-2\omega^{[i]}_{-2 } \omega^{[i]}_{-2 } \vac
\nonumber\\&\quad{}
+2\omega^{[j]}_{-2 } \omega^{[j]}_{-2 } \vac
+8\omega^{[j]}_{-1 } \Har^{[j]}_{-1 } \vac
\nonumber\\&\quad{}
-7\omega_{0 }^{2}\Har^{[i]}_{-1 } \vac
+2\omega_{0 }^{2}\omega^{[i]}_{-1 } \omega^{[i]}_{-1 } \vac
\nonumber\\&\quad{}
-2\omega_{0 }^{2}\omega^{[j]}_{-1 } \omega^{[j]}_{-1 } \vac
-3\omega_{0 }^{2}\Har^{[j]}_{-1 } \vac
\nonumber\\&\quad{}
-19\omega_{0 }^{4}\omega^{[i]}_{-1 } \vac
+2\omega_{0 }^{4}\omega^{[j]}_{-1 } \vac
,
\nonumber\\
S_{ij}(1,2)_{1}S_{ij}(1,3)&=
-6\omega_{0 } \Har^{[i]}_{-1 } \vac
-\omega_{0 } \Har^{[j]}_{-1 } \vac
-\omega_{0 }^{3}\omega^{[i]}_{-1 } \vac
+\omega_{0 }^{3}\omega^{[j]}_{-1 } \vac
,
\nonumber\\
S_{ij}(1,2)_{2}S_{ij}(1,3)&=
-12\Har^{[i]}_{-1 } \vac
-4\omega_{0 }^{2}\omega^{[i]}_{-1 } \vac
,
\nonumber\\
S_{ij}(1,2)_{3}S_{ij}(1,3)&=
-12\omega_{0 } \omega^{[i]}_{-1 } \vac
,
\nonumber\\
S_{ij}(1,2)_{4}S_{ij}(1,3)&=
-24\omega^{[i]}_{-1 } \vac
,
\nonumber\\
S_{ij}(1,2)_{5}S_{ij}(1,3)&=0,\nonumber\\
S_{ij}(1,2)_{6}S_{ij}(1,3)&=
-12\vac
,
\end{align}

\begin{align}
S_{ij}(1,2)_{0}S_{k j}(1,1)&=
-2S_{k i}(1,3)_{-1 } \vac
,
\nonumber\\
S_{ij}(1,2)_{1}S_{k j}(1,1)&=
-2S_{k i}(1,2)_{-1 } \vac
,
\nonumber\\
S_{ij}(1,2)_{2}S_{k j}(1,1)&=
-2S_{k i}(1,1)_{-1 } \vac
,
\end{align}

\begin{align}
S_{ij}(1,2)_{0}S_{k j}(1,2)&=
6\omega^{[k]}_{-1 } S_{k i}(1,1)_{-2 } \vac
-2\omega_{0 } \omega^{[k]}_{-1 } S_{k i}(1,1)_{-1 } \vac
\nonumber\\&\quad{}
 -\omega_{0 }^{3}S_{k i}(1,1)_{-1 } \vac
-4\omega^{[k]}_{-1 } S_{k i}(1,2)_{-1 } \vac
\nonumber\\&\quad{}
+6S_{k i}(1,2)_{-3 } \vac
-6\omega_{0 } S_{k i}(1,3)_{-1 } \vac
,
\nonumber\\
S_{ij}(1,2)_{1}S_{k j}(1,2)&=
-6S_{k i}(1,3)_{-1 } \vac
,
\nonumber\\
S_{ij}(1,2)_{2}S_{k j}(1,2)&=
-6S_{k i}(1,2)_{-1 } \vac
,
\nonumber\\
S_{ij}(1,2)_{3}S_{k j}(1,2)&=
-6S_{k i}(1,1)_{-1 } \vac
,
\end{align}

\begin{align}
S_{ij}(1,2)_{0}S_{k j}(1,3)&=
16\omega^{[k]}_{-1 } S_{k i}(1,1)_{-3 } \vac
-4\Har^{[k]}_{-1 } S_{k i}(1,1)_{-1 } \vac
\nonumber\\&\quad{}
+44\omega_{0 } \omega^{[k]}_{-1 } S_{k i}(1,1)_{-2 } \vac
+2\omega_{0 } \omega^{[i]}_{-2 } S_{k i}(1,1)_{-1 } \vac
\nonumber\\&\quad{}
-16\omega_{0 }^{2}\omega^{[k]}_{-1 } S_{k i}(1,1)_{-1 } \vac
 -\omega_{0 }^{4}S_{k i}(1,1)_{-1 } \vac
\nonumber\\&\quad{}
-10\omega^{[k]}_{-2 } S_{k i}(1,2)_{-1 } \vac
-4\omega^{[k]}_{-1 } S_{k i}(1,2)_{-2 } \vac
\nonumber\\&\quad{}
+2\omega_{0 } S_{k i}(1,2)_{-3 } \vac
-4\omega_{0 } \omega^{[i]}_{-1 } S_{k i}(1,2)_{-1 } \vac
,
\nonumber\\
S_{ij}(1,2)_{1}S_{k j}(1,3)&=
12\omega^{[k]}_{-1 } S_{k i}(1,1)_{-2 } \vac
-4\omega_{0 } \omega^{[k]}_{-1 } S_{k i}(1,1)_{-1 } \vac
\nonumber\\&\quad{}
-2\omega_{0 }^{3}S_{k i}(1,1)_{-1 } \vac
-8\omega^{[k]}_{-1 } S_{k i}(1,2)_{-1 } \vac
\nonumber\\&\quad{}
+12S_{k i}(1,2)_{-3 } \vac
-12\omega_{0 } S_{k i}(1,3)_{-1 } \vac
,
\nonumber\\
S_{ij}(1,2)_{2}S_{k j}(1,3)&=
-12S_{k i}(1,3)_{-1 } \vac
,
\nonumber\\
S_{ij}(1,2)_{3}S_{k j}(1,3)&=
-12S_{k i}(1,2)_{-1 } \vac
,
\nonumber\\
S_{ij}(1,2)_{4}S_{k j}(1,3)&=
-12S_{k i}(1,1)_{-1 } \vac
,
\end{align}

\begin{align}
S_{ij}(1,2)_{0}S_{k i}(1,1)&=
2S_{k j}(1,3)_{-1 } \vac
,
\nonumber\\
S_{ij}(1,2)_{1}S_{k i}(1,1)&=
S_{k j}(1,2)_{-1 } \vac
,
\end{align}

\begin{align}
S_{ij}(1,2)_{0}S_{k i}(1,2)&=
6\omega^{[k]}_{-2 } S_{k j}(1,1)_{-1 } \vac
-4\omega_{0 } \omega^{[k]}_{-1 } S_{k j}(1,1)_{-1 } \vac
\nonumber\\&\quad{}
+\omega_{0 }^{3}S_{k j}(1,1)_{-1 } \vac
+4\omega^{[k]}_{-1 } S_{k j}(1,2)_{-1 } \vac
\nonumber\\&\quad{}
-3\omega_{0 } S_{k j}(1,2)_{-2 } \vac
+6\omega_{0 } S_{k j}(1,3)_{-1 } \vac
,
\nonumber\\
S_{ij}(1,2)_{1}S_{k i}(1,2)&=
4S_{k j}(1,3)_{-1 } \vac
,
\nonumber\\
S_{ij}(1,2)_{2}S_{k i}(1,2)&=
2S_{k j}(1,2)_{-1 } \vac
,
\end{align}

\begin{align}
S_{ij}(1,2)_{0}S_{k i}(1,3)&=
-16\omega^{[k]}_{-1 } S_{k j}(1,1)_{-3 } \vac
+4\Har^{[k]}_{-1 } S_{k j}(1,1)_{-1 } \vac
\nonumber\\&\quad{}
-98\omega_{0 } \omega^{[k]}_{-1 } S_{k j}(1,1)_{-2 } \vac
+34\omega_{0 }^{2}\omega^{[k]}_{-1 } S_{k j}(1,1)_{-1 } \vac
\nonumber\\&\quad{}
+3\omega_{0 }^{4}S_{k j}(1,1)_{-1 } \vac
+2\omega^{[k]}_{-1 } S_{k j}(1,2)_{-2 } \vac
\nonumber\\&\quad{}
+22\omega_{0 } \omega^{[k]}_{-1 } S_{k j}(1,2)_{-1 } \vac
-4\omega_{0 }^{3}S_{k j}(1,2)_{-1 } \vac
\nonumber\\&\quad{}
+6\omega_{0 } S_{k j}(1,3)_{-2 } \vac
,
\nonumber\\
S_{ij}(1,2)_{1}S_{k i}(1,3)&=
9\omega^{[k]}_{-2 } S_{k j}(1,1)_{-1 } \vac
-6\omega_{0 } \omega^{[k]}_{-1 } S_{k j}(1,1)_{-1 } \vac
\nonumber\\&\quad{}
+3\omega_{0 }^{3}S_{k j}(1,1)_{-1 } \vac
+6\omega^{[k]}_{-1 } S_{k j}(1,2)_{-1 } \vac
\nonumber\\&\quad{}
-9\omega_{0 } S_{k j}(1,2)_{-2 } \vac
+9\omega_{0 } S_{k j}(1,3)_{-1 } \vac
,
\nonumber\\
S_{ij}(1,2)_{2}S_{k i}(1,3)&=
6S_{k j}(1,3)_{-1 } \vac
,
\nonumber\\
S_{ij}(1,2)_{3}S_{k i}(1,3)&=
3S_{k j}(1,2)_{-1 } \vac
,
\end{align}

\begin{align}
S_{ij}(1,3)_{0}S_{ij}(1,3)&=
2\omega_{0 } \omega^{[i]}_{-1 } \Har^{[i]}_{-1 } \vac
+5\omega_{0 } \omega^{[i]}_{-2 } \omega^{[i]}_{-2 } \vac
\nonumber\\&\quad{}
+\omega_{0 } \omega^{[j]}_{-2 } \omega^{[j]}_{-2 } \vac
+2\omega_{0 } \omega^{[j]}_{-1 } \Har^{[j]}_{-1 } \vac
\nonumber\\&\quad{}
+\omega_{0 }^{3}\Har^{[i]}_{-1 } \vac
 -\omega_{0 }^{3}\omega^{[i]}_{-1 } \omega^{[i]}_{-1 } \vac
\nonumber\\&\quad{}
 -\omega_{0 }^{3}\omega^{[j]}_{-1 } \omega^{[j]}_{-1 } \vac
-3\omega_{0 }^{3}\Har^{[j]}_{-1 } \vac
\nonumber\\&\quad{}
+7\omega_{0 }^{5} \omega^{[i]}_{-1 } \vac
+\omega_{0 }^{5} \omega^{[j]}_{-1 } \vac
,
\nonumber\\
S_{ij}(1,3)_{1}S_{ij}(1,3)&=
4\omega^{[i]}_{-1 } \Har^{[i]}_{-1 } \vac
+5\omega^{[i]}_{-2 } \omega^{[i]}_{-2 } \vac
\nonumber\\&\quad{}
+\omega^{[j]}_{-2 } \omega^{[j]}_{-2 } \vac
+4\omega^{[j]}_{-1 } \Har^{[j]}_{-1 } \vac
\nonumber\\&\quad{}
+7\omega_{0 }^{2}\Har^{[i]}_{-1 } \vac
 -\omega_{0 }^{2}\omega^{[i]}_{-1 } \omega^{[i]}_{-1 } \vac
\nonumber\\&\quad{}
 -\omega_{0 }^{2}\omega^{[j]}_{-1 } \omega^{[j]}_{-1 } \vac
-3\omega_{0 }^{2}\Har^{[j]}_{-1 } \vac
\nonumber\\&\quad{}
+19\omega_{0 }^{4}\omega^{[i]}_{-1 } \vac
+\omega_{0 }^{4}\omega^{[j]}_{-1 } \vac
,
\nonumber\\
S_{ij}(1,3)_{2}S_{ij}(1,3)&=
15\omega_{0 } \Har^{[i]}_{-1 } \vac
+5\omega_{0 }^{3}\omega^{[i]}_{-1 } \vac
,
\nonumber\\
S_{ij}(1,3)_{3}S_{ij}(1,3)&=
30\Har^{[i]}_{-1 } \vac
+10\omega_{0 }^{2}\omega^{[i]}_{-1 } \vac
,
\nonumber\\
S_{ij}(1,3)_{4}S_{ij}(1,3)&=
30\omega_{0 } \omega^{[i]}_{-1 } \vac
,
\nonumber\\
S_{ij}(1,3)_{5}S_{ij}(1,3)&=
60\omega^{[i]}_{-1 } \vac
,
\nonumber\\
S_{ij}(1,3)_{6}S_{ij}(1,3)&=0,\nonumber\\
S_{ij}(1,3)_{7}S_{ij}(1,3)&=
30\vac
,
\end{align}

\begin{align}
S_{ij}(1,3)_{0}S_{k j}(1,1)&=
-3\omega^{[k]}_{-1 } S_{k i}(1,1)_{-2 } \vac
+\omega_{0 } \omega^{[k]}_{-1 } S_{k i}(1,1)_{-1 } \vac
\nonumber\\&\quad{}
+\omega_{0 }^{3}S_{k i}(1,1)_{-1 } \vac
+2\omega^{[k]}_{-1 } S_{k i}(1,2)_{-1 } \vac
\nonumber\\&\quad{}
-3S_{k i}(1,2)_{-3 } \vac
+3\omega_{0 } S_{k i}(1,3)_{-1 } \vac
,
\nonumber\\
S_{ij}(1,3)_{1}S_{k j}(1,1)&=
3S_{k i}(1,3)_{-1 } \vac
,
\nonumber\\
S_{ij}(1,3)_{2}S_{k j}(1,1)&=
3S_{k i}(1,2)_{-1 } \vac
,
\nonumber\\
S_{ij}(1,3)_{3}S_{k j}(1,1)&=
3S_{k i}(1,1)_{-1 } \vac
,
\end{align}

\begin{align}
S_{ij}(1,3)_{0}S_{k j}(1,2)&=
-16\omega^{[k]}_{-1 } S_{k i}(1,1)_{-3 } \vac
+4\Har^{[k]}_{-1 } S_{k i}(1,1)_{-1 } \vac
\nonumber\\&\quad{}
-44\omega_{0 } \omega^{[k]}_{-1 } S_{k i}(1,1)_{-2 } \vac
-2\omega_{0 } \omega^{[i]}_{-2 } S_{k i}(1,1)_{-1 } \vac
\nonumber\\&\quad{}
+16\omega_{0 }^{2}\omega^{[k]}_{-1 } S_{k i}(1,1)_{-1 } \vac
+\omega_{0 }^{4}S_{k i}(1,1)_{-1 } \vac
\nonumber\\&\quad{}
+10\omega^{[k]}_{-2 } S_{k i}(1,2)_{-1 } \vac
+4\omega^{[k]}_{-1 } S_{k i}(1,2)_{-2 } \vac
\nonumber\\&\quad{}
-2\omega_{0 } S_{k i}(1,2)_{-3 } \vac
+4\omega_{0 } \omega^{[i]}_{-1 } S_{k i}(1,2)_{-1 } \vac
,
\nonumber\\
S_{ij}(1,3)_{1}S_{k j}(1,2)&=
-12\omega^{[k]}_{-1 } S_{k i}(1,1)_{-2 } \vac
+4\omega_{0 } \omega^{[k]}_{-1 } S_{k i}(1,1)_{-1 } \vac
\nonumber\\&\quad{}
+2\omega_{0 }^{3}S_{k i}(1,1)_{-1 } \vac
+8\omega^{[k]}_{-1 } S_{k i}(1,2)_{-1 } \vac
\nonumber\\&\quad{}
-12S_{k i}(1,2)_{-3 } \vac
+12\omega_{0 } S_{k i}(1,3)_{-1 } \vac
,
\nonumber\\
S_{ij}(1,3)_{2}S_{k j}(1,2)&=
12S_{k i}(1,3)_{-1 } \vac
,
\nonumber\\
S_{ij}(1,3)_{3}S_{k j}(1,2)&=
12S_{k i}(1,2)_{-1 } \vac
,
\nonumber\\
S_{ij}(1,3)_{4}S_{k j}(1,2)&=
12S_{k i}(1,1)_{-1 } \vac
,
\end{align}

\begin{align}
S_{ij}(1,3)_{0}S_{k j}(1,3)&=
30\Har^{[i]}_{-1 } S_{k i}(1,2)_{-1 } \vac
+20\omega^{[i]}_{-3 } S_{k i}(1,2)_{-1 } \vac
\nonumber\\&\quad{}
-30\omega^{[i]}_{-2 } S_{k i}(1,3)_{-1 } \vac
,
\nonumber\\
S_{ij}(1,3)_{1}S_{k j}(1,3)&=
-40\omega^{[k]}_{-1 } S_{k i}(1,1)_{-3 } \vac
+10\Har^{[k]}_{-1 } S_{k i}(1,1)_{-1 } \vac
\nonumber\\&\quad{}
-110\omega_{0 } \omega^{[k]}_{-1 } S_{k i}(1,1)_{-2 } \vac
-5\omega_{0 } \omega^{[i]}_{-2 } S_{k i}(1,1)_{-1 } \vac
\nonumber\\&\quad{}
+40\omega_{0 }^{2}\omega^{[k]}_{-1 } S_{k i}(1,1)_{-1 } \vac
+5\omega_{0 }^{4}S_{k i}(1,1)_{-1 } \vac
\nonumber\\&\quad{}
+25\omega^{[k]}_{-2 } S_{k i}(1,2)_{-1 } \vac
+10\omega^{[k]}_{-1 } S_{k i}(1,2)_{-2 } \vac
\nonumber\\&\quad{}
-5\omega_{0 } S_{k i}(1,2)_{-3 } \vac
+10\omega_{0 } \omega^{[i]}_{-1 } S_{k i}(1,2)_{-1 } \vac
,
\nonumber\\
S_{ij}(1,3)_{2}S_{k j}(1,3)&=
-30\omega^{[k]}_{-1 } S_{k i}(1,1)_{-2 } \vac
+10\omega_{0 } \omega^{[k]}_{-1 } S_{k i}(1,1)_{-1 } \vac
\nonumber\\&\quad{}
+5\omega_{0 }^{3}S_{k i}(1,1)_{-1 } \vac
+20\omega^{[k]}_{-1 } S_{k i}(1,2)_{-1 } \vac
\nonumber\\&\quad{}
-30S_{k i}(1,2)_{-3 } \vac
+30\omega_{0 } S_{k i}(1,3)_{-1 } \vac
,
\nonumber\\
S_{ij}(1,3)_{3}S_{k j}(1,3)&=
30S_{k i}(1,3)_{-1 } \vac
,
\nonumber\\
S_{ij}(1,3)_{4}S_{k j}(1,3)&=
30S_{k i}(1,2)_{-1 } \vac
,
\nonumber\\
S_{ij}(1,3)_{5}S_{k j}(1,3)&=
30S_{k i}(1,1)_{-1 } \vac
,
\end{align}

\begin{align}
S_{ij}(1,3)_{0}S_{k i}(1,1)&=
3\omega^{[k]}_{-2 } S_{k j}(1,1)_{-1 } \vac
-2\omega_{0 } \omega^{[k]}_{-1 } S_{k j}(1,1)_{-1 } \vac
\nonumber\\&\quad{}
+\omega_{0 }^{3}S_{k j}(1,1)_{-1 } \vac
+2\omega^{[k]}_{-1 } S_{k j}(1,2)_{-1 } \vac
\nonumber\\&\quad{}
-3\omega_{0 } S_{k j}(1,2)_{-2 } \vac
+3\omega_{0 } S_{k j}(1,3)_{-1 } \vac
,
\nonumber\\
S_{ij}(1,3)_{1}S_{k i}(1,1)&=
S_{k j}(1,3)_{-1 } \vac
,
\end{align}

\begin{align}
S_{ij}(1,3)_{0}S_{k i}(1,2)&=
-16\omega^{[k]}_{-1 } S_{k j}(1,1)_{-3 } \vac
+4\Har^{[k]}_{-1 } S_{k j}(1,1)_{-1 } \vac
\nonumber\\&\quad{}
-98\omega_{0 } \omega^{[k]}_{-1 } S_{k j}(1,1)_{-2 } \vac
+34\omega_{0 }^{2}\omega^{[k]}_{-1 } S_{k j}(1,1)_{-1 } \vac
\nonumber\\&\quad{}
+3\omega_{0 }^{4}S_{k j}(1,1)_{-1 } \vac
+2\omega^{[k]}_{-1 } S_{k j}(1,2)_{-2 } \vac
\nonumber\\&\quad{}
+22\omega_{0 } \omega^{[k]}_{-1 } S_{k j}(1,2)_{-1 } \vac
-4\omega_{0 }^{3}S_{k j}(1,2)_{-1 } \vac
\nonumber\\&\quad{}
+6\omega_{0 } S_{k j}(1,3)_{-2 } \vac
,
\nonumber\\
S_{ij}(1,3)_{1}S_{k i}(1,2)&=
6\omega^{[k]}_{-2 } S_{k j}(1,1)_{-1 } \vac
-4\omega_{0 } \omega^{[k]}_{-1 } S_{k j}(1,1)_{-1 } \vac
\nonumber\\&\quad{}
+\omega_{0 }^{3}S_{k j}(1,1)_{-1 } \vac
+4\omega^{[k]}_{-1 } S_{k j}(1,2)_{-1 } \vac
\nonumber\\&\quad{}
-3\omega_{0 } S_{k j}(1,2)_{-2 } \vac
+6\omega_{0 } S_{k j}(1,3)_{-1 } \vac
,
\nonumber\\
S_{ij}(1,3)_{2}S_{k i}(1,2)&=
2S_{k j}(1,3)_{-1 } \vac
,
\end{align}

\begin{align}
S_{ij}(1,3)_{0}S_{k i}(1,3)&=
-120\omega^{[k]}_{-1 } \omega^{[j]}_{-1 } S_{k j}(1,1)_{-2 } \vac
-300\omega^{[j]}_{-4 } S_{k j}(1,1)_{-1 } \vac
\nonumber\\&\quad{}
-180\Har^{[j]}_{-1 } S_{k j}(1,1)_{-2 } \vac
-60\Har^{[j]}_{-2 } S_{k j}(1,1)_{-1 } \vac
\nonumber\\&\quad{}
+40\omega_{0 } \omega^{[k]}_{-1 } \omega^{[j]}_{-1 } S_{k j}(1,1)_{-1 } \vac
+240\omega_{0 } \omega^{[j]}_{-3 } S_{k j}(1,1)_{-1 } \vac
\nonumber\\&\quad{}
-90\omega_{0 }^{2}\omega^{[j]}_{-2 } S_{k j}(1,1)_{-1 } \vac
+20\omega_{0 }^{3}\omega^{[j]}_{-1 } S_{k j}(1,1)_{-1 } \vac
\nonumber\\&\quad{}
+330\omega^{[k]}_{-2 } S_{k j}(1,2)_{-2 } \vac
+780\omega^{[k]}_{-1 } S_{k j}(1,2)_{-3 } \vac
\nonumber\\&\quad{}
-90\Har^{[k]}_{-1 } S_{k j}(1,2)_{-1 } \vac
+180\Har^{[j]}_{-1 } S_{k j}(1,2)_{-1 } \vac
\nonumber\\&\quad{}
-120\omega_{0 }^{2}\omega^{[k]}_{-1 } S_{k j}(1,2)_{-1 } \vac
-135\omega_{0 }^{4}S_{k j}(1,2)_{-1 } \vac
\nonumber\\&\quad{}
-470\omega^{[k]}_{-2 } S_{k j}(1,3)_{-1 } \vac
-500\omega^{[k]}_{-1 } S_{k j}(1,3)_{-2 } \vac
\nonumber\\&\quad{}
+750S_{k j}(1,3)_{-4 } \vac
,
\nonumber\\
S_{ij}(1,3)_{1}S_{k i}(1,3)&=
-8\omega^{[k]}_{-1 } S_{k j}(1,1)_{-3 } \vac
+2\Har^{[k]}_{-1 } S_{k j}(1,1)_{-1 } \vac
\nonumber\\&\quad{}
-49\omega_{0 } \omega^{[k]}_{-1 } S_{k j}(1,1)_{-2 } \vac
+17\omega_{0 }^{2}\omega^{[k]}_{-1 } S_{k j}(1,1)_{-1 } \vac
\nonumber\\&\quad{}
+9\omega_{0 }^{4}S_{k j}(1,1)_{-1 } \vac
+\omega^{[k]}_{-1 } S_{k j}(1,2)_{-2 } \vac
\nonumber\\&\quad{}
+11\omega_{0 } \omega^{[k]}_{-1 } S_{k j}(1,2)_{-1 } \vac
-6\omega_{0 }^{3}S_{k j}(1,2)_{-1 } \vac
\nonumber\\&\quad{}
+9\omega_{0 } S_{k j}(1,3)_{-2 } \vac
,
\nonumber\\
S_{ij}(1,3)_{2}S_{k i}(1,3)&=
9\omega^{[k]}_{-2 } S_{k j}(1,1)_{-1 } \vac
-6\omega_{0 } \omega^{[k]}_{-1 } S_{k j}(1,1)_{-1 } \vac
\nonumber\\&\quad{}
+3\omega_{0 }^{3}S_{k j}(1,1)_{-1 } \vac
+6\omega^{[k]}_{-1 } S_{k j}(1,2)_{-1 } \vac
\nonumber\\&\quad{}
-9\omega_{0 } S_{k j}(1,2)_{-2 } \vac
+9\omega_{0 } S_{k j}(1,3)_{-1 } \vac
,
\nonumber\\
S_{ij}(1,3)_{3}S_{k i}(1,3)&=
3S_{k j}(1,3)_{-1 } \vac
,
\end{align}

\subsection{The case that $\langle \alpha,\alpha\rangle\neq 0,1/2,1$, and $2$}
\label{section:Norm-nonzero}
Let $\mn$ be a complex number with $\mn\neq 0,1/2,1,2$ and $\alpha\in\fh$ such that $\langle\alpha,\alpha\rangle=\mn$.
Let $h^{[1]},\ldots,h^{[\rankL]}$ be an orthonormal basis of $\fh$
such that $\langle\alpha,h^{[1]}\rangle\neq 0$ and
$\langle\alpha,h^{[i]}\rangle=0$ for all $i=2,\ldots,\rankL$.
In the following computation, $i,j,k$ are distinct elements of $\{2,\ldots,\rankL\}$.

\begin{align}
	\label{eq:Har[1]0ExB=frac2mnmn-2omega-2E}
\Har^{[1]}_{0}\ExB&=
\frac{2  \mn }{\mn -2}\omega^{[1]}_{-2 } E
+\frac{-4  \mn }{(\mn -2)  (2  \mn -1)}\omega_{0 } \omega^{[1]}_{-1 } E
+\frac{2}{(\mn -2)  (2  \mn -1)}\omega_{0 } \omega_{0 } \omega_{0 } E,\nonumber\\
\Har^{[1]}_{1}\ExB&=
\frac{2  \mn }{2  \mn -1}\omega^{[1]}_{-1 } E
+\frac{-1}{2  \mn -1}\omega_{0 } \omega_{0 } E,
\nonumber\\
\Har^{[1]}_{2}\ExB&=
\frac{1}{3}\omega_{0 } E,
\end{align}

\begin{align}
\omega^{[1]}_{0}S_{i1}(1,1)_{0}\ExB&=
\frac{\mn }{\mn -1}\omega_{0 } (S_{i1}(1,1)_{0}E)
+\frac{-\mn }{\mn -1}S_{i1}(1,1)_{-1 } E,\nonumber\\
\omega^{[1]}_{1}S_{i1}(1,1)_{0}\ExB&=
\frac{\mn }{2}(S_{i1}(1,1)_{0}E),
\end{align}

\begin{align}
\Har^{[1]}_{0}S_{i1}(1,1)_{0}\ExB&=
\frac{-2\mn^2}{3(\mn-1)}S_{i1}(1,1)_{-3 } \ExB 
+\frac{\mn^2}{3(\mn-1)^2}\omega_{0 } S_{i1}(1,1)_{-2 } \ExB 
\nonumber\\&\quad{}
+\frac{-\mn^2}{3(\mn-1)^3}\omega_{0 } \omega_{0 } S_{i1}(1,1)_{-1 } \ExB 
+\frac{\mn}{3(\mn-1)^3}\omega_{0 } \omega_{0 } \omega_{0 } (S_{i1}(1,1)_{0}\ExB) 
\nonumber\\&\quad{}
+\frac{2\mn^2}{3(\mn-1)}S_{i1}(1,2)_{-2 } \ExB 
+\frac{-\mn^2}{3(\mn-1)^2}\omega_{0 } S_{i1}(1,2)_{-1 } \ExB 
\nonumber\\&\quad{}
+\frac{2\mn(2\mn-3)}{3(\mn-1)}S_{i1}(1,3)_{-1 } \ExB,\end{align}

\begin{align}
\Har^{[1]}_{1}S_{i1}(1,1)_{0}\ExB&=
\frac{\mn }{2  (\mn -1)^2}\omega_{0 } \omega_{0 } (S_{i1}(1,1)_{0}E)
+\frac{\mn ^2}{2  (\mn -1)}S_{i1}(1,1)_{-2 } E\nonumber\\&\quad{}
+\frac{-\mn ^2}{2  (\mn -1)^2}\omega_{0 } S_{i1}(1,1)_{-1 } E
+\frac{\mn   (\mn -2)}{2  (\mn -1)}S_{i1}(1,2)_{-1 } E,
\nonumber\\
\Har^{[1]}_{2}S_{i1}(1,1)_{0}\ExB&=
\frac{\mn }{3  (\mn -1)}\omega_{0 } (S_{i1}(1,1)_{0}E)
+\frac{-\mn }{3  (\mn -1)}S_{i1}(1,1)_{-1 } E,
\end{align}

\begin{align}
\omega^{[i]}_{0}S_{i1}(1,1)_{0}\ExB&=
\frac{-1}{\mn -1}\omega_{0 } (S_{i1}(1,1)_{0}E)
+\frac{\mn }{\mn -1}S_{i1}(1,1)_{-1 } E,\nonumber\\
\omega^{[i]}_{1}S_{i1}(1,1)_{0}\ExB&=
S_{i1}(1,1)_{0}E,
\end{align}

\begin{align}
\Har^{[i]}_{0}S_{i1}(1,1)_{0}\ExB&=
\frac{-2  (\mn -3)}{\mn -1}\omega^{[i]}_{-2 } (S_{i1}(1,1)_{0}E)
+\frac{-4}{\mn -1}\omega_{0 } \omega^{[i]}_{-1 } (S_{i1}(1,1)_{0}E)\nonumber\\&\quad{}
+\frac{4  \mn }{\mn -1}\omega^{[i]}_{-1 } S_{i1}(1,1)_{-1 } E,\nonumber\\
\Har^{[i]}_{1}S_{i1}(1,1)_{0}\ExB&=
\frac{2}{(\mn -1)^2}\omega_{0 } \omega_{0 } (S_{i1}(1,1)_{0}E)
+\frac{2  \mn }{\mn -1}S_{i1}(1,1)_{-2 } E\nonumber\\&\quad{}
+\frac{-2  \mn }{(\mn -1)^2}\omega_{0 } S_{i1}(1,1)_{-1 } E
+\frac{-2  \mn }{\mn -1}S_{i1}(1,2)_{-1 } E,
\nonumber\\
\Har^{[i]}_{2}S_{i1}(1,1)_{0}\ExB&=
\frac{-7}{3  (\mn -1)}\omega_{0 } (S_{i1}(1,1)_{0}E)
+\frac{7  \mn }{3  (\mn -1)}S_{i1}(1,1)_{-1 } E,
\nonumber\\
\Har^{[i]}_{3}S_{i1}(1,1)_{0}\ExB&=
S_{i1}(1,1)_{0}E,
\end{align}

\begin{align}
S_{i1}(1,1)_{0}S_{i1}(1,1)_{0}\ExB&=
2  \mn \omega^{[i]}_{-1 } E
+\frac{2  \mn }{2  \mn -1}\omega^{[1]}_{-1 } E
+\frac{-1}{2  \mn -1}\omega_{0 } \omega_{0 } E,\nonumber\\
S_{i1}(1,1)_{1}S_{i1}(1,1)_{0}\ExB&=
\omega_{0 } E,
\nonumber\\
S_{i1}(1,1)_{2}S_{i1}(1,1)_{0}\ExB&=
\mn E,
\end{align}

\begin{align}
S_{i1}(1,2)_{0}\ExB&=
\frac{1}{\mn -1}\omega_{0 } (S_{i1}(1,1)_{0}E)
+\frac{-\mn }{\mn -1}S_{i1}(1,1)_{-1 } E,\nonumber\\
S_{i1}(1,2)_{1}\ExB&=
-S_{i1}(1,1)_{0}E,
\end{align}

\begin{align}
S_{i1}(1,2)_{0}S_{i1}(1,1)_{0}\ExB&=
-\mn \omega^{[i]}_{-2 } E
+\frac{2  \mn }{\mn -2}\omega^{[1]}_{-2 } E\nonumber\\&\quad{}
+\frac{-4  \mn }{(\mn -2)  (2  \mn -1)}\omega_{0 } \omega^{[1]}_{-1 } E
+\frac{2}{(\mn -2)  (2  \mn -1)}\omega_{0 } \omega_{0 } \omega_{0 } E,\nonumber\\
S_{i1}(1,2)_{1}S_{i1}(1,1)_{0}\ExB&=
-2  \mn \omega^{[i]}_{-1 } E
+\frac{2  \mn }{2  \mn -1}\omega^{[1]}_{-1 } E
+\frac{-1}{2  \mn -1}\omega_{0 } \omega_{0 } E,
\nonumber\\
S_{i1}(1,2)_{2}S_{i1}(1,1)_{0}\ExB&=0,\nonumber\\
S_{i1}(1,2)_{3}S_{i1}(1,1)_{0}\ExB&=
-\mn E,
\end{align}

\begin{align}
S_{i1}(1,3)_{0}\ExB&=
\frac{1}{2  (\mn -1)^2}\omega_{0 } \omega_{0 } (S_{i1}(1,1)_{0}E)
+\frac{\mn }{2  (\mn -1)}S_{i1}(1,1)_{-2 } E\nonumber\\&\quad{}
+\frac{-\mn }{2  (\mn -1)^2}\omega_{0 } S_{i1}(1,1)_{-1 } E
+\frac{-\mn }{2  (\mn -1)}S_{i1}(1,2)_{-1 } E,\nonumber\\
S_{i1}(1,3)_{1}\ExB&=
\frac{-1}{\mn -1}\omega_{0 } (S_{i1}(1,1)_{0}E)
+\frac{\mn }{\mn -1}S_{i1}(1,1)_{-1 } E,
\nonumber\\
S_{i1}(1,3)_{2}\ExB&=
S_{i1}(1,1)_{0}E.
\end{align}
\begin{align}
&S_{i1}(1,3)_{0}S_{i1}(1,1)_{0}\ExB\nonumber\\
&=
\frac{-\mn}{(\mn-2)(2\mn-1)}\omega^{[1]}_{-1 } \omega^{[1]}_{-1 } \ExB 
+\mn\Har^{[i]}_{-1 } \ExB 
+\frac{2\mn}{3}\omega^{[i]}_{-3 } \ExB 
\nonumber\\&\quad{}
+\frac{16\mn^2-14\mn+9}{6(\mn-2)(2\mn-1)}\omega^{[1]}_{-3 } \ExB 
+\frac{\mn}{2(\mn-2)}\Har^{[1]}_{-1 } \ExB 
\nonumber\\&\quad{}
+\frac{-(\mn+1)(6\mn-5)}{2(\mn-2)^2(2\mn-1)}\omega_{0 } \omega^{[1]}_{-2 } \ExB 
\nonumber\\&\quad{}
+\frac{4\mn-1}{(\mn-2)^2(2\mn-1)}\omega_{0 } \omega_{0 } \omega^{[1]}_{-1 } \ExB 
\nonumber\\&\quad{}
+\frac{-7}{4(\mn-2)^2(2\mn-1)}\omega_{0 } \omega_{0 } \omega_{0 } \omega_{0 } \ExB,
\label{eq:Si1(1,3)0Si1-1}
\end{align}
\begin{align}
S_{i1}(1,3)_{1}S_{i1}(1,1)_{0}\ExB&=
\mn \omega^{[i]}_{-2 } E
+\frac{\mn }{\mn -2}\omega^{[1]}_{-2 } E
+\frac{-2  \mn }{(\mn -2)  (2  \mn -1)}\omega_{0 } \omega^{[1]}_{-1 } E\nonumber\\&\quad{}
+\frac{1}{(\mn -2)  (2  \mn -1)}\omega_{0 } \omega_{0 } \omega_{0 } E,
\nonumber\\
S_{i1}(1,3)_{2}S_{i1}(1,1)_{0}\ExB&=
2  \mn \omega^{[i]}_{-1 } E,
\nonumber\\
S_{i1}(1,3)_{3}S_{i1}(1,1)_{0}\ExB&=0,\nonumber\\
S_{i1}(1,3)_{4}S_{i1}(1,1)_{0}\ExB&=
\mn E,
\end{align}
\begin{align}
S_{k 1}(1,1)_{0}S_{i1}(1,1)_{0}\ExB&=
\mn S_{k i}(1,1)_{-1 } \ExB 
,\end{align}

\begin{align}
S_{k 1}(1,2)_{0}\ExB&=
\frac{-\mn }{\mn -1}S_{k 1}(1,1)_{-1 } \ExB 
+\frac{1}{\mn -1}\omega_{0 } (S_{k 1}(1,1)_{0}\ExB) 
,\nonumber\\
S_{k 1}(1,2)_{1}\ExB&=
-(S_{k 1}(1,1)_{0}\ExB) 
,
\end{align}

\begin{align}
S_{k 1}(1,2)_{0}S_{i1}(1,1)_{0}\ExB&=
-\mn S_{k i}(1,1)_{-2 } \ExB 
+\mn S_{k i}(1,2)_{-1 } \ExB 
,\nonumber\\
S_{k 1}(1,2)_{1}S_{i1}(1,1)_{0}\ExB&=
-\mn S_{k i}(1,1)_{-1 } \ExB 
,
\end{align}

\begin{align}
S_{k 1}(1,3)_{0}\ExB&=
\frac{\mn }{2 (\mn -1)}S_{k 1}(1,1)_{-2 } \ExB 
+\frac{-\mn }{2 (\mn -1)^2}\omega_{0 } S_{k 1}(1,1)_{-1 } \ExB 
\nonumber\\&\quad{}
+\frac{1}{2 (\mn -1)^2}\omega_{0 } \omega_{0 } (S_{k 1}(1,1)_{0}\ExB) 
+\frac{-\mn }{2 (\mn -1)}S_{k 1}(1,2)_{-1 } \ExB 
,\nonumber\\
S_{k 1}(1,3)_{1}\ExB&=
\frac{\mn }{\mn -1}S_{k 1}(1,1)_{-1 } \ExB 
+\frac{-1}{\mn -1}\omega_{0 } (S_{k 1}(1,1)_{0}\ExB) 
,
\nonumber\\
S_{k 1}(1,3)_{2}\ExB&=
(S_{k 1}(1,1)_{0}\ExB) 
,
\end{align}

\begin{align}
S_{k 1}(1,3)_{0}S_{i1}(1,1)_{0}\ExB&=
\mn S_{k i}(1,1)_{-3 } \ExB 
-\mn S_{k i}(1,2)_{-2 } \ExB 
+\mn S_{k i}(1,3)_{-1 } \ExB 
,\nonumber\\
S_{k 1}(1,3)_{1}S_{i1}(1,1)_{0}\ExB&=
\mn S_{k i}(1,1)_{-2 } \ExB 
-\mn S_{k i}(1,2)_{-1 } \ExB 
,
\nonumber\\
S_{k 1}(1,3)_{2}S_{i1}(1,1)_{0}\ExB&=
\mn S_{k i}(1,1)_{-1 } \ExB 
,
\end{align}

\begin{align}
S_{k i}(1,1)_{0}S_{i1}(1,1)_{0}\ExB&=
\frac{\mn }{\mn -1}S_{k 1}(1,1)_{-1 } \ExB 
+\frac{-1}{\mn -1}\omega_{0 } (S_{k 1}(1,1)_{0}\ExB) 
,\nonumber\\
S_{k i}(1,1)_{1}S_{i1}(1,1)_{0}\ExB&=
(S_{k 1}(1,1)_{0}\ExB) 
,
\end{align}

\begin{align}
S_{k i}(1,2)_{0}S_{i1}(1,1)_{0}\ExB&=
\frac{-\mn }{\mn -1}S_{k 1}(1,1)_{-2 } \ExB 
+\frac{\mn }{(\mn -1)^2}\omega_{0 } S_{k 1}(1,1)_{-1 } \ExB 
\nonumber\\&\quad{}
+\frac{-1}{(\mn -1)^2}\omega_{0 } \omega_{0 } (S_{k 1}(1,1)_{0}\ExB) 
+\frac{\mn }{\mn -1}S_{k 1}(1,2)_{-1 } \ExB 
,\nonumber\\
S_{k i}(1,2)_{1}S_{i1}(1,1)_{0}\ExB&=
\frac{-2 \mn }{\mn -1}S_{k 1}(1,1)_{-1 } \ExB 
+\frac{2}{\mn -1}\omega_{0 } (S_{k 1}(1,1)_{0}\ExB) 
,
\nonumber\\
S_{k i}(1,2)_{2}S_{i1}(1,1)_{0}\ExB&=
-2(S_{k 1}(1,1)_{0}\ExB) 
,
\end{align}

\begin{align}
S_{k i}(1,3)_{0}S_{i1}(1,1)_{0}\ExB&=
\frac{-(\mn -3)}{\mn -1}\omega^{[k]}_{-2 } (S_{k 1}(1,1)_{0}\ExB) 
+\frac{2 \mn }{\mn -1}\omega^{[k]}_{-1 } S_{k 1}(1,1)_{-1 } \ExB 
\nonumber\\&\quad{}
+\frac{-2}{\mn -1}\omega_{0 } \omega^{[k]}_{-1 } (S_{k 1}(1,1)_{0}\ExB) 
,\nonumber\\
S_{k i}(1,3)_{1}S_{i1}(1,1)_{0}\ExB&=
\frac{3 \mn }{2 (\mn -1)}S_{k 1}(1,1)_{-2 } \ExB 
+\frac{-3 \mn }{2 (\mn -1)^2}\omega_{0 } S_{k 1}(1,1)_{-1 } \ExB 
\nonumber\\&\quad{}
+\frac{3}{2 (\mn -1)^2}\omega_{0 } \omega_{0 } (S_{k 1}(1,1)_{0}\ExB) 
+\frac{-3 \mn }{2 (\mn -1)}S_{k 1}(1,2)_{-1 } \ExB 
,
\nonumber\\
S_{k i}(1,3)_{2}S_{i1}(1,1)_{0}\ExB&=
\frac{3 \mn }{\mn -1}S_{k 1}(1,1)_{-1 } \ExB 
+\frac{-3}{\mn -1}\omega_{0 } (S_{k 1}(1,1)_{0}\ExB) 
,
\nonumber\\
S_{k i}(1,3)_{3}S_{i1}(1,1)_{0}\ExB&=
3(S_{k 1}(1,1)_{0}\ExB).
\end{align}

\subsection{The case that $\langle \alpha,\alpha\rangle=2$}
\label{section:normal-2}
Let $\alpha\in\fh$ with $\langle\alpha,\alpha\rangle=2$.
Let $h^{[1]},\ldots,h^{[\rankL]}$ be an orthonormal basis of $\fh$ such that $\alpha\in\C h^{[1]}$.
In the following computation, $i,j,k$ are distinct elements of $\{2,\ldots,\rankL\}$.

\begin{align}
\label{eq:omega[1]0Har[1]0ExB=omega0}
\omega^{[1]}_{0}\Har^{[1]}_{0}\ExB&=
\omega_{0 } (\Har^{[1]}_{0}E),&
\omega^{[1]}_{1}\Har^{[1]}_{0}\ExB&=
4(\Har^{[1]}_{0}E) ,
\nonumber\\
\omega^{[1]}_{2}\Har^{[1]}_{0}\ExB&=
8\omega^{[1]}_{-1 } E
-2\omega_{0 } \omega_{0 } E,&
\omega^{[1]}_{3}\Har^{[1]}_{0}\ExB&=
6\omega_{0 } E,
\nonumber\\
\omega^{[1]}_{4}\Har^{[1]}_{0}\ExB&=
12E,
\end{align}
\begin{align}
\Har^{[1]}_{0}\ExB&=
\Har^{[1]}_{0}E,\nonumber\\
\Har^{[1]}_{1}\ExB&=
\frac{4}{3}\omega^{[1]}_{-1 } E
+\frac{-1}{3}\omega_{0 } \omega_{0 } E,
\nonumber\\
\Har^{[1]}_{2}\ExB&=
\frac{1}{3}\omega_{0 } E,
\end{align}

\begin{align}
\Har^{[1]}_{0}\Har^{[1]}_{0}\ExB&=
\frac{4096}{5145}\omega^{[1]}_{-1 } \omega^{[1]}_{-1 } \omega^{[1]}_{-1 } E
+\frac{210412}{46305}\omega^{[1]}_{-2 } (\Har^{[1]}_{0}E) \nonumber\\&\quad{}
+\frac{18944}{5145}\omega^{[1]}_{-1 } \Har^{[1]}_{-1 } E
+\frac{-124504}{15435}\Har^{[1]}_{-3 } E\nonumber\\&\quad{}
+\frac{5888}{36015}\omega_{0 } \omega^{[1]}_{-1 } (\Har^{[1]}_{0}E) 
+\frac{3032168}{324135}\omega_{0 } \Har^{[1]}_{-2 } E\nonumber\\&\quad{}
+\frac{-42680128}{72930375}\omega_{0 } \omega_{0 } \omega^{[1]}_{-1 } \omega^{[1]}_{-1 } E
+\frac{-42960376}{24310125}\omega_{0 } \omega_{0 } \Har^{[1]}_{-1 } E\nonumber\\&\quad{}
+\frac{-4044646}{24310125}\omega_{0 } \omega_{0 } \omega_{0 } (\Har^{[1]}_{0}E) 
+\frac{32226464}{218791125}\omega_{0 } \omega_{0 } \omega_{0 } \omega_{0 } \omega^{[1]}_{-1 } E\nonumber\\&\quad{}
+\frac{-2775692}{218791125}\omega_{0 } \omega_{0 } \omega_{0 } \omega_{0 } \omega_{0 } \omega_{0 } E,\end{align}

\begin{align}
\Har^{[1]}_{1}\Har^{[1]}_{0}\ExB&=
\frac{36}{7}\omega^{[1]}_{-1 } (\Har^{[1]}_{0}E) 
+\frac{80}{7}\Har^{[1]}_{-2 } E\nonumber\\&\quad{}
+\frac{1088}{1575}\omega_{0 } \omega^{[1]}_{-1 } \omega^{[1]}_{-1 } E
+\frac{1496}{525}\omega_{0 } \Har^{[1]}_{-1 } E\nonumber\\&\quad{}
+\frac{-709}{525}\omega_{0 } \omega_{0 } (\Har^{[1]}_{0}E) 
+\frac{-544}{4725}\omega_{0 } \omega_{0 } \omega_{0 } \omega^{[1]}_{-1 } E\nonumber\\&\quad{}
+\frac{-68}{4725}\omega_{0 } \omega_{0 } \omega_{0 } \omega_{0 } \omega_{0 } E,
\nonumber\\
\Har^{[1]}_{2}\Har^{[1]}_{0}\ExB&=
\frac{128}{25}\omega^{[1]}_{-1 } \omega^{[1]}_{-1 } E
+\frac{528}{25}\Har^{[1]}_{-1 } E\nonumber\\&\quad{}
+\frac{-11}{75}\omega_{0 } (\Har^{[1]}_{0}E) 
+\frac{-64}{75}\omega_{0 } \omega_{0 } \omega^{[1]}_{-1 } E\nonumber\\&\quad{}
+\frac{-8}{75}\omega_{0 } \omega_{0 } \omega_{0 } \omega_{0 } E,
\nonumber\\
\Har^{[1]}_{3}\Har^{[1]}_{0}\ExB&=
27(\Har^{[1]}_{0}E) ,
\nonumber\\
\Har^{[1]}_{4}\Har^{[1]}_{0}\ExB&=
48\omega^{[1]}_{-1 } E-12\omega_{0 } \omega_{0 } E,
\nonumber\\
\Har^{[1]}_{5}\Har^{[1]}_{0}\ExB&=
20\omega_{0 } E,
\end{align}
\begin{align}
\omega^{[1]}_{0}S_{i1}(1,1)_{0}\ExB&=
-2S_{i1}(1,1)_{-1 } E+2\omega_{0 } (S_{i1}(1,1)_{0}E),\nonumber\\
\omega^{[1]}_{1}S_{i1}(1,1)_{0}\ExB&=
S_{i1}(1,1)_{0}E,
\end{align}

\begin{align}
\Har^{[1]}_{0}S_{i1}(1,1)_{0}\ExB&=
\frac{40}{21}S_{i1}(1,1)_{-3 } E
+\frac{-32}{21}\omega^{[1]}_{-1 } S_{i1}(1,1)_{-1 } E\nonumber\\&\quad{}
+\frac{-4}{7}\omega_{0 } S_{i1}(1,1)_{-2 } E
+\frac{-4}{21}\omega_{0 } \omega^{[1]}_{-1 } (S_{i1}(1,1)_{0}E)\nonumber\\&\quad{}
+\frac{5}{21}\omega_{0 } \omega_{0 } \omega_{0 } (S_{i1}(1,1)_{0}E)
+\frac{26}{21}\omega_{0 } S_{i1}(1,2)_{-1 } E\nonumber\\&\quad{}
+\frac{44}{21}S_{i1}(1,3)_{-1 } E,\nonumber\\
\Har^{[1]}_{1}S_{i1}(1,1)_{0}\ExB&=
2S_{i1}(1,1)_{-2 } E
-2\omega_{0 } S_{i1}(1,1)_{-1 } E
+\omega_{0 } \omega_{0 } (S_{i1}(1,1)_{0}E),
\nonumber\\
\Har^{[1]}_{2}S_{i1}(1,1)_{0}\ExB&=
\frac{-2}{3}S_{i1}(1,1)_{-1 } E
+\frac{2}{3}\omega_{0 } (S_{i1}(1,1)_{0}E),
\end{align}

\begin{align}
\omega^{[i]}_{0}S_{i1}(1,1)_{0}\ExB&=
2S_{i1}(1,1)_{-1 } E-\omega_{0 } (S_{i1}(1,1)_{0}E),\nonumber\\
\omega^{[i]}_{1}S_{i1}(1,1)_{0}\ExB&=
S_{i1}(1,1)_{0}E,
\end{align}

\begin{align}
\Har^{[i]}_{0}S_{i1}(1,1)_{0}\ExB&=
\frac{-20}{7}S_{i1}(1,1)_{-3 } E
+\frac{16}{7}\omega^{[1]}_{-1 } S_{i1}(1,1)_{-1 } E\nonumber\\&\quad{}
+\frac{6}{7}\omega_{0 } S_{i1}(1,1)_{-2 } E
+\frac{2}{7}\omega_{0 } \omega^{[1]}_{-1 } (S_{i1}(1,1)_{0}E)\nonumber\\&\quad{}
+\frac{-5}{14}\omega_{0 } \omega_{0 } \omega_{0 } (S_{i1}(1,1)_{0}E)
+\frac{-13}{7}\omega_{0 } S_{i1}(1,2)_{-1 } E\nonumber\\&\quad{}
+\frac{20}{7}S_{i1}(1,3)_{-1 } E,\nonumber\\
\Har^{[i]}_{1}S_{i1}(1,1)_{0}\ExB&=
4S_{i1}(1,1)_{-2 } E
-4\omega_{0 } S_{i1}(1,1)_{-1 } E\nonumber\\&\quad{}
+2\omega_{0 } \omega_{0 } (S_{i1}(1,1)_{0}E)
-4S_{i1}(1,2)_{-1 } E,
\nonumber\\
\Har^{[i]}_{2}S_{i1}(1,1)_{0}\ExB&=
\frac{14}{3}S_{i1}(1,1)_{-1 } E
+\frac{-7}{3}\omega_{0 } (S_{i1}(1,1)_{0}E),
\nonumber\\
\Har^{[i]}_{3}S_{i1}(1,1)_{0}\ExB&=
S_{i1}(1,1)_{0}E,
\end{align}

\begin{align}
S_{i1}(1,1)_{0}S_{i1}(1,1)_{0}\ExB&=
4\omega^{[i]}_{-1 } E 
+\frac{4}{3}\omega^{[1]}_{-1 } E 
+\frac{-1}{3}\omega_{0 } \omega_{0 } E 
,\nonumber\\
S_{i1}(1,1)_{1}S_{i1}(1,1)_{0}\ExB&=
\omega_{0 } E 
,
\nonumber\\
S_{i1}(1,1)_{2}S_{i1}(1,1)_{0}\ExB&=
2E 
,
\end{align}

\begin{align}
S_{i1}(1,1)_{0}\Har^{[1]}_{0}\ExB&=
\frac{-20}{21}S_{i1}(1,1)_{-3 } E
+\frac{16}{21}\omega^{[1]}_{-1 } S_{i1}(1,1)_{-1 } E\nonumber\\&\quad{}
+\frac{2}{7}\omega_{0 } S_{i1}(1,1)_{-2 } E
+\frac{2}{21}\omega_{0 } \omega^{[1]}_{-1 } (S_{i1}(1,1)_{0}E)\nonumber\\&\quad{}
+\frac{-5}{42}\omega_{0 } \omega_{0 } \omega_{0 } (S_{i1}(1,1)_{0}E)
+\frac{-13}{21}\omega_{0 } S_{i1}(1,2)_{-1 } E\nonumber\\&\quad{}
+\frac{104}{21}S_{i1}(1,3)_{-1 } E,\nonumber\\
S_{i1}(1,1)_{1}\Har^{[1]}_{0}\ExB&=
6S_{i1}(1,1)_{-2 } E
-6\omega_{0 } S_{i1}(1,1)_{-1 } E\nonumber\\&\quad{}
+3\omega_{0 } \omega_{0 } (S_{i1}(1,1)_{0}E)
-6S_{i1}(1,2)_{-1 } E,
\nonumber\\
S_{i1}(1,1)_{2}\Har^{[1]}_{0}\ExB&=
12S_{i1}(1,1)_{-1 } E
-6\omega_{0 } (S_{i1}(1,1)_{0}E),
\nonumber\\
S_{i1}(1,1)_{3}\Har^{[1]}_{0}\ExB&=
6(S_{i1}(1,1)_{0}E),
\end{align}

\begin{align}
S_{i1}(1,2)_{0}\ExB&=
-2S_{i1}(1,1)_{-1 } E+\omega_{0 } (S_{i1}(1,1)_{0}E),\nonumber\\
S_{i1}(1,2)_{1}\ExB&=-S_{i1}(1,1)_{0}E,
\end{align}

\begin{align}
S_{i1}(1,2)_{0}S_{i1}(1,1)_{0}\ExB&=
-2\omega^{[i]}_{-2 } E 
+(\Har_{0}E) 
,\nonumber\\
S_{i1}(1,2)_{1}S_{i1}(1,1)_{0}\ExB&=
-4\omega^{[i]}_{-1 } E 
+\frac{4}{3}\omega^{[1]}_{-1 } E 
+\frac{-1}{3}\omega_{0 } \omega_{0 } E 
,
\nonumber\\
S_{i1}(1,2)_{2}S_{i1}(1,1)_{0}\ExB&=0,\nonumber\\
S_{i1}(1,2)_{3}S_{i1}(1,1)_{0}\ExB&=
-2E 
,
\end{align}

\begin{align}
S_{i1}(1,2)_{0}\Har^{[1]}_{0}\ExB&=
12S_{i1}(1,1)_{-4 } E
+\frac{-80}{9}\omega^{[i]}_{-2 } S_{i1}(1,1)_{-1 } E\nonumber\\&\quad{}
+\frac{-484}{27}\omega^{[i]}_{-3 } (S_{i1}(1,1)_{0}E)
+\frac{-80}{9}\Har^{[i]}_{-1 } (S_{i1}(1,1)_{0}E)\nonumber\\&\quad{}
+8\omega^{[i]}_{-1 } \omega^{[1]}_{-1 } (S_{i1}(1,1)_{0}E)
+\frac{94}{9}\omega_{0 } \omega^{[i]}_{-2 } (S_{i1}(1,1)_{0}E)\nonumber\\&\quad{}
-2\omega_{0 } \omega_{0 } \omega^{[i]}_{-1 } (S_{i1}(1,1)_{0}E)
-12S_{i1}(1,2)_{-3 } E\nonumber\\&\quad{}
-12\omega^{[i]}_{-1 } S_{i1}(1,2)_{-1 } E
+8S_{i1}(1,3)_{-2 } E,\end{align}

\begin{align}
S_{i1}(1,2)_{1}\Har^{[1]}_{0}\ExB&=
\frac{200}{21}S_{i1}(1,1)_{-3 } E
+\frac{-160}{21}\omega^{[1]}_{-1 } S_{i1}(1,1)_{-1 } E\nonumber\\&\quad{}
+\frac{-20}{7}\omega_{0 } S_{i1}(1,1)_{-2 } E
+\frac{-20}{21}\omega_{0 } \omega^{[1]}_{-1 } (S_{i1}(1,1)_{0}E)\nonumber\\&\quad{}
+\frac{25}{21}\omega_{0 } \omega_{0 } \omega_{0 } (S_{i1}(1,1)_{0}E)
+\frac{130}{21}\omega_{0 } S_{i1}(1,2)_{-1 } E\nonumber\\&\quad{}
+\frac{-284}{21}S_{i1}(1,3)_{-1 } E,
\nonumber\\
S_{i1}(1,2)_{2}\Har^{[1]}_{0}\ExB&=
-24S_{i1}(1,1)_{-2 } E
+24\omega_{0 } S_{i1}(1,1)_{-1 } E\nonumber\\&\quad{}
-12\omega_{0 } \omega_{0 } (S_{i1}(1,1)_{0}E)
+24S_{i1}(1,2)_{-1 } E,
\nonumber\\
S_{i1}(1,2)_{3}\Har^{[1]}_{0}\ExB&=
-48S_{i1}(1,1)_{-1 } E\nonumber\\&\quad{}
+24\omega_{0 } (S_{i1}(1,1)_{0}E),
\nonumber\\
S_{i1}(1,2)_{4}\Har^{[1]}_{0}\ExB&=
-24(S_{i1}(1,1)_{0}E),
\end{align}

\begin{align}
S_{i1}(1,3)_{0}\ExB&=
S_{i1}(1,1)_{-2 } E
-\omega_{0 } S_{i1}(1,1)_{-1 } E\nonumber\\&\quad{}
+\frac{1}{2}\omega_{0 } \omega_{0 } (S_{i1}(1,1)_{0}E)
-S_{i1}(1,2)_{-1 } E,\nonumber\\
S_{i1}(1,3)_{1}\ExB&=
2S_{i1}(1,1)_{-1 } E
-\omega_{0 } (S_{i1}(1,1)_{0}E),
\nonumber\\
S_{i1}(1,3)_{2}\ExB&=
S_{i1}(1,1)_{0}E,
\end{align}

\begin{align}
S_{i1}(1,3)_{0}S_{i1}(1,1)_{0}\ExB&=
\frac{4}{3}\omega^{[i]}_{-3 } E 
+2\Har^{[i]}_{-1 } E 
+\frac{16}{75}\omega^{[1]}_{-1 } \omega^{[1]}_{-1 } E 
+\frac{22}{25}\Har^{[1]}_{-1 } E 
\nonumber\\&\quad{}
+\frac{-1}{50}\omega_{0 } (\Har_{0}E) 
+\frac{-8}{225}\omega_{0 } \omega_{0 } \omega^{[1]}_{-1 } E 
\nonumber\\&\quad{}
+\frac{-1}{225}\omega_{0 } \omega_{0 } \omega_{0 } \omega_{0 } E 
,\nonumber\\
S_{i1}(1,3)_{1}S_{i1}(1,1)_{0}\ExB&=
2\omega^{[i]}_{-2 } E 
+\frac{1}{2}(\Har_{0}E) 
,
\nonumber\\
S_{i1}(1,3)_{2}S_{i1}(1,1)_{0}\ExB&=
4\omega^{[i]}_{-1 } E 
,
\nonumber\\
S_{i1}(1,3)_{3}S_{i1}(1,1)_{0}\ExB&=0,\nonumber\\
S_{i1}(1,3)_{4}S_{i1}(1,1)_{0}\ExB&=
2E 
,
\end{align}

\begin{align}
S_{i1}(1,3)_{0}\Har^{[1]}_{0}\ExB&=
\frac{1732}{213}S_{i1}(1,1)_{-5 } E
+\frac{-1856}{781}\omega^{[i]}_{-4 } (S_{i1}(1,1)_{0}E)\nonumber\\&\quad{}
+\frac{49860}{10153}\Har^{[i]}_{-1 } S_{i1}(1,1)_{-1 } E
+\frac{1856}{781}\Har^{[i]}_{-2 } (S_{i1}(1,1)_{0}E)\nonumber\\&\quad{}
+\frac{-8}{213}\omega^{[1]}_{-1 } S_{i1}(1,1)_{-3 } E
+\frac{16620}{10153}\omega^{[i]}_{-2 } \omega^{[1]}_{-1 } (S_{i1}(1,1)_{0}E)\nonumber\\&\quad{}
+\frac{836}{213}\omega^{[i]}_{-1 } \omega^{[1]}_{-2 } (S_{i1}(1,1)_{0}E)
+\frac{-32}{213}\omega^{[1]}_{-3 } S_{i1}(1,1)_{-1 } E\nonumber\\&\quad{}
+\frac{-28}{71}\Har^{[1]}_{-1 } S_{i1}(1,1)_{-1 } E\nonumber\\&\quad{}
+\frac{46}{71}\omega_{0 } S_{i1}(1,1)_{-4 } E\nonumber\\&\quad{}
+\frac{123554}{91377}\omega_{0 } \omega^{[i]}_{-2 } S_{i1}(1,1)_{-1 } E\nonumber\\&\quad{}
+\frac{235819}{274131}\omega_{0 } \omega^{[i]}_{-3 } (S_{i1}(1,1)_{0}E)\nonumber\\&\quad{}
+\frac{-429718}{91377}\omega_{0 } \Har^{[i]}_{-1 } (S_{i1}(1,1)_{0}E)\nonumber\\&\quad{}
+\frac{130}{213}\omega_{0 } \omega^{[i]}_{-1 } \omega^{[1]}_{-1 } (S_{i1}(1,1)_{0}E)\nonumber\\&\quad{}
+\frac{1}{71}\omega_{0 } \omega^{[1]}_{-2 } S_{i1}(1,1)_{-1 } E\nonumber\\&\quad{}
+\frac{-1}{213}\omega_{0 } \omega_{0 } S_{i1}(1,1)_{-3 } E\nonumber\\&\quad{}
+\frac{-114689}{182754}\omega_{0 } \omega_{0 } \omega^{[i]}_{-2 } (S_{i1}(1,1)_{0}E)\nonumber\\&\quad{}
+\frac{-65}{426}\omega_{0 } \omega_{0 } \omega_{0 } \omega^{[i]}_{-1 } (S_{i1}(1,1)_{0}E)\nonumber\\&\quad{}
+\frac{-460}{213}S_{i1}(1,2)_{-4 } E\nonumber\\&\quad{}
+\frac{34844}{10153}\omega^{[i]}_{-2 } S_{i1}(1,2)_{-1 } E\nonumber\\&\quad{}
+\frac{-49}{71}\omega_{0 } S_{i1}(1,2)_{-3 } E\nonumber\\&\quad{}
+\frac{-1031}{213}\omega_{0 } \omega^{[i]}_{-1 } S_{i1}(1,2)_{-1 } E\nonumber\\&\quad{}
+\frac{-2}{71}\omega_{0 } \omega^{[1]}_{-1 } S_{i1}(1,2)_{-1 } E\nonumber\\&\quad{}
+\frac{49}{71}\omega_{0 } S_{i1}(1,3)_{-2 } E\nonumber\\&\quad{}
+\frac{10}{71}\omega_{0 } \omega_{0 } S_{i1}(1,3)_{-1 } E,\end{align}

\begin{align}
S_{i1}(1,3)_{1}\Har^{[1]}_{0}\ExB&=
-12S_{i1}(1,1)_{-4 } E
+\frac{116}{9}\omega^{[i]}_{-2 } S_{i1}(1,1)_{-1 } E\nonumber\\&\quad{}
+\frac{556}{27}\omega^{[i]}_{-3 } (S_{i1}(1,1)_{0}E)
+\frac{116}{9}\Har^{[i]}_{-1 } (S_{i1}(1,1)_{0}E)\nonumber\\&\quad{}
-8\omega^{[i]}_{-1 } \omega^{[1]}_{-1 } (S_{i1}(1,1)_{0}E)
+\frac{-112}{9}\omega_{0 } \omega^{[i]}_{-2 } (S_{i1}(1,1)_{0}E)\nonumber\\&\quad{}
+2\omega_{0 } \omega_{0 } \omega^{[i]}_{-1 } (S_{i1}(1,1)_{0}E)
+12S_{i1}(1,2)_{-3 } E\nonumber\\&\quad{}
+12\omega^{[i]}_{-1 } S_{i1}(1,2)_{-1 } E
-8S_{i1}(1,3)_{-2 } E,
\nonumber\\
S_{i1}(1,3)_{2}\Har^{[1]}_{0}\ExB&=
\frac{-80}{3}S_{i1}(1,1)_{-3 } E
+\frac{64}{3}\omega^{[1]}_{-1 } S_{i1}(1,1)_{-1 } E\nonumber\\&\quad{}
+8\omega_{0 } S_{i1}(1,1)_{-2 } E
+\frac{8}{3}\omega_{0 } \omega^{[1]}_{-1 } (S_{i1}(1,1)_{0}E)\nonumber\\&\quad{}
+\frac{-10}{3}\omega_{0 } \omega_{0 } \omega_{0 } (S_{i1}(1,1)_{0}E)
+\frac{-52}{3}\omega_{0 } S_{i1}(1,2)_{-1 } E\nonumber\\&\quad{}
+\frac{92}{3}S_{i1}(1,3)_{-1 } E,
\nonumber\\
S_{i1}(1,3)_{3}\Har^{[1]}_{0}\ExB&=
60S_{i1}(1,1)_{-2 } E
-60\omega_{0 } S_{i1}(1,1)_{-1 } E\nonumber\\&\quad{}
+30\omega_{0 } \omega_{0 } (S_{i1}(1,1)_{0}E)
-60S_{i1}(1,2)_{-1 } E,
\nonumber\\
S_{i1}(1,3)_{4}\Har^{[1]}_{0}\ExB&=
120S_{i1}(1,1)_{-1 } E
-60\omega_{0 } (S_{i1}(1,1)_{0}E),
\nonumber\\
S_{i1}(1,3)_{5}\Har^{[1]}_{0}\ExB&=
60(S_{i1}(1,1)_{0}E),
\end{align}
\begin{align}
	S_{k 1}(1,1)_{0}\ExB&=
	(S_{k 1}(1,1)_{0}\ExB) 
	,\end{align}

\begin{align}
	S_{k 1}(1,1)_{0}S_{i1}(1,1)_{0}\ExB&=
	2S_{k i}(1,1)_{-1 } \ExB 
	,\end{align}

\begin{align}
	S_{k 1}(1,1)_{0}\Har^{[1]}_{0}\ExB&=
	\frac{2}{3}\omega^{[k]}_{-2 } (S_{k 1}(1,1)_{0}\ExB) 
	+\frac{8}{3}\omega^{[k]}_{-1 } S_{k 1}(1,1)_{-1 } \ExB 
	\nonumber\\&\quad{}
	+\frac{-4}{3}\omega_{0 } \omega^{[k]}_{-1 } (S_{k 1}(1,1)_{0}\ExB) 
	+4S_{k 1}(1,3)_{-1 } \ExB 
	,\nonumber\\
	S_{k 1}(1,1)_{1}\Har^{[1]}_{0}\ExB&=
	3\omega^{[1]}_{-1 } (S_{k 1}(1,1)_{0}\ExB) 
	+3\omega_{0 } S_{k 1}(1,1)_{-1 } \ExB 
	\nonumber\\&\quad{}
	+\frac{-9}{4}\omega_{0 } \omega_{0 } (S_{k 1}(1,1)_{0}\ExB) 
	+\frac{-9}{2}S_{k 1}(1,2)_{-1 } \ExB 
	,
	\nonumber\\
	S_{k 1}(1,1)_{2}\Har^{[1]}_{0}\ExB&=
	12S_{k 1}(1,1)_{-1 } \ExB 
	-6\omega_{0 } (S_{k 1}(1,1)_{0}\ExB) 
	,
	\nonumber\\
	S_{k 1}(1,1)_{3}\Har^{[1]}_{0}\ExB&=
	6(S_{k 1}(1,1)_{0}\ExB) 
	,
\end{align}

\begin{align}
	S_{k 1}(1,2)_{0}\ExB&=
	-2S_{k 1}(1,1)_{-1 } \ExB 
	+\omega_{0 } (S_{k 1}(1,1)_{0}\ExB) 
	,\nonumber\\
	S_{k 1}(1,2)_{1}\ExB&=
	-(S_{k 1}(1,1)_{0}\ExB) 
	,
\end{align}

\begin{align}
	S_{k 1}(1,2)_{0}S_{i1}(1,1)_{0}\ExB&=
	-2S_{k i}(1,1)_{-2 } \ExB 
	+2S_{k i}(1,2)_{-1 } \ExB 
	,\nonumber\\
	S_{k 1}(1,2)_{1}S_{i1}(1,1)_{0}\ExB&=
	-2S_{k i}(1,1)_{-1 } \ExB 
	,
\end{align}

\begin{align}
	S_{k 1}(1,3)_{0}S_{i1}(1,1)_{0}\ExB&=
	2S_{k i}(1,1)_{-3 } \ExB 
	-2S_{k i}(1,2)_{-2 } \ExB 
	\nonumber\\&\quad{}
	+2S_{k i}(1,3)_{-1 } \ExB 
	,\end{align}

\begin{align}
	S_{k 1}(1,3)_{1}S_{i1}(1,1)_{0}\ExB&=
	2S_{k i}(1,1)_{-2 } \ExB 
	-2S_{k i}(1,2)_{-1 } \ExB 
	,
	\nonumber\\
	S_{k 1}(1,3)_{2}S_{i1}(1,1)_{0}\ExB&=
	2S_{k i}(1,1)_{-1 } \ExB 
	,
\end{align}

\begin{align}
	S_{k i}(1,1)_{0}S_{i1}(1,1)_{0}\ExB&=
	2S_{k 1}(1,1)_{-1 } \ExB 
	-\omega_{0 } (S_{k 1}(1,1)_{0}\ExB) 
	,\end{align}

\begin{align}
	S_{k i}(1,1)_{1}S_{i1}(1,1)_{0}\ExB&=
	(S_{k 1}(1,1)_{0}\ExB) 
	,
\end{align}

\begin{align}
	S_{k i}(1,2)_{0}S_{i1}(1,1)_{0}\ExB&=
	-\omega^{[1]}_{-1 } (S_{k 1}(1,1)_{0}\ExB) 
	-\omega_{0 } S_{k 1}(1,1)_{-1 } \ExB 
	\nonumber\\&\quad{}
	+\frac{3}{4}\omega_{0 } \omega_{0 } (S_{k 1}(1,1)_{0}\ExB) 
	+\frac{3}{2}S_{k 1}(1,2)_{-1 } \ExB 
	,\end{align}

\begin{align}
	S_{k i}(1,2)_{1}S_{i1}(1,1)_{0}\ExB&=
	-4S_{k 1}(1,1)_{-1 } \ExB 
	+2\omega_{0 } (S_{k 1}(1,1)_{0}\ExB) 
	,
	\nonumber\\
	S_{k i}(1,2)_{2}S_{i1}(1,1)_{0}\ExB&=
	-2(S_{k 1}(1,1)_{0}\ExB) 
	,
\end{align}

\begin{align}
	S_{k i}(1,3)_{0}S_{i1}(1,1)_{0}\ExB&=
	\omega^{[k]}_{-2 } (S_{k 1}(1,1)_{0}\ExB) 
	+4\omega^{[k]}_{-1 } S_{k 1}(1,1)_{-1 } \ExB 
	\nonumber\\&\quad{}
	-2\omega_{0 } \omega^{[k]}_{-1 } (S_{k 1}(1,1)_{0}\ExB) 
	,\end{align}

\begin{align}
	S_{k i}(1,3)_{1}S_{i1}(1,1)_{0}\ExB&=
	\frac{3}{2}\omega^{[1]}_{-1 } (S_{k 1}(1,1)_{0}\ExB) 
	+\frac{3}{2}\omega_{0 } S_{k 1}(1,1)_{-1 } \ExB 
	\nonumber\\&\quad{}
	+\frac{-9}{8}\omega_{0 } \omega_{0 } (S_{k 1}(1,1)_{0}\ExB) 
	+\frac{-9}{4}S_{k 1}(1,2)_{-1 } \ExB 
	,
	\nonumber\\
	S_{k i}(1,3)_{2}S_{i1}(1,1)_{0}\ExB&=
	6S_{k 1}(1,1)_{-1 } \ExB 
	-3\omega_{0 } (S_{k 1}(1,1)_{0}\ExB) 
	,
\end{align}

\subsection{The case that $\langle \alpha,\alpha\rangle=1/2$}
\label{section:norm-1-2}
Let $\alpha\in\fh$ with $\langle\alpha,\alpha\rangle=1/2$.
Let $h^{[1]},\ldots,h^{[\rankL]}$ be an orthonormal basis of $\fh$
such that $\alpha\in\C h^{[1]}$.
In the following computation, $i,j,k$ are distinct elements of $\{2,\ldots,\rankL\}$.

\begin{align}
\omega^{[1]}_{0}\ExB&=
\omega_{0 } E,&
\omega^{[1]}_{1}\ExB&=
\frac{1}{4}E,
\end{align}

\begin{align}
\omega^{[1]}_{0}S_{i1}(1,1)_{0}\ExB&=
S_{i1}(1,1)_{-1 } E-\omega_{0 } (S_{i1}(1,1)_{0}E),\nonumber\\
\omega^{[1]}_{1}S_{i1}(1,1)_{0}\ExB&=
\frac{1}{4}(S_{i1}(1,1)_{0}E),
\end{align}

\begin{align}
\label{eq:norm1-2-omega[1]0Har[1]1ExB}
\omega^{[1]}_{0}\Har^{[1]}_{1}\ExB&=
\omega_{0 } (\Har^{[1]}_{1}\ExB),&
\omega^{[1]}_{1}\Har^{[1]}_{1}\ExB&=
\frac{9}{4}(\Har^{[1]}_{1}\ExB),
\nonumber\\
\omega^{[1]}_{2}\Har^{[1]}_{1}\ExB&=
2\omega_{0 } E,
&
\omega^{[1]}_{3}\Har^{[1]}_{1}\ExB&=
E,
\end{align}

\begin{align}
\Har^{[1]}_{0}\ExB&=
\frac{-2}{3}\omega^{[1]}_{-2 } E+\frac{4}{3}\omega_{0 } (\Har^{[1]}_{1}\ExB),\nonumber\\
\Har^{[1]}_{1}\ExB&=
(\Har^{[1]}_{1}\ExB),
\nonumber\\
\Har^{[1]}_{2}\ExB&=
\frac{1}{3}\omega_{0 } E,
\end{align}

\begin{align}
\Har^{[1]}_{0}S_{i1}(1,1)_{0}\ExB&=
-2S_{i1}(1,1)_{-3 } E+\frac{8}{3}\omega^{[i]}_{-1 } S_{i1}(1,1)_{-1 } E\nonumber\\&\quad{}
+\frac{20}{3}\omega^{[i]}_{-2 } (S_{i1}(1,1)_{0}E)+\omega^{[1]}_{-1 } S_{i1}(1,1)_{-1 } E\nonumber\\&\quad{}
+5\omega_{0 } S_{i1}(1,1)_{-2 } E+\frac{-16}{3}\omega_{0 } \omega^{[i]}_{-1 } (S_{i1}(1,1)_{0}E)\nonumber\\&\quad{}
+4\omega_{0 } \omega^{[1]}_{-1 } (S_{i1}(1,1)_{0}E)-3\omega_{0 } \omega_{0 } S_{i1}(1,1)_{-1 } E\nonumber\\&\quad{}
-3\omega_{0 } S_{i1}(1,2)_{-1 } E,\end{align}

\begin{align}
\Har^{[1]}_{1}S_{i1}(1,1)_{0}\ExB&=
\frac{-1}{4}S_{i1}(1,1)_{-2 } E+\frac{-1}{2}\omega_{0 } S_{i1}(1,1)_{-1 } E\nonumber\\&\quad{}
+\omega_{0 } \omega_{0 } (S_{i1}(1,1)_{0}E)+\frac{3}{4}S_{i1}(1,2)_{-1 } E,
\nonumber\\
\Har^{[1]}_{2}S_{i1}(1,1)_{0}\ExB&=
\frac{1}{3}S_{i1}(1,1)_{-1 } E+\frac{-1}{3}\omega_{0 } (S_{i1}(1,1)_{0}E),
\end{align}

\begin{align}
\Har^{[1]}_{0}\Har^{[1]}_{1}\ExB&=
\frac{-172}{25}\Har^{[1]}_{-2 } E+\frac{42}{25}\omega^{[1]}_{-2 } (\Har^{[1]}_{1}\ExB)\nonumber\\&\quad{}
+\frac{126}{25}\omega_{0 } \Har^{[1]}_{-1 } E+\frac{22}{25}\omega_{0 } \omega^{[1]}_{-1 } (\Har^{[1]}_{1}\ExB)\nonumber\\&\quad{}
+\frac{-4}{75}\omega_{0 } \omega_{0 } \omega^{[1]}_{-2 } E+\frac{-58}{75}\omega_{0 } \omega_{0 } \omega_{0 } (\Har^{[1]}_{1}\ExB),\end{align}

\begin{align}
\Har^{[1]}_{1}\Har^{[1]}_{1}\ExB&=
\frac{-3}{2}\Har^{[1]}_{-1 } E+\frac{5}{2}\omega^{[1]}_{-1 } (\Har^{[1]}_{1}\ExB)\nonumber\\&\quad{}
+\frac{-11}{3}\omega_{0 } \omega^{[1]}_{-2 } E+\frac{29}{6}\omega_{0 } \omega_{0 } (\Har^{[1]}_{1}\ExB),
\nonumber\\
\Har^{[1]}_{2}\Har^{[1]}_{1}\ExB&=
-4\omega^{[1]}_{-2 } E+\frac{25}{3}\omega_{0 } (\Har^{[1]}_{1}\ExB),
\nonumber\\
\Har^{[1]}_{3}\Har^{[1]}_{1}\ExB&=
8(\Har^{[1]}_{1}\ExB),
\nonumber\\
\Har^{[1]}_{4}\Har^{[1]}_{1}\ExB&=
4\omega_{0 } E,
\nonumber\\
\Har^{[1]}_{5}\Har^{[1]}_{1}\ExB&=
\frac{1}{3}E,
\end{align}

\begin{align}
\omega^{[i]}_{0}S_{i1}(1,1)_{0}\ExB&=
-S_{i1}(1,1)_{-1 } E+2\omega_{0 } (S_{i1}(1,1)_{0}E),\nonumber\\
\omega^{[i]}_{1}S_{i1}(1,1)_{0}\ExB&=
(S_{i1}(1,1)_{0}E),
\end{align}

\begin{align}
\Har^{[i]}_{0}S_{i1}(1,1)_{0}\ExB&=
-4\omega^{[i]}_{-1 } S_{i1}(1,1)_{-1 } E-10\omega^{[i]}_{-2 } (S_{i1}(1,1)_{0}E)\nonumber\\&\quad{}
+8\omega_{0 } \omega^{[i]}_{-1 } (S_{i1}(1,1)_{0}E),\end{align}

\begin{align}
\Har^{[i]}_{1}S_{i1}(1,1)_{0}\ExB&=
-2S_{i1}(1,1)_{-2 } E-4\omega_{0 } S_{i1}(1,1)_{-1 } E\nonumber\\&\quad{}
+8\omega_{0 } \omega_{0 } (S_{i1}(1,1)_{0}E)+2S_{i1}(1,2)_{-1 } E,
\nonumber\\
\Har^{[i]}_{2}S_{i1}(1,1)_{0}\ExB&=
\frac{-7}{3}S_{i1}(1,1)_{-1 } E+\frac{14}{3}\omega_{0 } (S_{i1}(1,1)_{0}E),
\nonumber\\
\Har^{[i]}_{3}S_{i1}(1,1)_{0}\ExB&=
(S_{i1}(1,1)_{0}E),
\end{align}

\begin{align}
S_{i1}(1,1)_{0}\ExB&=
(S_{i1}(1,1)_{0}E),\end{align}

\begin{align}
S_{i1}(1,1)_{0}S_{i1}(1,1)_{0}\ExB&=
\omega^{[i]}_{-1 } E+(\Har^{[1]}_{1}\ExB),\nonumber\\
S_{i1}(1,1)_{1}S_{i1}(1,1)_{0}\ExB&=
\omega_{0 } E,
\nonumber\\
S_{i1}(1,1)_{2}S_{i1}(1,1)_{0}\ExB&=
\frac{1}{2}E,
\end{align}

\begin{align}
S_{i1}(1,1)_{0}\Har^{[1]}_{1}\ExB&=
\frac{-5}{4}S_{i1}(1,1)_{-2 } E+\frac{-5}{2}\omega_{0 } S_{i1}(1,1)_{-1 } E\nonumber\\&\quad{}
+5\omega_{0 } \omega_{0 } (S_{i1}(1,1)_{0}E)+\frac{7}{4}S_{i1}(1,2)_{-1 } E,\nonumber\\
S_{i1}(1,1)_{1}\Har^{[1]}_{1}\ExB&=
-2S_{i1}(1,1)_{-1 } E+4\omega_{0 } (S_{i1}(1,1)_{0}E),
\nonumber\\
S_{i1}(1,1)_{2}\Har^{[1]}_{1}\ExB&=
2(S_{i1}(1,1)_{0}E),
\end{align}

\begin{align}
S_{i1}(1,2)_{0}\ExB&=
S_{i1}(1,1)_{-1 } E-2\omega_{0 } (S_{i1}(1,1)_{0}E),\nonumber\\
S_{i1}(1,2)_{1}\ExB&=-(S_{i1}(1,1)_{0}E),
\end{align}

\begin{align}
S_{i1}(1,2)_{0}S_{i1}(1,1)_{0}\ExB&=
\frac{-1}{2}\omega^{[i]}_{-2 } E+\frac{-2}{3}\omega^{[1]}_{-2 } E
+\frac{4}{3}\omega_{0 } (\Har^{[1]}_{1}\ExB),\end{align}

\begin{align}
S_{i1}(1,2)_{1}S_{i1}(1,1)_{0}\ExB&=
-\omega^{[i]}_{-1 } E+(\Har^{[1]}_{1}\ExB),
\nonumber\\
S_{i1}(1,2)_{2}S_{i1}(1,1)_{0}\ExB&=0,\nonumber\\
S_{i1}(1,2)_{3}S_{i1}(1,1)_{0}\ExB&=
\frac{-1}{2}E,
\end{align}

\begin{align}
S_{i1}(1,2)_{0}\Har^{[1]}_{1}\ExB&=
\frac{-3}{2}S_{i1}(1,1)_{-3 } E+7\omega^{[i]}_{-1 } S_{i1}(1,1)_{-1 } E\nonumber\\&\quad{}
+\frac{35}{2}\omega^{[i]}_{-2 } (S_{i1}(1,1)_{0}E)+\frac{1}{2}\omega^{[1]}_{-1 } S_{i1}(1,1)_{-1 } E\nonumber\\&\quad{}
+4\omega_{0 } S_{i1}(1,1)_{-2 } E-14\omega_{0 } \omega^{[i]}_{-1 } (S_{i1}(1,1)_{0}E)\nonumber\\&\quad{}
+4\omega_{0 } \omega^{[1]}_{-1 } (S_{i1}(1,1)_{0}E)+\frac{-5}{2}\omega_{0 } \omega_{0 } S_{i1}(1,1)_{-1 } E\nonumber\\&\quad{}
-3\omega_{0 } S_{i1}(1,2)_{-1 } E,\end{align}

\begin{align}
S_{i1}(1,2)_{1}\Har^{[1]}_{1}\ExB&=
\frac{13}{4}S_{i1}(1,1)_{-2 } E+\frac{13}{2}\omega_{0 } S_{i1}(1,1)_{-1 } E\nonumber\\&\quad{}
-13\omega_{0 } \omega_{0 } (S_{i1}(1,1)_{0}E)+\frac{-15}{4}S_{i1}(1,2)_{-1 } E,
\nonumber\\
S_{i1}(1,2)_{2}\Har^{[1]}_{1}\ExB&=
6S_{i1}(1,1)_{-1 } E-12\omega_{0 } (S_{i1}(1,1)_{0}E),
\nonumber\\
S_{i1}(1,2)_{3}\Har^{[1]}_{1}\ExB&=
-6(S_{i1}(1,1)_{0}E),
\end{align}

\begin{align}
S_{i1}(1,3)_{0}\ExB&=
\frac{-1}{2}S_{i1}(1,1)_{-2 } E-\omega_{0 } S_{i1}(1,1)_{-1 } E\nonumber\\&\quad{}
+2\omega_{0 } \omega_{0 } (S_{i1}(1,1)_{0}E)+\frac{1}{2}S_{i1}(1,2)_{-1 } E,\end{align}

\begin{align}
S_{i1}(1,3)_{1}\ExB&=
-S_{i1}(1,1)_{-1 } E+2\omega_{0 } (S_{i1}(1,1)_{0}E),
\nonumber\\
S_{i1}(1,3)_{2}\ExB&=
(S_{i1}(1,1)_{0}E),
\end{align}

\begin{align}
S_{i1}(1,3)_{0}S_{i1}(1,1)_{0}\ExB&=
\frac{1}{2}\Har^{[i]}_{-1 } E+\frac{1}{6}\omega^{[i]}_{-1 } \omega^{[1]}_{-1 } E
+\frac{3}{8}\omega^{[1]}_{-1 } (\Har^{[1]}_{1}\ExB)+\frac{1}{3}\omega_{0 } \omega^{[i]}_{-2 } E\nonumber\\&\quad{}
+\frac{-1}{2}\omega_{0 } \omega^{[1]}_{-2 } E+\frac{-1}{6}\omega_{0 } \omega_{0 } \omega^{[i]}_{-1 } E\nonumber\\&\quad{}
+\frac{5}{8}\omega_{0 } \omega_{0 } (\Har^{[1]}_{1}\ExB),\end{align}

\begin{align}
S_{i1}(1,3)_{1}S_{i1}(1,1)_{0}\ExB&=
\frac{1}{2}\omega^{[i]}_{-2 } E+\frac{-1}{3}\omega^{[1]}_{-2 } E
+\frac{2}{3}\omega_{0 } (\Har^{[1]}_{1}\ExB),
\nonumber\\
S_{i1}(1,3)_{2}S_{i1}(1,1)_{0}\ExB&=
\omega^{[i]}_{-1 } E,
\nonumber\\
S_{i1}(1,3)_{3}S_{i1}(1,1)_{0}\ExB&=0,\nonumber\\
S_{i1}(1,3)_{4}S_{i1}(1,1)_{0}\ExB&=
\frac{1}{2}E,
\end{align}

\begin{align}
S_{i1}(1,3)_{0}\Har^{[1]}_{1}\ExB&=
6S_{i1}(1,1)_{-4 } E+\frac{-3}{4}\omega^{[1]}_{-1 } S_{i1}(1,1)_{-2 } E\nonumber\\&\quad{}
+\frac{7}{4}\omega^{[1]}_{-2 } S_{i1}(1,1)_{-1 } E+\frac{19}{4}\omega^{[1]}_{-3 } (S_{i1}(1,1)_{0}E)\nonumber\\&\quad{}
+\frac{9}{8}\omega^{[1]}_{-1 } \omega^{[1]}_{-1 } (S_{i1}(1,1)_{0}E)+\frac{-145}{3}\omega_{0 } \omega^{[i]}_{-1 } S_{i1}(1,1)_{-1 } E\nonumber\\&\quad{}
+\frac{-725}{6}\omega_{0 } \omega^{[i]}_{-2 } (S_{i1}(1,1)_{0}E)+\frac{-65}{4}\omega_{0 } S_{i1}(1,1)_{-1 } (\Har^{[1]}_{1}\ExB)\nonumber\\&\quad{}
+3\omega_{0 } \omega_{0 } S_{i1}(1,1)_{-2 } E+\frac{290}{3}\omega_{0 } \omega_{0 } \omega^{[i]}_{-1 } (S_{i1}(1,1)_{0}E)\nonumber\\&\quad{}
+\frac{-9}{8}\omega_{0 } \omega_{0 } \omega_{0 } \omega_{0 } (S_{i1}(1,1)_{0}E)+\frac{-83}{8}\omega_{0 } S_{i1}(1,2)_{-2 } E\nonumber\\&\quad{}
+\frac{51}{4}\omega_{0 } \omega_{0 } S_{i1}(1,2)_{-1 } E+\frac{-103}{8}\omega_{0 } S_{i1}(1,3)_{-1 } E,\end{align}

\begin{align}
S_{i1}(1,3)_{1}\Har^{[1]}_{1}\ExB&=
\frac{3}{2}S_{i1}(1,1)_{-3 } E-11\omega^{[i]}_{-1 } S_{i1}(1,1)_{-1 } E\nonumber\\&\quad{}
+\frac{-55}{2}\omega^{[i]}_{-2 } (S_{i1}(1,1)_{0}E)+\frac{-1}{2}\omega^{[1]}_{-1 } S_{i1}(1,1)_{-1 } E\nonumber\\&\quad{}
-4\omega_{0 } S_{i1}(1,1)_{-2 } E+22\omega_{0 } \omega^{[i]}_{-1 } (S_{i1}(1,1)_{0}E)\nonumber\\&\quad{}
-4\omega_{0 } \omega^{[1]}_{-1 } (S_{i1}(1,1)_{0}E)+\frac{5}{2}\omega_{0 } \omega_{0 } S_{i1}(1,1)_{-1 } E\nonumber\\&\quad{}
+3\omega_{0 } S_{i1}(1,2)_{-1 } E,
\nonumber\\
S_{i1}(1,3)_{2}\Har^{[1]}_{1}\ExB&=
\frac{-25}{4}S_{i1}(1,1)_{-2 } E+\frac{-25}{2}\omega_{0 } S_{i1}(1,1)_{-1 } E\nonumber\\&\quad{}
+25\omega_{0 } \omega_{0 } (S_{i1}(1,1)_{0}E)+\frac{27}{4}S_{i1}(1,2)_{-1 } E,
\nonumber\\
S_{i1}(1,3)_{3}\Har^{[1]}_{1}\ExB&=
-12S_{i1}(1,1)_{-1 } E+24\omega_{0 } (S_{i1}(1,1)_{0}E),
\nonumber\\
S_{i1}(1,3)_{4}\Har^{[1]}_{1}\ExB&=
12(S_{i1}(1,1)_{0}E),
\end{align}
\begin{align}
	S_{k 1}(1,1)_{0}S_{i1}(1,1)_{0}\ExB&=
	\frac{1}{2}S_{k i}(1,1)_{-1 } E,\end{align}

\begin{align}
	S_{k 1}(1,2)_{0}S_{i1}(1,1)_{0}\ExB&=
	\frac{-1}{2}S_{k i}(1,1)_{-2 } E+\frac{1}{2}S_{k i}(1,2)_{-1 } E,\nonumber\\
	S_{k 1}(1,2)_{1}S_{i1}(1,1)_{0}\ExB&=
	\frac{-1}{2}S_{k i}(1,1)_{-1 } E,
\end{align}

\begin{align}
	S_{k 1}(1,3)_{0}S_{i1}(1,1)_{0}\ExB&=
	\frac{1}{2}S_{k i}(1,1)_{-3 } E+\frac{-1}{2}S_{k i}(1,2)_{-2 } E\nonumber\\&\quad{}
	+\frac{1}{2}S_{k i}(1,3)_{-1 } E,\nonumber\\
	S_{k 1}(1,3)_{1}S_{i1}(1,1)_{0}\ExB&=
	\frac{1}{2}S_{k i}(1,1)_{-2 } E+\frac{-1}{2}S_{k i}(1,2)_{-1 } E,
	\nonumber\\
	S_{k 1}(1,3)_{2}S_{i1}(1,1)_{0}\ExB&=
	\frac{1}{2}S_{k i}(1,1)_{-1 } E,
\end{align}

\begin{align}
	S_{k i}(1,1)_{0}S_{i1}(1,1)_{0}\ExB&=
	-S_{k 1}(1,1)_{-1 } E+2\omega_{0 } (S_{k 1}(1,1)_{0}E),\nonumber\\
	S_{k i}(1,1)_{1}S_{i1}(1,1)_{0}\ExB&=
	(S_{k 1}(1,1)_{0}E),
\end{align}

\begin{align}
	S_{k i}(1,2)_{0}S_{i1}(1,1)_{0}\ExB&=
	-1\omega^{[1]}_{-1 } (S_{k 1}(1,1)_{0}E)+2\omega_{0 } S_{k 1}(1,1)_{-1 } E\nonumber\\&\quad{}
	-3\omega_{0 } \omega_{0 } (S_{k 1}(1,1)_{0}E),\nonumber\\
	S_{k i}(1,2)_{1}S_{i1}(1,1)_{0}\ExB&=
	2S_{k 1}(1,1)_{-1 } E-4\omega_{0 } (S_{k 1}(1,1)_{0}E),
	\nonumber\\
	S_{k i}(1,2)_{2}S_{i1}(1,1)_{0}\ExB&=
	-2(S_{k 1}(1,1)_{0}E),
\end{align}

\begin{align}
	S_{k i}(1,3)_{0}S_{i1}(1,1)_{0}\ExB&=
	\frac{-3}{8}S_{k 1}(1,1)_{-3 } E+\frac{3}{4}S_{k 1}(1,1)_{-1 } (\Har^{[1]}_{1}E)\nonumber\\&\quad{}
	+\frac{3}{4}S_{k 1}(1,2)_{-2 } E+\frac{-3}{4}\omega_{0 } S_{k 1}(1,2)_{-1 } E\nonumber\\&\quad{}
	+\frac{3}{8}S_{k 1}(1,3)_{-1 } E,\nonumber\\
	S_{k i}(1,3)_{1}S_{i1}(1,1)_{0}\ExB&=
	\frac{3}{2}\omega^{[1]}_{-1 } (S_{k 1}(1,1)_{0}E)-3\omega_{0 } S_{k 1}(1,1)_{-1 } E\nonumber\\&\quad{}
	+\frac{9}{2}\omega_{0 } \omega_{0 } (S_{k 1}(1,1)_{0}E),
	\nonumber\\
	S_{k i}(1,3)_{2}S_{i1}(1,1)_{0}\ExB&=
	-3S_{k 1}(1,1)_{-1 } E+6\omega_{0 } (S_{k 1}(1,1)_{0}E),
\end{align}

\subsection{The case that $\langle \alpha,\alpha\rangle=1$}
\label{section:Norm-1}
Let $\alpha\in\fh$ with $\langle\alpha,\alpha\rangle=1$.
Let $h^{[1]},\ldots,h^{[\rankL]}$ be an orthonormal basis of $\fh$
such that $\alpha\in\C h^{[1]}$.
In the following computation, $i,j,k$ are distinct elements of $\{2,\ldots,\rankL\}$.

\begin{align}
\omega^{[1]}_{0}\ExB&=
\omega_{0 } \ExB,
&
\omega^{[1]}_{1}\ExB&=
\frac{1}{2}\ExB 
,
\end{align}

\begin{align}
\omega^{[1]}_{0}S_{i1}(1,1)_{0}\ExB&=
S_{i1}(1,1)_{-1 } \ExB 
+(S_{i1}(1,2)_{0}\ExB) 
,\nonumber\\
\omega^{[1]}_{1}S_{i1}(1,1)_{0}\ExB&=
\frac{1}{2}(S_{i1}(1,1)_{0}\ExB) 
,
\end{align}

\begin{align}
\omega^{[1]}_{0}S_{i1}(1,2)_{0}\ExB&=
\frac{-2}{3}S_{i1}(1,1)_{-2 } \ExB 
+\frac{4}{3}\omega^{[1]}_{-1 } (S_{i1}(1,1)_{0}\ExB) 
\nonumber\\&\quad{}
+\frac{-2}{3}\omega_{0 } S_{i1}(1,1)_{-1 } \ExB 
-\omega_{0 } (S_{i1}(1,2)_{0}\ExB) 
,\nonumber\\
\omega^{[1]}_{1}S_{i1}(1,2)_{0}\ExB&=
\frac{1}{2}(S_{i1}(1,2)_{0}\ExB) 
,
\end{align}

\begin{align}
\omega^{[1]}_{0}S_{k 1}(1,1)_{0}\ExB&=
S_{k 1}(1,1)_{-1 } \ExB 
+(S_{k 1}(1,2)_{0}E) 
,\nonumber\\
\omega^{[1]}_{1}S_{k 1}(1,1)_{0}\ExB&=
\frac{1}{2}(S_{k 1}(1,1)_{0}E) 
,
\end{align}

\begin{align}
\omega^{[1]}_{0}S_{k 1}(1,2)_{0}\ExB&=
\frac{4}{3}\omega^{[1]}_{-1 } (S_{k 1}(1,1)_{0}E) 
+\frac{-2}{3}\omega_{0 } S_{k 1}(1,1)_{-1 } \ExB 
\nonumber\\&\quad{}
+\frac{-2}{3}S_{k 1}(1,2)_{-1 } \ExB 
+\frac{-1}{3}\omega_{0 } (S_{k 1}(1,2)_{0}E) 
,\nonumber\\
\omega^{[1]}_{1}S_{k 1}(1,2)_{0}\ExB&=
\frac{1}{2}(S_{k 1}(1,2)_{0}E) 
,
\end{align}

\begin{align}
\Har^{[1]}_{0}\ExB&=
-2\omega^{[1]}_{-2 } \ExB 
+4\omega_{0 } \omega^{[1]}_{-1 } \ExB 
-2\omega_{0 } \omega_{0 } \omega_{0 } \ExB 
,\nonumber\\
\Har^{[1]}_{1}\ExB&=
2\omega^{[1]}_{-1 } \ExB 
-\omega_{0 } \omega_{0 } \ExB 
,
\nonumber\\
\Har^{[1]}_{2}\ExB&=
\frac{1}{3}\omega_{0 } \ExB 
,
\end{align}

\begin{align}
\Har^{[1]}_{0}S_{i1}(1,1)_{0}\ExB&=
\frac{2}{3}\omega^{[i]}_{-2 } (S_{i1}(1,1)_{0}\ExB) 
+\frac{4}{3}\omega^{[i]}_{-1 } (S_{i1}(1,2)_{0}\ExB) 
\nonumber\\&\quad{}
+2S_{i1}(1,3)_{-1 } \ExB 
,\nonumber\\
\Har^{[1]}_{1}S_{i1}(1,1)_{0}\ExB&=
\frac{2}{3}S_{i1}(1,1)_{-2 } \ExB 
+\frac{2}{3}\omega^{[1]}_{-1 } (S_{i1}(1,1)_{0}\ExB) 
\nonumber\\&\quad{}
+\frac{-1}{3}\omega_{0 } S_{i1}(1,1)_{-1 } \ExB 
,
\nonumber\\
\Har^{[1]}_{2}S_{i1}(1,1)_{0}\ExB&=
\frac{1}{3}S_{i1}(1,1)_{-1 } \ExB 
+\frac{1}{3}(S_{i1}(1,2)_{0}\ExB) 
,
\end{align}

\begin{align}
\Har^{[1]}_{0}S_{i1}(1,2)_{0}\ExB&=
\frac{2}{87}\omega^{[1]}_{-1 } S_{i1}(1,1)_{-2 } \ExB 
+\omega^{[1]}_{-3 } (S_{i1}(1,1)_{0}\ExB) 
\nonumber\\&\quad{}
+\frac{2}{87}\omega^{[1]}_{-1 } \omega^{[1]}_{-1 } (S_{i1}(1,1)_{0}\ExB) 
+\frac{43}{29}\Har^{[1]}_{-1 } (S_{i1}(1,1)_{0}\ExB) 
\nonumber\\&\quad{}
+\frac{-16}{29}\omega_{0 } \omega^{[i]}_{-2 } (S_{i1}(1,1)_{0}\ExB) 
+\frac{-1}{87}\omega_{0 } \omega^{[1]}_{-1 } S_{i1}(1,1)_{-1 } \ExB 
\nonumber\\&\quad{}
+\frac{-1}{174}\omega_{0 } \omega^{[1]}_{-2 } (S_{i1}(1,1)_{0}\ExB) 
+\frac{-2}{29}\omega^{[1]}_{-1 } S_{i1}(1,2)_{-1 } \ExB 
\nonumber\\&\quad{}
+\frac{-3}{58}\omega_{0 } S_{i1}(1,2)_{-2 } \ExB 
+\frac{-32}{29}\omega_{0 } \omega^{[i]}_{-1 } (S_{i1}(1,2)_{0}\ExB) 
\nonumber\\&\quad{}
+\frac{1}{29}\omega_{0 } \omega_{0 } S_{i1}(1,2)_{-1 } \ExB 
+\frac{-12}{29}S_{i1}(1,3)_{-2 } \ExB 
\nonumber\\&\quad{}
+\frac{-3}{2}\omega_{0 } S_{i1}(1,3)_{-1 } \ExB 
,\nonumber\\
\Har^{[1]}_{1}S_{i1}(1,2)_{0}\ExB&=
\frac{-1}{2}S_{i1}(1,1)_{-3 } \ExB 
+\frac{2}{3}\omega^{[i]}_{-2 } (S_{i1}(1,1)_{0}\ExB) 
\nonumber\\&\quad{}
+\frac{-1}{2}\omega^{[1]}_{-1 } S_{i1}(1,1)_{-1 } \ExB 
+\frac{-1}{2}\omega_{0 } S_{i1}(1,1)_{-2 } \ExB 
\nonumber\\&\quad{}
+\frac{1}{2}\omega_{0 } \omega^{[1]}_{-1 } (S_{i1}(1,1)_{0}\ExB) 
+\frac{4}{3}\omega^{[i]}_{-1 } (S_{i1}(1,2)_{0}\ExB) 
\nonumber\\&\quad{}
+\frac{-1}{2}\omega_{0 } \omega_{0 } (S_{i1}(1,2)_{0}\ExB) 
+S_{i1}(1,3)_{-1 } \ExB 
,
\nonumber\\
\Har^{[1]}_{2}S_{i1}(1,2)_{0}\ExB&=
\frac{-2}{9}S_{i1}(1,1)_{-2 } \ExB 
+\frac{4}{9}\omega^{[1]}_{-1 } (S_{i1}(1,1)_{0}\ExB) 
\nonumber\\&\quad{}
+\frac{-2}{9}\omega_{0 } S_{i1}(1,1)_{-1 } \ExB 
+\frac{-1}{3}\omega_{0 } (S_{i1}(1,2)_{0}\ExB) 
,
\end{align}

\begin{align}
\Har^{[1]}_{0}S_{k 1}(1,1)_{0}\ExB&=
-8\omega^{[1]}_{-2 } (S_{k 1}(1,1)_{0}E) 
+2\omega_{0 } S_{k 1}(1,1)_{-2 } \ExB 
\nonumber\\&\quad{}
-6S_{k 1}(1,2)_{-2 } \ExB 
+4\omega^{[1]}_{-1 } (S_{k 1}(1,2)_{0}E) 
\nonumber\\&\quad{}
+6\omega_{0 } S_{k 1}(1,2)_{-1 } \ExB 
+6S_{k 1}(1,3)_{-1 } \ExB 
,\nonumber\\
\Har^{[1]}_{1}S_{k 1}(1,1)_{0}\ExB&=
\frac{2}{3}\omega^{[1]}_{-1 } (S_{k 1}(1,1)_{0}E) 
+\frac{-1}{3}\omega_{0 } S_{k 1}(1,1)_{-1 } \ExB 
\nonumber\\&\quad{}
+\frac{2}{3}S_{k 1}(1,2)_{-1 } \ExB 
+\frac{-2}{3}\omega_{0 } (S_{k 1}(1,2)_{0}E) 
,
\nonumber\\
\Har^{[1]}_{2}S_{k 1}(1,1)_{0}\ExB&=
\frac{1}{3}S_{k 1}(1,1)_{-1 } \ExB 
+\frac{1}{3}(S_{k 1}(1,2)_{0}E) 
,
\end{align}

\begin{align}
\Har^{[1]}_{0}S_{k 1}(1,2)_{0}\ExB&=
\frac{196}{243}\omega^{[k]}_{-3 } (S_{k 1}(1,1)_{0}E) 
+\frac{4}{81}\omega^{[k]}_{-2 } S_{k 1}(1,1)_{-1 } \ExB 
\nonumber\\&\quad{}
+\frac{86}{81}\omega^{[k]}_{-1 } S_{k 1}(1,1)_{-2 } \ExB 
+\frac{-172}{81}\Har^{[k]}_{-1 } (S_{k 1}(1,1)_{0}E) 
\nonumber\\&\quad{}
+2\omega^{[1]}_{-2 } S_{k 1}(1,1)_{-1 } \ExB 
+\frac{8}{3}\omega^{[1]}_{-1 } \omega^{[1]}_{-1 } (S_{k 1}(1,1)_{0}E) 
\nonumber\\&\quad{}
+\frac{-2}{27}\omega_{0 } \omega^{[1]}_{-2 } (S_{k 1}(1,1)_{0}E) 
+\frac{-2}{3}\omega_{0 } \omega_{0 } \omega_{0 } \omega_{0 } (S_{k 1}(1,1)_{0}E) 
\nonumber\\&\quad{}
+\frac{-86}{81}\omega^{[k]}_{-1 } S_{k 1}(1,2)_{-1 } \ExB 
+\frac{-28}{9}S_{k 1}(1,2)_{-3 } \ExB 
\nonumber\\&\quad{}
+\frac{26}{9}\omega^{[1]}_{-2 } (S_{k 1}(1,2)_{0}E) 
+\frac{94}{81}\omega_{0 } \omega^{[k]}_{-1 } (S_{k 1}(1,2)_{0}E) 
\nonumber\\&\quad{}
+\frac{-116}{27}\omega_{0 } \omega^{[1]}_{-1 } (S_{k 1}(1,2)_{0}E) 
+\frac{-16}{27}\omega_{0 } \omega_{0 } S_{k 1}(1,2)_{-1 } \ExB 
\nonumber\\&\quad{}
+\frac{-14}{27}\omega_{0 } \omega_{0 } \omega_{0 } (S_{k 1}(1,2)_{0}E) 
+\frac{88}{27}\omega_{0 } S_{k 1}(1,3)_{-1 } \ExB 
,\nonumber\\
\Har^{[1]}_{1}S_{k 1}(1,2)_{0}\ExB&=
-12\omega^{[1]}_{-2 } (S_{k 1}(1,1)_{0}E) 
+3\omega_{0 } S_{k 1}(1,1)_{-2 } \ExB 
\nonumber\\&\quad{}
-10S_{k 1}(1,2)_{-2 } \ExB 
+6\omega^{[1]}_{-1 } (S_{k 1}(1,2)_{0}E) 
\nonumber\\&\quad{}
+9\omega_{0 } S_{k 1}(1,2)_{-1 } \ExB 
+8S_{k 1}(1,3)_{-1 } \ExB 
,
\nonumber\\
\Har^{[1]}_{2}S_{k 1}(1,2)_{0}\ExB&=
\frac{4}{9}\omega^{[1]}_{-1 } (S_{k 1}(1,1)_{0}E) 
+\frac{-2}{9}\omega_{0 } S_{k 1}(1,1)_{-1 } \ExB 
\nonumber\\&\quad{}
+\frac{-2}{9}S_{k 1}(1,2)_{-1 } \ExB 
+\frac{-1}{9}\omega_{0 } (S_{k 1}(1,2)_{0}E) 
,
\end{align}

\begin{align}
\omega^{[i]}_{0}S_{i1}(1,1)_{0}\ExB&=
-(S_{i1}(1,2)_{0}\ExB) 
,\nonumber\\
\omega^{[i]}_{1}S_{i1}(1,1)_{0}\ExB&=
(S_{i1}(1,1)_{0}\ExB) 
,
\end{align}

\begin{align}
\omega^{[i]}_{0}S_{i1}(1,2)_{0}\ExB&=
\frac{2}{3}S_{i1}(1,1)_{-2 } \ExB 
+\frac{-4}{3}\omega^{[1]}_{-1 } (S_{i1}(1,1)_{0}\ExB) 
\nonumber\\&\quad{}
+\frac{2}{3}\omega_{0 } S_{i1}(1,1)_{-1 } \ExB 
+2\omega_{0 } (S_{i1}(1,2)_{0}\ExB) 
,\nonumber\\
\omega^{[i]}_{1}S_{i1}(1,2)_{0}\ExB&=
2(S_{i1}(1,2)_{0}\ExB) 
,
\nonumber\\
\omega^{[i]}_{2}S_{i1}(1,2)_{0}\ExB&=
-2(S_{i1}(1,1)_{0}\ExB) 
,
\end{align}

\begin{align}
\Har^{[i]}_{0}S_{i1}(1,1)_{0}\ExB&=
-2\omega^{[i]}_{-2 } (S_{i1}(1,1)_{0}\ExB) 
-4\omega^{[i]}_{-1 } (S_{i1}(1,2)_{0}\ExB) 
,\nonumber\\
\Har^{[i]}_{1}S_{i1}(1,1)_{0}\ExB&=
\frac{-4}{3}S_{i1}(1,1)_{-2 } \ExB 
+\frac{8}{3}\omega^{[1]}_{-1 } (S_{i1}(1,1)_{0}\ExB) 
\nonumber\\&\quad{}
+\frac{-4}{3}\omega_{0 } S_{i1}(1,1)_{-1 } \ExB 
-4\omega_{0 } (S_{i1}(1,2)_{0}\ExB) 
,
\nonumber\\
\Har^{[i]}_{2}S_{i1}(1,1)_{0}\ExB&=
\frac{-7}{3}(S_{i1}(1,2)_{0}\ExB) 
,
\nonumber\\
\Har^{[i]}_{3}S_{i1}(1,1)_{0}\ExB&=
(S_{i1}(1,1)_{0}\ExB) 
,
\end{align}

\begin{align}
\Har^{[i]}_{0}S_{i1}(1,2)_{0}\ExB&=
\frac{-32}{29}\omega^{[1]}_{-1 } S_{i1}(1,1)_{-2 } \ExB 
+\frac{-32}{29}\omega^{[1]}_{-1 } \omega^{[1]}_{-1 } (S_{i1}(1,1)_{0}\ExB) 
\nonumber\\&\quad{}
+\frac{24}{29}\Har^{[1]}_{-1 } (S_{i1}(1,1)_{0}\ExB) 
+\frac{72}{29}\omega_{0 } \omega^{[i]}_{-2 } (S_{i1}(1,1)_{0}\ExB) 
\nonumber\\&\quad{}
+\frac{16}{29}\omega_{0 } \omega^{[1]}_{-1 } S_{i1}(1,1)_{-1 } \ExB 
+\frac{8}{29}\omega_{0 } \omega^{[1]}_{-2 } (S_{i1}(1,1)_{0}\ExB) 
\nonumber\\&\quad{}
+\frac{96}{29}\omega^{[1]}_{-1 } S_{i1}(1,2)_{-1 } \ExB 
+\frac{72}{29}\omega_{0 } S_{i1}(1,2)_{-2 } \ExB 
\nonumber\\&\quad{}
+\frac{144}{29}\omega_{0 } \omega^{[i]}_{-1 } (S_{i1}(1,2)_{0}\ExB) 
+\frac{-48}{29}\omega_{0 } \omega_{0 } S_{i1}(1,2)_{-1 } \ExB 
\nonumber\\&\quad{}
+\frac{-120}{29}S_{i1}(1,3)_{-2 } \ExB 
,\end{align}

\begin{align}
\Har^{[i]}_{1}S_{i1}(1,2)_{0}\ExB&=
6\omega^{[i]}_{-2 } (S_{i1}(1,1)_{0}\ExB) 
+12\omega^{[i]}_{-1 } (S_{i1}(1,2)_{0}\ExB) 
,
\nonumber\\
\Har^{[i]}_{2}S_{i1}(1,2)_{0}\ExB&=
\frac{38}{9}S_{i1}(1,1)_{-2 } \ExB 
+\frac{-76}{9}\omega^{[1]}_{-1 } (S_{i1}(1,1)_{0}\ExB) 
\nonumber\\&\quad{}
+\frac{38}{9}\omega_{0 } S_{i1}(1,1)_{-1 } \ExB 
+\frac{38}{3}\omega_{0 } (S_{i1}(1,2)_{0}\ExB) 
,
\nonumber\\
\Har^{[i]}_{3}S_{i1}(1,2)_{0}\ExB&=
8(S_{i1}(1,2)_{0}\ExB) 
,
\nonumber\\
\Har^{[i]}_{4}S_{i1}(1,2)_{0}\ExB&=
-4(S_{i1}(1,1)_{0}\ExB) 
,
\end{align}

\begin{align}
S_{i1}(1,1)_{0}\ExB&=
(S_{i1}(1,1)_{0}\ExB) 
,\end{align}

\begin{align}
S_{i1}(1,1)_{0}S_{i1}(1,1)_{0}\ExB&=
2\omega^{[i]}_{-1 } \ExB 
+2\omega^{[1]}_{-1 } \ExB 
-\omega_{0 } \omega_{0 } \ExB 
,\nonumber\\
S_{i1}(1,1)_{1}S_{i1}(1,1)_{0}\ExB&=
\omega_{0 } \ExB 
,
\nonumber\\
S_{i1}(1,1)_{2}S_{i1}(1,1)_{0}\ExB&=
\ExB 
,
\end{align}

\begin{align}
S_{i1}(1,1)_{0}S_{i1}(1,2)_{0}\ExB&=
-\omega^{[i]}_{-2 } \ExB 
+2\omega^{[1]}_{-2 } \ExB 
-4\omega_{0 } \omega^{[1]}_{-1 } \ExB 
+2\omega_{0 } \omega_{0 } \omega_{0 } \ExB 
,\nonumber\\
S_{i1}(1,1)_{1}S_{i1}(1,2)_{0}\ExB&=
-4\omega^{[1]}_{-1 } \ExB 
+2\omega_{0 } \omega_{0 } \ExB 
,
\nonumber\\
S_{i1}(1,1)_{2}S_{i1}(1,2)_{0}\ExB&=
-2\omega_{0 } \ExB 
,
\nonumber\\
S_{i1}(1,1)_{3}S_{i1}(1,2)_{0}\ExB&=
-2\ExB 
,
\end{align}

\begin{align}
S_{i1}(1,1)_{0}S_{k 1}(1,1)_{0}\ExB&=
S_{k i}(1,1)_{-1 } \ExB 
,\end{align}

\begin{align}
S_{i1}(1,1)_{0}S_{k 1}(1,2)_{0}\ExB&=
-S_{k i}(1,1)_{-2 } \ExB 
+S_{k i}(1,2)_{-1 } \ExB 
,\end{align}

\begin{align}
S_{i1}(1,2)_{0}\ExB&=
(S_{i1}(1,2)_{0}\ExB) 
,\nonumber\\
S_{i1}(1,2)_{1}\ExB&=
-(S_{i1}(1,1)_{0}\ExB) 
,
\end{align}

\begin{align}
S_{i1}(1,2)_{0}S_{i1}(1,1)_{0}\ExB&=
-\omega^{[i]}_{-2 } \ExB 
-2\omega^{[1]}_{-2 } \ExB 
\nonumber\\&\quad{}
+4\omega_{0 } \omega^{[1]}_{-1 } \ExB 
-2\omega_{0 } \omega_{0 } \omega_{0 } \ExB 
,\nonumber\\
S_{i1}(1,2)_{1}S_{i1}(1,1)_{0}\ExB&=
-2\omega^{[i]}_{-1 } \ExB 
+2\omega^{[1]}_{-1 } \ExB 
-\omega_{0 } \omega_{0 } \ExB 
,
\nonumber\\
S_{i1}(1,2)_{2}S_{i1}(1,1)_{0}\ExB&=0,\nonumber\\
S_{i1}(1,2)_{3}S_{i1}(1,1)_{0}\ExB&=
-\ExB 
,
\end{align}

\begin{align}
S_{i1}(1,2)_{0}S_{i1}(1,2)_{0}\ExB&=
-2\Har^{[i]}_{-1 } \ExB 
+\frac{2}{3}\omega^{[i]}_{-3 } \ExB 
\nonumber\\&\quad{}
+\frac{-24}{23}\omega^{[1]}_{-1 } \omega^{[1]}_{-1 } \ExB 
+\frac{-54}{23}\Har^{[1]}_{-1 } \ExB 
\nonumber\\&\quad{}
+\frac{-42}{23}\omega_{0 } \omega^{[1]}_{-2 } \ExB 
+\frac{60}{23}\omega_{0 } \omega_{0 } \omega^{[1]}_{-1 } \ExB 
\nonumber\\&\quad{}
+\frac{-24}{23}\omega_{0 } \omega_{0 } \omega_{0 } \omega_{0 } \ExB 
,\nonumber\\
S_{i1}(1,2)_{1}S_{i1}(1,2)_{0}\ExB&=
\omega^{[i]}_{-2 } \ExB 
+4\omega^{[1]}_{-2 } \ExB 
-8\omega_{0 } \omega^{[1]}_{-1 } \ExB 
+4\omega_{0 } \omega_{0 } \omega_{0 } \ExB 
,
\nonumber\\
S_{i1}(1,2)_{2}S_{i1}(1,2)_{0}\ExB&=
-4\omega^{[1]}_{-1 } \ExB 
+2\omega_{0 } \omega_{0 } \ExB 
,
\nonumber\\
S_{i1}(1,2)_{3}S_{i1}(1,2)_{0}\ExB&=0,\nonumber\\
S_{i1}(1,2)_{4}S_{i1}(1,2)_{0}\ExB&=
2\ExB 
,
\end{align}

\begin{align}
S_{i1}(1,2)_{0}S_{k 1}(1,1)_{0}\ExB&=
-S_{k i}(1,2)_{-1 } \ExB,\nonumber\\
S_{i1}(1,2)_{1}S_{k 1}(1,1)_{0}\ExB&=
-S_{k i}(1,1)_{-1 } \ExB 
,
\end{align}

\begin{align}
S_{i1}(1,2)_{0}S_{k 1}(1,2)_{0}\ExB&=
S_{k i}(1,2)_{-2 } \ExB 
-2S_{k i}(1,3)_{-1 } \ExB 
,\nonumber\\
S_{i1}(1,2)_{1}S_{k 1}(1,2)_{0}\ExB&=
S_{k i}(1,1)_{-2 } \ExB 
-S_{k i}(1,2)_{-1 } \ExB 
,
\end{align}

\begin{align}
S_{i1}(1,3)_{0}\ExB&=
\frac{-1}{3}S_{i1}(1,1)_{-2 } \ExB 
+\frac{2}{3}\omega^{[1]}_{-1 } (S_{i1}(1,1)_{0}\ExB) 
\nonumber\\&\quad{}
+\frac{-1}{3}\omega_{0 } S_{i1}(1,1)_{-1 } \ExB 
-\omega_{0 } (S_{i1}(1,2)_{0}\ExB) 
,\nonumber\\
S_{i1}(1,3)_{1}\ExB&=
-(S_{i1}(1,2)_{0}\ExB) 
,
\nonumber\\
S_{i1}(1,3)_{2}\ExB&=
(S_{i1}(1,1)_{0}\ExB) 
,
\end{align}

\begin{align}
S_{i1}(1,3)_{0}S_{i1}(1,1)_{0}\ExB&=
\Har^{[i]}_{-1 } \ExB 
+\frac{2}{3}\omega^{[i]}_{-3 } \ExB 
\nonumber\\&\quad{}
+\frac{12}{23}\omega^{[1]}_{-1 } \omega^{[1]}_{-1 } \ExB 
+\frac{27}{23}\Har^{[1]}_{-1 } \ExB 
\nonumber\\&\quad{}
+\frac{21}{23}\omega_{0 } \omega^{[1]}_{-2 } \ExB 
+\frac{-30}{23}\omega_{0 } \omega_{0 } \omega^{[1]}_{-1 } \ExB 
\nonumber\\&\quad{}
+\frac{12}{23}\omega_{0 } \omega_{0 } \omega_{0 } \omega_{0 } \ExB 
,\nonumber\\
S_{i1}(1,3)_{1}S_{i1}(1,1)_{0}\ExB&=
\omega^{[i]}_{-2 } \ExB 
-\omega^{[1]}_{-2 } \ExB 
+2\omega_{0 } \omega^{[1]}_{-1 } \ExB 
-\omega_{0 } \omega_{0 } \omega_{0 } \ExB 
,
\nonumber\\
S_{i1}(1,3)_{2}S_{i1}(1,1)_{0}\ExB&=
2\omega^{[i]}_{-1 } \ExB 
,
\nonumber\\
S_{i1}(1,3)_{3}S_{i1}(1,1)_{0}\ExB&=0,\nonumber\\
S_{i1}(1,3)_{4}S_{i1}(1,1)_{0}\ExB&=
\ExB 
,
\end{align}

\begin{align}
S_{i1}(1,3)_{0}S_{i1}(1,2)_{0}\ExB&=
\frac{1}{2}\Har^{[i]}_{-2 } \ExB 
+\frac{-1}{2}\omega^{[i]}_{-4 } \ExB 
\nonumber\\&\quad{}
+\frac{24}{13}\omega^{[1]}_{-2 } \omega^{[1]}_{-1 } \ExB 
+\frac{36}{13}\Har^{[1]}_{-2 } \ExB 
\nonumber\\&\quad{}
+\frac{-480}{299}\omega_{0 } \omega^{[1]}_{-1 } \omega^{[1]}_{-1 } \ExB 
+\frac{-1908}{299}\omega_{0 } \Har^{[1]}_{-1 } \ExB 
\nonumber\\&\quad{}
+\frac{-192}{23}\omega_{0 } \omega_{0 } \omega^{[1]}_{-2 } \ExB 
+\frac{3960}{299}\omega_{0 } \omega_{0 } \omega_{0 } \omega^{[1]}_{-1 } \ExB 
\nonumber\\&\quad{}
+\frac{-1860}{299}\omega_{0 } \omega_{0 } \omega_{0 } \omega_{0 } \omega_{0 } \ExB 
,\nonumber\\
S_{i1}(1,3)_{1}S_{i1}(1,2)_{0}\ExB&=
2\Har^{[i]}_{-1 } \ExB 
+\frac{-2}{3}\omega^{[i]}_{-3 } \ExB 
\nonumber\\&\quad{}
+\frac{-24}{23}\omega^{[1]}_{-1 } \omega^{[1]}_{-1 } \ExB 
+\frac{-54}{23}\Har^{[1]}_{-1 } \ExB 
\nonumber\\&\quad{}
+\frac{-42}{23}\omega_{0 } \omega^{[1]}_{-2 } \ExB 
+\frac{60}{23}\omega_{0 } \omega_{0 } \omega^{[1]}_{-1 } \ExB 
\nonumber\\&\quad{}
+\frac{-24}{23}\omega_{0 } \omega_{0 } \omega_{0 } \omega_{0 } \ExB 
,
\nonumber\\
S_{i1}(1,3)_{2}S_{i1}(1,2)_{0}\ExB&=
-\omega^{[i]}_{-2 } \ExB 
+2\omega^{[1]}_{-2 } \ExB 
-4\omega_{0 } \omega^{[1]}_{-1 } \ExB 
+2\omega_{0 } \omega_{0 } \omega_{0 } \ExB 
,\nonumber\\
S_{i1}(1,3)_{4}S_{i1}(1,2)_{0}\ExB&=0,\nonumber\\
S_{i1}(1,3)_{5}S_{i1}(1,2)_{0}\ExB&=
-2\ExB 
,
\end{align}

\begin{align}
S_{i1}(1,3)_{0}S_{k 1}(1,1)_{0}\ExB&=
S_{k i}(1,3)_{-1 } \ExB 
,\nonumber\\
S_{i1}(1,3)_{1}S_{k 1}(1,1)_{0}\ExB&=
S_{k i}(1,2)_{-1 } \ExB 
,
\nonumber\\
S_{i1}(1,3)_{2}S_{k 1}(1,1)_{0}\ExB&=
S_{k i}(1,1)_{-1 } \ExB 
,
\end{align}

\begin{align}
S_{i1}(1,3)_{0}S_{k 1}(1,2)_{0}\ExB&=
\omega^{[k]}_{-2 } S_{k i}(1,1)_{-1 } \ExB 
-2\omega^{[k]}_{-1 } S_{k i}(1,1)_{-2 } \ExB 
\nonumber\\&\quad{}
+3S_{k i}(1,1)_{-4 } \ExB 
+2\omega^{[k]}_{-1 } S_{k i}(1,2)_{-1 } \ExB 
\nonumber\\&\quad{}
-3S_{k i}(1,2)_{-3 } \ExB 
+2S_{k i}(1,3)_{-2 } \ExB 
,\nonumber\\
S_{i1}(1,3)_{1}S_{k 1}(1,2)_{0}\ExB&=
-S_{k i}(1,2)_{-2 } \ExB 
+2S_{k i}(1,3)_{-1 } \ExB 
,
\nonumber\\
S_{i1}(1,3)_{2}S_{k 1}(1,2)_{0}\ExB&=
-S_{k i}(1,1)_{-2 } \ExB 
+S_{k i}(1,2)_{-1 } \ExB 
,
\end{align}

\begin{align}
S_{k 1}(1,1)_{0}\ExB&=
(S_{k 1}(1,1)_{0}E) 
,\end{align}

\begin{align}
S_{k 1}(1,1)_{0}S_{i1}(1,1)_{0}\ExB&=
S_{k i}(1,1)_{-1 } \ExB 
,\end{align}

\begin{align}
S_{k 1}(1,1)_{0}S_{i1}(1,2)_{0}\ExB&=
-S_{k i}(1,2)_{-1 } \ExB 
,\end{align}

\begin{align}
S_{k 1}(1,2)_{0}\ExB&=
(S_{k 1}(1,2)_{0}E), \nonumber\\
S_{k 1}(1,2)_{1}\ExB&=
-(S_{k 1}(1,1)_{0}E) 
,
\end{align}

\begin{align}
S_{k 1}(1,2)_{0}S_{i1}(1,1)_{0}\ExB&=
-S_{k i}(1,1)_{-2 } \ExB 
+S_{k i}(1,2)_{-1 } \ExB,
\nonumber\\
S_{k 1}(1,2)_{1}S_{i1}(1,1)_{0}\ExB&=
-S_{k i}(1,1)_{-1 } \ExB 
,
\end{align}

\begin{align}
S_{k 1}(1,2)_{0}S_{i1}(1,2)_{0}\ExB&=
S_{k i}(1,2)_{-2 } \ExB 
-2S_{k i}(1,3)_{-1 } \ExB, 
\nonumber\\
S_{k 1}(1,2)_{1}S_{i1}(1,2)_{0}\ExB&=
S_{k i}(1,2)_{-1 } \ExB 
,
\end{align}

\begin{align}
S_{k 1}(1,2)_{0}S_{k 1}(1,1)_{0}\ExB&=
-\omega^{[k]}_{-2 } \ExB 
-2\omega^{[1]}_{-2 } \ExB 
+4\omega_{0 } \omega^{[1]}_{-1 } \ExB 
-2\omega_{0 } \omega_{0 } \omega_{0 } \ExB 
,\nonumber\\
S_{k 1}(1,2)_{1}S_{k 1}(1,1)_{0}\ExB&=
-2\omega^{[k]}_{-1 } \ExB 
+2\omega^{[1]}_{-1 } \ExB 
-\omega_{0 } \omega_{0 } \ExB 
,
\end{align}

\begin{align}
S_{k 1}(1,2)_{0}S_{k 1}(1,2)_{0}\ExB&=
\frac{2}{3}\omega^{[k]}_{-3 } \ExB 
-2\Har^{[k]}_{-1 } \ExB 
+\frac{-24}{23}\omega^{[1]}_{-1 } \omega^{[1]}_{-1 } \ExB 
+\frac{-54}{23}\Har^{[1]}_{-1 } \ExB 
\nonumber\\&\quad{}
+\frac{-42}{23}\omega_{0 } \omega^{[1]}_{-2 } \ExB 
+\frac{60}{23}\omega_{0 } \omega_{0 } \omega^{[1]}_{-1 } \ExB 
+\frac{-24}{23}\omega_{0 } \omega_{0 } \omega_{0 } \omega_{0 } \ExB 
,\nonumber\\
S_{k 1}(1,2)_{1}S_{k 1}(1,2)_{0}\ExB&=
\omega^{[k]}_{-2 } \ExB 
+4\omega^{[1]}_{-2 } \ExB 
-8\omega_{0 } \omega^{[1]}_{-1 } \ExB 
+4\omega_{0 } \omega_{0 } \omega_{0 } \ExB 
,
\nonumber\\
S_{k 1}(1,2)_{2}S_{k 1}(1,2)_{0}\ExB&=
-4\omega^{[1]}_{-1 } \ExB 
+2\omega_{0 } \omega_{0 } \ExB 
,
\end{align}

\begin{align}
S_{k 1}(1,3)_{0}\ExB&=
\frac{2}{3}\omega^{[1]}_{-1 } (S_{k 1}(1,1)_{0}E) 
+\frac{-1}{3}\omega_{0 } S_{k 1}(1,1)_{-1 } \ExB 
\nonumber\\&\quad{}
+\frac{-1}{3}S_{k 1}(1,2)_{-1 } \ExB 
+\frac{-2}{3}\omega_{0 } (S_{k 1}(1,2)_{0}E) 
,\nonumber\\
S_{k 1}(1,3)_{1}\ExB&=
-(S_{k 1}(1,2)_{0}E) 
,
\nonumber\\
S_{k 1}(1,3)_{2}\ExB&=
(S_{k 1}(1,1)_{0}E) 
,
\end{align}

\begin{align}
S_{k 1}(1,3)_{0}S_{i1}(1,1)_{0}\ExB&=
S_{k i}(1,1)_{-3 } \ExB 
-S_{k i}(1,2)_{-2 } \ExB 
+S_{k i}(1,3)_{-1 } \ExB 
,\nonumber\\
S_{k 1}(1,3)_{1}S_{i1}(1,1)_{0}\ExB&=
S_{k i}(1,1)_{-2 } \ExB 
-S_{k i}(1,2)_{-1 } \ExB 
,
\nonumber\\
S_{k 1}(1,3)_{2}S_{i1}(1,1)_{0}\ExB&=
S_{k i}(1,1)_{-1 } \ExB 
,
\end{align}

\begin{align}
S_{k 1}(1,3)_{0}S_{i1}(1,2)_{0}\ExB&=
-\omega^{[k]}_{-2 } S_{k i}(1,1)_{-1 } \ExB 
+2\omega^{[k]}_{-1 } S_{k i}(1,1)_{-2 } \ExB 
\nonumber\\&\quad{}
-3S_{k i}(1,1)_{-4 } \ExB 
-2\omega^{[k]}_{-1 } S_{k i}(1,2)_{-1 } \ExB 
\nonumber\\&\quad{}
+2S_{k i}(1,2)_{-3 } \ExB 
-S_{k i}(1,3)_{-2 } \ExB 
,\nonumber\\
S_{k 1}(1,3)_{1}S_{i1}(1,2)_{0}\ExB&=
-S_{k i}(1,2)_{-2 } \ExB 
+2S_{k i}(1,3)_{-1 } \ExB 
,
\nonumber\\
S_{k 1}(1,3)_{2}S_{i1}(1,2)_{0}\ExB&=
-S_{k i}(1,2)_{-1 } \ExB 
,
\end{align}

\begin{align}
S_{k 1}(1,3)_{0}S_{k 1}(1,1)_{0}\ExB&=
\frac{2}{3}\omega^{[k]}_{-3 } \ExB 
+\Har^{[k]}_{-1 } \ExB 
+\frac{12}{23}\omega^{[1]}_{-1 } \omega^{[1]}_{-1 } \ExB 
+\frac{27}{23}\Har^{[1]}_{-1 } \ExB 
\nonumber\\&\quad{}
+\frac{21}{23}\omega_{0 } \omega^{[1]}_{-2 } \ExB 
+\frac{-30}{23}\omega_{0 } \omega_{0 } \omega^{[1]}_{-1 } \ExB 
\nonumber\\&\quad{}
+\frac{12}{23}\omega_{0 } \omega_{0 } \omega_{0 } \omega_{0 } \ExB 
,\nonumber\\
S_{k 1}(1,3)_{1}S_{k 1}(1,1)_{0}\ExB&=
\omega^{[k]}_{-2 } \ExB 
-\omega^{[1]}_{-2 } \ExB 
+2\omega_{0 } \omega^{[1]}_{-1 } \ExB 
-\omega_{0 } \omega_{0 } \omega_{0 } \ExB 
,
\nonumber\\
S_{k 1}(1,3)_{2}S_{k 1}(1,1)_{0}\ExB&=
2\omega^{[k]}_{-1 } \ExB 
,
\end{align}

\begin{align}
S_{k 1}(1,3)_{0}S_{k 1}(1,2)_{0}\ExB&=
\frac{-1}{2}\omega^{[k]}_{-4 } \ExB 
+\frac{1}{2}\Har^{[k]}_{-2 } \ExB 
+\frac{24}{13}\omega^{[1]}_{-2 } \omega^{[1]}_{-1 } \ExB 
+\frac{36}{13}\Har^{[1]}_{-2 } \ExB 
\nonumber\\&\quad{}
+\frac{-480}{299}\omega_{0 } \omega^{[1]}_{-1 } \omega^{[1]}_{-1 } \ExB 
+\frac{-1908}{299}\omega_{0 } \Har^{[1]}_{-1 } \ExB 
\nonumber\\&\quad{}
+\frac{-192}{23}\omega_{0 } \omega_{0 } \omega^{[1]}_{-2 } \ExB 
+\frac{3960}{299}\omega_{0 } \omega_{0 } \omega_{0 } \omega^{[1]}_{-1 } \ExB 
\nonumber\\&\quad{}
+\frac{-1860}{299}\omega_{0 } \omega_{0 } \omega_{0 } \omega_{0 } \omega_{0 } \ExB 
,\nonumber\\
S_{k 1}(1,3)_{1}S_{k 1}(1,2)_{0}\ExB&=
\frac{-2}{3}\omega^{[k]}_{-3 } \ExB 
+2\Har^{[k]}_{-1 } \ExB 
\nonumber\\&\quad{}
+\frac{-24}{23}\omega^{[1]}_{-1 } \omega^{[1]}_{-1 } \ExB 
+\frac{-54}{23}\Har^{[1]}_{-1 } \ExB 
\nonumber\\&\quad{}
+\frac{-42}{23}\omega_{0 } \omega^{[1]}_{-2 } \ExB 
+\frac{60}{23}\omega_{0 } \omega_{0 } \omega^{[1]}_{-1 } \ExB 
\nonumber\\&\quad{}
+\frac{-24}{23}\omega_{0 } \omega_{0 } \omega_{0 } \omega_{0 } \ExB 
,
\nonumber\\
S_{k 1}(1,3)_{2}S_{k 1}(1,2)_{0}\ExB&=
-\omega^{[k]}_{-2 } \ExB 
+2\omega^{[1]}_{-2 } \ExB 
-4\omega_{0 } \omega^{[1]}_{-1 } \ExB 
+2\omega_{0 } \omega_{0 } \omega_{0 } \ExB 
,
\end{align}

\begin{align}
S_{k i}(1,1)_{0}S_{i1}(1,1)_{0}\ExB&=
-(S_{k 1}(1,2)_{0}E) 
,\nonumber\\
S_{k i}(1,1)_{1}S_{i1}(1,1)_{0}\ExB&=
(S_{k 1}(1,1)_{0}E) 
,
\end{align}
\begin{align}
S_{k i}(1,1)_{0}S_{i1}(1,2)_{0}\ExB&=
\frac{-4}{3}\omega^{[1]}_{-1 } (S_{k 1}(1,1)_{0}E) 
+\frac{2}{3}\omega_{0 } S_{k 1}(1,1)_{-1 } \ExB 
\nonumber\\&\quad{}
+\frac{2}{3}S_{k 1}(1,2)_{-1 } \ExB 
+\frac{4}{3}\omega_{0 } (S_{k 1}(1,2)_{0}E) 
,\nonumber\\
S_{k i}(1,1)_{1}S_{i1}(1,2)_{0}\ExB&=
2(S_{k 1}(1,2)_{0}E) 
,
\nonumber\\
S_{k i}(1,1)_{2}S_{i1}(1,2)_{0}\ExB&=
-2(S_{k 1}(1,1)_{0}E) 
,
\end{align}

\begin{align}
S_{k i}(1,1)_{0}S_{k 1}(1,1)_{0}\ExB&=
-(S_{i1}(1,2)_{0}\ExB) 
,\nonumber\\
S_{k i}(1,1)_{1}S_{k 1}(1,1)_{0}\ExB&=
(S_{i1}(1,1)_{0}\ExB) 
,
\end{align}

\begin{align}
S_{k i}(1,1)_{0}S_{k 1}(1,2)_{0}\ExB&=
\frac{2}{3}S_{i1}(1,1)_{-2 } \ExB 
+\frac{-4}{3}\omega^{[1]}_{-1 } (S_{i1}(1,1)_{0}\ExB) 
\nonumber\\&\quad{}
+\frac{2}{3}\omega_{0 } S_{i1}(1,1)_{-1 } \ExB 
+2\omega_{0 } (S_{i1}(1,2)_{0}\ExB) 
,\nonumber\\
S_{k i}(1,1)_{1}S_{k 1}(1,2)_{0}\ExB&=
2(S_{i1}(1,2)_{0}\ExB) 
,
\nonumber\\
S_{k i}(1,1)_{2}S_{k 1}(1,2)_{0}\ExB&=
-2(S_{i1}(1,1)_{0}\ExB) 
,
\end{align}

\begin{align}
S_{k i}(1,2)_{0}S_{i1}(1,1)_{0}\ExB&=
\frac{-4}{3}\omega^{[1]}_{-1 } (S_{k 1}(1,1)_{0}E) 
+\frac{2}{3}\omega_{0 } S_{k 1}(1,1)_{-1 } \ExB 
\nonumber\\&\quad{}
+\frac{2}{3}S_{k 1}(1,2)_{-1 } \ExB 
+\frac{4}{3}\omega_{0 } (S_{k 1}(1,2)_{0}E) 
,\nonumber\\
S_{k i}(1,2)_{1}S_{i1}(1,1)_{0}\ExB&=
2(S_{k 1}(1,2)_{0}E) 
,
\nonumber\\
S_{k i}(1,2)_{2}S_{i1}(1,1)_{0}\ExB&=
-2(S_{k 1}(1,1)_{0}E) 
,
\end{align}

\begin{align}
S_{k i}(1,2)_{0}S_{i1}(1,2)_{0}\ExB&=
24\omega^{[1]}_{-2 } (S_{k 1}(1,1)_{0}E) 
-6\omega_{0 } S_{k 1}(1,1)_{-2 } \ExB 
\nonumber\\&\quad{}
+18S_{k 1}(1,2)_{-2 } \ExB 
-12\omega^{[1]}_{-1 } (S_{k 1}(1,2)_{0}E) 
\nonumber\\&\quad{}
-18\omega_{0 } S_{k 1}(1,2)_{-1 } \ExB 
-12S_{k 1}(1,3)_{-1 } \ExB 
,\nonumber\\
S_{k i}(1,2)_{1}S_{i1}(1,2)_{0}\ExB&=
4\omega^{[1]}_{-1 } (S_{k 1}(1,1)_{0}E) 
-2\omega_{0 } S_{k 1}(1,1)_{-1 } \ExB 
\nonumber\\&\quad{}
-2S_{k 1}(1,2)_{-1 } \ExB 
-4\omega_{0 } (S_{k 1}(1,2)_{0}E) 
,
\nonumber\\
S_{k i}(1,2)_{2}S_{i1}(1,2)_{0}\ExB&=
-6(S_{k 1}(1,2)_{0}E) 
,
\nonumber\\
S_{k i}(1,2)_{3}S_{i1}(1,2)_{0}\ExB&=
6(S_{k 1}(1,1)_{0}E) 
,
\end{align}

\begin{align}
S_{k i}(1,2)_{0}S_{k 1}(1,1)_{0}\ExB&=
\frac{-2}{3}S_{i1}(1,1)_{-2 } \ExB 
+\frac{4}{3}\omega^{[1]}_{-1 } (S_{i1}(1,1)_{0}\ExB) 
\nonumber\\&\quad{}
+\frac{-2}{3}\omega_{0 } S_{i1}(1,1)_{-1 } \ExB 
-2\omega_{0 } (S_{i1}(1,2)_{0}\ExB) 
,\nonumber\\
S_{k i}(1,2)_{1}S_{k 1}(1,1)_{0}\ExB&=
-(S_{i1}(1,2)_{0}\ExB) 
,
\end{align}

\begin{align}
S_{k i}(1,2)_{0}S_{k 1}(1,2)_{0}\ExB&=
2\omega^{[i]}_{-2 } (S_{i1}(1,1)_{0}\ExB) 
+4\omega^{[i]}_{-1 } (S_{i1}(1,2)_{0}\ExB) 
,\nonumber\\
S_{k i}(1,2)_{1}S_{k 1}(1,2)_{0}\ExB&=
\frac{4}{3}S_{i1}(1,1)_{-2 } \ExB 
+\frac{-8}{3}\omega^{[1]}_{-1 } (S_{i1}(1,1)_{0}\ExB) 
\nonumber\\&\quad{}
+\frac{4}{3}\omega_{0 } S_{i1}(1,1)_{-1 } \ExB 
+4\omega_{0 } (S_{i1}(1,2)_{0}\ExB) 
,
\nonumber\\
S_{k i}(1,2)_{2}S_{k 1}(1,2)_{0}\ExB&=
2(S_{i1}(1,2)_{0}\ExB) 
,
\end{align}

\begin{align}
S_{k i}(1,3)_{0}S_{i1}(1,1)_{0}\ExB&=
12\omega^{[1]}_{-2 } (S_{k 1}(1,1)_{0}E) 
-3\omega_{0 } S_{k 1}(1,1)_{-2 } \ExB 
\nonumber\\&\quad{}
+9S_{k 1}(1,2)_{-2 } \ExB 
-6\omega^{[1]}_{-1 } (S_{k 1}(1,2)_{0}E) 
\nonumber\\&\quad{}
-9\omega_{0 } S_{k 1}(1,2)_{-1 } \ExB 
-6S_{k 1}(1,3)_{-1 } \ExB 
,\nonumber\\
S_{k i}(1,3)_{1}S_{i1}(1,1)_{0}\ExB&=
2\omega^{[1]}_{-1 } (S_{k 1}(1,1)_{0}E) 
-\omega_{0 } S_{k 1}(1,1)_{-1 } \ExB 
\nonumber\\&\quad{}
-S_{k 1}(1,2)_{-1 } \ExB 
-2\omega_{0 } (S_{k 1}(1,2)_{0}E) 
,
\nonumber\\
S_{k i}(1,3)_{2}S_{i1}(1,1)_{0}\ExB&=
-3(S_{k 1}(1,2)_{0}E) 
,
\nonumber\\
S_{k i}(1,3)_{3}S_{i1}(1,1)_{0}\ExB&=
3(S_{k 1}(1,1)_{0}E) 
,
\end{align}

\begin{align}
S_{k i}(1,3)_{0}S_{i1}(1,2)_{0}\ExB&=
\frac{4}{9}\omega^{[k]}_{-3 } (S_{k 1}(1,1)_{0}E) 
+\frac{2}{3}\omega^{[k]}_{-1 } S_{k 1}(1,1)_{-2 } \ExB 
\nonumber\\&\quad{}
+\frac{-4}{3}\Har^{[k]}_{-1 } (S_{k 1}(1,1)_{0}E) 
+\frac{-2}{3}\omega^{[k]}_{-1 } S_{k 1}(1,2)_{-1 } \ExB 
\nonumber\\&\quad{}
+\frac{2}{3}\omega_{0 } \omega^{[k]}_{-1 } (S_{k 1}(1,2)_{0}E) 
,\nonumber\\
S_{k i}(1,3)_{1}S_{i1}(1,2)_{0}\ExB&=
-48\omega^{[1]}_{-2 } (S_{k 1}(1,1)_{0}E) 
+12\omega_{0 } S_{k 1}(1,1)_{-2 } \ExB 
\nonumber\\&\quad{}
-36S_{k 1}(1,2)_{-2 } \ExB 
+24\omega^{[1]}_{-1 } (S_{k 1}(1,2)_{0}E) 
\nonumber\\&\quad{}
+36\omega_{0 } S_{k 1}(1,2)_{-1 } \ExB 
+24S_{k 1}(1,3)_{-1 } \ExB 
,
\nonumber\\
S_{k i}(1,3)_{2}S_{i1}(1,2)_{0}\ExB&=
-8\omega^{[1]}_{-1 } (S_{k 1}(1,1)_{0}E) 
+4\omega_{0 } S_{k 1}(1,1)_{-1 } \ExB 
\nonumber\\&\quad{}
+4S_{k 1}(1,2)_{-1 } \ExB 
+8\omega_{0 } (S_{k 1}(1,2)_{0}E) 
,
\nonumber\\
S_{k i}(1,3)_{3}S_{i1}(1,2)_{0}\ExB&=
12(S_{k 1}(1,2)_{0}E) 
,
\nonumber\\
S_{k i}(1,3)_{4}S_{i1}(1,2)_{0}\ExB&=
-12(S_{k 1}(1,1)_{0}E) 
,
\end{align}

\begin{align}
S_{k i}(1,3)_{0}S_{k 1}(1,1)_{0}\ExB&=
-\omega^{[i]}_{-2 } (S_{i1}(1,1)_{0}\ExB) 
-2\omega^{[i]}_{-1 } (S_{i1}(1,2)_{0}\ExB) 
,\nonumber\\
S_{k i}(1,3)_{1}S_{k 1}(1,1)_{0}\ExB&=
\frac{-1}{3}S_{i1}(1,1)_{-2 } \ExB 
+\frac{2}{3}\omega^{[1]}_{-1 } (S_{i1}(1,1)_{0}\ExB) 
\nonumber\\&\quad{}
+\frac{-1}{3}\omega_{0 } S_{i1}(1,1)_{-1 } \ExB 
-\omega_{0 } (S_{i1}(1,2)_{0}\ExB) 
,
\end{align}

\begin{align}
S_{k i}(1,3)_{0}S_{k 1}(1,2)_{0}\ExB&=
\frac{-16}{29}\omega^{[1]}_{-1 } S_{i1}(1,1)_{-2 } \ExB 
+\frac{-16}{29}\omega^{[1]}_{-1 } \omega^{[1]}_{-1 } (S_{i1}(1,1)_{0}\ExB) 
\nonumber\\&\quad{}
+\frac{12}{29}\Har^{[1]}_{-1 } (S_{i1}(1,1)_{0}\ExB) 
+\frac{36}{29}\omega_{0 } \omega^{[i]}_{-2 } (S_{i1}(1,1)_{0}\ExB) 
\nonumber\\&\quad{}
+\frac{8}{29}\omega_{0 } \omega^{[1]}_{-1 } S_{i1}(1,1)_{-1 } \ExB 
+\frac{4}{29}\omega_{0 } \omega^{[1]}_{-2 } (S_{i1}(1,1)_{0}\ExB) 
\nonumber\\&\quad{}
+\frac{48}{29}\omega^{[1]}_{-1 } S_{i1}(1,2)_{-1 } \ExB 
+\frac{36}{29}\omega_{0 } S_{i1}(1,2)_{-2 } \ExB 
\nonumber\\&\quad{}
+\frac{72}{29}\omega_{0 } \omega^{[i]}_{-1 } (S_{i1}(1,2)_{0}\ExB) 
+\frac{-24}{29}\omega_{0 } \omega_{0 } S_{i1}(1,2)_{-1 } \ExB 
\nonumber\\&\quad{}
+\frac{-60}{29}S_{i1}(1,3)_{-2 } \ExB 
,\nonumber\\
S_{k i}(1,3)_{1}S_{k 1}(1,2)_{0}\ExB&=
2\omega^{[i]}_{-2 } (S_{i1}(1,1)_{0}\ExB) 
+4\omega^{[i]}_{-1 } (S_{i1}(1,2)_{0}\ExB) 
,
\nonumber\\
S_{k i}(1,3)_{2}S_{k 1}(1,2)_{0}\ExB&=
\frac{2}{3}S_{i1}(1,1)_{-2 } \ExB 
+\frac{-4}{3}\omega^{[1]}_{-1 } (S_{i1}(1,1)_{0}\ExB) 
\nonumber\\&\quad{}
+\frac{2}{3}\omega_{0 } S_{i1}(1,1)_{-1 } \ExB 
+2\omega_{0 } (S_{i1}(1,2)_{0}\ExB) 
,
\end{align}

\subsection{The case that $\langle \alpha,\alpha\rangle=0$}
\label{section:Norm-0}

Let $\alpha\in\fh\setminus\{0\}$ with $\langle\alpha,\alpha\rangle=0$.
Let  $h^{[1]},h^{[2]},\ldots,h^{[\rankL]}$ be an orthonormal basis of $\fh$ such that
\begin{align}
0\neq \langle\alpha,h^{[2]}\rangle=\sqrt{-1}\langle\alpha,h^{[1]}\rangle
\mbox{ and }\langle\alpha,h^{[i]}\rangle=0
\end{align}
for all $i=3,\ldots,\rankL$. We write
\begin{align}
\zone&=\langle\alpha,h^{[1]}\rangle
\end{align}
for simplicity. By Lemma \ref{lemma:infinite-set}, we may assume all the denominators in the following formulas
are non-zero.
In the following computation, $k,l$ are distinct elements of $\{3,\ldots,\rankL\}$.


\begin{align}
\omega^{[1]}_{0}\omega^{[1]}_{0}\ExB&=
-2 \zone^4 (2 \zone^2-1)\omega^{[2]}_{-1 } E
+2 \zone^4 (2 \zone^2+1)\omega^{[1]}_{-1 } E\nonumber\\&\quad{}
-\zone^2 (2 \zone^2-1)\omega_{0 } \omega_{0 } E
-(2 \zone^2-1)\omega_{0 } (\omega^{[1]}_{0}\ExB)\nonumber\\&\quad{}
+\zone^2 (2 \zone^2-1) (2 \zone^2+1)\sqrt{-1}S_{21}(1,1)_{-1 } E,\nonumber\\
\omega^{[1]}_{1}\omega^{[1]}_{0}\ExB&=
\frac{\zone^2+2}{2}(\omega^{[1]}_{0}\ExB),
\nonumber\\
\omega^{[1]}_{2}\omega^{[1]}_{0}\ExB&=
\zone^2E,
\end{align}

\begin{align}
\omega^{[1]}_{0}S_{k 1}(1,1)_{0}\ExB&=
-\zone^2\omega_{0 } (S_{k 1}(1,1)_{0}E)
+\zone^2 (\zone^2+1)S_{k 1}(1,1)_{-1 } E\nonumber\\&\quad{}
+\zone^4\sqrt{-1}S_{k 2}(1,1)_{-1 } E,\nonumber\\
\omega^{[1]}_{1}S_{k 1}(1,1)_{0}\ExB&=
\frac{\zone^2}{2}(S_{k 1}(1,1)_{0}E),
\end{align}

\begin{align}
\Har^{[1]}_{0}\ExB&=
\frac{-\zone^2 (6 \zone^2-1)}{7 \zone^2+1}\omega^{[1]}_{-2 } E
+\frac{16 \zone^6}{7 \zone^2+1}\omega_{0 } \omega^{[2]}_{-1 } E\nonumber\\&\quad{}
+\frac{-4 \zone^2 (2 \zone^2+1)^2}{7 \zone^2+1}\omega_{0 } \omega^{[1]}_{-1 } E
+\frac{8 \zone^4}{7 \zone^2+1}\omega_{0 } \omega_{0 } \omega_{0 } E\nonumber\\&\quad{}
+\frac{2 \zone^2}{7 \zone^2+1}\omega^{[2]}_{-1 } (\omega^{[1]}_{0}\ExB)
+\frac{2 \zone^2}{7 \zone^2+1}\omega^{[1]}_{-1 } (\omega^{[1]}_{0}\ExB)\nonumber\\&\quad{}
+\frac{8 \zone^2+1}{7 \zone^2+1}\omega_{0 } \omega_{0 } (\omega^{[1]}_{0}\ExB)\nonumber\\&\quad{}
+\frac{-\zone^2 (2 \zone^2+1)}{7 \zone^2+1}\sqrt{-1}S_{21}(1,1)_{-2 } E\nonumber\\&\quad{}
+\frac{-8 \zone^4 (2 \zone^2+1)}{7 \zone^2+1}\sqrt{-1}\omega_{0 } S_{21}(1,1)_{-1 } E\nonumber\\&\quad{}
+\frac{-\zone^2 (2 \zone^2+1)}{7 \zone^2+1}\sqrt{-1}S_{21}(1,2)_{-1 } E,\nonumber\\
\Har^{[1]}_{1}\ExB&=
2 \zone^4\omega^{[2]}_{-1 } E
-2 \zone^2 (\zone^2+1)\omega^{[1]}_{-1 } E\nonumber\\&\quad{}
+\zone^2\omega_{0 } \omega_{0 } E
+\omega_{0 } (\omega^{[1]}_{0}\ExB)\nonumber\\&\quad{}
-\zone^2 (2 \zone^2+1)\sqrt{-1}S_{21}(1,1)_{-1 } E,
\nonumber\\
\Har^{[1]}_{2}\ExB&=
\frac{1}{3}(\omega^{[1]}_{0}\ExB),
\end{align}

\begin{align}
\Har^{[1]}_{0}\omega^{[1]}_{0}\ExB&=
\frac{-8 \zone^4 (4 \zone^2-3)}{16 \zone^2+3}\omega^{[2]}_{-1 } \omega^{[1]}_{-1 } E\nonumber\\&\quad{}
+\frac{-8 \zone^2 (2 \zone^2-9)}{48 \zone^2+9}\omega^{[1]}_{-3 } E\nonumber\\&\quad{}
+\frac{8 \zone^2 (\zone^2+1) (4 \zone^2-3)}{16 \zone^2+3}\omega^{[1]}_{-1 } \omega^{[1]}_{-1 } E\nonumber\\&\quad{}
+\frac{-4 \zone^2 (2 \zone^2-9)}{16 \zone^2+3}\Har^{[1]}_{-1 } E\nonumber\\&\quad{}
+\frac{\zone^2 (4 \zone^2-3) (7 \zone^2+1)}{16 \zone^2+3}\omega_{0 } \omega^{[2]}_{-2 } E\nonumber\\&\quad{}
+\frac{-(4 \zone^2-3) (7 \zone^4-9 \zone^2-4)}{16 \zone^2+3}\omega_{0 } \omega^{[1]}_{-2 } E\nonumber\\&\quad{}
+\frac{2 \zone^2 (3 \zone-1) (3 \zone+1) (4 \zone^2-3)}{16 \zone^2+3}\omega_{0 } \omega_{0 } \omega^{[2]}_{-1 } E\nonumber\\&\quad{}
+\frac{-2 (4 \zone^2-3) (9 \zone^4+3 \zone^2+2)}{16 \zone^2+3}\omega_{0 } \omega_{0 } \omega^{[1]}_{-1 } E\nonumber\\&\quad{}
+\frac{(3 \zone-1) (3 \zone+1) (4 \zone^2-3)}{16 \zone^2+3}\omega_{0 } \omega_{0 } \omega_{0 } \omega_{0 } E\nonumber\\&\quad{}
+\frac{6 (4 \zone^2-3)}{16 \zone^2+3}\omega^{[1]}_{-2 } (\omega^{[1]}_{0}\ExB)\nonumber\\&\quad{}
+\frac{4 (4 \zone^2-3)}{16 \zone^2+3}\omega_{0 } \omega^{[2]}_{-1 } (\omega^{[1]}_{0}\ExB)\nonumber\\&\quad{}
+\frac{2 (4 \zone^2-3)}{16 \zone^2+3}\omega_{0 } \omega_{0 } \omega_{0 } (\omega^{[1]}_{0}\ExB)\nonumber\\&\quad{}
+\frac{4 \zone^2 (2 \zone^2+1) (4 \zone^2-3)}{16 \zone^2+3}\sqrt{-1}\omega^{[1]}_{-1 } S_{21}(1,1)_{-1 } E\nonumber\\&\quad{}
+\frac{-(4 \zone^2-3) (7 \zone^4+5 \zone^2+2)}{16 \zone^2+3}\sqrt{-1}\omega_{0 } S_{21}(1,1)_{-2 } E\nonumber\\&\quad{}
+\frac{-2 (3 \zone^2-1) (3 \zone^2+1) (4 \zone^2-3)}{16 \zone^2+3}\sqrt{-1}\omega_{0 } \omega_{0 } S_{21}(1,1)_{-1 } E\nonumber\\&\quad{}
+\frac{10 \zone^2 (4 \zone^2-3)}{16 \zone^2+3}\sqrt{-1}\omega_{0 } S_{21}(1,2)_{-1 } E,\end{align}

\begin{align}
\Har^{[1]}_{1}\omega^{[1]}_{0}\ExB&=
\frac{\zone^2 (6 \zone^4-11 \zone^2+6)}{14 \zone^2+2}\omega^{[1]}_{-2 } E\nonumber\\&\quad{}
+\frac{-8 \zone^6 (\zone-2) (\zone+2)}{7 \zone^2+1}\omega_{0 } \omega^{[2]}_{-1 } E\nonumber\\&\quad{}
+\frac{2 \zone^2 (\zone-2) (\zone+2) (2 \zone^2+1)^2}{7 \zone^2+1}\omega_{0 } \omega^{[1]}_{-1 } E\nonumber\\&\quad{}
+\frac{-4 \zone^4 (\zone-2) (\zone+2)}{7 \zone^2+1}\omega_{0 } \omega_{0 } \omega_{0 } E\nonumber\\&\quad{}
+\frac{-\zone^2 (\zone-2) (\zone+2)}{7 \zone^2+1}\omega^{[2]}_{-1 } (\omega^{[1]}_{0}\ExB)\nonumber\\&\quad{}
+\frac{-\zone^2 (\zone-2) (\zone+2)}{7 \zone^2+1}\omega^{[1]}_{-1 } (\omega^{[1]}_{0}\ExB)\nonumber\\&\quad{}
+\frac{-(\zone-2) (\zone+2) (8 \zone^2+1)}{14 \zone^2+2}\omega_{0 } \omega_{0 } (\omega^{[1]}_{0}\ExB)\nonumber\\&\quad{}
+\frac{\zone^2 (\zone-2) (\zone+2) (2 \zone^2+1)}{14 \zone^2+2}\sqrt{-1}S_{21}(1,1)_{-2 } E\nonumber\\&\quad{}
+\frac{4 \zone^4 (\zone-2) (\zone+2) (2 \zone^2+1)}{7 \zone^2+1}\sqrt{-1}\omega_{0 } S_{21}(1,1)_{-1 } E\nonumber\\&\quad{}
+\frac{\zone^2 (\zone-2) (\zone+2) (2 \zone^2+1)}{14 \zone^2+2}\sqrt{-1}S_{21}(1,2)_{-1 } E,
\nonumber\\
\Har^{[1]}_{2}\omega^{[1]}_{0}\ExB&=
\frac{-2 \zone^4 (2 \zone^2-7)}{3}\omega^{[2]}_{-1 } E\nonumber\\&\quad{}
+\frac{2 \zone^2 (2 \zone^4-5 \zone^2-6)}{3}\omega^{[1]}_{-1 } E\nonumber\\&\quad{}
+\frac{-\zone^2 (2 \zone^2-7)}{3}\omega_{0 } \omega_{0 } E\nonumber\\&\quad{}
+\frac{-(2 \zone^2-7)}{3}\omega_{0 } (\omega^{[1]}_{0}\ExB)\nonumber\\&\quad{}
+\frac{\zone^2 (2 \zone^2-7) (2 \zone^2+1)}{3}\sqrt{-1}S_{21}(1,1)_{-1 } E,
\nonumber\\
\Har^{[1]}_{3}\omega^{[1]}_{0}\ExB&=
(\omega^{[1]}_{0}\ExB),
\end{align}

\begin{align}
&\Har^{[1]}_{0}S_{k 1}(1,1)_{0}\ExB\nonumber\\&=
\frac{2 (\zone-1) (\zone+1) (4 \zone^4-10 \zone^2-3)}{4 \zone^4+6 \zone^2+1}\omega^{[k]}_{-2 } (S_{k 1}(1,1)_{0}E)\nonumber\\&\quad{}
+\frac{2 \zone^2 (2 \zone^2+1)}{4 \zone^4+6 \zone^2+1}\omega^{[2]}_{-2 } (S_{k 1}(1,1)_{0}E)
+\frac{2 \zone^2 (2 \zone^2+1)}{4 \zone^4+6 \zone^2+1}\omega^{[1]}_{-2 } (S_{k 1}(1,1)_{0}E)\nonumber\\&\quad{}
+\frac{-4 (\zone-1) (\zone+1) (4 \zone^4-10 \zone^2-3)}{12 \zone^4+18 \zone^2+3}\omega_{0 } \omega^{[k]}_{-1 } (S_{k 1}(1,1)_{0}E)\nonumber\\&\quad{}
+\frac{-2 \zone^2 (12 \zone^4-2 \zone^2-1)}{4 \zone^6+2 \zone^4-5 \zone^2-1}\omega_{0 } \omega^{[2]}_{-1 } (S_{k 1}(1,1)_{0}E)
+\frac{-2 \zone^2 (12 \zone^4-2 \zone^2-1)}{4 \zone^6+2 \zone^4-5 \zone^2-1}\omega_{0 } \omega^{[1]}_{-1 } (S_{k 1}(1,1)_{0}E)\nonumber\\&\quad{}
+\frac{-(2 \zone^2-1) (12 \zone^4-2 \zone^2-1)}{4 \zone^6+2 \zone^4-5 \zone^2-1}\omega_{0 } \omega_{0 } \omega_{0 } (S_{k 1}(1,1)_{0}E)\nonumber\\&\quad{}
+\frac{4 \zone^2 (\zone-1) (\zone+1) (4 \zone^4-10 \zone^2-3)}{12 \zone^4+18 \zone^2+3}\omega^{[k]}_{-1 } S_{k 1}(1,1)_{-1 } E\nonumber\\&\quad{}
+\frac{2 \zone^2 (\zone^2-3) (2 \zone^2+1)}{4 \zone^4+6 \zone^2+1}S_{k 1}(1,1)_{-3 } E\nonumber\\&\quad{}
+\frac{-4 \zone^4}{4 \zone^4+6 \zone^2+1}\omega^{[2]}_{-1 } S_{k 1}(1,1)_{-1 } E
+\frac{4 \zone^4}{4 \zone^4+6 \zone^2+1}\omega^{[1]}_{-1 } S_{k 1}(1,1)_{-1 } E\nonumber\\&\quad{}
+\frac{-2 \zone^2 (10 \zone^4-2 \zone^2+1)}{4 \zone^6+2 \zone^4-5 \zone^2-1}\omega_{0 } S_{k 1}(1,1)_{-2 } E\nonumber\\&\quad{}
+\frac{2 \zone^2 (2 \zone^2+1) (6 \zone^4-4 \zone^2+1)}{4 \zone^6+2 \zone^4-5 \zone^2-1}\omega_{0 } \omega_{0 } S_{k 1}(1,1)_{-1 } E\nonumber\\&\quad{}
+\frac{2 \zone^2 (8 \zone^2+1)}{4 \zone^4+6 \zone^2+1}S_{k 1}(1,2)_{-2 } E
+\frac{\zone^2 (24 \zone^6-4 \zone^4+8 \zone^2-1)}{4 \zone^6+2 \zone^4-5 \zone^2-1}\omega_{0 } S_{k 1}(1,2)_{-1 } E\nonumber\\&\quad{}
+\frac{4 \zone^2 (\zone-1) (\zone+1) (4 \zone^4-10 \zone^2-3)}{12 \zone^4+18 \zone^2+3}\sqrt{-1}\omega^{[k]}_{-1 } S_{k 2}(1,1)_{-1 } E\nonumber\\&\quad{}
+\frac{2 \zone^2 (\zone-1) (\zone+1) (2 \zone^2+1)}{4 \zone^4+6 \zone^2+1}\sqrt{-1}S_{k 2}(1,1)_{-3 } E\nonumber\\&\quad{}
+\frac{8 \zone^4}{4 \zone^4+6 \zone^2+1}\sqrt{-1}\omega^{[1]}_{-1 } S_{k 2}(1,1)_{-1 } E
+\frac{2 \zone^2 (12 \zone^4-2 \zone^2-1)}{4 \zone^4+6 \zone^2+1}\sqrt{-1}\omega_{0 } \omega_{0 } S_{k 2}(1,1)_{-1 } E\nonumber\\&\quad{}
+\frac{3 \zone^2 (2 \zone^2-1) (2 \zone^2+1)^2}{4 \zone^6+2 \zone^4-5 \zone^2-1}\sqrt{-1}\omega_{0 } S_{k 2}(1,2)_{-1 } E\nonumber\\&\quad{}
+\frac{-2 \zone^2 (2 \zone^2+1)^2}{4 \zone^4+6 \zone^2+1}\sqrt{-1}S_{k 2}(1,3)_{-1 } E,\end{align}

\begin{align}
\Har^{[1]}_{1}S_{k 1}(1,1)_{0}\ExB&=
\frac{\zone^2+2}{3}\omega^{[2]}_{-1 } (S_{k 1}(1,1)_{0}E)
+\frac{\zone^2+2}{3}\omega^{[1]}_{-1 } (S_{k 1}(1,1)_{0}E)\nonumber\\&\quad{}
+\frac{2 \zone^2+1}{3}\omega_{0 } \omega_{0 } (S_{k 1}(1,1)_{0}E)
+\frac{-(\zone^2-2) (2 \zone^2+1)}{6}S_{k 1}(1,1)_{-2 } E\nonumber\\&\quad{}
+\frac{-(4 \zone^4+3 \zone^2+2)}{6}\omega_{0 } S_{k 1}(1,1)_{-1 } E
+\frac{-(2 \zone^4-\zone^2+2)}{6}\sqrt{-1}S_{k 2}(1,1)_{-2 } E\nonumber\\&\quad{}
+\frac{-(4 \zone^4+\zone^2-2)}{6}\sqrt{-1}\omega_{0 } S_{k 2}(1,1)_{-1 } E
-\zone^2\sqrt{-1}S_{k 2}(1,2)_{-1 } E,
\nonumber\\
\Har^{[1]}_{2}S_{k 1}(1,1)_{0}\ExB&=
\frac{-\zone^2}{3}\omega_{0 } (S_{k 1}(1,1)_{0}E)
+\frac{\zone^2 (\zone^2+1)}{3}S_{k 1}(1,1)_{-1 } E\nonumber\\&\quad{}
+\frac{\zone^4}{3}\sqrt{-1}S_{k 2}(1,1)_{-1 } E,
\end{align}

\begin{align}
\omega^{[2]}_{0}\ExB&=
\omega_{0 } E
-(\omega^{[1]}_{0}\ExB),&
\omega^{[2]}_{1}\ExB&=
\frac{-\zone^2}{2}E,
\end{align}

\begin{align}
\omega^{[2]}_{0}\omega^{[1]}_{0}\ExB&=
2 \zone^4 (2 \zone^2-1)\omega^{[2]}_{-1 } E
-2 \zone^4 (2 \zone^2+1)\omega^{[1]}_{-1 } E\nonumber\\&\quad{}
+\zone^2 (2 \zone^2-1)\omega_{0 } \omega_{0 } E
+2 \zone^2\omega_{0 } (\omega^{[1]}_{0}\ExB)\nonumber\\&\quad{}
-\zone^2 (2 \zone^2-1) (2 \zone^2+1)\sqrt{-1}S_{21}(1,1)_{-1 } E,\nonumber\\
\omega^{[2]}_{1}\omega^{[1]}_{0}\ExB&=
\frac{-\zone^2}{2}(\omega^{[1]}_{0}\ExB),
\end{align}

\begin{align}
\omega^{[2]}_{0}S_{k 1}(1,1)_{0}\ExB&=
\zone^2\omega_{0 } (S_{k 1}(1,1)_{0}E)
-\zone^4S_{k 1}(1,1)_{-1 } E\nonumber\\&\quad{}
-\zone^2 (\zone-1) (\zone+1)\sqrt{-1}S_{k 2}(1,1)_{-1 } E,\nonumber\\
\omega^{[2]}_{1}S_{k 1}(1,1)_{0}\ExB&=
\frac{-\zone^2}{2}(S_{k 1}(1,1)_{0}E),
\end{align}

\begin{align}
\Har^{[2]}_{0}\ExB&=
\frac{6 \zone^4-3 \zone^2-4}{7 \zone^2+1}\omega^{[1]}_{-2 } E
+\frac{-4 \zone^2 (4 \zone^4+\zone^2-1)}{7 \zone^2+1}\omega_{0 } \omega^{[2]}_{-1 } E\nonumber\\&\quad{}
+\frac{4 (4 \zone^6+5 \zone^4+\zone^2+1)}{7 \zone^2+1}\omega_{0 } \omega^{[1]}_{-1 } E
+\frac{-2 (4 \zone^4+\zone^2-1)}{7 \zone^2+1}\omega_{0 } \omega_{0 } \omega_{0 } E\nonumber\\&\quad{}
+\frac{-2 (\zone^2+2)}{7 \zone^2+1}\omega^{[2]}_{-1 } (\omega^{[1]}_{0}\ExB)
+\frac{-2 (\zone^2+2)}{7 \zone^2+1}\omega^{[1]}_{-1 } (\omega^{[1]}_{0}\ExB)\nonumber\\&\quad{}
+\frac{-(8 \zone^2+3)}{7 \zone^2+1}\omega_{0 } \omega_{0 } (\omega^{[1]}_{0}\ExB)
+\frac{(\zone^2+2) (2 \zone^2+1)}{7 \zone^2+1}\sqrt{-1}S_{21}(1,1)_{-2 } E\nonumber\\&\quad{}
+\frac{2 (2 \zone^2+1) (4 \zone^4+\zone^2-1)}{7 \zone^2+1}\sqrt{-1}\omega_{0 } S_{21}(1,1)_{-1 } E\nonumber\\&\quad{}
+\frac{\zone^2 (2 \zone^2-9)}{7 \zone^2+1}\sqrt{-1}S_{21}(1,2)_{-1 } E,\nonumber\\
\Har^{[2]}_{1}\ExB&=
-2 \zone^2 (\zone-1) (\zone+1)\omega^{[2]}_{-1 } E
+2 \zone^4\omega^{[1]}_{-1 } E\nonumber\\&\quad{}
-(\zone-1) (\zone+1)\omega_{0 } \omega_{0 } E
-\omega_{0 } (\omega^{[1]}_{0}\ExB)\nonumber\\&\quad{}
+\zone^2 (2 \zone^2-1)\sqrt{-1}S_{21}(1,1)_{-1 } E,
\nonumber\\
\Har^{[2]}_{2}\ExB&=
\frac{1}{3}\omega_{0 } E+\frac{-1}{3}(\omega^{[1]}_{0}\ExB),
\end{align}

\begin{align}
\Har^{[2]}_{0}\omega^{[1]}_{0}\ExB&=
\frac{8 \zone^4 (12 \zone^4-11 \zone^2-3)}{48 \zone^4+25 \zone^2+3}\omega^{[2]}_{-1 } \omega^{[1]}_{-1 } E\nonumber\\&\quad{}
+\frac{8 \zone^2 (38 \zone^4-34 \zone^2-9)}{144 \zone^4+75 \zone^2+9}\omega^{[1]}_{-3 } E\nonumber\\&\quad{}
+\frac{-8 \zone^4 (12 \zone^4+\zone^2+2)}{48 \zone^4+25 \zone^2+3}\omega^{[1]}_{-1 } \omega^{[1]}_{-1 } E\nonumber\\&\quad{}
+\frac{-20 \zone^4 (2 \zone^2-1)}{48 \zone^4+25 \zone^2+3}\Har^{[1]}_{-1 } E\nonumber\\&\quad{}
+\frac{-\zone^4 (184 \zone^4+161 \zone^2+22)}{96 \zone^4+50 \zone^2+6}\omega_{0 } \omega^{[2]}_{-2 } E\nonumber\\&\quad{}
+\frac{\zone^2 (184 \zone^6-111 \zone^4+76 \zone^2+8)}{96 \zone^4+50 \zone^2+6}\omega_{0 } \omega^{[1]}_{-2 } E\nonumber\\&\quad{}
+\frac{-\zone^4 (200 \zone^4+127 \zone^2+2)}{48 \zone^4+25 \zone^2+3}\omega_{0 } \omega_{0 } \omega^{[2]}_{-1 } E\nonumber\\&\quad{}
+\frac{\zone^2 (200 \zone^6+191 \zone^4+20 \zone^2+8)}{48 \zone^4+25 \zone^2+3}\omega_{0 } \omega_{0 } \omega^{[1]}_{-1 } E\nonumber\\&\quad{}
+\frac{-\zone^2 (200 \zone^4+127 \zone^2+2)}{96 \zone^4+50 \zone^2+6}\omega_{0 } \omega_{0 } \omega_{0 } \omega_{0 } E\nonumber\\&\quad{}
+\frac{2 \zone^2}{3 \zone^2+1}\omega^{[2]}_{-2 } (\omega^{[1]}_{0}\ExB)\nonumber\\&\quad{}
+\frac{-8 \zone^2 (5 \zone^2+3)}{48 \zone^4+25 \zone^2+3}\omega^{[1]}_{-2 } (\omega^{[1]}_{0}\ExB)\nonumber\\&\quad{}
+\frac{-4 \zone^2 (12 \zone^2+5)}{48 \zone^4+25 \zone^2+3}\omega_{0 } \omega^{[2]}_{-1 } (\omega^{[1]}_{0}\ExB)\nonumber\\&\quad{}
+\frac{-\zone^2 (8 \zone^2+7)}{48 \zone^4+25 \zone^2+3}\omega_{0 } \omega_{0 } \omega_{0 } (\omega^{[1]}_{0}\ExB)\nonumber\\&\quad{}
+\frac{2 \zone^2 (2 \zone^2-1)}{3 \zone^2+1}\sqrt{-1}S_{21}(1,1)_{-3 } E\nonumber\\&\quad{}
+\frac{-4 \zone^4 (2 \zone^2-1) (12 \zone^2+1)}{48 \zone^4+25 \zone^2+3}\sqrt{-1}\omega^{[1]}_{-1 } S_{21}(1,1)_{-1 } E\nonumber\\&\quad{}
+\frac{\zone^2 (184 \zone^6+257 \zone^4+174 \zone^2+32)}{96 \zone^4+50 \zone^2+6}\sqrt{-1}\omega_{0 } S_{21}(1,1)_{-2 } E\nonumber\\&\quad{}
+\frac{5 \zone^2 (40 \zone^6+27 \zone^4-3 \zone^2-2)}{48 \zone^4+25 \zone^2+3}\sqrt{-1}\omega_{0 } \omega_{0 } S_{21}(1,1)_{-1 } E\nonumber\\&\quad{}
+\frac{-\zone^2 (136 \zone^4+85 \zone^2+6)}{48 \zone^4+25 \zone^2+3}\sqrt{-1}\omega_{0 } S_{21}(1,2)_{-1 } E\nonumber\\&\quad{}
+\frac{-2 \zone^2 (2 \zone^2-1)}{3 \zone^2+1}\sqrt{-1}S_{21}(1,3)_{-1 } E,\end{align}

\begin{align}
\Har^{[2]}_{1}\omega^{[1]}_{0}\ExB&=
\frac{-\zone^2 (6 \zone^4-5 \zone^2-8)}{14 \zone^2+2}\omega^{[1]}_{-2 } E\
+\frac{4 \zone^4 (\zone^2+1) (2 \zone^2-1)}{7 \zone^2+1}\omega_{0 } \omega^{[2]}_{-1 } E\nonumber\\&\quad{}
+\frac{-2 \zone^2 (4 \zone^6+6 \zone^4+\zone^2+2)}{7 \zone^2+1}\omega_{0 } \omega^{[1]}_{-1 } E
+\frac{2 \zone^2 (\zone^2+1) (2 \zone^2-1)}{7 \zone^2+1}\omega_{0 } \omega_{0 } \omega_{0 } E\nonumber\\&\quad{}
+\frac{\zone^2 (\zone^2+4)}{7 \zone^2+1}\omega^{[2]}_{-1 } (\omega^{[1]}_{0}\ExB)
+\frac{\zone^2 (\zone^2+4)}{7 \zone^2+1}\omega^{[1]}_{-1 } (\omega^{[1]}_{0}\ExB)\nonumber\\&\quad{}
+\frac{\zone^2 (8 \zone^2+5)}{14 \zone^2+2}\omega_{0 } \omega_{0 } (\omega^{[1]}_{0}\ExB)\nonumber\\&\quad{}
+\frac{-\zone^2 (\zone^2-2) (2 \zone^2-1)}{14 \zone^2+2}\sqrt{-1}S_{21}(1,1)_{-2 } E\nonumber\\&\quad{}
+\frac{-2 \zone^2 (\zone^2+1) (2 \zone^2-1) (2 \zone^2+1)}{7 \zone^2+1}\sqrt{-1}\omega_{0 } S_{21}(1,1)_{-1 } E\nonumber\\&\quad{}
+\frac{-\zone^2 (\zone^2-2) (2 \zone^2-1)}{14 \zone^2+2}\sqrt{-1}S_{21}(1,2)_{-1 } E,
\nonumber\\
\Har^{[2]}_{2}\omega^{[1]}_{0}\ExB&=
\frac{2 \zone^4 (2 \zone^2-1)}{3}\omega^{[2]}_{-1 } E
+\frac{-2 \zone^4 (2 \zone^2+1)}{3}\omega^{[1]}_{-1 } E\nonumber\\&\quad{}
+\frac{\zone^2 (2 \zone^2-1)}{3}\omega_{0 } \omega_{0 } E
+\frac{2 \zone^2}{3}\omega_{0 } (\omega^{[1]}_{0}\ExB)\nonumber\\&\quad{}
+\frac{-\zone^2 (2 \zone^2-1) (2 \zone^2+1)}{3}\sqrt{-1}S_{21}(1,1)_{-1 } E,
\end{align}

\begin{align}
\Har^{[2]}_{0}S_{k 1}(1,1)_{0}\ExB&=
-2 \zone^2\omega^{[k]}_{-2 } (S_{k 1}(1,1)_{0}E)
+\frac{4 \zone^2}{3}\omega_{0 } \omega^{[k]}_{-1 } (S_{k 1}(1,1)_{0}E)\nonumber\\&\quad{}
+\frac{-4 \zone^4}{3}\omega^{[k]}_{-1 } S_{k 1}(1,1)_{-1 } E
+\frac{-4 \zone^4}{3}\sqrt{-1}\omega^{[k]}_{-1 } S_{k 2}(1,1)_{-1 } E\nonumber\\&\quad{}
+2 \zone^2\sqrt{-1}S_{k 2}(1,3)_{-1 } E,\end{align}

\begin{align}
\Har^{[2]}_{1}S_{k 1}(1,1)_{0}\ExB&=
\frac{-\zone^2}{3}\omega^{[2]}_{-1 } (S_{k 1}(1,1)_{0}E)
+\frac{-\zone^2}{3}\omega^{[1]}_{-1 } (S_{k 1}(1,1)_{0}E)\nonumber\\&\quad{}
+\frac{-2 \zone^2}{3}\omega_{0 } \omega_{0 } (S_{k 1}(1,1)_{0}E)
+\frac{\zone^2 (2 \zone^2-1)}{6}S_{k 1}(1,1)_{-2 } E\nonumber\\&\quad{}
+\frac{\zone^2 (4 \zone^2+1)}{6}\omega_{0 } S_{k 1}(1,1)_{-1 } E
+\frac{\zone^2 (2 \zone^2+1)}{6}\sqrt{-1}S_{k 2}(1,1)_{-2 } E\nonumber\\&\quad{}
+\frac{\zone^2 (2 \zone-1) (2 \zone+1)}{6}\sqrt{-1}\omega_{0 } S_{k 2}(1,1)_{-1 } E\nonumber\\&\quad{}
+\zone^2\sqrt{-1}S_{k 2}(1,2)_{-1 } E,
\nonumber\\
\Har^{[2]}_{2}S_{k 1}(1,1)_{0}\ExB&=
\frac{\zone^2}{3}\omega_{0 } (S_{k 1}(1,1)_{0}E)
+\frac{-\zone^4}{3}S_{k 1}(1,1)_{-1 } E\nonumber\\&\quad{}
+\frac{-\zone^2 (\zone-1) (\zone+1)}{3}\sqrt{-1}S_{k 2}(1,1)_{-1 } E,
\end{align}

\begin{align}
\label{eq:S21(11)0ExB=-sqrt-1omega0E+2sqrt-1(omega[1]0ExB)}
S_{21}(1,1)_{0}\ExB&=
-\sqrt{-1}\omega_{0 } E
+2\sqrt{-1}(\omega^{[1]}_{0}\ExB),\nonumber\\
S_{21}(1,1)_{1}\ExB&=
\zone^2\sqrt{-1}E,
\end{align}

\begin{align}
S_{21}(1,1)_{0}\omega^{[1]}_{0}\ExB&=
-2 \zone^2 (4 \zone^4-2 \zone^2+1)\sqrt{-1}\omega^{[2]}_{-1 } E\nonumber\\&\quad{}
+2 \zone^2 (4 \zone^4+2 \zone^2+1)\sqrt{-1}\omega^{[1]}_{-1 } E\nonumber\\&\quad{}
-(4 \zone^4-2 \zone^2+1)\sqrt{-1}\omega_{0 } \omega_{0 } E\nonumber\\&\quad{}
-(2 \zone-1) (2 \zone+1)\sqrt{-1}\omega_{0 } (\omega^{[1]}_{0}\ExB)\nonumber\\&\quad{}
-8 \zone^6S_{21}(1,1)_{-1 } E,\nonumber\\
S_{21}(1,1)_{1}\omega^{[1]}_{0}\ExB&=
-\sqrt{-1}\omega_{0 } E
+(\zone^2+1)\sqrt{-1}(\omega^{[1]}_{0}\ExB),
\nonumber\\
S_{21}(1,1)_{2}\omega^{[1]}_{0}\ExB&=
\zone^2\sqrt{-1}E,
\end{align}

\begin{align}
S_{21}(1,1)_{0}S_{k 1}(1,1)_{0}\ExB&=
-2 \zone^2\sqrt{-1}\omega_{0 } (S_{k 1}(1,1)_{0}E)\nonumber\\&\quad{}
+\zone^2 (2 \zone^2+1)\sqrt{-1}S_{k 1}(1,1)_{-1 } E\nonumber\\&\quad{}
-\zone^2 (2 \zone^2-1)S_{k 2}(1,1)_{-1 } E,\nonumber\\
S_{21}(1,1)_{1}S_{k 1}(1,1)_{0}\ExB&=
\zone^2\sqrt{-1}(S_{k 1}(1,1)_{0}E),
\end{align}

\begin{align}
S_{21}(1,2)_{0}\ExB&=
2 \zone^2\sqrt{-1}\omega^{[2]}_{-1 } E
-2 \zone^2\sqrt{-1}\omega^{[1]}_{-1 } E
+\sqrt{-1}\omega_{0 } \omega_{0 } E\nonumber\\&\quad{}
+2 \zone^2S_{21}(1,1)_{-1 } E,\nonumber\\
S_{21}(1,2)_{1}\ExB&=
\sqrt{-1}\omega_{0 } E-\sqrt{-1}(\omega^{[1]}_{0}\ExB),
\nonumber\\
S_{21}(1,2)_{2}\ExB&=
-\zone^2\sqrt{-1}E,
\end{align}

\begin{align}
S_{21}(1,2)_{0}\omega^{[1]}_{0}\ExB&=
\frac{15 \zone^4+2 \zone^2-4}{7 \zone^2+1}\sqrt{-1}\omega^{[1]}_{-2 } E\nonumber\\&\quad{}
+\frac{-4 \zone^2 (\zone^2+1) (3 \zone^2-1)}{7 \zone^2+1}\sqrt{-1}\omega_{0 } \omega^{[2]}_{-1 } E\nonumber\\&\quad{}
+\frac{4 (3 \zone^6+5 \zone^4+1)}{7 \zone^2+1}\sqrt{-1}\omega_{0 } \omega^{[1]}_{-1 } E\nonumber\\&\quad{}
+\frac{-2 (\zone^2+1) (3 \zone^2-1)}{7 \zone^2+1}\sqrt{-1}\omega_{0 } \omega_{0 } \omega_{0 } E\nonumber\\&\quad{}
+\frac{2 (\zone^2-2)}{7 \zone^2+1}\sqrt{-1}\omega^{[2]}_{-1 } (\omega^{[1]}_{0}\ExB)\nonumber\\&\quad{}
+\frac{2 (\zone^2-2)}{7 \zone^2+1}\sqrt{-1}\omega^{[1]}_{-1 } (\omega^{[1]}_{0}\ExB)\nonumber\\&\quad{}
+\frac{-3 (2 \zone^2+1)}{7 \zone^2+1}\sqrt{-1}\omega_{0 } \omega_{0 } (\omega^{[1]}_{0}\ExB)\nonumber\\&\quad{}
+\frac{-(5 \zone^4+4 \zone^2+2)}{7 \zone^2+1}S_{21}(1,1)_{-2 } E\nonumber\\&\quad{}
+\frac{-2 (\zone^2+1) (2 \zone^2+1) (3 \zone^2-1)}{7 \zone^2+1}\omega_{0 } S_{21}(1,1)_{-1 } E\nonumber\\&\quad{}
+\frac{-5 \zone^2 (\zone^2-2)}{7 \zone^2+1}S_{21}(1,2)_{-1 } E,\end{align}

\begin{align}
S_{21}(1,2)_{1}\omega^{[1]}_{0}\ExB&=
2 \zone^2 (2 \zone^4-3 \zone^2+2)\sqrt{-1}\omega^{[2]}_{-1 } E
-2 \zone^4 (2 \zone^2-1)\sqrt{-1}\omega^{[1]}_{-1 } E\nonumber\\&\quad{}
+2 \zone^4-3 \zone^2+2\sqrt{-1}\omega_{0 } \omega_{0 } E
+2 (\zone-1) (\zone+1)\sqrt{-1}\omega_{0 } (\omega^{[1]}_{0}\ExB)\nonumber\\&\quad{}
+\zone^2 (2 \zone^2-1)^2S_{21}(1,1)_{-1 } E,
\nonumber\\
S_{21}(1,2)_{2}\omega^{[1]}_{0}\ExB&=
2\sqrt{-1}\omega_{0 } E
-(\zone^2+2)\sqrt{-1}(\omega^{[1]}_{0}\ExB),
\nonumber\\
S_{21}(1,2)_{3}\omega^{[1]}_{0}\ExB&=
-2 \zone^2\sqrt{-1}E,
\end{align}

\begin{align}
S_{21}(1,2)_{0}S_{k 1}(1,1)_{0}\ExB&=
\frac{2}{3}\sqrt{-1}\omega^{[2]}_{-1 } (S_{k 1}(1,1)_{0}E)
+\frac{2}{3}\sqrt{-1}\omega^{[1]}_{-1 } (S_{k 1}(1,1)_{0}E)\nonumber\\&\quad{}
+\frac{1}{3}\sqrt{-1}\omega_{0 } \omega_{0 } (S_{k 1}(1,1)_{0}E)
+\frac{\zone^2+1}{3}\sqrt{-1}S_{k 1}(1,1)_{-2 } E\nonumber\\&\quad{}
+\frac{-(\zone^2+1)}{3}\sqrt{-1}\omega_{0 } S_{k 1}(1,1)_{-1 } E
+\frac{-(\zone-1) (\zone+1)}{3}S_{k 2}(1,1)_{-2 } E\nonumber\\&\quad{}
+\frac{(\zone-1) (\zone+1)}{3}\omega_{0 } S_{k 2}(1,1)_{-1 } E,\nonumber\\
S_{21}(1,2)_{1}S_{k 1}(1,1)_{0}\ExB&=
\zone^2\sqrt{-1}\omega_{0 } (S_{k 1}(1,1)_{0}E)
-\zone^4\sqrt{-1}S_{k 1}(1,1)_{-1 } E\nonumber\\&\quad{}
+\zone^2 (\zone-1) (\zone+1)S_{k 2}(1,1)_{-1 } E,
\nonumber\\
S_{21}(1,2)_{2}S_{k 1}(1,1)_{0}\ExB&=
-\zone^2\sqrt{-1}(S_{k 1}(1,1)_{0}E),
\end{align}

\begin{align}
S_{21}(1,3)_{0}\ExB&=
\frac{-2 (3 \zone^4-\zone^2-1)}{7 \zone^2+1}\sqrt{-1}\omega^{[1]}_{-2 } E\nonumber\\&\quad{}
+\frac{2 \zone^2 (8 \zone^4+\zone^2-1)}{7 \zone^2+1}\sqrt{-1}\omega_{0 } \omega^{[2]}_{-1 } E\nonumber\\&\quad{}
+\frac{-2 (\zone^2+1) (8 \zone^4+\zone^2+1)}{7 \zone^2+1}\sqrt{-1}\omega_{0 } \omega^{[1]}_{-1 } E\nonumber\\&\quad{}
+\frac{8 \zone^4+\zone^2-1}{7 \zone^2+1}\sqrt{-1}\omega_{0 } \omega_{0 } \omega_{0 } E\nonumber\\&\quad{}
+\frac{2 (\zone^2+1)}{7 \zone^2+1}\sqrt{-1}\omega^{[2]}_{-1 } (\omega^{[1]}_{0}\ExB)\nonumber\\&\quad{}
+\frac{2 (\zone^2+1)}{7 \zone^2+1}\sqrt{-1}\omega^{[1]}_{-1 } (\omega^{[1]}_{0}\ExB)\nonumber\\&\quad{}
+\frac{2 (4 \zone^2+1)}{7 \zone^2+1}\sqrt{-1}\omega_{0 } \omega_{0 } (\omega^{[1]}_{0}\ExB)\nonumber\\&\quad{}
+\frac{(\zone^2+1) (2 \zone^2+1)}{7 \zone^2+1}S_{21}(1,1)_{-2 } E\nonumber\\&\quad{}
+\frac{(2 \zone^2+1) (8 \zone^4+\zone^2-1)}{7 \zone^2+1}\omega_{0 } S_{21}(1,1)_{-1 } E\nonumber\\&\quad{}
+\frac{2 \zone^2 (\zone^2-2)}{7 \zone^2+1}S_{21}(1,2)_{-1 } E,\nonumber\\
S_{21}(1,3)_{1}\ExB&=
2 \zone^2 (\zone-1) (\zone+1)\sqrt{-1}\omega^{[2]}_{-1 } E
-2 \zone^4\sqrt{-1}\omega^{[1]}_{-1 } E\nonumber\\&\quad{}
+(\zone-1) (\zone+1)\sqrt{-1}\omega_{0 } \omega_{0 } E
+\sqrt{-1}\omega_{0 } (\omega^{[1]}_{0}\ExB)\nonumber\\&\quad{}
+\zone^2 (2 \zone^2-1)S_{21}(1,1)_{-1 } E,
\nonumber\\
S_{21}(1,3)_{2}\ExB&=-\sqrt{-1}\omega_{0 } E+\sqrt{-1}(\omega^{[1]}_{0}\ExB),
\nonumber\\
S_{21}(1,3)_{3}\ExB&=
\zone^2\sqrt{-1}E,
\end{align}

\begin{align}
S_{21}(1,3)_{0}\omega^{[1]}_{0}\ExB&=
\frac{-4 \zone^2 (24 \zone^6-16 \zone^4+21 \zone^2+9)}{48 \zone^4+25 \zone^2+3}\sqrt{-1}\omega^{[2]}_{-1 } \omega^{[1]}_{-1 } E\nonumber\\&\quad{}
+\frac{-4 (44 \zone^6+13 \zone^4+87 \zone^2+27)}{144 \zone^4+75 \zone^2+9}\sqrt{-1}\omega^{[1]}_{-3 } E\nonumber\\&\quad{}
+\frac{4 (24 \zone^8+8 \zone^6+21 \zone^4+33 \zone^2+9)}{48 \zone^4+25 \zone^2+3}\sqrt{-1}\omega^{[1]}_{-1 } \omega^{[1]}_{-1 } E\nonumber\\&\quad{}
+\frac{2 (4 \zone^6-52 \zone^4-96 \zone^2-27)}{48 \zone^4+25 \zone^2+3}\sqrt{-1}\Har^{[1]}_{-1 } E\nonumber\\&\quad{}
+\frac{352 \zone^8-29 \zone^6+273 \zone^4+165 \zone^2+18}{192 \zone^4+100 \zone^2+12}\sqrt{-1}\omega_{0 } \omega^{[2]}_{-2 } E\nonumber\\&\quad{}
+\frac{-(352 \zone^{10}-541 \zone^8+619 \zone^6-117 \zone^4-336 \zone^2-72)}{192 \zone^6+100 \zone^4+12 \zone^2}\sqrt{-1}\omega_{0 } \omega^{[1]}_{-2 } E\nonumber\\&\quad{}
+\frac{416 \zone^8+157 \zone^6+519 \zone^4+123 \zone^2-18}{96 \zone^4+50 \zone^2+6}\sqrt{-1}\omega_{0 } \omega_{0 } \omega^{[2]}_{-1 } E\nonumber\\&\quad{}
+\frac{-(416 \zone^{10}+317 \zone^8+833 \zone^6+453 \zone^4+168 \zone^2+36)}{96 \zone^6+50 \zone^4+6 \zone^2}\sqrt{-1}\omega_{0 } \omega_{0 } \omega^{[1]}_{-1 } E\nonumber\\&\quad{}
+\frac{416 \zone^8+157 \zone^6+519 \zone^4+123 \zone^2-18}{192 \zone^6+100 \zone^4+12 \zone^2}\sqrt{-1}\omega_{0 } \omega_{0 } \omega_{0 } \omega_{0 } E\nonumber\\&\quad{}
+\frac{-\zone^2}{3 \zone^2+1}\sqrt{-1}\omega^{[2]}_{-2 } (\omega^{[1]}_{0}\ExB)\nonumber\\&\quad{}
+\frac{56 \zone^6-3 \zone^4+72 \zone^2+27}{48 \zone^6+25 \zone^4+3 \zone^2}\sqrt{-1}\omega^{[1]}_{-2 } (\omega^{[1]}_{0}\ExB)\nonumber\\&\quad{}
+\frac{6 (8 \zone^6+8 \zone^2+3)}{48 \zone^6+25 \zone^4+3 \zone^2}\sqrt{-1}\omega_{0 } \omega^{[2]}_{-1 } (\omega^{[1]}_{0}\ExB)\nonumber\\&\quad{}
+\frac{32 \zone^6+141 \zone^4+123 \zone^2+27}{96 \zone^6+50 \zone^4+6 \zone^2}\sqrt{-1}\omega_{0 } \omega_{0 } \omega_{0 } (\omega^{[1]}_{0}\ExB)\nonumber\\&\quad{}
+\frac{\zone^2 (2 \zone^2-1)}{3 \zone^2+1}S_{21}(1,1)_{-3 } E\nonumber\\&\quad{}
+\frac{-2 (48 \zone^8-8 \zone^6+42 \zone^4+42 \zone^2+9)}{48 \zone^4+25 \zone^2+3}\omega^{[1]}_{-1 } S_{21}(1,1)_{-1 } E\nonumber\\&\quad{}
+\frac{352 \zone^{10}+163 \zone^8+433 \zone^6+369 \zone^4+186 \zone^2+36}{192 \zone^6+100 \zone^4+12 \zone^2}\omega_{0 } S_{21}(1,1)_{-2 } E\nonumber\\&\quad{}
+\frac{416 \zone^{10}+189 \zone^8+612 \zone^6+246 \zone^4-39 \zone^2-18}{96 \zone^6+50 \zone^4+6 \zone^2}\omega_{0 } \omega_{0 } S_{21}(1,1)_{-1 } E\nonumber\\&\quad{}
+\frac{-(256 \zone^6-109 \zone^4+171 \zone^2+81)}{96 \zone^4+50 \zone^2+6}\omega_{0 } S_{21}(1,2)_{-1 } E\nonumber\\&\quad{}
+\frac{-(2 \zone^4-10 \zone^2-3)}{3 \zone^2+1}S_{21}(1,3)_{-1 } E,\end{align}

\begin{align}
S_{21}(1,3)_{1}\omega^{[1]}_{0}\ExB&=
\frac{6 \zone^6-23 \zone^4+\zone^2+12}{14 \zone^2+2}\sqrt{-1}\omega^{[1]}_{-2 } E\nonumber\\&\quad{}
+\frac{-2 \zone^2 (4 \zone^6-10 \zone^4-5 \zone^2+3)}{7 \zone^2+1}\sqrt{-1}\omega_{0 } \omega^{[2]}_{-1 } E\nonumber\\&\quad{}
+\frac{2 (4 \zone^8-6 \zone^6-14 \zone^4-\zone^2-3)}{7 \zone^2+1}\sqrt{-1}\omega_{0 } \omega^{[1]}_{-1 } E\nonumber\\&\quad{}
+\frac{-(4 \zone^6-10 \zone^4-5 \zone^2+3)}{7 \zone^2+1}\sqrt{-1}\omega_{0 } \omega_{0 } \omega_{0 } E\nonumber\\&\quad{}
+\frac{-(\zone^2-2) (\zone^2+3)}{7 \zone^2+1}\sqrt{-1}\omega^{[2]}_{-1 } (\omega^{[1]}_{0}\ExB)\nonumber\\&\quad{}
+\frac{-(\zone^2-2) (\zone^2+3)}{7 \zone^2+1}\sqrt{-1}\omega^{[1]}_{-1 } (\omega^{[1]}_{0}\ExB)\nonumber\\&\quad{}
+\frac{-(8 \zone^4-19 \zone^2-9)}{14 \zone^2+2}\sqrt{-1}\omega_{0 } \omega_{0 } (\omega^{[1]}_{0}\ExB)\nonumber\\&\quad{}
+\frac{-(2 \zone^6-11 \zone^4-13 \zone^2-6)}{14 \zone^2+2}S_{21}(1,1)_{-2 } E\nonumber\\&\quad{}
+\frac{-(2 \zone^2+1) (4 \zone^6-10 \zone^4-5 \zone^2+3)}{7 \zone^2+1}\omega_{0 } S_{21}(1,1)_{-1 } E\nonumber\\&\quad{}
+\frac{-\zone^2 (2 \zone^4-11 \zone^2+29)}{14 \zone^2+2}S_{21}(1,2)_{-1 } E,
\nonumber\\
S_{21}(1,3)_{2}\omega^{[1]}_{0}\ExB&=
-2 \zone^2 (2 \zone^4-4 \zone^2+3)\sqrt{-1}\omega^{[2]}_{-1 } E\nonumber\\&\quad{}
+4 \zone^4 (\zone-1) (\zone+1)\sqrt{-1}\omega^{[1]}_{-1 } E\nonumber\\&\quad{}
-(2 \zone^4-4 \zone^2+3)\sqrt{-1}\omega_{0 } \omega_{0 } E\nonumber\\&\quad{}
-(2 \zone^2-3)\sqrt{-1}\omega_{0 } (\omega^{[1]}_{0}\ExB)\nonumber\\&\quad{}
-2 \zone^2 (\zone-1) (\zone+1) (2 \zone^2-1)S_{21}(1,1)_{-1 } E,
\nonumber\\
S_{21}(1,3)_{3}\omega^{[1]}_{0}\ExB&=
-3\sqrt{-1}\omega_{0 } E
+(\zone^2+3)\sqrt{-1}(\omega^{[1]}_{0}\ExB),
\nonumber\\
S_{21}(1,3)_{4}\omega^{[1]}_{0}\ExB&=
3 \zone^2\sqrt{-1}E,
\end{align}

\begin{align}
&S_{21}(1,3)_{0}S_{k 1}(1,1)_{0}\ExB\nonumber\\
&=
\frac{8 \zone^6-8 \zone^4+8 \zone^2+3}{4 \zone^4+6 \zone^2+1}\sqrt{-1}\omega^{[k]}_{-2 } (S_{k 1}(1,1)_{0}E)\nonumber\\&\quad{}
+\frac{\zone^2 (2 \zone^2+1)}{4 \zone^4+6 \zone^2+1}\sqrt{-1}\omega^{[2]}_{-2 } (S_{k 1}(1,1)_{0}E)
+\frac{\zone^2 (2 \zone^2+1)}{4 \zone^4+6 \zone^2+1}\sqrt{-1}\omega^{[1]}_{-2 } (S_{k 1}(1,1)_{0}E)\nonumber\\&\quad{}
+\frac{-2 (8 \zone^6-8 \zone^4+8 \zone^2+3)}{12 \zone^4+18 \zone^2+3}\sqrt{-1}\omega_{0 } \omega^{[k]}_{-1 } (S_{k 1}(1,1)_{0}E)\nonumber\\&\quad{}
+\frac{-\zone^2 (12 \zone^4-2 \zone^2-1)}{4 \zone^6+2 \zone^4-5 \zone^2-1}\sqrt{-1}\omega_{0 } \omega^{[2]}_{-1 } (S_{k 1}(1,1)_{0}E)\nonumber\\&\quad{}
+\frac{-\zone^2 (12 \zone^4-2 \zone^2-1)}{4 \zone^6+2 \zone^4-5 \zone^2-1}\sqrt{-1}\omega_{0 } \omega^{[1]}_{-1 } (S_{k 1}(1,1)_{0}E)\nonumber\\&\quad{}
+\frac{-(2 \zone^2-1) (12 \zone^4-2 \zone^2-1)}{8 \zone^6+4 \zone^4-10 \zone^2-2}\sqrt{-1}\omega_{0 } \omega_{0 } \omega_{0 } (S_{k 1}(1,1)_{0}E)\nonumber\\&\quad{}
+\frac{2 \zone^2 (8 \zone^6-8 \zone^4+8 \zone^2+3)}{12 \zone^4+18 \zone^2+3}\sqrt{-1}\omega^{[k]}_{-1 } S_{k 1}(1,1)_{-1 } E\nonumber\\&\quad{}
+\frac{\zone^2 (\zone^2-3) (2 \zone^2+1)}{4 \zone^4+6 \zone^2+1}\sqrt{-1}S_{k 1}(1,1)_{-3 } E\nonumber\\&\quad{}
+\frac{-2 \zone^4}{4 \zone^4+6 \zone^2+1}\sqrt{-1}\omega^{[2]}_{-1 } S_{k 1}(1,1)_{-1 } E
+\frac{2 \zone^4}{4 \zone^4+6 \zone^2+1}\sqrt{-1}\omega^{[1]}_{-1 } S_{k 1}(1,1)_{-1 } E\nonumber\\&\quad{}
+\frac{-\zone^2 (10 \zone^4-2 \zone^2+1)}{4 \zone^6+2 \zone^4-5 \zone^2-1}\sqrt{-1}\omega_{0 } S_{k 1}(1,1)_{-2 } E\nonumber\\&\quad{}
+\frac{\zone^2 (2 \zone^2+1) (6 \zone^4-4 \zone^2+1)}{4 \zone^6+2 \zone^4-5 \zone^2-1}\sqrt{-1}\omega_{0 } \omega_{0 } S_{k 1}(1,1)_{-1 } E\nonumber\\&\quad{}
+\frac{\zone^2 (8 \zone^2+1)}{4 \zone^4+6 \zone^2+1}\sqrt{-1}S_{k 1}(1,2)_{-2 } E\nonumber\\&\quad{}
+\frac{\zone^2 (24 \zone^6-4 \zone^4+8 \zone^2-1)}{8 \zone^6+4 \zone^4-10 \zone^2-2}\sqrt{-1}\omega_{0 } S_{k 1}(1,2)_{-1 } E\nonumber\\&\quad{}
+\frac{-2 \zone^2 (8 \zone^6-8 \zone^4+8 \zone^2+3)}{12 \zone^4+18 \zone^2+3}\omega^{[k]}_{-1 } S_{k 2}(1,1)_{-1 } E\nonumber\\&\quad{}
+\frac{-\zone^2 (\zone-1) (\zone+1) (2 \zone^2+1)}{4 \zone^4+6 \zone^2+1}S_{k 2}(1,1)_{-3 } E\nonumber\\&\quad{}
+\frac{-4 \zone^4}{4 \zone^4+6 \zone^2+1}\omega^{[1]}_{-1 } S_{k 2}(1,1)_{-1 } E
+\frac{-\zone^2 (12 \zone^4-2 \zone^2-1)}{4 \zone^4+6 \zone^2+1}\omega_{0 } \omega_{0 } S_{k 2}(1,1)_{-1 } E\nonumber\\&\quad{}
+\frac{-3 \zone^2 (2 \zone^2-1) (2 \zone^2+1)^2}{8 \zone^6+4 \zone^4-10 \zone^2-2}\omega_{0 } S_{k 2}(1,2)_{-1 } E\nonumber\\&\quad{}
+\frac{2 \zone^2 (\zone^2+1) (4 \zone^2+1)}{4 \zone^4+6 \zone^2+1}S_{k 2}(1,3)_{-1 } E,\end{align}

\begin{align}
S_{21}(1,3)_{1}S_{k 1}(1,1)_{0}\ExB&=
\frac{\zone^2}{3}\sqrt{-1}\omega^{[2]}_{-1 } (S_{k 1}(1,1)_{0}E)\nonumber\\&\quad{}
+\frac{\zone^2}{3}\sqrt{-1}\omega^{[1]}_{-1 } (S_{k 1}(1,1)_{0}E)\nonumber\\&\quad{}
+\frac{2 \zone^2}{3}\sqrt{-1}\omega_{0 } \omega_{0 } (S_{k 1}(1,1)_{0}E)\nonumber\\&\quad{}
+\frac{-\zone^2 (2 \zone^2-1)}{6}\sqrt{-1}S_{k 1}(1,1)_{-2 } E\nonumber\\&\quad{}
+\frac{-\zone^2 (4 \zone^2+1)}{6}\sqrt{-1}\omega_{0 } S_{k 1}(1,1)_{-1 } E\nonumber\\&\quad{}
+\frac{\zone^2 (2 \zone^2+1)}{6}S_{k 2}(1,1)_{-2 } E\nonumber\\&\quad{}
+\frac{\zone^2 (2 \zone-1) (2 \zone+1)}{6}\omega_{0 } S_{k 2}(1,1)_{-1 } E\nonumber\\&\quad{}
+\zone^2S_{k 2}(1,2)_{-1 } E,
\nonumber\\
S_{21}(1,3)_{2}S_{k 1}(1,1)_{0}\ExB&=
-\zone^2\sqrt{-1}\omega_{0 } (S_{k 1}(1,1)_{0}E)
+\zone^4\sqrt{-1}S_{k 1}(1,1)_{-1 } E\nonumber\\&\quad{}
-\zone^2 (\zone-1) (\zone+1)S_{k 2}(1,1)_{-1 } E,
\nonumber\\
S_{21}(1,3)_{3}S_{k 1}(1,1)_{0}\ExB&=
\zone^2\sqrt{-1}(S_{k 1}(1,1)_{0}E),
\end{align}

\begin{align}
\omega^{\textcolor{black}{[k]}}_{0}S_{k 1}(1,1)_{0}\ExB&=
\omega_{0 } (S_{k 1}(1,1)_{0}E)
-\zone^2S_{k 1}(1,1)_{-1 } E\nonumber\\&\quad{}
-\zone^2\sqrt{-1}S_{k 2}(1,1)_{-1 } E,\nonumber\\
\omega^{\textcolor{black}{[k]}}_{1}S_{k 1}(1,1)_{0}\ExB&=
(S_{k 1}(1,1)_{0}E),
\end{align}

\begin{align}
\Har^{\textcolor{black}{[k]}}_{0}S_{k 1}(1,1)_{0}\ExB&=
-6\omega^{[k]}_{-2 } (S_{k 1}(1,1)_{0}E)
+4\omega_{0 } \omega^{[k]}_{-1 } (S_{k 1}(1,1)_{0}E)\nonumber\\&\quad{}
-4 \zone^2\omega^{[k]}_{-1 } S_{k 1}(1,1)_{-1 } E
-4 \zone^2\sqrt{-1}\omega^{[k]}_{-1 } S_{k 2}(1,1)_{-1 } E,\end{align}

\begin{align}
\Har^{\textcolor{black}{[k]}}_{1}S_{k 1}(1,1)_{0}\ExB&=
\frac{4}{3}\omega^{[2]}_{-1 } (S_{k 1}(1,1)_{0}E)
+\frac{4}{3}\omega^{[1]}_{-1 } (S_{k 1}(1,1)_{0}E)\nonumber\\&\quad{}
+\frac{8}{3}\omega_{0 } \omega_{0 } (S_{k 1}(1,1)_{0}E)
+\frac{-2 (2 \zone^2-1)}{3}S_{k 1}(1,1)_{-2 } E\nonumber\\&\quad{}
+\frac{-2 (4 \zone^2+1)}{3}\omega_{0 } S_{k 1}(1,1)_{-1 } E\nonumber\\&\quad{}
+\frac{-2 (2 \zone^2+1)}{3}\sqrt{-1}S_{k 2}(1,1)_{-2 } E\nonumber\\&\quad{}
+\frac{-2 (2 \zone-1) (2 \zone+1)}{3}\sqrt{-1}\omega_{0 } S_{k 2}(1,1)_{-1 } E,
\nonumber\\
\Har^{\textcolor{black}{[k]}}_{2}S_{k 1}(1,1)_{0}\ExB&=
\frac{7}{3}\omega_{0 } (S_{k 1}(1,1)_{0}E)
+\frac{-7 \zone^2}{3}S_{k 1}(1,1)_{-1 } E
+\frac{-7 \zone^2}{3}\sqrt{-1}S_{k 2}(1,1)_{-1 } E,
\nonumber\\
\Har^{\textcolor{black}{[k]}}_{3}S_{k 1}(1,1)_{0}\ExB&=
(S_{k 1}(1,1)_{0}E),
\end{align}


\begin{align}
S_{k 1}(1,1)_{0}\omega^{[1]}_{0}\ExB&=
-(\zone-1) (\zone+1)\omega_{0 } (S_{k 1}(1,1)_{0}E)
+\zone^4S_{k 1}(1,1)_{-1 } E\nonumber\\&\quad{}
+\zone^2 (\zone-1) (\zone+1)\sqrt{-1}S_{k 2}(1,1)_{-1 } E,\nonumber\\
S_{k 1}(1,1)_{1}\omega^{[1]}_{0}\ExB&=
(S_{k 1}(1,1)_{0}E),
\end{align}

\begin{align}
S_{k 1}(1,1)_{0}S_{k 1}(1,1)_{0}\ExB&=
2 \zone^2\omega^{[k]}_{-1 } E
+2 \zone^4\omega^{[2]}_{-1 } E
-2 \zone^2 (\zone^2+1)\omega^{[1]}_{-1 } E\nonumber\\&\quad{}
+\zone^2\omega_{0 } \omega_{0 } E
+\omega_{0 } (\omega^{[1]}_{0}\ExB)
-\zone^2 (2 \zone^2+1)\sqrt{-1}S_{21}(1,1)_{-1 } E,\nonumber\\
S_{k 1}(1,1)_{1}S_{k 1}(1,1)_{0}\ExB&=
(\omega^{[1]}_{0}\ExB),
\nonumber\\
S_{k 1}(1,1)_{2}S_{k 1}(1,1)_{0}\ExB&=
\zone^2E,
\end{align}

\begin{align}
S_{k 1}(1,2)_{0}\ExB&=
-\omega_{0 } (S_{k 1}(1,1)_{0}E)
+\zone^2S_{k 1}(1,1)_{-1 } E
+\zone^2\sqrt{-1}S_{k 2}(1,1)_{-1 } E,\nonumber\\
S_{k 1}(1,2)_{1}\ExB&=
-(S_{k 1}(1,1)_{0}E),
\end{align}

\begin{align}
S_{k 1}(1,2)_{0}\omega^{[1]}_{0}\ExB&=
\frac{2 \zone^2}{3}\omega^{[2]}_{-1 } (S_{k 1}(1,1)_{0}E)
+\frac{2 \zone^2}{3}\omega^{[1]}_{-1 } (S_{k 1}(1,1)_{0}E)\nonumber\\&\quad{}
+\frac{4 \zone^2-3}{3}\omega_{0 } \omega_{0 } (S_{k 1}(1,1)_{0}E)
+\frac{-\zone^2 (2 \zone^2-1)}{3}S_{k 1}(1,1)_{-2 } E\nonumber\\&\quad{}
+\frac{-2 \zone^2 (2 \zone^2-1)}{3}\omega_{0 } S_{k 1}(1,1)_{-1 } E\nonumber\\&\quad{}
+\frac{-2 \zone^2 (\zone-1) (\zone+1)}{3}\sqrt{-1}S_{k 2}(1,1)_{-2 } E\nonumber\\&\quad{}
+\frac{-4 \zone^2 (\zone-1) (\zone+1)}{3}\sqrt{-1}\omega_{0 } S_{k 2}(1,1)_{-1 } E\nonumber\\&\quad{}
-\zone^2\sqrt{-1}S_{k 2}(1,2)_{-1 } E,\end{align}

\begin{align}
S_{k 1}(1,2)_{1}\omega^{[1]}_{0}\ExB&=
\zone^2-2\omega_{0 } (S_{k 1}(1,1)_{0}E)
-\zone^2 (\zone-1) (\zone+1)S_{k 1}(1,1)_{-1 } E\nonumber\\&\quad{}
-\zone^2 (\zone^2-2)\sqrt{-1}S_{k 2}(1,1)_{-1 } E,
\nonumber\\
S_{k 1}(1,2)_{2}\omega^{[1]}_{0}\ExB&=
-2(S_{k 1}(1,1)_{0}E),
\end{align}

\begin{align}
S_{k 1}(1,2)_{0}S_{k 1}(1,1)_{0}\ExB&=
-\zone^2\omega^{[k]}_{-2 } E
+\frac{-\zone^2 (6 \zone^2-1)}{7 \zone^2+1}\omega^{[1]}_{-2 } E\nonumber\\&\quad{}
+\frac{16 \zone^6}{7 \zone^2+1}\omega_{0 } \omega^{[2]}_{-1 } E
+\frac{-4 \zone^2 (2 \zone^2+1)^2}{7 \zone^2+1}\omega_{0 } \omega^{[1]}_{-1 } E\nonumber\\&\quad{}
+\frac{8 \zone^4}{7 \zone^2+1}\omega_{0 } \omega_{0 } \omega_{0 } E
+\frac{2 \zone^2}{7 \zone^2+1}\omega^{[2]}_{-1 } (\omega^{[1]}_{0}\ExB)\nonumber\\&\quad{}
+\frac{2 \zone^2}{7 \zone^2+1}\omega^{[1]}_{-1 } (\omega^{[1]}_{0}\ExB)
+\frac{8 \zone^2+1}{7 \zone^2+1}\omega_{0 } \omega_{0 } (\omega^{[1]}_{0}\ExB)\nonumber\\&\quad{}
+\frac{-\zone^2 (2 \zone^2+1)}{7 \zone^2+1}\sqrt{-1}S_{21}(1,1)_{-2 } E\nonumber\\&\quad{}
+\frac{-8 \zone^4 (2 \zone^2+1)}{7 \zone^2+1}\sqrt{-1}\omega_{0 } S_{21}(1,1)_{-1 } E\nonumber\\&\quad{}
+\frac{-\zone^2 (2 \zone^2+1)}{7 \zone^2+1}\sqrt{-1}S_{21}(1,2)_{-1 } E,\end{align}

\begin{align}
S_{k 1}(1,2)_{1}S_{k 1}(1,1)_{0}\ExB&=
-2 \zone^2\omega^{[k]}_{-1 } E
+2 \zone^4\omega^{[2]}_{-1 } E\nonumber\\&\quad{}
-2 \zone^2 (\zone^2+1)\omega^{[1]}_{-1 } E
+\zone^2\omega_{0 } \omega_{0 } E\nonumber\\&\quad{}
+\omega_{0 } (\omega^{[1]}_{0}\ExB)
-\zone^2 (2 \zone^2+1)\sqrt{-1}S_{21}(1,1)_{-1 } E,
\nonumber\\
S_{k 1}(1,2)_{2}S_{k 1}(1,1)_{0}\ExB&=0,\nonumber\\
S_{k 1}(1,2)_{3}S_{k 1}(1,1)_{0}\ExB&=
-\zone^2E,
\end{align}

\begin{align}
S_{k 1}(1,3)_{0}\ExB&=
\frac{1}{3}\omega^{[2]}_{-1 } (S_{k 1}(1,1)_{0}E)
+\frac{1}{3}\omega^{[1]}_{-1 } (S_{k 1}(1,1)_{0}E)\nonumber\\&\quad{}
+\frac{2}{3}\omega_{0 } \omega_{0 } (S_{k 1}(1,1)_{0}E)
+\frac{-(2 \zone^2-1)}{6}S_{k 1}(1,1)_{-2 } E\nonumber\\&\quad{}
+\frac{-(4 \zone^2+1)}{6}\omega_{0 } S_{k 1}(1,1)_{-1 } E
+\frac{-(2 \zone^2+1)}{6}\sqrt{-1}S_{k 2}(1,1)_{-2 } E\nonumber\\&\quad{}
+\frac{-(2 \zone-1) (2 \zone+1)}{6}\sqrt{-1}\omega_{0 } S_{k 2}(1,1)_{-1 } E,\nonumber\\
S_{k 1}(1,3)_{1}\ExB&=
\omega_{0 } (S_{k 1}(1,1)_{0}E)
-\zone^2S_{k 1}(1,1)_{-1 } E
-\zone^2\sqrt{-1}S_{k 2}(1,1)_{-1 } E,
\nonumber\\
S_{k 1}(1,3)_{2}\ExB&=
(S_{k 1}(1,1)_{0}E),
\end{align}

\begin{align}
&S_{k 1}(1,3)_{0}\omega^{[1]}_{0}\ExB\nonumber\\
&=
\frac{\zone^2 (12 \zone^4-14 \zone^2-9)}{4 \zone^4+6 \zone^2+1}\omega^{[k]}_{-2 } (S_{k 1}(1,1)_{0}E)
+\frac{\zone^2 (2 \zone^2+1)}{4 \zone^4+6 \zone^2+1}\omega^{[2]}_{-2 } (S_{k 1}(1,1)_{0}E)\nonumber\\&\quad{}
+\frac{\zone^2 (2 \zone^2+1)}{4 \zone^4+6 \zone^2+1}\omega^{[1]}_{-2 } (S_{k 1}(1,1)_{0}E)
+\frac{-2 \zone^2 (12 \zone^4-14 \zone^2-9)}{12 \zone^4+18 \zone^2+3}\omega_{0 } \omega^{[k]}_{-1 } (S_{k 1}(1,1)_{0}E)\nonumber\\&\quad{}
+\frac{-\zone^2 (12 \zone^4-2 \zone^2-1)}{4 \zone^6+2 \zone^4-5 \zone^2-1}\omega_{0 } \omega^{[2]}_{-1 } (S_{k 1}(1,1)_{0}E)
+\frac{-\zone^2 (12 \zone^4-2 \zone^2-1)}{4 \zone^6+2 \zone^4-5 \zone^2-1}\omega_{0 } \omega^{[1]}_{-1 } (S_{k 1}(1,1)_{0}E)\nonumber\\&\quad{}
+\frac{-(2 \zone^2-1) (12 \zone^4-2 \zone^2-1)}{8 \zone^6+4 \zone^4-10 \zone^2-2}\omega_{0 } \omega_{0 } \omega_{0 } (S_{k 1}(1,1)_{0}E)\nonumber\\&\quad{}
+\frac{2 \zone^4 (12 \zone^4-14 \zone^2-9)}{12 \zone^4+18 \zone^2+3}\omega^{[k]}_{-1 } S_{k 1}(1,1)_{-1 } E
+\frac{\zone^2 (2 \zone^2-1) (3 \zone^2+2)}{4 \zone^4+6 \zone^2+1}S_{k 1}(1,1)_{-3 } E\nonumber\\&\quad{}
+\frac{-2 \zone^4}{4 \zone^4+6 \zone^2+1}\omega^{[2]}_{-1 } S_{k 1}(1,1)_{-1 } E\nonumber\\&\quad{}
+\frac{2 \zone^4}{4 \zone^4+6 \zone^2+1}\omega^{[1]}_{-1 } S_{k 1}(1,1)_{-1 } E\nonumber\\&\quad{}
+\frac{-\zone^2 (10 \zone^4-2 \zone^2+1)}{4 \zone^6+2 \zone^4-5 \zone^2-1}\omega_{0 } S_{k 1}(1,1)_{-2 } E\nonumber\\&\quad{}
+\frac{\zone^2 (2 \zone^2+1) (6 \zone^4-4 \zone^2+1)}{4 \zone^6+2 \zone^4-5 \zone^2-1}\omega_{0 } \omega_{0 } S_{k 1}(1,1)_{-1 } E\nonumber\\&\quad{}
+\frac{-2 \zone^4 (2 \zone^2-1)}{4 \zone^4+6 \zone^2+1}S_{k 1}(1,2)_{-2 } E\nonumber\\&\quad{}
+\frac{\zone^2 (24 \zone^6-4 \zone^4+8 \zone^2-1)}{8 \zone^6+4 \zone^4-10 \zone^2-2}\omega_{0 } S_{k 1}(1,2)_{-1 } E\nonumber\\&\quad{}
+\frac{2 \zone^4 (12 \zone^4-14 \zone^2-9)}{12 \zone^4+18 \zone^2+3}\sqrt{-1}\omega^{[k]}_{-1 } S_{k 2}(1,1)_{-1 } E\nonumber\\&\quad{}
+\frac{\zone^2 (\zone-1) (\zone+1) (2 \zone^2+1)}{4 \zone^4+6 \zone^2+1}\sqrt{-1}S_{k 2}(1,1)_{-3 } E\nonumber\\&\quad{}
+\frac{4 \zone^4}{4 \zone^4+6 \zone^2+1}\sqrt{-1}\omega^{[1]}_{-1 } S_{k 2}(1,1)_{-1 } E\nonumber\\&\quad{}
+\frac{\zone^2 (12 \zone^4-2 \zone^2-1)}{4 \zone^4+6 \zone^2+1}\sqrt{-1}\omega_{0 } \omega_{0 } S_{k 2}(1,1)_{-1 } E\nonumber\\&\quad{}
+\frac{3 \zone^2 (2 \zone^2-1) (2 \zone^2+1)^2}{8 \zone^6+4 \zone^4-10 \zone^2-2}\sqrt{-1}\omega_{0 } S_{k 2}(1,2)_{-1 } E\nonumber\\&\quad{}
+\frac{-\zone^2 (2 \zone^2+1)^2}{4 \zone^4+6 \zone^2+1}\sqrt{-1}S_{k 2}(1,3)_{-1 } E,\end{align}

\begin{align}
S_{k 1}(1,3)_{1}\omega^{[1]}_{0}\ExB
&=
\frac{-(2 \zone^2-1)}{3}\omega^{[2]}_{-1 } (S_{k 1}(1,1)_{0}E)
+\frac{-(2 \zone^2-1)}{3}\omega^{[1]}_{-1 } (S_{k 1}(1,1)_{0}E)\nonumber\\&\quad{}
+\frac{-(4 \zone^2-5)}{3}\omega_{0 } \omega_{0 } (S_{k 1}(1,1)_{0}E)\nonumber\\&\quad{}
+\frac{(2 \zone^2-1)^2}{6}S_{k 1}(1,1)_{-2 } E\nonumber\\&\quad{}
+\frac{8 \zone^4-8 \zone^2-1}{6}\omega_{0 } S_{k 1}(1,1)_{-1 } E\nonumber\\&\quad{}
+\frac{4 \zone^4-6 \zone^2-1}{6}\sqrt{-1}S_{k 2}(1,1)_{-2 } E\nonumber\\&\quad{}
+\frac{8 \zone^4-12 \zone^2+1}{6}\sqrt{-1}\omega_{0 } S_{k 2}(1,1)_{-1 } E\nonumber\\&\quad{}
+\zone^2\sqrt{-1}S_{k 2}(1,2)_{-1 } E,
\nonumber\\
S_{k 1}(1,3)_{2}\omega^{[1]}_{0}\ExB&=
-(\zone^2-3)\omega_{0 } (S_{k 1}(1,1)_{0}E)
+\zone^2 (\zone^2-2)S_{k 1}(1,1)_{-1 } E\nonumber\\&\quad{}
+\zone^2 (\zone^2-3)\sqrt{-1}S_{k 2}(1,1)_{-1 } E,
\nonumber\\
S_{k 1}(1,3)_{3}\omega^{[1]}_{0}\ExB&=
3(S_{k 1}(1,1)_{0}E),
\end{align}

\begin{align}
S_{k 1}(1,3)_{0}S_{k 1}(1,1)_{0}\ExB&=
\frac{2 \zone^2}{3}\omega^{[k]}_{-3 } E
+\zone^2\Har^{[k]}_{-1 } E\nonumber\\&\quad{}
+\frac{12 \zone^4}{16 \zone^2+3}\omega^{[2]}_{-1 } \omega^{[1]}_{-1 } E\nonumber\\&\quad{}
+\frac{10 \zone^2}{16 \zone^2+3}\omega^{[1]}_{-3 } E\nonumber\\&\quad{}
+\frac{-12 \zone^2 (\zone^2+1)}{16 \zone^2+3}\omega^{[1]}_{-1 } \omega^{[1]}_{-1 } E\nonumber\\&\quad{}
+\frac{15 \zone^2}{16 \zone^2+3}\Har^{[1]}_{-1 } E\nonumber\\&\quad{}
+\frac{-3 \zone^2 (7 \zone^2+1)}{32 \zone^2+6}\omega_{0 } \omega^{[2]}_{-2 } E\nonumber\\&\quad{}
+\frac{3 (7 \zone^4-9 \zone^2-4)}{32 \zone^2+6}\omega_{0 } \omega^{[1]}_{-2 } E\nonumber\\&\quad{}
+\frac{-3 \zone^2 (3 \zone-1) (3 \zone+1)}{16 \zone^2+3}\omega_{0 } \omega_{0 } \omega^{[2]}_{-1 } E\nonumber\\&\quad{}
+\frac{3 (9 \zone^4+3 \zone^2+2)}{16 \zone^2+3}\omega_{0 } \omega_{0 } \omega^{[1]}_{-1 } E\nonumber\\&\quad{}
+\frac{-3 (3 \zone-1) (3 \zone+1)}{32 \zone^2+6}\omega_{0 } \omega_{0 } \omega_{0 } \omega_{0 } E\nonumber\\&\quad{}
+\frac{-9}{16 \zone^2+3}\omega^{[1]}_{-2 } (\omega^{[1]}_{0}\ExB)\nonumber\\&\quad{}
+\frac{-6}{16 \zone^2+3}\omega_{0 } \omega^{[2]}_{-1 } (\omega^{[1]}_{0}\ExB)\nonumber\\&\quad{}
+\frac{-3}{16 \zone^2+3}\omega_{0 } \omega_{0 } \omega_{0 } (\omega^{[1]}_{0}\ExB)\nonumber\\&\quad{}
+\frac{-6 \zone^2 (2 \zone^2+1)}{16 \zone^2+3}\sqrt{-1}\omega^{[1]}_{-1 } S_{21}(1,1)_{-1 } E\nonumber\\&\quad{}
+\frac{3 (7 \zone^4+5 \zone^2+2)}{32 \zone^2+6}\sqrt{-1}\omega_{0 } S_{21}(1,1)_{-2 } E\nonumber\\&\quad{}
+\frac{3 (3 \zone^2-1) (3 \zone^2+1)}{16 \zone^2+3}\sqrt{-1}\omega_{0 } \omega_{0 } S_{21}(1,1)_{-1 } E\nonumber\\&\quad{}
+\frac{-15 \zone^2}{16 \zone^2+3}\sqrt{-1}\omega_{0 } S_{21}(1,2)_{-1 } E,\end{align}

\begin{align}
S_{k 1}(1,3)_{1}S_{k 1}(1,1)_{0}\ExB&=
\zone^2\omega^{[k]}_{-2 } E\nonumber\\&\quad{}
+\frac{-\zone^2 (6 \zone^2-1)}{14 \zone^2+2}\omega^{[1]}_{-2 } E\nonumber\\&\quad{}
+\frac{8 \zone^6}{7 \zone^2+1}\omega_{0 } \omega^{[2]}_{-1 } E\nonumber\\&\quad{}
+\frac{-2 \zone^2 (2 \zone^2+1)^2}{7 \zone^2+1}\omega_{0 } \omega^{[1]}_{-1 } E\nonumber\\&\quad{}
+\frac{4 \zone^4}{7 \zone^2+1}\omega_{0 } \omega_{0 } \omega_{0 } E\nonumber\\&\quad{}
+\frac{\zone^2}{7 \zone^2+1}\omega^{[2]}_{-1 } (\omega^{[1]}_{0}\ExB)\nonumber\\&\quad{}
+\frac{\zone^2}{7 \zone^2+1}\omega^{[1]}_{-1 } (\omega^{[1]}_{0}\ExB)\nonumber\\&\quad{}
+\frac{8 \zone^2+1}{14 \zone^2+2}\omega_{0 } \omega_{0 } (\omega^{[1]}_{0}\ExB)\nonumber\\&\quad{}
+\frac{-\zone^2 (2 \zone^2+1)}{14 \zone^2+2}\sqrt{-1}S_{21}(1,1)_{-2 } E\nonumber\\&\quad{}
+\frac{-4 \zone^4 (2 \zone^2+1)}{7 \zone^2+1}\sqrt{-1}\omega_{0 } S_{21}(1,1)_{-1 } E\nonumber\\&\quad{}
+\frac{-\zone^2 (2 \zone^2+1)}{14 \zone^2+2}\sqrt{-1}S_{21}(1,2)_{-1 } E,
\nonumber\\
S_{k 1}(1,3)_{2}S_{k 1}(1,1)_{0}\ExB&=
2 \zone^2\omega^{[k]}_{-1 } E,
\nonumber\\
S_{k 1}(1,3)_{3}S_{k 1}(1,1)_{0}\ExB&=0,\nonumber\\
S_{k 1}(1,3)_{4}S_{k 1}(1,1)_{0}\ExB&=
\zone^2E,
\end{align}

\begin{align}
S_{k 2}(1,1)_{0}\ExB&=
\sqrt{-1}(S_{k 1}(1,1)_{0}E),\end{align}

\begin{align}
S_{k 2}(1,1)_{0}\omega^{[1]}_{0}\ExB&=
-\zone^2\sqrt{-1}\omega_{0 } (S_{k 1}(1,1)_{0}E)
+\zone^2 (\zone^2+1)\sqrt{-1}S_{k 1}(1,1)_{-1 } E\nonumber\\&\quad{}
-\zone^4S_{k 2}(1,1)_{-1 } E,\end{align}

\begin{align}
S_{k 2}(1,1)_{0}S_{k 1}(1,1)_{0}\ExB&=
2 \zone^2\sqrt{-1}\omega^{[k]}_{-1 } E
+2 \zone^2 (\zone-1) (\zone+1)\sqrt{-1}\omega^{[2]}_{-1 } E\nonumber\\&\quad{}
-2 \zone^4\sqrt{-1}\omega^{[1]}_{-1 } E
+(\zone-1) (\zone+1)\sqrt{-1}\omega_{0 } \omega_{0 } E\nonumber\\&\quad{}
+\sqrt{-1}\omega_{0 } (\omega^{[1]}_{0}\ExB)
+\zone^2 (2 \zone^2-1)S_{21}(1,1)_{-1 } E,\nonumber\\
S_{k 2}(1,1)_{1}S_{k 1}(1,1)_{0}\ExB&=
-\sqrt{-1}\omega_{0 } E+\sqrt{-1}(\omega^{[1]}_{0}\ExB),
\nonumber\\
S_{k 2}(1,1)_{2}S_{k 1}(1,1)_{0}\ExB&=
\zone^2\sqrt{-1}E,
\end{align}

\begin{align}
S_{k 2}(1,2)_{0}\ExB&=
-\sqrt{-1}\omega_{0 } (S_{k 1}(1,1)_{0}E)
+\zone^2\sqrt{-1}S_{k 1}(1,1)_{-1 } E
-\zone^2S_{k 2}(1,1)_{-1 } E,\nonumber\\
S_{k 2}(1,2)_{1}\ExB&=
-\sqrt{-1}(S_{k 1}(1,1)_{0}E),
\end{align}

\begin{align}
S_{k 2}(1,2)_{0}\omega^{[1]}_{0}\ExB&=
\frac{2 (\zone^2+1)}{3}\sqrt{-1}\omega^{[2]}_{-1 } (S_{k 1}(1,1)_{0}E)\nonumber\\&\quad{}
+\frac{2 (\zone^2+1)}{3}\sqrt{-1}\omega^{[1]}_{-1 } (S_{k 1}(1,1)_{0}E)\nonumber\\&\quad{}
+\frac{4 \zone^2+1}{3}\sqrt{-1}\omega_{0 } \omega_{0 } (S_{k 1}(1,1)_{0}E)\nonumber\\&\quad{}
+\frac{-(\zone^2+1) (2 \zone^2-1)}{3}\sqrt{-1}S_{k 1}(1,1)_{-2 } E\nonumber\\&\quad{}
+\frac{-(4 \zone^4+2 \zone^2+1)}{3}\sqrt{-1}\omega_{0 } S_{k 1}(1,1)_{-1 } E\nonumber\\&\quad{}
+\frac{2 \zone^4+1}{3}S_{k 2}(1,1)_{-2 } E\nonumber\\&\quad{}
+\frac{(2 \zone^2-1) (2 \zone^2+1)}{3}\omega_{0 } S_{k 2}(1,1)_{-1 } E\nonumber\\&\quad{}
+\zone^2S_{k 2}(1,2)_{-1 } E,\nonumber\\
S_{k 2}(1,2)_{1}\omega^{[1]}_{0}\ExB&=
\zone^2\sqrt{-1}\omega_{0 } (S_{k 1}(1,1)_{0}E)
-\zone^2 (\zone^2+1)\sqrt{-1}S_{k 1}(1,1)_{-1 } E\nonumber\\&\quad{}
+\zone^4S_{k 2}(1,1)_{-1 } E,
\end{align}

\begin{align}
S_{k 2}(1,2)_{0}S_{k 1}(1,1)_{0}\ExB&=
-\zone^2\sqrt{-1}\omega^{[k]}_{-2 } E\nonumber\\&\quad{}
+\frac{-(6 \zone^4-3 \zone^2-4)}{7 \zone^2+1}\sqrt{-1}\omega^{[1]}_{-2 } E\nonumber\\&\quad{}
+\frac{4 \zone^2 (4 \zone^4+\zone^2-1)}{7 \zone^2+1}\sqrt{-1}\omega_{0 } \omega^{[2]}_{-1 } E\nonumber\\&\quad{}
+\frac{-4 (4 \zone^6+5 \zone^4+\zone^2+1)}{7 \zone^2+1}\sqrt{-1}\omega_{0 } \omega^{[1]}_{-1 } E\nonumber\\&\quad{}
+\frac{2 (4 \zone^4+\zone^2-1)}{7 \zone^2+1}\sqrt{-1}\omega_{0 } \omega_{0 } \omega_{0 } E\nonumber\\&\quad{}
+\frac{2 (\zone^2+2)}{7 \zone^2+1}\sqrt{-1}\omega^{[2]}_{-1 } (\omega^{[1]}_{0}\ExB)\nonumber\\&\quad{}
+\frac{2 (\zone^2+2)}{7 \zone^2+1}\sqrt{-1}\omega^{[1]}_{-1 } (\omega^{[1]}_{0}\ExB)\nonumber\\&\quad{}
+\frac{8 \zone^2+3}{7 \zone^2+1}\sqrt{-1}\omega_{0 } \omega_{0 } (\omega^{[1]}_{0}\ExB)\nonumber\\&\quad{}
+\frac{(\zone^2+2) (2 \zone^2+1)}{7 \zone^2+1}S_{21}(1,1)_{-2 } E\nonumber\\&\quad{}
+\frac{2 (2 \zone^2+1) (4 \zone^4+\zone^2-1)}{7 \zone^2+1}\omega_{0 } S_{21}(1,1)_{-1 } E\nonumber\\&\quad{}
+\frac{\zone^2 (2 \zone^2-9)}{7 \zone^2+1}S_{21}(1,2)_{-1 } E,\end{align}

\begin{align}
S_{k 2}(1,2)_{1}S_{k 1}(1,1)_{0}\ExB&=
-2 \zone^2\sqrt{-1}\omega^{[k]}_{-1 } E
+2 \zone^2 (\zone-1) (\zone+1)\sqrt{-1}\omega^{[2]}_{-1 } E\nonumber\\&\quad{}
-2 \zone^4\sqrt{-1}\omega^{[1]}_{-1 } E
+(\zone-1) (\zone+1)\sqrt{-1}\omega_{0 } \omega_{0 } E\nonumber\\&\quad{}
+\sqrt{-1}\omega_{0 } (\omega^{[1]}_{0}\ExB)
+\zone^2 (2 \zone^2-1)S_{21}(1,1)_{-1 } E,
\nonumber\\
S_{k 2}(1,2)_{2}S_{k 1}(1,1)_{0}\ExB&=0,\nonumber\\
S_{k 2}(1,2)_{3}S_{k 1}(1,1)_{0}\ExB&=
-\zone^2\sqrt{-1}E,
\end{align}

\begin{align}
S_{k 2}(1,3)_{0}\ExB&=
\frac{1}{3}\sqrt{-1}\omega^{[2]}_{-1 } (S_{k 1}(1,1)_{0}E)
+\frac{1}{3}\sqrt{-1}\omega^{[1]}_{-1 } (S_{k 1}(1,1)_{0}E)\nonumber\\&\quad{}
+\frac{2}{3}\sqrt{-1}\omega_{0 } \omega_{0 } (S_{k 1}(1,1)_{0}E)
+\frac{-(2 \zone^2-1)}{6}\sqrt{-1}S_{k 1}(1,1)_{-2 } E\nonumber\\&\quad{}
+\frac{-(4 \zone^2+1)}{6}\sqrt{-1}\omega_{0 } S_{k 1}(1,1)_{-1 } E
+\frac{2 \zone^2+1}{6}S_{k 2}(1,1)_{-2 } E\nonumber\\&\quad{}
+\frac{(2 \zone-1) (2 \zone+1)}{6}\omega_{0 } S_{k 2}(1,1)_{-1 } E,\nonumber\\
S_{k 2}(1,3)_{1}\ExB&=
\sqrt{-1}\omega_{0 } (S_{k 1}(1,1)_{0}E)
-\zone^2\sqrt{-1}S_{k 1}(1,1)_{-1 } E\nonumber\\&\quad{}
+\zone^2S_{k 2}(1,1)_{-1 } E,
\nonumber\\
S_{k 2}(1,3)_{2}\ExB&=
\sqrt{-1}(S_{k 1}(1,1)_{0}E),
\end{align}

\begin{align}
&S_{k 2}(1,3)_{0}\omega^{[1]}_{0}\ExB\nonumber\\&=
\frac{12 \zone^6-2 \zone^4+9 \zone^2+3}{4 \zone^4+6 \zone^2+1}\sqrt{-1}\omega^{[k]}_{-2 } (S_{k 1}(1,1)_{0}E)
+\frac{\zone^2 (2 \zone^2+1)}{4 \zone^4+6 \zone^2+1}\sqrt{-1}\omega^{[2]}_{-2 } (S_{k 1}(1,1)_{0}E)\nonumber\\&\quad{}
+\frac{\zone^2 (2 \zone^2+1)}{4 \zone^4+6 \zone^2+1}\sqrt{-1}\omega^{[1]}_{-2 } (S_{k 1}(1,1)_{0}E)\nonumber\\&\quad{}
+\frac{-2 (12 \zone^6-2 \zone^4+9 \zone^2+3)}{12 \zone^4+18 \zone^2+3}\sqrt{-1}\omega_{0 } \omega^{[k]}_{-1 } (S_{k 1}(1,1)_{0}E)\nonumber\\&\quad{}
+\frac{-\zone^2 (12 \zone^4-2 \zone^2-1)}{4 \zone^6+2 \zone^4-5 \zone^2-1}\sqrt{-1}\omega_{0 } \omega^{[2]}_{-1 } (S_{k 1}(1,1)_{0}E)\nonumber\\&\quad{}
+\frac{-\zone^2 (12 \zone^4-2 \zone^2-1)}{4 \zone^6+2 \zone^4-5 \zone^2-1}\sqrt{-1}\omega_{0 } \omega^{[1]}_{-1 } (S_{k 1}(1,1)_{0}E)\nonumber\\&\quad{}
+\frac{-(2 \zone^2-1) (12 \zone^4-2 \zone^2-1)}{8 \zone^6+4 \zone^4-10 \zone^2-2}\sqrt{-1}\omega_{0 } \omega_{0 } \omega_{0 } (S_{k 1}(1,1)_{0}E)\nonumber\\&\quad{}
+\frac{2 \zone^2 (12 \zone^6-2 \zone^4+9 \zone^2+3)}{12 \zone^4+18 \zone^2+3}\sqrt{-1}\omega^{[k]}_{-1 } S_{k 1}(1,1)_{-1 } E\nonumber\\&\quad{}
+\frac{\zone^2 (2 \zone^2-1) (3 \zone^2+2)}{4 \zone^4+6 \zone^2+1}\sqrt{-1}S_{k 1}(1,1)_{-3 } E\nonumber\\&\quad{}
+\frac{-2 \zone^4}{4 \zone^4+6 \zone^2+1}\sqrt{-1}\omega^{[2]}_{-1 } S_{k 1}(1,1)_{-1 } E
+\frac{2 \zone^4}{4 \zone^4+6 \zone^2+1}\sqrt{-1}\omega^{[1]}_{-1 } S_{k 1}(1,1)_{-1 } E\nonumber\\&\quad{}
+\frac{-\zone^2 (10 \zone^4-2 \zone^2+1)}{4 \zone^6+2 \zone^4-5 \zone^2-1}\sqrt{-1}\omega_{0 } S_{k 1}(1,1)_{-2 } E\nonumber\\&\quad{}
+\frac{\zone^2 (2 \zone^2+1) (6 \zone^4-4 \zone^2+1)}{4 \zone^6+2 \zone^4-5 \zone^2-1}\sqrt{-1}\omega_{0 } \omega_{0 } S_{k 1}(1,1)_{-1 } E\nonumber\\&\quad{}
+\frac{-2 \zone^4 (2 \zone^2-1)}{4 \zone^4+6 \zone^2+1}\sqrt{-1}S_{k 1}(1,2)_{-2 } E
+\frac{\zone^2 (24 \zone^6-4 \zone^4+8 \zone^2-1)}{8 \zone^6+4 \zone^4-10 \zone^2-2}\sqrt{-1}\omega_{0 } S_{k 1}(1,2)_{-1 } E\nonumber\\&\quad{}
+\frac{-2 \zone^2 (12 \zone^6-2 \zone^4+9 \zone^2+3)}{12 \zone^4+18 \zone^2+3}\omega^{[k]}_{-1 } S_{k 2}(1,1)_{-1 } E\nonumber\\&\quad{}
+\frac{-\zone^2 (\zone-1) (\zone+1) (2 \zone^2+1)}{4 \zone^4+6 \zone^2+1}S_{k 2}(1,1)_{-3 } E\nonumber\\&\quad{}
+\frac{-4 \zone^4}{4 \zone^4+6 \zone^2+1}\omega^{[1]}_{-1 } S_{k 2}(1,1)_{-1 } E
+\frac{-\zone^2 (12 \zone^4-2 \zone^2-1)}{4 \zone^4+6 \zone^2+1}\omega_{0 } \omega_{0 } S_{k 2}(1,1)_{-1 } E\nonumber\\&\quad{}
+\frac{-3 \zone^2 (2 \zone^2-1) (2 \zone^2+1)^2}{8 \zone^6+4 \zone^4-10 \zone^2-2}\omega_{0 } S_{k 2}(1,2)_{-1 } E
+\frac{\zone^2 (2 \zone^2+1)^2}{4 \zone^4+6 \zone^2+1}S_{k 2}(1,3)_{-1 } E,\end{align}

\begin{align}
S_{k 2}(1,3)_{1}\omega^{[1]}_{0}\ExB&=
\frac{-2 (\zone^2+1)}{3}\sqrt{-1}\omega^{[2]}_{-1 } (S_{k 1}(1,1)_{0}E)\nonumber\\&\quad{}
+\frac{-2 (\zone^2+1)}{3}\sqrt{-1}\omega^{[1]}_{-1 } (S_{k 1}(1,1)_{0}E)\nonumber\\&\quad{}
+\frac{-(4 \zone^2+1)}{3}\sqrt{-1}\omega_{0 } \omega_{0 } (S_{k 1}(1,1)_{0}E)\nonumber\\&\quad{}
+\frac{(\zone^2+1) (2 \zone^2-1)}{3}\sqrt{-1}S_{k 1}(1,1)_{-2 } E\nonumber\\&\quad{}
+\frac{4 \zone^4+2 \zone^2+1}{3}\sqrt{-1}\omega_{0 } S_{k 1}(1,1)_{-1 } E\nonumber\\&\quad{}
+\frac{-(2 \zone^4+1)}{3}S_{k 2}(1,1)_{-2 } E\nonumber\\&\quad{}
+\frac{-(2 \zone^2-1) (2 \zone^2+1)}{3}\omega_{0 } S_{k 2}(1,1)_{-1 } E\nonumber\\&\quad{}
-\zone^2S_{k 2}(1,2)_{-1 } E,
\nonumber\\
S_{k 2}(1,3)_{2}\omega^{[1]}_{0}\ExB&=
-\zone^2\sqrt{-1}\omega_{0 } (S_{k 1}(1,1)_{0}E)\nonumber\\&\quad{}
+\zone^2 (\zone^2+1)\sqrt{-1}S_{k 1}(1,1)_{-1 } E\nonumber\\&\quad{}
-\zone^4S_{k 2}(1,1)_{-1 } E,
\end{align}

\begin{align}
S_{k 2}(1,3)_{0}S_{k 1}(1,1)_{0}\ExB&=
\frac{2 \zone^2}{3}\sqrt{-1}\omega^{[k]}_{-3 } E
+\zone^2\sqrt{-1}\Har^{[k]}_{-1 } E\nonumber\\&\quad{}
+\frac{12 \zone^2 (\zone^2-3)}{16 \zone^2+3}\sqrt{-1}\omega^{[2]}_{-1 } \omega^{[1]}_{-1 } E\nonumber\\&\quad{}
+\frac{-2 (11 \zone^2+18)}{16 \zone^2+3}\sqrt{-1}\omega^{[1]}_{-3 } E\nonumber\\&\quad{}
+\frac{-12 (\zone^2-3) (\zone^2+1)}{16 \zone^2+3}\sqrt{-1}\omega^{[1]}_{-1 } \omega^{[1]}_{-1 } E\nonumber\\&\quad{}
+\frac{-3 (11 \zone^2+18)}{16 \zone^2+3}\sqrt{-1}\Har^{[1]}_{-1 } E\nonumber\\&\quad{}
+\frac{-3 (30 \zone^4-37 \zone^2-6)}{64 \zone^2+12}\sqrt{-1}\omega_{0 } \omega^{[2]}_{-2 } E\nonumber\\&\quad{}
+\frac{3 (30 \zone^6-89 \zone^4+40 \zone^2+24)}{64 \zone^4+12 \zone^2}\sqrt{-1}\omega_{0 } \omega^{[1]}_{-2 } E\nonumber\\&\quad{}
+\frac{-3 (2 \zone^4-59 \zone^2+6)}{32 \zone^2+6}\sqrt{-1}\omega_{0 } \omega_{0 } \omega^{[2]}_{-1 } E\nonumber\\&\quad{}
+\frac{3 (2 \zone^6-83 \zone^4-20 \zone^2-12)}{32 \zone^4+6 \zone^2}\sqrt{-1}\omega_{0 } \omega_{0 } \omega^{[1]}_{-1 } E\nonumber\\&\quad{}
+\frac{-3 (2 \zone^4-59 \zone^2+6)}{64 \zone^4+12 \zone^2}\sqrt{-1}\omega_{0 } \omega_{0 } \omega_{0 } \omega_{0 } E\nonumber\\&\quad{}
+\frac{-9 (\zone^2-3)}{16 \zone^4+3 \zone^2}\sqrt{-1}\omega^{[1]}_{-2 } (\omega^{[1]}_{0}\ExB)\nonumber\\&\quad{}
+\frac{-6 (\zone^2-3)}{16 \zone^4+3 \zone^2}\sqrt{-1}\omega_{0 } \omega^{[2]}_{-1 } (\omega^{[1]}_{0}\ExB)\nonumber\\&\quad{}
+\frac{3 (14 \zone^2+9)}{32 \zone^4+6 \zone^2}\sqrt{-1}\omega_{0 } \omega_{0 } \omega_{0 } (\omega^{[1]}_{0}\ExB)\nonumber\\&\quad{}
+\frac{6 (\zone^2-3) (2 \zone^2+1)}{16 \zone^2+3}\omega^{[1]}_{-1 } S_{21}(1,1)_{-1 } E\nonumber\\&\quad{}
+\frac{-3 (30 \zone^6-29 \zone^4-26 \zone^2-12)}{64 \zone^4+12 \zone^2}\omega_{0 } S_{21}(1,1)_{-2 } E\nonumber\\&\quad{}
+\frac{-3 (2 \zone^6-73 \zone^4-5 \zone^2+6)}{32 \zone^4+6 \zone^2}\omega_{0 } \omega_{0 } S_{21}(1,1)_{-1 } E\nonumber\\&\quad{}
+\frac{3 (26 \zone^2-27)}{32 \zone^2+6}\omega_{0 } S_{21}(1,2)_{-1 } E\nonumber\\&\quad{}
+3S_{21}(1,3)_{-1 } E,\end{align}

\begin{align}
S_{k 2}(1,3)_{1}S_{k 1}(1,1)_{0}\ExB&=
\zone^2\sqrt{-1}\omega^{[k]}_{-2 } E\nonumber\\&\quad{}
+\frac{-(6 \zone^4-3 \zone^2-4)}{14 \zone^2+2}\sqrt{-1}\omega^{[1]}_{-2 } E\nonumber\\&\quad{}
+\frac{2 \zone^2 (4 \zone^4+\zone^2-1)}{7 \zone^2+1}\sqrt{-1}\omega_{0 } \omega^{[2]}_{-1 } E\nonumber\\&\quad{}
+\frac{-2 (4 \zone^6+5 \zone^4+\zone^2+1)}{7 \zone^2+1}\sqrt{-1}\omega_{0 } \omega^{[1]}_{-1 } E\nonumber\\&\quad{}
+\frac{4 \zone^4+\zone^2-1}{7 \zone^2+1}\sqrt{-1}\omega_{0 } \omega_{0 } \omega_{0 } E\nonumber\\&\quad{}
+\frac{\zone^2+2}{7 \zone^2+1}\sqrt{-1}\omega^{[2]}_{-1 } (\omega^{[1]}_{0}\ExB)\nonumber\\&\quad{}
+\frac{\zone^2+2}{7 \zone^2+1}\sqrt{-1}\omega^{[1]}_{-1 } (\omega^{[1]}_{0}\ExB)\nonumber\\&\quad{}
+\frac{8 \zone^2+3}{14 \zone^2+2}\sqrt{-1}\omega_{0 } \omega_{0 } (\omega^{[1]}_{0}\ExB)\nonumber\\&\quad{}
+\frac{(\zone^2+2) (2 \zone^2+1)}{14 \zone^2+2}S_{21}(1,1)_{-2 } E\nonumber\\&\quad{}
+\frac{(2 \zone^2+1) (4 \zone^4+\zone^2-1)}{7 \zone^2+1}\omega_{0 } S_{21}(1,1)_{-1 } E\nonumber\\&\quad{}
+\frac{\zone^2 (2 \zone^2-9)}{14 \zone^2+2}S_{21}(1,2)_{-1 } E,
\nonumber\\
S_{k 2}(1,3)_{2}S_{k 1}(1,1)_{0}\ExB&=
2 \zone^2\sqrt{-1}\omega^{[k]}_{-1 } E,
\nonumber\\
S_{k 2}(1,3)_{3}S_{k 1}(1,1)_{0}\ExB&=0,\nonumber\\
S_{k 2}(1,3)_{4}S_{k 1}(1,1)_{0}\ExB&=
\zone^2\sqrt{-1}\ExB,
\end{align}


\begin{align}
S_{l 1}(1,1)_{0}S_{k 1}(1,1)_{0}\ExB&=
\mz^2S_{l k}(1,1)_{-1 } \ExB 
,\end{align}

\begin{align}
S_{l 1}(1,2)_{0}S_{k 1}(1,1)_{0}\ExB&=
-\mz^2S_{l k}(1,1)_{-2 } \ExB 
+\mz^2S_{l k}(1,2)_{-1 } \ExB 
,\nonumber\\
S_{l 1}(1,2)_{1}S_{k 1}(1,1)_{0}\ExB&=
-\mz^2S_{l k}(1,1)_{-1 } \ExB 
,
\end{align}

\begin{align}
S_{l 1}(1,3)_{0}S_{k 1}(1,1)_{0}\ExB&=
\mz^2S_{l k}(1,1)_{-3 } \ExB 
-\mz^2S_{l k}(1,2)_{-2 } \ExB 
+\mz^2S_{l k}(1,3)_{-1 } \ExB,\nonumber\\
S_{l 1}(1,3)_{1}S_{k 1}(1,1)_{0}\ExB&=
\mz^2S_{l k}(1,1)_{-2 } \ExB 
-\mz^2S_{l k}(1,2)_{-1 } \ExB 
,
\nonumber\\
S_{l 1}(1,3)_{2}S_{k 1}(1,1)_{0}\ExB&=
\mz^2S_{l k}(1,1)_{-1 } \ExB 
,
\end{align}

\begin{align}
S_{l 2}(1,1)_{0}S_{k 1}(1,1)_{0}\ExB&=
\mz^2\sqrt{-1}S_{l k}(1,1)_{-1 } \ExB 
,\end{align}

\begin{align}
S_{l 2}(1,2)_{0}S_{k 1}(1,1)_{0}\ExB&=
-\mz^2\sqrt{-1}S_{l k}(1,1)_{-2 } \ExB 
+\mz^2\sqrt{-1}S_{l k}(1,2)_{-1 } \ExB,\nonumber\\
S_{l 2}(1,2)_{1}S_{k 1}(1,1)_{0}\ExB&=
-\mz^2\sqrt{-1}S_{l k}(1,1)_{-1 } \ExB 
,
\end{align}

\begin{align}
S_{l 2}(1,3)_{0}S_{k 1}(1,1)_{0}\ExB&=
\mz^2\sqrt{-1}S_{l k}(1,1)_{-3 } \ExB 
-\mz^2\sqrt{-1}S_{l k}(1,2)_{-2 } \ExB 
\nonumber\\&\quad{}
+\mz^2\sqrt{-1}S_{l k}(1,3)_{-1 } \ExB 
,\nonumber\\
S_{l 2}(1,3)_{1}S_{k 1}(1,1)_{0}\ExB&=
\mz^2\sqrt{-1}S_{l k}(1,1)_{-2 } \ExB 
-\mz^2\sqrt{-1}S_{l k}(1,2)_{-1 } \ExB 
,
\nonumber\\
S_{l 2}(1,3)_{2}S_{k 1}(1,1)_{0}\ExB&=
\mz^2\sqrt{-1}S_{l k}(1,1)_{-1 } \ExB 
,
\end{align}

\begin{align}
S_{l k}(1,1)_{0}S_{k 1}(1,1)_{0}\ExB&=
-\mz^2S_{l 1}(1,1)_{-1 } \ExB 
-\mz^2\sqrt{-1}S_{l 2}(1,1)_{-1 } \ExB 
\nonumber\\&\quad{}
-\sqrt{-1}\omega_{0 } (S_{l 2}(1,1)_{0}\ExB) 
,\nonumber\\
S_{l k}(1,1)_{1}S_{k 1}(1,1)_{0}\ExB&=
-\sqrt{-1}(S_{l 2}(1,1)_{0}\ExB) 
,
\end{align}

\begin{align}
S_{l k}(1,2)_{0}S_{k 1}(1,1)_{0}\ExB&=
\frac{2\mz^4}{(\mz-1)(\mz+1)}S_{l 1}(1,1)_{-2 } \ExB 
\nonumber\\&\quad{}
+\frac{-2\mz^2}{(\mz-1)(\mz+1)}\omega_{0 } S_{l 1}(1,1)_{-1 } \ExB 
\nonumber\\&\quad{}
+\frac{1}{(\mz-1)(\mz+1)}\omega_{0 } \omega_{0 } (S_{l 1}(1,1)_{0}\ExB) 
\nonumber\\&\quad{}
+\frac{-\mz^2(2\mz-1)(2\mz+1)}{(\mz-1)(\mz+1)}S_{l 1}(1,2)_{-1 } \ExB 
\nonumber\\&\quad{}
+2\mz^2\sqrt{-1}S_{l 2}(1,1)_{-2 } \ExB 
\nonumber\\&\quad{}
+\frac{-2\mz^2}{(\mz-1)(\mz+1)}\sqrt{-1}\omega^{[2]}_{-1 } (S_{l 2}(1,1)_{0}\ExB) 
\nonumber\\&\quad{}
+\frac{-2\mz^2}{(\mz-1)(\mz+1)}\sqrt{-1}\omega^{[1]}_{-1 } (S_{l 2}(1,1)_{0}\ExB) 
\nonumber\\&\quad{}
+\frac{-\mz^2(2\mz-1)(2\mz+1)}{(\mz-1)(\mz+1)}\sqrt{-1}S_{l 2}(1,2)_{-1 } \ExB 
,\nonumber\\
S_{l k}(1,2)_{1}S_{k 1}(1,1)_{0}\ExB&=
2\mz^2S_{l 1}(1,1)_{-1 } \ExB 
+2\mz^2\sqrt{-1}S_{l 2}(1,1)_{-1 } \ExB 
\nonumber\\&\quad{}
+2\sqrt{-1}\omega_{0 } (S_{l 2}(1,1)_{0}\ExB) 
,
\nonumber\\
S_{l k}(1,2)_{2}S_{k 1}(1,1)_{0}\ExB&=
2\sqrt{-1}(S_{l 2}(1,1)_{0}\ExB) 
,
\end{align}

\begin{align}
S_{l k}(1,3)_{0}S_{k 1}(1,1)_{0}\ExB&=
\frac{-3\mz^2(2\mz^4-9\mz^2+3)}{20\mz^4-30\mz^2+3}S_{l 1}(1,1)_{-3 } \ExB 
\nonumber\\&\quad{}
+\frac{18\mz^4}{20\mz^4-30\mz^2+3}\omega^{[2]}_{-1 } S_{l 1}(1,1)_{-1 } \ExB 
\nonumber\\&\quad{}
+\frac{-3\mz^2(2\mz^2-3)}{20\mz^4-30\mz^2+3}\omega^{[2]}_{-2 } (S_{l 1}(1,1)_{0}\ExB) 
\nonumber\\&\quad{}
+\frac{6\mz^4}{20\mz^4-30\mz^2+3}\omega^{[1]}_{-1 } S_{l 1}(1,1)_{-1 } \ExB 
\nonumber\\&\quad{}
+\frac{-3\mz^4(12\mz^4-12\mz^2-1)}{(\mz-1)(\mz+1)(20\mz^4-30\mz^2+3)}\omega_{0 } S_{l 1}(1,1)_{-2 } \ExB 
\nonumber\\&\quad{}
+\frac{-3\mz^2(12\mz^4-10\mz^2-3)}{(\mz-1)(\mz+1)(20\mz^4-30\mz^2+3)}\omega_{0 } \omega^{[2]}_{-1 } (S_{l 1}(1,1)_{0}\ExB) 
\nonumber\\&\quad{}
+\frac{3\mz^4(12\mz^2-13)}{(\mz-1)(\mz+1)(20\mz^4-30\mz^2+3)}\omega_{0 } \omega_{0 } S_{l 1}(1,1)_{-1 } \ExB 
\nonumber\\&\quad{}
+\frac{-3\mz^2(8\mz^2-3)}{20\mz^4-30\mz^2+3}S_{l 1}(1,2)_{-2 } \ExB 
\nonumber\\&\quad{}
+\frac{3\mz^2(48\mz^6-56\mz^4+2\mz^2+3)}{2(\mz-1)(\mz+1)(20\mz^4-30\mz^2+3)}\omega_{0 } S_{l 1}(1,2)_{-1 } \ExB 
\nonumber\\&\quad{}
+\frac{3\mz^2(4\mz^4+2\mz^2-1)}{20\mz^4-30\mz^2+3}S_{l 1}(1,3)_{-1 } \ExB 
\nonumber\\&\quad{}
+\frac{-3\mz^2(\mz^2-3)(2\mz^2+1)}{20\mz^4-30\mz^2+3}\sqrt{-1}S_{l 2}(1,1)_{-3 } \ExB 
\nonumber\\&\quad{}
+\frac{12\mz^4}{20\mz^4-30\mz^2+3}\sqrt{-1}\omega^{[2]}_{-1 } S_{l 2}(1,1)_{-1 } \ExB 
\nonumber\\&\quad{}
+\frac{3\mz^2(2\mz^2-3)}{20\mz^4-30\mz^2+3}\sqrt{-1}\omega^{[1]}_{-2 } (S_{l 2}(1,1)_{0}\ExB) 
\nonumber\\&\quad{}
+\frac{-3\mz^2(12\mz^4-14\mz^2+3)}{20\mz^4-30\mz^2+3}\sqrt{-1}\omega_{0 } S_{l 2}(1,1)_{-2 } \ExB 
\nonumber\\&\quad{}
+\frac{3\mz^2(12\mz^4-10\mz^2-3)}{(\mz-1)(\mz+1)(20\mz^4-30\mz^2+3)}\sqrt{-1}\omega_{0 } \omega^{[1]}_{-1 } (S_{l 2}(1,1)_{0}\ExB) 
\nonumber\\&\quad{}
+\frac{3(12\mz^4-14\mz^2+1)}{2(\mz-1)(\mz+1)(20\mz^4-30\mz^2+3)}\sqrt{-1}\omega_{0 } \omega_{0 } \omega_{0 } (S_{l 2}(1,1)_{0}\ExB) 
\nonumber\\&\quad{}
+\frac{9\mz^2(16\mz^6-24\mz^4+8\mz^2-1)}{2(\mz-1)(\mz+1)(20\mz^4-30\mz^2+3)}\sqrt{-1}\omega_{0 } S_{l 2}(1,2)_{-1 } \ExB 
\nonumber\\&\quad{}
+\frac{3\mz^2(2\mz^2-1)(2\mz^2+1)}{20\mz^4-30\mz^2+3}\sqrt{-1}S_{l 2}(1,3)_{-1 } \ExB 
,\end{align}

\begin{align}
S_{l k}(1,3)_{1}S_{k 1}(1,1)_{0}\ExB&=
\frac{-3\mz^4}{(\mz-1)(\mz+1)}S_{l 1}(1,1)_{-2 } \ExB 
\nonumber\\&\quad{}
+\frac{3\mz^2}{(\mz-1)(\mz+1)}\omega_{0 } S_{l 1}(1,1)_{-1 } \ExB 
\nonumber\\&\quad{}
+\frac{-3}{2(\mz-1)(\mz+1)}\omega_{0 } \omega_{0 } (S_{l 1}(1,1)_{0}\ExB) 
\nonumber\\&\quad{}
+\frac{3\mz^2(2\mz-1)(2\mz+1)}{2(\mz-1)(\mz+1)}S_{l 1}(1,2)_{-1 } \ExB 
\nonumber\\&\quad{}
-3\mz^2\sqrt{-1}S_{l 2}(1,1)_{-2 } \ExB 
\nonumber\\&\quad{}
+\frac{3\mz^2}{(\mz-1)(\mz+1)}\sqrt{-1}\omega^{[2]}_{-1 } (S_{l 2}(1,1)_{0}\ExB) 
\nonumber\\&\quad{}
+\frac{3\mz^2}{(\mz-1)(\mz+1)}\sqrt{-1}\omega^{[1]}_{-1 } (S_{l 2}(1,1)_{0}\ExB) 
\nonumber\\&\quad{}
+\frac{3\mz^2(2\mz-1)(2\mz+1)}{2(\mz-1)(\mz+1)}\sqrt{-1}S_{l 2}(1,2)_{-1 } \ExB 
,
\nonumber\\
S_{l k}(1,3)_{2}S_{k 1}(1,1)_{0}\ExB&=
-3\mz^2S_{l 1}(1,1)_{-1 } \ExB 
-3\mz^2\sqrt{-1}S_{l 2}(1,1)_{-1 } \ExB 
\nonumber\\&\quad{}
-3\sqrt{-1}\omega_{0 } (S_{l 2}(1,1)_{0}\ExB) 
,
\nonumber\\
S_{l k}(1,3)_{3}S_{k 1}(1,1)_{0}\ExB&=
-3\sqrt{-1}(S_{l 2}(1,1)_{0}\ExB).
\end{align}

 For $r,s\in\{1,2,3\}$, $i,j\in\Z_{\geq 0}$ and a pair of distinct elements $k,l\in\{3,\ldots,\rankL\}$, one can compute $S_{kl}(1,r)_{i}S_{k1}(1,s)_{j}\ExB$
and $S_{kl}(1,r)_{i}S_{k2}(1,s)_{j}\ExB$
using the fact that $S_{kl}(1,r)_{m}E=0$ for all $m\in\Z_{\geq 0}$.

\section{Notation}\label{section:notation}
\begin{tabular}{lp{13cm}}
$V$ & a vertex algebra.\\
$U$ & a subspace of a weak $V$-module.\\
$\Omega_{V}(U)$&$
=\{\lu\in U\ \Big|\ 
a_{i}\lu=0\ \mbox{for all homogeneous }a\in V\mbox{and }i>\wt a-1.\}$ \eqref{eq:OmegaV(U)=BigluinU}.\\
$\mn$ & a non-zero complex number.\\
$\hei$ & a finite dimensional vector space equipped with a nondegenerate symmetric bilinear form
$\langle \mbox{ }, \mbox{ }\rangle$.\\
$h$ & an element of $\fh$ with $\langle \wh,\wh\rangle=1$.\\
$h^{[1]},\ldots,h^{[\rankL]}$ & an orthonormal basis of $\fh$.\\
$\alpha$ & an element of $\fh$ with $\langle\alpha,\alpha\rangle=\mn$.\\
$M(1)$ & the vertex algebra associated to the Heisenberg algebra.\\
$\lattice$ & a non-degenerate even lattice of finite rank.\\
$\rankL$ & the rank of $\lattice$.\\
$V_{\lattice}$ & the vertex algebra associated to $\lattice$.\\
$\theta$ & the automorphism of $V_{\lattice}$ induced from the $-1$ symmetry of $\lattice$.\\
$M(1)^{+}$ & the fixed point subalgbra of $M(1)$ under the action of $\theta$.\\
$V_{\lattice}^{+}$ & the fixed point subalgbra of $V_{\lattice}$ under the action of $\theta$.\\
$\mK,\module,\mN,\mW$ & weak $M(1)^{+}$ (or $V_{\lattice}^{+}$)-modules.\\
$I(\mbox{ },x)$ & an intertwining operator for $M(1)^{+}$.\\
$\epsilon(\lu,\lv)$ & 
$\lu_{\epsilon(\lu,\lv)}\lv\neq 0\mbox{ and }\lu_{i}\lv=0\mbox{ for all }i>\epsilon(\lu,\lv)$
if $I(\lu,x)\lv\neq 0$ and $\epsilon(\lu,\lv)=+\infty$ if $I(\lu,x)\lv= 0$,
where $I : \module\times\mW\rightarrow \mN\db{x}$ is an intertwining operator and 
$\lu\in\module$, $\lv\in\mW$ \eqref{eqn:max-vanish}.\\
$\langle\omega_i\rangle X$ & the space spanned by the elements $\omega_i^{j}\lu, j\in\Z_{\geq 0}, \lu\in X$. \\
$A(V)$ & the Zhu algebra of a vertex operator algebra $V$.\\
$\omega$&$=(1/2)h(-1)^2\vac$ or $(1/2)\sum_{i=1}^{\rankL}h^{[i]}(-1)^2\vac$.\nonumber\\
$\Har$&$=(1/3)(h(-3)h(-1)\vac-h(-2)^2\vac)$.\\
$J$&$=h(-1)^4\vac-2h(-3)h(-1)\vac+(3/2)h(-2)^2\vac=-9\Har+4\omega_{-1}^2\vac-3\omega_{-3}\vac$.\\
$\ExB$&$=\ExB(\alpha)=e^{\alpha}+e^{-\alpha}$ where $\alpha\in\fh$.\\
$\lE$ & an integer such that $\lE\geq \epsilon(\ExB,\lu)$ or $\lE=\epsilon(\ExB,\lu)$ for a given element $\lu$.\\
$\omega^{[i]}$&$=(1/2)h^{[i]}(-1)^2$.\nonumber\\
$\Har^{[i]}$&$=(1/3)(h^{[i]}(-3)h^{[i]}(-1)\vac-h^{[i]}(-2)^2\vac)$.\\
$\nS_{ij}(l,m)$&$=h^{[i]}(-l)h^{[j]}(-m)$.\\
$\epsilon_{S}$& an integer such that $\epsilon_{S}\geq \epsilon(S_{ij},\lu)$ or 
$\epsilon_{S}=\epsilon(S_{ij},\lu)$ for a given element $\lu$ and 
$i,j\in\{1,\ldots,\rankL\}$.\\
$B$ & a subspace of a weak $M(1)^{+}$-module $\mW$ \eqref{eq:B=SpanC(ajExB)}.\\
$\zone$&$=\langle\alpha,h^{[1]}\rangle$.
\end{tabular}


\providecommand{\MR}{\relax\ifhmode\unskip\space\fi MR }
\providecommand{\MRhref}[2]{%
  \href{http://www.ams.org/mathscinet-getitem?mr=#1}{#2}
}
\providecommand{\href}[2]{#2}

\bibliographystyle{ijmart}

\end{document}